\documentclass[10pt]{amsart}

\usepackage[left=2.8cm, right=2.8cm, top=2.2cm, bottom=2.2cm]{geometry}

\usepackage{amsmath,amssymb,amsthm,mathrsfs,stmaryrd,latexsym}
\usepackage{amsfonts}
\usepackage{accents}


\usepackage{tcolorbox}
\tcbuselibrary{skins, breakable, theorems}

\usepackage{enumerate} 
\usepackage{tikz}
\usepackage{etoolbox}

\def\Luoma#1{\uppercase\expandafter{\romannumeral#1}}
\def\luoma#1{\romannumeral#1}

\usepackage{url}

\newtheorem{mythm}{Theorem}[section]
\newtheorem{mylem}[mythm]{Lemma}
\newtheorem{myprop}[mythm]{Proposition}
\newtheorem{mycor}[mythm]{Corollary}

\theoremstyle{definition}
\newtheorem{mydefn}[mythm]{Definition}
\newtheorem{myexample}[mythm]{Example}

\theoremstyle{remark}
\newtheorem{myrem}[mythm]{Remark}
\newtheorem{mypara}[mythm]{}

\usepackage[all]{xy}
\usepackage{graphicx}
\usepackage{subfigure}

\newcommand{\bb}{\mathbb}
\newcommand{\ca}{\mathcal}
\newcommand{\ak}{\mathfrak}
\newcommand{\scr}{\mathscr}

\newcommand{\mbf}{\mathbf}
\newcommand{\mrm}{\mathrm}

\newcommand{\trm}{\textrm}

\def\op#1{\mathop{\mathrm{#1}}}

\newcommand{\ho}{\mrm{Hom}}
\newcommand{\tor}{\mrm{Tor}}
\newcommand{\ke}{\mrm{Ker}}
\newcommand{\cok}{\mrm{Coker}}
\newcommand{\im}{\mrm{Im}}
\newcommand{\df}{\mrm{d}}
\newcommand{\id}{\mrm{id}}
\newcommand{\spec}{\op{Spec}}
\newcommand{\colim}{\op{colim}}

\newcommand{\rr}{\mrm{R}}
\newcommand{\dl}{\mrm{L}}

\newcommand{\iso}{\stackrel{\sim}{\longrightarrow}}

\newcommand{\plim}{\varprojlim}

\newcommand{\oppo}{\mrm{op}}

\newcommand{\triv}{\mrm{tr}}
\newcommand{\sen}{\mrm{Sen}}
\newcommand{\ari}{\mrm{ari}}
\newcommand{\geo}{\mrm{geo}}

\newcommand{\lie}{\mrm{Lie}}
\newcommand{\gal}{\mrm{Gal}}

\newcommand{\repn}{\mbf{Rep}_{\mrm{cont}}}
\newcommand{\repnpr}{\mbf{Rep}^{\mrm{proj}}_{\mrm{cont}}}

\def\repnan#1{\mbf{Rep}^{\mrm{proj}}_{\mrm{cont},{#1}\trm{-}\mrm{an}}}
\def\repnsm#1{\mbf{Rep}^{#1\trm{-}\mrm{s}}}
\newcommand{\higgs}{\mbf{HB}_{\mrm{nilp}}}

\def\ff#1{\scr{F}^{\mrm{fini}}_{\overline{#1}/#1}}

\newcommand{\et}{\mrm{\acute{e}t}}

\newcommand{\hyodoring}{\scr{C}_{\mrm{HT}}}

\title[Sen Operators over $p$-adic Varieties]{Sen Operators and Lie Algebras arising from Galois Representations over $p$-adic Varieties}
\author{Tongmu He}
\date{\today}
\address{Tongmu He, Institut des Hautes \'Etudes Scientifiques, 35 route de Chartres, 91440 Bures-sur-Yvette, France}
\email{hetm15@ihes.fr}

\usepackage{hyperref}
\hypersetup{colorlinks,
citecolor=black,
filecolor=black,
linkcolor=black,
urlcolor=black,
pdftitle={Sen Operators over p-adic Varieties},
pdfauthor={Tongmu He},
pdftex}

\numberwithin{equation}{mythm}

\setcounter{tocdepth}{1}

\begin{document}
\maketitle

\begin{abstract}
	Any finite-dimensional $p$-adic representation of the absolute Galois group of a $p$-adic local field with imperfect residue field is characterized by its arithmetic and geometric Sen operators defined by Sen and Brinon. We generalize their construction to the fundamental group of a $p$-adic affine variety with a semi-stable chart, and prove that the module of Sen operators is canonically defined, independently of the choice of the chart. Our construction relies on a descent theorem in the $p$-adic Simpson correspondence developed by Tsuji. When the representation comes from a $\bb{Q}_p$-representation of a $p$-adic analytic group quotient of the fundamental group, we describe its Lie algebra action in terms of the Sen operators, which is a generalization of a result of Sen and Ohkubo. These Sen operators can be extended continuously to certain infinite-dimensional representations. As an application, we prove that the geometric Sen operators annihilate locally analytic vectors, generalizing a result of Pan.
\end{abstract}

\footnotetext{\emph{2020 Mathematics Subject Classification} 11F80 (primary), 14F35, 14G45.\\Keywords: Sen operator, Galois representation, $p$-adic Simpson correspondence, locally analytic vector}

\tableofcontents

\section{Introduction}

\begin{mypara}
	Let $K$ be a complete discrete valuation field extension of $\bb{Q}_p$, $\overline{K}$ an algebraic closure of $K$, $\widehat{\overline{K}}$ the $p$-adic completion of $\overline{K}$, $G$ the Galois group of $\overline{K}$ over $K$. When the residue field of $K$ is perfect, for any finite-dimensional (continuous semi-linear) $\widehat{\overline{K}}$-representation $W$ of $G$, Sen \cite{sen1980sen} associates a canonical $\widehat{\overline{K}}$-linear endomorphism on $W$, called the Sen operator, which determines the isomorphism class of $G$-representations on $W$. Moreover, if $W$ is the base change of a $\bb{Q}_p$-representation $V$ of $G$, Sen \cite[Theorem 11]{sen1980sen} relates the infinitesimal action of the inertia subgroup of $G$ on $V$ to the Sen operator on $W$. When the residue field of $K$ is imperfect with a $p$-basis of cardinality $d\geq 1$, Brinon \cite{brinon2003sen} defines $1+d$ (non-canonical) operators on $W$, which also determine the isomorphism class of $G$-representations on $W$. Moreover, if $W$ is the base change of a $\bb{Q}_p$-representation $V$ of $G$, Ohkubo \cite{ohkubo2014sen} relates the space generated by these $1+d$ operators to the infinitesimal action of the inertia subgroup of $G$ on $V$ as Sen did for $d=0$. In this article, we construct Sen operators for representations of the fundamental group of a $p$-adic affine variety with semi-stable chart. We show that the module of Sen operators is canonically defined, independent of the choice of the chart. Indeed, we associate to each representation a canonical Lie algebra action which gives all the Sen operators. Moreover, when the representation comes from $\bb{Q}_p$, we relate the Sen operators to the infinitesimal action of the inertia subgroups at height-$1$ primes, generalizing the results of Sen-Ohkubo. As an application, we prove that the geometric Sen operators annihilate locally analytic vectors, generalizing a result of Pan.
\end{mypara}

\begin{mypara}\label{para:review-brinon-intro}
	In fact, our strategy for constructing the Sen operators in the relative situation is to glue the Sen operators defined in the case of valuation fields. Hence, we firstly take a brief review on Brinon's construction of Sen operators. We take $t_1,\dots,t_d\in\ca{O}_K$ whose images in the residue field form a $p$-basis. We fix a compatible system of primitive $p^n$-th roots of unity $\zeta=(\zeta_{p^n})_{n\in\bb{N}}$ and a compatible system of $p^n$-th roots $(t_{i,p^n})_{n\in\bb{N}}$ of $t_i$ for $1\leq i\leq d$. We also put $t_{0,p^n}=\zeta_{p^n}$ for consistency. For any $n,m\in\bb{N}\cup\{\infty\}$, consider the field extension $K_{n,m}=K(\zeta_{p^n},t_{1,p^m},\dots,t_{d,p^m})$ of $K$ contained in $\overline{K}$. We simply set $K_{n,0}=K_n$ and we name some Galois groups as indicated in the following diagram
	\begin{align}\label{diam:intro-tower}
		\xymatrix{
			\overline{K}&\\
			K_{\infty,\infty}\ar[u]&\\
			K_\infty\ar[u]^-{\Delta}\ar@/^2pc/[uu]^-{H}&K\ar[l]^-{\Sigma}\ar[lu]_-{\Gamma}\ar@/_1pc/[luu]_{G}
		}
	\end{align}
	Any finite-dimensional $\widehat{\overline{K}}$-representation $W$ of $G$ descends to a $K_{\infty,\infty}$-representation $V$ of $\Gamma$ by a theorem of Brinon (cf. \ref{thm:sen-brinon}). We remark that it can be descended further to a $K_\infty$-representation of $\Gamma$ on which $\Delta$ acts analytically by a theorem of Tsuji (cf. \ref{prop:gamma-analytic}). Here, acting analytically means that the action of any element of $\Delta$ is given by the exponential of its infinitesimal action (cf. \ref{defn:analytic}). The topological group $\Gamma$ is indeed a $p$-adic analytic group, to which one can associate a Lie algebra $\lie(\Gamma)$ over $\bb{Q}_p$. Then, the infinitesimal action of $\lie(\Gamma)$ on $V$ extends $\widehat{\overline{K}}$-linearly to a (non-canonical) Lie algebra action of $\lie(\Gamma)$ on $W$, which defines $1+d$ operators of $W$ by Brinon as $\Gamma$ is locally isomorphic to $\bb{Z}_p\ltimes\bb{Z}_p^d$. 
	
	This action of $\lie(\Gamma)$ depends on the choice of $t_1,\dots,t_d$, which prevents the generalization to relative situation. The first question is whether we can define a canonical Lie algebra action on $W$, which gives the Sen operators defined by Brinon by choosing a basis. We answer it positively by considering the Faltings extension of $\ca{O}_K$ defined in \cite{he2021faltingsext} (cf. \ref{thm:fal-ext}), that is, a canonical exact sequence of $\widehat{\overline{K}}$-representations of $G$,
	\begin{align}\label{eq:fal-ext-intro}
		0\longrightarrow \widehat{\overline{K}}(1)\stackrel{\iota}{\longrightarrow} \scr{E}_{\ca{O}_K}\stackrel{\jmath}{\longrightarrow}\widehat{\overline{K}}\otimes_{\ca{O}_K}\widehat{\Omega}^1_{\ca{O}_K}\longrightarrow 0,
	\end{align}
	where $\widehat{\overline{K}}(1)$ denotes the first Tate twist of $\widehat{\overline{K}}$, $\scr{E}_{\ca{O}_K}=\plim_{x\mapsto px}\Omega^1_{\ca{O}_{\overline{K}}/\ca{O}_K}$ is a $(1+d)$-dimensional $\widehat{\overline{K}}$-space with a basis $\{(\df\log(t_{i,p^n}))_{n\in\bb{N}}\}_{0\leq i\leq d}$. Taking duals and Tate twists, we obtain a canonical exact sequence
	\begin{align}\label{eq:fal-ext-dual-intro}
		0\longrightarrow \ho_{\ca{O}_K}(\widehat{\Omega}^1_{\ca{O}_K}(-1),\widehat{\overline{K}})\stackrel{\jmath^*}{\longrightarrow} \scr{E}^*_{\ca{O}_K}(1)\stackrel{\iota^*}{\longrightarrow}\widehat{\overline{K}}\longrightarrow 0
	\end{align} 
	where $\scr{E}^*_{\ca{O}_K}=\ho_{\widehat{\overline{K}}}(\scr{E}_{\ca{O}_K},\widehat{\overline{K}})$. There is a canonical $\widehat{\overline{K}}$-linear Lie algebra structure on $\scr{E}^*_{\ca{O}_K}(1)$ associated to the linear form $\iota^*$ defined by $[f_1,f_2]=\iota^*(f_1)f_2-\iota^*(f_2)f_1$ for any $f_1,f_2\in \scr{E}^*_{\ca{O}_K}(1)$. This will be the canonical Lie algebra replacing $\lie(\Gamma)$, so that we obtain the following canonical definition of Sen operators.
\end{mypara}

\begin{mythm}[{cf. \ref{thm:sen-brinon-operator}, \ref{prop:sen-brinon-operator-func}}]\label{thm:sen-brinon-operator-intro}
	Let $K$ be a complete discrete valuation field extension of $\bb{Q}_p$ whose residue field admits a finite $p$-basis, $G$ its absolute Galois group. For any finite-dimensional $\widehat{\overline{K}}$-representation of $G$, there is a canonical $G$-equivariant homomorphism of $\widehat{\overline{K}}$-linear Lie algebras (where we put adjoint action of $G$ on $\mrm{End}_{\widehat{\overline{K}}}(W)$),
	\begin{align}
		\varphi_{\sen}|_W:\scr{E}^*_{\ca{O}_K}(1)\longrightarrow \mrm{End}_{\widehat{\overline{K}}}(W),
	\end{align}
	which is functorial in $W$ and satisfies the following properties:
	\begin{enumerate}
		\renewcommand{\labelenumi}{{\rm(\theenumi)}}
		\item Let $t_1,\dots,t_e\in K$ with compatible systems of $p$-power roots $(t_{i,p^n})_{n\in\bb{N}}\subseteq \overline{K}$ such that $\df t_1,\dots,\df t_e$ are $K$-linearly independent in $\widehat{\Omega}^1_{\ca{O}_K}[1/p]$. Consider the tower $(K_{n,m})_{n,m\in\bb{N}}$ defined by these elements analogously to \eqref{diam:intro-tower} and take the same notation for Galois groups, and assume that there is a $K_\infty$-representation $V$ of $\Gamma$ on which $\Delta$ acts analytically ({\rm\ref{defn:analytic}}) such that $W=\widehat{\overline{K}}\otimes_{K_\infty} V$. Then, $\Gamma$ is naturally locally isomorphic to $\bb{Z}_p\ltimes\bb{Z}_p^e$, and if we take the standard basis $\partial_0,\dots,\partial_e$ of $\lie(\Gamma)\cong \lie(\bb{Z}_p\ltimes\bb{Z}_p^e)$, then for any $f\in \scr{E}^*_{\ca{O}_K}(1)$,
		\begin{align}\label{eq:thm:sen-brinon-operator-intro}
			\varphi_{\sen}|_W(f)=\sum_{i=0}^e f((\df\log(t_{i,p^n}))_{n\in\bb{N}}\otimes\zeta^{-1})\otimes \varphi_{\partial_i}|_V,
		\end{align}
		where $\varphi_{\partial_i}|_V$ is the infinitesimal action of $\partial_i$ on $V$.
		\item Let $K'$ be a complete discrete valuation field extension of $K$ whose residue field admits a finite $p$-basis, $W'=\widehat{\overline{K'}}\otimes_{\widehat{\overline{K}}}W$. Assume that $K'\otimes_K\widehat{\Omega}^1_{\ca{O}_K}[1/p]\to \widehat{\Omega}^1_{\ca{O}_{K'}}[1/p]$ is injective. Then, there is a natural commutative diagram
		\begin{align}\label{diam:sen-brinon-operator-func-intro}
			\xymatrix{
				\scr{E}^*_{\ca{O}_{K'}}(1)\ar[rr]^-{\varphi_{\sen}|_{W'}}\ar[d]&&\mrm{End}_{\widehat{\overline{K'}}}(W')\\
				\widehat{\overline{K'}}\otimes_{\widehat{\overline{K}}}\scr{E}^*_{\ca{O}_K}(1)
				\ar[rr]_-{\id_{\widehat{\overline{K'}}}\otimes\varphi_{\sen}|_W}&&\widehat{\overline{K'}}\otimes_{\widehat{\overline{K}}}\mrm{End}_{\widehat{\overline{K}}}(W)
				\ar[u]_-{\wr}
			}
		\end{align}
	Moreover, if $K'$ is a finite extension of $K$, then the left vertical arrow is an isomorphism.
	\end{enumerate}
\end{mythm}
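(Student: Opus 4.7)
The plan is to first build $\varphi_{\sen}|_W$ from a chosen basis via the Brinon--Tsuji descent, then to establish canonicity by showing the resulting formula is intrinsic. To construct the map, I would fix lifts $t_1,\dots,t_d \in \ca{O}_K$ of a $p$-basis of the residue field of $K$ together with compatible systems of $p$-power roots $(t_{i,p^n})_n$ and $(\zeta_{p^n})_n$. By Brinon's descent theorem (\ref{thm:sen-brinon}) combined with Tsuji's analyticity refinement (\ref{prop:gamma-analytic}), $W$ descends to a $K_\infty$-representation $V$ of the $p$-adic analytic group $\Gamma$ on which $\Delta$ acts analytically (\ref{defn:analytic}). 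Since $\Gamma$ is locally isomorphic to $\bb{Z}_p \ltimes \bb{Z}_p^d$, its Lie algebra carries a standard basis $\partial_0,\dots,\partial_d$, and each infinitesimal action $\varphi_{\partial_i}|_V$ extends $\widehat{\overline{K}}$-linearly to an endomorphism of $W$. I would then \emph{define} $\varphi_{\sen}|_W$ by formula \eqref{eq:thm:sen-brinon-operator-intro} taken with $e=d$; because $\{(\df\log(t_{i,p^n}))_{n\in\bb{N}} \otimes \zeta^{-1}\}_{0\leq i\leq d}$ is a $\widehat{\overline{K}}$-basis of $\scr{E}_{\ca{O}_K}(-1)$, this gives a well-defined $\widehat{\overline{K}}$-linear map $\scr{E}^*_{\ca{O}_K}(1) \to \mrm{End}_{\widehat{\overline{K}}}(W)$.

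Next I would verify the algebraic properties in parallel. With the dual basis $\{e_i^* \otimes \zeta\}$ of $\scr{E}^*_{\ca{O}_K}(1)$, a direct computation from the formula $[f_1,f_2]=\iota^*(f_1)f_2-\iota^*(f_2)f_1$ yields $[e_0^* \otimes \zeta,\, e_j^* \otimes \zeta] = e_j^* \otimes \zeta$ for $j \geq 1$ while all other brackets vanish, exactly matching the bracket of $\lie(\bb{Z}_p \ltimes \bb{Z}_p^d)$ on the $\partial_i$; hence $\varphi_{\sen}|_W$ is a Lie algebra homomorphism. Functoriality in $W$ reduces to the functoriality of Brinon--Tsuji descent. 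For $G$-equivariance, the twist $\otimes\,\zeta^{-1}$ cancels the cyclotomic character appearing in the $G$-action on $\lie(\Gamma)$ once it is transported through the semi-linear $G$-action on $W$; one checks on each basis pairing $(e_i \otimes \zeta^{-1}, \partial_i)$ that the two $G$-actions in \eqref{eq:thm:sen-brinon-operator-intro} agree.

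The main obstacle is canonicity, i.e.\ independence of the auxiliary choices. For independence of the descent, two analytic descents of $W$ must coincide by the uniqueness in Tsuji's theorem, so the infinitesimal actions they define are identical. For independence of the basis $t_1,\dots,t_d$, any two such choices can be embedded into a common finer tower after a finite extension of $K$, reducing the problem to the functoriality statement (2) (proved independently via the explicit formula below). Alternatively one argues directly: an elementary change of basis simultaneously transforms the $\partial_i$ and the $(\df\log(t_{i,p^n}))_n$ by mutually inverse linear transformations, so the bilinear expression in \eqref{eq:thm:sen-brinon-operator-intro} is preserved.

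Finally, for property (2), the injectivity hypothesis on $K' \otimes_K \widehat{\Omega}^1_{\ca{O}_K}[1/p] \to \widehat{\Omega}^1_{\ca{O}_{K'}}[1/p]$ allows me to extend a lift of a $p$-basis of $K$ to one of $K'$, producing a compatible tower $K_{n,m} \subseteq K'_{n,m}$. The commutativity of diagram \eqref{diam:sen-brinon-operator-func-intro} then follows from formula \eqref{eq:thm:sen-brinon-operator-intro} applied on both sides: the infinitesimal action of each $\partial_i^K$ equals the restriction of $\partial_i^{K'}$ acting on the $K'$-descent, while the extra directions in $\scr{E}^*_{\ca{O}_{K'}}(1)$ coming from the new generators are matched by the extra $\partial_j^{K'}$. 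When $K'/K$ is finite, the map on continuous differentials is an isomorphism, so $\scr{E}_{\ca{O}_{K'}} \cong \widehat{\overline{K'}} \otimes_{\widehat{\overline{K}}} \scr{E}_{\ca{O}_K}$ and the left vertical arrow in \eqref{diam:sen-brinon-operator-func-intro} is an isomorphism by duality.
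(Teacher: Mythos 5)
Your outline of the statement's structure is right — fix a basis, define $\varphi_{\sen}|_W$ by the formula, verify Lie-bracket and $G$-equivariance, then prove independence of choices — and the verification of the Lie algebra structure and $G$-equivariance from the explicit basis are essentially the paper's computations. However, the pivotal step, canonicity, is where your argument has a genuine gap.

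Your claim that ``an elementary change of basis simultaneously transforms the $\partial_i$ and the $(\df\log(t_{i,p^n}))_n$ by mutually inverse linear transformations'' does not hold as stated, for two reasons. First, the infinitesimal operators $\varphi_{\partial_i}|_V$ and $\varphi_{\widetilde\partial_j}|_{\widetilde V}$ live on \emph{different} $K_\infty$-modules $V$ and $\widetilde V$ — the two Brinon--Tsuji descents associated to the two towers — which are related only by an isomorphism $\beta:\widehat{\overline{K}}\otimes_{K_\infty}\widetilde V\cong\widehat{\overline{K}}\otimes_{K_\infty}V$ over $\widehat{\overline{K}}$. You must therefore show an identity of the form $\beta^{-1}\circ(\id\otimes\varphi_{\partial_i}|_V)\circ\beta=\sum_j a_{ij}(\id\otimes\varphi_{\widetilde\partial_j}|_{\widetilde V})$, and this is not a tautological change of basis: $\beta$ intertwines the $G$-actions, not the $\Gamma$- or $\widetilde\Gamma$-actions individually. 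Second, the relation $(\widetilde T_1,\dots,\widetilde T_e)=(T_1,\dots,T_d)A+B$ between the elements of $\scr{E}_{\ca{O}_K}(-1)$ is \emph{affine}, with the shift $B$ having entries in $\widehat{\overline{K}}$ rather than $K$; the corresponding relation for the arithmetic direction reads $\beta^{-1}\circ(\id\otimes\varphi_{\partial_0}|_V)\circ\beta=\id\otimes\varphi_{\widetilde\partial_0}|_{\widetilde V}+\sum_j b_j(\id\otimes\varphi_{\widetilde\partial_j}|_{\widetilde V})$, and this extra $B$-term is not captured by a ``mutually inverse linear transformation.'' The paper proves these identities as Lemma \ref{lem:sen-brinon-operator}, and the proof is genuinely nontrivial: it runs through the $p$-adic Simpson correspondence (\ref{thm:simpson-K}), transporting $\beta$ through the exponential isomorphisms $\jmath=\exp(-\sum T_i\otimes\varphi_{\partial_i})$ over the Hyodo ring $\hyodoring$ and descending the conjugated operators back to the Higgs bundle via $\bb D$. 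Your alternative — embedding into a common finer tower — also fails: for two different lifts of a $p$-basis, the $2d$ elements together are not $K$-linearly independent in $\widehat\Omega^1_{\ca{O}_K}[1/p]$, so the joint tower does not satisfy the hypotheses of the construction, and reducing to property (2) is circular since that property presupposes $\varphi_{\sen}$ is already well-defined. In short, the Simpson-correspondence input (Lemma \ref{lem:sen-brinon-operator}) is the real content of the canonicity claim, and it is missing from your argument.
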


The key of its proof is to show that the map $\varphi_{\sen}|_W$ defined by the formula \eqref{eq:thm:sen-brinon-operator-intro} does not depend on the choice of $V$ and $t_i$. For this, we use the variant of $p$-adic Simpson correspondence developed by Tsuji \cite{tsuji2018localsimpson} over $\ca{O}_K$ (cf. \ref{thm:simpson-K}). One clue is that the period ring used in this correspondence is constructed as the filtered colimit of symmetric tensor products of the Faltings extension \eqref{eq:fal-ext-intro} (called the Hyodo ring, cf. \ref{defn:hyodo-ring}). We remark that the assumption on $K'\otimes_K\widehat{\Omega}^1_{\ca{O}_K}[1/p]\to \widehat{\Omega}^1_{\ca{O}_{K'}}[1/p]$ for the functoriality is a technical condition for its proof, and we don't know how to remove this (cf. \ref{prop:sen-brinon-operator-func}).

\begin{mypara}\label{para:notation-A-intro}
	Now we can generalize the construction of Sen operators in the relative situation. Let $K$ be a complete discrete valuation field extension of $\bb{Q}_p$ with perfect residue field, $\pi$ a uniformizer of $K$. For simplicity, we consider a Noetherian normal domain $A$ flat over $\ca{O}_K$ with $A/pA\neq 0$ such that there exists an \'etale ring homomorphism for some integers $0\leq r\leq c\leq d$,
	\begin{align}\label{eq:semi-stable-chart-intro}
		\ca{O}_K[T_0,\dots,T_r,T_{r+1}^{\pm 1},\dots,T_c^{\pm 1},T_{c+1},\dots,T_d]/(T_0\cdots T_r-\pi)\longrightarrow A.
	\end{align}
	Thus, $\spec(A)$ is endowed with a strictly normal crossings divisor defined by $T_0\cdots T_d$. We set $A_{\triv}=A[1/T_0\cdots T_d]$. We call $(A_{\triv},A)$ a semi-stable pair, and call \eqref{eq:semi-stable-chart-intro} a semi-stable chart of it. Let $\ca{K}$ be the fraction field of $A$, $\ca{K}_{\mrm{ur}}$ the maximal unramified extension of $\ca{K}$ with respect to $(A_{\triv},A)$, i.e. the union of finite field extensions $\ca{K}'$ of $\ca{K}$ in an algebraic closure of $\ca{K}$ such that the integral closure $A'$ of $A$ in $\ca{K}'$ is finite \'etale over $A_{\triv}$, and let $\overline{A}$ be the integral closure of $A$ in $\ca{K}_{\mrm{ur}}$. We remark that this article considers more general pairs (called \emph{quasi-adequate}) than semi-stable pairs, so that the directed system of finite subextensions of $\ca{K}_{\mrm{ur}}/\ca{K}$ admits a cofinal subsystem consisting of elements  $\ca{K}'$ such that the pair $(A'_{\triv},A')$ is quasi-adequate, where $A'_{\triv}=A_{\triv}\otimes_A A'$ (cf. \ref{defn:quasi-adequate-alg}, \ref{cor:quasi-adequate-cofinal}). As before, we also consider the Faltings extension of $A$ (cf. \ref{thm:B-fal-ext}), that is, a canonical exact sequence of $\widehat{\overline{A}}[1/p]$-representations of $\gal(\ca{K}_{\mrm{ur}}/\ca{K})$,
	\begin{align}\label{eq:thm:B-fal-ext-intro}
		0\longrightarrow \widehat{\overline{A}}[\frac{1}{p}](1)\stackrel{\iota}{\longrightarrow}\scr{E}_A\stackrel{\jmath}{\longrightarrow} \widehat{\overline{A}}[\frac{1}{p}]\otimes_A\Omega^1_{(A_{\triv},A)}\longrightarrow 0,
	\end{align}
	where $\Omega^1_{(A_{\triv},A)}$ denotes the $A$-module of logarithmic $1$-differentials of the pair $(A_{\triv},A)$ over $(K,\ca{O}_K)$, which is finite free of rank $d$. The canonical $\widehat{\overline{A}}[1/p]$-module $\scr{E}_A$ is finite free of rank $1+d$, which satisfies the following property (cf. \ref{thm:B-fal-ext}, \ref{rem:B-fal-ext}): there is a canonical $\overline{A}[1/p]$-linear map $\plim_{x\mapsto px}\Omega^1_{\overline{A}/A}\to \scr{E}_A$ such that for any element $s\in \overline{A}[1/p]\cap \overline{A}_{\triv}^\times$ with a compatible system of $p$-power roots $(s_{p^n})_{n\in\bb{N}}\subseteq \overline{A}[1/p]$, there is a unique element $\omega\in \scr{E}_A$ such that the image of $(s_{p^n}^{p^n-1}\df s_{p^n})_{n\in\bb{N}}$ is equal to $s\omega$ (we thus denoted $\omega$ by $(\df\log(s_{p^n}))_{n\in\bb{N}}$). As before, we obtain a canonical exact sequence by taking duals and Tate twists,
	\begin{align}\label{eq:B-fal-ext-dual-intro}
		0\longrightarrow \ho_A(\Omega^1_{(A_{\triv},A)}(-1),\widehat{\overline{A}}[\frac{1}{p}])\stackrel{\jmath^*}{\longrightarrow} \scr{E}^*_A(1)\stackrel{\iota^*}{\longrightarrow}\widehat{\overline{A}}[\frac{1}{p}]\longrightarrow 0,
	\end{align} 
	and we endow $\scr{E}^*_A(1)$ with the canonical $\widehat{\overline{A}}[1/p]$-linear Lie algebra structure associated to the linear form $\iota^*$. Now we can state the construction of Sen operators in the relative situation.
\end{mypara}

\begin{mythm}[{cf. \ref{thm:sen-brinon-B}, \ref{prop:sen-brinon-B-func}, \ref{cor:quasi-adequate-cofinal}}]\label{thm:sen-brinon-B-intro}
	With the notation in {\rm\ref{para:notation-A-intro}}, for any finite projective $\widehat{\overline{A}}[1/p]$-representation $W$ of an open subgroup $G$ of $\gal(\ca{K}_{\mrm{ur}}/\ca{K})$, there is a canonical $G$-equivariant homomorphism of $\widehat{\overline{A}}[1/p]$-linear Lie algebras (where we put adjoint action of $G$ on $\mrm{End}_{\widehat{\overline{A}}[1/p]}(W)$),
	\begin{align}\label{eq:sen-brinon-B-intro}
		\varphi_{\sen}|_W:\scr{E}_A^*(1)\longrightarrow \mrm{End}_{\widehat{\overline{A}}[\frac{1}{p}]}(W),
	\end{align}
	which is functorial in $W$ and $G$, depends only on the pair $(A_{\triv},A)$ not on the choice of the chart \eqref{eq:semi-stable-chart-intro}, and satisfies the following properties:
	\begin{enumerate}
		\renewcommand{\labelenumi}{{\rm(\theenumi)}}
		\item Let $A'$ be the integral closure of a finite field extension $\ca{K}'$ of $(\ca{K}_{\mrm{ur}})^G$ contained in $\ca{K}_{\mrm{ur}}$, and let $t_1,\dots,t_e\in A'[1/p]\cap A_{\triv}'^\times$ with compatible systems of $p$-power roots $(t_{i,p^n})_{n\in\bb{N}}\subseteq \overline{A}[1/p]$ such that $\df t_1,\dots,\df t_e$ are $\ca{K}'$-linearly independent in $\Omega^1_{\ca{K}'/K}$. Consider the tower $(\ca{K}'_{n,m})_{n,m\in\bb{N}}$ defined by these elements analogously to \eqref{diam:intro-tower} and take the same notation for Galois groups, and let $A_{n,m}'$ be the integral closure of $A$ in $\ca{K}'_{n,m}$, $\widetilde{A}'_\infty=\colim_{n} \widehat{A'_n}$. Assume that there is a finite projective $\widetilde{A}'_\infty[1/p]$-representation $V$ of $\Gamma$ on which $\Delta$ acts analytically ({\rm\ref{defn:analytic}}) such that $W=\widehat{\overline{A}}\otimes_{\widetilde{A}'_\infty}V$. Then, $\Gamma$ is naturally locally isomorphic to $\bb{Z}_p\ltimes\bb{Z}_p^e$, and if we take the standard basis $\partial_0,\dots,\partial_e$ of $\lie(\Gamma)\cong \lie(\bb{Z}_p\ltimes\bb{Z}_p^e)$, then for any $f\in \scr{E}^*_A(1)$,
		\begin{align}\label{eq:thm:sen-brinon-B-intro}
			\varphi_{\sen}|_W(f)=\sum_{i=0}^e f((\df\log(t_{i,p^n}))_{n\in\bb{N}}\otimes\zeta^{-1})\otimes \varphi_{\partial_i}|_V,
		\end{align}
		where $\varphi_{\partial_i}|_V$ is the infinitesimal action of $\partial_i$ on $V$.\label{item:thm:sen-brinon-B-intro-1}
		\item Let $K'$ be a complete discrete valuation field extension of $K$ with perfect residue field, $(A'_{\triv},A')$ a semi-stable pair over $\ca{O}_{K'}$ with fraction field $\ca{K}'$, $\overline{A}\to \overline{A'}$ an injective ring homomorphism over $\ca{O}_K\to\ca{O}_{K'}$ which induces an inclusion $(A_{\triv},A)\to(A'_{\triv},A')$, $W'=\widehat{\overline{A'}}\otimes_{\widehat{\overline{A}}}W$. Assume that $\ca{K}'\otimes_{\ca{K}}\Omega^1_{\ca{K}/K}\to \Omega^1_{\ca{K'}/K'}$ is injective. Then, there is a natural commutative diagram
		\begin{align}\label{diam:sen-brinon-B-func-intro}
			\xymatrix{
				\scr{E}^*_{A'}(1)\ar[rr]^-{\varphi_{\sen}|_{W'}}\ar[d]&&\mrm{End}_{\widehat{\overline{A'}}[\frac{1}{p}]}(W')\\
				\widehat{\overline{A'}}\otimes_{\widehat{\overline{A}}}\scr{E}^*_A(1)
				\ar[rr]_-{\id_{\widehat{\overline{A'}}}\otimes\varphi_{\sen}|_W}&&\widehat{\overline{A'}}\otimes_{\widehat{\overline{A}}}\mrm{End}_{\widehat{\overline{A}}[\frac{1}{p}]}(W)
				\ar[u]_-{\wr}
			}
		\end{align}
	\end{enumerate}
\end{mythm}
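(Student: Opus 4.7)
The plan is to define $\varphi_{\sen}|_W$ via the formula \eqref{eq:thm:sen-brinon-B-intro} after using Tsuji's variant of the $p$-adic Simpson correspondence (\ref{thm:simpson-K} and its relative extension) to descend $W$ to an analytic representation of a Kummer group $\Gamma$, and then to show that the resulting operator is independent of all auxiliary choices and satisfies the stated properties. This realizes the strategy announced in the introduction: gluing the absolute Sen operators of Theorem \ref{thm:sen-brinon-operator-intro} using the Faltings extension \eqref{eq:thm:B-fal-ext-intro}.

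First I would verify that, after passing to a suitable quasi-adequate finite cover (using \ref{cor:quasi-adequate-cofinal}), one can put oneself in the setting of part (1): extend a semi-stable chart \eqref{eq:semi-stable-chart-intro} to coordinates $t_1,\dots,t_e$, form the associated Kummer tower with Galois groups $\Sigma,\Gamma,\Delta$ as in \eqref{diam:intro-tower}, and invoke Tsuji's descent theorem to produce a finite projective $\widetilde{A}'_\infty[1/p]$-representation $V$ of $\Gamma$ on which $\Delta$ acts analytically and with $W\cong\widehat{\overline{A}}\otimes_{\widetilde{A}'_\infty}V$. Since $\Gamma$ is locally isomorphic to $\bb{Z}_p\ltimes\bb{Z}_p^e$ with a standard basis $\partial_0,\dots,\partial_e$ of its Lie algebra, the infinitesimal actions $\varphi_{\partial_i}|_V$ are well-defined and \eqref{eq:thm:sen-brinon-B-intro} makes sense. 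The Lie bracket on $\scr{E}^*_A(1)$ defined via $\iota^*$ matches the bracket on $\lie(\bb{Z}_p\ltimes\bb{Z}_p^e)$ under the evaluation pairing, so the resulting map is a Lie algebra homomorphism; $G$-equivariance is inherited from the canonical nature of the Faltings extension.

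The core of the proof is well-definedness: independence of the descent $V$, of the coordinate system $(t_i)$, and of the semi-stable chart. For two analytic descents $V,V'$ of the same $W$, Tsuji's equivalence (whose period ring is the Hyodo ring built from \eqref{eq:thm:B-fal-ext-intro}) forces them to correspond to isomorphic Higgs-type objects, so their $\partial_i$-actions become identified after base change to $\widehat{\overline{A}}[1/p]$; thus the value of \eqref{eq:thm:sen-brinon-B-intro} is unchanged. Independence of $(t_i)$ inside a fixed chart reduces to a change of coordinates on the Kummer tower, which corresponds to a change of basis on $\scr{E}^*_A(1)$ leaving \eqref{eq:thm:sen-brinon-B-intro} invariant. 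To pass between two different charts one refines to a common enlargement, and the key input is the absolute case Theorem \ref{thm:sen-brinon-operator-intro}(1) applied at the completion of $A$ along a height-one prime, whose Faltings extension receives \eqref{eq:thm:B-fal-ext-intro} canonically. Functoriality in $W$ and $G$ is then automatic from the construction.

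For the functoriality statement (2), I would choose compatible semi-stable charts and coordinate systems on $(A_{\triv},A)$ and $(A'_{\triv},A')$; the injectivity hypothesis on $\ca{K}'\otimes_{\ca{K}}\Omega^1_{\ca{K}/K}\to\Omega^1_{\ca{K}'/K'}$ is precisely what guarantees that the $p$-power roots chosen for $A$ remain part of a valid Kummer tower for $A'$. The descent $V$ of $W$ then base-changes to a descent $V'$ of $W'$ with compatible infinitesimal actions, so the diagram \eqref{diam:sen-brinon-B-func-intro} commutes termwise by \eqref{eq:thm:sen-brinon-B-intro} once one observes that the vectors $(\df\log(t_{i,p^n}))_n$ on both sides are identified under $\scr{E}^*_{A'}(1)\to\widehat{\overline{A'}}\otimes_{\widehat{\overline{A}}}\scr{E}^*_A(1)$. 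The principal obstacle throughout is the well-definedness: everything hinges on using the relative $p$-adic Simpson correspondence strongly enough to kill the dependence on the auxiliary analytic descent, and on matching the local DVR-theoretic Sen theory with the global Faltings-extension formalism so that independence of the chart becomes a consequence of the absolute Theorem \ref{thm:sen-brinon-operator-intro}.
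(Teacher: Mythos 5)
Your overall strategy—use Tsuji's descent to realize $W$ as $\widehat{\overline{A}}\otimes_{\widetilde{A}'_\infty}V$, then define $\varphi_{\sen}|_W$ by \eqref{eq:thm:sen-brinon-B-intro}—matches the paper, and your observation that the crux is well-definedness is correct. But your proposed route to well-definedness is not the paper's, and as written it has a genuine gap.

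You propose to prove independence of the descent $V$ using a \emph{relative} $p$-adic Simpson correspondence over $A$ (via a Hyodo ring built from the relative Faltings extension, so that two descents give isomorphic Higgs-type objects), and to prove independence of the coordinates $(t_i)$ by a separate change-of-variables argument, reserving the height-one-prime reduction only for comparing two different charts. The paper does not develop a relative Hyodo ring or a relative Simpson correspondence over a quasi-adequate $A$, and the comparison of infinitesimal $\lie(\Gamma)$-actions attached to different descents is precisely the kind of statement that, in the absolute case, required the Simpson correspondence machinery of \ref{lem:sen-brinon-operator}; it is not free even there. More importantly, fragmenting the well-definedness into three separate arguments misses the clean single mechanism the paper actually uses: the injection $\widehat{\overline{A}}[1/p]\hookrightarrow\prod_{\ak{q}\in\ak{S}_p(\overline{A})}\widehat{\overline{E}}_{\ak{q}}$ from \ref{prop:ht1-prime-inj} and \ref{lem:A-inj} gives injective horizontal arrows in the diagram \eqref{diam:thm:sen-brinon-B}, so there is \emph{at most one} Lie algebra action $\alpha$ on $W$ whose base change at every height-one prime $\ak{q}$ recovers the canonical absolute Sen action of \ref{thm:sen-brinon-operator} on $\widehat{\overline{E}}_{\ak{q}}\otimes_{\widehat{\overline{A}}}W$. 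One then checks that the candidate formula \eqref{eq:thm:sen-brinon-B-intro} does specialize correctly at each $\ak{q}$ (this uses \ref{prop:sen-brinon-operator-func} and the compatibilities of \ref{para:B-fal-ext-compare}), and the uniqueness of $\alpha$ immediately kills the dependence on $V$, on $(t_i)$, and on the chart, all at once. Your proposal would need to either carry out this reduction for all three independence claims (not just chart-independence), or else develop the relative Simpson correspondence over $A$ that you invoke—and that is a substantially larger undertaking which the paper explicitly circumvents.

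Your treatment of part (2) is essentially correct and matches \ref{prop:sen-brinon-B-func}: the injectivity hypothesis on differentials guarantees the chosen roots stay a valid Kummer system over $A'$ (by \ref{prop:B-rank}), so the descent $V$ base-changes and the infinitesimal actions agree by \ref{rem:derivative}.
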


The situation described in \ref{thm:sen-brinon-operator-intro}.(\ref{item:thm:sen-brinon-B-intro-1}) is not special. Indeed, by a descent theorem of Tsuji \cite[14.2]{tsuji2018localsimpson} when he developed the $p$-adic Simpson correspondence for $(A_{\triv},A)$, the representation $W$ of $G$ can be descended to $V$ for some $A'$. We remark that Tsuji proved the case when $G=\gal(\ca{K}_{\mrm{ur}}/\ca{K})$, and we prove the general case by transferring his arguments to a more general class of pairs (cf. \ref{thm:descent}). The key to the proof of \ref{thm:sen-brinon-B-intro} is still checking that the map $\varphi_{\sen}|_W$ defined by the formula \eqref{eq:thm:sen-brinon-B-intro} does not depend on the choice of $A'$, $V$ and $t_i$. We reduce this problem to the case of valuation rings \ref{thm:sen-brinon-operator-intro} by localizing at height-$1$ prime ideals of $\overline{A}$ containing $p$.

\begin{mydefn}\label{defn:sen-brinon-B-intro}
	We call the image $\Phi(W)$ of $\varphi_{\sen}|_W$ the module of \emph{Sen operators} of $W$. We call the image $\Phi^{\geo}(W)$ of $\ho_A(\Omega^1_{(A_{\triv},A)}(-1),\widehat{\overline{A}}[\frac{1}{p}])$ under $\varphi_{\sen}|_W$ the module of \emph{geometric Sen operators} of $W$. And we call the image of $1\in \widehat{\overline{A}}[\frac{1}{p}]$ under $\varphi_{\sen}|_W$ in $\Phi^{\ari}(W)=\Phi(W)/\Phi^{\geo}(W)$ the \emph{arithmetic Sen operator} of $W$.
\end{mydefn}

The following evidence supports such a definition of arithmetic Sen operator: any two lifts of it in $\mrm{End}_{\widehat{\overline{A}}[\frac{1}{p}]}(W)$ have the same characteristic polynomial (cf. \ref{prop:sen-B-char-poly}).

\begin{mypara}\label{para:inertia-subgroup-intro}
	We denote by $\ak{S}_p(\overline{A})$ the set of height-$1$ prime ideals of $\overline{A}$ containing $p$. For any $\ak{q}\in \ak{S}_p(\overline{A})$ with image $\ak{p}\in\spec(A)$, let $E_{\ak{p}}$ be the $p$-adic completion of the discrete valuation field $A_{\ak{p}}[1/p]$, $\overline{E}_{\ak{q}}$ an algebraic closure of $E_{\ak{p}}$ with an embedding of valuation rings $\overline{A}_{\ak{q}}\to \ca{O}_{\overline{E}_{\ak{q}}}$. Let $I_{\ak{q}}\subseteq \gal(\ca{K}_{\mrm{ur}}/\ca{K})$ be the image of the inertial subgroup of $\gal(\overline{E}_{\ak{q}}/E_{\ak{p}})$. We have the following generalization of Sen-Ohkubo's result, which follows from the same reduction strategy as above.
\end{mypara}

\begin{mythm}[{cf. \ref{thm:sen-lie-B}, \ref{cor:quasi-adequate-cofinal}}]\label{thm:sen-lie-B-intro}
	Let $G$ be an open subgroup of $\gal(\ca{K}_{\mrm{ur}}/\ca{K})$, $(V,\rho)$ a finite-dimensional $\bb{Q}_p$-representation of $G$, $W=\widehat{\overline{A}}[1/p]\otimes_{\bb{Q}_p}V$. Then, $\sum_{\ak{q}\in\ak{S}_p(\overline{A})}\lie(\rho(I_{\ak{q}}))$ is the smallest $\bb{Q}_p$-subspace $S$ of $\mrm{End}_{\bb{Q}_p}(V)$ such that the $\widehat{\overline{A}}[1/p]$-module of Sen operators $\Phi(W)$ is contained in $\widehat{\overline{A}}[1/p]\otimes_{\bb{Q}_p} S$.
\end{mythm}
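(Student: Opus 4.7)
The plan is to reduce the statement to the Sen--Ohkubo theorem for complete discrete valuation fields by localizing at each height-$1$ prime $\ak{q} \in \ak{S}_p(\overline{A})$, following the same strategy used in the proof of Theorem \ref{thm:sen-brinon-B-intro}.

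First I would reduce to the case $G = \gal(\ca{K}_{\mrm{ur}}/\ca{K})$: setting $\ca{K}' = (\ca{K}_{\mrm{ur}})^G$ yields a finite extension, and Corollary \ref{cor:quasi-adequate-cofinal} provides a quasi-adequate pair whose fraction field contains $\ca{K}'$; the inertia subgroups $I_{\ak{q}}$ and the Sen module $\Phi(W)$ behave compatibly under this replacement.

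Then I would fix $\ak{q} \in \ak{S}_p(\overline{A})$ with image $\ak{p} \in \spec(A)$ and use the embedding $\overline{A}_{\ak{q}} \inj \ca{O}_{\overline{E}_{\ak{q}}}$ to view $W_{\ak{q}} = \widehat{\overline{E}}_{\ak{q}} \otimes_{\widehat{\overline{A}}} W \cong \widehat{\overline{E}}_{\ak{q}} \otimes_{\bb{Q}_p} V$ as a representation of the decomposition subgroup in $\gal(\overline{E}_{\ak{q}}/E_{\ak{p}})$. The key input is a localization-type functoriality of $\varphi_{\sen}$, namely the valuation-ring analog of Theorem \ref{thm:sen-brinon-B-intro}(2) for the morphism $(A_{\triv}, A) \to \ca{O}_{E_{\ak{p}}}$; its proof should parallel that of Theorem \ref{thm:sen-brinon-B-intro}, by comparing Faltings extensions and invoking Tsuji's descent in the $p$-adic Simpson correspondence. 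The injectivity hypothesis on $\Omega^1$ is satisfied because the semi-stable coordinates $T_0, \ldots, T_d$ retain linearly independent logarithmic differentials in $\widehat{\Omega}^1_{\ca{O}_{E_{\ak{p}}}}[1/p]$. This functoriality yields $\Phi(W_{\ak{q}}) = \widehat{\overline{E}}_{\ak{q}} \cdot (\Phi(W) \otimes 1)$ inside $\mrm{End}_{\widehat{\overline{E}}_{\ak{q}}}(W_{\ak{q}})$, and by Sen--Ohkubo applied to $E_{\ak{p}}$ (which has imperfect residue field with a finite $p$-basis), the smallest $\bb{Q}_p$-subspace $S_{\ak{q}} \subseteq \mrm{End}_{\bb{Q}_p}(V)$ satisfying $\Phi(W_{\ak{q}}) \subseteq \widehat{\overline{E}}_{\ak{q}} \otimes_{\bb{Q}_p} S_{\ak{q}}$ is exactly $\lie(\rho(I_{\ak{q}}))$.

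From this, both inclusions of the theorem follow. If $\Phi(W) \subseteq \widehat{\overline{A}}[1/p] \otimes_{\bb{Q}_p} S$ for some $\bb{Q}_p$-subspace $S$, base-changing to $\widehat{\overline{E}}_{\ak{q}}$ forces $\lie(\rho(I_{\ak{q}})) \subseteq S$ for every $\ak{q}$, hence $\sum_{\ak{q}} \lie(\rho(I_{\ak{q}})) \subseteq S$. Conversely, setting $S_0 = \sum_{\ak{q}} \lie(\rho(I_{\ak{q}}))$ and fixing a $\bb{Q}_p$-splitting $\mrm{End}_{\bb{Q}_p}(V) = S_0 \oplus T$, the image of $\Phi(W)$ in $\widehat{\overline{A}}[1/p] \otimes T$ maps to zero in every $\widehat{\overline{E}}_{\ak{q}} \otimes T$ and therefore vanishes, once one knows that $\widehat{\overline{A}}[1/p] \hookrightarrow \prod_{\ak{q}} \widehat{\overline{E}}_{\ak{q}}$ is injective. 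The principal obstacle is establishing the valuation-ring version of the functoriality above, since $\ca{O}_{E_{\ak{p}}}$ is not itself a semi-stable pair in the sense of this paper, so one must revisit the Tsuji-descent and Faltings-extension comparison arguments to ensure they adapt to this localization morphism; a secondary difficulty is verifying the injectivity $\widehat{\overline{A}}[1/p] \hookrightarrow \prod_{\ak{q}} \widehat{\overline{E}}_{\ak{q}}$, which, despite the non-Noetherianity of $\overline{A}$, should follow from the fact that the height-$1$ primes of $\overline{A}$ containing $p$ jointly separate nonzero elements.
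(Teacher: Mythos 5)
Your proposal matches the paper's proof of Theorem \ref{thm:sen-lie-B} in its essential structure: both reduce via Corollary \ref{cor:quasi-adequate-cofinal} and then localize at each $\ak{q}\in\ak{S}_p(\overline{A})$, using the injectivity of $\widehat{\overline{A}}[1/p]\to\prod_{\ak{q}}\widehat{\overline{E}}_{\ak{q}}$ (Proposition \ref{prop:ht1-prime-inj}), the localization functoriality of $\varphi_{\sen}$ (supplied in the paper by Remark \ref{rem:sen-brinon-B-func}, or directly from the defining diagram \eqref{diam:thm:sen-brinon-B}), and the Sen--Ohkubo theorem applied to each $E_{\ak{p}}$. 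Your complement/splitting argument for the converse inclusion is a straightforward reformulation of the paper's Lemma \ref{lem:smallest}, which states the same thing in terms of linear forms.
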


As a corollary, one can lift the Sen operators of $\bb{Q}_p$-representations to a universal Lie algebra homomorphism.

\begin{mycor}[{cf. \ref{thm:sen-lie-lift-B}, \ref{cor:sen-lie-lift-B}}]\label{thm:sen-lie-lift-B-intro}
	Let $\ca{G}$ be a quotient of an open subgroup of $\gal(\ca{K}_{\mrm{ur}}/\ca{K})$ which is a $p$-adic analytic group. Then, there exists a canonical homomorphism of $\widehat{\overline{A}}[1/p]$-linear Lie algebras $\varphi_{\sen}|_{\ca{G}}:\scr{E}^*_A(1)\to \widehat{\overline{A}}[1/p]\otimes_{\bb{Q}_p}\lie(\ca{G})$ making the following diagram commutative for any finite-dimensional $\bb{Q}_p$-representation $V$ of $\ca{G}$,
	\begin{align}\label{diam:thm:sen-lie-lift-B-intro}
		\xymatrix{
			\scr{E}^*_A(1)\ar[r]^-{\varphi_{\sen}|_{\ca{G}}}\ar[d]_-{\varphi_{\sen}|_W}&\widehat{\overline{A}}[\frac{1}{p}]\otimes_{\bb{Q}_p}\lie(\ca{G})\ar[d]^-{\id_{\widehat{\overline{A}}[\frac{1}{p}]}\otimes\varphi|_V}\\
			\mrm{End}_{\widehat{\overline{A}}[\frac{1}{p}]}(W)&\widehat{\overline{A}}[\frac{1}{p}]\otimes_{\bb{Q}_p}\mrm{End}_{\bb{Q}_p}(V)\ar[l]_-{\sim}
		}
	\end{align}
	where $W=\widehat{\overline{A}}[1/p]\otimes_{\bb{Q}_p}V$ is the associated object of $\repnpr(G,\widehat{\overline{A}}[1/p])$, and $\varphi|_V$ is the infinitesimal Lie algebra action of $\lie(\ca{G})$ on $V$
\end{mycor}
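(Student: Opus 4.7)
The plan is to reduce the construction of $\varphi_{\sen}|_{\ca{G}}$ to the Sen operator map of Theorem \ref{thm:sen-brinon-B-intro} applied to a single sufficiently large representation, then check canonicity by a direct-sum argument. First I would observe that $\ca{G}$, being a continuous quotient of an open subgroup of the profinite group $\gal(\ca{K}_{\mrm{ur}}/\ca{K})$, is itself compact, hence a compact $p$-adic analytic group. By Lazard's structure theorem, $\ca{G}$ contains a uniform pro-$p$ open subgroup $U$ embedding continuously into some $\mrm{GL}_m(\bb{Z}_p)$. Since $U$ has finite index in $\ca{G}$, inducing from $U$ produces a continuous faithful finite-dimensional $\bb{Q}_p$-representation $(V_0,\rho_0)$ of $\ca{G}$. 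Faithfulness means that $\varphi|_{V_0}:\lie(\ca{G})\to \mrm{End}_{\bb{Q}_p}(V_0)$ is injective with image the $\bb{Q}_p$-subspace $\lie(\rho_0(\ca{G}))$, which contains each $\lie(\rho_0(I_{\ak{q}}))$ for $\ak{q}\in\ak{S}_p(\overline{A})$.

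By Theorem \ref{thm:sen-lie-B-intro} applied to $V_0$, the image of $\varphi_{\sen}|_{W_0}$ lies in $\widehat{\overline{A}}[1/p]\otimes_{\bb{Q}_p}\sum_{\ak{q}}\lie(\rho_0(I_{\ak{q}}))\subseteq\widehat{\overline{A}}[1/p]\otimes_{\bb{Q}_p}\lie(\rho_0(\ca{G}))$. Since $\id\otimes\varphi|_{V_0}$ is $\widehat{\overline{A}}[1/p]$-linear, injective, and identifies $\widehat{\overline{A}}[1/p]\otimes\lie(\ca{G})$ with $\widehat{\overline{A}}[1/p]\otimes\lie(\rho_0(\ca{G}))$, there is a unique $\widehat{\overline{A}}[1/p]$-linear map $\varphi_{\sen}|_{\ca{G}}:\scr{E}^*_A(1)\to\widehat{\overline{A}}[1/p]\otimes_{\bb{Q}_p}\lie(\ca{G})$ with $\varphi_{\sen}|_{W_0}=(\id\otimes\varphi|_{V_0})\circ\varphi_{\sen}|_{\ca{G}}$. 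The Lie algebra bracket-compatibility is inherited from $\varphi_{\sen}|_{W_0}$, which is a Lie homomorphism by Theorem \ref{thm:sen-brinon-B-intro}, via the injectivity of $\id\otimes\varphi|_{V_0}$.

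For an arbitrary finite-dimensional $\bb{Q}_p$-representation $V$ of $\ca{G}$, I would verify the commutativity of \eqref{diam:thm:sen-lie-lift-B-intro} by applying the construction of the previous paragraph to $V\oplus V_0$, which remains faithful. Functoriality of $\varphi_{\sen}|_{-}$ in $W$ (Theorem \ref{thm:sen-brinon-B-intro}) applied to the two $\ca{G}$-equivariant projections $V\oplus V_0\to V$ and $V\oplus V_0\to V_0$ decomposes $\varphi_{\sen}|_{W\oplus W_0}$ block-diagonally. Comparing $V_0$-components and invoking the injectivity of $\id\otimes\varphi|_{V_0}$ forces the map produced from $V\oplus V_0$ to coincide with $\varphi_{\sen}|_{\ca{G}}$; the $V$-component then yields exactly $\varphi_{\sen}|_W=(\id\otimes\varphi|_V)\circ\varphi_{\sen}|_{\ca{G}}$, as required. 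The identical direct-sum argument with $V_0\oplus V_0'$ for a second faithful representation $V_0'$ shows independence of the auxiliary choice, establishing canonicity.

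The main obstacle is the appeal in Step 1 to a faithful continuous finite-dimensional $\bb{Q}_p$-representation of a compact $p$-adic analytic group; the induction-from-an-open-subgroup construction works but feels heavy for so light a statement. A cleaner alternative I would consider is to proceed directly by projective limit: Theorem \ref{thm:sen-lie-B-intro} already factors each $\varphi_{\sen}|_W$ through $\widehat{\overline{A}}[1/p]\otimes\lie(\rho_V(\ca{G}))$, these factorizations are compatible under $\ca{G}$-equivariant morphisms by functoriality in $W$, and one defines $\varphi_{\sen}|_{\ca{G}}$ as the induced map into $\widehat{\overline{A}}[1/p]\otimes\varprojlim_V\lie(\rho_V(\ca{G}))$; identifying this projective limit with $\widehat{\overline{A}}[1/p]\otimes\lie(\ca{G})$ still uses the existence of one faithful $V$, but the construction and its compatibility with every $V$ become automatic.
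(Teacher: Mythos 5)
Your argument follows the paper's own proof: construct a faithful finite-dimensional $\bb{Q}_p$-representation $V_0$ of $\ca{G}$, use \ref{thm:sen-lie-B-intro} to show $\Phi(W_0)\subseteq\widehat{\overline{A}}[1/p]\otimes_{\bb{Q}_p}\lie(\ca{G})$, pull back $\varphi_{\sen}|_{W_0}$ along the injective map $\id\otimes\varphi|_{V_0}$, and then establish well-definedness and the general commutativity by adding $V_0$ as a direct summand and comparing components. The only variation is that you build the faithful representation by induction from a uniform pro-$p$ open subgroup embedded in $\mrm{GL}_m(\bb{Z}_p)$, whereas the paper simply cites the structure theorem \ref{thm:compact-lie-group} that a compact $p$-adic analytic group already embeds into some $\mrm{GL}_d(\bb{Z}_p)$; both are valid, but the latter spares you the verification that the induced representation is faithful.
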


Now we can give our generalization of Pan's result \cite[3.1.2]{pan2021locally}.

\begin{mythm}[{cf. \ref{thm:locally-analytic-vector}}]\label{thm:locally-analytic-vector-intro}
	Let $\ca{G}$ be a quotient of $\gal(\ca{K}_{\mrm{ur}}/\ca{K})$ which is a $p$-adic analytic group, $\ca{G}_H\subseteq \ca{G}$ the image of $\gal(\ca{K}_{\mrm{ur}}/\ca{K}_\infty)$, $\Phi^{\geo}_{\ca{G}}\subseteq \widehat{\overline{A}}[1/p]\otimes_{\bb{Q}_p}\lie(\ca{G}_H)$ the image of $\ho_A(\Omega^1_{(A_{\triv},A)}(-1),\widehat{\overline{A}}[1/p])$ under $\varphi_{\sen}|_{\ca{G}}$. Then, the infinitesimal action of $\Phi^{\geo}_{\ca{G}}$ annihilates the $\ca{G}_H$-locally analytic vectors in $\widehat{\overline{A}}[1/p]$ (see {\rm\ref{para:locally-analytic-vector-B}} for a precise definition).
\end{mythm}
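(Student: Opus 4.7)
The plan is to combine the explicit local formula for $\varphi_{\sen}|_{\ca{G}}$ from Theorem \ref{thm:sen-brinon-B-intro} and Corollary \ref{thm:sen-lie-lift-B-intro} with a direct analysis of the infinitesimal action of the Kummer directions, in the spirit of Pan \cite{pan2021locally}.

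\emph{Step 1 (Reduction to Kummer derivations).} Shrinking $\ca{G}$, fix a local chart as in \ref{thm:sen-brinon-B-intro}.(1) with Kummer generators $t_0=\zeta,\,t_1,\dots,t_e$ and the standard basis $\partial_0,\dots,\partial_e$ of $\lie(\Gamma)\cong\lie(\bb{Z}_p\ltimes\bb{Z}_p^e)$, whose images I still denote $\partial_i\in\lie(\ca{G})$. Applying \eqref{eq:thm:sen-brinon-B-intro} to the universal setting of Corollary \ref{thm:sen-lie-lift-B-intro} gives
\[
\varphi_{\sen}|_{\ca{G}}(f)=\sum_{i=0}^{e} f\bigl((\df\log(t_{i,p^n}))_{n\in\bb{N}}\otimes\zeta^{-1}\bigr)\otimes\partial_i \qquad \text{in }\widehat{\overline{A}}[\tfrac{1}{p}]\otimes_{\bb{Q}_p}\lie(\ca{G}).
\]
For $f\in\ker(\iota^*)=\ho_A(\Omega^1_{(A_\triv,A)}(-1),\widehat{\overline{A}}[1/p])$ the $\partial_0$-coefficient vanishes, and each remaining $\partial_i$ lies in $\lie(\ca{G}_H)$ because $\Delta$ fixes $\ca{K}'_\infty\supseteq\ca{K}_\infty$. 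As $f$ varies the coefficients $c_i(f)=f((\df\log(t_{i,p^n}))_{n}\otimes\zeta^{-1})$ realize arbitrary tuples in $\widehat{\overline{A}}[1/p]^{e}$, so the theorem reduces to the vanishing $\partial_i\cdot v=0$ for each $1\leq i\leq e$ and each $\ca{G}_H$-locally analytic vector $v\in\widehat{\overline{A}}[1/p]$.

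\emph{Step 2 (Kummer vanishing on algebraic generators).} The one-parameter subgroup $t\mapsto\exp(t\partial_i)\subseteq\Delta$ acts on each Kummer generator $t_{j,p^n}$ by multiplication by $\zeta_{p^n}^{t\delta_{ij}}$, a map $\bb{Z}_p\to\widehat{\overline{A}}[1/p]$ that factors through the finite quotient $\bb{Z}_p\twoheadrightarrow\bb{Z}/p^n$ (since $\zeta_{p^n}^{p^n}=1$) and is thus locally constant in $t$. Differentiating at $t=0$ gives $\partial_i\cdot t_{j,p^n}=0$ in the sense of the locally analytic infinitesimal action. Since $\partial_i$ is a continuous $\bb{Q}_p$-linear derivation on the space of $\ca{G}_H$-locally analytic vectors, it annihilates every algebraic $\ca{G}_H$-locally analytic element of $\widehat{\overline{A}}[1/p]$.

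\emph{Step 3 (Propagation to all locally analytic vectors).} This is the main obstacle. My preferred route is to localize at a height-$1$ prime $\ak{q}\in\ak{S}_p(\overline{A})$: by the functoriality of Theorem \ref{thm:sen-brinon-B-intro}.(2), the geometric Sen operators on $\widehat{\overline{A}}[1/p]$ are compatible with those on the algebraically-closed completion $\widehat{\overline{E_{\ak{q}}}}$ from \ref{para:inertia-subgroup-intro}, whose base $E_{\ak{p}}$ is a complete discretely valued field with a finite $p$-basis coming from the chart. Assuming that locally analytic vectors and their infinitesimal derivatives transport correctly along this localization (a compatibility to verify carefully, using the density of the algebraic locally analytic vectors and the fact that Kummer infinitesimals at the generic point coincide with those at a well-chosen height-$1$ prime), one reduces to the vanishing of $\partial_i$ on the $\gal(\overline{E_{\ak{q}}}/E_{\ak{p},\infty})$-locally analytic vectors of $\widehat{\overline{E_{\ak{q}}}}$. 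When the residue field of $E_{\ak{p}}$ is perfect this is exactly Pan's theorem \cite[3.1.2]{pan2021locally}; the imperfect case should follow by running Pan's argument on each Kummer direction in parallel, using the Mahler expansion of a locally analytic vector in coordinates of a small open subgroup of the Galois group together with the Step 2 computation on the Kummer generators that appear in the expansion.
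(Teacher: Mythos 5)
Your plan correctly identifies Step 3 as the crux but offers no real resolution for it, and Step 2 does not contribute what you think. The Kummer computation only (re)proves the tautology that the infinitesimal action of $\lie(\Delta)$ vanishes on any element fixed by an open subgroup of $\Delta$: for $g^a$ fixing $v$ (which happens for $a$ in a small enough $p$-power ideal, since $v\in\overline{A}[1/p]$) the defining limit is already zero. This handles all of $\overline{A}[1/p]$, but the space of $\ca{G}_H$-locally analytic vectors in the completion contains genuinely non-algebraic elements, and no density argument transports the vanishing: the infinitesimal action on analytic vectors is continuous only for the $\scr{C}^{\mrm{an}}$-norm on $(\cdot)^{\ca{G}_H\trm{-an}}$ (cf.\ \ref{lem:analytic-derivative}), not for the subspace topology inherited from $\widehat{\overline{A}}[1/p]$, so you cannot pass from a dense subring to its closure.

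The missing idea in Step 3 is Pan's realization of $\ca{G}_H$-analytic vectors of $W^{\ca{H}}$ as the $H$-fixed, right-$\sigma_0^{\bb{Z}_p}$-fixed elements of the auxiliary infinite-dimensional Banach representation $\scr{C}^{\mrm{an}}(\ca{G},W)$ (with diagonal action), under which the infinitesimal $\ca{G}_H$-action becomes minus the left-translation infinitesimal action restricted to these fixed elements (\ref{lem:analytic-vector-geo}, after Pan). This reduces the locally analytic statement to the vanishing of the geometric Sen operators on $H$-invariants of a completed ``small'' representation, which is exactly \ref{thm:sen-operator-B-fix}. The localization at height-$1$ primes that you anticipate does occur, but inside the proof of \ref{thm:sen-operator-B-fix}, together with descent and decompletion of small integral representations (\ref{thm:small-descent}, \ref{thm:small-generic-descent}, \ref{lem:small-invariant}). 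Saying you will ``run Pan's argument'' in the relative imperfect case is precisely where the content lies: Pan's own proof rests on the $\scr{C}^{\mrm{an}}$-realization, so it cannot be bypassed, and the small-representation descent over quasi-adequate algebras is the new ingredient needed to make that reduction legitimate.
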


For its proof, we need to extend Sen operators on the infinite-dimensional representations of analytic functions on sufficiently small open subgroups of $\ca{G}$. This is the reason why we insist to consider open subgroups of $\gal(\ca{K}_{\mrm{ur}}/\ca{K})$ in the previous theorems, which enables us to prove properties related to Lie algebras but leads us to a general class of pairs more than semi-stable pairs. 

Previously, we always work with representations with rational coefficients, since a finite extension $A'$ of $A$ is not a nice integral model for $A'[1/p]$. But in order to investigate the continuity of Sen operators on infinite-dimensional representations, we need to consider representations with integral coefficients as ``lattices'' to bound these operators. Nice properties of the Sen operators are preserved by continuation if we have good descent and decompletion theory for integral representations over $A'$. But it has not been well developed yet as $A'$ is not a nice integral model. However, we don't encounter such a problem if $A'$ is a valuation ring (at least for the geometric part)! So we still follow the previous strategy: reduce the problem to the case of valuation rings by localizing at height-$1$ prime ideals of $\overline{A}$ containing $p$; and for the latter case, we can apply the descent results for small representations with integral coefficients of the geometric fundamental group, developed by Faltings \cite{faltings2005simpson}, Abbes-Gros \cite[\Luoma{2}.14]{abbes2016p} and Tsuji \cite[\textsection11, \textsection12]{tsuji2018localsimpson}. We plan to investigate in the future whether or not the image $\Phi_{\ca{G}}$ of $\varphi_{\sen}|_{\ca{G}}$ annihilates the $\ca{G}$-locally analytic vectors in $\widehat{\overline{A}}[1/p]$.

\begin{mypara}
	The article is structured as follows. In section \ref{sec:lie}, we briefly review the theory of $p$-adic analytic groups from a purely algebraic view following \cite{ddms1999lie}. In section \ref{sec:kummer}, we study the tower $(K_{n,m})_{n,m\in\bb{N}}$ \eqref{diam:intro-tower} and the infinitesimal actions of representations arising from this tower in a general setting. Then, we revisit Brinon's generalization of Sen's theory in section \ref{sec:brinon} using the $p$-adic Simpson correspondence developed by Tsuji, and give our canonical definition of Sen operators. For the generalization in the relative situation, we firstly introduce the main objects, quasi-adequate algebras, in section \ref{sec:quasi}. They share nice properties with semi-stable pairs up to a $p$-power torsion by some preparation lemmas in section \ref{sec:bound}. Especially, we can also define Faltings extension for such general algebra. A priori, the construction of Faltings extension in the relative situation is not canonical. We show the canonicity of Faltings extension by reducing to the case of valuation rings, cf. \ref{thm:B-fal-ext}. To glue the Sen operators defined over valuation rings, we need a ``global model'' on a quasi-adequate algebra, that is, a descent of representation of the fundamental group. This is a generalization of Tsuji's result and done in the section \ref{sec:descent}. We construct the Sen operators in section \ref{sec:sen-B}, and discuss their relation with Lie algebras. Finally, we extend Sen operators to infinite-dimensional representations in section \ref{sec:inf} and the end of section \ref{sec:sen-B}, and give an application on locally analytic vectors in the last section \ref{sec:analytic}.
\end{mypara}

\subsection*{Acknowledgements}
This work is part of my thesis prepared at Universit\'e Paris-Saclay and Institut des Hautes \'Etudes Scientifiques. 
I would like to express my sincere gratitude to my doctoral supervisor, Ahmed~ Abbes, for his guidance to this project, his thorough review of this work and his plenty of helpful suggestions on both research and writing. 
I would like to thank Longke~ Tang for his help on the proof of \ref{prop:cotangent-bound}. I would like to thank Lue~ Pan, Takeshi~ Tsuji, Zhixiang~ Wu, and Yicheng~ Zhou for useful discussions.

\section{Notation and Conventions}

\begin{mypara}\label{para:product}
	Let $d\in\bb{N}$ be a natural number. We endow the set $(\bb{N}\cup\{\infty\})^d$ with the partial order defined by $\underline{m}=(m_1,\dots,m_d)\leq \underline{m'}=(m_1',\dots,m_d')$ if $m_i\leq m'_i$ for any $1\leq i\leq d$. We put $|\underline{m}|=m_1+\cdots+m_d\in \bb{N}\cup\{\infty\}$. For any $r\in \bb{N}\cup\{\infty\}$, we set $\underline{r}=(r,\dots,r)\in (\bb{N}\cup\{\infty\})^d$ and $\underline{r}_i=(0,\dots,r,\dots,0)\in (\bb{N}\cup\{\infty\})^d$ where $r$ appears at the $i$-th component.
	
	We endow the set $\bb{N}_{>0}^d$ with the partial order defined by $\underline{N}| \underline{N'}$ if $N_i$ divides $N'_i$ for any $1\leq i\leq d$, where $\underline{N}=(N_1,\dots,N_d)$ and $\underline{N'}=(N_1',\dots,N_d')$, and we put $\underline{N'}/\underline{N}=(N_1'N_1^{-1},\dots,N_d'N_d^{-1})\in \bb{N}_{>0}^d$.
\end{mypara}

\begin{mypara}\label{para:notation-Tate-mod}
	All rings considered in this article are unitary and commutative. We fix a prime number $p$. For a ring $R$, we denote by $\Omega^1_R$ the $R$-module $\Omega^1_{R/\bb{Z}}$ of $1$-differentials of $R$ over $\bb{Z}$, and by $\widehat{\Omega}^1_R$ its $p$-adic completion. For an abelian group $M$, we set
	\begin{align}
		T_p(M)=&\plim_{x\mapsto px}M[p^n]=\ho_{\bb{Z}}(\bb{Z}[1/p]/\bb{Z},M),\\
		V_p(M)=&\plim_{x\mapsto px}M=\ho_{\bb{Z}}(\bb{Z}[1/p],M).
	\end{align}
	We remark that $T_p(M)$ is a $p$-adically complete $\bb{Z}_p$-module (\cite[4.4]{jannsen1988cont}), and that if $M=M[p^\infty]$ (i.e., $M$ is $p$-primary torsion) then $V_p(M)=T_p(M)\otimes_{\bb{Z}_p}\bb{Q}_p$. We fix an algebraic closure $\overline{\bb{Q}}_p$ of $\bb{Q}_p$, and we set $\bb{Z}_p(1)=T_p(\overline{\bb{Q}}_p^\times)$ which is a free $\bb{Z}_p$-module of rank $1$ and any compatible system of primitive $p^n$-th roots of unity $\zeta=(\zeta_{p^n})_{n\in \bb{N}}$ in $\overline{\bb{Q}}_p$ (i.e. $\zeta_{p^{n+1}}^p=\zeta_{p^n}$, $\zeta_1=1$, $\zeta_p\neq 1$) gives a basis of it. We endow $\bb{Z}_p(1)$ with the natural continuous action of the Galois group $\gal(\overline{\bb{Q}}_p/\bb{Q}_p)$. For any $\bb{Z}_p$-module $M$ and $r\in \bb{Z}$, we set $M(r)=M\otimes_{\bb{Z}_p}\bb{Z}_p(1)^{\otimes r}$, the $r$-th Tate twist of $M$.
\end{mypara}

\begin{mypara}\label{para:canonical-top}
	Let $A$ be a topological ring, $M$ a finitely generated $A$-module. For any $A$-linear surjective homomorphism $A^n\to M$ with $n\in\bb{N}$, if we endow $A^n$ with the product topology, then the quotient topology on $M$ does not depend on the choice of the surjection. We call this topology on $M$ the \emph{canonical topology} (\cite[page 820]{tsuji2018localsimpson}). It is clear that any homomorphism of finitely generated $A$-modules is continuous with respect to the canonical topology.
	
	If the topology on $A$ is linear, then the canonical topology on $M$ is also linear. For another finitely generated $A$-module $N$, the canonical topology on $M\otimes_A N$ coincides with the tensor product topology of the canonical topologies on $M$ and $N$. Moreover, let $A\to A'$ be a continuous homomorphism of linearly topologized rings. Then, the canonical topology on $A'\otimes_A M$ as a finitely generated $A'$-module coincides with the tensor product topology of the topology on $A'$ and the canonical topology on $M$.
\end{mypara}

\begin{mypara}\label{defn:repn}
	Let $G$ be a topological group, $A$ a topological ring endowed with a continuous action by $G$. An \emph{$A$-representation $(W,\rho)$ of $G$} is a topological $A$-module $W$ endowed with a continuous semi-linear action $\rho$ of $G$. A morphism $W\to W'$ of $A$-representations of $G$ is a continuous $A$-linear homomorphism compatible with the action of $G$. We denote by $\repn(G,A)$ the category of $A$-representations of $G$. Let $A_0$ be a $G$-stable subring of $A$. The \emph{$(G,A_0)$-finite part} of an $A$-representation $W$ of $G$ is the sum of all $G$-stable finitely generated $A_0$-submodules of $W$.
	
	We say that an $A$-representation $W$ of $G$ is \emph{finite projective} if $W$ is a finite projective $A$-module endowed with the canonical topology. We denote by $\repnpr(G,A)$ the full subcategory of  $\repn(G,A)$ consisting of finite projective $A$-representations of $G$. 
	
	Assume that the topology on $A$ is linear. For any two $A$-representations $W,W'$ of $G$ with linear topologies, the diagonal action of $G$ on $W\otimes_A W'$ is continuous with respect to the tensor product topology. If moreover $W$ and $W'$ are finite projective, then so is $W\otimes_A W'$ by \ref{para:canonical-top}. This makes $\repnpr(G,A)$ into an additive tensor category. Moreover, let $A'$ be a linearly topologized ring endowed with a continuous action of a topological group $G'$, $G'\to G$ a continuous group homomorphism, $A\to A'$ a continuous ring homomorphism compatible with the actions of $G$ and $G'$. Then, the tensor product defines a natural functor 
	\begin{align}
		\repnpr(G,A)\longrightarrow \repnpr(G',A'),\ W\mapsto A'\otimes_A W.
	\end{align}
\end{mypara}

\section{Brief Review on $p$-adic Analytic Groups}\label{sec:lie}
The theory of $p$-adic analytic groups (which are often referred to as ``$p$-adic Lie groups'') was developed by Lazard \cite{lazard1965lie}. We mainly follow \cite{ddms1999lie} to give a brief review.

\begin{mydefn}[{\cite[Theorem 4.5]{ddms1999lie}}]\label{defn:uniform-pro-p}
	A pro-$p$ group $G$ is called \emph{uniform} if $G$ is topologically finitely generated, torsion free and $G/\overline{G^p}$ (resp. $G/\overline{G^4}$) is abelian if $p$ is odd (resp. $p=2$), where $\overline{G^n}$ denotes the closed subgroup of $G$ generated by $n$-th powers for $n\in \bb{N}$. 
\end{mydefn}
In fact, the subset of $p^n$-th powers in a uniform pro-$p$ group $G$ forms a uniform and open characteristic subgroup $G^{p^n}$ of $G$; these open subgroups $\{G^{p^n}\}_{n\in\bb{N}}$ form a fundamental system of neighbourhoods of $1\in G$; and the map $G\to G^{p^n}$ sending $x$ to $x^{p^n}$ is a homeomorphism of topological spaces (\cite[Theorems 3.6, 4.10]{ddms1999lie}).

\begin{mydefn}[{\cite[Section 9.4]{ddms1999lie}}]\label{defn:powerful-lie-alg}
	A Lie algebra $L$ over $\bb{Z}_p$ is called \emph{powerful} if $L$ is a finite free $\bb{Z}_p$-module and $[L,L] \subseteq pL$ (resp. $[L,L] \subseteq 4L$) if $p$ is odd (resp. $p=2$).
\end{mydefn}

For a powerful Lie algebra $L$ over $\bb{Z}_p$ and any $n\in\bb{N}$, it is clear that the Lie sub-algebra $p^nL$ is also powerful.

\begin{mypara}\label{para:L_G}
	We associate to a uniform pro-p group $G$ a powerful Lie algebra $L_G=(G,+_G,[\ ,\ ]_G)$ over $\bb{Z}_p$ as follows: 
	\begin{enumerate}
		\renewcommand{\labelenumi}{{\rm(\theenumi)}}
		\item The underlying set of $L_G$ is that of $G$.
		\item The additive structure on $L_G$ is given by (\cite[Definition 4.12]{ddms1999lie})
		\begin{align}\label{eq:para:L_G-add}
			x +_G y=\lim_{n\to \infty} (x^{p^n}y^{p^n})^{p^{-n}},\ \forall x,y\in G,
		\end{align}
		where taking $p^n$-th root is well-defined as the map $G\to G^{p^n}$ sending $x$ to $x^{p^n}$ is a homeomorphism.
		\item The Lie bracket on $L_G$ is given by (\cite[Definition 4.29]{ddms1999lie})
		\begin{align}\label{eq:para:L_G-bracket}
			[x , y]_G=\lim_{n\to \infty} (x^{-p^n}y^{-p^n}x^{p^n}y^{p^n})^{p^{-2n}},\ \forall x,y\in G.
		\end{align}
	\end{enumerate}
	The Lie algebra $L_G$ over $\bb{Z}_p$ is well-defined and powerful, and a minimal topological generating set $(g_1,\dots,g_d)$ of $G$ forms a $\bb{Z}_p$-linear basis of $L_G$ (\cite[Theorems 4.17, 4.30 and Exercise 4.2.(\luoma{2})]{ddms1999lie}). We denote by $\log:G\to L_G$ and $\exp:L_G\to G$ the identity maps. Then, the map 
	\begin{align}\label{eq:coordinates-first}
		\bb{Z}_p^d\longrightarrow G,\ (a_1,\dots,a_d)\mapsto \exp(a_1\log(g_1)+_G\cdots+_G a_d\log(g_d))
	\end{align}
	is a homeomorphism of topological spaces such that the image of $(p^n\bb{Z}_p)^d$ is $G^{p^n}$. This map is called the \emph{system of coordinates of the first kind}. Moreover, for $x,y\in G$ such that $xy=yx$ (resp. for $u,v\in L_G$ such that $[u,v]=0$), we have $\log(xy)=\log(x)+_G\log(y)$ (resp. $\exp(u+_Gv)=\exp(u)\exp(v)$). In particular, for $a\in \bb{Z}_p$, we have $\log(x^a)=a\log(x)$ for any $x\in G$, and we have $\exp(au)=\exp(u)^a$ for any $u\in L_G$. On the other hand, the map
	\begin{align}\label{eq:coordinates-second}
		\bb{Z}_p^d\longrightarrow G,\ (b_1,\dots,b_d)\mapsto g_1^{b_1}\cdots g_d^{b_d},
	\end{align}
	is also a homeomorphism of topological spaces such that the image of $(p^n\bb{Z}_p)^d$ is $G^{p^n}$ (\cite[Theorems 4.9, 4.10]{ddms1999lie}). This map is called the \emph{system of coordinates of the second kind}.
\end{mypara}

\begin{mypara}\label{para:L-*}
	We associate to a powerful Lie algebra $L$ over $\bb{Z}_p$ a uniform pro-p group $(L,*)$ as follows: we endow $L$ with a group structure given by the Baker-Campbell-Hausdorff formula (\cite[Section 9.4]{ddms1999lie})
	\begin{align}\label{eq:para:L-*}
		x*y=x+y+\frac{1}{2}[x,y]+\frac{1}{12}([x,[x,y]]+[y,[y,x]])+\cdots.
	\end{align}
	The group $(L,*)$ is well-defined and uniform pro-$p$, and a $\bb{Z}_p$-linear basis of $L$ forms a minimal topological generating set of $(L,*)$ (\cite[Theorem 9.8]{ddms1999lie}).
\end{mypara}

\begin{mythm}[{\cite[Theorem 9.10]{ddms1999lie}}]\label{thm:powerful-lie}
	The assignments $G\mapsto L_G$ and $L\mapsto (L,*)$ defined in {\rm\ref{para:L_G}} and {\rm\ref{para:L-*}} are mutually inverse isomorphisms between the category of uniform pro-$p$ groups and the category of powerful Lie algebras over $\bb{Z}_p$.
\end{mythm}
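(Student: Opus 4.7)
The plan is to follow the standard argument of \cite[Chapter 9]{ddms1999lie}, of which we now give an overview. The theorem decomposes into three claims: (i) for every uniform pro-$p$ group $G$, the structure $L_G=(G,+_G,[\ ,\ ]_G)$ is indeed a powerful $\bb{Z}_p$-Lie algebra; (ii) for every powerful $\bb{Z}_p$-Lie algebra $L$, the binary operation $*$ makes $L$ into a uniform pro-$p$ group; (iii) the two assignments are functorial and mutually inverse. Functoriality on either side reduces to showing that a continuous group homomorphism $\phi:G\to G'$ is automatically additive and bracket-preserving for the operations \eqref{eq:para:L_G-add} and \eqref{eq:para:L_G-bracket} (which is immediate from continuity), and symmetrically that a $\bb{Z}_p$-Lie homomorphism $\psi:L\to L'$ is automatically multiplicative for $*$ (which follows term-by-term from the BCH expansion \eqref{eq:para:L-*}).

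For (i), one first argues that the sequences $(x^{p^n}y^{p^n})^{p^{-n}}$ and $(x^{-p^n}y^{-p^n}x^{p^n}y^{p^n})^{p^{-2n}}$ converge in $G$. The key input is that in a uniform pro-$p$ group the map $x\mapsto x^{p^n}$ is a homeomorphism onto the open characteristic subgroup $G^{p^n}$, combined with the Hall--Petresco commutator identities, which force the above sequences to be Cauchy with respect to the filtration $\{G^{p^n}\}_{n\in\bb{N}}$. The Lie algebra axioms (commutativity and associativity of $+_G$, bilinearity, antisymmetry and the Jacobi identity for $[\ ,\ ]_G$) are then obtained by passing to the limit in commutator identities valid in $G$. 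Finally, the powerful condition $[L_G,L_G]\subseteq pL_G$ (resp. $\subseteq 4L_G$ for $p=2$) is inherited from the analogous condition imposed on $G$.

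For (ii), the convergence of the BCH series \eqref{eq:para:L-*} in $L$ uses crucially the powerful hypothesis: an induction on the nesting depth shows that each $k$-fold iterated bracket lies in $p^{k-1}L$ (up to the usual $p=2$ adjustment), and the denominators appearing in the BCH coefficients have bounded $p$-adic valuation, so the series converges $p$-adically to an element of $L$. The group axioms for $*$ are formal consequences of the BCH formula, and the uniformity of $(L,*)$ follows by checking that $(L,*)^{p^n}$ corresponds under $\exp=\id$ to $p^nL$, hence is open and characteristic with $(L,*)/(L,*)^p$ abelian.

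The main obstacle is (iii), namely verifying that these constructions are inverse: that $L_{(L,*)}=L$ as Lie algebras and $(L_G,*)=G$ as groups. For the first equality, the limit formulas \eqref{eq:para:L_G-add}, \eqref{eq:para:L_G-bracket} applied to the group $(L,*)$ must recover the original $+$ and $[\ ,\ ]$ on $L$; this is a direct computation within the BCH series using that $n x = \underbrace{x*\cdots*x}_{n}$ for $n\in\bb{Z}$. For the second, one must recover the multiplication of $G$ from the Lie algebra operations via BCH; this is the central algebraic content of Lazard's theory and is proved in \cite[\textsection 4.5, \textsection 6.3, \textsection 9.4]{ddms1999lie} by embedding $G$ into the completed group algebra $\bb{Z}_p[\![G]\!]$ and analyzing the filtration induced by the augmentation ideal.
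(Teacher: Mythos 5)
The paper states this theorem as a direct citation to Theorem~9.10 of Dixon--du~Sautoy--Mann--Segal and does not reproduce a proof of its own, so there is no in-paper argument to compare against. Your three-part decomposition (that $L_G$ is a powerful Lie algebra, that $(L,*)$ is a uniform pro-$p$ group, that the two assignments are mutually inverse and functorial) matches the organization of DDMS Chapter~9, with parts (i) and (ii) corresponding to their Theorems 4.17/4.30 and 9.8, and part (iii) to Lemma~9.9 plus the final assembly.

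There is, however, a concrete error in the convergence argument in part (ii). You assert that ``the denominators appearing in the BCH coefficients have bounded $p$-adic valuation, so the series converges $p$-adically.'' The denominators are \emph{not} bounded: the degree-$n$ BCH term carries a denominator dividing something of the size of $n!$, whose $p$-adic valuation grows without bound, roughly like $(n-1)/(p-1)$. Boundedness is not what makes the series converge. The actual mechanism is a quantitative comparison: the $k$-fold iterated bracket lies in $p^{k-1}L$ by the powerful hypothesis $[L,L]\subseteq pL$, and for $p\geq 3$ the gain $n-1$ strictly dominates the loss $\lfloor (n-1)/(p-1)\rfloor$ from the denominator, so the valuation of the degree-$n$ term still tends to infinity. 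For $p=2$ the estimate $v_2(n!)\leq n-1$ is borderline, which is precisely why the powerful condition is strengthened to $[L,L]\subseteq 4L$ in that case -- the extra factor restores the margin needed for convergence. As written, the appeal to boundedness would not justify the conclusion, so this step needs to be replaced by the comparison argument above.
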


\begin{myexample}\label{exmp:gl_n}
	The subgroup $G=\id+p^\epsilon\mrm{M}_d(\bb{Z}_p)$ of the general linear group $\mrm{GL}_d(\bb{Z}_p)$ of degree $d$ over $\bb{Z}_p$ is a uniform pro-$p$ group, where $\epsilon=1$ if $p$ is odd, and $\epsilon=2$ if $p=2$. We have $G^{p^n}=\id+p^{n+\epsilon}\mrm{M}_d(\bb{Z}_p)$ for any $n\in\bb{N}$. In fact, the matrix exponential and logarithm,
	\begin{align}
		\exp: p^\epsilon\mrm{M}_d(\bb{Z}_p)&\longrightarrow\id+p^\epsilon\mrm{M}_d(\bb{Z}_p),\ X\mapsto \sum_{n= 0}^{\infty}\frac{1}{n!}X^n,\label{eq:matrix-exp}\\
		\log:\id+p^\epsilon\mrm{M}_d(\bb{Z}_p)&\longrightarrow p^\epsilon\mrm{M}_d(\bb{Z}_p),\ 1+X\mapsto \sum_{n= 1}^{\infty}\frac{(-1)^{n-1}}{n}X^n,\label{eq:matrix-log}
	\end{align}
	are mutually inverse homeomorphisms, which identify $\id+p^{n+\epsilon}\mrm{M}_d(\bb{Z}_p)$ with $p^{n+\epsilon}\mrm{M}_d(\bb{Z}_p)$. Moreover, they induce an isomorphism of $\bb{Z}_p$-linear Lie algebras $L_G\iso p^\epsilon\mrm{M}_d(\bb{Z}_p)$, where $p^\epsilon\mrm{M}_d(\bb{Z}_p)$ is endowed with the usual matrix Lie algebra structure. We can extend the matrix logarithm to $\log:\mrm{GL}_d(\bb{Z}_p)\to \mrm{M}_d(\bb{Q}_p)$ by setting $\log(X)=\log(X^r)/r$ for some $r\in \bb{N}$ such that $X^r\in \id+p^\epsilon\mrm{M}_d(\bb{Z}_p)$. Especially, for $d=1$, we can take $r=p(p-1)$ so that
	\begin{align}\label{eq:p-adic-log}
		\log: \bb{Z}_p^\times \longrightarrow \bb{Z}_p.
	\end{align}
\end{myexample}

\begin{mylem}[{\cite[Proposition 4.31]{ddms1999lie}}]\label{lem:uniform-ext}
	Let $G$ be a uniform pro-$p$ group, $N$ a closed normal subgroup of $G$ such that $G/N$ is a uniform pro-$p$ group. Then, $N$ is also a uniform pro-$p$ group, and the following natural sequence of powerful Lie algebras over $\bb{Z}_p$ is exact
	\begin{align}
		0\longrightarrow L_N\longrightarrow L_G\longrightarrow L_{G/N}\longrightarrow 0.
	\end{align} 
\end{mylem}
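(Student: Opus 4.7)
The plan is to apply the equivalence of categories \ref{thm:powerful-lie} between uniform pro-$p$ groups and powerful Lie algebras over $\bb{Z}_p$ to the quotient homomorphism $\pi : G \surj Q := G/N$. Since $\pi$ is a morphism in the category of uniform pro-$p$ groups, functoriality produces a surjective $\bb{Z}_p$-linear Lie algebra homomorphism $L_\pi : L_G \to L_Q$ whose underlying set-theoretic map coincides with $\pi$ itself, because $L_G = G$ and $L_Q = Q$ as sets by the construction in \ref{para:L_G}. Consequently the set-theoretic kernel $M := \ker L_\pi \subseteq L_G$ coincides with $N$ as a subset of $G$. The goal is then to identify $N$ with the uniform pro-$p$ group $(M,*)$ attached to the powerful Lie algebra $M$, which yields both conclusions of the lemma at once.

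The first task would be to verify that $M$ is a $\bb{Z}_p$-module direct summand of $L_G$, hence a powerful sub-Lie algebra. The short exact sequence
\[ 0 \longrightarrow M \longrightarrow L_G \longrightarrow L_Q \longrightarrow 0 \]
of $\bb{Z}_p$-modules splits since $L_Q$ is finite free over $\bb{Z}_p$ (\ref{defn:powerful-lie-alg}), so $M$ is a finite free direct summand, and as a kernel it is closed under the bracket. The summand property gives $pL_G \cap M = pM$ (and analogously with $4$ in place of $p$ when $p=2$); combining this with the powerfulness of $L_G$ yields
\[ [M,M] \subseteq [L_G,L_G] \cap M \subseteq pL_G \cap M = pM, \]
so $M$ is itself a powerful Lie algebra over $\bb{Z}_p$.

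Next, by \ref{para:L-*} one obtains a uniform pro-$p$ group $(M,*)$, and applying the functor $L \mapsto (L,*)$ of \ref{thm:powerful-lie} to the inclusion $M \inj L_G$ produces a continuous group homomorphism $(M,*) \to (L_G,*) = G$. Since this map is the identity on underlying sets and its image is $M = N$, it identifies $N \subseteq G$ with the uniform pro-$p$ group $(M,*)$. Hence $N$ is uniform pro-$p$ with $L_N = M$, which is precisely the assertion that the sequence $0 \to L_N \to L_G \to L_Q \to 0$ is exact.

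The main delicate point is the claim that the set-theoretic kernel of $L_\pi$ really coincides with $N$; this rests on the observation that the functor of \ref{thm:powerful-lie} preserves underlying sets, which is clear from \ref{para:L_G} where $\log$ and $\exp$ are tautologically inverse identity maps and the zero of $L_G$ is the identity of $G$. A secondary subtlety is ensuring that $M$ is a $\bb{Z}_p$-summand of $L_G$ rather than merely a pure $\bb{Z}_p$-submodule; this requires the freeness of $L_Q$, and it is precisely what upgrades $N$ from being a powerful closed subgroup to being genuinely uniform.
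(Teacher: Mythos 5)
Your proof is correct. The paper itself supplies no argument for this lemma --- it just cites [Proposition 4.31] of DDMS --- so there is no proof in the paper to compare against. Your route goes through the categorical equivalence \ref{thm:powerful-lie}: apply the Lie functor (which acts as the identity on underlying sets) to the quotient $\pi\colon G\surj G/N$, observe that $M=\ker L_\pi$ equals $N$ as a subset, check that $M$ is a powerful sub-Lie-algebra using linear algebra over $\bb{Z}_p$ (freeness of $L_{G/N}$ forces $M$ to be a free direct summand, and purity gives $[M,M]\subseteq pL_G\cap M=pM$), and then transport back by $L\mapsto(L,*)$ to conclude that $N=(M,*)$ is uniform with $L_N=M$. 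This is a legitimate and conceptually clean derivation, though it invokes the full equivalence (DDMS Theorem 9.10), which is heavier machinery than what DDMS's Chapter 4 proof of Proposition 4.31 can use, that proposition being stated before the equivalence is available; the Chapter 4 proof must instead establish uniformity of $N$ by direct manipulation of the powerful filtration. One small remark on your phrasing: being a $\bb{Z}_p$-summand of $L_G$ and being a pure $\bb{Z}_p$-submodule are the same thing for finitely generated modules over $\bb{Z}_p$, so the distinction you draw there is not substantive; what actually matters is that $L_G/M\cong L_{G/N}$ is torsion-free, which is the single source of both the freeness of $M$ and the containment $pL_G\cap M\subseteq pM$.
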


\begin{mydefn}[{\cite[Theorems 8.32, 9.4]{ddms1999lie}}]\label{defn:p-analytic-group}
	A \emph{$p$-adic analytic group} is a topological group which contains a uniform pro-$p$ open subgroup. A morphism between $p$-adic analytic groups is a continuous group homomorphism.
\end{mydefn}

\begin{mythm}[{\cite[Theorem 7.19]{ddms1999lie}}]\label{thm:compact-lie-group}
	Any compact $p$-adic analytic group is isomorphic to a closed subgroup of $\mrm{GL}_d(\bb{Z}_p)$ for some $d\in\bb{N}$.
\end{mythm}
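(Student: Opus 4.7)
The plan is to produce a faithful continuous representation $G\hookrightarrow \mrm{GL}_d(\bb{Z}_p)$; by compactness of $G$ its image will then automatically be a closed subgroup. I would proceed in three steps: reduce to an open normal uniform pro-$p$ subgroup, integrate a faithful Lie algebra representation on a uniform open subgroup of $G$, and extend the resulting embedding to $G$ by induction.

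First, by Definition~\ref{defn:p-analytic-group} the compact group $G$ admits a uniform pro-$p$ open subgroup $U$; the core $N:=\bigcap_{g\in G}gUg^{-1}$ is then open, normal, of finite index in $G$, and contained in $U$. Although $N$ need not itself be uniform, for $k\gg 0$ the characteristic subgroup $H:=N^{p^k}$ is uniform (by the properties of powers of uniform groups recalled right after Definition~\ref{defn:uniform-pro-p}), hence open, normal in $G$, and of finite index. Now I would apply Ado's theorem to the finite-dimensional $\bb{Q}_p$-Lie algebra $\ak{h}:=L_H\otimes_{\bb{Z}_p}\bb{Q}_p$ to obtain a faithful $\bb{Q}_p$-linear Lie algebra embedding $\varphi:\ak{h}\hookrightarrow \mrm{M}_d(\bb{Q}_p)$ for some $d$. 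For $m$ sufficiently large the sublattice $L_{H^{p^m}}=p^mL_H$ satisfies $\varphi(L_{H^{p^m}})\subseteq p^\epsilon\mrm{M}_d(\bb{Z}_p)$ with $\epsilon$ as in Example~\ref{exmp:gl_n}, and composing with the matrix exponential of that example transports the Baker--Campbell--Hausdorff group law on $L_{H^{p^m}}$ (Theorem~\ref{thm:powerful-lie}) to the multiplicative law on $\id+p^\epsilon\mrm{M}_d(\bb{Z}_p)$, yielding a continuous injective homomorphism $H^{p^m}\hookrightarrow \mrm{GL}_d(\bb{Z}_p)$.

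Finally, $H^{p^m}$ is characteristic in $H$, hence normal in $G$, and has finite index in $G$; inducing the faithful representation of $H^{p^m}$ up to $G$ produces a continuous $G$-action on a free $\bb{Z}_p$-module of rank $d\cdot[G:H^{p^m}]$, whose kernel $\bigcap_{g\in G}g\ker(H^{p^m}\to\mrm{GL}_d(\bb{Z}_p))g^{-1}$ is trivial by faithfulness and normality of $H^{p^m}$ in $G$. The main obstacle is the integration step: Ado's theorem only guarantees a faithful $\bb{Q}_p$-linear representation of $\ak{h}$, and to integrate it to a group homomorphism one must rescale (replacing $H$ by the uniform open subgroup $H^{p^m}$) so that the image of the Lie algebra lies in matrices sufficiently divisible by $p$ for $\exp$ to converge, after which the Baker--Campbell--Hausdorff formulas on both sides have to be matched carefully to produce a genuine homomorphism rather than merely a local one.
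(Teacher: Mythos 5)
Your outline is correct and is essentially the argument underlying the cited result in Dixon--du~Sautoy--Mann--Segal: reduce to an open normal uniform pro-$p$ subgroup, apply Ado's theorem to embed the $\bb{Q}_p$-Lie algebra faithfully into some $\mrm{M}_d(\bb{Q}_p)$, rescale so that the integral lattice lands in $p^\epsilon\mrm{M}_d(\bb{Z}_p)$ and exponentiate (via the equivalence of Theorem~\ref{thm:powerful-lie} and Example~\ref{exmp:gl_n}) to embed the uniform subgroup as a closed subgroup of $\mrm{GL}_d(\bb{Z}_p)$, then induce up to $G$ and use normality plus faithfulness to see the induced representation has trivial kernel. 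The only slip is in the first step: the claim that $H=N^{p^k}$ is uniform for $k\gg 0$ does not follow from the property of powers of \emph{uniform} groups recalled after Definition~\ref{defn:uniform-pro-p}, since $N$ is merely an open subgroup of a uniform group and need not itself be uniform (or even powerful); one needs the separate fact that in a pro-$p$ group of finite rank the subgroups $N^{p^k}$ are eventually powerful, after which torsion-freeness (inherited from $U$) yields uniformity. This is a matter of citing the right lemma rather than a genuine gap, and the rest of the argument is sound.
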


\begin{mylem}[{\cite[proof of Theorem 4.8]{ddms1999lie}}]\label{lem:uniform-ext-2}
	Let $G$ be a $p$-adic analytic group, $N$ a closed normal subgroup of $G$. Then, there exists an open subgroup $G_0$ of $G$ such that $G_0$, $N_0=N\cap G_0$ and $G_0/N_0$ are all uniform pro-$p$ groups.
\end{mylem}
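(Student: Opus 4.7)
The plan is to work through the equivalence between uniform pro-$p$ groups and powerful $\bb{Z}_p$-Lie algebras (Theorem \ref{thm:powerful-lie}). First I would replace $G$ by a uniform pro-$p$ open subgroup, which exists by Definition \ref{defn:p-analytic-group}, and replace $N$ by its intersection with this subgroup; any open subgroup of the new $G$ remains open in the original, so this reduction is harmless. Assume now $G$ is uniform pro-$p$ with Lie algebra $L_G$. As a closed subgroup of the $p$-adic analytic group $G$, the group $N$ is itself $p$-adic analytic, hence contains a uniform pro-$p$ open subgroup $U$. Via the set-theoretic identity $\log_G\colon G \to L_G$ of \ref{para:L_G}, the subset $M := \log_G(U) \subset L_G$ is a powerful Lie subalgebra over $\bb{Z}_p$: the operations $+_G$ and $[\cdot,\cdot]_G$ on $L_G$ are defined as limits of group operations in $G$ (see \eqref{eq:para:L_G-add}, \eqref{eq:para:L_G-bracket}), so their restrictions to $U$ agree with the analogous operations on $L_U$.

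The crucial step is to replace $M$ by its $\bb{Z}_p$-saturation $M^{\sat} := L_G \cap (M \otimes_{\bb{Z}_p} \bb{Q}_p)$ in $L_G$, so that $L_G/M^{\sat}$ becomes finite free over $\bb{Z}_p$. The subset $M^{\sat}$ is a Lie subalgebra: it is closed under $[\cdot,\cdot]_G$ as the intersection of two $\bb{Q}_p$-Lie sub-algebras of $L_G \otimes_{\bb{Z}_p} \bb{Q}_p$, and closed under $+_G$ because $+_G$ is given by the $p$-adically convergent BCH series in iterated Lie brackets (convergence using powerfulness of $L_G$), which stays inside $M^{\sat} \otimes_{\bb{Z}_p} \bb{Q}_p$, hence inside $M^{\sat}$. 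Since $M^{\sat}/M$ is finite torsion over $\bb{Z}_p$, we have $p^{n}M^{\sat} \subset M$ for all sufficiently large $n$; enlarging $n$, we may simultaneously require that $p^{n}M^{\sat}$ and $p^nL_G/p^nM^{\sat} \cong p^n(L_G/M^{\sat})$ are both powerful (using saturation, e.g.\ $[p^nM^{\sat}, p^nM^{\sat}] = p^{2n}[M^{\sat},M^{\sat}]\subset p^{2n}M^{\sat}$, and the analogous inclusion for the quotient), and that $G^{p^n}\cap N \subset U$ (since $\{G^{p^n}\}_n$ is a neighborhood basis of $1$ in $G$ and $U$ is open in $N$).

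For such $n$, I would set $G_0 := G^{p^n}$ (open uniform in $G$ with Lie algebra $p^nL_G$) and $N_0 := N \cap G_0$. Then $N_0 = U \cap G_0$ by the choice of $n$, and under $\log_G$ this set equals $M \cap p^nL_G = p^nM^{\sat}$, the key equality using both $p^nM^{\sat}\subset M$ and the saturation property; hence Theorem \ref{thm:powerful-lie} identifies $N_0$ with the uniform pro-$p$ subgroup of $G_0$ associated to the powerful Lie algebra $p^nM^{\sat}$. Finally, applying the correspondence to the exact sequence of powerful Lie algebras $0 \to p^nM^{\sat} \to p^nL_G \to p^n(L_G/M^{\sat}) \to 0$ yields a surjection of uniform pro-$p$ groups $G_0 \twoheadrightarrow H$ with kernel $N_0$, so $G_0/N_0 \cong H$ is uniform pro-$p$, completing the proof. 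The main obstacle I foresee is verifying the closure of $M^{\sat}$ under $+_G$ via the BCH formula, and keeping precise track of the tautological set-level identifications between $G$, $L_G$, and their subgroups/sub-Lie-algebras.
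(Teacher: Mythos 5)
Your strategy, passing to $\bb{Z}_p$-Lie algebras via Theorem \ref{thm:powerful-lie} and saturating, is a legitimate alternative to the cited argument of \cite[Theorem 4.8]{ddms1999lie}, which is more directly group-theoretic (powerful subgroups, lower $p$-series) rather than Lie-theoretic. The reduction to $G$ uniform, the identification of $M=\log_G(U)$ as a powerful Lie subalgebra of $L_G$, the saturation $M^{\mrm{sat}}=L_G\cap(M\otimes_{\bb{Z}_p}\bb{Q}_p)$, and the computation $M\cap p^nL_G = p^nM^{\mrm{sat}}$ are all correct.

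There is, however, a genuine gap at the final step. To form the quotient Lie algebra $L_G/M^{\mrm{sat}}$ and apply the equivalence of \ref{thm:powerful-lie} to the sequence $0\to p^nM^{\mrm{sat}}\to p^nL_G\to p^n(L_G/M^{\mrm{sat}})\to 0$, you need $M^{\mrm{sat}}$ to be a Lie \emph{ideal} of $L_G$, not merely a subalgebra, but you only check closure under $+_G$ and $[\ ,\ ]_G$. The ideal condition $[L_G,M^{\mrm{sat}}]\subseteq M^{\mrm{sat}}$ is precisely the point where the normality of $N$ must enter the argument, and it has not entered anywhere in your proof. The cleanest repair: $M\otimes_{\bb{Z}_p}\bb{Q}_p = \lie(N)$ inside $\lie(G)=L_G\otimes_{\bb{Z}_p}\bb{Q}_p$, and $\lie(N)$ is stable under $\mrm{Ad}_g$ for every $g\in G$ because conjugation by $g$ preserves $N$; differentiating gives $[\lie(G),\lie(N)]\subseteq\lie(N)$, so $\lie(N)$ is an ideal of $\lie(G)$ and therefore $M^{\mrm{sat}}=L_G\cap\lie(N)$ is an ideal of $L_G$ (note you cannot quote \ref{lem:lie-ext} here, since its proof invokes the present lemma). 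Alternatively, you could avoid the Lie-ideal discussion entirely by checking directly that $G_0/N_0$ is uniform: it is topologically finitely generated and powerful as a quotient of the uniform group $G_0$, and it is torsion-free exactly because $L_G/M^{\mrm{sat}}$ is torsion-free by saturation. One smaller slip: closure of $M^{\mrm{sat}}$ under $+_G$ needs no BCH argument --- $+_G$ is the $\bb{Z}_p$-module addition of $L_G$, not the group law $*$ (which is what BCH computes), so $M^{\mrm{sat}}$ is closed under $+_G$ simply as an intersection of two $\bb{Z}_p$-submodules of $L_G\otimes_{\bb{Z}_p}\bb{Q}_p$.
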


\begin{mythm}[{\cite[Theorems 9.6, 9.7]{ddms1999lie}}]\label{thm:lie-group-ext}
	Let $G$ be a separated topological group, $N$ a closed normal subgroup of $G$, $H$ a closed subgroup of $G$.
	\begin{enumerate}
		\renewcommand{\labelenumi}{{\rm(\theenumi)}}
		\item If $G$ is a $p$-adic analytic group, then so is $H$ and $G/N$.
		\item If $N$ and $G/N$ are $p$-adic analytic groups, then so is $G$.
	\end{enumerate}
\end{mythm}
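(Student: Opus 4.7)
For part~(1), I would treat the quotient and the subgroup cases separately.

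For the quotient $G/N$, the statement follows directly from Lemma~\ref{lem:uniform-ext-2}: that lemma produces an open subgroup $G_0 \subseteq G$ with $G_0$, $N_0 = N \cap G_0$ and $G_0/N_0$ all uniform pro-$p$; the natural continuous bijection $G_0/N_0 \to G_0N/N$ is a homeomorphism (compact source, Hausdorff target), so $G/N$ contains the uniform pro-$p$ open subgroup $G_0N/N$.

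For the subgroup $H$, pick a uniform pro-$p$ open subgroup $U \subseteq G$, so $H_0 := H \cap U$ is a closed subgroup of $U$ and open in $H$; it suffices to exhibit a uniform pro-$p$ open subgroup of $H_0$. I would work through the Lie algebra $L_U$ via the identifications of~\ref{para:L_G}, and consider
\begin{equation*}
\ak{h} \;=\; \{X \in L_U \,:\, \exp(\bb{Z}_p X) \subseteq H_0\}.
\end{equation*}
This set is a closed $\bb{Z}_p$-submodule by construction; using the limit formulas~\eqref{eq:para:L_G-add}--\eqref{eq:para:L_G-bracket} together with closedness of $H_0$ under the group operations of $U$, the set $\ak{h}$ turns out to be a Lie subalgebra of $L_U$. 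For $n$ sufficiently large, $p^n\ak{h}$ is powerful, so Theorem~\ref{thm:powerful-lie} produces a uniform pro-$p$ group $(p^n\ak{h},*)$. Comparing the Baker--Campbell--Hausdorff formula defining $*$ with the actual group law on $U$, the map $\exp$ identifies $(p^n\ak{h},*)$ with an open subgroup of $H_0$, as required.

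For part~(2), the strategy is to reduce to the compact case and then construct a uniform open subgroup of $G$. Let $\overline{U} \subseteq G/N$ be uniform pro-$p$ open, $V \subseteq N$ uniform pro-$p$ open, and $G_1 \subseteq G$ the preimage of $\overline{U}$ (an open subgroup containing $N$). By continuity of the conjugation action $G_1 \times N \to N$ and compactness of $V$, I would find an open subgroup $G_0 \subseteq G_1$ normalizing $V$ (after replacing $V$ by an intersection of finitely many conjugates if needed). After possibly shrinking $V$ and $\overline{U}$, this places $G_0$ in a short exact sequence
\begin{equation*}
1 \longrightarrow V \longrightarrow G_0 \longrightarrow G_0/V \longrightarrow 1
\end{equation*}
of pro-$p$ groups with both $V$ and $G_0/V$ uniform. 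Then $G_0$ is a compact, topologically finitely generated, torsion-free pro-$p$ group, and to conclude I would verify the uniformness commutator bound $[G_0,G_0] \subseteq \overline{G_0^p}$ (or $\overline{G_0^4}$ if $p=2$) by separately estimating the commutators within $V$, among a chosen set of lifts of topological generators of $G_0/V$, and mixed commutators of the two kinds, each of which can be made small by further shrinking.

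The main obstacle is the last step of part~(2), the mixed commutator estimate. Controlling $[v,\tilde g]$ for $v\in V$ and $\tilde g\in G_0$ a lift of an element of $G_0/V$ requires a simultaneous choice of scales in $V$ and in the lifts, compatible with the powerful structures on both sides; equivalently, at the Lie algebra level, one must realise the extension of powerful Lie algebras $0 \to L_V \to L_{G_0} \to L_{G_0/V} \to 0$ predicted by Lemma~\ref{lem:uniform-ext} after sufficient shrinking. Once this is done, Theorem~\ref{thm:powerful-lie} supplies the uniform pro-$p$ open subgroup needed to conclude that $G$ is $p$-adic analytic.
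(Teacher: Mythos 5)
The paper gives no proof of this statement; it is quoted verbatim from \cite[Theorems 9.6, 9.7]{ddms1999lie}. So your proposal is evaluated on its own terms.

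Your quotient argument for part (1) is correct: \ref{lem:uniform-ext-2} gives $G_0$ with $G_0$, $N_0 = N\cap G_0$, $G_0/N_0$ uniform, and the continuous bijection $G_0/N_0 \to G_0N/N$ between a compact space and a Hausdorff space is a homeomorphism onto an open subgroup of $G/N$.

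The subgroup argument for part (1) has a genuine gap. With the identifications of \ref{para:L_G}, $\exp$ is the identity on underlying sets, so for $X \in L_U$ one has $\exp(\bb{Z}_p X) = X^{\bb{Z}_p} = \overline{X^{\bb{Z}}}$, and since $H_0$ is a closed subgroup this closure lies in $H_0$ whenever $X \in H_0$. Thus $\ak{h}$ is exactly $H_0$ as a subset of $L_U$, and the claim that $\ak{h}$ is a $\bb{Z}_p$-submodule ``by construction'' is false: only stability under scalar multiplication is built in. Closure under $+_U$ does not follow from the limit formula \eqref{eq:para:L_G-add} together with closedness of $H_0$, because the approximants $(x^{p^n}y^{p^n})^{p^{-n}}$ are $p^n$-th roots taken in $U$, and $H_0$ need not be closed under extraction of $p^n$-th roots (it need not be powerful). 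The same objection applies to \eqref{eq:para:L_G-bracket}. So the key claim ``$\ak{h}$ is a Lie subalgebra of $L_U$'' is unproved as stated. It may ultimately be true via Lazard's saturation theory, but that is essentially the content of the theorem you are trying to prove, not an input to it. You also leave unverified that $\exp$ carries $(p^n\ak{h},*)$ to an \emph{open} subgroup of $H_0$.

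Part (2) you explicitly leave incomplete: you identify the mixed commutator bound $[v,\tilde g]\subseteq \overline{G_0^{p^{\epsilon}}}$ as ``the main obstacle'' and do not resolve it. There is also an earlier unjustified step, namely that one can shrink $G_1$ so that some open subgroup normalises a uniform open $V \subseteq N$; replacing $V$ by a finite intersection of conjugates can destroy uniformity and requires a separate argument.

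For both missing pieces there is a shorter and more robust route, which is what \cite{ddms1999lie} actually uses: work with the invariant ``rank'' of a pro-$p$ group (the supremum of $d(K)$ over open subgroups $K$). A pro-$p$ group has finite rank if and only if it has a powerful, hence uniform, open subgroup; rank is inherited by closed subgroups and by quotients, and is subadditive in extensions. Combined with \ref{thm:compact-lie-group} to reduce to a closed subgroup of $\mrm{GL}_d(\bb{Z}_p)$, this disposes of the closed-subgroup case of (1) and all of (2) without ever having to prove that $H_0$ is a Lie subalgebra or to control mixed commutators by hand.
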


\begin{mypara}
	The uniform pro-$p$ open subgroups $H$ of a $p$-adic analytic group $G$ form a fundamental system of open neighbourhoods of $1\in G$. Moreover, given such an $H$, $\{H^{p^n}\}_{n\in\bb{N}}$ is initial in this system. Thus, for a uniform pro-$p$ open subgroup $H'$ of $H$, the corresponding Lie algebra $L_{H'}$ over $\bb{Z}_p$ is a $\bb{Z}_p$-submodule of $L_H$ with finite index. In particular, the natural morphism 
	\begin{align}
		L_{H'}\otimes_{\bb{Z}_p}\bb{Q}_p\longrightarrow L_{H}\otimes_{\bb{Z}_p}\bb{Q}_p
	\end{align}
	is an isomorphism of Lie algebras over $\bb{Q}_p$.
\end{mypara}

\begin{mydefn}[{\cite[Section 9.5]{ddms1999lie}}]\label{defn:lie-alg}
	Let $G$ be a $p$-adic analytic group. The filtered colimit of Lie algebras over $\bb{Q}_p$,
	\begin{align}
		\lie(G)=\colim_{H} L_H\otimes_{\bb{Z}_p}\bb{Q}_p,
	\end{align}
	where $H$ is a uniform pro-$p$ open subgroup of $G$ with the corresponding Lie algebra $L_H$ over $\bb{Z}_p$, is called the \emph{Lie algebra} of $G$ over $\bb{Q}_p$. We denote by $\dim(G)$ the dimension of $\lie(G)$ over $\bb{Q}_p$ and call it the \emph{dimension} of $G$.
	
	Moreover, if $G$ is compact, then there is a canonical continuous map, called the \emph{logarithm map} of $G$,
	\begin{align}\label{eq:log-lie}
		\log_G:G\longrightarrow \lie(G)
	\end{align}
	sending $g$ to $\log_H(g^{r})/r$, where $r$ is the index of a uniform pro-$p$ open subgroup $H$ of $G$, $\log_H:H\to L_H$ is defined in \ref{para:L_G}, and this definition does not depend on the choice of $H$.
\end{mydefn}

\begin{mylem}\label{lem:lie-ext}
	Let $G$ be a $p$-adic analytic group, $N$ a closed normal subgroup of $G$. Then, there is a canonical exact sequence of $\bb{Q}_p$-linear Lie algebras
	\begin{align}
		0\longrightarrow \lie(N)\longrightarrow\lie(G)\longrightarrow\lie(G/N)\longrightarrow 0.
	\end{align}
\end{mylem}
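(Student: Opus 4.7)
The plan is to reduce to the uniform pro-$p$ case via Lemma \ref{lem:uniform-ext-2} and then apply Lemma \ref{lem:uniform-ext}. More precisely, I would first invoke \ref{lem:uniform-ext-2} to pick an open subgroup $G_0\subseteq G$ such that $G_0$, $N_0:=N\cap G_0$ and the quotient $G_0/N_0$ are all uniform pro-$p$. Then Lemma \ref{lem:uniform-ext} gives a short exact sequence of powerful $\bb{Z}_p$-Lie algebras
\begin{align*}
0\longrightarrow L_{N_0}\longrightarrow L_{G_0}\longrightarrow L_{G_0/N_0}\longrightarrow 0,
\end{align*}
and tensoring with $\bb{Q}_p$ (which is exact) yields the desired exact sequence, provided one identifies the three terms with $\lie(N)$, $\lie(G)$, $\lie(G/N)$.

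The identifications go as follows. Since $G_0$ is open in $G$ and $N_0=N\cap G_0$ is open in $N$, the colimits defining $\lie(G)$ and $\lie(N)$ in \ref{defn:lie-alg} are already computed by $L_{G_0}\otimes_{\bb{Z}_p}\bb{Q}_p$ and $L_{N_0}\otimes_{\bb{Z}_p}\bb{Q}_p$ respectively, because the colimit can be taken over the cofinal system $\{G_0^{p^n}\}_{n\in\bb{N}}$ (resp.\ $\{N_0^{p^n}\}_{n\in\bb{N}}$), and the transition maps $L_{G_0^{p^n}}\otimes\bb{Q}_p\to L_{G_0}\otimes\bb{Q}_p$ are isomorphisms since $L_{G_0^{p^n}}=p^nL_{G_0}$ has finite index in $L_{G_0}$. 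For the quotient, I would observe that the image of $G_0$ in $G/N$ is $G_0N/N\cong G_0/(G_0\cap N)=G_0/N_0$ and that $G_0N$ is open in $G$ (being a union of cosets of the open subgroup $G_0$); hence $G_0/N_0$ is an open uniform pro-$p$ subgroup of $G/N$, and the same cofinality argument yields $\lie(G/N)\cong L_{G_0/N_0}\otimes_{\bb{Z}_p}\bb{Q}_p$.

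Finally I would verify canonicity: for two choices $G_0,G_0'$ meeting the hypothesis of \ref{lem:uniform-ext-2}, one can find a third choice $G_0''$ (e.g.\ a sufficiently small $(G_0\cap G_0')^{p^n}$, checking that $G_0''$, $G_0''\cap N$ and $G_0''/(G_0''\cap N)$ are uniform pro-$p$) that refines both, and the three short exact sequences fit into a commutative ladder in which the vertical arrows become isomorphisms after $\otimes_{\bb{Z}_p}\bb{Q}_p$. The horizontal maps then agree with the functorial maps $\lie(N)\to\lie(G)\to\lie(G/N)$ induced by the inclusion $N\hookrightarrow G$ and the projection $G\twoheadrightarrow G/N$, via the naturality of the construction $H\mapsto L_H$ in \ref{para:L_G}. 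The main obstacle I anticipate is precisely the bookkeeping needed to justify that the Lie algebra morphisms extracted from the uniform open subgroup $G_0$ coincide with the functorial ones and do not depend on $G_0$; once that is handled, exactness is immediate from \ref{lem:uniform-ext}.
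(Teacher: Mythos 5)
Your proposal is correct and follows exactly the route the paper takes: the paper's proof is the one-line citation of \ref{lem:uniform-ext-2} and \ref{lem:uniform-ext}, and you have simply filled in the identifications and the independence of the choice of $G_0$ that the paper leaves implicit.
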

\begin{proof}
	It follows from \ref{lem:uniform-ext-2} and \ref{lem:uniform-ext}.
\end{proof}

\begin{mypara}\label{para:adjoint}
	Let $G$ be a $p$-adic analytic group. For any $g\in G$, the conjugation on $G$ sending $x$ to $gxg^{-1}$ is continuous, and thus induces an automorphism $\mrm{Ad}_g$ of the Lie algebra $\lie(G)$. The map
	\begin{align}\label{eq:adjoint}
		\mrm{Ad}:G\longrightarrow \mrm{Aut}_{\bb{Q}_p}(\lie(G)),\ g\mapsto \mrm{Ad}_g,
	\end{align}
	is a continuous group homomorphism, which makes $\lie(G)$ into a finite projective $\bb{Q}_p$-representation of $G$, which we call the \emph{adjoint representation} of $G$ (cf. \cite[Exercise 9.11]{ddms1999lie}). This construction is functorial in $G$. We remark that for $G=\mrm{GL}_d(\bb{Z}_p)$, the adjoint action is given by $\mrm{Ad}_X(Y)=XYX^{-1}$ for any $X\in \mrm{GL}_d(\bb{Z}_p)$ and $Y\in \lie(G)=\mrm{M}_d(\bb{Q}_p)$.
\end{mypara}

\section{Infinitesimal Actions of Representations arising from Kummer Towers}\label{sec:kummer}
\begin{mydefn}\label{defn:ht1-prime}
	Let $A$ be a ring, $\pi$ an element of $A$. We denote by $\ak{S}_{\pi}(A)$ the set of prime ideals $\ak{p}$ of height $1$ containing $\pi$. 
\end{mydefn}
We remark that for a Noetherian normal domain $A$ with a non-zero element $\pi$, the set $\ak{S}_{\pi}(A)$ coincides naturally with the finite set of generic points of $\spec(A/\pi A)$, and $A_{\ak{p}}$ is a discrete valuation ring for any $\ak{p}\in \ak{S}_{\pi}(A)$.

\begin{mylem}\label{lem:ht1-prime-map}
	Let $A\to B$ be an injective and integral homomorphism of domains with $A$ normal, $\pi$ an element of $A$. Then, the inverse image of $\ak{S}_{\pi}(A)$ via the map $\spec(B)\to \spec(A)$ is $\ak{S}_{\pi}(B)$, and the induced map $\ak{S}_{\pi}(B)\to \ak{S}_{\pi}(A)$ is surjective.
\end{mylem}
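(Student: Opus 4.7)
The plan is to deduce the lemma from the Cohen--Seidenberg theorems. Since $A\to B$ is integral and injective, with $A$ a normal domain and $B$ a domain (hence a torsion-free $A$-module), all of lying-over, incomparability, going-up, and going-down are available. Containment of $\pi$ is trivially preserved in both directions under contraction: for $\ak{q}\in\spec(B)$ with $\ak{p}=\ak{q}\cap A$, one has $\pi\in\ak{q}$ iff $\pi\in\ak{p}$, because $\pi\in A$. So both assertions of the lemma reduce to the height-preservation statement $\mrm{ht}(\ak{q})=\mrm{ht}(\ak{p})$.

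I would establish this equality by proving both inequalities separately. For $\mrm{ht}(\ak{p})\geq\mrm{ht}(\ak{q})$, take any strict chain $\ak{q}_0\subsetneq\cdots\subsetneq\ak{q}_h=\ak{q}$ in $B$; its contraction to $A$ remains a strict chain by incomparability, so $\mrm{ht}(\ak{p})\geq h$. For the reverse inequality $\mrm{ht}(\ak{q})\geq\mrm{ht}(\ak{p})$, take any strict chain $\ak{p}_0\subsetneq\cdots\subsetneq\ak{p}_h=\ak{p}$ in $A$ and iteratively apply going-down, starting from $\ak{q}$ over $\ak{p}$, to produce primes $\ak{q}_0\subsetneq\cdots\subsetneq\ak{q}_h=\ak{q}$ in $B$ with $\ak{q}_i\cap A=\ak{p}_i$. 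Together these give $\mrm{ht}(\ak{q})=\mrm{ht}(\ak{p})$, so a prime of $B$ lies in $\ak{S}_\pi(B)$ if and only if its contraction lies in $\ak{S}_\pi(A)$, which is the first assertion.

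For the surjectivity of the induced map $\ak{S}_\pi(B)\to\ak{S}_\pi(A)$, given $\ak{p}\in\ak{S}_\pi(A)$ I would apply lying-over to produce a prime $\ak{q}\in\spec(B)$ with $\ak{q}\cap A=\ak{p}$; the first assertion then forces $\ak{q}\in\ak{S}_\pi(B)$. No genuine technical obstacle is expected here: the hypotheses (integral, injective, $A$ normal, $B$ a domain) are precisely those required by the Cohen--Seidenberg going-down theorem, and torsion-freeness of $B$ over $A$ is automatic from $A\hookrightarrow B$ and $B$ being a domain.
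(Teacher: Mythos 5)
Your proof is correct and uses the same toolkit as the paper (lying-over for surjectivity, incomparability and going-down — which is exactly where normality of $A$ and $B$ being a domain are needed — for heights). The only stylistic difference is that you derive the slightly stronger general fact $\mrm{ht}(\ak{q})=\mrm{ht}(\ak{q}\cap A)$, whereas the paper treats the height-one case directly; both arguments are essentially the same.
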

\begin{proof}
	Firstly, we note that $\spec(B)\to \spec(A)$ is surjective. For any $\ak{q}\in\spec(B)$ with image $\ak{p}\in\spec(A)$, if $\ak{p}$ is of height $1$, then so is $\ak{q}$, since any point of $\spec(B)$ is closed in its fibre over $\spec(A)$; conversely, if $\ak{q}$ is of height $1$, then so is $\ak{p}$, since $A\to B$ satisfies going down property (\cite[\href{https://stacks.math.columbia.edu/tag/00H8}{00H8}]{stacks-project}). This shows that the inverse image of $\ak{S}_{\pi}(A)$ via the surjection $\spec(B)\to \spec(A)$ is $\ak{S}_{\pi}(B)$.
\end{proof}

\begin{myprop}\label{prop:ht1-prime-inj}
	Let $A\to B$ be an injective and integral homomorphism of normal domains with $A$ Noetherian, $\pi$ a nonzero element of $A$. We assume that $B$ is the union of a directed system $(B_\lambda)_{\lambda\in\Lambda}$ of Noetherian normal $A$-subalgebras. 
	\begin{enumerate}
		\renewcommand{\labelenumi}{{\rm(\theenumi)}}
		\item We have $\ak{S}_\pi(B)=\lim_{\lambda\in\Lambda^{\oppo}} \ak{S}_\pi(B_\lambda)$, and for each $\ak{q}\in \ak{S}_\pi(B)$, if we denote by $\ak{q}_\lambda\in \ak{S}_\pi(B_\lambda)$ its image, then $(B_{\lambda,\ak{q}_\lambda})_{\lambda\in\Lambda}$ is a directed system of discrete valuation rings with faithfully flat transition maps, whose colimit is $B_{\ak{q}}$, a valuation ring of height $1$.\label{item:prop:ht1-prime-inj-1}
		\item For any integer $n>0$, the natural map 
		\begin{align}\label{eq:prop:ht1-prime-inj-1}
			B/\pi^n B\longrightarrow \prod_{\ak{q}\in\ak{S}_{\pi}(B)} B_{\ak{q}}/\pi^n B_{\ak{q}}
		\end{align}
		is injective, which thus induces an injective map of $\pi$-adic completions
		\begin{align}\label{eq:prop:ht1-prime-inj-2}
			\widehat{B}\longrightarrow \prod_{\ak{q}\in\ak{S}_{\pi}(B)} \widehat{B_{\ak{q}}}.
		\end{align}\label{item:prop:ht1-prime-inj-2}
	\end{enumerate}
\end{myprop}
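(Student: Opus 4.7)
The two claims should be handled in order, with part (1) feeding into part (2). For part (1), I would first identify $\ak{S}_\pi$ with a projective limit of finite sets using Lemma \ref{lem:ht1-prime-map}, and then check that each stalk $B_\ak{q}$ is a filtered union of DVRs in a common ambient field. For part (2), the idea is to reduce to the Noetherian case (which is classical Krull-domain content) by using faithful flatness from part (1) to descend the divisibility condition from $B_\ak{q}$ to some $B_{\lambda,\ak{q}_\lambda}$.

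\textbf{Part (1).} Since filtered colimits commute with $\spec$ (as a topological space) and with localization, the equality $\ak{S}_\pi(B) = \lim_{\lambda \in \Lambda^{\oppo}} \ak{S}_\pi(B_\lambda)$ follows from Lemma \ref{lem:ht1-prime-map} applied to each transition $B_\lambda \to B_{\lambda'}$ and to $B_\lambda \to B$: a height-$1$ prime of $B$ containing $\pi$ restricts to such a prime in each $B_\lambda$, and the finite (hence compact) sets $\ak{S}_\pi(B_\lambda)$ with surjective transition maps yield a nonempty, exhausting projective limit. For fixed $\ak{q} \in \ak{S}_\pi(B)$ with images $\ak{q}_\lambda$, localization-colimit interchange gives $B_\ak{q} = \colim_\lambda B_{\lambda,\ak{q}_\lambda}$. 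Each $B_{\lambda,\ak{q}_\lambda}$ is a DVR, being the localization of a Noetherian normal domain at a height-$1$ prime. The transition map $B_{\lambda,\ak{q}_\lambda} \to B_{\lambda',\ak{q}_{\lambda'}}$ is injective (subrings of the common fraction field) and local (since $\ak{q}_\lambda$ is the contraction of $\ak{q}_{\lambda'}$), and any injective local map from a DVR into a torsion-free module is flat, hence faithfully flat. The colimit is therefore a filtered union of valuation rings of a single field, which is itself a valuation ring; height $1$ is forced by the fact that $\ak{q}$ has height $1$ in $B$ (Lemma \ref{lem:ht1-prime-map}).

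\textbf{Part (2).} I would first treat the Noetherian-normal case separately: for a Noetherian normal domain $A$, if $x \in A$ and $x/\pi^n \in A_\ak{p}$ for every $\ak{p} \in \ak{S}_\pi(A)$, then for every height-$1$ prime $\ak{p}$ of $A$ not containing $\pi$ we trivially have $x/\pi^n \in A_\ak{p}$, so $x/\pi^n \in \bigcap_{\ak{p}\text{ ht }1} A_\ak{p} = A$ by the Krull characterization of $A$. Thus $A/\pi^n A \hookrightarrow \prod_{\ak{p} \in \ak{S}_\pi(A)} A_\ak{p}/\pi^n A_\ak{p}$. For general $B$, take $x \in B$ whose image on the right of \eqref{eq:prop:ht1-prime-inj-1} is zero, and pick $\lambda$ with $x \in B_\lambda$. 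For each $\ak{q}_\lambda \in \ak{S}_\pi(B_\lambda)$, choose a preimage $\ak{q} \in \ak{S}_\pi(B)$ via the surjection of Lemma \ref{lem:ht1-prime-map}; by part (1) the map $B_{\lambda,\ak{q}_\lambda} \to B_\ak{q}$ is faithfully flat, so
\[
\pi^n B_{\lambda,\ak{q}_\lambda} = \pi^n B_\ak{q} \cap B_{\lambda,\ak{q}_\lambda} \ni x.
\]
Applying the Noetherian case to $B_\lambda$ gives $x \in \pi^n B_\lambda \subseteq \pi^n B$, proving \eqref{eq:prop:ht1-prime-inj-1}. The injectivity of \eqref{eq:prop:ht1-prime-inj-2} then follows by taking the inverse limit over $n$ and using left-exactness of $\lim$.

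\textbf{Main obstacle.} The genuinely nontrivial step is the reduction in part (2): one must know that faithful flatness of $B_{\lambda,\ak{q}_\lambda} \to B_\ak{q}$ propagates the containment $x \in \pi^n B_\ak{q}$ back to $B_{\lambda,\ak{q}_\lambda}$, and must arrange that a single $\lambda$ works simultaneously for all finitely many $\ak{q}_\lambda \in \ak{S}_\pi(B_\lambda)$. Finiteness of $\ak{S}_\pi(B_\lambda)$ (from $B_\lambda$ being Noetherian) is what makes this harmless. The rest of the argument is essentially bookkeeping on filtered colimits.
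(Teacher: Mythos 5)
Your proof is correct and follows essentially the same route as the paper: part (1) via \ref{lem:ht1-prime-map} and the DVR/faithful-flatness structure of the localizations, and part (2) by reducing to the Noetherian normal domain $B_\lambda$ (where $B_\lambda = \bigcap_{\operatorname{ht}\,\mathfrak{p}=1} B_{\lambda,\mathfrak{p}}$) and propagating the containment $x\in\pi^n B_{\mathfrak{q}}$ back to $B_{\lambda,\mathfrak{q}_\lambda}$ by faithful flatness. The only cosmetic difference is that you do an element-chase choosing a single preimage $\mathfrak{q}\in f_\lambda^{-1}(\mathfrak{q}_\lambda)$ for each $\mathfrak{q}_\lambda$, whereas the paper organizes the same content as a composition of two injective maps $B_\lambda/\pi^n\hookrightarrow\prod_{\mathfrak{q}_\lambda}B_{\lambda,\mathfrak{q}_\lambda}/\pi^n\hookrightarrow\prod_{\mathfrak{q}}B_{\mathfrak{q}}/\pi^n$ before passing to the filtered colimit; the underlying ideas (finiteness of $\ak{S}_\pi(B_\lambda)$, faithful flatness, and Krull's intersection characterization of normal Noetherian domains) are identical.
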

\begin{proof}
	(\ref{item:prop:ht1-prime-inj-1}) Since $\spec(B)=\lim_{\lambda\in \Lambda^{\oppo}}\spec(B_{\lambda})$ by \cite[8.2.10]{ega4-3}, we have $\ak{S}_{\pi}(B)=\lim_{\lambda\in \Lambda^{\oppo}} \ak{S}_{\pi}(B_{\lambda})$ by \ref{lem:ht1-prime-map}. Since $B_\lambda$ is a Noetherian normal domain, its localization $B_{\lambda,\ak{q}_\lambda}$ is a discrete valuation ring, and the transition map $B_{\lambda,\ak{q}_\lambda}\to B_{\lambda',\ak{q}_{\lambda'}}$ for $\lambda\leq \lambda'$ is local and injective, thus an extension of discrete valuation rings, which completes the proof. 
	
	(\ref{item:prop:ht1-prime-inj-2}) For each $\lambda$, the map $B_\lambda/\pi^n B_\lambda\to \prod_{\ak{q}_\lambda\in\ak{S}_{\pi}(B_\lambda)} B_{\lambda,\ak{q}_\lambda}/\pi^n B_{\lambda,\ak{q}_\lambda}$ is injective by \cite[\href{https://stacks.math.columbia.edu/tag/031T}{031T}, \href{https://stacks.math.columbia.edu/tag/0311}{0311}]{stacks-project}. If we denote by $f_\lambda:\ak{S}_\pi(B)\to \ak{S}_\pi(B_\lambda)$ the natural surjection, then
	\begin{align}
		B_{\lambda,\ak{q}_\lambda}/\pi^n B_{\lambda,\ak{q}_\lambda}\longrightarrow \prod_{\ak{q}\in f_\lambda^{-1}(\ak{q}_{\lambda})} B_{\ak{q}}/\pi^n B_{\ak{q}}
	\end{align}
	is injective as $B_{\lambda,\ak{q}_\lambda}\to B_{\ak{q}}$ is faithfully flat. Thus, the composition of the two previous maps, $B_\lambda/\pi^n B_\lambda\to \prod_{\ak{q}\in\ak{S}_{\pi}(B)} B_{\ak{q}}/\pi^n B_{\ak{q}}$, is injective. The conclusion follows from taking filtered colimit on $\lambda\in\Lambda$.
\end{proof}

\begin{myrem}\label{rem:int-clos-fini}
	Let $A\to B$ be an injective and integral homomorphism of normal domains with $A$ Noetherian. We remark that if the fraction field of $B$ is a finite separable extension of that of $A$, then $B$ is finite over $A$ (\cite[\href{https://stacks.math.columbia.edu/tag/032L}{032L}]{stacks-project}). Thus, the assumption of \ref{prop:ht1-prime-inj} is satisfied if the fraction field of $B$ is a separable extension of that of $A$.
\end{myrem}

\begin{mydefn}\label{defn:tower}
	A \emph{tower of normal domains} is a directed system $(A_\lambda)_{\lambda\in \Lambda}$ of normal domains with injective and integral transition morphisms. We denote by $A_\infty$ the colimit of $(A_\lambda)_{\lambda\in \Lambda}$.
\end{mydefn}

We remark that if $(\ca{K}_\lambda)_{\lambda\in \Lambda}$ is the tower of the fraction fields of a tower of normal domains $(A_\lambda)_{\lambda\in \Lambda}$, then in fact $A_\lambda$ is the integral closure of $A_{\lambda_0}$ in $\ca{K}_\lambda$ for $\lambda,\lambda_0\in \Lambda$ with $\lambda\geq \lambda_0$. Moreover, for any element $\pi\in A_{\lambda_0}$, we obtain an inverse system of sets $(\ak{S}_\pi(A_\lambda))_{\lambda\in\Lambda_{\geq\lambda_0}^{\oppo}}$ with surjective transition maps by \ref{lem:ht1-prime-map}, and we have $\ak{S}_\pi(A_\infty)=\lim_{\lambda\in\Lambda_{\geq\lambda_0}^{\oppo}} \ak{S}_\pi(A_\lambda)$.

\begin{mylem}\label{lem:p-adic-separated1}
	Let $A$ be a ring, $\pi$ an element of $A$, $M\to N$ an injective homomorphism of $\pi$-torsion free $A$-modules. Assume that $M=M[1/\pi]\cap N\subseteq N[1/p]$. Then, for any integer $n>0$, the homomorphism 
	\begin{align}
		M/\pi^n M\longrightarrow N/\pi^n N
	\end{align}
	is injective. In particular, the homomorphism of the $\pi$-adic completions (endowed with the $\pi$-adic topology) $\widehat{M}\to \widehat{N}$ is a closed embedding.
\end{mylem}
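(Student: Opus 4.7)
The plan is to prove the mod-$\pi^n$ injectivity directly from the hypothesis $M = M[1/\pi]\cap N$, and then deduce the completion statement by combining left-exactness of the inverse limit with a short topological argument. I expect no serious obstacle; the only subtle point is justifying that the subspace topology on $\widehat{M}$ inherited from $\widehat{N}$ coincides with its own $\pi$-adic topology.

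First, I would fix $n>0$ and take $x\in M$ whose image in $N/\pi^n N$ vanishes, i.e.\ $x = \pi^n y$ for some $y\in N$. Using $\pi$-torsion freeness, I view $M$ as a submodule of $N\subseteq N[1/\pi]$, so $y = x/\pi^n$ makes sense in $M[1/\pi]$. Thus $y\in M[1/\pi]\cap N$, which by hypothesis equals $M$, giving $x\in \pi^n M$. This proves injectivity of $M/\pi^n M\to N/\pi^n N$ for every $n>0$.

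Next, taking the inverse limit over $n$ of the exact sequences $0\to M/\pi^n M\to N/\pi^n N$ yields an injection $\widehat{M}\to \widehat{N}$ by the left-exactness of $\varprojlim$. For the closed embedding claim, I would argue that $\pi^n\widehat{M} = \widehat{M}\cap \pi^n\widehat{N}$: the inclusion $\subseteq$ is automatic, and conversely an element of $\widehat{M}$ lying in $\pi^n\widehat{N}$ has zero image in $\widehat{N}/\pi^n\widehat{N} = N/\pi^n N$, hence zero image in the submodule $M/\pi^n M$, hence lies in the kernel $\pi^n\widehat{M}$ of $\widehat{M}\to M/\pi^n M$. This shows the subspace topology from $\widehat{N}$ agrees with the $\pi$-adic topology on $\widehat{M}$. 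Since $\widehat{M}$ is complete for its $\pi$-adic topology, its image in $\widehat{N}$ is a complete, hence closed, subspace, yielding the desired closed embedding.
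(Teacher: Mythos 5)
Your proof is correct and follows essentially the same route as the paper: prove $\pi^n M = M\cap\pi^n N$ directly from the hypothesis $M = M[1/\pi]\cap N$, then use that $\widehat M$ is complete (hence closed in the separated group $\widehat N$) once the subspace topology is identified with the $\pi$-adic one. You make the topology comparison more explicit by showing $\pi^n\widehat M = \widehat M\cap\pi^n\widehat N$ via the identifications $\widehat M/\pi^n\widehat M\cong M/\pi^n M$ and $\widehat N/\pi^n\widehat N\cong N/\pi^n N$ (valid since $(\pi)$ is finitely generated), which the paper leaves implicit before citing Bourbaki.
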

\begin{proof}
	Firstly, we show that $\pi^nM=M\cap \pi^n N$ (i.e. $M/\pi^n M\to N/\pi^n N$ is injective). For $x\in M\cap \pi^n N$, $\pi^{-n}x\in M[1/\pi]$ lies in $N$. Hence, $\pi^{-n}x\in M[1/\pi]\cap N=M$, which proves the assertion. Then, we see that $\widehat{M}\to \widehat{N}$ is injective and that the $\pi$-adic topology on $\widehat{M}$ coincides with the topology induced from the $\pi$-adic topology of $\widehat{N}$. Since $\widehat{M}$ is complete and $\widehat{N}$ is separated, $\widehat{M}$ is closed in $\widehat{N}$ (\cite[\Luoma{2}.16, Proposition 8]{bourbaki1971top1-4}). Thus, $\widehat{M}$ identifies with a closed topological subgroup of $\widehat{N}$.
\end{proof}

\begin{mydefn}\label{defn:tower-completion}
	Let $K$ be a valuation field of height $1$ with a non-zero element $\pi$ in its maximal ideal, $(A_\lambda)_{\lambda\in \Lambda}$ a tower of normal domains flat over $\ca{O}_K$. The $\pi$-adic completions form a directed system $(\widehat{A}_\lambda)_{\lambda\in \Lambda}$ of flat $\ca{O}_K$-algebras, whose transition maps are closed embeddings with respect to the $\pi$-adic topology by \ref{lem:p-adic-separated1}. We set 
	\begin{align}
		\widetilde{A}_\infty=\colim_{\lambda\in \Lambda}\widehat{A}_\lambda.
	\end{align}
	As $\widehat{A}_\lambda/\pi^n \widehat{A}_\lambda\to \widetilde{A}_\infty/\pi^n\widetilde{A}_\infty$ is also injective for any $n\in \bb{N}$ by \ref{lem:p-adic-separated1}, we see that $\widehat{A}_\lambda\to \widetilde{A}_\infty$ is also a closed embedding with respect to the $\pi$-adic topology and that $\widetilde{A}_\infty$ is $\pi$-adically separated. Thus, we always regard $\widetilde{A}_\infty$ as a topological $\ca{O}_K$-subalgebra of the $\pi$-adic completion $\widehat{A}_\infty$ of $A_\infty$.
\end{mydefn}

\begin{mydefn}\label{defn:infinitesimal}
	Let $M$ be a separated topological $\bb{Q}_p$-module endowed with a continuous action of a pro-$p$ group $G$. For any $g\in G$ and $x\in M$, if the limit
	\begin{align}\label{eq:infinitesimal}
		\lim_{\bb{Z}_p\setminus\{0\}\ni a\to 0} a^{-1}(g^a-1)(x)
	\end{align} 
	exists in $M$, then we denote it by $\varphi_g(x)$ and call the assignment $x\mapsto \varphi_g(x)$ the \emph{infinitesimal action} of $g$ on $x$. 
\end{mydefn}

The following lemma follows directly from the definition.

\begin{mylem}\label{lem:infinitesimal}
	Let $M$ be a separated topological $\bb{Q}_p$-module endowed with a continuous action of a pro-$p$ group $G$.
	\begin{enumerate}
		\renewcommand{\labelenumi}{{\rm(\theenumi)}}
		\item Assume that the infinitesimal action of an element $g\in G$ exists for any $x\in M$. Then, $\varphi_g:x\mapsto \varphi_g(x)$ is a $\bb{Q}_p$-linear endomorphism of $M$, and we have $\varphi_{g^a}=a\varphi_g$ for any $a\in\bb{Z}_p$. Moreover, for any $g'\in G$, the infinitesimal action of $g'gg'^{-1}$ also exists for any $x\in M$, and we have $\varphi_{g'gg'^{-1}}=g'\circ\varphi_g\circ g'^{-1}$. \label{item:infinitesimal-1}
		\item If the infinitesimal action of two elements $g,g'\in G$ with $gg'=g'g$ exists for any $x\in M$, then $g'\circ \varphi_g=\varphi_g\circ g'$, $\varphi_g\circ\varphi_{g'}=\varphi_{g'}\circ\varphi_g$, and $\varphi_{gg'}=\varphi_g+\varphi_{g'}$.\label{item:infinitesimal-2}
		\item Let $G'\to G$ be a continuous homomorphism of pro-$p$ groups, $M'$ a separated topological $\bb{Q}_p$-module endowed with a continuous action of a pro-$p$ group $G'$, $f:M\to M'$ a continuous $\bb{Q}_p$-homomorphism compatible with the actions of $G$ and $G'$, $x\in M$, $g'\in G'$. Assume that the infinitesimal actions of $g'$ and its image $g\in G$ exist for $f(x)\in M'$ and $x$ respectively. Then, $f(\varphi_g(x))=\varphi_{g'}(f(x))$.\label{item:infinitesimal-3}
	\end{enumerate}
\end{mylem}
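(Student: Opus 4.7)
All three items follow by unwinding Definition~\ref{defn:infinitesimal} and using that the action $G\times M\to M$ is continuous and $M$ is separated. I would handle the parts in the order (i), (iii), (ii), since (iii) is purely formal and the only real (modest) subtlety sits in the commutativity assertion of (ii). In each case the strategy is the same: expand $\varphi_g(x)=\lim_{a\to 0}a^{-1}(g^a-1)(x)$, manipulate the defining expression using continuity and $\bb{Q}_p$-linearity of the operators involved, and pass to the limit.

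For (i), $\bb{Q}_p$-linearity of $\varphi_g$ is inherited pointwise from that of each $a^{-1}(g^a-1)$ (limits in a separated topological $\bb{Q}_p$-module respect addition and scalar multiplication). The equality $\varphi_{g^a}=a\varphi_g$ comes, for $a\neq 0$, from the substitution $b\mapsto ab$ in
\[
b^{-1}((g^a)^b-1)(x)=a\cdot(ab)^{-1}(g^{ab}-1)(x),
\]
with $ab\to 0$ as $b\to 0$; the case $a=0$ is trivial. The conjugation formula uses $(g'gg'^{-1})^b=g'g^bg'^{-1}$ together with the continuity and $\bb{Q}_p$-linearity of the action of $g'$. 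For (iii), apply the continuous $\bb{Q}_p$-linear map $f$ inside the defining limit, use the intertwining relation $f\circ g^a=g'^a\circ f$, and pass $f$ through the limit.

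For (ii), the first identity $g'\circ\varphi_g=\varphi_g\circ g'$ is the special case of the conjugation formula in (i) with $g'gg'^{-1}=g$ (valid because $gg'=g'g$). The additivity $\varphi_{gg'}=\varphi_g+\varphi_{g'}$ uses $(gg')^a=g^ag'^a$, valid inside the abelian closed subgroup $\overline{\langle g,g'\rangle}$ of the pro-$p$ group $G$, together with the splitting
\[
a^{-1}((gg')^a-1)(x)=g^a\bigl(a^{-1}(g'^a-1)(x)\bigr)+a^{-1}(g^a-1)(x);
\]
letting $a\to 0$, $g^a\to 1$ in $G$, and joint continuity of $G\times M\to M$ makes the limit equal $\varphi_{g'}(x)+\varphi_g(x)$. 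Finally, for the commutativity $\varphi_g\circ\varphi_{g'}=\varphi_{g'}\circ\varphi_g$, I apply the first identity of (ii) to the commuting pair $(g^a,g')$ to obtain $g^a\circ\varphi_{g'}=\varphi_{g'}\circ g^a$; combining with $\bb{Q}_p$-linearity of $\varphi_{g'}$ yields the exchange relation
\[
a^{-1}(g^a-1)(\varphi_{g'}(x))=\varphi_{g'}\bigl(a^{-1}(g^a-1)(x)\bigr),
\]
and dually with $g$ and $g'$ swapped.

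\textbf{Main obstacle.} The only step requiring any care is this last commutativity: one must view both $\varphi_g(\varphi_{g'}(x))$ and $\varphi_{g'}(\varphi_g(x))$ as iterated limits (in opposite orders) of the single bilinear expression $F(a,b)=(ab)^{-1}((g^a-1)(g'^b-1))(x)$, which is symmetric in its two variables because $g^a$ and $g'^b$ commute as operators on $M$. For fixed $a$ (resp.\ $b$) the inner continuous operator $a^{-1}(g^a-1)$ (resp.\ $b^{-1}(g'^b-1)$) can be pulled through the limit in the other variable, reducing each iterated limit to the target quantity; the coincidence of the two then follows from the symmetry of $F$. All other steps are routine manipulations with continuity and $\bb{Q}_p$-linearity.
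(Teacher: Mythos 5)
Parts (i), (iii), and most of (ii) of your proof are correct: the linearity and scaling $\varphi_{g^a}=a\varphi_g$, the conjugation formula, the specialization $g'\circ\varphi_g=\varphi_g\circ g'$ for commuting elements, the additivity $\varphi_{gg'}=\varphi_g+\varphi_{g'}$ via joint continuity of the action, the compatibility (iii), and the exchange relation $a^{-1}(g^a-1)(\varphi_{g'}(x))=\varphi_{g'}\bigl(a^{-1}(g^a-1)(x)\bigr)$ are all established cleanly. (The paper gives no proof, asserting that everything follows directly from the definition, so there is nothing to compare against.)

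The gap is in the very step you flag as the main obstacle, namely $\varphi_g\circ\varphi_{g'}=\varphi_{g'}\circ\varphi_g$. You express $\varphi_g(\varphi_{g'}(x))$ and $\varphi_{g'}(\varphi_g(x))$ as iterated limits of $F(a,b)=(ab)^{-1}(g^a-1)(g'^b-1)(x)$ in opposite orders, and then claim they coincide by the ``symmetry of $F$.'' But the operator identity $(g^a-1)(g'^b-1)=(g'^b-1)(g^a-1)$ does not make $F$ symmetric under $(a,b)\mapsto(b,a)$ — it merely gives two ways to write the same quantity $F(a,b)$, and is useless for comparing $F(a,b)$ with $F(b,a)$. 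In general two iterated limits of a function of two variables need not agree even when each exists; nothing in your argument excludes this. Concretely, your own exchange relation gives $\varphi_g(\varphi_{g'}(x))=\lim_{a\to 0}\varphi_{g'}\bigl(a^{-1}(g^a-1)(x)\bigr)$, and to turn this into $\varphi_{g'}\bigl(\lim_{a\to 0}a^{-1}(g^a-1)(x)\bigr)=\varphi_{g'}(\varphi_g(x))$ you must pull $\varphi_{g'}$ through the limit, i.e.\ you need continuity of $\varphi_{g'}$. That is not among the hypotheses: $\varphi_{g'}$ is only a pointwise limit of continuous operators on a separated (not complete) topological $\bb{Q}_p$-module, and pointwise limits need not be continuous absent a Banach--Steinhaus type argument. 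So the resolution you offer for the obstacle is not valid; as written, the commutativity claim remains unproved.
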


\begin{myprop}[{\cite[5.3]{tsuji2018localsimpson}}]\label{prop:derivative}
	Let $A$ be a topological $\bb{Q}_p$-algebra endowed with a continuous action of a topological group $G$. Assume that $G$ contains a pro-$p$ open subgroup $G_0$ of finite index and that there exists a tower $(A_\lambda)_{\lambda\in \Lambda}$ of normal domains flat over $\bb{Z}_p$ such that there is an isomorphism of topological rings $A\cong \widetilde{A}_\infty[1/p]$ and that for any $\lambda\in\Lambda$ the subalgebra $\widehat{A}_\lambda[1/p]$ is $G_0$-stable and invariant by an open subgroup of $G_0$ (via the isomorphism $A\cong \widetilde{A}_\infty[1/p]$). Then, for any $g\in G$ and any object $W$ of $\repnpr(G,A)$, there exists a unique $A$-linear endomorphism $\varphi_g|_W$ of $W$ satisfying the following conditions (we simply write $\varphi_g|_W$ by $\varphi_g$ if there is no ambiguity):
	\begin{enumerate}
		\renewcommand{\labelenumi}{{\rm(\theenumi)}}
		\item For any $g\in G_0$, $\varphi_g$ is the infinitesimal action \eqref{eq:infinitesimal} of $g$ on $W$.\label{item:prop:derivative-1}
		\item For any $g\in G$ and $n\in\bb{N}$, we have $\varphi_{g^n}=n\varphi_g$.\label{item:prop:derivative-2}
		\item For any $g\in G_0$ and $x\in W$, there exists $m_x\in \bb{N}$ such that for any $a\in p^{m_x}\bb{Z}_p$,
		\begin{align}\label{eq:prop:derivative1}
			g^a(x)=\exp(a\varphi_g)(x)=\sum_{k=0}^\infty \frac{a^k}{k!}(\underbrace{\varphi_g\circ\cdots \circ\varphi_g}_{k\trm{ copies}})(x).
		\end{align}\label{item:prop:derivative-3}
	\end{enumerate}
	In particular, $\varphi_g|_W$ does not depend on the choice of $G_0$ or the tower $(A_\lambda)_{\lambda\in \Lambda}$. Thus, we still call it the \emph{infinitesimal action} of $g\in G$ on $W$.
\end{myprop}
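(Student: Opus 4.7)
I will first treat uniqueness, then construct the endomorphism. For uniqueness, condition (1) directly specifies $\varphi_g|_W$ as the infinitesimal action \eqref{eq:infinitesimal} whenever $g\in G_0$. Since $[G:G_0]<\infty$, every $g\in G$ admits some $n\geq 1$ with $g^n\in G_0$, and condition (2) then forces $\varphi_g|_W=n^{-1}\varphi_{g^n}|_W$; independence of $n$ will follow by applying (2) twice: if also $g^m\in G_0$, then $\varphi_{g^{nm}}=n\varphi_{g^m}=m\varphi_{g^n}$, so $n^{-1}\varphi_{g^n}=m^{-1}\varphi_{g^m}$. Since conditions (1)--(3) are intrinsic to $W$, once existence is established the resulting endomorphism will automatically be independent of $G_0$ and of the tower $(A_\lambda)_{\lambda\in\Lambda}$.

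For existence, my plan is to construct $\varphi_g$ for $g\in G_0$ by reducing locally to the finite-dimensional Banach situation of \ref{exmp:gl_n}. Fix $x\in W$ and choose $A$-generators $w_1,\dots,w_n$ of $W$ with $x$ among them. Since $W$ is finitely presented and $A=\colim_\lambda\widehat{A}_\lambda[1/p]$, some $\lambda\in\Lambda$ will support all the defining relations; I then set $M=\sum_i\widehat{A}_\lambda[1/p]w_i\subseteq W$, a finitely generated $\widehat{A}_\lambda[1/p]$-submodule with $A\otimes_{\widehat{A}_\lambda[1/p]}M\iso W$. After enlarging $\lambda$ we may assume some open subgroup $H\subseteq G_0$ fixes $\widehat{A}_\lambda[1/p]$ pointwise, and combining continuity of the action with compactness of $G_0$, we further enlarge $\lambda$ and shrink $H$ so that $H$ stabilizes $M$ and hence acts $\widehat{A}_\lambda[1/p]$-linearly on it. Because $M$ is finite projective over the $\bb{Q}_p$-Banach algebra $\widehat{A}_\lambda[1/p]$, the ring $\mrm{End}_{\widehat{A}_\lambda[1/p]}(M)$ inherits a $\bb{Q}_p$-Banach algebra structure in which the matrix exponential and logarithm \eqref{eq:matrix-exp}--\eqref{eq:matrix-log} converge on a suitable neighbourhood of identity in $\mrm{Aut}_{\widehat{A}_\lambda[1/p]}(M)$, exactly as in \ref{exmp:gl_n}.

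For $g$ in a small enough open subgroup $H'\subseteq H$ (depending on $x$), the induced automorphism of $M$ will lie in this convergence domain, and I would define $\varphi_g|_W$ on $M$ as its logarithm, extended $A$-linearly to $W$. The standard Banach identity $g^a=\exp(a\log g)$ on $M$ for $a\in\bb{Z}_p$ then yields condition (3), and differentiating at $a=0$ returns the limit \eqref{eq:infinitesimal}, yielding (1). For arbitrary $g\in G_0$, I pick $n$ with $g^{p^n}\in H'$ and put $\varphi_g(x)=p^{-n}\varphi_{g^{p^n}}(x)$; computing \eqref{eq:infinitesimal} via the exponential formula for $g^{p^n}$ shows that this agrees with the infinitesimal action of $g$ on $x$ and is independent of $n$ by \ref{lem:infinitesimal}\,(\ref{item:infinitesimal-1}). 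Finally I extend to $g\in G$ using condition (2), as in the uniqueness argument. The hardest step will be the reduction itself: ensuring that $M$ can be chosen stable under an open subgroup acting $\widehat{A}_\lambda[1/p]$-linearly requires a careful interplay between the colimit description $A=\colim_\lambda\widehat{A}_\lambda[1/p]$, the continuity of the $G$-action on $W$, and the compactness of $G_0$.
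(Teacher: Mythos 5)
Your proposal takes a genuinely different route from the paper for the heart of the argument. The paper handles the case $G=G_0$ by simply invoking the cited result \cite[5.3]{tsuji2018localsimpson} after observing that $\widetilde{A}_\infty[1/p]$ carries the norm from {\rm\ref{para:normed-module-2}}; it then passes to general $g\in G$ by setting $\varphi_g=r^{-1}\varphi_{g^r}$ with $r=[G:G_0]$ and checking the properties via {\rm\ref{lem:infinitesimal}}. Your treatment of uniqueness and of the extension to $g\notin G_0$ is essentially the same as the paper's; what differs is that you attempt to reprove Tsuji's lemma from scratch by descending $W$ to a finitely generated $\widehat{A}_\lambda[1/p]$-module and running a Banach-algebra $\exp$/$\log$ argument, rather than citing it.

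There is, however, a genuine gap at the step you yourself flag as the hardest. You claim that ``combining continuity of the action with compactness of $G_0$, we further enlarge $\lambda$ and shrink $H$ so that $H$ stabilizes $M$''. This does not follow from continuity and compactness alone. By {\rm\ref{defn:tower-completion}}, $\widehat{A}_\lambda[1/p]$ is a \emph{closed} but \emph{not open} subring of $A=\widetilde{A}_\infty[1/p]$, so $M=\sum_i\widehat{A}_\lambda[1/p]w_i$ is not an open submodule of $W$; making $h$ small does not force $h(w_i)\in M$. Nor does compactness save you: the image of $G_0\to W$, $h\mapsto h(w_i)$, is compact, but a compact subset of $W=\colim_\lambda(\widehat{A}_\lambda[1/p]\otimes_{\widehat{A}_\lambda[1/p]}M)$ need not be contained in any finite stage of the colimit. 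The standard way to make such a descent work requires the acting group to be topologically finitely generated, so that the images of finitely many topological generators land in some $\mrm{GL}_n(\widehat{A}_\lambda[1/p])$ and the closedness of $\widehat{A}_\lambda[1/p]$ pins down the whole image. But the $G_0$ of this proposition is only assumed pro-$p$ and open of finite index, which does not imply topological finite generation.

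The repair is within reach and close to what you ultimately use: since $\varphi_g(x)$ is a pointwise definition for a fixed $g$, it suffices to stabilize $M$ under the procyclic closure $\overline{\langle g\rangle}\subseteq G_0$ (or a finite-index open subgroup thereof), and that group \emph{is} topologically singly generated, so the cocycle/descent argument applies. You should replace the phrase ``shrink $H$ so that $H$ stabilizes $M$'' by a statement about some $g^{p^m}$ stabilizing $M$, and justify it by the finite-generation of $\overline{\langle g\rangle}$ together with the fact that the $\widehat{A}_\lambda[1/p]$ are closed and exhaust $A$. With that modification, your Banach $\exp$/$\log$ construction, the verification of conditions (1)--(3), and the $A$-linearity (which uses that every $\alpha\in A$ is fixed by an open subgroup of $G_0$ by hypothesis, so the Leibniz-rule term $\varphi_g(\alpha)$ vanishes) all go through, and you will have essentially reproved the cited lemma.
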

\begin{proof}
	Firstly, assume that $G=G_0$. Since $\widetilde{A}_\infty$ is a $p$-adically separated flat $\bb{Z}_p$-algebra, there is a canonical norm on $\widetilde{A}_\infty[1/p]$ which induces its $p$-adic topology (cf. \ref{para:normed-module-2}). Thus, we are in the situation of \cite[5.3]{tsuji2018localsimpson}, and the conclusion follows from it. In general, for any $g\in G$, we set $\varphi_g=r^{-1}\varphi_{g^r}$ where $r$ is the index of $G_0$ in $G$ and $\varphi_{g^r}$ is the infinitesimal action of $g^r\in G_0$ on finite projective $A$-representation $W$ of $G_0$ (defined by restricting the $G$-action of $W$). One can check easily by \ref{lem:infinitesimal} that this $\varphi_g$ satisfies all the required properties.
\end{proof}

\begin{myrem}\label{rem:derivative}
	Let $A'$ be a topological $\bb{Q}_p$-algebra endowed with a continuous action of a topological group $G'$ satisfying the assumptions in {\rm\ref{prop:derivative}}. Assume that there is a morphism of topological groups $G'\to G$ and a morphism of topological rings $A\to A'$ which is compatible with the actions of $G$ and $G'$. For any object $W$ of $\repnpr(G,A)$, the base change $W'=A'\otimes_A W$ is naturally an object of $\repnpr(G',A')$ by \ref{defn:repn}. Then, for any $g'\in G'$ with image $g\in G$, we deduce from \eqref{eq:infinitesimal} and \ref{prop:derivative} that
	\begin{align}
		\varphi_{g'}|_{W'}=\id_{A'}\otimes \varphi_g|_W.
	\end{align}
\end{myrem}

\begin{mylem}\label{lem:derivative-uniform-cont}
	Under the assumptions in {\rm\ref{prop:derivative}}, assume further that $G$ acts trivially on $A$. Then, there is a pro-$p$ open subgroup $G_1$ of $G$ such that the map
	\begin{align}\label{eq:rem:derivative-1}
		\phi:\bb{Z}_p\times G_1\times W\longrightarrow W,
	\end{align}
	sending $(0,g,x)$ to $\varphi_g(x)$ and sending $(a,g,x)$ to $a^{-1}(g^ax-x)$ for $a\neq 0$, is continuous.
\end{mylem}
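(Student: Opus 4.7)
The plan is to shrink $G$ to a pro-$p$ open subgroup $G_1$ on which the infinitesimal action $\varphi_g$ is realised as a uniformly convergent logarithm series of $\widetilde{A}_\infty$-linear operators, and then deduce continuity of $\phi$ from a corresponding uniform power-series expansion.

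Since $W$ is a finite projective $A$-representation of the pro-$p$ open subgroup $G_0$ with $A=\widetilde{A}_\infty[1/p]$, I would first produce a $G_0'$-stable finitely generated $\widetilde{A}_\infty$-submodule $W_0\subseteq W$ generating $W$ over $A$ for some open subgroup $G_0'\subseteq G_0$: this uses the hypothesis that the $G_0$-stable subalgebras $\widehat{A}_\lambda[1/p]$ are each invariant under an open subgroup of $G_0$, combined with continuity of the $G_0$-action. Using continuity of this action on the $p$-adically topologised lattice $W_0$ (cf.~\ref{para:canonical-top}), I can then pick a pro-$p$ open subgroup $G_1\subseteq G_0'$ such that $(g-1)W_0\subseteq p^N W_0$ for every $g\in G_1$, where $N$ is fixed large enough that all the power series below converge $p$-adically (any $N>1+1/(p-1)$ suffices). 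Because $G$ acts trivially on $A$, every element of $G$ acts $\widetilde{A}_\infty$-linearly on $W_0$, so the operators $g-1$ lie in $\mrm{End}_{\widetilde{A}_\infty}(W_0)$.

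For $g\in G_1$, define
\begin{align*}
L(g)=\sum_{k\geq 1}\frac{(-1)^{k-1}}{k}(g-1)^k,
\end{align*}
an $\widetilde{A}_\infty$-linear endomorphism of $W_0$ by $p$-adic convergence. Standard $\log$--$\exp$ identities show that for each $g\in G_1$ the map $a\mapsto \exp(aL(g))$ is a continuous group homomorphism $\bb{Z}_p\to \mrm{Aut}_{\widetilde{A}_\infty}(W_0)$ agreeing with $a\mapsto g^a$ on a neighbourhood of $0$, hence on all of $\bb{Z}_p$ since the $p$-adic topological group $1+p^N\mrm{End}_{\widetilde{A}_\infty}(W_0)$ is torsion free. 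Matching this with the characterisation of $\varphi_g$ in \ref{prop:derivative}(\ref{item:prop:derivative-3}) yields $\varphi_g|_{W_0}=L(g)$. For $a\in\bb{Z}_p$ and $x\in W_0$ one then obtains
\begin{align*}
\phi(a,g,x)=\varphi_g(x)+\sum_{k\geq 2}\frac{a^{k-1}}{k!}L(g)^k(x),
\end{align*}
valid also at $a=0$ (both sides equal $\varphi_g(x)$). Each term is continuous in $(a,g,x)\in\bb{Z}_p\times G_1\times W_0$ because $g\mapsto (g-1)\bmod p^n$, and hence $g\mapsto L(g)^k\bmod p^n$, is continuous for each $n$; the full series converges uniformly by the same estimate used to define $L(g)$. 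Continuity of $\phi$ on $\bb{Z}_p\times G_1\times W_0$ thus follows, and extends to $\bb{Z}_p\times G_1\times W$ by $\bb{Q}_p$-linearity in $x$ together with $W=\bigcup_{n\geq 0}p^{-n}W_0$.

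The main technical obstacle is the uniform shrinking step: one needs the index $m_x$ in \ref{prop:derivative}(\ref{item:prop:derivative-3}) to be bounded independently of $x\in W_0$, so that the identity $g^a=\exp(a\varphi_g)$ holds for the full range $a\in\bb{Z}_p$ simultaneously on the whole lattice. This is precisely what the integral uniform estimate $(g-1)W_0\subseteq p^N W_0$ delivers, reducing the remaining argument to elementary manipulations of $p$-adically convergent power series.
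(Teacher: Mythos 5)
Your strategy is in essence the same as the paper's: both arguments reduce continuity of $\phi$ to continuity of a map valued in $\mrm{End}_A(W)$, and both rely on the $p$-adic $\log$--$\exp$ expansion together with uniform estimates on a small enough pro-$p$ open subgroup. The difference is packaging: you work intrinsically with a $G_1$-stable $\widetilde{A}_\infty$-lattice $W_0$, whereas the paper fixes a surjection $\psi:\mrm{M}_r(A)\to\mrm{End}_A(W)$ and estimates with matrix lifts of $g_1,g_2$.

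There is, however, a real gap where you declare $L(g)=\sum_{k\geq 1}\frac{(-1)^{k-1}}{k}(g-1)^k$ to be ``an $\widetilde{A}_\infty$-linear endomorphism of $W_0$ by $p$-adic convergence.'' The ring $\widetilde{A}_\infty=\colim_\lambda\widehat{A}_\lambda$ is $p$-adically separated but not $p$-adically complete, so a finitely generated $\widetilde{A}_\infty$-submodule $W_0\subseteq W$ is in general not $p$-adically complete; a Cauchy series in $W_0$ need not converge in $W_0$. You do know the limit $L(g)(x)=\varphi_g(x)$ exists in $W$ by \ref{prop:derivative}, but without $L(g)(W_0)\subseteq W_0$ you cannot form the iterates $L(g)^k(x)$ in your power series for $\phi$, nor carry out the uniform term-by-term estimate over $W_0$. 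Your bound $(g-1)W_0\subseteq p^NW_0$ controls the \emph{partial sums}, not the limit, so it does not by itself give $L(g)W_0\subseteq W_0$. This is exactly the point the paper is careful about: it lifts the images of $g_1,g_2$ to matrices $P_1,P_2\in\id+p^2\mrm{M}_r(\widehat{A}_\lambda)$ for a single $\lambda$, and since $\widehat{A}_\lambda$ is $p$-adically complete, the $\log$-series and all subsequent estimates live inside $\mrm{M}_r(\widehat{A}_\lambda)$. Your argument can be repaired by first descending $W$ to a finite projective $\widehat{A}_\lambda[1/p]$-representation of an open subgroup of $G_0$ (via \cite[5.2.(1)]{tsuji2018localsimpson}, as is done in the proof of \ref{cor:operator}) and then taking $W_0$ to be a $G_1$-stable $\widehat{A}_\lambda$-lattice; note that the ``$\widehat{A}_\lambda[1/p]$ invariant under an open subgroup'' hypothesis you invoke is automatic here because $G$ acts trivially on $A$ --- what is actually needed from $\widehat{A}_\lambda$ is its $p$-adic completeness. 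Finally, the appeal to torsion-freeness to pass from a neighbourhood of $0$ to all of $\bb{Z}_p$ is unnecessary: $\exp(nL(g))=g^n$ for $n\in\bb{Z}$ by the formal $\log$--$\exp$ identity, and density of $\bb{Z}$ in $\bb{Z}_p$ together with continuity of both sides gives the identity for all $a\in\bb{Z}_p$.
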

\begin{proof}
	Since $G$ acts trivially on $A$ by assumption, for any pro-$p$ open subgroup $G_1$ of $G$, $\phi$ induces a map
	\begin{align}
		\widetilde{\phi}:\bb{Z}_p\times G_1\longrightarrow \mrm{End}_A(W),
	\end{align}
	sending $(0,g)$ to $\varphi_g$ and sending $(a,g)$ to $a^{-1}(g^a-1)$ for $a\neq 0$. We fix an $A$-linear surjection $A^r\to W$. As $W$ is finite projective over $A$, we get an $A$-linear surjection $\psi:\mrm{M}_r(A)\to \mrm{End}_A(W)$ whose quotient topology on $\mrm{End}_A(W)$ coincides with its canonical topology (\ref{para:canonical-top}). As the matrix multiplication $\mrm{M}_r(A)\times A^r\to A^r$ is continuous, the natural $A$-linear homomorphism $\mrm{End}_A(W)\times W\to W$ is also continuous (\ref{para:canonical-top}). Thus, it suffices to show that $\widetilde{\phi}$ is continuous for some $G_1$. Moreover, by the definition \eqref{eq:infinitesimal} of $\varphi_g$, $\widetilde{\phi}$ is uniformly continuous if and only if its restriction on $(\bb{Z}_p\setminus\{0\})\times G_1$ is uniformly continuous (\cite[\Luoma{2}.20, Th\'eor\`eme 2]{bourbaki1971top1-4}). We claim that the latter holds for some $G_1$. 
	
	We note that the topology on $A$ is defined by the $p$-adic topology of $\widetilde{A}_\infty$. As $G$ acts continuously and $A$-linearly on $W$ by assumption, the induced map $G\to \mrm{End}_A(W)$ is continuous. For any $k\geq 2$, we take pro-$p$ open subgroups $G_k$ of $G$ whose image in $\mrm{End}_A(W)$ lies in $\psi(\id+p^k\mrm{M}_r(\widetilde{A}_\infty))$. For $g_1,g_2\in G_2$ with $g_1g_2^{-1}\in G_k$, let $P_1, P_2\in \id+p^2\mrm{M}_r(\widehat{A}_\lambda)$ be some liftings of the images of $g_1,g_2\in G_2$ respectively with $P_1P_2^{-1}\in \id+p^k\mrm{M}_r(\widehat{A}_\lambda)$ for some $\lambda\in\Lambda$. In particular, $P_1-P_2\in p^k\mrm{M}_r(\widehat{A}_\lambda)$, and  
	\begin{align}
		\log(P_1)-\log(P_2)=\sum_{n=1}^{\infty}\frac{(-1)^{n-1}}{n}((P_2-\id+(P_1-P_2))^n-(P_2-\id)^n)\in p^k\mrm{M}_r(\widehat{A}_\lambda)
	\end{align}
	as $p^{2(n-1)}\in n!\cdot\bb{Z}_p$ for any $n\geq 1$. Similarly, for any $a_1,a_2\in \bb{Z}_p\setminus\{0\}$ with $a_1-a_2\in p^k\bb{Z}_p$, we have
	\begin{align}
		a_1^{-1}(P_1^{a_1}-\id)-a_1^{-1}(P_2^{a_1}-\id)&=\sum_{n=1}^{\infty} a_1^{n-1}\frac{\log(P_1)^n-\log(P_2)^n}{n!} \in p^k\mrm{M}_r(\widehat{A}_\lambda),\\
		a_1^{-1}(P_2^{a_1}-\id)-a_2^{-1}(P_2^{a_2}-\id)&=\sum_{n=2}^{\infty} (a_1^{n-1}-a_2^{n-1})\frac{\log(P_2)^n}{n!} \in p^k\mrm{M}_r(\widehat{A}_\lambda).
	\end{align}
	Thus, $\widetilde{\phi}(a_1,g_1)-\widetilde{\phi}(a_2,g_2)$ belongs to $\psi(p^k\mrm{M}_r(\widetilde{A}_\infty))$, which implies that $\widetilde{\phi}$ is uniformly continuous by taking $G_1=G_2$.
\end{proof}

\begin{mycor}\label{cor:operator}
	Under the assumptions in {\rm\ref{prop:derivative}}, assume further that $G$ is a compact $p$-adic analytic group. Let $\lie(G)$ be the Lie algebra of $G$ over $\bb{Q}_p$. Then, there is a unique morphism of Lie algebras over $\bb{Q}_p$,
	\begin{align}\label{eq:lie-action}
		\varphi:\lie(G)\longrightarrow \mrm{End}_A(W),
	\end{align}
	such that its composition with the logarithm map of $G$ \eqref{eq:log-lie} $\log_G:G\to \lie(G)$ is the map $\varphi:G\to \mrm{End}_A(W)$ sending $g$ to the infinitesimal action $\varphi_g$ of $g\in G$ on $W$.
	
	We call $\varphi$ the canonical Lie algebra action induced by the infinitesimal action of $G$ on $W$, or simply the \emph{infinitesimal Lie algebra action}.
\end{mycor}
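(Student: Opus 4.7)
The plan is to build $\varphi$ on a uniform pro-$p$ open subgroup of $G$ using the infinitesimal action supplied by \ref{prop:derivative}, and then extend by $\bb{Q}_p$-linearity. Since $G$ is a compact $p$-adic analytic group, pick a uniform pro-$p$ open subgroup $H\subseteq G$. By \ref{para:L_G} the underlying set of $L_H$ equals $H$, and I define
\[
\varphi_0\colon L_H\longrightarrow \mrm{End}_A(W),\qquad u\longmapsto \varphi_u,
\]
where $\varphi_u$ is the infinitesimal action of $u\in H$ on $W$. I then extend $\bb{Q}_p$-linearly to obtain $\varphi\colon\lie(G)=L_H\otimes_{\bb{Z}_p}\bb{Q}_p\to\mrm{End}_A(W)$, and verify the required properties.

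The heart of the proof is to check that $\varphi_0$ is a $\bb{Z}_p$-linear Lie algebra homomorphism. Scalar multiplication is immediate: for $a\in\bb{Z}_p$, the Lie-algebra scalar $a\cdot u$ in $L_H$ coincides with the group power $u^a$ in $H$, so part~(2) of \ref{prop:derivative} gives $\varphi_0(au)=\varphi_{u^a}=a\varphi_u$. For the additivity identity $\varphi_0(u+_H v)=\varphi_u+\varphi_v$ and the bracket identity $\varphi_0([u,v]_H)=[\varphi_u,\varphi_v]$, I fix $x\in W$ and apply the exponential expansion \eqref{eq:prop:derivative1} to the elements $u^{p^n}$, $v^{p^n}$ and their iterated group products evaluated at $x$ for $n$ large; combined with a convergent Baker--Campbell--Hausdorff calculation in the matrix algebra containing the image of $H$ in $\mrm{End}_A(W)$ (as implicit in the proof of \ref{prop:derivative}), this yields
\[
\exp(p^n\varphi_u)\exp(p^n\varphi_v)(x)=\exp\bigl(p^n(\varphi_u+\varphi_v)+O(p^{2n})\bigr)(x)
\]
and an analogous commutator expansion. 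Taking the corresponding $p^{-n}$-th (respectively $p^{-2n}$-th) group powers in $H$ and comparing with the limit definitions \eqref{eq:para:L_G-add} and \eqref{eq:para:L_G-bracket} produces the required equalities as $n\to\infty$.

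The remaining checks are formal. Independence of $H$: any two uniform pro-$p$ open subgroups $H,H'\subseteq G$ share a common uniform pro-$p$ open refinement of the form $(H\cap H')^{p^N}$ for $N$ large, and the two constructions agree on its Lie algebra, hence on $\lie(G)$. Compatibility with $\log_G$: for $g\in G$ pick $r$ with $g^r\in H$; by the definition \eqref{eq:log-lie}, $\log_G(g)=r^{-1}\log_H(g^r)\in\lie(G)$, so $\varphi(\log_G(g))=r^{-1}\varphi_{g^r}=\varphi_g$ using part~(2) of \ref{prop:derivative}. Uniqueness is automatic since the image of $\log_G$ spans $\lie(G)$ over $\bb{Q}_p$. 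The main obstacle is the additivity/bracket computation above: the subtle point is that part~(3) of \ref{prop:derivative} only provides an $m_x$ depending on $x$, and when iterating the exponential along vectors of the form $v^{p^n}(x)$ one needs uniform control over the radius of convergence, which is afforded by the matrix model underlying \ref{prop:derivative} in which all $\varphi_g$ lie in a fixed $p$-adically bounded subring on which BCH converges.
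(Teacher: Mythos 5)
Your proof is structurally similar to the paper's (reduce to a uniform pro-$p$ open subgroup, use \ref{prop:derivative}.(\ref{item:prop:derivative-2}) for scalars, then check additivity and bracket against the limit formulas \eqref{eq:para:L_G-add} and \eqref{eq:para:L_G-bracket}), but you attack the additivity/bracket identities by a Baker--Campbell--Hausdorff computation rather than by the continuity route the paper actually takes, and there is a real gap in the way you justify the BCH step.

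You correctly identify the obstacle: \ref{prop:derivative}.(\ref{item:prop:derivative-3}) only yields an $m_x$ depending on $x$, so uniform control is needed. But your resolution, that "the matrix model underlying \ref{prop:derivative}" already places all $\varphi_g$ in a fixed $p$-adically bounded subring of $\mrm{End}_A(W)$, is an assertion without content in this setting. The ring $A=\widetilde{A}_\infty[1/p]$ is built from the separated but non-complete filtered colimit $\widetilde{A}_\infty$, and there is no a priori reason the $\varphi_g$ land in a bounded matrix subring over it; the series you want to manipulate need not converge in $\mrm{M}_r(\widetilde{A}_\infty)$. The paper supplies exactly the missing ingredient through two steps you skip: first, since $G_0$ is topologically finitely generated, one descends $W$ to a finite projective $\widehat{A}_\lambda[1/p]$-module $W_\lambda$ with $W=A\otimes W_\lambda$ (\cite[5.2.(1)]{tsuji2018localsimpson}) and invokes \ref{rem:derivative} to reduce the identities to $W_\lambda$; second, after passing to a uniform open $G_1\subseteq G_0$ acting trivially on $\widehat{A}_\lambda[1/p]$, \ref{lem:derivative-uniform-cont} establishes (by explicit lifting to $\id+p^2\mrm{M}_r(\widehat{A}_\lambda)$ and bounding $\log$ there) that $(a,g,x)\mapsto a^{-1}(g^ax-x)$, with $\varphi_g(x)$ as the $a=0$ value, is continuous. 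That continuity is what lets the paper pass to the limit in the sequences $\{(p^n,(g^{p^n}g'^{p^n})^{p^{-n}},x)\}$ and $\{(p^{2n},(g^{-p^n}g'^{-p^n}g^{p^n}g'^{p^n})^{p^{-2n}},x)\}$ coming from \eqref{eq:para:L_G-add} and \eqref{eq:para:L_G-bracket}, avoiding any BCH expansion of the operators themselves. Without the descent to $W_\lambda$ and the invocation of \ref{lem:derivative-uniform-cont} (or an equivalent explicit bound), the "uniform radius of convergence" you appeal to is not available, so your additivity and bracket identities are not actually proved.

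As a minor side remark, your argument for independence of $H$ uses "$(H\cap H')^{p^N}$", but $H\cap H'$ need not be uniform; you should instead take $H^{p^N}\subseteq H'$ for $N$ large, which is uniform because $H$ is. This point is harmless, though, since the colimit defining $\lie(G)$ is intrinsic and the transition maps are compatible with the $\varphi$'s by \ref{prop:derivative}.(\ref{item:prop:derivative-2}).
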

\begin{proof}
	Recall that $\lie(G)=\colim_{G_0} L_{G_0}\otimes_{\bb{Z}_p}\bb{Q}_p$ where the colimit is taken over the system of uniform pro-$p$ open subgroups $G_0$ of $G$ (see \ref{defn:lie-alg}), and that the $\bb{Z}_p$-linear Lie algebra $L_{G_0}=(G_0,+_{G_0},[\ ,\ ]_{G_0})$ is defined in \ref{para:L_G}. As $\log:G_0\to L_{G_0}$ is a homeomorphism, the uniqueness is obvious. It remains to check that the map $L_{G_0}\to \mrm{End}_A(W)$ sending $g$ to $\varphi_g$ is compatible with addition and Lie bracket. 
	
	As $G_0$ is topologically finitely generated, there exists $\lambda\in\Lambda$ and a finite projective $\widehat{A}_\lambda[1/p]$-representation $W_\lambda$ of $G_0$ such that $W=A\otimes_{\widehat{A}_\lambda[1/p]}W_\lambda$ (\cite[5.2.(1)]{tsuji2018localsimpson}). By \ref{rem:derivative}, it suffices to check that the map $L_{G_0}\to \mrm{End}_{\widehat{A}_\lambda[1/p]}(W_\lambda)$ sending $g$ to $\varphi_g$ is compatible with addition and Lie bracket. We take a uniform pro-$p$ open subgroup $G_1$ of $G_0$ such that $\widehat{A}_\lambda[1/p]$ is $G_1$-invariant by the assumptions in \ref{prop:derivative}. After replacing $G_1$ by $G_1^{p^n}$ for some $n\in\bb{N}$, we may assume by \ref{lem:derivative-uniform-cont} that the map  
	\begin{align}\label{eq:cor:operator}
		\phi_\lambda:\bb{Z}_p\times G_1\times W_\lambda\longrightarrow W_\lambda,
	\end{align}
	sending $(0,g,x)$ to $\varphi_g(x)$ and sending $(a,g,x)$ to $a^{-1}(g^ax-x)$ for $a\neq 0$, is continuous. As the map $L_{G_0}\to \mrm{End}_{\widehat{A}_\lambda[1/p]}(W_\lambda)$ is compatible with multiplication by an integer by \ref{prop:derivative}.(\ref{item:prop:derivative-2}), it suffices to check that its restriction on $L_{G_1}$ is compatible with addition and Lie bracket. For any $g,g'\in G_1$ and $x\in W_\lambda$, applying the continuity of \eqref{eq:cor:operator} to the convergent sequence $\{(p^n,(g^{p^n}g'^{p^n})^{p^{-n}},x)\}_{n\in\bb{N}}\subseteq \bb{Z}_p\times G_1\times W_\lambda$ with limit $(0,g+_{G_1}g',x)$ by \eqref{eq:para:L_G-add}, we get
	\begin{align}
		\varphi_{g+_{G_1}g'}(x)=\lim_{n\to \infty} p^{-n}(g^{p^n}g'^{p^n}x-x).
	\end{align}
	On the other hand, 
	\begin{align}
		(\varphi_g+\varphi_{g'})(x)=\lim_{n\to \infty}p^{-n}(g^{p^n}x+g'^{p^n}x-2x).
	\end{align}
	Thus, $\varphi_{g+_{G_1}g'}(x)-(\varphi_g+\varphi_{g'})(x)=\lim_{n\to\infty}p^{-n}(g^{p^n}-1)(g'^{p^n}x-x)=0$, since the action $G_1\times W_\lambda\to W_\lambda$ is continuous and $\lim_{n\to\infty}p^{-n}(g'^{p^n}x-x)=\varphi_{g'}(x)$. Similarly, applying the continuity of \eqref{eq:cor:operator} to the convergent sequence $\{(p^{2n},(g^{-p^n}g'^{-p^n}g^{p^n}g'^{p^n})^{p^{-2n}},x)\}_{n\in\bb{N}}\subseteq \bb{Z}_p\times G_1\times W_\lambda$ with limit $(0,[g,g']_{G_1},x)$ by \eqref{eq:para:L_G-bracket}, we get
	\begin{align}
		\varphi_{[g,g']_{G_1}}(x)=\lim_{n\to\infty}p^{-2n}(g^{-p^n}g'^{-p^n}g^{p^n}g'^{p^n}x-x).
	\end{align} 
	On the other hand, applying the continuity of \eqref{eq:cor:operator} to the convergent sequence $\{(p^n,g,p^{-n}(g'^{p^n}x-x))\}_{n\in\bb{N}}\subseteq \bb{Z}_p\times G_1\times W_\lambda$ with limit $(0,g,\varphi_{g'}(x))$, we get
	\begin{align}\label{eq:cor:operator-2}
		(\varphi_g\circ\varphi_{g'}-\varphi_{g'}\circ\varphi_g)(x)=\lim_{n\to \infty}p^{-2n}(g^{p^n}g'^{p^n}x-g'^{p^n}g^{p^n}x).
	\end{align}
	Thus, $\varphi_{[g,g']_{G_1}}(x)-(\varphi_g\circ\varphi_{g'}-\varphi_{g'}\circ\varphi_g)(x)=\lim_{n\to \infty}p^{-2n}(g^{-p^n}g'^{-p^n}-1)(g^{p^n}g'^{p^n}x-g'^{p^n}g^{p^n}x)=0$ by the continuity of the action $G_1\times W_\lambda\to W_\lambda$ and \eqref{eq:cor:operator-2}.
\end{proof}

\begin{mydefn}[{cf. \cite[14.1]{tsuji2018localsimpson}}]\label{defn:analytic}
	Under the assumptions in {\rm\ref{prop:derivative}}, let $H$ be a subgroup of $G$. For an object $W$ in $\repnpr(G,A)$, we say that $W$ is \emph{$H$-analytic} if $\exp(\varphi_h)(x)=\sum_{k=0}^\infty \varphi_h^k(x)/k!$ converges to $h(x)$ for any $x\in W$ and any $h\in H$. We denote by $\repnan{H}(G,A)$ the full subcategory of $\repnpr(G,A)$ formed by $H$-analytic objects.
\end{mydefn}

\begin{mypara}\label{para:notation-kummer}
	Let $K$ be a field of characteristic not equal to $p$, $\overline{K}$ an algebraic closure of $K$, $(\zeta_{p^n})_{n\in \bb{N}}$ a compatible system of primitive $p$-power roots of unity in $\overline{K}$, $G=\gal(\overline{K}/K)$. For any $n\in \bb{N}\cup\{\infty\}$, we define a Galois extension of $K$ in $\overline{K}$ by
	\begin{align}
		K_n=K(\zeta_{p^k}\ |\ k\in \bb{N}_{\leq n}).
	\end{align} 
	Consider the cyclotomic character 
	\begin{align}\label{eq:cycl-char}
		\chi:G\longrightarrow \bb{Z}_p^\times
	\end{align}
	which is defined by $\sigma(\zeta_{p^n})=\zeta_{p^n}^{\chi(\sigma)}$ for any $\sigma\in G$ and $n\in\bb{N}$. It factors through an injection $\Sigma=\gal(K_\infty/K)\hookrightarrow \bb{Z}_p^\times$, and does not depend on the choice of the system $(\zeta_{p^n})_{n\in \bb{N}}$. In particular, $\Sigma$ is either finite cyclic or isomorphic to the direct product of a finite cyclic group with $\bb{Z}_p$.
	
	We fix $d\in \bb{N}$. Let $t_1,\dots,t_d$ be elements of $K$ with compatible systems of $p$-power roots $(t_{i,p^n})_{n\in \bb{N}}$ in $\overline{K}$ (where $1\leq i\leq d$). For consistency, sometimes we also denote $\zeta_{p^n}$ by $t_{0,p^n}$. For any $n\in \bb{N}\cup\{\infty\}$ and any $\underline{m}=(m_1,\dots,m_d)\in (\bb{N}\cup\{\infty\})^d$, we define an extension of $K$ in $\overline{K}$ by
	\begin{align}
		K_{n,\underline{m}}=K_n(t_{1,p^{k_1}}, \cdots,t_{d,p^{k_d}}\ |\ k_i\in \bb{N}_{\leq m_i},\ 1\leq i\leq d).
	\end{align}
	It is a Galois extension of $K$ if $n\geq \max\{m_1,\dots,m_d\}$. Consider the continuous map
	\begin{align}\label{eq:cont-cocycle}
		\xi=(\xi_1,\dots,\xi_d):G\longrightarrow \bb{Z}_p^d
	\end{align}
	defined by $\tau(t_{i,p^n})=\zeta_{p^n}^{\xi_i(\tau)}t_{i,p^n}$ for any $\tau\in G$, $n\in\bb{N}$ and $1\leq i\leq d$. Notice that for any $\sigma,\tau\in G$,
	\begin{align}\label{eq:cont-cocycle-cond}
		\xi(\sigma\tau)=\xi(\sigma)+\chi(\sigma)\xi(\tau)
	\end{align}
	Thus, $\xi$ is a continuous $1$-cocycle. It becomes a group homomorphism when restricted to $H=\gal(\overline{K}/K_\infty)$, which factors through an injection $\Delta=\gal(K_{\infty,\underline{\infty}}/K_\infty)\hookrightarrow \bb{Z}_p^d$. In particular, $\Delta$ is isomorphic to $\bb{Z}_p^r$ for some $0\leq r\leq d$. For any $\sigma\in\Gamma=\gal(K_{\infty,\underline{\infty}}/K)$ and $\tau\in\Delta$, we have
	\begin{align}\label{eq:para:notation-kummer-relation}
		\sigma\tau\sigma^{-1}=\tau^{\chi(\sigma)},
	\end{align}
	by the definition of $\chi$. We have named some Galois groups as indicated in the following diagram:
	\begin{align}
		\xymatrix{
			\overline{K}&\\
			K_{\infty,\underline{\infty}}\ar[u]&\\
			K_\infty\ar[u]^-{\Delta}\ar@/^2pc/[uu]^-{H}&K\ar[l]^-{\Sigma}\ar[lu]_-{\Gamma}\ar@/_1pc/[luu]_{G}
		}
	\end{align}
	We remark that $\Gamma$ is a compact $p$-adic analytic group as an extension of $\Sigma$ by $\Delta$, and there is a natural exact sequence of Lie algebras over $\bb{Q}_p$ by \ref{lem:lie-ext},
	\begin{align}\label{eq:lie-sequence-gamma}
		0\longrightarrow \lie(\Delta)\longrightarrow
		\lie(\Gamma)\longrightarrow \lie(\Sigma)\longrightarrow 0.
	\end{align}
	Notice that the group homomorphism $\log\circ \chi:\Gamma\to \bb{Z}_p$ induces a homomorphism of $\bb{Q}_p$-linear Lie algebras $\log\circ \chi:\lie(\Gamma)\to \lie(\bb{Z}_p)=\bb{Q}_p$ which factors through $\lie(\Sigma)$, where $\log:\bb{Z}_p^\times\to \bb{Z}_p$ is the $p$-adic logarithm map \eqref{eq:p-adic-log}. We deduce from \eqref{eq:para:notation-kummer-relation} and \eqref{eq:para:L_G-bracket} that for any $x\in \lie(\Gamma)$ and $y\in \lie(\Delta)$,
	\begin{align}\label{eq:lie-str-gamma}
		[x,y]=\log(\chi(x))\cdot y.
	\end{align}
\end{mypara}

\begin{mypara}\label{defn:kummer-tower}
	Let $A$ be a Noetherian normal domain flat over $\bb{Z}_p$ with fraction field $K$, $t_i \in K$ with a compatible system of $p$-power roots $(t_{i,p^n})_{n\in \bb{N}}$ in $\overline{K}$ for any $1\leq i\leq d$. With the notation in \ref{para:notation-kummer}, let $A_{n,\underline{m}}$ be the integral closure of $A$ in $K_{n,\underline{m}}$ for any $n\in\bb{N}\cup\{\infty\}$ and $\underline{m}\in(\bb{N}\cup\{\infty\})^d$. We remark that $A_{n,\underline{m}}$ is a Noetherian normal domain finite over $A$ if $n,\underline{m}$ are finite by \ref{rem:int-clos-fini}. Endowing $\bb{N}^{1+d}$ with the product order (cf. \ref{para:product}), we call the tower of Noetherian normal domains $(A_{n,\underline{m}})_{(n,\underline{m})\in\bb{N}^{1+d}}$ the \emph{Kummer tower of $A$} defined by $\zeta_{p^n},t_{1,p^n},\dots,t_{d,p^n}$.
	
	Notice that $\Gamma$ acts continuously on $\widetilde{A}_{\infty,\underline{\infty}}[1/p]$ (defined in \ref{defn:tower-completion}) which satisfies the assumptions in \ref{cor:operator}. Then, for any object $W$ of $\repnpr(\Gamma,\widetilde{A}_{\infty,\underline{\infty}}[1/p])$, there is a canonical morphism of Lie algebras over $\bb{Q}_p$ induced by the infinitesimal action of $\Gamma$ on $W$,
	\begin{align}\label{eq:gamma-lie-action}
		\varphi:\lie(\Gamma)\longrightarrow \mrm{End}_{\widetilde{A}_{\infty,\underline{\infty}}[1/p]}(W).
	\end{align}
\end{mypara}

\begin{mylem}[{cf. \cite[Propositions 5, 7]{brinon2003sen}}]\label{lem:operator}
	With the notation in {\rm\ref{defn:kummer-tower}}, for any $\sigma\in\Gamma$, $\tau\in \Delta$ and any object $W$ of $\repnpr(\Gamma,\widetilde{A}_{\infty,\underline{\infty}}[1/p])$, we have
	\begin{align}
		\sigma \circ \varphi_{\tau} \circ \sigma^{-1}&=\chi(\sigma)\cdot \varphi_{\tau}, \label{eq:lem:operator-1}\\
		\varphi_\sigma\circ \varphi_\tau-\varphi_\tau\circ \varphi_\sigma&=\log(\chi(\sigma))\cdot\varphi_\tau,\label{eq:lem:operator-2}
	\end{align}
	as $\widetilde{A}_{\infty,\underline{\infty}}[1/p]$-linear endomorphisms on $W$.
\end{mylem}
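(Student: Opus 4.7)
The plan is to derive \eqref{eq:lem:operator-1} directly from the group-theoretic relation \eqref{eq:para:notation-kummer-relation} and the formal properties of the infinitesimal action, and then to obtain \eqref{eq:lem:operator-2} either by differentiating \eqref{eq:lem:operator-1} with respect to $\sigma$ or, more cleanly, by transporting the Lie bracket identity \eqref{eq:lie-str-gamma} along the Lie algebra homomorphism produced by \ref{cor:operator}.

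For \eqref{eq:lem:operator-1}: Fix a uniform pro-$p$ open subgroup $G_0\subseteq \Gamma$ containing $\Delta$ (such a $G_0$ exists because $\Delta$ is pro-$p$ and $\Sigma$ decomposes as a finite cyclic group times a pro-$p$ part). In particular $\tau\in G_0$, so $\varphi_\tau$ is literally the infinitesimal action \eqref{eq:infinitesimal}. Applying \ref{lem:infinitesimal}.(\ref{item:infinitesimal-1}) to the element $\tau$ and the group element $\sigma\in\Gamma$ gives that $\sigma\tau\sigma^{-1}$ again has an infinitesimal action and that
\begin{align}
    \sigma\circ\varphi_\tau\circ\sigma^{-1}=\varphi_{\sigma\tau\sigma^{-1}}.
\end{align}
By \eqref{eq:para:notation-kummer-relation}, $\sigma\tau\sigma^{-1}=\tau^{\chi(\sigma)}$, and since $\tau\in\Delta$ is in a pro-$p$ group the $\bb{Z}_p$-power is defined; the second statement of \ref{lem:infinitesimal}.(\ref{item:infinitesimal-1}) then yields $\varphi_{\tau^{\chi(\sigma)}}=\chi(\sigma)\cdot\varphi_\tau$. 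Combining these two identities proves \eqref{eq:lem:operator-1}.

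For \eqref{eq:lem:operator-2}: Because $\Gamma$ is a compact $p$-adic analytic group acting continuously on $\widetilde{A}_{\infty,\underline{\infty}}[1/p]$, \ref{cor:operator} provides a homomorphism of $\bb{Q}_p$-linear Lie algebras
\begin{align}
    \varphi:\lie(\Gamma)\longrightarrow \mrm{End}_{\widetilde{A}_{\infty,\underline{\infty}}[1/p]}(W)
\end{align}
such that $\varphi\circ\log_\Gamma$ coincides with $g\mapsto\varphi_g$. Evaluate the bracket identity $\varphi([x,y])=\varphi(x)\circ\varphi(y)-\varphi(y)\circ\varphi(x)$ at $x=\log_\Gamma(\sigma)\in\lie(\Gamma)$ and $y=\log_\Gamma(\tau)\in\lie(\Delta)$. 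By \eqref{eq:lie-str-gamma} the left-hand bracket equals $\log(\chi(x))\cdot y$, where $\log\circ\chi$ is understood as the Lie algebra morphism $\lie(\Gamma)\to\bb{Q}_p$ induced from the group homomorphism $\log\circ\chi:\Gamma\to\bb{Z}_p$. By functoriality of the logarithm map for $p$-adic analytic groups, $\log(\chi(\log_\Gamma(\sigma)))=\log(\chi(\sigma))$ in $\bb{Q}_p$ (this reduces, after passing to a uniform pro-$p$ open subgroup and dividing by a suitable integer as in \ref{prop:derivative}.(\ref{item:prop:derivative-2}), to the definition of $\log_\Gamma$). Hence
\begin{align}
    \varphi_\sigma\circ\varphi_\tau-\varphi_\tau\circ\varphi_\sigma=\varphi\bigl([\log_\Gamma(\sigma),\log_\Gamma(\tau)]\bigr)=\log(\chi(\sigma))\cdot\varphi_\tau,
\end{align}
which is \eqref{eq:lem:operator-2}.

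The only mildly delicate point is bookkeeping for $\sigma\in\Gamma$ not lying in the chosen uniform pro-$p$ open subgroup: in that case both $\varphi_\sigma$ and $\log_\Gamma(\sigma)$ are obtained from an appropriate $\sigma^r$ by dividing by~$r$, and both \eqref{eq:lem:operator-1} and \eqref{eq:lem:operator-2} scale linearly under $\sigma\mapsto\sigma^r$ (using $\chi(\sigma^r)=\chi(\sigma)^r$ and $\log(\chi(\sigma)^r)=r\log(\chi(\sigma))$), so it suffices to establish them when $\sigma$ lies in the uniform pro-$p$ open subgroup. This is the only step requiring a little care; the substantive content is already contained in \ref{lem:infinitesimal}, \ref{cor:operator} and \eqref{eq:lie-str-gamma}.
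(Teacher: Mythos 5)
Your proof takes essentially the same route as the paper: prove \eqref{eq:lem:operator-1} from the relation $\sigma\tau\sigma^{-1}=\tau^{\chi(\sigma)}$ together with the conjugation and scaling rules for infinitesimal actions, and prove \eqref{eq:lem:operator-2} by pushing the Lie-bracket identity \eqref{eq:lie-str-gamma} through the Lie algebra homomorphism of \ref{cor:operator}. The substantive content (references to \ref{lem:infinitesimal}, \ref{cor:operator} and \eqref{eq:lie-str-gamma}) matches the paper's argument exactly, with some helpful added detail on the functoriality of $\log$.

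One caveat about your final "bookkeeping" paragraph: you assert that \emph{both} \eqref{eq:lem:operator-1} and \eqref{eq:lem:operator-2} scale linearly under $\sigma\mapsto\sigma^r$. That is true for \eqref{eq:lem:operator-2} (both sides are $r$-homogeneous in $\sigma$ via $\varphi_{\sigma^r}=r\varphi_\sigma$ and $\log(\chi(\sigma^r))=r\log(\chi(\sigma))$), but it is \emph{not} how \eqref{eq:lem:operator-1} behaves: conjugating by $\sigma^r$ gives $\chi(\sigma)^r\varphi_\tau$, not $r\chi(\sigma)\varphi_\tau$, and there is no "$r$-th root" trick to pass from the $\sigma^r$ case back to $\sigma$. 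The good news is that \eqref{eq:lem:operator-1} does not need any reduction to a pro-$p$ open subgroup at all. The identity $\sigma\circ\varphi_\tau\circ\sigma^{-1}=\varphi_{\sigma\tau\sigma^{-1}}$ holds for every $\sigma\in\Gamma$ by directly conjugating the defining limit \eqref{eq:infinitesimal} (only the continuity of the $\sigma$-action and the fact that $\sigma\tau\sigma^{-1}\in\Delta$, a pro-$p$ group where infinitesimal actions exist, are needed); then $\varphi_{\tau^{\chi(\sigma)}}=\chi(\sigma)\varphi_\tau$ follows from \ref{lem:infinitesimal}.(\ref{item:infinitesimal-1}) applied inside $\Delta$. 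So you should drop the reduction step for \eqref{eq:lem:operator-1} and restrict the scaling remark to \eqref{eq:lem:operator-2}; as written, the claim is incorrect as stated, even though the conclusion is unaffected.
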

\begin{proof}
	As
	$\sigma\tau\sigma^{-1}=\tau^{\chi(\sigma)}$ \eqref{eq:para:notation-kummer-relation}, we have $\sigma \circ \varphi_{\tau} \circ \sigma^{-1}=\varphi_{\sigma\tau\sigma^{-1}}=\varphi_{\tau^{\chi(\sigma)}}=\chi(\sigma)\cdot \varphi_{\tau}$ (cf. \ref{lem:infinitesimal}). As $[\log_\Gamma(\sigma),\log_\Gamma(\tau)]=\log(\chi(\sigma))\cdot \log_\Gamma(\tau)$ by \eqref{eq:lie-str-gamma}, we have $\varphi_\sigma\circ \varphi_\tau-\varphi_\tau\circ \varphi_\sigma=\varphi_{[\log_\Gamma(\sigma),\log_\Gamma(\tau)]}=\log(\chi(\sigma))\cdot\varphi_\tau$.
\end{proof}

\begin{myprop}[{cf. \cite[Proposition 5]{brinon2003sen}, \cite[14.17]{tsuji2018localsimpson}}]\label{prop:operator-nilpotent}
	With the notation in {\rm\ref{defn:kummer-tower}}, assume that $K_{\infty}$ is an infinite extension of $K$ and that $\ak{S}_p(A_{\infty,\underline{\infty}})$ is finite. Then, the infinitesimal action $\varphi_\tau$ of any element $\tau\in \Delta$ on any object of $\repnpr(\Gamma,\widetilde{A}_{\infty,\underline{\infty}}[1/p])$ is nilpotent.
\end{myprop}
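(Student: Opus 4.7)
The plan is to deduce the nilpotence of $\varphi_\tau$ directly from the commutator identity in Lemma \ref{lem:operator} (equation \eqref{eq:lem:operator-2}), combined with the non-triviality of $\log\circ\chi$ on $\Gamma$ and the standard vanishing of the trace on commutators in the $A$-linear endomorphism algebra $\mrm{End}_A(W)$, where $A:=\widetilde{A}_{\infty,\underline{\infty}}[1/p]$. The hypothesis that $K_\infty/K$ is infinite enters at a single point: it is used to produce some $\sigma_0\in\Gamma$ with $c:=\log(\chi(\sigma_0))\in\bb{Z}_p\setminus\{0\}$. Indeed, $\chi$ embeds $\Sigma$ as an infinite closed subgroup of $\bb{Z}_p^\times$, so it cannot be contained in the finite torsion subgroup $\ker(\log)$ of $\bb{Z}_p^\times$. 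Since $c\in\bb{Q}_p^\times\subseteq A^\times$, the scalar $nc$ is invertible in $A$ for every integer $n\geq 1$.

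Setting $X:=\varphi_{\sigma_0}$ and $Y:=\varphi_\tau$, both $A$-linear by Proposition \ref{prop:derivative}, the relation \eqref{eq:lem:operator-2} reads $[X,Y]=cY$, and a routine induction on $n$ (using that $[X,\cdot]$ is a derivation of the associative algebra $\mrm{End}_A(W)$) gives $[X,Y^n]=nc\cdot Y^n$ for every $n\geq 1$. Since $W$ is finite projective over $A$, the trace map $\mrm{tr}\colon\mrm{End}_A(W)\to A$ is well-defined and annihilates commutators; taking traces of the previous identity yields $nc\cdot\mrm{tr}(Y^n)=0$, and hence $\mrm{tr}(Y^n)=0$ for every $n\geq 1$.

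Because $A$ is a $\bb{Q}$-algebra, Newton's identities express the coefficients of the characteristic polynomial $P_Y(T)=\det(T\cdot\id_W-Y)\in A[T]$ as universal polynomials with $\bb{Q}$-coefficients in the power traces $\mrm{tr}(Y^k)$, so $P_Y(T)$ is a power of $T$ on each connected component of $\spec(A)$. The Cayley--Hamilton theorem, valid for endomorphisms of finite projective modules over a commutative ring, then forces $Y=\varphi_\tau$ to be nilpotent on $W$. The argument is essentially formal once Lemma \ref{lem:operator} is available; the only point requiring some attention is the well-behaviour of the trace and of the characteristic polynomial for endomorphisms of finite projective modules over the possibly non-Noetherian, non-connected ring $A$, which one handles by embedding $W$ as a direct summand of a free module $A^N$ and extending $Y$ by zero, so that all traces, Newton's identities and Cayley--Hamilton take place inside $\mrm{M}_N(A)$.
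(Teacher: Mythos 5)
Your argument is correct, and it takes a genuinely different route from the paper's. The paper reduces to the localizations $W_{\ak{q}}=L_{\ak{q}}\otimes W$ at the (finitely many, by hypothesis) height-one primes $\ak{q}\in\ak{S}_p(A_{\infty,\underline{\infty}})$ via the injection $\widehat{A}_{\infty,\underline{\infty}}[1/p]\hookrightarrow\prod_{\ak{q}}L_{\ak{q}}$; over each field $L_{\ak{q}}$ it picks $N$ so that the characteristic polynomial $P_{\ak{q}}(T)$ of $\varphi_{\sigma,\ak{q}}$ and its shift $P_{\ak{q}}(T-N\log\chi(\sigma))$ are coprime, and deduces $(\varphi_{\tau,\ak{q}})^N=0$; finiteness of $\ak{S}_p(A_{\infty,\underline{\infty}})$ is then used to patch these conclusions back together. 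Your proof stays upstairs over $A=\widetilde{A}_{\infty,\underline{\infty}}[1/p]$: from the single identity $[X,Y]=cY$ with $c$ a unit you get $\mrm{tr}(Y^n)=0$ for all $n\geq 1$, Newton's identities over the $\bb{Q}$-algebra $A$ kill the elementary symmetric functions of the extension $\widetilde{Y}=Y\oplus 0$ on $A^N$, and Cayley--Hamilton gives $Y^N=0$. This buys two things: it is more elementary (no localization, no reduction to fields), and it is more uniform and more general — it produces the explicit bound $Y^N=0$ with $N$ the size of the ambient free module, and it does not use the hypothesis that $\ak{S}_p(A_{\infty,\underline{\infty}})$ is finite at all, which the paper's proof relies on. The only thing to flag is that you should phrase the Newton step as you already do, working with the monic characteristic polynomial of $\widetilde{Y}\in\mrm{M}_N(A)$ on the free module, so that the ``power of $T$ on each connected component'' remark is unnecessary: $\det(T-\widetilde{Y})=T^N$ outright.
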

\begin{proof}
	We follow the proof of \cite[14.17]{tsuji2018localsimpson}. Notice that $\Sigma$ identifies to an open subgroup of $\bb{Z}_p^\times$ via the cyclotomic character \eqref{eq:cycl-char}. Thus, there exists $\sigma\in \Gamma$ such that $\log(\chi(\sigma))\neq 0$. For any $\ak{q}\in\ak{S}_p(A_{\infty,\underline{\infty}})$, the localization $A_{\infty,\underline{\infty},\ak{q}}$ is a valuation ring of height $1$ by \ref{prop:ht1-prime-inj}.(\ref{item:prop:ht1-prime-inj-1}), and we denote by $L_{\ak{q}}$ the fraction field of the $p$-adic completion of $A_{\infty,\underline{\infty},\ak{q}}$. Then, the natural map
	\begin{align}\label{eq:prop:operator-nilpotent-1}
		\widehat{A}_{\infty,\underline{\infty}}[1/p]\longrightarrow \prod_{\ak{q}\in \ak{S}_p(A_{\infty,\underline{\infty}})} L_{\ak{q}}
	\end{align}
	is injective by \ref{prop:ht1-prime-inj}.(\ref{item:prop:ht1-prime-inj-2}). 
	
	For an object $W$ of $\repnpr(\Gamma,\widetilde{A}_{\infty,\underline{\infty}}[1/p])$ and $g\in \Gamma$, we denote by $W_{\ak{q}}$ (resp. $\varphi_{g,\ak{q}}$) the scalar extension of $W$ (resp. $\varphi_g$) to $L_{\ak{q}}$. As $\ak{S}_p(A_{\infty,\underline{\infty}})$ is finite, it suffices to show that $\varphi_{\tau,\ak{q}}$ is nilpotent by the injection \eqref{eq:prop:operator-nilpotent-1} and \ref{rem:derivative}. Let $P_{\ak{q}}(T)\in L_{\ak{q}}[T]$ be the characteristic polynomial of $\varphi_{\sigma,\ak{q}}$. We take an integer $N>0$ large enough such that $P_{\ak{q}}(T-N\log(\chi(\sigma)))$ is prime to $P_{\ak{q}}(T)$. Thus, the endomorphism $P_{\ak{q}}(\varphi_{\sigma,\ak{q}}-N\log(\chi(\sigma)))$ on $W_{\ak{q}}$ is an automorphism. By \eqref{eq:lem:operator-2}, we have
	\begin{align}
		P_{\ak{q}}(\varphi_{\sigma,\ak{q}}-N\log(\chi(\sigma)))\circ (\varphi_{\tau,\ak{q}})^N=(\varphi_{\tau,\ak{q}})^N\circ P_{\ak{q}}(\varphi_{\sigma,\ak{q}})=0.
	\end{align}
	Hence, we have $(\varphi_{\tau,\ak{q}})^N=0$.
\end{proof}

\begin{myprop}\label{prop:kummer-tower-lem}
	With the notation in {\rm\ref{defn:kummer-tower}}, consider the following statements:
	\begin{enumerate}
		\renewcommand{\labelenumi}{{\rm(\theenumi)}}
		\item We have $A/pA\neq 0$ (thus $\ak{S}_p(A)$ is non-empty), and for any $\ak{p}\in \ak{S}_p(A)$, if we denote by $\widehat{K}^{\ak{p}}$ the completion of $K$ with respect to the discrete valuation ring $A_{\ak{p}}$ and consider the Kummer tower $(\widehat{K}^{\ak{p}}_{n,\underline{m}})_{(n,\underline{m})\in\bb{N}^{1+d}}$ of $\widehat{K}^{\ak{p}}$ defined by $\zeta_{p^n},t_{1,p^n},\dots,t_{d,p^n}$, then the image of the continuous homomorphism \eqref{eq:cont-cocycle} $\gal(\widehat{K}^{\ak{p}}_{\infty,\underline{\infty}}/\widehat{K}^{\ak{p}}_\infty)\to \bb{Z}_p^d$ is open. \label{item:prop:kummer-tower-lem-1}
		\item There exists $n_0\in \bb{N}$ such that for any $n\in \bb{N}_{\geq n_0}$, the cyclotomic character \eqref{eq:cycl-char} $\chi:G\to \bb{Z}_p^\times$ and the $p$-adic logarithm map \eqref{eq:p-adic-log} $\log:\bb{Z}_p^\times\to \bb{Z}_p$ induce an isomorphism
		\begin{align}\label{eq:prop:kummer-tower-lem-chi}
			\log\circ\chi: \gal(K_{\infty,\underline{\infty}}/K_{n,\underline{\infty}})\iso p^n\bb{Z}_p;
		\end{align}
		and there exists $\underline{m_0}\in \bb{N}^d$ such that for any $\underline{m}\in (\bb{N}^d)_{\geq \underline{m_0}}$ the continuous $1$-cocycle \eqref{eq:cont-cocycle} $\xi:G\to \bb{Z}_p^d$ induces an isomorphism 
			\begin{align}\label{eq:prop:kummer-tower-lem-xi}
				\xi:\gal(K_{\infty,\underline{\infty}}/K_{\infty,\underline{m}})\iso p^{m_1}\bb{Z}_p\times\cdots\times p^{m_d}\bb{Z}_p
			\end{align}
			where $\underline{m}=(m_1,\dots,m_d)$.\label{item:prop:kummer-tower-lem-2}
		\item There exists $(n_0,\underline{m_0})\in \bb{N}^{1+d}$ such that for any $(n,\underline{m})\in (\bb{N}^{1+d})_{\geq (n_0,\underline{m_0})}$, the natural map $\ak{S}_p(A_{n,\underline{m}})\to \ak{S}_p(A_{n_0,\underline{m_0}})$ is a bijection.\label{item:prop:kummer-tower-lem-3}
		\item The cardinality of $\ak{S}_p(A_{n,\underline{m}})$,  when $(n,  \underline{m})$ varies in $\bb{N}^{1+d}$, is bounded.\label{item:prop:kummer-tower-lem-4}
	\end{enumerate}
	Then, {\rm(\ref{item:prop:kummer-tower-lem-1})} implies {\rm(\ref{item:prop:kummer-tower-lem-2})}, {\rm(\ref{item:prop:kummer-tower-lem-3})} and {\rm(\ref{item:prop:kummer-tower-lem-4})}; and {\rm(\ref{item:prop:kummer-tower-lem-3})} is equivalent to {\rm(\ref{item:prop:kummer-tower-lem-4})}.
\end{myprop}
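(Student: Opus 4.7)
The plan is to first dispose of the equivalence (\ref{item:prop:kummer-tower-lem-3}) $\Leftrightarrow$ (\ref{item:prop:kummer-tower-lem-4}), then derive the remaining implications by transferring information from the local Galois group at a height-$1$ prime into the global group $\Gamma$. For (\ref{item:prop:kummer-tower-lem-3}) $\Leftrightarrow$ (\ref{item:prop:kummer-tower-lem-4}), Lemma~\ref{lem:ht1-prime-map} applied to the integral extensions $A_{n,\underline{m}}\to A_{n',\underline{m'}}$ yields surjective transition maps $\ak{S}_p(A_{n',\underline{m'}}) \to \ak{S}_p(A_{n,\underline{m}})$ whenever $(n,\underline{m}) \leq (n',\underline{m'})$, so $(n,\underline{m}) \mapsto |\ak{S}_p(A_{n,\underline{m}})|$ is monotonically non-decreasing; since each set is finite, boundedness forces stabilization from some point on, and a surjection between equal-cardinality finite sets is a bijection. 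The reverse implication is immediate as $\ak{S}_p(A_{n_0,\underline{m_0}})$ is finite.

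For (\ref{item:prop:kummer-tower-lem-1}) $\Rightarrow$ (\ref{item:prop:kummer-tower-lem-2}) and (\ref{item:prop:kummer-tower-lem-1}) $\Rightarrow$ (\ref{item:prop:kummer-tower-lem-4}), I would fix $\ak{p}\in \ak{S}_p(A)$ and a prime $\overline{\ak{p}}$ of $\overline{A}$ above $\ak{p}$. This yields a local analogue $\widehat{\Gamma}^{\ak{p}} := \gal(\widehat{K}^{\ak{p}}_{\infty,\underline{\infty}}/\widehat{K}^{\ak{p}})$, which is sent injectively into $\Gamma$ by restriction (using that $K_{\infty,\underline{\infty}}$ is dense in $\widehat{K}^{\ak{p}}_{\infty,\underline{\infty}}$), compatibly with $\chi$ and $\xi$; one also readily verifies $\widehat{\Delta}^{\ak{p}} = \widehat{\Gamma}^{\ak{p}}\cap \Delta$ inside $\Gamma$. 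Hypothesis (\ref{item:prop:kummer-tower-lem-1}) says $\xi(\widehat{\Delta}^{\ak{p}})$ is open in $\bb{Z}_p^d$, hence so is $\xi(\Delta)$. Moreover, $\widehat{K}^{\ak{p}}$ is a complete discretely valued field of characteristic zero containing $\bb{Q}_p$, so $\widehat{K}^{\ak{p}} \cap \bb{Q}_p(\mu_{p^\infty})$ is an algebraic extension of $\bb{Q}_p$ inheriting a discrete valuation and is therefore finite over $\bb{Q}_p$; consequently $\chi(\widehat{\Sigma}^{\ak{p}})$ is open in $\bb{Z}_p^\times$, and so is $\chi(\Sigma)$. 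Property (\ref{item:prop:kummer-tower-lem-4}) then follows from a double coset identification: the primes of $A_{n,\underline{m}}$ above $\ak{p}$ are indexed by $\gal(K_{\infty,\underline{\infty}}/K_{n,\underline{m}}) \backslash \Gamma / \widehat{\Gamma}^{\ak{p}}$, whose cardinality is at most $[\Gamma : \widehat{\Gamma}^{\ak{p}}] = [\Sigma : \widehat{\Sigma}^{\ak{p}}] \cdot [\Delta : \widehat{\Delta}^{\ak{p}}]$, a finite number by the openness just established; summing over the finite set $\ak{S}_p(A)$ bounds $|\ak{S}_p(A_{n,\underline{m}})|$ uniformly.

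Property (\ref{item:prop:kummer-tower-lem-2}) splits into two isomorphisms. The second, involving $\xi$, is immediate: $\gal(K_{\infty,\underline{\infty}}/K_{\infty,\underline{m}})$ identifies via $\xi$ with $\xi(\Delta) \cap \prod_i p^{m_i}\bb{Z}_p$, which equals $\prod_i p^{m_i}\bb{Z}_p$ for $\underline{m} \geq \underline{m_0}$ by openness of $\xi(\Delta)$. For the $\log\circ\chi$ isomorphism, openness of $\chi(\Sigma)$ gives $\chi(\gal(K_\infty/K_n)) = 1 + p^n\bb{Z}_p$ for $n \geq n_0$, and the restriction map $\gal(K_{\infty,\underline{\infty}}/K_{n,\underline{\infty}}) \to \gal(K_\infty/K_n)$ is always injective since $K_\infty \cdot K_{n,\underline{\infty}} = K_{\infty,\underline{\infty}}$. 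The anticipated main obstacle is surjectivity of this restriction map: lifting $\sigma \in \gal(K_\infty/K_n)$ to $\tilde\sigma_0 \in \gal(K_{\infty,\underline{\infty}}/K_n)$ yields a well-defined class $c(\sigma) := \xi(\tilde\sigma_0) \bmod \xi(\Delta) \in \bb{Z}_p^d/\xi(\Delta)$, and using \eqref{eq:cont-cocycle-cond} one checks that $\sigma$ lifts to $\gal(K_{\infty,\underline{\infty}}/K_{n,\underline{\infty}})$ precisely when $c(\sigma) = 0$, while $c$ satisfies the twisted cocycle relation $c(\sigma_1\sigma_2) = c(\sigma_1) + \chi(\sigma_1)c(\sigma_2)$. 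Since $\xi(\Delta)$ is open, $p^M$ annihilates $\bb{Z}_p^d/\xi(\Delta)$ for some $M$; for $n \geq M$ the twist by $\chi(\sigma) \in 1 + p^n\bb{Z}_p$ becomes trivial, so $c$ restricts to a continuous homomorphism from $\gal(K_\infty/K_n)$ to a finite group, whose kernel is open and hence of the form $\gal(K_\infty/K_{n'})$ for some $n' \geq n$. Taking $n$ beyond this threshold and beyond $n_0$ yields the desired isomorphism $\gal(K_{\infty,\underline{\infty}}/K_{n,\underline{\infty}}) \iso 1 + p^n\bb{Z}_p$, which composed with $\log$ completes (\ref{item:prop:kummer-tower-lem-2}).
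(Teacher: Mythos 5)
Your proof is correct and reaches the same conclusions, but by a genuinely different route, so a comparison is worthwhile.

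For the equivalence of (3) and (4) your argument matches the paper's word for word (surjectivity of transition maps on $\ak{S}_p$ from Lemma~\ref{lem:ht1-prime-map}, directedness of the index set). Where you diverge is in passing from (1) to (2)--(4). The paper's strategy is a single degree-sandwiching trick: once it has the local isomorphisms $\xi:\gal(\widehat K^{\ak p}_{\infty,\underline\infty}/\widehat K^{\ak p}_{\infty,\underline m})\iso\prod p^{m_i}\bb Z_p$ and $\log\circ\chi:\gal(\widehat K^{\ak p}_{\infty,\underline\infty}/\widehat K^{\ak p}_{n,\underline\infty})\iso p^n\bb Z_p$, it observes that
$[\widehat K^{\ak p}_{\infty,\underline m}:\widehat K^{\ak p}_{\infty,\underline{m_0}}]\leq[\widehat K^{\ak p}_{n,\underline m}:\widehat K^{\ak p}_{n,\underline{m_0}}]\leq[K_{n,\underline m}:K_{n,\underline{m_0}}]\leq p^{\sum(m_i-m_0)}$,
forcing all inequalities to be equalities. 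This yields the global isomorphisms of (2) and, as a by-product, that the fibres of $\ak{S}_p(A_{n,\underline m})\to\ak{S}_p(A_{n,\underline{m_0}})$ are singletons — so (2) and (3) come out of the same computation. You instead treat (4) by a decomposition-group argument (double cosets $\gal(K_{\infty,\underline\infty}/K_{n,\underline m})\backslash\Gamma/\widehat\Gamma^{\ak p}$, uniform bound $[\Gamma:\widehat\Gamma^{\ak p}]$), and treat the $\chi$-part of (2) by identifying the obstruction to lifting $\sigma\in\gal(K_\infty/K_n)$ into $\gal(K_{\infty,\underline\infty}/K_{n,\underline\infty})$ as a $\chi$-twisted $1$-cocycle $c$ with values in the finite group $\bb Z_p^d/\xi(\Delta)$, which becomes a genuine homomorphism once the twist $\chi\equiv 1\bmod p^M$ is trivialized, and so has open kernel. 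Both routes are valid. Yours is more modular (it cleanly separates ``openness of $\chi(\Sigma)$'', ``surjectivity of the restriction map'', and ``bounding the number of primes'') and uses more classical decomposition-group machinery; the paper's is shorter because the degree inequality does triple duty. Your direct argument for openness of $\chi(\widehat\Sigma^{\ak p})$ (a complete discretely valued field cannot contain the non-discretely-valued $\bb Q_p(\mu_{p^\infty})$) is arguably tighter than the paper's dimension-count via \eqref{eq:lie-sequence-gamma}.

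Two points you should flesh out before this could substitute for the paper's proof. First, in the double-coset bound you should spell out the identification of $\widehat\Gamma^{\ak p}$ with a decomposition group: at a finite, Galois level $K_{n,\underline m}/K$ (and you should reduce to Galois levels by passing to $K_{\max(n,\underline m),\underline m}$ and invoking the surjective transition maps), the completion of $K_{n,\underline m}$ at a compatibly chosen prime is $\widehat K^{\ak p}(\zeta_{p^n},t_{1,p^{m_1}},\dots,t_{d,p^{m_d}})=\widehat K^{\ak p}_{n,\underline m}$, and the decomposition subgroup of $\gal(K_{n,\underline m}/K)$ is the image of $\widehat\Gamma^{\ak p}\subseteq\Gamma$. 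Second, in the twisted-cocycle step you need a short verification (using $\chi|_\Delta=1$ and the $\bb Z_p$-module structure of $\xi(\Delta)$) that $c(\sigma)=0$ really is equivalent to the existence of a lift $\tilde\sigma$ with $\xi(\tilde\sigma)=0$; this works because any lift can be corrected by $\tau^{-1}$ for a suitable $\tau\in\Delta$ while keeping the restriction to $K_\infty$ unchanged. Neither gap is fatal, but both deserve a sentence.
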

\begin{proof}
	Notice that for elements $(n',\underline{m'})\geq (n,\underline{m})$ in $\bb{N}^{1+d}$, the natural map $\ak{S}_p(A_{n',\underline{m'}})\to \ak{S}_p(A_{n,\underline{m}})$ is surjective by \ref{lem:ht1-prime-map}. Since $\bb{N}^{1+d}$ is directed, we see that (\ref{item:prop:kummer-tower-lem-3}) and (\ref{item:prop:kummer-tower-lem-4}) are equivalent. 
	
	Now we assume (\ref{item:prop:kummer-tower-lem-1}). We take an integer $m_0\in \bb{N}$ such that $(p^{m_0}\bb{Z}_p)^d$ lies in the image of the injective homomorphism $\xi:\gal(\widehat{K}^{\ak{p}}_{\infty,\underline{\infty}}/\widehat{K}^{\ak{p}}_\infty)\to \bb{Z}_p^d$ \eqref{eq:cont-cocycle}. We identify $(p^{m_0}\bb{Z}_p)^d$ with an open normal subgroup of $\gal(\widehat{K}^{\ak{p}}_{\infty,\underline{\infty}}/\widehat{K}^{\ak{p}}_\infty)$. We claim that the invariant subextension of $\widehat{K}^{\ak{p}}_{\infty,\underline{\infty}}$ by $p^{m_1}\bb{Z}_p\times\cdots\times p^{m_d}\bb{Z}_p \subseteq (p^{m_0}\bb{Z}_p)^d$ is $\widehat{K}^{\ak{p}}_{\infty,\underline{m}}$ for any $\underline{m}=(m_1,\dots,m_d)\in \bb{N}_{\geq m_0}^d$. Indeed, the invariant subextension contains $\widehat{K}^{\ak{p}}_{\infty,\underline{m}}$ by the definition of $\xi$. On the other hand, $\gal(\widehat{K}^{\ak{p}}_{\infty,\underline{\infty}}/\widehat{K}^{\ak{p}}_{\infty,\underline{m}})$ identifies with a closed subgroup of $p^{m_1}\bb{Z}_p\times\cdots\times p^{m_d}\bb{Z}_p$ via $\xi$. Thus, the claim follows from the Galois theory. In particular, $\xi$ induces a natural isomorphism 
	\begin{align}\label{eq:prop:kummer-tower-lem-1}
		\xi:\gal(\widehat{K}^{\ak{p}}_{\infty,\underline{\infty}}/\widehat{K}^{\ak{p}}_{\infty,\underline{m}})\iso p^{m_1}\bb{Z}_p\times\cdots\times p^{m_d}\bb{Z}_p.
	\end{align}
	We claim that $\widehat{K}^{\ak{p}}_{\infty,\underline{\infty}}$ is an infinite extension of $\widehat{K}^{\ak{p}}_{0,\underline{\infty}}$. Otherwise, $\widehat{K}^{\ak{p}}_{\infty,\underline{\infty}}$ is an extension of a finite extension of $\widehat{K}^{\ak{p}}$ by adding $t_{1,p^n},\dots,t_{d,p^n}$, so that the dimension of the $p$-adic analytic group $\gal(\widehat{K}^{\ak{p}}_{\infty,\underline{\infty}}/\widehat{K}^{\ak{p}})$ is no more than $d$ by \eqref{eq:cont-cocycle}. On the other hand, $\widehat{K}^{\ak{p}}$ is a complete discrete valuation field, while the valuation on $\widehat{K}^{\ak{p}}_{\infty}$ is non-discrete of height $1$. Thus, $\gal(\widehat{K}^{\ak{p}}_{\infty}/\widehat{K}^{\ak{p}})$ is an open subgroup of $\bb{Z}_p^\times$, which implies that the dimension of the $p$-adic analytic group $\gal(\widehat{K}^{\ak{p}}_{\infty,\underline{\infty}}/\widehat{K}^{\ak{p}})$ is $1+d$ under the assumption (\ref{item:prop:kummer-tower-lem-1}) by \eqref{eq:lie-sequence-gamma}. We get a contradiction, which proves the claim. Thus, the image of the cyclotomic character 
	\begin{align}\label{eq:prop:kummer-tower-lem-1-1}
		\chi:\gal(\widehat{K}^{\ak{p}}_{\infty,\underline{\infty}}/\widehat{K}^{\ak{p}}_{0,\underline{\infty}})\longrightarrow \bb{Z}_p^\times
	\end{align}
	is open. We take $n_0\in\bb{N}_{\geq 2}$ such that $1+p^{n_0}\bb{Z}_p$ lies in the image of \eqref{eq:prop:kummer-tower-lem-1-1}. Similarly as above, the invariant subextension of $\widehat{K}^{\ak{p}}_{\infty,\underline{\infty}}$ by $1+p^n\bb{Z}_p\subseteq 1+p^{n_0}\bb{Z}_p$ is $\widehat{K}^{\ak{p}}_{n,\underline{\infty}}$ by the definition of $\chi$, for any $n\in \bb{N}_{\geq n_0}$. In particular, $\chi$ and $\log$ induce an isomorphism
	\begin{align}\label{eq:prop:kummer-tower-lem-1-2}
		\log\circ\chi:\gal(\widehat{K}^{\ak{p}}_{\infty,\underline{\infty}}/\widehat{K}^{\ak{p}}_{n,\underline{\infty}})\iso p^n\bb{Z}_p.
	\end{align}	
	For any $(n,\underline{m})\in (\bb{N}^{1+d})_{\geq (n_0,\underline{m_0})}$, we have
	\begin{align}\label{eq:prop:kummer-tower-lem-2}
		[\widehat{K}^{\ak{p}}_{\infty,\underline{m}}:\widehat{K}^{\ak{p}}_{\infty,\underline{m_0}}]\leq[\widehat{K}^{\ak{p}}_{n,\underline{m}}:\widehat{K}^{\ak{p}}_{n,\underline{m_0}}]\leq[K_{n,\underline{m}}:K_{n,\underline{m_0}}]\leq p^{\sum_{i=1}^d(m_i-m_0)}.
	\end{align}
	By \eqref{eq:prop:kummer-tower-lem-1}, we see that the inequalities in \eqref{eq:prop:kummer-tower-lem-2} are equalities, which implies \eqref{eq:prop:kummer-tower-lem-xi}. In particular, each fibre of $\ak{S}_p(A_{n,\underline{m}})\to \ak{S}_p(A_{n,\underline{m_0}})$ consists of a single element (cf. \cite[\Luoma{6}.\textsection 8.5, Cor.3]{bourbaki2006commalg5-7}). Similarly,
	\begin{align}\label{eq:prop:kummer-tower-lem-3}
		[\widehat{K}^{\ak{p}}_{n,\underline{\infty}}:\widehat{K}^{\ak{p}}_{n_0,\underline{\infty}}]\leq[\widehat{K}^{\ak{p}}_{n,\underline{m_0}}:\widehat{K}^{\ak{p}}_{n_0,\underline{m_0}}]\leq[K_{n,\underline{m_0}}:K_{n_0,\underline{m_0}}]\leq p^{n-n_0}.
	\end{align}
	By \eqref{eq:prop:kummer-tower-lem-1-2}, we see that the inequalities in \eqref{eq:prop:kummer-tower-lem-3} are equalities, which implies \eqref{eq:prop:kummer-tower-lem-chi}. In particular, each fibre of $\ak{S}_p(A_{n,\underline{m_0}})\to \ak{S}_p(A_{n_0,\underline{m_0}})$ consists of a single element. Therefore, we obtain (\ref{item:prop:kummer-tower-lem-3}).
\end{proof}

We will give in \ref{prop:rank} and \ref{prop:B-rank} some differential criteria for checking the condition {\rm\ref{prop:kummer-tower-lem}.(\ref{item:prop:kummer-tower-lem-1})} for a Kummer tower.

\begin{mypara}\label{para:gamma-basis}
	With the notation in \ref{defn:kummer-tower}, for further computation (e.g. \ref{prop:special-functor}), we introduce a standard basis of $\lie(\Gamma)$ under the assumption that the Kummer tower $(A_{n,\underline{m}})_{(n,\underline{m})\in \bb{N}^{1+d}}$ satisfies the condition {\rm\ref{prop:kummer-tower-lem}.(\ref{item:prop:kummer-tower-lem-2})}. We name some Galois groups as indicated in the following diagram for any $n\in\bb{N}$:
	\begin{align}
		\xymatrix{
			K_{\infty,\underline{\infty}}&K_{n,\underline{\infty}}\ar[l]_-{\Sigma_{n,\underline{\infty}}}\\
			K_\infty\ar[u]^-{\Delta}&K_n\ar[l]^-{\Sigma_n}\ar[lu]|-{\Gamma_n}\ar[u]
		}
	\end{align}
	By the assumption {\rm\ref{prop:kummer-tower-lem}.(\ref{item:prop:kummer-tower-lem-2})}, there is an isomorphism for some $n_0\in\bb{N}$,
	\begin{align}
		\log\circ\chi: \Sigma_{n_0,\underline{\infty}}\iso\Sigma_{n_0}\iso p^{n_0}\bb{Z}_p,
	\end{align} 
	and $\xi$ identifies $\Delta$ with an open subgroup of $\bb{Z}_p^d$. The isomorphism $\Sigma_{n_0,\underline{\infty}}\iso\Sigma_{n_0}$ identifies $\Gamma_{n_0}$ with the semi-direct product $\Sigma_{n_0}\ltimes \Delta$ defined by $\sigma\in\Sigma_{n_0}$ acting on $\tau\in\Delta$ by $\tau\mapsto \tau^{\chi(\sigma)}$ (cf. \eqref{eq:para:notation-kummer-relation}). Moreover, there is an open embedding of topological groups
	\begin{align}
		(\log\circ\chi,\xi):\Gamma_{n_0}=\Sigma_{n_0}\ltimes \Delta\longrightarrow \bb{Z}_p\ltimes\bb{Z}_p^d 
	\end{align}
	where $\bb{Z}_p\ltimes\bb{Z}_p^d$ is the semi-direct product of $\bb{Z}_p$ acting on $\bb{Z}_p^d$ by multiplication. It induces an isomorphism of $\bb{Q}_p$-linear Lie algebras
	\begin{align}\label{eq:para:gamma-basis-1}
		\lie(\Gamma)\iso \lie(\bb{Z}_p\ltimes\bb{Z}_p^d).
	\end{align}
	Let $\partial_i \in\lie(\Gamma)$ be the image of $(0,\dots,1,\dots,0)\in\bb{Z}_p\ltimes\bb{Z}_p^d$ (where $1$ appears at the $i$-th component) via the logarithm map of $\bb{Z}_p\ltimes\bb{Z}_p^d$ and \eqref{eq:para:gamma-basis-1}. We deduce from \eqref{eq:lie-str-gamma} that for any $1\leq i,j\leq d$,
	\begin{align}
		[\partial_0,\partial_i]=\partial_i\quad\trm{ and }\quad[\partial_i,\partial_j]=0,
	\end{align}
	and we deduce from \eqref{eq:lie-sequence-gamma} that $\partial_0,\partial_1,\dots,\partial_d$ form a $\bb{Q}_p$-basis of $\lie(\Gamma)$, which we call the \emph{standard basis}. Moreover, if we extend $\xi_1,\dots,\xi_d$ to $\bb{Q}_p$-linear forms on $\lie(\Delta)$, then we see that they form a dual basis of $\partial_1,\dots,\partial_d$.

	Consider an object $W$ of $\repnpr(\Gamma,\widetilde{A}_{\infty,\underline{\infty}}[1/p])$ and the canonical Lie algebra action \eqref{eq:gamma-lie-action} $\varphi:\lie(\Gamma)\to \mrm{End}_{\widetilde{A}_{\infty,\underline{\infty}}[1/p]}(W)$ induced by the infinitesimal action of $\Gamma$ on $W$. For any $g\in G$, we set
	\begin{align}\label{eq:para:gamma-basis-chi}
		\varphi^{\chi}_g=\log(\chi(g))\varphi_{\partial_0},
	\end{align} 
	which defines a continuous group homomorphism $\varphi^{\chi}:G\to \mrm{End}_{\widetilde{A}_{\infty,\underline{\infty}}[1/p]}(W)$ factoring through $\Gamma$ with $\varphi^{\chi}|_{\Sigma_{0,\underline{\infty}}}=\varphi|_{\Sigma_{0,\underline{\infty}}}$ and $\varphi^{\chi}|_\Delta=0$. We also set
	\begin{align}\label{eq:para:gamma-basis-xi}
		\varphi^{\xi}_g=\sum_{i=1}^d\xi_i(g)\varphi_{\partial_i},
	\end{align}
	which defines a continuous $1$-cocycle $\varphi^{\xi}:G\to \mrm{End}_{\widetilde{A}_{\infty,\underline{\infty}}[1/p]}(W)$ factoring through $\Gamma$ with $\varphi^{\xi}|_\Delta=\varphi|_{\Delta}$ and $\varphi^{\xi}|_{\Sigma_{0,\underline{\infty}}}=0$.
\end{mypara}

\begin{mylem}\label{lem:varphi-decomposition}
	Under the assumptions in {\rm\ref{para:gamma-basis}}, for any $g\in \Gamma$ with $\log(\chi(g))\neq 0$, we have
	\begin{align}\label{eq:lem:varphi-decomposition-1}
		\log_\Gamma(g)=\log(\chi(g))\partial_0+\frac{\log(\chi(g))}{\chi(g)-1}\sum_{i=1}^d\xi_i(g)\partial_i \in \lie(\Gamma).
	\end{align}
	In particular, for any object $W$ of $\repnpr(\Gamma,\widetilde{A}_{\infty,\underline{\infty}}[1/p])$, we have
	\begin{align}\label{eq:lem:varphi-decomposition-2}
		\varphi_g=\varphi^{\chi}_g+\frac{\log(\chi(g))}{\chi(g)-1}\varphi^{\xi}_g
	\end{align}
	as $\widetilde{A}_{\infty,\underline{\infty}}[1/p]$-linear endomorphisms of $W$.
\end{mylem}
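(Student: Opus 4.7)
The plan is to realize $\Gamma$ inside $\mrm{GL}_{d+1}(\bb{Z}_p)$ via affine transformations and then read off $\log_\Gamma(g)$ from an explicit matrix-logarithm computation. First, I would verify that the assignment
\[
M:\Gamma\longrightarrow \mrm{GL}_{d+1}(\bb{Z}_p),\qquad g\longmapsto \begin{pmatrix}\chi(g) & \xi(g)\\ 0 & I_d\end{pmatrix},
\]
with $\xi(g)=(\xi_1(g),\dots,\xi_d(g))$ placed in the $1\times d$ top-right block, is a continuous group homomorphism; this reduces to a direct block-matrix multiplication in which the cocycle identity $\xi(g_1g_2)=\xi(g_1)+\chi(g_1)\xi(g_2)$ of \eqref{eq:cont-cocycle-cond} is precisely what is required. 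By functoriality of the logarithm map (see \ref{defn:lie-alg}), if $g$ lies in a uniform pro-$p$ open subgroup of $\Gamma$ that $M$ maps into $\id+p^\epsilon\mrm{M}_{d+1}(\bb{Z}_p)$ for the $\epsilon$ of \ref{exmp:gl_n}, then $dM(\log_\Gamma(g))=\log M(g)$, where $dM:\lie(\Gamma)\hookrightarrow \mrm{M}_{d+1}(\bb{Q}_p)$ is the induced Lie algebra morphism and the right-hand side is the matrix logarithm \eqref{eq:matrix-log}.

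Next, I would compute $\log M(g)$ explicitly. Writing $X=M(g)-I$, the fact that only the top row of $X$ is nonzero gives by induction $X^n=(\chi(g)-1)^{n-1}X$ for $n\geq 1$, so summing \eqref{eq:matrix-log} yields
\[
\log M(g)=\frac{\log(\chi(g))}{\chi(g)-1}\,X=\begin{pmatrix}\log(\chi(g)) & \tfrac{\log(\chi(g))}{\chi(g)-1}\,\xi(g)\\ 0 & 0\end{pmatrix}.
\]
Directly from the definitions in \ref{para:gamma-basis}, the derivative $dM$ sends $\partial_0$ to the matrix unit $E_{11}$ and $\partial_i$ to $E_{1,i+1}$ for $1\leq i\leq d$; these are $\bb{Q}_p$-linearly independent in $\mrm{M}_{d+1}(\bb{Q}_p)$, so $dM$ is injective, and comparing coefficients in the displayed expression for $\log M(g)$ produces exactly formula \eqref{eq:lem:varphi-decomposition-1}. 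For a general $g$ with $\log(\chi(g))\neq 0$ but outside the chosen uniform subgroup, I would pass to a power $g^r$ lying inside it; iterating the cocycle identity gives $\xi(g^r)=\tfrac{\chi(g)^r-1}{\chi(g)-1}\xi(g)$ (which is meaningful since $\log(\chi(g))\neq 0$ forces $\chi(g)\neq 1$), and then dividing by $r$ via $\log_\Gamma(g)=r^{-1}\log_\Gamma(g^r)$ from \ref{defn:lie-alg} causes the factor of $r$ to cancel, recovering \eqref{eq:lem:varphi-decomposition-1} for $g$.

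Finally, formula \eqref{eq:lem:varphi-decomposition-2} follows immediately by applying the Lie algebra homomorphism $\varphi$ of \eqref{eq:gamma-lie-action} to both sides of \eqref{eq:lem:varphi-decomposition-1} and invoking the definitions \eqref{eq:para:gamma-basis-chi} and \eqref{eq:para:gamma-basis-xi} of $\varphi^{\chi}_g$ and $\varphi^{\xi}_g$. The only point requiring care is the bookkeeping of the $g^r$ reduction, but this is routine once $\xi$ is treated through its cocycle identity; the substantive content is really the elementary $p$-adic power-series identity $\sum_{n\geq 1}(-1)^{n-1}(\chi(g)-1)^{n-1}/n=\log(\chi(g))/(\chi(g)-1)$ that drives the matrix computation.
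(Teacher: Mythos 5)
Your proof is correct, but it takes a genuinely different route from the paper's. The paper works intrinsically in the group: it reduces to $g\in\Gamma_{n_0}$, writes $g=\tau\sigma$ with $\tau\in\Delta$ and $\sigma\in\Sigma_{n_0,\underline{\infty}}$, iterates $\sigma\tau\sigma^{-1}=\tau^{\chi(\sigma)}$ to get $g^{p^n}=\tau^{(\chi(\sigma)^{p^n}-1)/(\chi(\sigma)-1)}\sigma^{p^n}$, and then evaluates the limit defining $g+_{\Gamma'}\sigma^{-1}$ from \eqref{eq:para:L_G-add} to extract the coefficient $\log(\chi(g))/(\chi(g)-1)$. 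You instead linearize the whole problem: you realize $(\chi,\xi)$ as the affine representation $M:\Gamma\to\mrm{GL}_{d+1}(\bb{Z}_p)$ (for which the cocycle identity \eqref{eq:cont-cocycle-cond} is exactly the multiplicativity of $M$), reduce to a power-series computation of the matrix logarithm using $X^n=(\chi(g)-1)^{n-1}X$, and then pull the answer back through the injective Lie-algebra map $dM$. Both are correct; your route buys a cleaner computation (the identity $\sum_{n\ge1}(-1)^{n-1}(\chi(g)-1)^{n-1}/n=\log(\chi(g))/(\chi(g)-1)$ does all the work), while the paper avoids introducing an auxiliary representation and stays closer to the definitions in \ref{para:L_G}.

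Two points in your write-up deserve slightly more care. First, the claim that $dM(\partial_0)=E_{11}$ and $dM(\partial_i)=E_{1,i+1}$ is asserted as ``direct from the definitions,'' but it does require unwinding \ref{para:gamma-basis}: one should note that $\Sigma_{n_0,\underline{\infty}}$ maps into the diagonal block (since $\xi$ vanishes there by definition of $K_{n_0,\underline{\infty}}$), so $\log M(\sigma)=\log(\chi(\sigma))E_{11}$ while $\log_\Gamma(\sigma)=\log(\chi(\sigma))\partial_0$; and similarly that $\Delta$ maps into the strictly upper-triangular block, so $\log M(\tau)=\sum_i\xi_i(\tau)E_{1,i+1}$ while $\log_\Gamma(\tau)=\sum_i\xi_i(\tau)\partial_i$ by the dual-basis statement in \ref{para:gamma-basis}. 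Together with the injectivity of $\log\circ\chi$ and $\xi$ on these subgroups (from \ref{prop:kummer-tower-lem}.(\ref{item:prop:kummer-tower-lem-2})) this pins down $dM$ on the basis. Second, the functoriality $dM(\log_\Gamma(g))=\log M(g)$ that you invoke is standard but not explicitly stated in the paper; it follows from the construction in \ref{para:L_G} since a continuous homomorphism of uniform pro-$p$ groups preserves the limit formulas \eqref{eq:para:L_G-add} and \eqref{eq:para:L_G-bracket}, together with the identification in \ref{exmp:gl_n}. Neither of these is a real gap, but they should be spelled out to make the argument self-contained.
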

\begin{proof}
	Since for any $r\geq 1$ we have $\log(\chi(g^r))=r\log(\chi(g))$ and $\xi(g^r)/(\chi(g^r)-1)=\xi(g)/(\chi(g)-1)$ by \eqref{eq:cont-cocycle-cond}, it suffices to prove \eqref{eq:lem:varphi-decomposition-1} for $g^r$. Thus, we may assume that $g\in \Gamma_{n_0}$ and let $g=\tau\sigma$ be the unique decomposition for some $\tau\in\Delta$ and $\sigma\in \Sigma_{n_0,\underline{\infty}}$. Since $\log(\chi(g))=\log(\chi(\sigma))$ and $\xi(g)=\xi(\tau)$, we have $\log(\chi(g))\partial_0=\log_\Gamma(\sigma)$ and $\sum_{i=1}^d\xi_i(g)\partial_i=\log_\Gamma(\tau)$. It remains to check that in $\lie(\Gamma)$, we have
	\begin{align}
		\log_\Gamma(g)=\log_\Gamma(\sigma)+\frac{\log(\chi(g))}{\chi(g)-1}\log_\Gamma(\tau).
	\end{align}
	By iteratively using the identity $\sigma\tau\sigma^{-1}=\tau^{\chi(\sigma)}$, we get $g^{p^n}=\tau^{\frac{\chi(\sigma)^{p^n}-1}{\chi(\sigma)-1}}\sigma^{p^n}$. After enlarging $n_0$, we may assume that $\Gamma_{n_0}$ is contained in a uniform pro-$p$ open subgroup $\Gamma'$ of $\Gamma$. Thus, by \eqref{eq:para:L_G-add},
	\begin{align}
		g+_{\Gamma'}\sigma^{-1}=\lim_{n\to\infty}(g^{p^n}\sigma^{-p^n})^{p^{-n}}=\lim_{n\to\infty}\tau^{\frac{\chi(\sigma)^{p^n}-1}{p^n(\chi(\sigma)-1)}}=\tau^{\frac{\log(\chi(g))}{\chi(g)-1}},
	\end{align}
	which completes the proof.
\end{proof}

\section{Revisiting Brinon's Generalization of Sen's Theory after Tsuji}\label{sec:brinon}
In this section, we revisit Brinon's generalization \cite{brinon2003sen} of Sen's theory following Tsuji \cite[\textsection15]{tsuji2018localsimpson}. More precisely, we establish a $p$-adic Simpson correspondence over a complete discrete valuation field of mixed characteristic (cf. \ref{thm:simpson-K}). Then, we give a canonical definition of Sen operators, which does not depend on choosing a $p$-basis of the residue field (and its $p$-power roots) (cf. \ref{defn:sen-brinon-operator}).

\begin{mypara}\label{para:notation-K}
	We use the following notation in this section. Let $K$ be a complete discrete valuation field of characteristic $0$ whose residue field $\kappa$ is of characteristic $p>0$ such that $[\kappa:\kappa^p]=p^d<\infty$ (i.e. $\kappa$ admits a finite $p$-basis, cf. \cite[21.1.9]{ega4-1}). We fix an algebraic closure $\overline{K}$ of $K$, and denote by $\widehat{\overline{K}}$ its $p$-adic completion. Let $t_1,\dots, t_d$ be $d$ elements of $\ca{O}_K^\times$ with compatible systems of $p$-power roots $(t_{1,p^n})_{n\in \bb{N}}, \cdots,(t_{d,p^n})_{n\in \bb{N}}$ in $\ca{O}_{\overline{K}}^\times$ such that the images of $t_1,\dots,t_d$ in $\kappa$ form a $p$-basis. We consider the Kummer tower $(\ca{O}_{K_{n,\underline{m}}})_{(n,\underline{m})\in \bb{N}^{1+d}}$ of $\ca{O}_K$ defined by $\zeta_{p^n},t_{1,p^n}, \dots,t_{d,p^n}$ (\ref{defn:kummer-tower}). We take again the notation in \ref{para:notation-kummer}.
	\begin{align}
		\xymatrix{
			\overline{K}&\\
			K_{\infty,\underline{\infty}}\ar[u]&\\
			K_\infty\ar[u]^-{\Delta}\ar@/^2pc/[uu]^-{H}&K\ar[l]^-{\Sigma}\ar[lu]_-{\Gamma}\ar@/_1pc/[luu]_{G}
		}
	\end{align}
\end{mypara}

\begin{mylem}\label{lem:cohen}
	There exists a complete discrete valuation subfield $K'$ of $K$ with $\ca{O}_{K'}/p\ca{O}_{K'}=\kappa$ such that $K$ is a totally ramified finite extension of $K'$ and that $t_1,\dots, t_d\in \ca{O}_{K'}$.
\end{mylem}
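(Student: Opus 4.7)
The plan is to construct $\ca{O}_{K'}$ as a Cohen subring of $\ca{O}_K$ containing the prescribed lifts $t_1,\dots,t_d$, and then to verify the remaining properties by comparing residue fields.

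More concretely, I would invoke the refined Cohen structure theorem for complete Noetherian local rings of mixed characteristic $(0,p)$ with residue field admitting a finite $p$-basis. Since $\ca{O}_K$ is such a ring with residue field $\kappa$ admitting the $p$-basis $\overline{t}_1,\dots,\overline{t}_d$, and since $t_1,\dots,t_d\in \ca{O}_K^\times$ are lifts of this $p$-basis, there exists a complete discrete valuation subring $\ca{O}_{K'}\subseteq \ca{O}_K$ with uniformizer $p$, with residue field $\kappa$ (via the inclusion $\ca{O}_{K'}\hookrightarrow \ca{O}_K\twoheadrightarrow \kappa$), and containing $t_1,\dots,t_d$. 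Such a subring is a \emph{Cohen ring} for $\kappa$. The construction is inductive: one lifts a ring homomorphism $\phi_n\colon C_n\to \ca{O}_K/p^n\ca{O}_K$, where $C_n$ is a Cohen ring of $\kappa$ modulo $p^n$ equipped with chosen lifts $\widetilde{t}_i$ of the $p$-basis, satisfying $\phi_n(\widetilde{t}_i)=t_i \bmod p^n$, to a compatible map $\phi_{n+1}$ at the next level. The obstruction to such a lift vanishes precisely because $\df\overline{t}_1,\dots,\df\overline{t}_d$ form a basis of the module of differentials of $\kappa$ over $\bb{F}_p$, which gives enough freedom to extend the section while fixing the prescribed images of the $\widetilde{t}_i$. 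Taking the inverse limit yields $\ca{O}_{K'}$.

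Setting $K'=\ca{O}_{K'}[1/p]$, this is a complete discrete valuation subfield of $K$ with $\ca{O}_{K'}/p\ca{O}_{K'}=\kappa$ and $t_1,\dots,t_d\in\ca{O}_{K'}$. Finally, since $\ca{O}_K$ and $\ca{O}_{K'}$ are both complete discrete valuation rings with the same residue field $\kappa$, the extension $K/K'$ is totally ramified; its degree equals the absolute ramification index $v_K(p)$ of $K$, which is finite by the hypothesis that $K$ is a complete discrete valuation extension of $\bb{Q}_p$.

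The principal obstacle is the existence of a Cohen subring containing the prescribed lifts $t_i$. The classical Cohen structure theorem only provides \emph{some} coefficient subring; the refinement requiring compatibility with chosen lifts of a $p$-basis is standard for finite $p$-bases but does require the inductive argument sketched above, in which the role of the $p$-basis is to ensure vanishing of the obstruction class at each step. Once this refined Cohen subring is in hand, the remaining statements follow formally from the equality of residue fields.
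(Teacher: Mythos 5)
Your proposal follows essentially the same route as the paper: start from the Cohen structure theorem to get a coefficient ring $R$ mapping to $\ca{O}_K$, then modify the map inductively modulo $\pi^n$ using that $\Omega^1_{(R/p^n)/(\bb{Z}/p^n)}$ is free with basis the differentials of the chosen lifts of the $p$-basis, so that one can force the lifts to land on $t_1,\dots,t_d$. One small caution: what the freeness of the differential module actually buys you is the \emph{freedom to adjust} a given lift by a derivation so that it hits the prescribed targets — the existence of some lift at each stage is a separate (formal smoothness / completeness) point, not itself governed by the $p$-basis — and the final finiteness of $\ca{O}_K$ over the constructed subring, hence the totally ramified claim, relies on the standard completeness argument (the paper cites \cite[\href{https://stacks.math.columbia.edu/tag/031D}{031D}]{stacks-project}), which your proposal states but does not fully justify.
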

\begin{proof}
	Let $\pi$ be a uniformizer of $K$. By Cohen structure theorem \cite[19.8.8]{ega4-1}, there exists a complete discrete valuation ring $R$ extension of $\bb{Z}_p$ with a local injective homomorphism $f:R\to \ca{O}_K$ which induces an isomorphism $f_1:R/pR\iso\ca{O}_K/\pi\ca{O}_K=\kappa$. We take $s_1,\dots,s_d\in R$ lifting the images of $t_1,\dots,t_d\in\ca{O}_K$ in $\kappa$ respectively. We claim that it suffices to find a series of homomorphisms $(f_n:R/p^nR\to \ca{O}_K/\pi^n\ca{O}_K)_{n\geq 2}$ such that $f_n(s_i)=t_i$ and $f_n$ lifts $f_{n-1}$. Indeed, this series defines a homomorphism $f_\infty:R\to \ca{O}_K$ by taking limit on $n$, which sends $s_i$ to $t_i$ and identifies the residue fields. Thus, $f_\infty$ is finite (\cite[\href{https://stacks.math.columbia.edu/tag/031D}{031D}]{stacks-project}) and thus a totally ramified extension of discrete valuation rings. The claim follows.
	
	We construct $(f_n)_{n\geq 2}$ inductively. Suppose that we have constructed $f_{n-1}$. We fix a lifting $\widetilde{f}_{n-1}:R/p^nR\to \ca{O}_K/\pi^n\ca{O}_K$ of $f_{n-1}$, and consider the commutative diagram
	\begin{align}
		\xymatrix{
			\ca{O}_K/\pi^n\ca{O}_K\ar[r]&\ca{O}_K/\pi^{n-1}\ca{O}_K\\
			\bb{Z}/p^n\bb{Z}\ar[r]\ar[u]&R/p^nR\ar[u]_-{f_{n-1}}\ar[ul]|-{\widetilde{f}_{n-1}}
		}
	\end{align}
	There is a map
	\begin{align}
		\ho_R(\Omega^1_{(R/p^nR)/(\bb{Z}/p^n\bb{Z})},\pi^{n-1}\ca{O}_K/\pi^n\ca{O}_K)\longrightarrow \ho_{\bb{Z}\trm{-Alg}}(R/p^nR,\ca{O}_K/\pi^n\ca{O}_K)
	\end{align}
	sending $D$ to $\widetilde{f}_{n-1}+D\circ \df_{R/p^nR}$. Recall that $\Omega^1_{(R/p^nR)/(\bb{Z}/p^n\bb{Z})}$ is a finite free $R/p^n$-module with basis $\df s_1,\dots,\df s_d$ (\cite[3.2]{he2021faltingsext}). We can take $D$ sending $\df s_i$ to $t_i-\widetilde{f}_{n-1}(s_i)$, as $t_i=f_{n-1}(s_i)$ by the induction hypothesis. Taking $f_n=\widetilde{f}_{n-1}+D\circ \df_{R/p^nR}$, we see that $t_i=f_{n}(s_i)$ and $f_n$ lifts $f_{n-1}$, which completes the induction.
\end{proof}

\begin{mylem}\label{lem:cohen-str}
	Let $K'$ be a subfield of $K$ as in {\rm\ref{lem:cohen}}, $(\ca{O}_{K'_{n,\underline{m}}})_{(n,\underline{m})\in \bb{N}^{1+d}}$ the Kummer tower of $\ca{O}_{K'}$ defined by $\zeta_{p^n},t_{1,p^n}, \dots,t_{d,p^n}$ ({\rm\ref{defn:kummer-tower}}).
	\begin{enumerate}
		\renewcommand{\labelenumi}{{\rm(\theenumi)}}
		\item The extension $K'_n$ over $K'$ is totally ramified, and $\ca{O}_{K'_n}= \ca{O}_{K'}[X_0]/(\frac{X_0^{p^n}-1}{X_0^{p^{n-1}}-1})=\ca{O}_{K'}[\zeta_{p^n}]$.
		\item The extension $K'_{n,\underline{m}}$ over $K'_n$ is weakly unramified, and $\ca{O}_{K'_{n,\underline{m}}}=\ca{O}_{K'_n}[X_1,\dots,X_d]/(X_1^{p^{m_1}}-t_1,\dots,X_d^{p^{m_d}}-t_d)=\ca{O}_{K'}[\zeta_{p^n},t_{1,p^{m_1}},\dots,t_{d,p^{m_d}}]$, where $\underline{m}=(m_1,\dots,m_d)$.
		\item The cyclotomic character \eqref{eq:cycl-char} $\chi:\gal(K'_{\infty,\underline{\infty}}/K'_{0,\underline{\infty}})\to \bb{Z}_p^\times$ is an isomorphism.
		\item The continuous homomorphism \eqref{eq:cont-cocycle} $\xi:\gal(K'_{\infty,\underline{\infty}}/K'_\infty)\to \bb{Z}_p^d$ is an isomorphism.\label{item:lem:cohen-str-4}
	\end{enumerate}
\end{mylem}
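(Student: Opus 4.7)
The plan is to prove the four claims in sequence, deducing (3) and (4) from the explicit ring-of-integers computations of (1) and (2). The key input is that $\ca O_{K'}$ is an absolutely unramified complete DVR with uniformizer $p$ and residue field $\kappa$, and that $(\bar t_1,\dots,\bar t_d)$ is a $p$-basis of $\kappa$. Claim (1) is classical cyclotomic theory: the polynomial $\Phi_{p^n}(X+1)$ is Eisenstein over $\ca O_{K'}$ (since $\Phi_{p^n}(1)=p$), so $K'_n/K'$ is totally ramified of degree $\varphi(p^n)$ with $\ca O_{K'_n}=\ca O_{K'}[\zeta_{p^n}]$, and the presentation $\ca O_{K'}[X_0]/((X_0^{p^n}-1)/(X_0^{p^{n-1}}-1))$ follows from the minimal polynomial of $\zeta_{p^n}$ over $K'$ being $\Phi_{p^n}$.

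For (2), fix a uniformizer $\pi_n$ of $\ca O_{K'_n}$ (for instance $\pi_n=\zeta_{p^n}-1$ for $n\geq 1$ by (1), and $\pi_0=p$). Consider the $\ca O_{K'_n}$-algebra $R:=\ca O_{K'_n}[X_1,\dots,X_d]/(X_i^{p^{m_i}}-t_i)$, which is free of rank $N:=\prod_i p^{m_i}$ as an $\ca O_{K'_n}$-module; one computes $R/\pi_n R=\kappa[X_1,\dots,X_d]/(X_i^{p^{m_i}}-\bar t_i)$. Since $(\bar t_1,\dots,\bar t_d)$ is a $p$-basis of $\kappa$, the monomials $\prod_i \bar t_i^{a_i/p^{m_i}}$ with $0\leq a_i<p^{m_i}$ are $\kappa$-linearly independent inside $\kappa^{1/p^{\max m_i}}$, so the surjection $\kappa[X_1,\dots,X_d]/(X_i^{p^{m_i}}-\bar t_i)\twoheadrightarrow \kappa(\bar t_i^{1/p^{m_i}}:1\le i\le d)$ sending $X_i\mapsto \bar t_i^{1/p^{m_i}}$ is a dimension-counting isomorphism, making $R/\pi_n R$ a purely inseparable field extension of $\kappa$ of degree $N$. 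Consequently $R$ is a local ring with principal maximal ideal $(\pi_n)$, hence a DVR whose fraction field is $K'_{n,\underline m}$. This yields simultaneously the stated ring-of-integers presentation, the irreducibility of the defining polynomials, and---since $\pi_n$ remains a uniformizer---the weak unramification of $K'_{n,\underline m}/K'_n$.

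For (3), (1) provides the classical isomorphism $\chi\colon \gal(K'_\infty/K')\iso \bb Z_p^\times$, and (2) applied with $n=0$ shows that $K'_{0,\underline\infty}/K'$ is weakly unramified (as a filtered union of weakly unramified extensions). Any intermediate field $K'\subseteq L\subseteq K'_\infty\cap K'_{0,\underline\infty}$ is simultaneously weakly unramified (inherited from the right) and totally ramified (inherited from the left), forcing $L=K'$. By linear disjointness, restriction to $K'_\infty$ induces a topological isomorphism $\gal(K'_{\infty,\underline\infty}/K'_{0,\underline\infty})\iso \gal(K'_\infty/K')$, whose composition with $\chi$ is (3). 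For (4), at each finite level $\underline m$ choose $n\geq \max_i m_i$ so that $\zeta_{p^{m_i}}\in K'_n$; then (2) gives $[K'_{n,\underline m}:K'_n]=\prod_i p^{m_i}$, and the Galois conjugates of $t_{i,p^{m_i}}$ over $K'_n$ are precisely $\zeta_{p^{m_i}}^{a_i}t_{i,p^{m_i}}$ for $0\leq a_i<p^{m_i}$, which identifies $\xi$ with the bijection $\gal(K'_{n,\underline m}/K'_n)\iso \prod_i \bb Z/p^{m_i}\bb Z$. Passing to the limit first in $n$ and then in $\underline m$ produces (4). The principal obstacle is the $p$-basis computation in (2)---verifying that $R/\pi_n R$ is a genuine field of the predicted degree, not merely a finite ring with nilpotents---since without this one would lose control over the ramification and the integral presentation; once (2) is in place, (3) and (4) are formal consequences of linear disjointness and Galois theory.
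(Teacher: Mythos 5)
Your proof is correct and follows essentially the same route as the paper, which disposes of (1) and (2) by a reference to Serre's \emph{Local Fields} and deduces (3), (4) ``by the arguments of Proposition~\ref{prop:kummer-tower-lem}''; you have simply written out the details the paper leaves implicit. A couple of remarks on points that deserve to be spelled out with a bit more care: in (3), trivial intersection $K'_\infty\cap K'_{0,\underline\infty}=K'$ is not literally ``linear disjointness'', but it is exactly what is needed to invoke the standard restriction isomorphism $\gal(LN/N)\iso\gal(L/L\cap N)$ for $L=K'_\infty$ Galois over $K'$ and $N=K'_{0,\underline\infty}$, so the conclusion stands. In (4), your passage ``to the limit first in $n$'' tacitly uses that the transition maps $\gal(K'_{n',\underline m}/K'_{n'})\to\gal(K'_{n,\underline m}/K'_n)$ are isomorphisms for $n'\geq n\geq\max_i m_i$; this follows from the constancy $[K'_{n,\underline m}:K'_n]=\prod_i p^{m_i}$ established in (2) (which forces linear disjointness of $K'_{n'}$ and $K'_{n,\underline m}$ over $K'_n$), and is worth making explicit. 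With those two clarifications the argument is complete.
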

\begin{proof}
	The first two assertions follow from the fact that $p$ is a uniformizer of the complete discrete valuation field $K'$ and that $t_1,\dots,t_d$ form a $p$-basis of its residue field (cf. \cite[\Luoma{1}.\textsection6]{serre1979local}). One can deduce easily the last two assertions from the first two by the arguments of \ref{prop:kummer-tower-lem}.
\end{proof}

\begin{myprop}\label{prop:brinon-kummer-tower-adequate}
	The Kummer tower $(\ca{O}_{K_{n,\underline{m}}})_{(n,\underline{m})\in \bb{N}^{1+d}}$ satisfies the condition {\rm\ref{prop:kummer-tower-lem}.(\ref{item:prop:kummer-tower-lem-1})}.
\end{myprop}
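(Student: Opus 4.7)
The plan is to reduce the statement to the case of the subfield $K' \subseteq K$ provided by the Cohen-theoretic Lemma~\ref{lem:cohen}, where we already have complete control of the Kummer tower via Lemma~\ref{lem:cohen-str}.

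First I would unpack the condition \ref{prop:kummer-tower-lem}.(\ref{item:prop:kummer-tower-lem-1}) for the Kummer tower of $\ca{O}_K$. Since $\ca{O}_K$ is a complete discrete valuation ring of mixed characteristic $(0,p)$, we have $\ca{O}_K/p\ca{O}_K\neq 0$ and $\ak{S}_p(\ca{O}_K)=\{\ak{m}_K\}$. As $K$ is already complete, $\widehat{K}^{\ak{m}_K}=K$, and the tower $(\widehat{K}^{\ak{m}_K}_{n,\underline{m}})$ is simply $(K_{n,\underline{m}})$. Thus the condition reduces to the single assertion that the image of $\xi\colon\gal(K_{\infty,\underline{\infty}}/K_\infty)\to\bb{Z}_p^d$ is open.

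Next I would apply Lemma~\ref{lem:cohen} to obtain a complete discrete valuation subfield $K'\subseteq K$ such that $K/K'$ is totally ramified finite and $t_1,\dots,t_d\in\ca{O}_{K'}$. Then by Lemma~\ref{lem:cohen-str}.(\ref{item:lem:cohen-str-4}), the analogous map $\xi_{K'}\colon\gal(K'_{\infty,\underline{\infty}}/K'_\infty)\to\bb{Z}_p^d$ is an isomorphism. Since the chosen $p$-power roots $t_{i,p^n}$ and $\zeta_{p^n}$ are the same for both towers, $K_{n,\underline{m}}=K\cdot K'_{n,\underline{m}}$, and the restriction map
\begin{align*}
\gal(K_{\infty,\underline{\infty}}/K_\infty)\longrightarrow \gal(K'_{\infty,\underline{\infty}}/K'_\infty)
\end{align*}
is well-defined, injective, and compatible with $\xi_K$ and $\xi_{K'}$. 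Its image is exactly $\gal(K'_{\infty,\underline{\infty}}/(K_\infty\cap K'_{\infty,\underline{\infty}}))$.

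The last step is to observe that $[K_\infty:K'_\infty]\leq[K:K']<\infty$, so $K_\infty\cap K'_{\infty,\underline{\infty}}$ is a finite extension of $K'_\infty$, and therefore the above image is an open subgroup of $\gal(K'_{\infty,\underline{\infty}}/K'_\infty)\cong\bb{Z}_p^d$. Transporting through $\xi_{K'}$, this is exactly the image of $\xi_K$, which is thus an open subgroup of $\bb{Z}_p^d$, as required. The whole argument is essentially formal once one has Lemmas~\ref{lem:cohen} and~\ref{lem:cohen-str} in hand; the only mildly delicate point is the compatibility of the two $\xi$-maps via restriction, but this is immediate from the fact that the cocycle formula $\tau(t_{i,p^n})=\zeta_{p^n}^{\xi_i(\tau)}t_{i,p^n}$ only depends on the fixed system of $p$-power roots in $\overline{K}$ and not on the base field.
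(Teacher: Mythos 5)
Your proof is correct and follows essentially the same route as the paper's: reduce via Lemma~\ref{lem:cohen} to the subfield $K'$, invoke Lemma~\ref{lem:cohen-str}.(\ref{item:lem:cohen-str-4}), and identify $\Delta$ with an open subgroup of $\gal(K'_{\infty,\underline{\infty}}/K'_\infty)\cong\bb{Z}_p^d$ via restriction. You merely spell out in more detail the unpacking of condition \ref{prop:kummer-tower-lem}.(\ref{item:prop:kummer-tower-lem-1}) for a complete DVR and the finiteness of $[K_\infty:K'_\infty]$, both of which the paper leaves implicit.
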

\begin{proof}
	We take $K'$ as in \ref{lem:cohen}. Since $K$ is a finite extension of $K'$, the Galois group $\Delta$ identifies with an open subgroup of $\gal(K'_{\infty,\underline{\infty}}/K'_\infty)$. The conclusion follows from \ref{lem:cohen-str}.(\ref{item:lem:cohen-str-4}).
\end{proof}
\begin{myrem}\label{rem:brinon-kummer-tower-adequate}
	Let $n_0\in\bb{N}_{\geq 2}$ such that $K\cap K'_{\infty,\underline{\infty}}\subseteq K'_{n_0,\underline{n_0}}$. Then, for any $(n,\underline{m})\in\bb{N}_{\geq n_0}^{1+d}$, the natural map $\gal(K_{\infty,\underline{\infty}}/K_{n,\underline{m}})\to \gal(K'_{\infty,\underline{\infty}}/K'_{n,\underline{m}})$ is an isomorphism by Galois theory. In particular, the conclusion of {\rm\ref{prop:kummer-tower-lem}.(\ref{item:prop:kummer-tower-lem-2})} for $(\ca{O}_{K_{n,\underline{m}}})_{(n,\underline{m})\in \bb{N}^{1+d}}$ holds for any $(n,\underline{m})\in\bb{N}_{\geq n_0}^{1+d}$ by \ref{lem:cohen-str}.
\end{myrem}

\begin{mypara}
	Recall that the $\ca{O}_K$-module $\widehat{\Omega}^1_{\ca{O}_K}$ (defined in \ref{para:notation-Tate-mod}) is finitely generated whose free part has rank $d$, and that $\widehat{\Omega}^1_{\ca{O}_K}[1/p]$ admits a $K$-basis $\df\log(t_1),\dots,\df\log(t_d)$ (cf. \cite[3.3]{he2021faltingsext}). For simplicity, we set (cf. \ref{para:notation-Tate-mod})
	\begin{align}
		\scr{E}_{\ca{O}_K}=V_p(\Omega^1_{\ca{O}_{\overline{K}}/\ca{O}_K})=\plim_{x\mapsto px}\Omega^1_{\ca{O}_{\overline{K}}/\ca{O}_K}.
	\end{align}
	It is a $\widehat{\overline{K}}$-module as $\Omega^1_{\ca{O}_{\overline{K}}/\ca{O}_K}$ is $p$-primary torsion (\cite[4.2]{he2021faltingsext}), and endowed with the natural action of $G$. For any $(s_{p^n})_{n\in\bb{N}}\in V_p(\overline{K}^{\times})$, we take $k\in \bb{N}$ sufficiently large such that $p^ks_1,p^ks_1^{-1}\in \ca{O}_{\overline{K}}$ (thus $p^ks_{p^n}^{\pm 1}\in \ca{O}_{\overline{K}}$). The element $p^{-2k}(p^k s_{p^n}^{-1}\df (p^k s_{p^n}))_{n\in\bb{N}}\in \scr{E}_{\ca{O}_K}$ does not depend on the choice of $k$, which we denote by $(\df \log(s_{p^n}))_{n\in\bb{N}}$. Similarly, we define $\df\log(s)\in \widehat{\Omega}^1_{\ca{O}_K}[1/p]$ for any $s\in K^\times$.
\end{mypara}

\begin{mythm}[{\cite[4.4]{he2021faltingsext}}]\label{thm:fal-ext}
	There is a canonical $G$-equivariant exact sequence of $\widehat{\overline{K}}$-modules, called the \emph{Faltings extension} of $\ca{O}_K$,
	\begin{align}\label{eq:fal-ext}
		0\longrightarrow \widehat{\overline{K}}(1)\stackrel{\iota}{\longrightarrow} \scr{E}_{\ca{O}_K}\stackrel{\jmath}{\longrightarrow}\widehat{\overline{K}}\otimes_{\ca{O}_K}\widehat{\Omega}^1_{\ca{O}_K}\longrightarrow 0,
	\end{align}
	satisfying the following properties:
	\begin{enumerate}
		\renewcommand{\labelenumi}{{\rm(\theenumi)}}
		\item We have $\iota((\zeta_{p^n})_{n\in\bb{N}})=(\df\log(\zeta_{p^n}))_{n\in\bb{N}}$.\label{item:thm:fal-ext-1}
		\item For any $s\in K^\times$ and any compatible system of $p$-power roots $(s_{p^n})_{n\in\bb{N}}$ of $s$ in $\overline{K}$, $\jmath(\df \log(s_{p^n}))_{n\in\bb{N}})=\df \log(s)$.\label{item:thm:fal-ext-2}
		\item The $\widehat{\overline{K}}$-linear surjection $\jmath$ admits a section sending $\df\log(t_i)$ to $(\df\log(t_{i,p^n}))_{n\in\bb{N}}$ for any $1\leq i\leq d$.\label{item:thm:fal-ext-3}
	\end{enumerate}
	In particular, $\scr{E}_{\ca{O}_K}$ is a finite free $\widehat{\overline{K}}$-module with basis $\{(\df\log(t_{i,p^n}))_{n\in\bb{N}}\}_{0\leq i\leq d}$, where $t_{0,p^n}=\zeta_{p^n}$, on which $G$ acts continuously with respect to the canonical topology (where $\widehat{\overline{K}}$ is endowed with the $p$-adic topology defined by its valuation ring).
\end{mythm}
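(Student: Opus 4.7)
The plan is to construct the two maps $\iota$ and $\jmath$ directly, and then establish exactness by reducing to an explicit computation in the Kummer tower. For $\iota$, given a compatible system $\zeta = (\zeta_{p^n})_{n \in \bb{N}}$ of primitive $p^n$-th roots of unity, the identity $\zeta_{p^{n+1}}^p = \zeta_{p^n}$ differentiates to $p \cdot \df\log \zeta_{p^{n+1}} = \df\log \zeta_{p^n}$ in $\Omega^1_{\ca{O}_{\overline{K}}/\ca{O}_K}$, so the sequence $(\df\log \zeta_{p^n})_n$ lies in $V_p(\Omega^1_{\ca{O}_{\overline{K}}/\ca{O}_K}) = \scr{E}_{\ca{O}_K}$. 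This defines a $G$-equivariant $\bb{Z}_p$-linear map $\bb{Z}_p(1) \to \scr{E}_{\ca{O}_K}$ which extends $\widehat{\overline{K}}$-linearly to $\iota$, establishing property~(\ref{item:thm:fal-ext-1}).

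For $\jmath$, start from the fundamental right exact sequence $\ca{O}_{\overline{K}} \otimes_{\ca{O}_K} \Omega^1_{\ca{O}_K} \xrightarrow{\alpha} \Omega^1_{\ca{O}_{\overline{K}}} \xrightarrow{\beta} \Omega^1_{\ca{O}_{\overline{K}}/\ca{O}_K} \to 0$. Given $(\omega_n)_n \in \scr{E}_{\ca{O}_K}$, each $\omega_n$ is annihilated by some $p^{N_n}$; for a lift $\tilde{\omega}_n \in \Omega^1_{\ca{O}_{\overline{K}}}$, the element $p^{N_n}\tilde{\omega}_n$ lies in $\mathrm{Im}(\alpha)$, and so descends to a class in $\ca{O}_{\overline{K}} \otimes_{\ca{O}_K} \Omega^1_{\ca{O}_K}$ modulo controlled ambiguity. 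Mapping this to $\widehat{\overline{K}} \otimes_{\ca{O}_K} \widehat{\Omega}^1_{\ca{O}_K}$ and dividing by $p^{N_n}$ (the target being $p$-divisible as a $\widehat{\overline{K}}$-module), and using the compatibility $\omega_n = p\omega_{n+1}$ to check independence of $n$ and of the choice of lift, yields the $\widehat{\overline{K}}$-linear map $\jmath$. Property~(\ref{item:thm:fal-ext-2}) is then a direct computation, and $\jmath \circ \iota = 0$ because the $\zeta_{p^n}$ map to $1 \in \ca{O}_K$ after the rescaling built into $\jmath$.

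For exactness and the rank computation, reduce to the Cohen subring $K'$ of Lemma~\ref{lem:cohen}. Since $K/K'$ is a finite extension of discrete valuation rings, $\Omega^1_{\ca{O}_K/\ca{O}_{K'}}$ is bounded $p$-power torsion, hence killed by $V_p$; consequently $V_p(\Omega^1_{\ca{O}_{\overline{K}}/\ca{O}_{K'}}) \to \scr{E}_{\ca{O}_K}$ is an isomorphism. Over $\ca{O}_{K'}$, Lemma~\ref{lem:cohen-str} gives the explicit presentation $\ca{O}_{K'_{n,\underline{n}}} = \ca{O}_{K'}[\zeta_{p^n}, t_{1,p^n}, \ldots, t_{d,p^n}]$, so $\Omega^1_{\ca{O}_{K'_{n,\underline{n}}}/\ca{O}_{K'}}$ decomposes as an explicit direct sum of cyclic modules generated by $\df\zeta_{p^n}$ and the $\df t_{i,p^n}$. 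Passing to the colimit over the Kummer tower, controlling the remaining ramification in $\overline{K}/K'_{\infty,\underline{\infty}}$ by a bounded $p$-torsion argument, and applying $V_p$ identify $\scr{E}_{\ca{O}_K}$ as a free $\widehat{\overline{K}}$-module of rank $1+d$ with basis $\{(\df\log t_{i,p^n})_n\}_{0 \leq i \leq d}$ (where $t_0 = \zeta$). Under this basis, $\iota$ identifies $\widehat{\overline{K}}(1)$ with the first summand, while the remaining $d$ summands split $\jmath$, giving simultaneously exactness and property~(\ref{item:thm:fal-ext-3}).

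The main obstacle is the control of $p$-power torsion when computing $\varinjlim_L \Omega^1_{\ca{O}_L/\ca{O}_{K'}}$ as $L$ ranges over finite subextensions of $\overline{K}/K'$ outside the Kummer tower: the $1+d$ expected generators come from the Kummer tower itself, but arbitrary wild ramification contributes a priori extra differentials, which must be shown to be annihilated by a bounded power of $p$ (depending only on the different ideal of $L/K'_{n,\underline{n}}$) so as to vanish after $V_p$. A secondary technical point is the well-definedness of $\jmath$: two lifts of $\omega_n$ differ by an element of $\mathrm{Im}(\alpha)$, and one must check that the resulting ambiguity in $\widehat{\overline{K}} \otimes \widehat{\Omega}^1_{\ca{O}_K}$ becomes trivial after division by $p^{N_n}$ and passage to the $p$-adic completion, which hinges on the flatness of $\widehat{\Omega}^1_{\ca{O}_K}[1/p]$ over $K$.
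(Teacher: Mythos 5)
Your overall plan---build $\iota$ out of $\bb{Z}_p(1)$, extract $\jmath$ from the right-exact sequence of differentials, and compute the rank and exactness by reducing to the Cohen subring $K'$ of Lemma~\ref{lem:cohen} and controlling the remaining ramification by almost purity over the perfectoid field $K'_{\infty,\underline{\infty}}$---is the right strategy and essentially what the cited reference \cite[4.4]{he2021faltingsext} and the paper's own relative analogue \ref{prop:B_nm-fal-ext} do. The reduction to the Cohen ring (where $p$ is a uniformizer, so Lemma~\ref{lem:cohen-str} gives the explicit presentation of the Kummer tower) is exactly the right move.

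There is, however, a concrete error in the construction of $\jmath$: the final division by $p^{N_n}$ is wrong, and it breaks the claimed independence of $n$. Test your formula on $(\omega_n)_n=(\df\log(t_{i,p^n}))_n$ with $t_i\in\ca{O}_K^\times$. Here $\omega_n$ is killed by $p^n$ (so $N_n=n$), a lift is $\tilde{\omega}_n=t_{i,p^n}^{-1}\df t_{i,p^n}\in\Omega^1_{\ca{O}_{\overline{K}}}$, and
\begin{align*}
p^{N_n}\tilde{\omega}_n = p^n t_{i,p^n}^{-1}\df t_{i,p^n} = \df\log(t_i) = \alpha(1\otimes\df\log(t_i)),
\end{align*}
so $\eta_n = 1\otimes\df\log(t_i)$ independently of $n$. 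Your rule then outputs $p^{-N_n}\eta_n = p^{-n}\df\log(t_i)$, which \emph{does} depend on $n$, diverges as $n\to\infty$, and disagrees with the required formula $\jmath((\df\log(t_{i,p^n}))_n)=\df\log(t_i)$ from property~(2). Since $\widehat{\overline{K}}\otimes_{\ca{O}_K}\widehat{\Omega}^1_{\ca{O}_K}$ is a vector space over the characteristic-zero field $\widehat{\overline{K}}$, the division is unambiguous---it is simply the wrong operation. The correct recipe is the connecting homomorphism: for each $n$ take the $\alpha$-preimage $\eta_n$ of $p^{N_n}\tilde{\omega}_n$ and read it off \emph{modulo $p^n$} in $(\ca{O}_{\overline{K}}\otimes_{\ca{O}_K}\Omega^1_{\ca{O}_K})/p^n$, with no division at all; the compatibility $\omega_n=p\omega_{n+1}$ then makes $(\eta_n\bmod p^n)_n$ a Cauchy system whose limit is $\jmath((\omega_n)_n)$ (this is literally how \ref{prop:B_nm-fal-ext} proceeds, via $\ho_{\bb{Z}_p}(\bb{Z}_p/p^n,-)$ applied to the morphism of exact sequences). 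In effect your proposal divides by $p^{N_n}$ twice: once implicitly (in passing from $p^{N_n}\tilde{\omega}_n$ back to $\eta_n$) and once explicitly.

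Two smaller points. First, the "controlled ambiguity" in pulling back through $\alpha$ is the ambiguity from $\ker\alpha$, not just from the choice of lift of $\omega_n$; you should say that $\ker\alpha$ is $p$-power torsion (coming from $H_1(\dl_{\ca{O}_{\overline{K}}/\ca{O}_K})$) and hence dies after passing to the uniquely $p$-divisible target. Second, for the rank computation you say the contribution of $\overline{K}/K'_{\infty,\underline{\infty}}$ is "bounded $p$-torsion," but almost purity gives that $\Omega^1_{\ca{O}_{\overline{K}}/\ca{O}_{K'_{\infty,\underline{\infty}}}}$ is \emph{almost zero}, which is the correct and slightly different statement (it is killed by the maximal ideal $\ak{m}_{\ca{O}_{\overline{K}}}$, not necessarily by a single power of $p$). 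Fortunately an almost-zero module is killed by $p$ and has $T_p=V_p=0$, so this repair is harmless; you should still make it. Finally, the statement's "in particular" also asserts continuity of the $G$-action on $\scr{E}_{\ca{O}_K}$, which your outline does not address and which the paper's proof checks explicitly via the $G$-stable integral structure $\scr{E}_{\ca{O}_K}^+$.
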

\begin{proof}
	The sequence \eqref{eq:fal-ext} is constructed in \cite[4.4]{he2021faltingsext} and (\ref{item:thm:fal-ext-1}), (\ref{item:thm:fal-ext-3}) are proved there. Notice that (\ref{item:thm:fal-ext-2}) follows from the constructing process \cite[(4.4.5)]{he2021faltingsext} (see also \ref{prop:B_nm-fal-ext} for a detailed proof). For the ``in particular'' part, it remains to check the continuity of the $G$-action. We set $\alpha_i=(\df\log(t_{i,p^n}))_{n\in\bb{N}}\in\scr{E}_{\ca{O}_K}$ for any $0\leq i\leq d$. For any $g\in G$ and $1\leq i\leq d$, we have
	\begin{align}\label{eq:fal-ext-action}
		g(\alpha_0)=\chi(g)\alpha_0\quad\trm{ and }\quad g(\alpha_i)=\xi_i(g)\alpha_0+\alpha_i,
	\end{align}
	where $\chi:G\to \bb{Z}_p^\times$ is the cyclotomic character \eqref{eq:cycl-char} and $\xi=(\xi_1,\dots,\xi_d):G\to \bb{Z}_p^d$ is the continuous $1$-cocycle \eqref{eq:cont-cocycle}. The elements $\alpha_0,\dots,\alpha_d$ generate a finite free $\ca{O}_{\widehat{\overline{K}}}$-submodule $\scr{E}_{\ca{O}_K}^+$ of $\scr{E}_{\ca{O}_K}$ which is $G$-stable. For any $r\in\bb{N}$, each element of $\scr{E}_{\ca{O}_K}/p^r\scr{E}_{\ca{O}_K}^+=\oplus_{i=0}^d(\overline{K}/p^r\ca{O}_{\overline{K}})\alpha_i$ is fixed by an open subgroup of $G$, which implies that the map $G\times(\scr{E}_{\ca{O}_K}/p^r\scr{E}_{\ca{O}_K}^+)\to \scr{E}_{\ca{O}_K}/p^r\scr{E}_{\ca{O}_K}^+$ (given by the action of $G$) is continuous with respect to the discrete topology on $\scr{E}_{\ca{O}_K}/p^r\scr{E}_{\ca{O}_K}^+$. Taking inverse limit on $r\in\bb{N}$, we see that $G\times \scr{E}_{\ca{O}_K}\to \scr{E}_{\ca{O}_K}$ is continuous with respect to the limit topology on $\scr{E}_{\ca{O}_K}$, which indeed coincides with the canonical topology (\ref{para:canonical-top}).
\end{proof}

\begin{myrem}\label{rem:fal-ext}
	The Faltings extension \eqref{eq:fal-ext} is functorial in the following sense: let $K'$ be a complete discrete valuation field extension of $K$ whose residue field admits a finite $p$-basis, $\overline{K}\to \overline{K'}$ a compatible embedding of the algebraic closures of $K$ and $K'$. It defines a natural map $\Omega^1_{\ca{O}_{\overline{K}}/\ca{O}_K}\to \Omega^1_{\ca{O}_{\overline{K'}}/\ca{O}_{K'}}$ by pullback and thus a natural morphism of exact sequences
	\begin{align}\label{diam:rem:fal-ext}
		\xymatrix{
			0\ar[r]& \widehat{\overline{K}}(1)\ar[r]^-{\iota}\ar[d]& \scr{E}_{\ca{O}_K}\ar[r]^-{\jmath}\ar[d]&\widehat{\overline{K}}\otimes_{\ca{O}_K}\widehat{\Omega}^1_{\ca{O}_K}\ar[r]\ar[d] &0\\
			0\ar[r]& \widehat{\overline{K'}}(1)\ar[r]^-{\iota}& \scr{E}_{\ca{O}_{K'}}\ar[r]^-{\jmath}&\widehat{\overline{K'}}\otimes_{\ca{O}_{K'}}\widehat{\Omega}^1_{\ca{O}_{K'}}\ar[r] &0
		}
	\end{align}
	Moreover, if $K'$ is a finite extension of $K$, then $K'\otimes_{\ca{O}_K}\widehat{\Omega}^1_{\ca{O}_K}\to K'\otimes_{\ca{O}_{K'}}\widehat{\Omega}^1_{\ca{O}_{K'}}$ is an isomorphism (cf. the proof of \cite[3.3]{he2021faltingsext}). Thus, the vertical maps in \eqref{diam:rem:fal-ext} are isomorphisms.
\end{myrem}

\begin{mycor}\label{cor:fal-ext-connect}
	The connecting map of the Faltings extension \eqref{eq:fal-ext} induces a canonical $\widehat{K_\infty}$-linear isomorphism
	\begin{align}\label{eq:fal-ext-connect-2}
		\widehat{K_\infty}\otimes_{\ca{O}_K}\widehat{\Omega}^1_{\ca{O}_K}\iso H^1(H,\widehat{\overline{K}}(1)),
	\end{align}
	sending $\df\log(t_i)$ to $\xi_i\otimes\zeta$, where $H^1$ denotes the continuous group cohomology, $\zeta=(\zeta_{p^n})_{n\in\bb{N}}\in \bb{Z}_p(1)$ and $\xi=(\xi_1,\dots,\xi_d):H\to \bb{Z}_p^d$ is the continuous $1$-cocycle \eqref{eq:cont-cocycle}.
\end{mycor}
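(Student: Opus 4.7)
The plan is to apply the functor $H^\ast(H,-)$ of continuous group cohomology to the Faltings extension \eqref{eq:fal-ext} and extract the connecting homomorphism from the resulting long exact sequence. I will identify its source with $\widehat{K_\infty}\otimes_{\ca{O}_K}\widehat{\Omega}^1_{\ca{O}_K}$, compute its value on the distinguished generators using the explicit section from \ref{thm:fal-ext}.(\ref{item:thm:fal-ext-3}), and deduce bijectivity from a standard cohomology computation.

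First I would identify the right-hand $H^0$ term. Since each $\df\log(t_i)$ is $G$-invariant (as $t_i\in K$), and $\widehat{\overline{K}}\otimes_{\ca{O}_K}\widehat{\Omega}^1_{\ca{O}_K}$ is free of rank $d$ over $\widehat{\overline{K}}$ with basis $\{\df\log(t_i)\}_{i=1}^d$ (the $p$-power torsion of $\widehat{\Omega}^1_{\ca{O}_K}$ being killed after inverting $p$), Ax-Sen-Tate ($\widehat{\overline{K}}^H=\widehat{K_\infty}$) yields
\begin{align*}
    H^0(H,\,\widehat{\overline{K}}\otimes_{\ca{O}_K}\widehat{\Omega}^1_{\ca{O}_K})=\widehat{K_\infty}\otimes_{\ca{O}_K}\widehat{\Omega}^1_{\ca{O}_K}.
\end{align*}

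Next, to compute the connecting map $\delta$ on $1\otimes\df\log(t_i)$, I lift it via the section of $\jmath$ from \ref{thm:fal-ext}.(\ref{item:thm:fal-ext-3}) to $\alpha_i:=(\df\log(t_{i,p^n}))_{n\in\bb{N}}\in\scr{E}_{\ca{O}_K}$. For $g\in H$ one has $\chi(g)=1$ since $g$ fixes $K_\infty$, so the Galois action formula \eqref{eq:fal-ext-action} specializes to $g(\alpha_i)-\alpha_i=\xi_i(g)\alpha_0=\iota(\xi_i(g)\zeta)$, using $\alpha_0=\iota(\zeta)$ from \ref{thm:fal-ext}.(\ref{item:thm:fal-ext-1}). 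Hence $\delta(1\otimes\df\log(t_i))$ is the class of the cocycle $g\mapsto\xi_i(g)\zeta$, namely $\xi_i\otimes\zeta$ in $H^1(H,\widehat{\overline{K}}(1))$. The map $\delta$ is automatically $\widehat{K_\infty}$-linear by functoriality of the long exact sequence.

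Finally, to conclude that $\delta$ is bijective, I would invoke the imperfect-residue-field generalization of the Tate-Sen theorem (due to Hyodo, also contained in \cite{brinon2003sen}), which exhibits $\{\xi_i\otimes\zeta\}_{i=1}^d$ as a canonical $\widehat{K_\infty}$-basis of $H^1(H,\widehat{\overline{K}}(1))$; since $\delta$ sends one basis to the other, it is an isomorphism. The main obstacle lies in this last cohomology computation: a self-contained proof would proceed via the Hochschild-Serre spectral sequence for the normal subgroup $H_0=\gal(\overline{K}/K_{\infty,\underline{\infty}})$ of $H$, using (i) an Ax-Sen-Tate-style argument for the tower $(K_{\infty,\underline{m}})_{\underline{m}}\subseteq\overline{K}$ to compute $H^\ast(H_0,\widehat{\overline{K}})$ and reduce to $\Delta\cong\bb{Z}_p^d$ acting on $\widehat{K_{\infty,\underline{\infty}}}(1)$, together with (ii) an explicit Koszul/Fourier-type computation of $H^\ast(\Delta,\widehat{K_{\infty,\underline{\infty}}}(1))$.
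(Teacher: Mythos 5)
Your proposal is correct and follows essentially the same route as the paper: compute the connecting map of the long exact sequence explicitly on the basis $\{\df\log(t_i)\}$ via the section of $\jmath$, and then invoke Hyodo's computation of $H^1(H,\widehat{\overline{K}}(1))$ to conclude bijectivity. This is exactly what the paper does in the body of the corollary (citing \cite[2-1, 5-1]{hyodo1986hodge}), deferring the detailed cohomology computation to the appendix (\ref{app:prop:fal-ext-connect}, \ref{lem:H-coh}). Your cocycle calculation $g(\alpha_i)-\alpha_i=\xi_i(g)\alpha_0=\iota(\xi_i(g)\zeta)$ for $g\in H$ is correct, using $\chi|_H=1$ and \eqref{eq:fal-ext-action}.

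One detail worth flagging in your last paragraph: the sketch of a self-contained proof, while morally right, glosses over the fact that the Hochschild--Serre/almost-\'etale descent step (your (i)) only works cleanly once $K$ is replaced by a finite Galois extension $K'$ satisfying Hyodo's assumption {\rm($\ast$)} in \ref{para:notation-K-good}, after which $K'_{\infty,\underline{\infty}}$ is pre-perfectoid and almost purity applies. The general case is then recovered by a restriction--corestriction argument, which is what the paper's appendix does. Your invocation of Hyodo's theorem as a black box packages this reduction, so the proposal is sound, but the self-contained sketch is incomplete without this descent step. Also, the paper's \ref{lem:H-coh}.(2) realizes your step (ii) not via a Koszul complex for $\Delta\cong\bb{Z}_p^d$ on the completed ring directly, but by first decomposing $\ca{O}_{K_{\infty,\underline{\infty}}}$ as a direct sum of rank-one $\ca{O}_{K_\infty}[\Delta]$-modules and using the vanishing of cohomology of the nontrivial isotypic pieces up to $\zeta_p-1$; these are two standard presentations of the same computation.
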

\begin{proof}
	We remark that the Faltings extension \eqref{eq:fal-ext} is an exact sequence of finite projective $\widehat{\overline{K}}$-representations of $G$, which admits a continuous splitting (not $G$-equivariant), so that we obtain a long exact sequence of continuous group cohomologies (cf. \cite[\textsection 2]{tate1976galcoh}). The corollary follows from Hyodo's computation of $H^1(H,\widehat{\overline{K}}(1))$ (cf. \cite[2-1, 5-1]{hyodo1986hodge}). We will give a detailed proof in \ref{app:prop:fal-ext-connect}.
\end{proof}

\begin{myrem}\label{rem:fal-ext-coh}
	A similar result for $H^1(G,\widehat{\overline{K}}(1))$ is given in \cite[4.5]{he2021faltingsext}, relying on Hyodo's computation.
\end{myrem}

\begin{mypara}\label{para:hyodo-ring}
	We set
	\begin{align}\label{eq:hyodo-ring}
		\hyodoring=\colim_{n\in\bb{N}} \mrm{Sym}^n_{\widehat{\overline{K}}} (\scr{E}_{\ca{O}_K}(-1)),
	\end{align}
	where $\mrm{Sym}^n$ is taking the homogeneous part of degree $n$ of the symmetric algebra, and the transition map $\mrm{Sym}^n\to \mrm{Sym}^{n+1}$ is defined by sending $[x_1\otimes\cdots\otimes x_n]$ to $[1\otimes x_1\otimes\cdots\otimes x_n]$ (where $1$ denotes the image of $1\in\widehat{\overline{K}}$ via \eqref{eq:fal-ext}). It is a $\widehat{\overline{K}}$-module endowed with the natural action of $G$. There is a natural $G$-equivariant exact sequence of $\widehat{\overline{K}}$-modules induced by \eqref{eq:fal-ext},
	\begin{align}\label{eq:para:hyodo-ring}
		0\to \mrm{Sym}^{n-1}_{\widehat{\overline{K}}} (\scr{E}_{\ca{O}_K}(-1))\to \mrm{Sym}^n_{\widehat{\overline{K}}} (\scr{E}_{\ca{O}_K}(-1))\to \widehat{\overline{K}}\otimes_{\ca{O}_K}(\mrm{Sym}^n_{\ca{O}_K}\widehat{\Omega}^1_{\ca{O}_K})(-n)\to 0.
	\end{align}
	The $\widehat{\overline{K}}$-module $\hyodoring$ admits a natural $\widehat{\overline{K}}$-algebra structure induced by the multiplication morphisms $\mrm{Sym}^n\otimes \mrm{Sym}^m\to \mrm{Sym}^{n+m}$.
\end{mypara}

\begin{mycor}\label{cor:hyodo-ring}
	We set $\zeta=(\zeta_{p^n})_{n\in\bb{N}}\in \bb{Z}_p(1)$ and denote by $\zeta^{-1}\in \bb{Z}_p(-1)=\ho_{\bb{Z}_p}(\bb{Z}_p(1),\bb{Z}_p)$ the dual basis of $\zeta$.
	\begin{enumerate}
		\renewcommand{\labelenumi}{{\rm(\theenumi)}}
		\item  There is an isomorphism of $\widehat{\overline{K}}$-algebras, 
		\begin{align}\label{eq:hyodo-ring-polynomial}
			\widehat{\overline{K}}[T_1,\dots,T_d]\iso \hyodoring,
		\end{align}
		sending the variable $T_i$ to $(\df\log(t_{i,p^n}))_{n\in\bb{N}}\otimes\zeta^{-1}$ for any $1\leq i\leq d$. \label{item:cor:hyodo-ring-1}
		\item If we endow $\widehat{\overline{K}}[T_1,\dots,T_d]$ with the semi-linear $G$-action by transport of structure via \eqref{eq:hyodo-ring-polynomial}, then for any $g\in G$ and $1\leq i\leq d$, we have
		\begin{align}
			g(T_i)=\chi(g)^{-1}(\xi_i(g)+T_i),
		\end{align}
		where $\chi:G\to \bb{Z}_p^\times$ is the cyclotomic character \eqref{eq:cycl-char} and $\xi=(\xi_1,\dots,\xi_d):G\to \bb{Z}_p^d$ is the continuous $1$-cocycle \eqref{eq:cont-cocycle}. In particular, $G$ acts continuously on $\mrm{Sym}^n_{\widehat{\overline{K}}} (\scr{E}_{\ca{O}_K}(-1))$ with respect to the canonical topology for any $n\in\bb{N}$. \label{item:cor:hyodo-ring-2}
		\item The canonical map $\jmath$ in \eqref{eq:fal-ext} induces a canonical isomorphism of $\hyodoring$-modules,
		\begin{align}\label{eq:hyodo-ring-diff}
			\Omega^1_{\hyodoring/\widehat{\overline{K}}}\iso \hyodoring\otimes_{\ca{O}_K}\widehat{\Omega}^1_{\ca{O}_K}(-1),
		\end{align}
		and the universal differential map $\df_{\hyodoring}:\hyodoring\to \hyodoring\otimes_{\ca{O}_K}\widehat{\Omega}^1_{\ca{O}_K}(-1)$ sends $T_i$ to $\df\log(t_i)\otimes \zeta^{-1}$ for any $1\leq i\leq d$. \label{item:cor:hyodo-ring-3}
	\end{enumerate}
\end{mycor}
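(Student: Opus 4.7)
The approach is to use the explicit basis $\alpha_i = (\df\log(t_{i,p^n}))_{n\in\bb{N}}$ of $\scr{E}_{\ca{O}_K}$ from \ref{thm:fal-ext} (for $0 \leq i \leq d$, with $t_{0,p^n} = \zeta_{p^n}$) and reduce the computation of $\hyodoring$ to polynomial-ring algebra. Set $X_i = \alpha_i \otimes \zeta^{-1} \in \scr{E}_{\ca{O}_K}(-1)$; then $X_0, \ldots, X_d$ is a $\widehat{\overline{K}}$-basis of $\scr{E}_{\ca{O}_K}(-1)$, and $\mrm{Sym}^n_{\widehat{\overline{K}}}(\scr{E}_{\ca{O}_K}(-1))$ identifies with the degree-$n$ part of the polynomial ring $\widehat{\overline{K}}[X_0, \ldots, X_d]$. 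By \ref{thm:fal-ext}.(\ref{item:thm:fal-ext-1}), the image of $1 \in \widehat{\overline{K}}$ under $\iota(-1)$ is exactly $X_0$, so the transition map in \eqref{eq:hyodo-ring} is multiplication by $X_0$. The key observation is that this forces $X_0 = 1$ in $\hyodoring$, so that the assignment $T_i \mapsto [X_i]$ (for $1 \leq i \leq d$) gives a $\widehat{\overline{K}}$-algebra map $\widehat{\overline{K}}[T_1, \ldots, T_d] \to \hyodoring$. Surjectivity follows because a monomial $X_0^{a_0} X_1^{a_1} \cdots X_d^{a_d}$ of degree $n$ represents the same class in $\hyodoring$ as $X_1^{a_1} \cdots X_d^{a_d} \in \mrm{Sym}^{a_1 + \cdots + a_d}$; injectivity follows because a polynomial $P$ of total degree $N$ lifts to a homogeneous element of $\mrm{Sym}^N$ via multiplication by suitable powers of $X_0$, and in the integral domain $\widehat{\overline{K}}[X_0, \ldots, X_d]$ no further power of $X_0$ can annihilate it unless $P = 0$. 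This proves (1); the ring structure on $\hyodoring$ is well-defined as the symmetric-algebra multiplications $\mrm{Sym}^n \otimes \mrm{Sym}^m \to \mrm{Sym}^{n+m}$ are manifestly compatible with the transitions (by commutativity).

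For (2), combining $g(\alpha_0) = \chi(g) \alpha_0$ and $g(\alpha_i) = \xi_i(g) \alpha_0 + \alpha_i$ from \eqref{eq:fal-ext-action} with $g(\zeta^{-1}) = \chi(g)^{-1} \zeta^{-1}$ yields $g(X_0) = X_0$ (consistent with $T_0 = 1$ being $G$-fixed) and $g(X_i) = \chi(g)^{-1}(\xi_i(g) X_0 + X_i)$, which upon passing to classes gives the stated formula $g(T_i) = \chi(g)^{-1}(\xi_i(g) + T_i)$. Continuity on each $\mrm{Sym}^n_{\widehat{\overline{K}}}(\scr{E}_{\ca{O}_K}(-1))$ is inherited from \ref{thm:fal-ext}: the $\ca{O}_{\widehat{\overline{K}}}$-lattice generated by degree-$n$ monomials in the $X_i$ is $G$-stable by the above formulas, and modulo $p^r$ each element has an open stabilizer by the same argument given in the continuity proof of \ref{thm:fal-ext}, yielding continuity in the canonical topology upon taking inverse limits in $r$.

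For (3), from (1) we have $\hyodoring \cong \widehat{\overline{K}}[T_1, \ldots, T_d]$, so $\Omega^1_{\hyodoring/\widehat{\overline{K}}}$ is $\hyodoring$-free on $\df T_1, \ldots, \df T_d$; meanwhile $\hyodoring \otimes_{\ca{O}_K} \widehat{\Omega}^1_{\ca{O}_K}(-1) = \hyodoring \otimes_K (\widehat{\Omega}^1_{\ca{O}_K}[1/p])(-1)$ is $\hyodoring$-free on the $\df\log(t_i) \otimes \zeta^{-1}$, using the basis of $\widehat{\Omega}^1_{\ca{O}_K}[1/p]$ recalled before \ref{thm:fal-ext}. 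The canonical isomorphism is constructed from $\jmath$ as follows: the $\widehat{\overline{K}}$-linear map $\scr{E}_{\ca{O}_K}(-1) \xrightarrow{\jmath(-1)} \widehat{\overline{K}} \otimes_{\ca{O}_K} \widehat{\Omega}^1_{\ca{O}_K}(-1) \hookrightarrow \hyodoring \otimes_{\ca{O}_K} \widehat{\Omega}^1_{\ca{O}_K}(-1)$ kills $X_0$ (by exactness of \eqref{eq:fal-ext}), and therefore extends uniquely to a $\widehat{\overline{K}}$-derivation of $\hyodoring = \widehat{\overline{K}}[X_0, \ldots, X_d]/(X_0 - 1)$ with values in $\hyodoring \otimes_{\ca{O}_K} \widehat{\Omega}^1_{\ca{O}_K}(-1)$; by the universal property of $\Omega^1_{\hyodoring/\widehat{\overline{K}}}$ this factors as $\df_{\hyodoring}$ followed by an $\hyodoring$-linear map $\Omega^1_{\hyodoring/\widehat{\overline{K}}} \to \hyodoring \otimes_{\ca{O}_K} \widehat{\Omega}^1_{\ca{O}_K}(-1)$ sending $\df T_i$ to $1 \otimes \df\log(t_i) \otimes \zeta^{-1}$ (by \ref{thm:fal-ext}.(\ref{item:thm:fal-ext-2})). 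This sends basis to basis, hence is an isomorphism; canonicity is built in since its construction uses only $\jmath$ and $\df_{\hyodoring}$.

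No step presents a real obstacle: once one recognizes that the colimit structure in \eqref{eq:hyodo-ring} identifies $X_0$ with $1$ in $\hyodoring$, each of (1), (2), (3) reduces to a direct polynomial-ring computation combined with the explicit data of the Faltings extension furnished by \ref{thm:fal-ext}.
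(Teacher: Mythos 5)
Your proof is correct and works out in full what the paper dispatches in a single line (``It follows directly from \ref{thm:fal-ext} and its arguments''); the key identifications — $\hyodoring \cong \mrm{Sym}_{\widehat{\overline{K}}}(\scr{E}_{\ca{O}_K}(-1))/(X_0-1)$, the $G$-action read off from \eqref{eq:fal-ext-action} together with $g(\zeta^{-1})=\chi(g)^{-1}\zeta^{-1}$, and the extension of $\jmath$ to the universal derivation killing $X_0$ — are exactly the content the paper leaves implicit. No gap; the approach matches.
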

\begin{proof}
	It follows directly from \ref{thm:fal-ext} and its arguments.
\end{proof}

\begin{mydefn}[{cf. \cite[\textsection1]{hyodo1989variation}, \cite[\Luoma{2}.15]{abbes2016p}, \cite[\textsection15]{tsuji2018localsimpson}}]\label{defn:hyodo-ring}
	The $\widehat{\overline{K}}$-algebra $\hyodoring$ constructed in \eqref{eq:hyodo-ring} is called the \emph{Hyodo ring} of $\ca{O}_K$.
\end{mydefn}

\begin{mycor}\label{cor:hyodo-ring-coh}
	We have
	\begin{align}
		\colim_{n\in\bb{N}}H^q(H,\mrm{Sym}^n_{\widehat{\overline{K}}} (\scr{E}_{\ca{O}_K}(-1))) = \left\{ \begin{array}{ll}
			\widehat{K_\infty} & \textrm{if $q=0$,}\\
			0 & \textrm{otherwise,}
		\end{array} \right.
	\end{align}
	where $H^q$ denotes the continuous group cohomology, and $\mrm{Sym}^n_{\widehat{\overline{K}}} (\scr{E}_{\ca{O}_K}(-1))$ is endowed with the canonical topology as a finite-dimensional $\widehat{\overline{K}}$-module. In particular, $(\hyodoring)^H=\widehat{K_\infty}$.
\end{mycor}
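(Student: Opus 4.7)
The plan is to derive the corollary from the long exact sequence in continuous $H$-cohomology associated to the short exact sequence \eqref{eq:para:hyodo-ring}, passed to the filtered colimit over $n$, using Tate's theorem together with the description of the Faltings extension class provided by \ref{cor:fal-ext-connect}.

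First I would set $M_n:=\mrm{Sym}^n_{\widehat{\overline K}}(\scr{E}_{\ca O_K}(-1))$ and $N_n:=\widehat{\overline K}\otimes_{\ca O_K}(\mrm{Sym}^n_{\ca O_K}\widehat{\Omega}^1_{\ca O_K})(-n)$, and observe that $N_n$ is a finite direct sum of copies of $\widehat{\overline K}(-n)$. Its continuous $H$-cohomology is thus understood from known computations: $H^0(H,\widehat{\overline K})=\widehat{K_\infty}$ by Tate, and $H^1(H,\widehat{\overline K}(1))\cong \widehat{K_\infty}\otimes_{\ca O_K}\widehat{\Omega}^1_{\ca O_K}$ via the connecting map of \eqref{eq:fal-ext} by \ref{cor:fal-ext-connect}, with $\df\log(t_1),\dots,\df\log(t_d)$ forming a $\widehat{K_\infty}$-basis. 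Under the polynomial description $\hyodoring\cong \widehat{\overline K}[T_1,\dots,T_d]$ of \ref{cor:hyodo-ring}.(\ref{item:cor:hyodo-ring-1}) and the fact that $\chi|_H=1$, the $H$-action simplifies to $h(T_i)=T_i+\xi_i(h)$; in particular each cocycle $\xi_i$, which represents $\df\log(t_i)\otimes\zeta^{-1}$ in $H^1(H,\widehat{\overline K})$ by a Tate twist of \ref{cor:fal-ext-connect}, acquires $T_i$ as a primitive after enlarging the coefficient module from $\widehat{\overline K}$ to $\hyodoring$.

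For the degree-zero statement I would argue directly in the polynomial picture: if $P=\sum_{\underline k}a_{\underline k}T^{\underline k}\in \widehat{\overline K}[T_1,\dots,T_d]$ is $H$-invariant of total degree $N\geq 1$, comparing the coefficients of $T^{\underline k}$ on both sides of $h(P)=P$ first shows that every top-degree coefficient ($|\underline k|=N$) satisfies $h(a_{\underline k})=a_{\underline k}$, hence lies in $\widehat{\overline K}^H=\widehat{K_\infty}$ by Tate. The descent equation at degree $N-1$ then writes a nonzero $\widehat{K_\infty}$-linear combination of the $\xi_i$ as the coboundary $(h-1)a_{\underline k'}$ of some $a_{\underline k'}\in \widehat{\overline K}$, contradicting the $\widehat{K_\infty}$-linear independence of the classes $[\xi_i]$ in $H^1(H,\widehat{\overline K})$ (a Tate twist of \ref{cor:fal-ext-connect}). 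Hence all $H$-invariant polynomials are constants in $\widehat{K_\infty}$, so $(\hyodoring)^H=\widehat{K_\infty}$, and the same computation applied to each $M_n$ yields $\colim_n H^0(H,M_n)=\widehat{K_\infty}$.

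For the vanishing in positive degrees, I would combine the long exact sequence from \eqref{eq:para:hyodo-ring} with the identification (up to canonical twist) of the connecting homomorphism $H^{q-1}(H,N_n)\to H^q(H,M_{n-1})$ as cup product with the Faltings extension class; via \ref{cor:fal-ext-connect} and \ref{cor:hyodo-ring}.(\ref{item:cor:hyodo-ring-3}), this cup product corresponds to the universal differential on $\hyodoring$. In the filtered colimit, the direct system of long exact sequences reassembles into the Koszul/de Rham complex of the polynomial algebra $\widehat{K_\infty}[T_1,\dots,T_d]$ over $\widehat{K_\infty}$, which is acyclic in positive degrees. The main obstacle is justifying this colimit identification: either one interchanges the filtered colimit with continuous cohomology (delicate, since each $M_n$ is $p$-adically complete rather than discrete, though finite-dimensional over $\widehat{\overline K}$ with the canonical topology), or—more concretely—one shows inductively that every class in $H^q(H,M_n)$ is killed by a sufficiently large transition map $H^q(H,M_n)\to H^q(H,M_{n+N})$ by producing explicit cocycle primitives in the $T_i$, exactly as in the degree-zero argument above.
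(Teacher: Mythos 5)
Your degree-zero argument is a clean, elementary verification that works: induction on total degree using the polynomial picture $\hyodoring\cong\widehat{\overline K}[T_1,\dots,T_d]$ with $h(T_i)=T_i+\xi_i(h)$, the computation $\widehat{\overline K}^H=\widehat{K_\infty}$, and the $\widehat{K_\infty}$-linear independence of the classes $[\xi_i]$ in $H^1(H,\widehat{\overline K})$ coming from \ref{cor:fal-ext-connect}. Since $H^0$ commutes with filtered colimits, this gives the $q=0$ case directly, without needing the more elaborate machinery.

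For positive degrees, however, your proposal leaves a genuine gap that the paper fills. You correctly identify the key mathematical content---that the connecting homomorphisms in the long exact sequence for \eqref{eq:para:hyodo-ring} should be governed by the Faltings extension class and assemble into a Koszul-type complex---but you explicitly defer the hard step (``justifying this colimit identification'') and only gesture at two possible remedies. The first remedy (interchanging the filtered colimit with continuous group cohomology) is not available in general here, since the coefficient modules carry the non-discrete canonical topology; the second remedy (showing each class is killed by a large enough transition map) is the right one but is not carried out. The paper makes this precise via a spectral sequence attached to the finite decreasing filtration $\mrm{F}^i\mrm{Sym}^n_{\widehat{\overline K}}(\scr{E}_{\ca O_K}(-1))=\mrm{Sym}^{-i}_{\widehat{\overline K}}(\scr{E}_{\ca O_K}(-1))$. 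The two key inputs are: (i) a lemma (\ref{app:lem:hyodo-ring-coh}) identifying the $E_1$-differential $\df_1^{i,j}$, up to the isomorphisms of \ref{cor:fal-ext-connect} and its cup-product extension \eqref{eq:app:prop:fal-ext-connect-1}, with the explicit $K$-linear map
\begin{align}
\phi^{i,j}:\mrm{Sym}_K^i\widehat{\Omega}^1\otimes_K\widehat{\Omega}^j\longrightarrow \mrm{Sym}_K^{i-1}\widehat{\Omega}^1\otimes_K\widehat{\Omega}^{j+1},
\end{align}
and (ii) the exactness of the resulting Koszul-type complex in positive degrees (Hyodo's Lemma 1.2 / \cite[\Luoma{3}.5.1]{abbes2016p}). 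From (i) and (ii) one reads off that the spectral sequence degenerates at $E_2$, that $H^0(H,\mrm{Sym}^n)=\widehat{K_\infty}$, and---crucially for the colimit statement---that the transition map $H^q(H,\mrm{Sym}^n)\to H^q(H,\mrm{Sym}^{n+1})$ is zero for $q\neq 0$. To make your proposal complete you would need to either reproduce this spectral-sequence argument or write down, in each degree $q\geq 1$, an explicit cochain primitive in $\mrm{Sym}^{n+N}$ for a given $q$-cocycle in $\mrm{Sym}^n$; as written, the claim that the long exact sequences ``reassemble into the Koszul/de Rham complex of $\widehat{K_\infty}[T_1,\dots,T_d]$'' is stated but not established.
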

\begin{proof}
	It follows from the argument of \cite[(1.2.2)]{hyodo1989variation}, which relies on the cohomological property \ref{cor:fal-ext-connect} of the Faltings extension. We will give a detailed proof in \ref{app:cor:hyodo-ring-coh}.
\end{proof}
\begin{myrem}\label{rem:hyodo-ring-coh}
	One can also obtain a similar result for $H^q(G,\mrm{Sym}^n_{\widehat{\overline{K}}} (\scr{E}_{\ca{O}_K}(-1)))$ by the argument of \cite[(1.2.1)]{hyodo1989variation}.
\end{myrem}

\begin{mypara}\label{para:fal-ext-dual}
	Taking a Tate twist of the dual of the Faltings extension \eqref{eq:fal-ext} of $\ca{O}_K$, we obtain a canonical exact sequence of finite projective $\widehat{\overline{K}}$-representations of $G$,
	\begin{align}\label{eq:fal-ext-dual}
		0\longrightarrow \ho_{\ca{O}_K}(\widehat{\Omega}^1_{\ca{O}_K}(-1),\widehat{\overline{K}})\stackrel{\jmath^*}{\longrightarrow} \scr{E}^*_{\ca{O}_K}(1)\stackrel{\iota^*}{\longrightarrow}\widehat{\overline{K}}\longrightarrow 0
	\end{align} 
	where $\scr{E}^*_{\ca{O}_K}=\ho_{\widehat{\overline{K}}}(\scr{E}_{\ca{O}_K},\widehat{\overline{K}})$. There is a canonical $G$-equivariant $\widehat{\overline{K}}$-linear Lie algebra structure on $\scr{E}^*_{\ca{O}_K}(1)$ associated to the linear form $\iota^*$, defined by the Lie bracket for any $f_1,f_2\in \scr{E}^*_{\ca{O}_K}(1)$,
	\begin{align}
		[f_1,f_2]=\iota^*(f_1)f_2-\iota^*(f_2)f_1.
	\end{align}
	Thus, $\ho_{\ca{O}_K}(\widehat{\Omega}^1_{\ca{O}_K}(-1),\widehat{\overline{K}})$ is a Lie ideal of $\scr{E}^*_{\ca{O}_K}(1)$, and $\widehat{\overline{K}}$ is the quotient, and the induced Lie algebra structures on them are trivial. Any $\widehat{\overline{K}}$-linear splitting of \eqref{eq:fal-ext-dual} identifies $\scr{E}^*_{\ca{O}_K}(1)$ with the semi-direct product of Lie algebras of $\widehat{\overline{K}}$ acting on $\ho_{\ca{O}_K}(\widehat{\Omega}^1_{\ca{O}_K}(-1),\widehat{\overline{K}})$ by multiplication. Let $\{T_i=(\df\log(t_{i,p^n}))_{n\in\bb{N}}\otimes\zeta^{-1}\}_{0\leq i\leq d}$ (where $t_{0,p^n}=\zeta_{p^n}$) denote the basis of $\scr{E}_{\ca{O}_K}(-1)$, and let $\{T_i^*\}_{0\leq i\leq d}$ be the dual basis of $\scr{E}^*_{\ca{O}_K}(1)$. Then, we see that the Lie bracket on $\scr{E}^*_{\ca{O}_K}(1)$ is determined by
	\begin{align}
		[T_0^*,T_i^*]=T_i^*\quad\trm{ and }\quad [T_i^*,T_j^*]=0,
	\end{align}
	for any $1\leq i,j\leq d$. Indeed, this dual basis induces an isomorphism of $\widehat{\overline{K}}$-linear Lie algebras
	\begin{align}
		\widehat{\overline{K}}\otimes_{\bb{Q}_p}\lie(\bb{Z}_p\ltimes\bb{Z}_p^d)\iso \scr{E}^*_{\ca{O}_K}(1),\ 1\otimes\partial_i\mapsto T_i^*,
	\end{align}
	where $\{\partial_i\}_{0\leq i\leq d}$ is the standard basis of $\lie(\bb{Z}_p\ltimes\bb{Z}_p^d)$ (cf. \ref{para:gamma-basis}).
\end{mypara}

\begin{mythm}[{\cite[Th\'eor\`eme 1, 2]{brinon2003sen}, \cite[\textsection9]{ohkubo2011note}}]\label{thm:sen-brinon}
	The functor
	\begin{align}
		\repnpr(\Gamma,K_{\infty,\underline{\infty}})\longrightarrow \repnpr(G,\widehat{\overline{K}}),\ V\mapsto  \widehat{\overline{K}}\otimes_{K_{\infty,\underline{\infty}}}V
	\end{align}
	is an equivalence of categories.
\end{mythm}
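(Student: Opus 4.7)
The plan is to factor the functor through an intermediate category of $\widehat{K_{\infty,\underline{\infty}}}$-representations of $\Gamma$ and prove an equivalence at each stage, following the tower $K \subset K_{\infty,\underline{\infty}} \subset \overline{K}$. One may first reduce to the case $\ca{O}_K/p\ca{O}_K = \kappa$ using \ref{lem:cohen}: there is a complete discrete valuation subfield $K' \subset K$ with that property such that $K/K'$ is finite totally ramified with $t_i \in \ca{O}_{K'}$, and by \ref{rem:brinon-kummer-tower-adequate} the Kummer towers of $K$ and $K'$ have identical deep tails. A standard restriction--induction argument transfers the equivalence from $K'$ to $K$, so we may assume that $\ca{O}_K$ itself has the explicit totally-ramified/weakly-unramified structure supplied by \ref{lem:cohen-str}.

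Stage one is Galois descent from $\widehat{\overline{K}}$ down to $\widehat{K_{\infty,\underline{\infty}}}$ along the closed normal subgroup $H' = \gal(\overline{K}/K_{\infty,\underline{\infty}})$ of $G$. The Ax--Sen--Tate theorem yields $(\widehat{\overline{K}})^{H'} = \widehat{K_{\infty,\underline{\infty}}}$, and a variant of Hyodo's continuous cohomology computations (of the same flavor as \ref{cor:fal-ext-connect} and \ref{cor:hyodo-ring-coh}) shows that $H^i_{\mrm{cont}}(H', \widehat{\overline{K}})$ is controlled tightly enough to execute semilinear Galois descent of finite projective modules. Consequently, for every $W \in \repnpr(G, \widehat{\overline{K}})$, the $H'$-invariants $\widetilde{V} := W^{H'}$ form a finite projective $\widehat{K_{\infty,\underline{\infty}}}$-module of the same rank, the natural map $\widehat{\overline{K}} \otimes_{\widehat{K_{\infty,\underline{\infty}}}} \widetilde{V} \to W$ is a $G$-equivariant isomorphism, and $\widetilde{V}$ inherits a continuous semilinear action of $\Gamma = G/H'$.

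Stage two is a Sen-style decompletion from $\widehat{K_{\infty,\underline{\infty}}}$ down to $K_{\infty,\underline{\infty}}$. Inside a finite projective $\widehat{K_{\infty,\underline{\infty}}}$-representation $\widetilde{V}$ of $\Gamma$, I would extract the canonical $K_{\infty,\underline{\infty}}$-subspace $V \subset \widetilde{V}$ of $\Gamma$-locally-finite vectors, namely the union of finite-dimensional $K_{\infty,\underline{\infty}}$-subspaces stable under some open subgroup of $\Gamma$ and on which it acts analytically in the sense of \ref{defn:analytic}. The Tate--Sen axioms for the tower $(K_{n,\underline{m}})_{(n,\underline{m})}$ acting on $\widehat{K_{\infty,\underline{\infty}}}$ are verified via normalized trace operators whose growth is controlled by the ramification data recorded in \ref{lem:cohen-str}; these axioms force $V$ to have the expected rank and to satisfy $\widehat{K_{\infty,\underline{\infty}}} \otimes_{K_{\infty,\underline{\infty}}} V \iso \widetilde{V}$ as $\Gamma$-representations. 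The inverse functor $W \mapsto (W^{H'})^{\Gamma\trm{-lfin}}$ then provides both essential surjectivity and full faithfulness.

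The main obstacle is stage two. Conceptually it parallels Sen's original argument for the case $d=0$, but the normalized-trace estimates must now be tracked simultaneously along the cyclotomic direction (governed by $\Sigma$) and all $d$ Kummer directions (governed by $\Delta$). The twisted commutation identity $\sigma\tau\sigma^{-1}=\tau^{\chi(\sigma)}$ \eqref{eq:para:notation-kummer-relation} and the resulting non-abelian Lie structure \eqref{eq:lie-str-gamma} require careful bookkeeping, and producing trace estimates that converge uniformly across all directions is the technical core of the argument, originally carried out by Brinon and re-examined by Ohkubo and Tsuji.
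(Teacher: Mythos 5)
The paper gives no proof of this statement; it is simply cited from Brinon and Ohkubo. Your two-stage outline — almost-\'etale descent along $H'=\gal(\overline{K}/K_{\infty,\underline{\infty}})$, then Tate--Sen decompletion along the Kummer tower — is precisely the strategy those references follow, and it is mirrored in the paper's relative analogues \ref{prop:almost-et-descent}, \ref{prop:decompletion} and \ref{prop:descent-further}. So the overall approach is the right one.

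There is, however, a genuine gap in the opening reduction. The claim that ``a standard restriction--induction argument transfers the equivalence from $K'$ to $K$'' does not literally work: since $K'\subset K$, we have $G\subset G'$, and there is no way to push a $\widehat{\overline{K}}$-semilinear $G$-representation forward to a $G'$-representation without changing the rank, while restriction goes the wrong way. What one actually does — and what the paper does at the analogous point \ref{rem:tate-sen}, as well as in \ref{prop:brinon-kummer-tower-adequate}/\ref{rem:brinon-kummer-tower-adequate} — is to keep working with $K$'s own tower $(K_{n,\underline{m}})$ and use \ref{lem:cohen}/\ref{lem:cohen-str} only to \emph{establish the Tate--Sen estimates for that tower}: one proves the normalized-trace bounds and Ax--Sen--Tate-type vanishing explicitly for the structured tower $(K'_{n,\underline{m}})$ (where $\ca{O}_{K'_{n,\underline{m}}}$ has a clean presentation) and then transports the \emph{estimates}, not the equivalence, to $(K_{n,\underline{m}})$ via a bounded-$p$-torsion comparison, exactly in the spirit of the $p^{k_0}$-isomorphism arguments of section \ref{sec:descent}. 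A smaller point: in stage one the key input is almost purity (pre-perfectoidness of $K_{\infty,\underline{\infty}}$), not just the $H^i(H',\widehat{\overline{K}})$ computations of Hyodo type — the latter handle the base ring, but to descend finite projective modules one needs the almost-\'etale control of $GL_n$-cocycles. With those corrections, the rest of your sketch is correct; the conflation of the locally-finite part with $\Delta$-analyticity in stage two is harmless, since any finite-dimensional $\Gamma$-stable subspace is automatically analytic after shrinking the open subgroup (cf. \ref{lem:finite-dim-repn-analytic}).
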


\begin{myprop}[{\cite[14.16]{tsuji2018localsimpson}}]\label{prop:gamma-analytic}
	The functor (cf. {\rm\ref{defn:analytic}})
	\begin{align}\label{eq:3.6.1}
		\repnan{\Delta}(\Gamma,K_\infty)\longrightarrow \repnpr(\Gamma,K_{\infty,\underline{\infty}}),\ V\mapsto K_{\infty,\underline{\infty}}\otimes_{K_\infty} V
	\end{align}
	is an equivalence. 
\end{myprop}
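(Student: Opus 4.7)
The base-change functor is well-defined (continuous $\Gamma$-action transports to the tensor product) and faithful (by faithful flatness of $K_{\infty,\underline{\infty}}/K_\infty$).

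For fullness, my plan is to reduce to showing that $(K_{\infty,\underline{\infty}}\otimes_{K_\infty}M)^\Delta=M^\Delta$ for any $M\in\repnan{\Delta}(\Gamma,K_\infty)$. I would use the Kummer decomposition $K_{\infty,\underline{\infty}}=\bigoplus_\chi K_\infty\cdot e_\chi$ indexed by torsion characters $\chi$ of $\Delta$ (each $\chi$ factors through some $\Delta/\Delta_{\underline{m}}$, with $e_\chi$ a corresponding monomial in the $t_{i,p^{m_i}}$); this yields $K_{\infty,\underline{\infty}}\otimes M=\bigoplus_\chi e_\chi\cdot M$ with $\tau\in\Delta$ acting on $e_\chi\cdot M$ by $\chi(\tau)\cdot\tau|_M$. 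An invariant $x=\sum_\chi e_\chi\otimes m_\chi$ forces $\tau(m_\chi)=\chi(\tau)^{-1}m_\chi$ for every $\tau$. Differentiating in $\tau$ gives $\varphi_\tau(m_\chi)=-\log\chi(\tau)\cdot m_\chi=0$, since the $p$-adic logarithm vanishes on $\mu_{p^\infty}\subseteq K_\infty^\times$ (cf.\ \ref{exmp:gl_n}). The $\Delta$-analyticity of $M$ then forces $\tau(m_\chi)=\exp(\varphi_\tau)(m_\chi)=m_\chi$, so $\chi(\tau)=1$ for every $\tau$, which rules out $\chi\neq 1$. Hence only the trivial character contributes, giving $M^\Delta$; taking $\Sigma$-invariants yields $(K_{\infty,\underline{\infty}}\otimes M)^\Gamma=M^\Gamma$, and applied to $M=\ho_{K_\infty}(V,V')$ this establishes fullness.

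For essential surjectivity, given $W\in\repnpr(\Gamma,K_{\infty,\underline{\infty}})$, I plan a two-stage descent. First, I would apply a Sen--Tate style decompletion along the Kummer extension $K_{\infty,\underline{\infty}}/K_{\infty,\underline{m}}$ (which is a $\bb{Z}_p^d$-extension for $\underline{m}$ large by \ref{prop:brinon-kummer-tower-adequate} and \ref{rem:brinon-kummer-tower-adequate}) to obtain a $\Gamma$-stable, finite-dimensional $K_{\infty,\underline{m}}$-subspace $W_{\underline{m}}\subseteq W$ with $K_{\infty,\underline{\infty}}\otimes_{K_{\infty,\underline{m}}}W_{\underline{m}}\iso W$. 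Enlarging $\underline{m}$ further, the matrix logarithm (\ref{exmp:gl_n}) provides a $K_{\infty,\underline{m}}$-linear infinitesimal action of $\lie(\Delta)$ on $W_{\underline{m}}$ whose exponential recovers the $\Delta_{\underline{m}}$-action. Second, I would set
\begin{align*}
V=\{v\in W_{\underline{m}}\,:\,\tau(v)=\exp(\varphi_\tau)(v)\text{ for every }\tau\in\Delta\},
\end{align*}
a $K_\infty$-subspace of $W_{\underline{m}}$. The conjugation relation $\sigma\tau\sigma^{-1}=\tau^{\chi(\sigma)}$ from \eqref{eq:para:notation-kummer-relation}, together with the identity $\sigma\varphi_\tau\sigma^{-1}=\chi(\sigma)\varphi_\tau$ (cf.\ \ref{lem:operator}), shows $V$ is $\Sigma$-stable, hence $\Gamma$-stable, and $\Delta$ acts on $V$ analytically by construction. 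A character-matching count reverse to the one used for fullness identifies $\dim_{K_\infty}V=\dim_{K_{\infty,\underline{\infty}}}W$, yielding $K_{\infty,\underline{\infty}}\otimes_{K_\infty}V\iso W$.

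The main obstacle is the first stage of essential surjectivity, namely the Tate--Sen decompletion along a higher-dimensional Kummer tower. One natural route is to iterate the classical rank-one Sen decompletion one variable at a time, using \ref{prop:kummer-tower-lem} to secure the required $\bb{Z}_p$-quotient at each step. A shorter alternative is to transfer $W$ into $\repnpr(G,\widehat{\overline{K}})$ via Brinon's equivalence \ref{thm:sen-brinon}, construct the $K_\infty$-form $V$ directly using Brinon's Sen-operator construction on that side, and verify $\Delta$-analyticity from the exponential formula; the other checks (fullness, $\Gamma$-stability, identification of the dimension) are then purely formal from the character dichotomy between torsion Kummer characters of $K_{\infty,\underline{\infty}}$ and exponentiated Lie-algebra characters of $M$.
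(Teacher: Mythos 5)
Your proposal is essentially the argument of Tsuji \cite[14.16]{tsuji2018localsimpson}, which the paper simply cites after checking that the two auxiliary lemmas \cite[14.17, 14.18]{tsuji2018localsimpson} apply in this setting; you have recovered the shape of that argument (twist the $\Delta$-action by $\exp(-\varphi_\tau)$, then descend). Since you wrote it out, a few places deserve to be tightened. (1) Stage one of your essential surjectivity is \emph{not} a Tate--Sen decompletion: $K_{\infty,\underline{\infty}}=\bigcup_{\underline{m}}K_{\infty,\underline{m}}$ is a filtered union (not a $p$-adic completion), so descending $W$ to some $K_{\infty,\underline{m}}$ with its $\Gamma$-action is the finite-generation/compactness argument of \cite[5.2.(1)]{tsuji2018localsimpson} (this is exactly the content of \cite[14.18]{tsuji2018localsimpson}, and the reason the paper remarks that finite extensions of $K$ remain complete d.v.f.'s). (2) Writing $V=\{v\in W_{\underline{m}}:\tau(v)=\exp(\varphi_\tau)(v),\ \forall\tau\in\Delta\}$ requires $\exp(\varphi_\tau)$ to converge for \emph{all} $\tau\in\Delta$, not just the small ones where the matrix logarithm is available; this is where the nilpotency of $\varphi_\tau$ (\ref{prop:operator-nilpotent}, corresponding to \cite[14.17]{tsuji2018localsimpson}) is indispensable, and you invoke it only implicitly. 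Nilpotency is also what makes $\ho_{K_\infty}(V,V')$ land in $\repnan{\Delta}(\Gamma,K_\infty)$ in your fullness step, via $\exp(\mathrm{ad}_{\varphi_\tau})(f)=\exp(\varphi_\tau)\circ f\circ\exp(-\varphi_\tau)$. (3) The ``character-matching count'' at the end can be replaced by the observation that $\overline{\rho}(\tau)=\exp(-\varphi_\tau)\rho(\tau)$ gives a $K_{\infty,\underline{m}}$-semi-linear action of $\Delta/\Delta_{\underline{m}}=\gal(K_{\infty,\underline{m}}/K_\infty)$ on $W_{\underline{m}}$ (after enlarging $\underline{m}$ so $\overline{\rho}|_{\Delta_{\underline{m}}}=\mathrm{id}$), and finite Galois descent gives $K_{\infty,\underline{m}}\otimes_{K_\infty}V\iso W_{\underline{m}}$ directly, which is both cleaner and what actually pins down the $K_\infty$-structure. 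With these points made explicit the argument is complete.
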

\begin{proof}
	It follows from the proof of \cite[14.16]{tsuji2018localsimpson}. We note that the lemma \cite[14.17]{tsuji2018localsimpson} used in the proof holds by \ref{prop:operator-nilpotent}, and the lemma \cite[14.18]{tsuji2018localsimpson} holds since any finite field extension of $K$ is still a complete discrete valuation field.
\end{proof}

\begin{mylem}[{\cite[15.3.(2)]{tsuji2018localsimpson}}]\label{lem:G-Kcycl-fixed}
	For any object $V$ of $\repnpr(\Sigma,K_\infty)$, the $(\Sigma,K_\infty)$-finite part of $\widehat{K_\infty}\otimes_{K_\infty}V$ (see {\rm\ref{defn:repn}}) is $V$.
\end{mylem}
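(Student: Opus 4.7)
Write $\widehat{V} := \widehat{K_\infty} \otimes_{K_\infty} V$. The inclusion $V \hookrightarrow \widehat{V}$ presents $V$ as a $\Sigma$-stable finite-dimensional $K_\infty$-submodule of $\widehat{V}$, so $V$ is contained in the $(\Sigma,K_\infty)$-finite part of $\widehat{V}$. The content of the lemma is the reverse inclusion: every $\Sigma$-stable finitely generated $K_\infty$-submodule $W \subseteq \widehat{V}$ lies in $V$. Replacing $W$ by $V+W$ (still $\Sigma$-stable and finitely generated), I may assume $V \subseteq W$ and reduce to showing $W=V$, or equivalently that the image of $W$ in $\widehat{V}/V \cong (\widehat{K_\infty}/K_\infty)\otimes_{K_\infty}V$ vanishes. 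Hence the whole statement reduces to the claim that \emph{$(\widehat{K_\infty}/K_\infty)\otimes_{K_\infty}V$ admits no nonzero $\Sigma$-stable finitely generated $K_\infty$-submodule}.

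The tool for this is Tate's normalized trace along the $\bb{Z}_p$-tower $K_\infty/K_{n_0}$, which is available because \ref{prop:brinon-kummer-tower-adequate} together with \ref{rem:brinon-kummer-tower-adequate} ensures that $\Sigma_n := \gal(K_\infty/K_n) \cong \bb{Z}_p$ via $\log\circ\chi$ for all $n \geq n_0$. Tate's construction supplies, for each such $n$, compatible continuous $K_n$-linear projections $R_n : \widehat{K_\infty} \twoheadrightarrow K_n$ whose kernels $X_n$ satisfy the estimate that $\sigma_n - 1$ is invertible on $X_n$ with bounded $K_n$-linear inverse, for $\sigma_n$ a topological generator of $\Sigma_n$. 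The filtered colimit $\bigcup_n X_n$ identifies with $\widehat{K_\infty}/K_\infty$, and tensoring with $V$ identifies $(\widehat{K_\infty}/K_\infty)\otimes_{K_\infty}V$ with the analogous colimit.

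The final step is the standard Sen decompletion contradiction. A putative nonzero element $\overline{w}$ of a $\Sigma$-stable finite-dimensional $K_\infty$-subspace $\overline{W}$ of $(\widehat{K_\infty}/K_\infty)\otimes_{K_\infty}V$ would, on the one hand, be annihilated to arbitrarily high $p$-adic precision by $\sigma_m - 1$ for $m$ large, by continuity of the $\Sigma$-action on the finite-dimensional $\overline{W}$; on the other hand, for $n$ large, $\overline{w}$ lies in $X_n \otimes_{K_n} V^{(n)}$ for a $K_n$-form $V^{(n)}$ of $V$, and the Tate-Sen estimate forces $(\sigma_n-1)\overline{w}$ to be comparable in norm to $\overline{w}$, yielding $\overline{w}=0$. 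The main obstacle is reconciling the $K_\infty$-linearity of the finite-dimensionality of $\overline{W}$ with the merely $K_n$-linearity of the operators $\sigma_n-1$: handling this requires choosing $n$ large enough that $\overline{W}$ admits a $K_n$-form, which is precisely where classical Sen theory for the complete discrete valuation field $K_{n_0}$ is invoked, in the same spirit in which \ref{prop:operator-nilpotent} reduces problems to the valuation ring case by localizing.
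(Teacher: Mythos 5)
Your overall toolkit---Tate's normalized traces and the Tate--Sen estimate along the $\bb{Z}_p$-tower---is the right one and is indeed what the paper invokes via Tsuji's decompletion formalism (cf.\ \ref{rem:tate-sen} and \cite[7.14]{tsuji2018localsimpson}). The genuine gap is in the reduction to the quotient $(\widehat{K_\infty}/K_\infty)\otimes_{K_\infty}V$. Since $K_\infty$ is dense in $\widehat{K_\infty}$, the quotient topology on $\widehat{K_\infty}/K_\infty$ is indiscrete, so $(\widehat{K_\infty}/K_\infty)\otimes_{K_\infty}V$ carries no norm inherited from $\widehat{V}$, and your contradiction---``$(\sigma_m-1)\overline{w}$ is small by continuity'' versus ``$(\sigma_n-1)\overline{w}$ is comparable in norm to $\overline{w}$ by Tate--Sen''---has no content there. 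The specific claim that ``the filtered colimit $\bigcup_n X_n$ identifies with $\widehat{K_\infty}/K_\infty$'' is also false: the kernels $X_n=\ker R_n$ are \emph{decreasing} in $n$ (because $R_n = R_n\circ R_{n+1}$), so $\bigcup_n X_n = X_{n_0}$, and $X_{n_0}\cap K_\infty\neq 0$ since it contains the trace-zero part of every $K_m$, $m>n_0$. The Tate--Sen norm estimates live on the closed subspaces $X_n\subseteq\widehat{K_\infty}$, where the Banach structure is honest; passing to the quotient destroys exactly the structure you need.

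The fix is to stay inside $\widehat{V}$ and never pass to the quotient. With $V\subseteq W$ as in your reduction, $W$ is a $\Sigma$-stable finite-dimensional $K_\infty$-subspace of $\widehat{V}$. First descend $W$ to a finite level: picking a $K_\infty$-basis of $W$, the matrix of a topological generator of $\Sigma$ has entries in $K_\infty=\bigcup_m K_m$, hence in some $K_m$; the $K_m$-span $W_m$ of that basis is then $\Sigma$-stable (it is closed in $\widehat{V}$ because $K_m$ is complete, so stability under a topological generator propagates to all of $\Sigma$). Now choose a $K_{n_0}$-form $V_{n_0}$ of $V$, so $\widehat{V}=\widehat{K_\infty}\otimes_{K_{n_0}}V_{n_0}$, and apply Tsuji's \cite[7.14]{tsuji2018localsimpson} to the Tate--Sen tower $(\ca{O}_{K_n})_{n\geq n_0}$: the $\Sigma$-stable finitely generated $K_{n_0}$-submodule $W_m$ lies in $K_{n'}\otimes_{K_{n_0}}V_{n_0}\subseteq V$ for some $n'\geq n_0$. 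Then $W=K_\infty W_m\subseteq V$, as desired; the traces and estimates are applied entirely in $\widehat{V}$, where they make sense.
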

\begin{proof}
	It follows from the argument of \cite[15.3.(2)]{tsuji2018localsimpson} (cf. \ref{rem:tate-sen}).
\end{proof}

\begin{mypara}\label{para:notation-K-tilde}
	We shall give an explicit way in \ref{prop:special-functor} to construct Higgs bundles from representations, which generalizes \cite[15.1.(4)]{tsuji2018localsimpson}. Firstly, we introduce another Kummer tower more general than the one considered in \ref{para:notation-K}. We fix $e\in \bb{N}$. Let $\widetilde{t}_1,\dots,\widetilde{t}_e$ be elements of $K$ with compatible systems of $p$-power roots $(\widetilde{t}_{1,p^n})_{n\in\bb{N}},\dots,(\widetilde{t}_{e,p^n})_{n\in\bb{N}}$ in $\overline{K}$. Consider the Kummer tower $(\ca{O}_{K_{n,\undertilde{l}}})_{(n,\undertilde{l})\in \bb{N}^{1+e}}$ of $\ca{O}_K$ defined by $\zeta_{p^n},\widetilde{t}_{1,p^n},\dots,\widetilde{t}_{e,p^n}$. We take the notation in \ref{para:notation-kummer} for this Kummer tower by adding tildes.
	\begin{align}
		\xymatrix{
			\overline{K}&\\
			K_{\infty,\undertilde{\infty}}\ar[u]&\\
			K_\infty\ar[u]^-{\widetilde{\Delta}}\ar@/^2pc/[uu]^-{H}&K\ar[l]^-{\Sigma}\ar[lu]_-{\widetilde{\Gamma}}\ar@/_1pc/[luu]_{G}
		}
	\end{align}
	We have the continuous $1$-cocycle
	\begin{align}
		\widetilde{\xi}=(\widetilde{\xi}_1,\dots,\widetilde{\xi}_e):G\longrightarrow\bb{Z}_p^e,
	\end{align}
	describing the action of $G$ on $\widetilde{t}_{1,p^n},\dots,\widetilde{t}_{e,p^n}$, cf.  \eqref{eq:cont-cocycle}. We define $1+e$ elements in $\scr{E}_{\ca{O}_K}(-1)\subseteq \hyodoring$ by
	\begin{align}
		\widetilde{T}_0=1,\ \widetilde{T}_1=(\df\log(\widetilde{t}_{1,p^n}))_n\otimes\zeta^{-1},\ \cdots,\ \widetilde{T}_e=(\df\log(\widetilde{t}_{e,p^n}))_n\otimes\zeta^{-1}.
	\end{align}
	Similarly to \ref{cor:hyodo-ring}.(\ref{item:cor:hyodo-ring-2}), for any $g\in G$ and $1\leq i\leq e$, we have
	\begin{align}\label{eq:para:special-functor}
		g(\widetilde{T}_i)=\chi(g)^{-1}(\widetilde{\xi}_i(g)+\widetilde{T}_i).
	\end{align}
	We remark that $\df_{\hyodoring}(\widetilde{T}_i)=\df\log(\widetilde{t}_i)\otimes\zeta^{-1}\in K\otimes_{\ca{O}_K}\widehat{\Omega}^1_{\ca{O}_K}(-1)$ by \ref{thm:fal-ext}.(\ref{item:thm:fal-ext-2}). 
\end{mypara}

\begin{mylem}\label{lem:diff-lie-alg}
	With the notation in {\rm\ref{para:notation-K-tilde}}, we write \begin{align}\label{eq:lem:diff-lie-alg}
		(\widetilde{T}_1,\dots,\widetilde{T}_e)=(T_1,\dots,T_d)A+B
	\end{align}
	as elements of $\scr{E}_{\ca{O}_K}(-1)\subseteq \hyodoring$, where $A=(a_{ij})\in\mrm{M}_{d\times e}(\widehat{\overline{K}}),B=(b_j)\in \mrm{M}_{1\times e}(\widehat{\overline{K}})$. Then, $A\in \mrm{M}_{d\times e}(K)$, and we have
	\begin{align}\label{eq:lem:diff-lie-alg-2}
		(\widetilde{\xi}_1,\dots,\widetilde{\xi}_e)=(\xi_1,\dots,\xi_d)A
	\end{align}
	as vectors with value in the continuous group cohomology group $H^1(H,\widehat{\overline{K}})$. In particular, we have
	\begin{align}\label{eq:rank}
		\dim(\widetilde{\Delta})\geq \mrm{rank}(A).
	\end{align}
\end{mylem}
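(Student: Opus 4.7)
The plan is to exploit the Faltings extension and Hyodo's computation (Corollary~\ref{cor:fal-ext-connect}) to convert the $\widehat{\overline{K}}$-linear relation~\eqref{eq:lem:diff-lie-alg} into data in both $\widehat{\Omega}^1_{\ca{O}_K}$ and the continuous cohomology $H^1(H,\widehat{\overline{K}})$. For the membership $A\in \mrm{M}_{d\times e}(K)$, I apply the universal differential $\df_{\hyodoring}$ to~\eqref{eq:lem:diff-lie-alg}. By Corollary~\ref{cor:hyodo-ring}.(\ref{item:cor:hyodo-ring-3}) we have $\df_{\hyodoring}(T_i)=\df\log(t_i)\otimes\zeta^{-1}$ and $\df_{\hyodoring}(\widetilde{T}_j)=\df\log(\widetilde{t}_j)\otimes\zeta^{-1}$, while $\df_{\hyodoring}$ kills $\widehat{\overline{K}}$. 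This yields $\df\log(\widetilde{t}_j)=\sum_i a_{ij}\df\log(t_i)$ inside $\widehat{\overline{K}}\otimes_{\ca{O}_K}\widehat{\Omega}^1_{\ca{O}_K}$; since the left-hand side lies in $\widehat{\Omega}^1_{\ca{O}_K}[1/p]$, in which $\df\log(t_1),\dots,\df\log(t_d)$ form a $K$-basis, every $a_{ij}$ must lie in $K$.

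To obtain~\eqref{eq:lem:diff-lie-alg-2}, I apply an arbitrary $g\in H$ to~\eqref{eq:lem:diff-lie-alg}. By~\eqref{eq:para:special-functor} and Corollary~\ref{cor:hyodo-ring}.(\ref{item:cor:hyodo-ring-2}), and using $\chi|_H=1$, the element $g$ sends $T_i\mapsto T_i+\xi_i(g)$ and $\widetilde{T}_j\mapsto \widetilde{T}_j+\widetilde{\xi}_j(g)$, while fixing the coefficients $a_{ij}\in K$. Subtracting~\eqref{eq:lem:diff-lie-alg} from its $g$-translate gives $\widetilde{\xi}_j(g)=\sum_i a_{ij}\xi_i(g)+(g(b_j)-b_j)$, so $\widetilde{\xi}_j-\sum_i a_{ij}\xi_i$ is the coboundary $\delta(b_j)$ in $Z^1(H,\widehat{\overline{K}})$, which establishes~\eqref{eq:lem:diff-lie-alg-2} in $H^1(H,\widehat{\overline{K}})$.

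For the inequality~\eqref{eq:rank}, I evaluate $\dim_K$ of the $K$-span of $\{\widetilde{\xi}_j\}_j$ in $H^1(H,\widehat{\overline{K}})$ in two ways. First, by Corollary~\ref{cor:fal-ext-connect} (twisted by $\zeta^{-1}$), the classes $\xi_1,\dots,\xi_d$ are $\widehat{K_\infty}$-linearly independent in $H^1(H,\widehat{\overline{K}})$, hence $K$-linearly independent; by~\eqref{eq:lem:diff-lie-alg-2} this $K$-span therefore has dimension exactly $\mrm{rank}(A)$. Second, $\widetilde{\xi}_j|_H$ is a homomorphism (since $\chi|_H=1$) that kills $\gal(\overline{K}/K_{\infty,\undertilde{\infty}})$, hence factors through a homomorphism $\widetilde{\Delta}\to\bb{Z}_p$; the resulting $\widetilde{\xi}:\widetilde{\Delta}\hookrightarrow\bb{Z}_p^e$ is injective, so $\dim_{\bb{Q}_p}\ho_{\cont}(\widetilde{\Delta},\bb{Q}_p)=\dim(\widetilde{\Delta})$. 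Inflation along $H\twoheadrightarrow\widetilde{\Delta}$ followed by the inclusion $\bb{Q}_p\hookrightarrow\widehat{\overline{K}}$ defines a $\bb{Q}_p$-linear map $\ho_{\cont}(\widetilde{\Delta},\bb{Q}_p)\to H^1(H,\widehat{\overline{K}})$; its $K$-linear extension has image of $K$-dimension at most $\dim(\widetilde{\Delta})$ and contains every $\widetilde{\xi}_j$, whence $\mrm{rank}(A)\leq\dim(\widetilde{\Delta})$.

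The only real subtlety is the consistent handling of the Tate twist by $\zeta^{-1}$ when passing between $\scr{E}_{\ca{O}_K}$ and cohomology; once the $K$-linear independence of $\xi_1,\dots,\xi_d$ in $H^1(H,\widehat{\overline{K}})$ has been extracted from Hyodo's computation, both halves of the rank inequality reduce to straightforward linear algebra.
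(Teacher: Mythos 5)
Your proof is correct and follows essentially the same route as the paper: apply $\df_{\hyodoring}$ to deduce $A\in\mrm{M}_{d\times e}(K)$, apply $g$ (you work directly on $H$ whereas the paper first computes for general $g\in G$ and then restricts, which is immaterial) to obtain the coboundary identity in $H^1(H,\widehat{\overline{K}})$, and finally combine the $\widehat{K_\infty}$-linear independence of $\xi_1,\dots,\xi_d$ from \ref{cor:fal-ext-connect} with the factorization of $\widetilde{\xi}|_H$ through $\widetilde{\Delta}\hookrightarrow\bb{Z}_p^e$ to bound $\mrm{rank}(A)$ by $\dim(\widetilde{\Delta})$. The only difference is that you spell out the rank inequality in somewhat more detail than the paper's terse final sentence.
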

\begin{proof}
	Notice that $(\df_{\hyodoring}(\widetilde{T}_1),\dots,\df_{\hyodoring}(\widetilde{T}_e))=(\df_{\hyodoring}(T_1),\dots,\df_{\hyodoring}(T_d))A$ and that $\df_{\hyodoring}(T_1),\dots,\df_{\hyodoring}(T_d)$ form a basis of $K\otimes_{\ca{O}_K}\widehat{\Omega}^1_{\ca{O}_K}(-1)$. Thus, $A\in \mrm{M}_{d\times e}(K)$ as  $\df_{\hyodoring}(\widetilde{T}_i)=\df\log(\widetilde{t}_i)\otimes\zeta^{-1}\in K\otimes_{\ca{O}_K}\widehat{\Omega}^1_{\ca{O}_K}(-1)$. We act on \eqref{eq:lem:diff-lie-alg} by $g\in G$, then by \eqref{eq:para:special-functor},
	\begin{align}
		(\widetilde{\xi}_1(g),\dots,\widetilde{\xi}_e(g))=(\xi_1(g),\dots,\xi_d(g))A+\chi(g)g(B)-B.
	\end{align}
	Thus, \eqref{eq:lem:diff-lie-alg-2} follows from the fact that $\chi(H)=1$ and the map $H\to \widehat{\overline{K}}$ sending $g$ to $g(b_j)-b_j$ is a $1$-coboundry. In particular, the image of the composition of the natural maps
	\begin{align}
		\ho(\widetilde{\Delta},\bb{Q}_p)\to \ho(H,\bb{Q}_p)\to H^1(H,\widehat{\overline{K}})
	\end{align}
	contains $(\xi_1,\dots,\xi_d)A$. Since $\xi_1,\dots,\xi_d$ form a $\widehat{K_\infty}$-basis of $H^1(H,\widehat{\overline{K}})$ by \eqref{eq:fal-ext-connect-2}, we see that 
	$\dim(\widetilde{\Delta})\geq \mrm{rank}(A)$.
\end{proof}

\begin{myprop}\label{prop:rank}
	With the notation in {\rm\ref{para:notation-K-tilde}}, the following conditions are equivalent:
	\begin{enumerate}
		\renewcommand{\labelenumi}{{\rm(\theenumi)}}
		\item The group $\widetilde{\Delta}$ has dimension $e$, i.e. the Kummer tower $(\ca{O}_{K_{n,\undertilde{l}}})_{(n,\undertilde{l})\in \bb{N}^{1+e}}$ satisfies the condition {\rm\ref{prop:kummer-tower-lem}.(\ref{item:prop:kummer-tower-lem-1})}.\label{item:prop:rank-1}
		\item The $1+e$ elements $\widetilde{T}_0, \widetilde{T}_1,\cdots,\widetilde{T}_e$ of $\scr{E}_{\ca{O}_K}(-1)\subseteq \hyodoring$ are linearly independent over $\widehat{\overline{K}}$.\label{item:prop:rank-2}
		\item The $e$ elements $\df \widetilde{t}_1,\dots,\df \widetilde{t}_e$ of $\widehat{\Omega}^1_{\ca{O}_K}[1/p]$ are linearly independent over $K$.\label{item:prop:rank-3}
	\end{enumerate}
\end{myprop}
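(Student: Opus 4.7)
The strategy is to prove (2) $\Leftrightarrow$ (3) directly via $\mrm{rank}(A) = e$, then (2) $\Rightarrow$ (1) from the inequality of Lemma \ref{lem:diff-lie-alg}, and finally the delicate (1) $\Rightarrow$ (2) by a cohomological injectivity argument.

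For (2) $\Leftrightarrow$ (3): since $T_0, T_1, \ldots, T_d$ form a $\widehat{\overline{K}}$-basis of $\scr{E}_{\ca{O}_K}(-1)$ and $\widetilde{T}_0 = T_0$, the linear independence demanded by (2) amounts to the $e$ columns of $A$ in \eqref{eq:lem:diff-lie-alg} being $\widehat{\overline{K}}$-linearly independent, i.e.\ $\mrm{rank}(A) = e$. Applying the map $\jmath$ from the Faltings extension \eqref{eq:fal-ext} to \eqref{eq:lem:diff-lie-alg} yields $\df\log(\widetilde{t}_i) = \sum_j a_{j,i}\df\log(t_j)$ in $\widehat{\Omega}^1_{\ca{O}_K}[1/p]$, exhibiting $A$ also as the matrix expressing the $\df\log(\widetilde{t}_i)$ in the $K$-basis $\df\log(t_1),\ldots,\df\log(t_d)$; since $\widetilde{t}_i \in K^{\times}$ gives $\df\widetilde{t}_i = \widetilde{t}_i\df\log(\widetilde{t}_i)$, (3) is likewise equivalent to $\mrm{rank}(A) = e$. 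Then (2) $\Rightarrow$ (1) is immediate from \eqref{eq:rank} together with the trivial bound $\dim(\widetilde{\Delta}) \leq e$ coming from the injection $\widetilde{\xi}\colon \widetilde{\Delta} \hookrightarrow \bb{Z}_p^e$.

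For (1) $\Rightarrow$ (2), I argue by contradiction: suppose $\mrm{rank}(A) < e$ and pick $c \in K^e \setminus \{0\}$ with $Ac = 0$. Set $\phi := \sum_i c_i \widetilde{\xi}_i$; this is a continuous group homomorphism $H \to K \subseteq \widehat{K_\infty}$, and by Lemma \ref{lem:diff-lie-alg} we have $\phi = \sum_j(\sum_i a_{j,i}c_i)\xi_j = 0$ as a class in $H^1(H,\widehat{\overline{K}})$. The key step is to promote this cohomological vanishing to $\phi = 0$ as a homomorphism. For this I invoke the injectivity of the natural map $H^1(H,\widehat{K_\infty}) \hookrightarrow H^1(H,\widehat{\overline{K}})$, which follows from the long exact sequence of $0 \to \widehat{K_\infty} \to \widehat{\overline{K}} \to \widehat{\overline{K}}/\widehat{K_\infty} \to 0$ together with the identity $\widehat{\overline{K}}^H = \widehat{K_\infty}$ (immediate from Corollary \ref{cor:hyodo-ring-coh} in degree $0$) and the Hyodo-type vanishing $(\widehat{\overline{K}}/\widehat{K_\infty})^H = 0$. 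Since $H$ acts trivially on $\widehat{K_\infty}$, we have $H^1(H,\widehat{K_\infty}) = \ho_{\cont}(H,\widehat{K_\infty})$, and the vanishing of $[\phi]$ forces $\phi = 0$ as a homomorphism. But under (1), $\widetilde{\Delta}$ is open in $\bb{Z}_p^e$ and contains $(p^N\bb{Z}_p)^e$ for some $N$; evaluating $\phi$ at the standard coordinate generators of this open subgroup yields $c_i = 0$ for each $i$, contradicting $c \neq 0$.

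The main obstacle is the final implication (1) $\Rightarrow$ (2); the other pieces are essentially formal consequences of Lemma \ref{lem:diff-lie-alg}. The cohomological injectivity $H^1(H,\widehat{K_\infty}) \hookrightarrow H^1(H,\widehat{\overline{K}})$ that drives the last step rests on refining Ax--Sen--Tate to the vanishing $(\widehat{\overline{K}}/\widehat{K_\infty})^H = 0$, which is part of the Hyodo framework underlying Corollary \ref{cor:hyodo-ring-coh}.
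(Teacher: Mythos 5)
Your treatment of (2) $\Leftrightarrow$ (3) and of (2) $\Rightarrow$ (1) (via \eqref{eq:rank}) is correct and matches the paper's route for those two pieces.

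The argument you give for (1) $\Rightarrow$ (2), however, has a genuine gap. The step that drives it is the claimed injectivity of $H^1(H,\widehat{K_\infty}) \to H^1(H,\widehat{\overline{K}})$, equivalently the vanishing $(\widehat{\overline{K}}/\widehat{K_\infty})^H = 0$. Neither of these follows from what you cite: Ax--Sen--Tate gives $\widehat{\overline{K}}^H = \widehat{K_\infty}$, which in the long exact sequence for $0 \to \widehat{K_\infty} \to \widehat{\overline{K}} \to \widehat{\overline{K}}/\widehat{K_\infty} \to 0$ only says that the map $H^0(H,\widehat{\overline{K}}) \to H^0(H,\widehat{\overline{K}}/\widehat{K_\infty})$ is zero, not that the target vanishes; and Corollary \ref{cor:hyodo-ring-coh} is a statement about $H^q(H,\mrm{Sym}^n(\scr{E}_{\ca{O}_K}(-1)))$, not about $\widehat{\overline{K}}/\widehat{K_\infty}$. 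In fact the injectivity fails: since $H$ acts trivially on $\widehat{K_\infty}$, one has $H^1(H,\widehat{K_\infty}) = \ho_{\cont}(H,\widehat{K_\infty})$, which is a very large space (it receives, among other things, the Kummer cocycles of elements of $K^\times$ after untwisting), whereas by \ref{cor:fal-ext-connect} the target $H^1(H,\widehat{\overline{K}})$ is free of finite rank $d$ over $\widehat{K_\infty}$. Already for $K = \bb{Q}_p$ (so $d = 0$) the target vanishes, yet the cocycle attached to the tower $p^{1/p^n}$ gives a nonzero continuous homomorphism $H \to \bb{Z}_p$; this is exactly the kind of $\phi$ your argument needs to exclude, and the cohomological vanishing cannot distinguish it from $0$. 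So the reduction ``$[\phi]=0$ in $H^1(H,\widehat{\overline{K}})$ implies $\phi = 0$'' does not hold, and the contradiction you derive does not go through.

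For comparison, the paper's proof of (1) $\Rightarrow$ (2) does not pass through this cohomological injectivity at all. It works directly with a putative dependence relation $\sum_k x_{i_k}\alpha_{i_k} = 0$ of minimal length among the $\alpha_i = \widetilde{T}_i\otimes\zeta$, applies $g \in G$ using the transformation laws $g(\alpha_0)=\chi(g)\alpha_0$ and $g(\alpha_i)=\widetilde{\xi}_i(g)\alpha_0+\alpha_i$, and uses minimality to force the coefficients $x_i$ (for $1 \le i \le l-2$) to be $G$-invariant before extracting a contradiction by varying $g$ in $\widetilde{\Delta}$. You should study how that case analysis sidesteps the coboundary ambiguity that sinks your cohomological version, because that ambiguity is real, not a technicality.
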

\begin{proof}
	It is clear that (\ref{item:prop:rank-2}) and (\ref{item:prop:rank-3}) are equivalent by the splitting of the Faltings extension defined in \ref{thm:fal-ext}.(\ref{item:thm:fal-ext-3}). We see that (\ref{item:prop:rank-3}) implies (\ref{item:prop:rank-1}) by \eqref{eq:rank}. It remains to check (\ref{item:prop:rank-1}) $\Rightarrow$ (\ref{item:prop:rank-2}).
	
	We set $\alpha_i=\widetilde{T}_i\otimes\zeta\in \scr{E}_{\ca{O}_K}$ ($0\leq i\leq e$). Let $l\in \bb{N}$ be the smallest integer such that $\alpha_{i_1},\dots,\alpha_{i_l}$ are linearly dependent for some $0\leq i_1<\cdots <i_l\leq e$. Assume that $l\geq 1$. We write $x_{i_1}\alpha_{i_1}+\cdots+x_{i_l}\alpha_{i_l}=0$ for some $x_{i_1},\dots, x_{i_l}\in \widehat{\overline{K}}\setminus\{0\}$. By \eqref{eq:para:special-functor}, for any $g\in G$ and $1\leq i\leq e$, we have
	\begin{align}\label{eq:lem:fal-ext-2}
		g(\alpha_0)=\chi(g)\alpha_0\quad\trm{ and }\quad g(\alpha_i)=\widetilde{\xi}_i(g)\alpha_0+\alpha_i.
	\end{align}
	 We consider the cases where $i_1=0$ and $i_1>0$ separately. 
	
	If $i_1=0$, then we may assume that $(i_1,i_2,\dots,i_l)=(0,1,\dots,l-1)$. Since $\alpha_0\neq 0$, we have $l>1$ and we may assume that $x_{l-1}=1$. Thus,
	\begin{align}
		0&=\sum_{i=0}^{l-1} x_i\alpha_i-g(\sum_{i=0}^{l-1} x_i\alpha_i)\\
		&=\left(x_0-\chi(g)g(x_0)-\sum_{i=1}^{l-1}\widetilde{\xi}_i(g)g(x_i)\right)\alpha_0+(x_1-g(x_1))\alpha_1+\cdots+(x_{l-2}-g(x_{l-2}))\alpha_{l-2}.\nonumber
	\end{align}
	By the minimality of $l$, we have $x_0=\chi(g)g(x_0)+\sum_{i=1}^{l-1}\widetilde{\xi}_i(g)g(x_i)$, $x_1=g(x_1)$, $\dots$ , $x_{l-2}=g(x_{l-2})$. Thus, $x_0=\chi(g)g(x_0)+\sum_{i=1}^{l-1}\widetilde{\xi}_i(g)x_i$, which is a contradiction as we can take $(\chi(g),\widetilde{\xi}_1(g),\dots,\widetilde{\xi}_e(g))$ running through an open subgroup of $0\times \bb{Z}_p^e$ by varying $g$ in $\widetilde{\Delta}$.
	
	If $i_1>0$, then we may assume that $(i_1,i_2,\dots,i_l)=(1,2,\dots,l)$ and that $x_l=1$. Thus, 
	\begin{align}
		0&=\sum_{i=1}^l x_i\alpha_i-g(\sum_{i=1}^l x_i\alpha_i)\\
		&=\left(-\sum_{i=1}^{l-1}\widetilde{\xi}_i(g)g(x_i)\right)\alpha_0+(x_1-g(x_1))\alpha_1+\cdots+(x_{l-1}-g(x_{l-1}))\alpha_{l-1}.\nonumber
	\end{align}
	We get a contradiction in a similar way.
\end{proof}

\begin{mypara}\label{para:special-functor}
	Following \ref{para:notation-K-tilde}, we assume that the equivalent conditions in \ref{prop:rank} hold, and we take the notation in \ref{para:gamma-basis} by adding tildes. Recall that for any object $W$ of $\repnpr(\widetilde{\Gamma},K_{\infty,\undertilde{\infty}})$, there is a canonical Lie algebra action induced by the infinitesimal action of $\widetilde{\Gamma}$ on $W$ defined in \eqref{eq:gamma-lie-action},
	\begin{align}
		\varphi:\lie(\widetilde{\Gamma})\to \mrm{End}_{K_{\infty,\undertilde{\infty}}}(W).
	\end{align} 
	Let $\widetilde{\partial}_0\in\lie(\widetilde{\Sigma}_{0,\undertilde{\infty}})$ and $\widetilde{\partial}_1,\dots,\widetilde{\partial}_e\in\lie(\widetilde{\Delta})$ be the standard basis defined in \ref{para:gamma-basis}, and we put for any $g\in G$,
	\begin{align}\label{eq:para:special-functor-2}
		\varphi^{\widetilde{\chi}}_g=\log(\chi(g))\varphi_{\widetilde{\partial}_0},\quad\trm{ and }\quad\varphi^{\widetilde{\xi}}_g=\sum_{i=1}^d\widetilde{\xi}_i(g)\varphi_{\widetilde{\partial}_i}.
	\end{align}	
\end{mypara}

\begin{mylem}\label{lem:fal-ext-lie-alg-special}
	Under the assumption in {\rm\ref{para:special-functor}} and with the same notation, the map 
	\begin{align}\label{eq:lem:fal-ext-lie-alg-special}
		\psi:\scr{E}^*_{\ca{O}_K}(1)=\ho_{\widehat{\overline{K}}}(\scr{E}_{\ca{O}_K}(-1),\widehat{\overline{K}})\longrightarrow \widehat{\overline{K}}\otimes_{\bb{Q}_p}\lie(\widetilde{\Gamma}),
	\end{align}
	sending $f$ to $\sum_{i=0}^e f(\widetilde{T}_i)\otimes \widetilde{\partial}_i$, is surjective and induces a morphism of exact sequences of $\widehat{\overline{K}}$-linear Lie algebras
	\begin{align}\label{diam:lem:fal-ext-lie-alg-special}
		\xymatrix{
			0\ar[r]& \ho_{\ca{O}_K}(\widehat{\Omega}^1_{\ca{O}_K}(-1),\widehat{\overline{K}})\ar[r]^-{\jmath^*}\ar@{->>}[d]& \scr{E}^*_{\ca{O}_K}(1)\ar[r]^-{\iota^*}\ar@{->>}[d]^-{\psi}&\widehat{\overline{K}}\ar[r]\ar[d]^-{\wr}& 0\\
			0\ar[r]& \widehat{\overline{K}}\otimes_{\bb{Q}_p}\lie(\widetilde{\Delta})\ar[r]& \widehat{\overline{K}}\otimes_{\bb{Q}_p}\lie(\widetilde{\Gamma})\ar[r]&\widehat{\overline{K}}\otimes_{\bb{Q}_p}\lie(\Sigma)\ar[r]& 0
		}
	\end{align}
	where the first row is \eqref{eq:fal-ext-dual}.
\end{mylem}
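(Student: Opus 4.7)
\smallskip

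\noindent\textbf{Proof plan.} The plan is to verify the three compatibilities making the diagram commute (which forces the outer vertical maps), then deduce surjectivity of $\psi$ by a simple diagram chase, and finally check the Lie bracket compatibility by an explicit computation with the standard basis.

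First I would unwind the definitions of $\iota^*$ and $\jmath^*$. By the Tate twist of \eqref{eq:fal-ext}, $\iota_{-1}:\widehat{\overline{K}}\to\scr{E}_{\ca{O}_K}(-1)$ sends $1$ to $\widetilde{T}_0=1$, so $\iota^*(f)=f(\widetilde{T}_0)$ for every $f\in\scr{E}^*_{\ca{O}_K}(1)$. Similarly, \ref{thm:fal-ext}.\ref{item:thm:fal-ext-2} gives $\jmath_{-1}(\widetilde{T}_i)=\df\log(\widetilde{t}_i)\otimes\zeta^{-1}$ for $i\geq 1$ (and $\jmath_{-1}(\widetilde{T}_0)=0$). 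Thus if $g\in\ho_{\ca{O}_K}(\widehat{\Omega}^1_{\ca{O}_K}(-1),\widehat{\overline{K}})$, then $\jmath^*(g)(\widetilde{T}_0)=0$ and $\jmath^*(g)(\widetilde{T}_i)=g(\df\log(\widetilde{t}_i)\otimes\zeta^{-1})$ for $i\geq 1$. Consequently $\psi(\jmath^*(g))=\sum_{i=1}^{e}g(\df\log(\widetilde{t}_i)\otimes\zeta^{-1})\otimes\widetilde{\partial}_i\in\widehat{\overline{K}}\otimes_{\bb{Q}_p}\lie(\widetilde{\Delta})$, so the left square commutes with the indicated left vertical map. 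Since $\widetilde{\partial}_0$ projects to the image in $\lie(\Sigma)$ of the generator $\log\chi$ under the identification $\log\circ\chi:\lie(\Sigma)\iso\bb{Q}_p$ and $\widetilde{\partial}_1,\dots,\widetilde{\partial}_e\in\lie(\widetilde{\Delta})$ die in $\lie(\Sigma)$, the composition $\scr{E}^*_{\ca{O}_K}(1)\stackrel{\psi}{\to}\widehat{\overline{K}}\otimes\lie(\widetilde{\Gamma})\to\widehat{\overline{K}}\otimes\lie(\Sigma)$ sends $f$ to $f(\widetilde{T}_0)\otimes\widetilde{\partial}_0|_{\lie(\Sigma)}$, so the right square commutes with right vertical given by the isomorphism $a\mapsto a\otimes\widetilde{\partial}_0|_{\lie(\Sigma)}$.

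Next I would check surjectivity of the left vertical map. Since $\df\widetilde{t}_1,\dots,\df\widetilde{t}_e$ are $K$-linearly independent in $\widehat{\Omega}^1_{\ca{O}_K}[1/p]$ by the equivalent conditions of \ref{prop:rank}, so are $\df\log(\widetilde{t}_1),\dots,\df\log(\widetilde{t}_e)$; hence the evaluation map $\ho_{\ca{O}_K}(\widehat{\Omega}^1_{\ca{O}_K}(-1),\widehat{\overline{K}})\to\widehat{\overline{K}}^{e}$ at $(\df\log(\widetilde{t}_i)\otimes\zeta^{-1})_i$ is surjective, which is exactly the surjectivity required. Combined with the right vertical isomorphism, the five lemma (or a direct diagram chase applied to the two short exact rows) yields surjectivity of $\psi$.

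Finally, for the Lie algebra part, the bracket on $\scr{E}^*_{\ca{O}_K}(1)$ is $[f_1,f_2]=f_1(\widetilde{T}_0)f_2-f_2(\widetilde{T}_0)f_1$, while on $\widehat{\overline{K}}\otimes_{\bb{Q}_p}\lie(\widetilde{\Gamma})$ the bracket is $\widehat{\overline{K}}$-bilinear and determined by the relations $[\widetilde{\partial}_0,\widetilde{\partial}_i]=\widetilde{\partial}_i$ for $i\geq 1$ and $[\widetilde{\partial}_i,\widetilde{\partial}_j]=0$ for $i,j\geq 1$, recalled in \ref{para:gamma-basis}. Expanding $[\psi(f_1),\psi(f_2)]=\sum_{i,j}f_1(\widetilde{T}_i)f_2(\widetilde{T}_j)\otimes[\widetilde{\partial}_i,\widetilde{\partial}_j]$, only the cross terms with exactly one index zero survive, giving $\sum_{i\geq 1}(f_1(\widetilde{T}_0)f_2(\widetilde{T}_i)-f_2(\widetilde{T}_0)f_1(\widetilde{T}_i))\otimes\widetilde{\partial}_i$, which matches $\psi([f_1,f_2])$ since the $i=0$ component of the latter vanishes. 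I expect this last verification to be the only piece that requires writing; everything else is essentially bookkeeping. The main (modest) obstacle is making sure the identification of $\iota^*$ with evaluation at $\widetilde{T}_0=1$ and the identification of the right vertical map via $\widetilde{\partial}_0$ are compatible with the choices implicit in \ref{para:notation-K-tilde} and \ref{para:gamma-basis}.
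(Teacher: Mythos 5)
Your proof is correct and takes essentially the same route as the paper, with small stylistic differences in how the two pieces are organized. For surjectivity the paper invokes \ref{prop:rank}.(\ref{item:prop:rank-2}) (linear independence of $\widetilde{T}_0,\dots,\widetilde{T}_e$ in $\scr{E}_{\ca{O}_K}(-1)$, which immediately makes $f\mapsto(f(\widetilde{T}_i))_i$ surjective), whereas you check surjectivity of the outer vertical maps (using \ref{prop:rank}.(\ref{item:prop:rank-3})) and conclude by the five lemma; these are two phrasings of the same fact. For the Lie bracket the paper computes $\psi(T_0^*),\psi(T_i^*)$ on the dual basis $\{T_i^*\}$ via the change-of-basis matrices $A,B$ of \ref{lem:diff-lie-alg}, while you expand $[\psi(f_1),\psi(f_2)]$ directly for arbitrary $f_1,f_2$; your version is slightly more self-contained since it never needs $A,B$. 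Both verifications are sound.
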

\begin{proof}
	The surjectivity of $\psi$ follows from \ref{prop:rank}.(\ref{item:prop:rank-2}). It remains to check $\psi$ is compatible with Lie brackets. With the notation in \ref{lem:diff-lie-alg}, we have
	\begin{align}
		\psi(T_0^*)=\widetilde{\partial}_0+\sum_{j=1}^e b_j\widetilde{\partial}_j,\quad\trm{ and }\quad \psi(T_i^*)=\sum_{j=1}^e a_{ij}\widetilde{\partial}_j,\ \forall 1\leq i\leq d,
	\end{align}
	where $(T_0^*,\dots,T_d^*)$ is the dual basis of $(T_0,\dots,T_d)$ defined in \ref{para:fal-ext-dual}. Thus, $[\psi(T_0^*),\psi(T_i^*)]=\psi(T_i^*)$ and $[\psi(T_i^*),\psi(T_j^*)]=0$ for any $1\leq i,j\leq d$, which completes the proof.
\end{proof}

\begin{myrem}\label{rem:canonical-diff-lie-alg}
	The first vertical map in \eqref{diam:lem:fal-ext-lie-alg-special} can be defined without taking bases. Consider the canonical maps
	\begin{align}\label{eq:prop:canonical-diff-lie-alg-0}
		\ho(\widetilde{\Delta},\bb{Q}_p)\to \ho(H,\bb{Q}_p)\to H^1(H,\widehat{\overline{K}})\iso \widehat{K_\infty}\otimes_{\ca{O}_K}\widehat{\Omega}^1_{\ca{O}_K}(-1)
	\end{align}
	where the first arrow is induced by the surjection $H\to \widetilde{\Delta}$, and the last arrow is induced by the connecting map of the Faltings extension of $\ca{O}_K$ \eqref{eq:fal-ext-connect-2} which sends $\xi_i$ to $\df_{\hyodoring}(T_i)$ for any $1\leq i\leq d$. Thus, the composition of \eqref{eq:prop:canonical-diff-lie-alg-0} sends $\widetilde{\xi}_i$ to $\df_{\hyodoring}(\widetilde{T}_i)$ for any $1\leq i\leq e$ by \ref{lem:diff-lie-alg}. Since $\{\widetilde{\xi}_i\}_{1\leq i\leq e}$ is the dual basis of the basis $\{\widetilde{\partial}_i\}_{1\leq i\leq e}\subseteq \lie(\widetilde{\Delta})$ by construction, the composition of \eqref{eq:prop:canonical-diff-lie-alg-0} induces a natural injective $K$-linear map
	\begin{align}\label{eq:prop:canonical-diff-lie-alg-1}
			\ho_{\bb{Q}_p}(\lie(\widetilde{\Delta}),K)\longrightarrow K\otimes_{\ca{O}_K}\widehat{\Omega}^1_{\ca{O}_K}(-1)
	\end{align}
	which sends $f$ to $\sum_{i=1}^e f(\widetilde{\partial}_i)\otimes \df_{\hyodoring}(\widetilde{T}_i)$. Its dual is a natural surjective $K$-linear map
	\begin{align}\label{eq:prop:canonical-diff-lie-alg-2}
		\ho_{\ca{O}_K}(\widehat{\Omega}^1_{\ca{O}_K}(-1),K)\longrightarrow K\otimes_{\bb{Q}_p}\lie(\widetilde{\Delta})
	\end{align}
	which sends $f$ to $\sum_{i=1}^e f(\df_{\hyodoring}(\widetilde{T}_i))\otimes \widetilde{\partial}_i$, and induces the first vertical map in \eqref{diam:lem:fal-ext-lie-alg-special} by extending scalars.
\end{myrem}

\begin{mypara}\label{defn:higgs}
	Let $(C,\ca{O})$ be a ringed site, $\ca{M}$ an $\ca{O}$-module. A \emph{Higgs field} on an $\ca{O}$-module $\ca{F}$ \emph{with coefficients in $\ca{M}$} is an $\ca{O}$-linear morphism $\theta:\ca{F}\to \ca{F}\otimes_{\ca{O}}\ca{M}$ such that $\theta^{(1)}\circ\theta=0$, where $\theta^{(1)}$ is the $\ca{O}$-linear morphism $\ca{F}\otimes_{\ca{O}}\ca{M}\to \ca{F}\otimes_{\ca{O}}\wedge^2\ca{M}$ defined by $\theta^{(1)}(x\otimes \omega)=\theta(x)\wedge \omega$ for any local sections $x$ of $\ca{F}$ and $\omega$ of $\ca{M}$.
	
	If $\ca{M}$ is a finite free $\ca{O}$-module with basis $\omega_1,\dots,\omega_d$, then to give a Higgs field $\theta$ on $\ca{F}$ is equivalent to give $d$ endomorphisms $\theta_i$ ($1\leq i\leq d$) of the $\ca{O}$-module $\ca{F}$ which commute with each other. For any local section $x$ of $\ca{M}$, we have
	\begin{align}
		\theta(x)=\theta_1(x)\otimes \omega_1+\cdots \theta_d(x)\otimes \omega_d.
	\end{align}
	We call the $d$-tuple $(\theta_1,\dots,\theta_d)$ the \emph{coordinates of the Higgs field $\theta$} with respect to the $\ca{O}$-basis $\omega_1,\dots,\omega_d$ of $\ca{M}$.
	
	Assume that $\ca{M}$ is a finite projective $\ca{O}$-module. We say that a Higgs field $\theta$ on an $\ca{O}$-module $\ca{F}$ is \emph{nilpotent} if there is a finite decreasing filtration by $\ca{O}$-submodules $\ca{F}=\ca{F}^0\supseteq \ca{F}^1\supseteq \cdots \supseteq \ca{F}^n=0$ such that $\theta(\ca{F}^i)\subseteq  \ca{F}^{i+1}\otimes_{\ca{O}}\ca{M}$ for any $0\leq i<n$.
	
	One checks easily by \ref{cor:hyodo-ring} that the universal differential map
	\begin{align}
		\df_{\hyodoring}:\hyodoring\to \hyodoring\otimes_{\ca{O}_K}\widehat{\Omega}^1_{\ca{O}_K}(-1)
	\end{align}
	is a $G$-equivariant $\widehat{\overline{K}}$-linear Higgs field on the $\widehat{\overline{K}}$-module $\hyodoring$ with coefficients in $\widehat{\overline{K}}\otimes_{\ca{O}_K}\widehat{\Omega}^1_{\ca{O}_K}(-1)$.
\end{mypara}

\begin{mydefn}[{cf. \cite[page 872]{tsuji2018localsimpson}}]\label{defn:nil-higgs-bundle-K}
	We define a category $\higgs(\Sigma,K_\infty,\widehat{\Omega}^1_{\ca{O}_K}(-1))$ as follows: 
	\begin{enumerate}
		\renewcommand{\labelenumi}{{\rm(\theenumi)}}
		\item An object $(M,\rho,\theta)$ is a finite projective $K_\infty$-representation $(M,\rho)$ of $\Sigma$ (cf. \ref{defn:repn}) endowed with a $K_\infty$-linear nilpotent Higgs field $\theta:M\to M\otimes_{\ca{O}_K}\widehat{\Omega}^1_{\ca{O}_K}(-1)$ (with coefficients in $K_\infty\otimes_{\ca{O}_K}\widehat{\Omega}^1_{\ca{O}_K}(-1)$) which is $\Sigma$-equivariant (i.e. $\theta\circ\rho(\sigma)=(\rho(\sigma)\otimes\chi^{-1}(\sigma))\circ \theta$ for any $\sigma\in\Sigma$).
		\item A morphism $(M,\rho,\theta)\to(M',\rho',\theta')$ is a $\Sigma$-equivariant $K_\infty$-linear morphism $f:M\to M'$ which is compatible with the Higgs fields (i.e. $\theta'\circ f=(f\otimes 1)\circ \theta$).
	\end{enumerate}
	It is an additive tensor category, where the tensor product is given by $(M,\rho,\theta)\otimes(M',\rho',\theta')=(M\otimes_{K_\infty}M',\rho\otimes\rho',\theta\otimes 1+1\otimes \theta')$.
\end{mydefn}

\begin{myprop}[{cf. \cite[page 873]{tsuji2018localsimpson}}]\label{prop:special-functor}
	Under the assumption in {\rm\ref{para:special-functor}} and with the same notation, let $(V,\rho)$ be an object of $\repnan{\widetilde{\Delta}}(\widetilde{\Gamma},K_\infty)$. 
	\begin{enumerate}
		\renewcommand{\labelenumi}{{\rm(\theenumi)}}
		\item For any $g\in \widetilde{\Gamma}$, we set
		\begin{align}
			\overline{\rho}(g)=\exp(-\varphi^{\widetilde{\xi}}_g)\rho(g).
		\end{align}
		Then, $\overline{\rho}|_{\widetilde{\Delta}}=1$ and $\overline{\rho}|_{\widetilde{\Sigma}_{0,\undertilde{\infty}}}=\rho|_{\widetilde{\Sigma}_{0,\undertilde{\infty}}}$. Moreover, $(V,\overline{\rho})$ is an object of $\repnpr(\Sigma,K_\infty)$.\label{item:prop:special-functor-1}
		\item The $K_\infty$-linear homomorphism
		\begin{align}\label{eq:prop:special-functor-theta}
			\theta_V: V\longrightarrow V\otimes_{\ca{O}_K}\widehat{\Omega}^1_{\ca{O}_K}(-1),\ v\mapsto -\sum_{i=1}^e\varphi_{\widetilde{\partial}_i}(v)\otimes \df_{\hyodoring}(\widetilde{T}_i)
		\end{align}
		is a nilpotent Higgs field which is $\Sigma$-equivariant via $\overline{\rho}$.\label{item:prop:special-functor-2}
	\end{enumerate}
	Therefore, there is a functor
	\begin{align}\label{eq:special-functor}
		\repnan{\widetilde{\Delta}}(\widetilde{\Gamma},K_\infty)\longrightarrow \higgs(\Sigma,K_\infty,\widehat{\Omega}^1_{\ca{O}_K}(-1)),\ (V,\rho)\mapsto (V,\overline{\rho},\theta_V),
	\end{align}
	which relies on the choice of $\widetilde{t}_1,\dots,\widetilde{t}_e$.
\end{myprop}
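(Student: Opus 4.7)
The plan is to verify each claim by elementary manipulation with the commuting nilpotent operators $\varphi_{\widetilde{\partial}_1},\dots,\varphi_{\widetilde{\partial}_e}\in\mrm{End}_{K_\infty}(V)$, following Tsuji's approach \cite[page 873]{tsuji2018localsimpson}. Commutativity is immediate from $[\widetilde{\partial}_i,\widetilde{\partial}_j]=0$ in $\lie(\widetilde{\Delta})$ (\ref{para:gamma-basis}), while nilpotency follows from \ref{prop:operator-nilpotent} applied after base change to $K_{\infty,\undertilde{\infty}}$ (the hypothesis that $\ak{S}_p$ is finite is automatic since $\ca{O}_{K_{\infty,\undertilde{\infty}}}$ is a valuation ring by \ref{prop:ht1-prime-inj}.(\ref{item:prop:ht1-prime-inj-1})). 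As a consequence, $\varphi^{\widetilde{\xi}}_g=\sum_i\widetilde{\xi}_i(g)\varphi_{\widetilde{\partial}_i}$ is a nilpotent $K_\infty$-linear endomorphism for every $g\in\widetilde{\Gamma}$, so $\exp(-\varphi^{\widetilde{\xi}}_g)$ is polynomial in the $\varphi_{\widetilde{\partial}_i}$'s, and $g\mapsto\overline{\rho}(g)$ is continuous since $\widetilde{\xi}$ and $\rho$ are. Moreover, passing the group-level identity \eqref{eq:lem:operator-1} to the Lie algebra by $\bb{Q}_p$-linearity of $\varphi$ yields the conjugation rule $\rho(g)\circ\varphi_{\widetilde{\partial}_i}\circ\rho(g)^{-1}=\chi(g)\varphi_{\widetilde{\partial}_i}$ for all $g\in\widetilde{\Gamma}$, which integrates to $\rho(g)\exp(-\varphi^{\widetilde{\xi}}_{g'})\rho(g)^{-1}=\exp(-\chi(g)\varphi^{\widetilde{\xi}}_{g'})$ since $\exp$ is polynomial here.

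For part (\ref{item:prop:special-functor-1}), the identity $\overline{\rho}|_{\widetilde{\Sigma}_{0,\undertilde{\infty}}}=\rho|_{\widetilde{\Sigma}_{0,\undertilde{\infty}}}$ is immediate from $\widetilde{\xi}_i|_{\widetilde{\Sigma}_{0,\undertilde{\infty}}}=0$. For $\tau\in\widetilde{\Delta}$, the $\widetilde{\Delta}$-analyticity (\ref{defn:analytic}) combined with the dual-basis relation $\log_{\widetilde{\Gamma}}\tau=\sum_i\widetilde{\xi}_i(\tau)\widetilde{\partial}_i$ from \ref{para:gamma-basis} and \ref{cor:operator} forces $\rho(\tau)=\exp(\varphi^{\widetilde{\xi}}_\tau)$, hence $\overline{\rho}(\tau)=1$. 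Once $\overline{\rho}$ is shown to be a group homomorphism, the triviality on $\widetilde{\Delta}$ makes it factor through $\Sigma=\widetilde{\Gamma}/\widetilde{\Delta}$, and the $K_\infty$-semi-linearity passes from $\rho$ to $\overline{\rho}$ since $\varphi^{\widetilde{\xi}}_g$ is $K_\infty$-linear, producing the required object of $\repnpr(\Sigma,K_\infty)$. The multiplicativity $\overline{\rho}(gg')=\overline{\rho}(g)\overline{\rho}(g')$ reduces, via the cocycle identity $\varphi^{\widetilde{\xi}}_{gg'}=\varphi^{\widetilde{\xi}}_g+\chi(g)\varphi^{\widetilde{\xi}}_{g'}$ (extracted from \eqref{eq:cont-cocycle-cond}) and the conjugation rule above, to $\exp(-\varphi^{\widetilde{\xi}}_g)\exp(-\chi(g)\varphi^{\widetilde{\xi}}_{g'})=\exp(-\varphi^{\widetilde{\xi}}_{gg'})$, which holds since the two operators commute and are nilpotent.

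For part (\ref{item:prop:special-functor-2}), the vanishing $\theta_V^{(1)}\circ\theta_V=0$ is immediate from $[\varphi_{\widetilde{\partial}_i},\varphi_{\widetilde{\partial}_j}]=0$ combined with the antisymmetry of $\df_{\hyodoring}(\widetilde{T}_i)\wedge\df_{\hyodoring}(\widetilde{T}_j)$, and nilpotency of $\theta_V$ follows from taking the filtration $\ca{F}^k=\sum_{|I|=k}\varphi_I(V)$ induced by the finite commuting family of nilpotent endomorphisms $\{\varphi_{\widetilde{\partial}_i}\}$, which terminates at $0$ and satisfies $\theta_V(\ca{F}^k)\subseteq\ca{F}^{k+1}\otimes\widehat{\Omega}^1_{\ca{O}_K}(-1)$ by construction. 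For $\Sigma$-equivariance, the conjugation rule gives $\varphi_{\widetilde{\partial}_i}\circ\rho(\sigma)=\chi(\sigma)^{-1}\rho(\sigma)\circ\varphi_{\widetilde{\partial}_i}$; since $\varphi_{\widetilde{\partial}_i}$ commutes with $\varphi^{\widetilde{\xi}}_\sigma$ (hence with $\exp(-\varphi^{\widetilde{\xi}}_\sigma)$), we obtain $\varphi_{\widetilde{\partial}_i}\circ\overline{\rho}(\sigma)=\chi(\sigma)^{-1}\overline{\rho}(\sigma)\circ\varphi_{\widetilde{\partial}_i}$, and absorbing the factor $\chi(\sigma)^{-1}$ into the Tate twist on $\widehat{\Omega}^1_{\ca{O}_K}(-1)$ yields $\theta_V\circ\overline{\rho}(\sigma)=(\overline{\rho}(\sigma)\otimes\chi^{-1}(\sigma))\circ\theta_V$ for every $\sigma\in\widetilde{\Gamma}$, and a fortiori for every $\sigma\in\Sigma$. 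The main obstacle is merely the bookkeeping of the cocycle and conjugation twists in the multiplicativity check of $\overline{\rho}$, but all the needed identities are already packaged in \ref{lem:operator}, \ref{para:gamma-basis}, and \eqref{eq:cont-cocycle-cond}.
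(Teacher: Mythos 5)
Your proof is correct and follows essentially the same route as the paper: verify nilpotency and commutativity of the $\varphi_{\widetilde{\partial}_i}$ via \ref{prop:operator-nilpotent}, use $\widetilde{\Delta}$-analyticity to get $\overline{\rho}|_{\widetilde{\Delta}}=1$, check multiplicativity of $\overline{\rho}$ from the cocycle identity \eqref{eq:cont-cocycle-cond} and the conjugation rule \eqref{eq:lem:operator-1}, and deduce $\Sigma$-equivariance of $\theta_V$ by absorbing the $\chi^{-1}$ factor into the Tate twist. The only cosmetic differences are that you spell out the decreasing filtration $\ca{F}^k$ witnessing nilpotency of $\theta_V$ (the paper just asserts it) and that you phrase the middle step in the multiplicativity check as conjugating the exponential, whereas the paper writes out the cocycle rearrangement inline.
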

\begin{proof}
	(\ref{item:prop:special-functor-1}) Notice that the infinitesimal Lie algebra action of $\lie(\widetilde{\Delta})$ on $V$ (\ref{cor:operator}), 
	\begin{align}\label{eq:prop:special-functor-lie}
		\lie(\widetilde{\Delta})\longrightarrow \mrm{End}_{K_\infty}(V),
	\end{align}
	is nilpotent by \ref{rem:derivative} and \ref{prop:operator-nilpotent} (whose assumptions are satisfied as $K$ is a complete discrete valuation field). Thus, $\exp(-\varphi^{\widetilde{\xi}}_g)$ and $\overline{\rho}(g)$ are well-defined endomorphisms of $V$. Since $(V,\rho)$ is $\widetilde{\Delta}$-analytic (\ref{defn:analytic}), if $g\in \widetilde{\Delta}$ then $\rho(g)=\exp(\varphi_g)=\exp(\varphi^{\widetilde{\xi}}_g)$ as $\varphi^{\widetilde{\xi}}|_{\widetilde{\Delta}}=\varphi|_{\widetilde{\Delta}}$. Thus, $\overline{\rho}(g)=\id_V$. It is clear that $\overline{\rho}$ is continuous and $K_\infty$-semi-linear. It remains to show that $\overline{\rho}(g_1g_2)=\overline{\rho}(g_1)\overline{\rho}(g_2)$ for any $g_1,g_2\in \widetilde{\Gamma}$ (so that $\overline{\rho}$ is a $K_\infty$-representation of $\widetilde{\Gamma}/\widetilde{\Delta}=\Sigma$).
	\begin{align}\label{eq:prop:special-functor-1}
		\overline{\rho}(g_1g_2)&=\exp(-\sum_{i=1}^e  \widetilde{\xi}_i(g_1g_2)\varphi_{\widetilde{\partial}_i})\rho(g_1g_2)\quad \trm{(by \eqref{eq:para:special-functor-2})}\\
		&=\exp(-\sum_{i=1}^e  (\widetilde{\xi}_i(g_1)+\chi(g_1)\widetilde{\xi}_i(g_2))\varphi_{\widetilde{\partial}_i})\rho(g_1g_2) \quad\trm{(by \eqref{eq:cont-cocycle-cond})}\nonumber\\
		&=\exp(-\sum_{i=1}^e \widetilde{\xi}_i(g_1)\varphi_{\widetilde{\partial}_i})\rho(g_1)\exp(-\sum_{i=1}^e \widetilde{\xi}_i(g_2)\varphi_{\widetilde{\partial}_i})\rho(g_2)\quad \trm{(by \eqref{eq:lem:operator-1})}\nonumber\\
		&=\overline{\rho}(g_1)\overline{\rho}(g_2).\nonumber
	\end{align}
	
	(\ref{item:prop:special-functor-2}) Since the endomorphisms $\varphi_{\widetilde{\partial}_i}$ on $V$ are nilpotent and commute with each other, $\theta_V$ is a nilpotent Higgs field. It remains to check the $\Sigma$-equivariance of $\theta_V$. For $v\in V$ and $g\in \widetilde{\Gamma}$, we have
	\begin{align}
		\overline{\rho}(g)(\theta_V(v))&=\exp(-\sum_{i=1}^e  \widetilde{\xi}_i(g)\varphi_{\widetilde{\partial}_i})\rho(g)(-\sum_{j=1}^e\varphi_{\widetilde{\partial}_j}(v)\otimes \df_{\hyodoring}(\widetilde{T}_j))\quad\trm{(by \eqref{eq:para:special-functor-2})}\\
		&=\exp(-\sum_{i=1}^e  \widetilde{\xi}_i(g)\varphi_{\widetilde{\partial}_i})(-\sum_{j=1}^e\chi(g)\varphi_{\widetilde{\partial}_j}(\rho(g)(v))\otimes \chi(g)^{-1}\df_{\hyodoring}(\widetilde{T}_j))\quad \trm{(by \eqref{eq:lem:operator-1})}\nonumber\\
		&=-\sum_{j=1}^e\varphi_{\widetilde{\partial}_j}(\exp(-\sum_{i=1}^e  \widetilde{\xi}_i(g)\varphi_{\widetilde{\partial}_i})\rho(g)(v))\otimes \df_{\hyodoring}(\widetilde{T}_j)\quad \trm{(as $\varphi_{\widetilde{\partial}_i}\varphi_{\widetilde{\partial}_j}=\varphi_{\widetilde{\partial}_j}\varphi_{\widetilde{\partial}_i}$)}\nonumber\\
		&=\theta_V(\overline{\rho}(g)(v)).\nonumber
	\end{align}
\end{proof}

\begin{myrem}\label{rem:special-functor}
	\begin{enumerate}
		\renewcommand{\labelenumi}{{\rm(\theenumi)}}
		\item The minus sign in the definition of $\theta_V$ is designed for \ref{thm:simpson-K}.(\ref{item:simpson-K4}) (cf. \ref{rem:simpson-K-2}).
		\item There is another definition for the Higgs field $\theta_V$ \eqref{eq:prop:special-functor-theta}. Consider the canonical maps 
		\begin{align}
			V\longrightarrow\ho_{\bb{Q}_p}(\lie(\widetilde{\Delta}),V)\longrightarrow V\otimes_{\ca{O}_K}\widehat{\Omega}^1_{\ca{O}_K}(-1),
		\end{align}
		where the first map is induced by the infinitesimal Lie algebra algebra action of $\lie(\widetilde{\Delta})$ on $V$ which sends $v\in V$ to $(\widetilde{\partial}\mapsto \varphi_{\widetilde{\partial}}(v))$, and the second map is induced by extending scalars from the canonical map \eqref{eq:prop:canonical-diff-lie-alg-1} which sends $f$ to $\sum_{i=1}^e f(\widetilde{\partial}_i)\otimes \df_{\hyodoring}(\widetilde{T}_i)$. It is clear that the composition is $-\theta_V$.
	\end{enumerate}
\end{myrem}

\begin{mypara}
	Let $M=(M,\rho,\theta)$ be an object of $\higgs(\Sigma,K_\infty,\widehat{\Omega}^1_{\ca{O}_K}(-1))$. On the $\hyodoring$-module $\hyodoring\otimes_{K_\infty}M$, we define a semi-linear action of $G$ by the diagonal action $g\otimes \overline{g}$ for any $g\in G$ with image $\overline{g}\in \Sigma$, and a $G$-equivariant $\widehat{\overline{K}}$-linear Higgs field $\df_{\hyodoring}\otimes 1+1\otimes \theta$ with value in $\widehat{\overline{K}}\otimes_{\ca{O}_K}\widehat{\Omega}^1_{\ca{O}_K}(-1)$. In particular, its $\widehat{\overline{K}}$-submodule
	\begin{align}
		\bb{V}(M)=(\hyodoring\otimes_{K_\infty}M)^{\df_{\hyodoring}\otimes 1+1\otimes \theta=0}
	\end{align}
	is endowed with the induced semi-linear action of $G$.
	
	Let $W=(W,\rho)$ be an object of $\repnpr(G,\widehat{\overline{K}})$. On the $\hyodoring$-module $\hyodoring\otimes_{\widehat{\overline{K}}}W$, we define a semi-linear action of $G$ by the diagonal action $g\otimes g$ for any $g\in G$, and a $G$-equivariant $\widehat{\overline{K}}$-linear Higgs field $\df_{\hyodoring}\otimes 1$. In particular, its $K_\infty$-submodule 
	\begin{align}
		\bb{D}(W)=((\hyodoring\otimes_{\widehat{\overline{K}}}W)^{H})^{(\Sigma,K_\infty)\trm{-fini}}
	\end{align}
	where $(-)^{(\Sigma,K_\infty)\trm{-fini}}$ is taking the $(\Sigma,K_\infty)$-finite part (cf. \ref{defn:repn}), is endowed with the induced semi-linear action of $\Sigma$ and the induced $\Sigma$-equivariant $K_\infty$-linear Higgs field with value in $K_\infty\otimes_{\ca{O}_K}\widehat{\Omega}^1_{\ca{O}_K}(-1)$ (cf. \ref{cor:hyodo-ring-coh} and \ref{lem:G-Kcycl-fixed}).
	
	We remark that the definitions of $\bb{V}$ and $\bb{D}$ do not depend on the choice of $t_1,\dots, t_d$.
\end{mypara}

\begin{mythm}[{cf. \cite[15.1]{tsuji2018localsimpson}}]\label{thm:simpson-K}
	We keep the notation in {\rm\ref{para:notation-K}}.
	\begin{enumerate}
		\renewcommand{\labelenumi}{{\rm(\theenumi)}}
		\item For any object $M$ of $\higgs(\Sigma,K_\infty,\widehat{\Omega}^1_{\ca{O}_K}(-1))$, $\bb{V}(M)$ is a finite-dimensional $\widehat{\overline{K}}$-module on which $G$ acts continuously with respect to the canonical topology (thus $\bb{V}(M)$ is an object of $\repnpr(G,\widehat{\overline{K}})$). Moreover, the canonical $\hyodoring$-linear morphism (which is $G$-equivariant and compatible with Higgs fields by definition)
		\begin{align}\label{eq:thm:simpson-K-1}
			\hyodoring\otimes_{\widehat{\overline{K}}}\bb{V}(M)\longrightarrow \hyodoring\otimes_{K_\infty} M
		\end{align}
		is an isomorphism.\label{item:simpson-K1}
		\item For any object $W$ of $\repnpr(G,\widehat{\overline{K}})$, $\bb{D}(W)$ is a finite-dimensional $\widehat{\overline{K}}$-module on which $\Sigma$ acts continuously with respect to the canonical topology (thus $\bb{D}(W)$ is an object of $\higgs(\Sigma,K_\infty,\widehat{\Omega}^1_{\ca{O}_K}(-1))$). Moreover, the canonical $\hyodoring$-linear morphism (which is $G$-equivariant and compatible with Higgs fields by definition)
		\begin{align}\label{eq:thm:simpson-K-2}
			\hyodoring\otimes_{K_\infty}\bb{D}(W)\longrightarrow \hyodoring\otimes_{\widehat{\overline{K}}} W
		\end{align}
		is an isomorphism.\label{item:simpson-K2}
		\item The functors
		\begin{align}\label{eq:thm:simpson-K-3}
			\xymatrix{
				\repnpr(G,\widehat{\overline{K}})\ar@<0.2pc>[r]^-{\bb{D}}&\higgs(\Sigma,K_\infty,\widehat{\Omega}^1_{\ca{O}_K}(-1))\ar@<0.2pc>[l]^-{\bb{V}}
			}
		\end{align}
		are equivalences of additive tensor categories, quasi-inverse to each other.\label{item:simpson-K3}
		\item Under the assumption in {\rm\ref{para:special-functor}} and with the same notation, let $(V,\rho)$ be an object of $\repnan{\widetilde{\Delta}}(\widetilde{\Gamma},K_\infty)$, $M=(V,\overline{\rho},\theta_V)$ the object of $\higgs(\Sigma,K_\infty,\widehat{\Omega}^1_{\ca{O}_K}(-1))$ defined by the functor \eqref{eq:special-functor}. Then, there exists a natural $G$-equivariant  $\widehat{\overline{K}}$-linear isomorphism
		\begin{align}\label{eq:thm:simpson-K-4}
			\bb{V}(M)\iso \widehat{\overline{K}}\otimes_{K_\infty}V
		\end{align}\label{item:simpson-K4}
		which depends on the choice of $\widetilde{t}_1,\dots,\widetilde{t}_e$.
	\end{enumerate}
\end{mythm}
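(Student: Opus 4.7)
My plan is to reduce everything to an explicit calculation in the analytic setting, and then deduce (1), (2), (3) from (4) by descent. Combining \ref{thm:sen-brinon} with \ref{prop:gamma-analytic}, and taking the choice $\widetilde{t}_i=t_i$ in \ref{para:special-functor} (so $\widetilde{\Gamma}=\Gamma$ and $\widetilde{\Delta}=\Delta$), every $W\in\repnpr(G,\widehat{\overline{K}})$ has the form $\widehat{\overline{K}}\otimes_{K_\infty}V$ for some $V\in\repnan{\Delta}(\Gamma,K_\infty)$, and via the functor \eqref{eq:special-functor} such a $V$ produces an object $M=(V,\overline{\rho},\theta_V)$ of $\higgs(\Sigma,K_\infty,\widehat{\Omega}^1_{\ca{O}_K}(-1))$. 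I will prove (4) first in general, and then use the special case $\widetilde{t}_i=t_i$ to handle the other parts.

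\textbf{Proof of (4).} Under the hypotheses of \ref{para:special-functor}, I define the $\widehat{\overline{K}}$-linear map
\begin{align*}
\alpha:\widehat{\overline{K}}\otimes_{K_\infty}V\longrightarrow \hyodoring\otimes_{K_\infty}V,\qquad 1\otimes v\longmapsto \exp\!\Bigl(\sum_{i=1}^e \widetilde{T}_i\cdot \varphi_{\widetilde{\partial}_i}\Bigr)(v),
\end{align*}
which is a well-defined polynomial (not just a series) in the $\widetilde{T}_i$ because the $\varphi_{\widetilde{\partial}_i}$ are commuting nilpotent endomorphisms of $V$ by \ref{lem:operator} and \ref{prop:operator-nilpotent}. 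Differentiating term-by-term and using $\df_{\hyodoring}(\widetilde{T}_i)=\df\log(\widetilde{t}_i)\otimes\zeta^{-1}$ together with the definition \eqref{eq:prop:special-functor-theta} of $\theta_V$, I verify $(\df_{\hyodoring}\otimes 1+1\otimes\theta_V)\circ\alpha=0$, so $\alpha$ lands in $\bb{V}(M)$. For $G$-equivariance, I combine $g(\widetilde{T}_i)=\chi(g)^{-1}(\widetilde{\xi}_i(g)+\widetilde{T}_i)$ from \eqref{eq:para:special-functor}, the commutation $\sigma\circ\varphi_{\widetilde{\partial}_i}\circ\sigma^{-1}=\chi(\sigma)\varphi_{\widetilde{\partial}_i}$ from \eqref{eq:lem:operator-1}, and the definition $\overline{\rho}(g)=\exp(-\varphi^{\widetilde{\xi}}_g)\rho(g)$ from \ref{prop:special-functor}; the exponential twists then cancel pairwise to yield $g\circ\alpha=\alpha\circ\rho(g)$. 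Extending by $\hyodoring$-linearity produces a horizontal $G$-equivariant homomorphism whose inverse is multiplication by $\exp(-\sum_i\widetilde{T}_i\varphi_{\widetilde{\partial}_i})$; this is the isomorphism \eqref{eq:thm:simpson-K-1}, forcing $\alpha$ to identify $\widehat{\overline{K}}\otimes_{K_\infty}V$ with all of $\bb{V}(M)$, which proves (4).

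\textbf{Deducing (1), (2), (3).} Specializing to $\widetilde{t}_i=t_i$, I first check essential surjectivity of the functor \eqref{eq:special-functor}: given any $(M,\rho',\theta)\in\higgs$ with $\theta=\sum_i\theta_i\otimes\df\log(t_i)\otimes\zeta^{-1}$, the formula $\rho(g)=\exp(-\sum_i\xi_i(g)\theta_i)\rho'(\overline{g})$ defines a continuous $\Gamma$-action whose cocycle identity follows from the $\Sigma$-equivariance of $\theta$ combined with \eqref{eq:cont-cocycle-cond}, and which is $\Delta$-analytic by the nilpotency of the $\theta_i$. Thus every such $M$ arises from some $V$, and (1) follows from (4). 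For (2), starting from $W=\widehat{\overline{K}}\otimes_{K_\infty}V$, I transport \eqref{eq:thm:simpson-K-1} for $M=(V,\overline{\rho},\theta_V)$ into an identification $\hyodoring\otimes_{\widehat{\overline{K}}}W\cong\hyodoring\otimes_{K_\infty}V$ compatible with $G$-action and Higgs fields; taking $H$-invariants and applying \ref{cor:hyodo-ring-coh} gives $(\hyodoring\otimes_{\widehat{\overline{K}}}W)^H\cong\widehat{K_\infty}\otimes_{K_\infty}V$, and extracting the $(\Sigma,K_\infty)$-finite part recovers $V$ exactly by \ref{lem:G-Kcycl-fixed}. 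This proves \eqref{eq:thm:simpson-K-2} and identifies $\bb{D}(W)$ with $M$. The quasi-inverse identities in (3) now follow by combining (1) and (2), and compatibility with tensor products is immediate from the multiplicative structure of $\hyodoring$ and the Higgs tensor product.

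\textbf{Main obstacle.} The delicate point is the passage from the coordinate-dependent formulae --- both the functor \eqref{eq:special-functor} and the map $\alpha$ depend on the choice of $\widetilde{t}_i$ --- to the intrinsically canonical functors $\bb{V}$ and $\bb{D}$. This hinges on the Tate--Sen type result \ref{lem:G-Kcycl-fixed}: it is what guarantees that the $(\Sigma,K_\infty)$-finite part of $\widehat{K_\infty}\otimes_{K_\infty}V$ is genuinely $V$, so that $\bb{D}(W)$ does not depend on any auxiliary choices. A secondary technical issue is the convergence of the exponential defining $\alpha$, which is precisely where the $\Delta$-analyticity hypothesis (via \ref{prop:gamma-analytic} and the nilpotency \ref{prop:operator-nilpotent}) is used, reducing the series to a polynomial.
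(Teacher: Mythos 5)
Your proposal is correct, and it takes a genuinely different route from the paper's. The paper proves (1) \emph{directly}: for any $M$, it writes the Higgs field in coordinates $\theta_i$ and shows that $\iota=\exp(-\sum_i T_i\otimes\theta_i)$ is a $\hyodoring$-linear automorphism of $\hyodoring\otimes_{K_\infty}M$ intertwining $\df_{\hyodoring}\otimes 1$ with $\df_{\hyodoring}\otimes 1+1\otimes\theta$; restricting to the kernels gives $\bb{V}(M)\cong\widehat{\overline{K}}\otimes_{K_\infty}M$ without any reference to $\Gamma$-representations. Only then does it prove (4) as a separate exponential calculation (now twisting by the $\varphi_{\widetilde{\partial}_i}$ instead of the $\theta_i$). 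Your proposal instead proves (4) first and then deduces (1) by observing that the functor \eqref{eq:special-functor} (specialized to $\widetilde{t}_i=t_i$) is essentially surjective. That essential surjectivity is a new lemma not stated in the paper: you construct the preimage $V$ by setting $\rho(g)=\exp(-\sum_i\xi_i(g)\theta_i)\rho'(\overline{g})$ and check the cocycle identity via $\Sigma$-equivariance of $\theta$ and \eqref{eq:cont-cocycle-cond}, $\Delta$-analyticity via nilpotency, and $\varphi_{\partial_i}=-\theta_i$. This is a correct and clean argument, and it avoids doing the exponential-twist calculation twice; the trade-off is that you take on the extra burden of the surjectivity lemma, which the paper sidesteps by the self-contained proof of (1). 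Your treatments of (2) and (3) coincide with the paper's (essential surjectivity of $\bb{V}$ via \ref{thm:sen-brinon}, \ref{prop:gamma-analytic}, and (4); then $H$-invariants, \ref{cor:hyodo-ring-coh}, and \ref{lem:G-Kcycl-fixed}).

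Two small gaps to be aware of. First, in your deduction of (1) from (4) you should also verify that $\overline{\rho}=\rho'$ and $\theta_V=\theta$ for your constructed $(V,\rho)$; this is a short calculation using $\varphi_{\partial_i}|_V=-\theta_i$, but it is what makes the essential-surjectivity claim precise. Second, the assertion in (1) that $G$ acts \emph{continuously with respect to the canonical topology} on $\bb{V}(M)$ requires a word: (4) gives a $\widehat{\overline{K}}$-linear $G$-equivariant bijection with $\widehat{\overline{K}}\otimes_{K_\infty}V$, which carries the canonical topology, but one must check this identification is a homeomorphism relative to the topology $\bb{V}(M)$ inherits from $\hyodoring\otimes_{K_\infty}M$. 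The paper settles this by noting $\bb{V}(M)$ is a direct summand of some $\mrm{Sym}^n_{\widehat{\overline{K}}}(\scr{E}_{\ca{O}_K}(-1))\otimes_{K_\infty}M$; you should include the same observation (or note that finite-dimensional $\widehat{\overline{K}}$-linear bijections between subspaces of linearly topologized modules with the induced and canonical topologies agree by \ref{lem:banach-sub}-type reasoning).
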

\begin{proof}
	We follow the proof of 	\cite[15.1]{tsuji2018localsimpson}. We identify $\hyodoring=\widehat{\overline{K}}[T_1,\dots,T_d]$ by \eqref{eq:hyodo-ring-polynomial}.
	
	(\ref{item:simpson-K1}) Let $\theta:M\to M\otimes_{\ca{O}_K}\widehat{\Omega}^1_{\ca{O}_K}(-1)$ denote the $K_\infty$-linear Higgs field of $M$. We write
	\begin{align}
		\theta(x)&=\sum_{i=1}^d\theta_i(x)\otimes \df_{\hyodoring}(T_i),\ \forall x\in M,\\
		\df_{\hyodoring}(f)&=\sum_{i=1}^d\frac{\partial f}{\partial T_i}\otimes \df_{\hyodoring}(T_i),\ \forall f\in\hyodoring,
	\end{align}
	where $\theta_i$ (resp. $\frac{\partial}{\partial T_i}$) are $K_\infty$-linear (resp. $\widehat{\overline{K}}$-linear) endomorphisms of $M$ (resp. $\hyodoring$), which commute with each other. Since $\theta_i$ are nilpotent by definition, we can define a $\hyodoring$-linear isomorphism
	\begin{align}
		\iota=\exp(-\sum_{i=1}^d T_i\otimes \theta_i):\hyodoring\otimes_{K_\infty}M\iso \hyodoring\otimes_{K_\infty}M,
	\end{align}
	whose inverse is given by $\iota^{-1}=\exp(\sum_{i=1}^dT_i\otimes \theta_i)$. We claim that the following diagram is commutative. 
	\begin{align}
		\xymatrix{
			\hyodoring\otimes_{K_\infty}M\ar[d]_-{\iota}\ar[rr]^-{\df_{\hyodoring}\otimes 1}&&\hyodoring\otimes_{K_\infty}M\otimes_{\ca{O}_K}\widehat{\Omega}^1_{\ca{O}_K}(-1)\ar[d]^-{\iota\otimes 1}\\
			\hyodoring\otimes_{K_\infty}M\ar[rr]^-{\df_{\hyodoring}\otimes 1+1\otimes\theta}&&\hyodoring\otimes_{K_\infty}M\otimes_{\ca{O}_K}\widehat{\Omega}^1_{\ca{O}_K}(-1)
		}
	\end{align}
	Indeed, we have
	\begin{align}\label{eq:iota-higgs}
		&(\df_{\hyodoring}\otimes 1)\circ \exp(\sum_{i=1}^dT_i\otimes \theta_i)\\
		=&\exp(\sum_{i=1}^dT_i\otimes \theta_i)\circ (\df_{\hyodoring}\otimes 1)+\sum_{j=1}^d\left(\exp(\sum_{i=1}^dT_i\otimes \theta_i)\circ(1\otimes \theta_j)\right)\otimes\df_{\hyodoring}(T_j)\nonumber\\
		=&(\exp(\sum_{i=1}^dT_i\otimes \theta_i)\otimes 1)\circ(\df_{\hyodoring}\otimes 1+1\otimes\theta).\nonumber
	\end{align}
	Thus, the restriction of $\iota$ induces a $\widehat{\overline{K}}$-linear isomorphism
	\begin{align}\label{eq:3.14.10}
		\iota_0:\widehat{\overline{K}}\otimes_{K_\infty}M=(\hyodoring\otimes_{K_\infty}M)^{\df_{\hyodoring}\otimes 1=0}\iso \bb{V}(M)=(\hyodoring\otimes_{K_\infty}M)^{\df_{\hyodoring}\otimes 1+1\otimes \theta=0},
	\end{align}
	from which we see that $\bb{V}(M)$ is finite-dimensional over $\widehat{\overline{K}}$ and thus contained in the finite-dimensional $\widehat{\overline{K}}$-submodule $\mrm{Sym}^n_{\widehat{\overline{K}}} (\scr{E}_{\ca{O}_K}(-1))\otimes_{K_\infty} M$ of $\hyodoring\otimes_{K_\infty}M$ for some integer $n>0$. Since $\bb{V}(M)$ is a direct summand of $\mrm{Sym}^n_{\widehat{\overline{K}}} (\scr{E}_{\ca{O}_K}(-1))\otimes_{K_\infty} M$, the topology on $\bb{V}(M)$ induced from $\mrm{Sym}^n_{\widehat{\overline{K}}} (\scr{E}_{\ca{O}_K}(-1))\otimes_{K_\infty} M$ coincides with the canonical topology as a finite-dimensional $\widehat{\overline{K}}$-module. Since $G$ acts continuously on $\mrm{Sym}^n_{\widehat{\overline{K}}} (\scr{E}_{\ca{O}_K}(-1))\otimes_{K_\infty} M$ by \ref{cor:hyodo-ring}.(\ref{item:cor:hyodo-ring-2}), it acts also continuously on $\bb{V}(M)$ with respect to the canonical topology, which means that $\bb{V}(M)$ is an object of $\repnpr(G,\widehat{\overline{K}})$. Finally, notice that the composition of the $\hyodoring$-linear maps
	\begin{align}
		\xymatrix@C=4pc{
			\hyodoring\otimes_{K_\infty}M\ar[r]^-{\id_{\hyodoring}\otimes \iota_0}&
			\hyodoring\otimes_{\widehat{\overline{K}}}\bb{V}(M)\ar[r]^-{\eqref{eq:thm:simpson-K-1}}& \hyodoring\otimes_{K_\infty} M
		}
	\end{align}
	is the isomorphism $\iota$. Thus, \eqref{eq:thm:simpson-K-1} is an isomorphism, which completes the proof of (\ref{item:simpson-K1}).
	
	(\ref{item:simpson-K4}) Since the $K_\infty$-endomorphisms $\varphi_{\widetilde{\partial}_i}$ ($1\leq i\leq e$) on $V$ are nilpotent and commute with each other, we can define a $\hyodoring$-linear isomorphism
	\begin{align}\label{eq:thm:simpson-K-jmath}
		\jmath=\exp(-\sum_{i=1}^e\widetilde{T}_i\otimes \varphi_{\widetilde{\partial}_i}):\hyodoring\otimes_{K_\infty}M\iso \hyodoring\otimes_{K_\infty} V,
	\end{align}
	whose inverse is given by $\jmath^{-1}=\exp(\sum_{i=1}^e\widetilde{T}_i\otimes \varphi_{\widetilde{\partial}_i})$. We claim that $\jmath$ is $G$-equivariant. Indeed, for any $g\in G$ and $x\in M$, we have
	\begin{align}
		g(\jmath(1\otimes x))&=\exp(-\sum_{i=1}^e \chi(g)^{-1}(\widetilde{\xi}_i(g)+\widetilde{T}_i)\otimes \chi(g)\varphi_{\widetilde{\partial}_i})(1\otimes \rho(g)(x))\quad(\trm{by \eqref{eq:para:special-functor}, \eqref{eq:lem:operator-1}})\\
		&=\exp(-\sum_{i=1}^e\widetilde{T}_i\otimes \varphi_{\widetilde{\partial}_i})\exp(-\sum_{i=1}^e1\otimes \widetilde{\xi}_i(g)\varphi_{\widetilde{\partial}_i})(1\otimes \rho(g)(x))\nonumber\\
		&=\jmath(1\otimes\overline{\rho}(g)(x))\quad \trm{(by \eqref{eq:para:special-functor-2})}.\nonumber
	\end{align}
	On the other hand, by the same argument as \eqref{eq:iota-higgs}, we see that $\jmath$ is compatible with Higgs fields, i.e. the following diagram is commutative.
	\begin{align}
		\xymatrix{
			\hyodoring\otimes_{K_\infty}M\ar[d]_-{\jmath}\ar[rr]^-{\df_{\hyodoring}\otimes 1+1\otimes \theta_V}&&\hyodoring\otimes_{K_\infty}M\otimes_{\ca{O}_K}\widehat{\Omega}^1_{\ca{O}_K}(-1)\ar[d]^-{\jmath\otimes 1}\\ 
			\hyodoring\otimes_{K_\infty} V\ar[rr]^-{\df_{\hyodoring}\otimes 1}&&\hyodoring\otimes_{K_\infty} V\otimes_{\ca{O}_K}\widehat{\Omega}^1_{\ca{O}_K}(-1)
		}
	\end{align}
	Thus, the restriction of $\jmath$ induces a $G$-equivariant $\widehat{\overline{K}}$-linear isomorphism
	\begin{align}\label{eq:3.18.11}
		\bb{V}(M)=(\hyodoring\otimes_{K_\infty}M)^{\df_{\hyodoring}\otimes 1+1\otimes \theta_V=0}\iso \widehat{\overline{K}}\otimes_{K_\infty}V=(\hyodoring\otimes_{K_\infty} V)^{\df_{\hyodoring}\otimes 1=0}.
	\end{align}
	
	(\ref{item:simpson-K2}) We apply (\ref{item:simpson-K4}) to the case where $e=d$ and $(\widetilde{t}_1,\dots,\widetilde{t}_e)=(t_1,\dots,t_d)$ (with the same $p$-power roots). By \ref{thm:sen-brinon}, \ref{prop:gamma-analytic} and (\ref{item:simpson-K4}), we see that the functor $\bb{V}$ is essentially surjective. Thus, we may assume that $W=\bb{V}(M)$ for some object $M$ of $\higgs(\Sigma,K_\infty,\widehat{\Omega}^1_{\ca{O}_K}(-1))$. Taking the $H$-invariant part of the isomorphism \eqref{eq:thm:simpson-K-1}, by \ref{cor:hyodo-ring-coh} we get a canonical isomorphism
	\begin{align}
		(\hyodoring\otimes_{\widehat{\overline{K}}}W)^{H}\iso \widehat{K_\infty}\otimes_{K_\infty}M.
	\end{align}
	Taking the $(\Sigma,K_\infty)$-finite part, we get  by \ref{lem:G-Kcycl-fixed} a canonical $\Sigma$-equivariant $K_\infty$-linear isomorphism $\bb{D}(W)\iso M$ compatible with Higgs fields, which completes the proof of (\ref{item:simpson-K2}).
	
	(\ref{item:simpson-K3}) The proof of (\ref{item:simpson-K2}) shows that the canonical morphism $\bb{D}\circ\bb{V}\to \id$ is an isomorphism. Taking the Higgs field zero part of the isomorphism \eqref{eq:thm:simpson-K-2}, we see that $\bb{V}\circ\bb{D}\to \id$ is an isomorphism. Using the isomorphisms \eqref{eq:thm:simpson-K-1} and \eqref{eq:thm:simpson-K-2}, we see that $\bb{V}$ and $\bb{D}$ are compatible with tensor products. This completes the proof.
\end{proof}

\begin{myrem}\label{rem:simpson-K-1}
	Let $L$ be a complete discrete valuation field of characteristic $0$ with perfect residue field of characteristic $p$. If $\ca{O}_K$ is the $p$-adic completion of the localization of an adequate $\ca{O}_L$-algebra $A$ at some $\ak{p}\in\ak{S}_p(A)$ (cf. \ref{defn:quasi-adequate-alg}), then \ref{thm:simpson-K} (except (\ref{item:simpson-K4})) is a special case of \cite[15.2]{tsuji2018localsimpson}.
\end{myrem}

\begin{myrem}\label{rem:simpson-K-2}
Our construction of \eqref{eq:prop:special-functor-theta} has a sign difference with Tsuji's. The essential reason is that the $G$-action defined by Tsuji on the Hyodo ring $\hyodoring=\widehat{\overline{K}}[T_1,\dots,T_d]$ is given by $g(T_i)=\chi(g)^{-1}(-\xi_i(g)+T_i)$ (cf. \cite[page 872]{tsuji2018localsimpson} and \ref{prop:fal-ext-comparison}).
\end{myrem}

\begin{mylem}\label{lem:sen-brinon-operator}
	Under the assumption in {\rm\ref{para:special-functor}} and with the same notation, let $(V,\rho)$ (resp. $(\widetilde{V},\rho'))$ be an object of $\repnan{\Delta}(\Gamma,K_\infty)$ (resp. $\repnan{\widetilde{\Delta}}(\widetilde{\Gamma},K_\infty)$). Consider the $K_\infty$-linear endomorphisms $\{\varphi_{\partial_i}|_V\}_{0\leq i\leq d}$ on $V$ (resp. $\{\varphi_{\widetilde{\partial}_i}\}_{0\leq i\leq d}$ on $\widetilde{V}$) defined by the infinitesimal Lie algebra action of $\lie(\Gamma)$ (resp. $\lie(\widetilde{\Gamma})$). We write 
	\begin{align}\label{eq:lem:sen-brinon-operator-base-change}
		(\widetilde{T}_1,\dots,\widetilde{T}_e)=(T_1,\dots,T_d)A+B
	\end{align}
	as elements of $\scr{E}_{\ca{O}_K}(-1)\subseteq \hyodoring$, where $A=(a_{ij})\in\mrm{M}_{d\times e}(K),B=(b_j)\in \mrm{M}_{1\times e}(\widehat{\overline{K}})$ (cf. {\rm\ref{lem:diff-lie-alg}}). Assume that there is an isomorphism $\beta:\widehat{\overline{K}}\otimes_{K_\infty}\widetilde{V}\iso \widehat{\overline{K}}\otimes_{K_\infty}V$ in $\repnpr(G,\widehat{\overline{K}})$. Then, there are identities of $\widehat{\overline{K}}$-linear endomorphisms
	\begin{align}
		\beta^{-1}\circ(1\otimes \varphi_{\partial_1}|_V,\dots,1\otimes \varphi_{\partial_d}|_V)\circ \beta&=(1\otimes \varphi_{\widetilde{\partial}_1}|_{\widetilde{V}},\dots,1\otimes \varphi_{\widetilde{\partial}_e}|_{\widetilde{V}})A^{\mrm{T}},\label{eq:sen-geo-bc}\\
		\beta^{-1}\circ (1\otimes \varphi_{\partial_0}|_V)\circ \beta&=1\otimes \varphi_{\widetilde{\partial}_0}|_{\widetilde{V}}+(1\otimes \varphi_{\widetilde{\partial}_1}|_{\widetilde{V}},\dots,1\otimes \varphi_{\widetilde{\partial}_e}|_{\widetilde{V}})B^\mrm{T},\label{eq:sen-ari-bc}
	\end{align}
	where $A^{\mrm{T}}$ and $B^\mrm{T}$ are the transposes of $A$ and $B$.
\end{mylem}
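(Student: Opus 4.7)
The idea is to pass through the $p$-adic Simpson correspondence (Theorem \ref{thm:simpson-K}). Applying the quasi-inverse $\bb{D}$ to the $G$-equivariant isomorphism $\beta$ produces an isomorphism
\begin{align*}
	\alpha:(\widetilde{V},\overline{\rho}',\theta_{\widetilde{V}})\stackrel{\sim}{\longrightarrow}(V,\overline{\rho},\theta_V)
\end{align*}
in $\higgs(\Sigma,K_\infty,\widehat{\Omega}^1_{\ca{O}_K}(-1))$, where the two Higgs bundles are those attached to $\widetilde{V}$ and $V$ by the functor of Proposition \ref{prop:special-functor} applied to the $\widetilde{t}$-tower and the $t$-tower, respectively. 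Since $b_j\in\widehat{\overline{K}}$ is annihilated by $\df_{\hyodoring}$, we have $\df_{\hyodoring}(\widetilde{T}_j)=\sum_i a_{ij}\df_{\hyodoring}(T_i)$, so the Higgs-field compatibility $\theta_V\circ\alpha=(\alpha\otimes\id)\circ\theta_{\widetilde{V}}$ gives
\begin{align*}
	\alpha^{-1}\circ\varphi_{\partial_i}|_V\circ\alpha=\sum_{j=1}^e a_{ij}\,\varphi_{\widetilde{\partial}_j}|_{\widetilde{V}}\quad(i=1,\dots,d).
\end{align*}
In parallel, the $\Sigma$-equivariance of $\alpha$, evaluated infinitesimally on $\sigma\in\Sigma$ via lifts in $\Sigma_{0,\underline{\infty}}\subset\Gamma$ and $\widetilde{\Sigma}_{0,\undertilde{\infty}}\subset\widetilde{\Gamma}$ (where $\overline{\rho}=\rho$ and $\overline{\rho}'=\rho'$ by Proposition \ref{prop:special-functor}.(\ref{item:prop:special-functor-1})), together with Lemma \ref{lem:varphi-decomposition}, yields $\alpha^{-1}\circ\varphi_{\partial_0}|_V\circ\alpha=\varphi_{\widetilde{\partial}_0}|_{\widetilde{V}}$.

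By naturality of the equivalence $\bb{V}\circ\bb{D}\cong\id$ together with the explicit canonical identifications of Theorem \ref{thm:simpson-K}.(\ref{item:simpson-K4}), the isomorphism $\beta$ decomposes as $\beta=\jmath_{t,V}\circ\bb{V}(\alpha)\circ\jmath_{\widetilde{t},\widetilde{V}}^{-1}$, where $\jmath_{t,V}=\exp(-\sum_{i=1}^d T_i\otimes\varphi_{\partial_i}|_V)$ and $\jmath_{\widetilde{t},\widetilde{V}}=\exp(-\sum_{j=1}^e \widetilde{T}_j\otimes\varphi_{\widetilde{\partial}_j}|_{\widetilde{V}})$ are restricted to the Higgs-zero subspaces. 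It then remains to propagate $1\otimes\varphi_{\partial_i}|_V$ through these three layers of conjugation.

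The commutation relations $[\partial_0,\partial_i]=\partial_i$ and $[\partial_i,\partial_j]=0$ for $i,j\geq1$ (and the analogous relations for the $\widetilde{\partial}$'s) ensure that all second-order adjoint brackets vanish, so $\exp(\mathrm{ad})$ truncates after the linear term. For $i\geq1$, each $1\otimes\varphi_{\partial_i}|_V$ already commutes with $\sum_k T_k\otimes\varphi_{\partial_k}|_V$ and with $\sum_k\widetilde{T}_k\otimes\varphi_{\widetilde{\partial}_k}|_{\widetilde{V}}$, so the two outer $\jmath$-conjugations act trivially and the middle $\bb{V}(\alpha)$-conjugation, combined with the Higgs relation above, directly produces \eqref{eq:sen-geo-bc}. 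For $i=0$, conjugation by $\jmath_{t,V}^{-1}$ adds the correction $-\sum_i T_i\otimes\varphi_{\partial_i}|_V$; the $\bb{V}(\alpha)$-conjugation rewrites this via $\sum_i a_{ij}T_i=\widetilde{T}_j-b_j$ as $-\sum_j\widetilde{T}_j\otimes\varphi_{\widetilde{\partial}_j}|_{\widetilde{V}}+\sum_j b_j\otimes\varphi_{\widetilde{\partial}_j}|_{\widetilde{V}}$; and the $\jmath_{\widetilde{t},\widetilde{V}}$-conjugation contributes $+\sum_j\widetilde{T}_j\otimes\varphi_{\widetilde{\partial}_j}|_{\widetilde{V}}$ from the shift of $1\otimes\varphi_{\widetilde{\partial}_0}|_{\widetilde{V}}$ (exploiting $[\widetilde{\partial}_0,\widetilde{\partial}_j]=\widetilde{\partial}_j$), which cancels the first summand, while $\sum_j b_j\otimes\varphi_{\widetilde{\partial}_j}|_{\widetilde{V}}$ is central (as $b_j\in\widehat{\overline{K}}$ commutes with every $\widetilde{T}_k$ and the $\widetilde{\partial}_j$'s commute with each other) and therefore survives. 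Restricting to the Higgs-zero part $\widehat{\overline{K}}\otimes_{K_\infty}\widetilde{V}$ gives \eqref{eq:sen-ari-bc}.

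\textbf{Main obstacle.} The principal technical point is verifying the factorization $\beta=\jmath_{t,V}\circ\bb{V}(\alpha)\circ\jmath_{\widetilde{t},\widetilde{V}}^{-1}$: one must check that the explicit isomorphism $\bb{V}(M)\cong\widehat{\overline{K}}\otimes V$ of Theorem \ref{thm:simpson-K}.(\ref{item:simpson-K4}), although constructed tower-by-tower, coincides with the canonical unit of the adjunction $\bb{V}\circ\bb{D}\cong\id$ for each of the two towers—so that applying $\bb{V}$ to $\alpha=\bb{D}(\beta)$ indeed recovers $\beta$ through these identifications.
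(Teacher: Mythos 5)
Your approach is essentially the paper's approach, but the ``main obstacle'' you flag at the end is not in fact an obstacle, and the paper's proof shows why. Rather than first taking $\alpha=\bb{D}(1\otimes\beta)$ and then trying to compare the explicit isomorphism $\jmath$ of Theorem \ref{thm:simpson-K}.(\ref{item:simpson-K4}) with the unit of the abstract adjunction, the paper defines $h$ directly as the restriction of the $G$-equivariant, Higgs-compatible, $\hyodoring$-linear composite $\jmath^{-1}\circ(1\otimes\beta)\circ\widetilde{\jmath}:\hyodoring\otimes_{K_\infty}\widetilde{M}\to\hyodoring\otimes_{K_\infty}M$ to the $H$-invariant $(\Sigma,K_\infty)$-finite part. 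By Corollary \ref{cor:hyodo-ring-coh} and Lemma \ref{lem:G-Kcycl-fixed}, that part of $\hyodoring\otimes_{K_\infty}\widetilde{M}$ is exactly $\widetilde{M}$ (and similarly for $M$), so the restriction is a morphism $h:\widetilde{M}\to M$ in $\higgs(\Sigma,K_\infty,\widehat{\Omega}^1_{\ca{O}_K}(-1))$. Since the composite is $\hyodoring$-linear and $\hyodoring\otimes_{K_\infty}\widetilde{M}$ is generated over $\hyodoring$ by $1\otimes\widetilde{M}$, the identity $\jmath^{-1}\circ(1\otimes\beta)\circ\widetilde{\jmath}=1\otimes h$ is automatic; the remark that $h=\bb{D}(1\otimes\beta)$ is a by-product, not an input. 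With this definition of $h$ the factorization $1\otimes\beta=\jmath\circ(1\otimes h)\circ\widetilde{\jmath}^{-1}$ you want is immediate, and the rest of your computation (Higgs-compatibility of $h$ for \eqref{eq:sen-geo-bc}, conjugation by $\jmath$ and $\widetilde{\jmath}$ using $[\partial_0,\partial_i]=\partial_i$, $\Sigma$-equivariance of $h$ combined with the identification $\varphi_{\partial_0}|_V=\varphi_{\partial_0}|_M$ on the shared $\Sigma_{0,\underline{\infty}}$-action for \eqref{eq:sen-ari-bc}) matches the paper. One small point worth tightening: you write ``$\overline{\rho}=\rho$'', but $\overline{\rho}$ agrees with $\rho$ only on $\Sigma_{0,\underline{\infty}}$ (resp.\ $\widetilde{\Sigma}_{0,\undertilde{\infty}}$), which is the precise statement of Proposition \ref{prop:special-functor}.(\ref{item:prop:special-functor-1}) and exactly what is needed to identify $\varphi_{\partial_0}|_V$ with $\varphi_{\partial_0}|_M$ before invoking $\Sigma$-equivariance.
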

\begin{proof}
	Let $M=(V,\overline{\rho},\theta_V)$ (resp. $\widetilde{M}=(\widetilde{V},\overline{\rho'},\theta_{\widetilde{V}}))$ be the object of $\higgs(\Sigma,K_\infty,\widehat{\Omega}^1_{\ca{O}_K}(-1))$ defined by the functor {\rm\eqref{eq:special-functor}}. Consider the commutative diagram
	\begin{align}
		\xymatrix{
			\hyodoring\otimes_{K_\infty}\widetilde{M}\ar[r]^-{\widetilde{\jmath}}\ar[d]_-{\jmath^{-1}\circ(1\otimes \beta)\circ\widetilde{\jmath}}& \hyodoring\otimes_{K_\infty} \widetilde{V}\ar[d]^-{1\otimes \beta}\\
			\hyodoring\otimes_{K_\infty}M\ar[r]^-{\jmath}& \hyodoring\otimes_{K_\infty} V
		}
	\end{align}
	where $\jmath=\exp(-\sum_{i=1}^d T_i\otimes \varphi_{\partial_i}|_V)$ (resp. $\widetilde{\jmath}=\exp(-\sum_{i=1}^e\widetilde{T}_i\otimes \varphi_{\widetilde{\partial}_i}|_{\widetilde{V}})$) is the $\hyodoring$-linear isomorphism defined in \eqref{eq:thm:simpson-K-jmath}. Notice that  
	\begin{align}
		((\hyodoring\otimes_{K_\infty}\widetilde{M})^H)^{(\Sigma,K_\infty)\trm{-fini}}=(\widehat{K_\infty}\otimes_{K_\infty}\widetilde{M})^{(\Sigma,K_\infty)\trm{-fini}}=\widetilde{M}
	\end{align}
	where the first equality follows from \ref{cor:hyodo-ring-coh}, and the second equality follows from \ref{lem:G-Kcycl-fixed}.
	Since $1\otimes \beta$, $\jmath$ and $\widetilde{\jmath}$ are $G$-equivariant and compatible with Higgs fields by the proof of \ref{thm:simpson-K}.(\ref{item:simpson-K4}), $\jmath^{-1}\circ(1\otimes \beta)\circ\widetilde{\jmath}$ induces a  $\Sigma$-equivariant $K_\infty$-linear map compatible with Higgs fields,
	\begin{align}
		h:\widetilde{M}=((\hyodoring\otimes_{K_\infty}\widetilde{M})^H)^{(\Sigma,K_\infty)\trm{-fini}}\longrightarrow ((\hyodoring\otimes_{K_\infty}M)^H)^{(\Sigma,K_\infty)\trm{-fini}}=M,
	\end{align}
	so that we actually have $\jmath^{-1}\circ(1\otimes \beta)\circ\widetilde{\jmath}=1\otimes h$ (we remark that $h=\bb{D}(1\otimes \beta)$). 
	
	Notice that $\varphi_{\partial_0}|_V\circ \varphi_{\partial_i}^n|_V=\varphi_{\partial_i}^n|_V\circ(\varphi_{\partial_0}|_V+n)$ for any $n\in\bb{N}$ and $1\leq i\leq d$ by \eqref{eq:lem:operator-2}. Thus, we have
	\begin{align}\label{eq:lem:sen-brinon-operator-1}
		&\jmath^{-1}\circ (1\otimes \varphi_{\partial_0}|_V) \circ \jmath\\
		=& \exp(\sum_{i=1}^dT_i\otimes \varphi_{\partial_i}|_V)\circ (1\otimes \varphi_{\partial_0}|_V) \circ\exp(-\sum_{i=1}^dT_i\otimes \varphi_{\partial_i}|_V)\nonumber\\
		=&\exp(\sum_{i=1}^dT_i\otimes \varphi_{\partial_i}|_V)\circ\exp(-\sum_{i=1}^dT_i\otimes \varphi_{\partial_i}|_V)\circ (1\otimes \varphi_{\partial_0}|_V-\sum_{i=1}^dT_i\otimes \varphi_{\partial_i}|_V)\nonumber\\
		=&1\otimes \varphi_{\partial_0}|_V-\sum_{i=1}^dT_i\otimes \varphi_{\partial_i}|_V.\nonumber
	\end{align}
	Since $h$ is compatible with Higgs fields, we have $\sum_{j=1}^e (h\circ \varphi_{\widetilde{\partial}_j}|_{\widetilde{V}})\otimes \df_{\hyodoring}(\widetilde{T}_j)=\sum_{i=1}^d  (\varphi_{\partial_i}|_V\circ h)\otimes \df_{\hyodoring}(T_i)$ by the definition \eqref{eq:prop:special-functor-theta}. Notice that $(\df_{\hyodoring}(\widetilde{T}_1),\dots,\df_{\hyodoring}(\widetilde{T}_e))=(\df_{\hyodoring}(T_1),\dots,\df_{\hyodoring}(T_d))A$ and that $\df_{\hyodoring}(T_1),\dots,\df_{\hyodoring}(T_d)$ are linearly independent. Thus, we have
	\begin{align}
		(\varphi_{\partial_1}|_V\circ h,\dots,\varphi_{\partial_d}|_V\circ h)=(h\circ \varphi_{\widetilde{\partial}_1}|_{\widetilde{V}},\dots,h\circ \varphi_{\widetilde{\partial}_e}|_{\widetilde{V}})A^{\mrm{T}},
	\end{align}
	which implies \eqref{eq:sen-geo-bc}, since $\jmath$ and $\widetilde{\jmath}$ commute with $1\otimes\varphi_{\partial_i}|_V$ and $1\otimes\varphi_{\widetilde{\partial}_i}|_{\widetilde{V}}$ respectively. Since $\overline{\rho'}|_{\widetilde{\Sigma}_{0,\undertilde{\infty}}}=\rho'|_{\widetilde{\Sigma}_{0,\undertilde{\infty}}}$ by \ref{prop:special-functor}.(\ref{item:prop:special-functor-1}), we have an identification
	\begin{align}
		\varphi_{\widetilde{\partial}_0}|_{\widetilde{V}}=\varphi_{\widetilde{\partial}_0}|_{\widetilde{M}}\in \mrm{End}_{K_\infty}(\widetilde{V})=\mrm{End}_{K_\infty}(\widetilde{M}),
	\end{align}
	where $\varphi_{\widetilde{\partial}_0}|_{\widetilde{V}}$ is given by the infinitesimal action of $\widetilde{\partial}_0\in \lie(\widetilde{\Sigma}_{0,\undertilde{\infty}})$ via $\rho'$ on $V$, and $\varphi_{\widetilde{\partial}_0}|_{\widetilde{M}}$ is given by the infinitesimal action of $\widetilde{\partial}_0\in \lie(\widetilde{\Sigma}_{0,\undertilde{\infty}})=\lie(\Sigma)$ via $\overline{\rho'}$ on $M$. Similarly, we have $\varphi_{\partial_0}|_V=\varphi_{\partial_0}|_M$. Since $h:\widetilde{M}\to M$ is $\Sigma$-equivariant, we deduce that $h\circ \varphi_{\widetilde{\partial}_0}|_{\widetilde{V}}=\varphi_{\partial_0}|_{V}\circ h$ by \ref{lem:infinitesimal}.(\ref{item:infinitesimal-3}). Using these properties, we have
	\begin{align}
		&(1\otimes h)^{-1}\circ(1\otimes \varphi_{\partial_0}|_V-\sum_{i=1}^dT_i\otimes \varphi_{\partial_i}|_V)\circ (1\otimes h)\\
		=&1\otimes \varphi_{\widetilde{\partial}_0}|_{\widetilde{V}}- \sum_{i=1}^dT_i\otimes (h^{-1}\circ\varphi_{\partial_i}|_V\circ h)\nonumber\\
		=&1\otimes \varphi_{\widetilde{\partial}_0}|_{\widetilde{V}}- \sum_{i=1}^d\sum_{j=1}^e T_ia_{ij}\otimes \varphi_{\widetilde{\partial}_i}|_{\widetilde{V}}\nonumber\\
		=&1\otimes \varphi_{\widetilde{\partial}_0}|_{\widetilde{V}}- \sum_{j=1}^e (\widetilde{T}_j-b_j)\otimes \varphi_{\widetilde{\partial}_i}|_{\widetilde{V}}.\nonumber
	\end{align}
	By the argument of \eqref{eq:lem:sen-brinon-operator-1}, we see that
	\begin{align}
		\widetilde{\jmath}\circ (1\otimes \varphi_{\widetilde{\partial}_0}|_{\widetilde{V}}- \sum_{j=1}^e (\widetilde{T}_j-b_j)\otimes \varphi_{\widetilde{\partial}_i}|_{\widetilde{V}})\circ \widetilde{\jmath}^{-1}=1\otimes \varphi_{\widetilde{\partial}_0}|_{\widetilde{V}}+\sum_{j=1}^e b_j\otimes \varphi_{\widetilde{\partial}_j}|_{\widetilde{V}}
	\end{align}
	which completes the proof.
\end{proof}

\begin{mythm}\label{thm:sen-brinon-operator}
	Let $K$ be a complete discrete valuation field extension of $\bb{Q}_p$ whose residue field admits a finite $p$-basis, $G=\gal(\overline{K}/K)$. Then, for any object $W$ of $\repnpr(G,\widehat{\overline{K}})$, there is a canonical homomorphism of $\widehat{\overline{K}}$-linear Lie algebras (see {\rm\ref{para:fal-ext-dual}}) 
	\begin{align}\label{eq:sen-brinon-operator}
		\varphi_{\sen}|_W:\scr{E}_{\ca{O}_K}^*(1)\longrightarrow \mrm{End}_{\widehat{\overline{K}}}(W),
	\end{align}
	which is $G$-equivariant  with respect to the canonical action on $\scr{E}_{\ca{O}_K}^*(1)$ defined in {\rm\ref{para:fal-ext-dual}} and the adjoint action on $\mrm{End}_{\widehat{\overline{K}}}(W)$ (i.e. $g\in G$ sends an endomorphism $\phi$ to $g\circ \phi \circ g^{-1}$), and functorial in $W$, i.e. it defines a canonical functor 
	\begin{align}\label{eq:sen-functor}
		\varphi_{\sen}:\repnpr(G,\widehat{\overline{K}})\longrightarrow \mbf{Rep}^{\mrm{proj}}(\scr{E}_{\ca{O}_K}^*(1),\widehat{\overline{K}}),
	\end{align}
	from the category of finite projective (continuous semi-linear) $\widehat{\overline{K}}$-representations of the profinite group $G$ to the category of finite projective $\widehat{\overline{K}}$-linear representations of the Lie algebra $\scr{E}_{\ca{O}_K}^*(1)$. 
	
	Moreover, under the assumption in {\rm\ref{para:special-functor}} and with the same notation, assume that there is an object $\widetilde{V}$ of $\repnan{\widetilde{\Delta}}(\widetilde{\Gamma},K_\infty)$ such that $W=\widehat{\overline{K}}\otimes_{K_\infty}\widetilde{V}$. Then, for any $f\in \scr{E}_{\ca{O}_K}^*(1)=\ho_{\widehat{\overline{K}}}(\scr{E}_{\ca{O}_K}(-1),\widehat{\overline{K}})$,
	\begin{align}\label{eq:thm:sen-brinon-operator}
		\varphi_{\sen}|_W(f)=\sum_{i=0}^e f(\widetilde{T}_i)\otimes \varphi_{\widetilde{\partial}_i}|_{\widetilde{V}}.
	\end{align}
\end{mythm}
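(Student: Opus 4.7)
The plan is to construct $\varphi_{\sen}|_W$ by first descending $W$ to a representation where the infinitesimal Lie algebra action is available, and then establishing canonicity by an explicit change-of-basis identity that will simultaneously yield the formula \eqref{eq:thm:sen-brinon-operator}.

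Applying Theorems \ref{thm:sen-brinon} and \ref{prop:gamma-analytic} to the $p$-basis $t_1,\dots,t_d$ of the residue field fixed in \ref{para:notation-K}, one obtains an object $V$ of $\repnan{\Delta}(\Gamma, K_\infty)$, unique up to canonical isomorphism, with $W \cong \widehat{\overline{K}} \otimes_{K_\infty} V$. By \ref{cor:operator}, the infinitesimal action of $\Gamma$ on $V$ extends to a $\bb{Q}_p$-linear Lie algebra homomorphism $\varphi : \lie(\Gamma) \to \mrm{End}_{K_\infty}(V)$, whose $\widehat{\overline{K}}$-linear extension to $\mrm{End}_{\widehat{\overline{K}}}(W)$ will be composed with the surjective $\widehat{\overline{K}}$-linear Lie algebra morphism $\psi : \scr{E}^*_{\ca{O}_K}(1) \to \widehat{\overline{K}} \otimes_{\bb{Q}_p} \lie(\Gamma)$ of \ref{lem:fal-ext-lie-alg-special}. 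I define
\begin{align*}
\varphi_{\sen}|_W := (\id_{\widehat{\overline{K}}} \otimes \varphi) \circ \psi,
\end{align*}
which is a Lie algebra homomorphism by construction and manifestly coincides with the right-hand side of \eqref{eq:thm:sen-brinon-operator} when $e = d$ and $\widetilde{t}_i = t_i$.

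The key step, and the main obstacle, is verifying the formula \eqref{eq:thm:sen-brinon-operator} for arbitrary auxiliary data $(\widetilde{t}_1,\dots,\widetilde{t}_e)$ with descent $\widetilde{V}$ in $\repnan{\widetilde{\Delta}}(\widetilde{\Gamma}, K_\infty)$ as in \ref{para:special-functor}; this will at the same time prove canonicity of $\varphi_{\sen}|_W$. Writing $(\widetilde{T}_1,\dots,\widetilde{T}_e) = (T_1,\dots,T_d)A + B$ with $A \in \mrm{M}_{d \times e}(K)$ and $B \in \mrm{M}_{1 \times e}(\widehat{\overline{K}})$ as in \ref{lem:diff-lie-alg}, and using the convention $T_0 = \widetilde{T}_0 = 1$, the composite of the two descent isomorphisms gives a $G$-equivariant $\widehat{\overline{K}}$-linear isomorphism $\beta : \widehat{\overline{K}} \otimes_{K_\infty} \widetilde{V} \iso \widehat{\overline{K}} \otimes_{K_\infty} V$ in $\repnpr(G, \widehat{\overline{K}})$. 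For $f \in \scr{E}^*_{\ca{O}_K}(1)$, the expansion $f(\widetilde{T}_j) = \sum_{i=1}^{d} a_{ij} f(T_i) + b_j f(T_0)$ for $1 \leq j \leq e$ combined with the change-of-basis identities \eqref{eq:sen-geo-bc} and \eqref{eq:sen-ari-bc} of \ref{lem:sen-brinon-operator} yields, after regrouping,
\begin{align*}
\beta \circ \Bigl( \sum_{j=0}^{e} f(\widetilde{T}_j) \otimes \varphi_{\widetilde{\partial}_j}|_{\widetilde{V}} \Bigr) \circ \beta^{-1} = \sum_{i=0}^{d} f(T_i) \otimes \varphi_{\partial_i}|_V.
\end{align*}
This shows simultaneously that \eqref{eq:thm:sen-brinon-operator} holds and that the resulting endomorphism of $W$ is independent of the auxiliary data.

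Functoriality in $W$ is then immediate: any morphism $W \to W'$ in $\repnpr(G, \widehat{\overline{K}})$ descends via the equivalences in \ref{thm:sen-brinon} and \ref{prop:gamma-analytic} to a $\Gamma$-equivariant morphism $V \to V'$, which commutes with the infinitesimal actions by \ref{lem:infinitesimal}.(\ref{item:infinitesimal-3}). For $G$-equivariance, the cleanest route is to invoke the canonicity just established: for each $g \in G$, conjugation of the descent data by $g$ produces another valid descent of $W$, the comparison matrix with the original being computable from the explicit $G$-action \eqref{eq:fal-ext-action} on the $T_i$, so applying canonicity (equivalently, a direct computation using the commutation relations \eqref{eq:lem:operator-1} and \eqref{eq:lem:operator-2}) delivers the required compatibility of $\varphi_{\sen}|_W$ with the natural $G$-action on $\scr{E}^*_{\ca{O}_K}(1)$ and the adjoint action on $\mrm{End}_{\widehat{\overline{K}}}(W)$.
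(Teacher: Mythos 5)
Your construction is essentially the paper's: descend $W$ via Theorems \ref{thm:sen-brinon} and \ref{prop:gamma-analytic}, use \ref{lem:fal-ext-lie-alg-special} to get the Lie-algebra structure, and prove well-definedness via \ref{lem:sen-brinon-operator}. Your change-of-basis identity
\begin{align*}
\beta \circ \Bigl( \sum_{j=0}^{e} f(\widetilde{T}_j) \otimes \varphi_{\widetilde{\partial}_j}|_{\widetilde{V}} \Bigr) \circ \beta^{-1} = \sum_{i=0}^{d} f(T_i) \otimes \varphi_{\partial_i}|_V
\end{align*}
is exactly the paper's matrix computation (with the $\begin{smallmatrix}1 & B\\0 & A\end{smallmatrix}$ block form), and the functoriality argument via the full faithfulness of the descent equivalence plus \ref{lem:infinitesimal}.(\ref{item:infinitesimal-3}) is also the same.

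The one genuine weak spot is the $G$-equivariance. The ``canonicity'' route you sketch does not obviously work: applying $g$ to the $p$-power roots $(\widetilde{t}_{i,p^n})_n$ gives elements $(\df\log(g\widetilde{t}_{i,p^n}))_n\otimes\zeta^{-1} = \widetilde{\xi}_i(g)T_0 + \widetilde{T}_i$, which is \emph{not} the same as the element $g(\widetilde{T}_i) = \chi(g)^{-1}(\widetilde{\xi}_i(g) + \widetilde{T}_i)$ appearing in the natural $G$-action on $\scr{E}_{\ca{O}_K}(-1)$ (the discrepancy is the $\chi(g)^{-1}$ twist); moreover the tower $(K_{n,\undertilde{m}})$ is only unchanged in the range $n\geq m_j$, so ``conjugating the descent data'' is not a clean operation. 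What actually works, and what the paper does, is the direct computation you mention parenthetically. Note also that your cited inputs are slightly off: the relation $g\circ\varphi_{\widetilde{\partial}_i}\circ g^{-1}=\chi(g)\varphi_{\widetilde{\partial}_i}$ for $1\leq i\leq e$ does come from \eqref{eq:lem:operator-1}, but to compute $g\circ\varphi_{\widetilde{\partial}_0}\circ g^{-1}$ one needs \ref{lem:varphi-decomposition} together with the cocycle relation \eqref{eq:cont-cocycle-cond}, not \eqref{eq:lem:operator-2} (which is the internal Lie bracket, not a Galois conjugation formula). With those inputs the $G$-equivariance check is a straightforward four-line calculation, but it should be carried out rather than deferred to an ``equivalently''.
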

\begin{proof}
	Recall that the base change functor
	\begin{align}\label{eq:thm:sen-brinon-operator-1}
		\repnan{\Delta}(\Gamma,K_\infty)\longrightarrow \repnpr(G,\widehat{\overline{K}}),\ V\mapsto \widehat{\overline{K}}\otimes_{K_\infty}V,
	\end{align}
	is an equivalence by \ref{thm:sen-brinon} and \ref{prop:gamma-analytic}. Thus, there is an essentially unique object $V$ of $\repnan{\Delta}(\Gamma,K_\infty)$ such that $W=\widehat{\overline{K}}\otimes_{K_\infty}V$. We claim that $\varphi_{\sen}|_W$ defined by the formula \eqref{eq:thm:sen-brinon-operator} does not depend on the choice of $\widetilde{V}$ and $\widetilde{t}_i$ (so that $\varphi_{\sen}|_W$ is canonically defined by the essential surjectivity of \eqref{eq:thm:sen-brinon-operator-1}, and functorial in $W$ by the fully faithfulness of \eqref{eq:thm:sen-brinon-operator-1}). With the notation in \ref{lem:sen-brinon-operator}, for any $f\in\ho_{\widehat{\overline{K}}}(\scr{E}_{\ca{O}_K}(-1),\widehat{\overline{K}})$, we have
	\begin{align}
		&(f(\widetilde{T}_0),f(\widetilde{T}_1),\dots,f(\widetilde{T}_e))\otimes (\varphi_{\widetilde{\partial}_0},\varphi_{\widetilde{\partial}_1},\dots,\varphi_{\widetilde{\partial}_e})^{\mrm{T}}\\
		=&(f(T_0),f(T_1),\dots,f(T_d))\left(\begin{array}{cc}
			1 & B  \\
			0 & A
		\end{array}\right)\otimes (\varphi_{\widetilde{\partial}_0},\varphi_{\widetilde{\partial}_1},\dots,\varphi_{\widetilde{\partial}_e})^{\mrm{T}}\nonumber\\
		=&(f(T_0),f(T_1),\dots,f(T_d))\otimes (\varphi_{\partial_0},\varphi_{\partial_1},\dots,\varphi_{\partial_d})^{\mrm{T}}\nonumber
	\end{align}
	where the last equality follows from \ref{lem:sen-brinon-operator}, which proves the claim. 
	
	Notice that the map $\varphi_{\sen}|_W$ defined by \eqref{eq:thm:sen-brinon-operator} fits into the following commutative diagram
	\begin{align}
		\xymatrix{
			\scr{E}_{\ca{O}_K}^*(1)\ar[rr]^-{	\varphi_{\sen}|_W}\ar[dr]_-{\psi}&& \mrm{End}_{\widehat{\overline{K}}}(W)\\
			& \widehat{\overline{K}}\otimes_{\bb{Q}_p}\lie(\widetilde{\Gamma})\ar[ur]_-{\id_{\widehat{\overline{K}}}\otimes \varphi|_{\widetilde{V}}}&
		}
	\end{align}
	where $\psi$ is the surjection \eqref{eq:lem:fal-ext-lie-alg-special}, $\varphi|_{\widetilde{V}}:\lie(\widetilde{\Gamma})\to \mrm{End}_{K_\infty}(\widetilde{V})$ is the  infinitesimal Lie algebra algebra action \eqref{eq:gamma-lie-action}. This shows that $\varphi_{\sen}|_W$ is a morphism of  $\widehat{\overline{K}}$-linear Lie algebras.
	
	It remains to check the $G$-equivariance of $\varphi_{\sen}|_W$. For any $g\in G$, we have 
	\begin{align}
		g\circ \varphi_{\widetilde{\partial}_i}\circ g^{-1}&=\chi(g)\varphi_{\widetilde{\partial}_i}, \ \forall 1\leq i\leq e,\\
		g\circ \varphi_{\widetilde{\partial}_0}\circ g^{-1}&=\varphi_{\widetilde{\partial}_0}-\sum_{i=1}^e\widetilde{\xi}_i(g)\varphi_{\widetilde{\partial}_i},
	\end{align}
	where the first follows from \eqref{eq:lem:operator-1}, and the second follows from \ref{lem:varphi-decomposition} and the identity $\widetilde{\xi}(g\sigma_0g^{-1})=\widetilde{\xi}(g)(1-\chi(\sigma_0))$ for any $\sigma_0\in \widetilde{\Sigma}_{0,\undertilde{\infty}}$ by \eqref{eq:cont-cocycle-cond}. Therefore,
		\begin{align}
			\varphi_{\sen}|_W(g\cdot f)=&\sum_{i=0}^e g(f(g^{-1}\widetilde{T}_i))\otimes \varphi_{\widetilde{\partial}_i}\\
			=&(g\otimes 1)\left(f(\widetilde{T}_0)\otimes \varphi_{\widetilde{\partial}_0}+\sum_{i=1}^e \chi(g)(f(\widetilde{T}_i)+\widetilde{\xi}_i(g^{-1})f(\widetilde{T}_0))\otimes \varphi_{\widetilde{\partial}_i}\right) \quad\trm{(by \eqref{eq:para:special-functor})}\nonumber\\
			=&(g\otimes 1)\left(f(\widetilde{T}_0)\otimes (\varphi_{\widetilde{\partial}_0}-\sum_{i=1}^e\widetilde{\xi}_i(g)\varphi_{\widetilde{\partial}_i})+\chi(g)\sum_{i=1}^e f(\widetilde{T}_i)\otimes \varphi_{\widetilde{\partial}_i}\right) \quad\trm{(by \eqref{eq:cont-cocycle-cond})}\nonumber\\
			=&\sum_{i=0}^e g(f(\widetilde{T}_i))\otimes (g\circ\varphi_{\widetilde{\partial}_i}\circ g^{-1})=g\circ \varphi_{\sen}|_W(f)\circ g^{-1}\nonumber
		\end{align}
	which shows the $G$-equivariance.
\end{proof}

\begin{myrem}\label{rem:sen-brinon-operator-field}
	The same argument also shows that the $\widehat{\overline{K}}$-linear map
	\begin{align}
		W\longrightarrow W\otimes_{\widehat{\overline{K}}}\scr{E}_{\ca{O}_K}(-1)
	\end{align}
	sending $x$ to $\sum_{i=0}^e (\id_{\widehat{\overline{K}}}\otimes \varphi_{\widetilde{\partial}_i}|_{\widetilde{V}})(x)\otimes \widetilde{T}_i$, is $G$-equivariant and does not depend on the choice of $\widetilde{V}$ or $\widetilde{t}_i$. It naturally induces the map $\varphi_{\sen}|_W$ \eqref{eq:sen-brinon-operator}. We note that it is not a Higgs field.
\end{myrem}

\begin{mydefn}\label{defn:sen-brinon-operator}
	Let $W$ be an object of $\repnpr(G,\widehat{\overline{K}})$. We denote by $\Phi(W)$ the image of $\varphi_{\sen}|_W$, and by $\Phi^{\geo}(W)$ the image of $\ho_{\ca{O}_K}(\widehat{\Omega}^1_{\ca{O}_K}(-1),\widehat{\overline{K}})$ under $\varphi_{\sen}|_W$. We call an element of $\Phi(W)\subseteq \mrm{End}_{\widehat{\overline{K}}}(W)$ a \emph{Sen operator} of $W$. We call an element of $\Phi^{\geo}(W)\subseteq \mrm{End}_{\widehat{\overline{K}}}(W)$ a \emph{geometric Sen operator} of $W$. And we call the image of $1\in\widehat{\overline{K}}$ in $\Phi^{\ari}(W)=\Phi(W)/\Phi^{\geo}(W)$ the \emph{arithmetic Sen operator} of $W$.
\end{mydefn}

	Namely, we defined a canonical morphism of exact sequences of $\widehat{\overline{K}}$-linear Lie algebras
	\begin{align}
	\xymatrix{
		0\ar[r]&\ho_{\ca{O}_K}(\widehat{\Omega}^1_{\ca{O}_K}(-1),\widehat{\overline{K}})\ar[r]^-{\jmath^*}\ar@{->>}[d]&\scr{E}^*_{\ca{O}_K}(1)\ar[r]^-{\iota^*}\ar@{->>}[d]&\widehat{\overline{K}}\ar@{->>}[d]\ar[r]&0\\
		0\ar[r]&\Phi^{\geo}(W)\ar[r]&\Phi(W)\ar[r]&\Phi^{\ari}(W)\ar[r]&0
		}
	\end{align}
	which factors through \eqref{diam:lem:fal-ext-lie-alg-special} under the assumption of \eqref{eq:thm:sen-brinon-operator}.
	
\begin{myprop}\label{prop:sen-brinon-operator-func}
	Let $K'$ be a complete discrete valuation field extension of $K$ whose residue field admits a finite $p$-basis, $\overline{K'}$ an algebraic closure of $K'$ containing $\overline{K}$, $G'=\gal(\overline{K'}/K')$, $W$ an object of $\repnpr(G,\widehat{\overline{K}})$, $W'=\widehat{\overline{K'}}\otimes_{\widehat{\overline{K}}}W$ the associated object of $\repnpr(G',\widehat{\overline{K'}})$. Assume that $K'\otimes_{\ca{O}_K}\widehat{\Omega}^1_{\ca{O}_K}\to K'\otimes_{\ca{O}_{K'}}\widehat{\Omega}^1_{\ca{O}_{K'}}$ is injective. Then, there is a natural commutative diagram
	\begin{align}\label{diam:sen-brinon-operator-func}
		\xymatrix{
			\scr{E}^*_{\ca{O}_{K'}}(1)\ar[rr]^-{\varphi_{\sen}|_{W'}}\ar@{->>}[d]&&\mrm{End}_{\widehat{\overline{K'}}}(W')\\
			\widehat{\overline{K'}}\otimes_{\widehat{\overline{K}}}\scr{E}^*_{\ca{O}_K}(1)
			\ar[rr]^-{\id_{\widehat{\overline{K'}}}\otimes\varphi_{\sen}|_W}&&\widehat{\overline{K'}}\otimes_{\widehat{\overline{K}}}\mrm{End}_{\widehat{\overline{K}}}(W)
			\ar[u]_-{\wr}
		}
	\end{align}
	where $\varphi_{\sen}$ are the canonical Lie algebra actions defined in {\rm\ref{thm:sen-brinon-operator}}, the left vertical arrow is the surjection induced by taking dual of the natural injection $\widehat{\overline{K'}}\otimes_{\widehat{\overline{K}}}\scr{E}_{\ca{O}_K}(-1)\to \scr{E}_{\ca{O}_{K'}}(-1)$ (cf. {\rm\ref{rem:fal-ext}}), and the right vertical arrow is the canonical isomorphism. In particular, the inverse of the right vertical arrow induces a natural isomorphism 
	\begin{align}\label{eq:prop:sen-brinon-operator-func}
		 \Phi(W')\iso\widehat{\overline{K'}}\otimes_{\widehat{\overline{K}}}\Phi(W)
	\end{align}
	which is compatible with geometric and arithmetic Sen operators.
\end{myprop}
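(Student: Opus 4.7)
The plan is to reduce the commutativity of \eqref{diam:sen-brinon-operator-func} to an explicit comparison via formula \eqref{eq:thm:sen-brinon-operator}, after setting up compatible Kummer towers over $K$ and $K'$. First I would pick $t_1,\dots,t_d \in \ca{O}_K^\times$ whose images form a $p$-basis of the residue field of $K$, together with compatible systems of $p$-power roots in $\overline{K}$; \ref{thm:sen-brinon} and \ref{prop:gamma-analytic} then descend $W$ to an essentially unique object $V \in \repnan{\Delta}(\Gamma,K_\infty)$ with $\widehat{\overline{K}}\otimes_{K_\infty}V \cong W$. Using the injectivity hypothesis together with \ref{prop:rank}, I would complete $(t_i)_{i\le d}$ to a family $(t'_i)_{i\le d'} \subseteq \ca{O}_{K'}^\times$ whose images form a $p$-basis of the residue field of $K'$, with $t'_i=t_i$ and the same $p$-power roots in $\overline{K'}$ for $i\le d$; denote by $\Gamma'\supseteq\Delta'$ the resulting Kummer Galois groups of $K'$ and by $\partial'_0,\dots,\partial'_{d'}$ the standard basis of $\lie(\Gamma')$. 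The natural restriction $\Gamma'\to\Gamma$ (induced by the inclusion $K_{\infty,\underline{\infty}}\subseteq\overline{K'}$) sends $\partial'_i\mapsto\partial_i$ for $0\le i\le d$ and $\partial'_i\mapsto 0$ for $d<i\le d'$, since $t'_{d+1},\dots,t'_{d'}$ and their roots lie outside $K_{\infty,\underline{\infty}}$. I would then set $V' := K'_\infty\otimes_{K_\infty}V$, regarded as a $\Gamma'$-representation through $\Gamma'\to\Gamma$; this satisfies $\widehat{\overline{K'}}\otimes_{K'_\infty}V' = W'$, and lies in $\repnan{\Delta'}(\Gamma',K'_\infty)$, because any $h\in\Delta'$ fixes $K'_\infty$ and acts on $V'$ only through its image in $\Delta$, where the analyticity of $V$ applies.

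Next, by \ref{rem:derivative}, $\varphi_{\partial'_i}|_{V'} = 1\otimes\varphi_{\partial_i}|_V$ for $0\le i\le d$ and $\varphi_{\partial'_i}|_{V'}=0$ for $d<i\le d'$. Writing $T_i$ (resp.\ $T'_i$) for the basis element of $\scr{E}_{\ca{O}_K}(-1)$ (resp.\ $\scr{E}_{\ca{O}_{K'}}(-1)$) attached to $(t_i,\zeta)$ (resp.\ $(t'_i,\zeta)$), we have $T'_i=T_i$ for $i\le d$. Then formula \eqref{eq:thm:sen-brinon-operator} applied to $W'$ gives, for any $f'\in\scr{E}^*_{\ca{O}_{K'}}(1)$,
\begin{align*}
\varphi_{\sen}|_{W'}(f') \;=\; \sum_{i=0}^{d'} f'(T'_i)\otimes\varphi_{\partial'_i}|_{V'} \;=\; \sum_{i=0}^{d} f'(T_i)\otimes(1\otimes\varphi_{\partial_i}|_V).
\end{align*}
By \ref{rem:fal-ext}, the left vertical map of \eqref{diam:sen-brinon-operator-func} is dual to the base-change inclusion $\widehat{\overline{K'}}\otimes_{\widehat{\overline{K}}}\scr{E}_{\ca{O}_K}(-1)\hookrightarrow\scr{E}_{\ca{O}_{K'}}(-1)$, which identifies $(T_0,\dots,T_d)$ with the first $d+1$ basis vectors of $\scr{E}_{\ca{O}_{K'}}(-1)$; hence the image of $f'$ in $\widehat{\overline{K'}}\otimes_{\widehat{\overline{K}}}\scr{E}^*_{\ca{O}_K}(1)$ is determined by $T_i\mapsto f'(T_i)$ for $i\le d$. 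Applying $\id\otimes\varphi_{\sen}|_W$ and then the right vertical isomorphism produces the same element as above, proving commutativity.

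For the final assertion, commutativity together with the surjectivity of the left vertical map gives $\Phi(W') = \widehat{\overline{K'}}\otimes_{\widehat{\overline{K}}}\Phi(W)$, using the flatness of $\widehat{\overline{K'}}$ over the field $\widehat{\overline{K}}$; the compatibility with the geometric/arithmetic decomposition from \ref{defn:sen-brinon-operator} is automatic because, by the functoriality of \ref{rem:fal-ext}, the left vertical map respects the two-step filtration induced by \eqref{eq:fal-ext-dual}. The main obstacle of this plan is checking cleanly the identification of $V'$ and the behaviour of $\Gamma'\to\Gamma$ on the standard generators: both rest on choosing $(t'_i)_{i\le d'}$ as a genuine extension of $(t_i)_{i\le d}$, which is precisely what the injectivity hypothesis on differentials guarantees via \ref{prop:rank}.
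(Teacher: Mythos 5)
Your proof follows the same basic strategy as the paper's — descend to compatible Kummer towers over $K$ and $K'$ and compare the formula \eqref{eq:thm:sen-brinon-operator} — but there is a genuine gap in one step. You claim that the injectivity hypothesis together with \ref{prop:rank} allows you to complete $(t_i)_{i\le d}$ to a family $(t'_i)_{i\le d'}$ whose residues form a $p$-basis of the residue field $\kappa'$ of $K'$. This is false in general. The injectivity hypothesis only says that $\df t_1,\dots,\df t_d$ remain $K'$-linearly independent in $\widehat{\Omega}^1_{\ca{O}_{K'}}[1/p]$, and this does \emph{not} imply $p$-independence of the residues $\bar t_i$ in $\kappa'$. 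For example, take $K$ the $p$-adic completion of $\bb{Q}_p(T)$ (with $\kappa=\bb{F}_p(T)$, $p$-basis $\{T\}$) and $K'$ the $p$-adic completion of $K(T^{1/p})$ (with $\kappa'=\bb{F}_p(T^{1/p})$). Then $\df T=pT^{(p-1)/p}\df(T^{1/p})$ is nonzero in $\widehat{\Omega}^1_{\ca{O}_{K'}}[1/p]$, so the injectivity hypothesis holds, but $\bar T=(\overline{T^{1/p}})^p\in\kappa'^p$ cannot be extended to a $p$-basis of $\kappa'$.

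The good news is that your proof does not actually need the extension to a full $p$-basis. The formula \eqref{eq:thm:sen-brinon-operator} is stated, via the ``tilde'' framework of \ref{para:notation-K-tilde}--\ref{para:special-functor}, for an \emph{arbitrary} finite family $\widetilde{t}_1,\dots,\widetilde{t}_e$ satisfying the equivalent conditions of \ref{prop:rank}; the family need not lift a $p$-basis. So one should simply take, over $K'$, the rank-$d$ tower generated by $\zeta_{p^n}$ and the images of $t_{1,p^n},\dots,t_{d,p^n}$ (this is what the paper does). The injectivity hypothesis gives \ref{prop:rank}.(3) for this family, \ref{prop:gamma-analytic} then says $V'=K'_\infty\otimes_{K_\infty}V$ is a $\Delta'$-analytic $\Gamma'$-representation with $W'=\widehat{\overline{K'}}\otimes_{K'_\infty}V'$, and the remainder of your computation (using \ref{rem:derivative} to identify $\varphi_{\partial'_i}|_{V'}=1\otimes\varphi_{\partial_i}|_V$ and matching the $T_i$'s under the base-change inclusion of Faltings extensions) proceeds verbatim, now with $d'=d$ and no ``extra'' generators to worry about. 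With this modification your argument is correct and coincides with the one in the paper.
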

\begin{proof}
	Let $t'_{1,p^n},\dots,t'_{d,p^n}\in \overline{K'}$ be the images of $t_{1,p^n},\dots,t_{d,p^n}\in \overline{K}$. Then, there is a commutative diagram
	\begin{align}
		\xymatrix{
			K'\ar[r]_-{\Sigma'}\ar@/^1pc/[rr]|-{\Gamma'}\ar@/^2pc/[rrr]|-{G'}&K'_{\infty}\ar[r]_-{\Delta'}&K'_{\infty,\underline{\infty}}\ar[r]&\overline{K'}\\
			K\ar[u]\ar[r]^-{\Sigma}\ar@/_1pc/[rr]|-{\Gamma}\ar@/_2pc/[rrr]|-{G}&K_{\infty}\ar[r]^-{\Delta}\ar[u]&K_{\infty,\underline{\infty}}\ar[r]\ar[u]&\overline{K}\ar[u]
		}
	\end{align}
	Since $\df\log(t_1'),\dots,\df\log(t_d')$ are $K'$-linearly independent in $K'\otimes_{\ca{O}_{K'}}\widehat{\Omega}^1_{\ca{O}_{K'}}$ by assumption, $\Delta'$ is also of dimension $d$ by \ref{prop:rank}. In particular, we have a natural isomorphism $\lie(\Gamma')\iso\lie(\Gamma)$ which identifies their standard bases $\{\partial'_i\}_{1\leq i\leq d}$ and $\{\partial_i\}_{1\leq i\leq d}$ defined in \ref{para:gamma-basis}. Let $V$ be an object of $\repnan{\Delta}(\Gamma,K_\infty)$ such that $W=\widehat{\overline{K}}\otimes_{K_\infty}V$. Then, the object $V'=K'_{\infty}\otimes_{K_\infty}V$ of $\repnan{\Delta'}(\Gamma',K'_\infty)$ satisfies that $W'=\widehat{\overline{K'}}\otimes_{K'_\infty}V'$. By \ref{rem:derivative}, the natural identification $\mrm{End}_{\widehat{\overline{K'}}}(W')=\widehat{\overline{K'}}\otimes_{\widehat{\overline{K}}}\mrm{End}_{\widehat{\overline{K}}}(W)$ identifies $\id_{\widehat{\overline{K'}}}\otimes \varphi_{\partial'_i}|_{V'}$ with $\id_{\widehat{\overline{K'}}}\otimes \varphi_{\partial_i}|V$. This shows that the diagram \eqref{diam:sen-brinon-operator-func} is commutative which induces an isomorphism \eqref{eq:prop:sen-brinon-operator-func}.
\end{proof}

\begin{mylem}\label{lem:sen-brinon-operator-basic}
	Let $W$ be an object of $\repnpr(G,\widehat{\overline{K}})$. 
	\begin{enumerate}
		\renewcommand{\labelenumi}{{\rm(\theenumi)}}
		\item Geometric Sen operators of $W$ are nilpotent and commute with each other.\label{item:lem:sen-brinon-operator-1}
		\item If $\phi\in\Phi(W)$ is a lifting of the arithmetic Sen operator and $\theta\in\Phi^{\geo}(W)$ is a geometric Sen operator, then $[\phi,\theta]=\theta$.\label{item:lem:sen-brinon-operator-2}
		\item Moreover, we have $\phi(W^G)=0$ and $\theta(W^H)=0$.\label{item:lem:sen-brinon-operator-3}
	\end{enumerate}
\end{mylem}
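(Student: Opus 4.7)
The plan is to reduce all three claims to concrete computations with the explicit formula of \ref{thm:sen-brinon-operator}. By \ref{thm:sen-brinon} combined with \ref{prop:gamma-analytic}, I would first descend $W$ to an essentially unique object $V$ of $\repnan{\Delta}(\Gamma,K_\infty)$ so that $W=\widehat{\overline{K}}\otimes_{K_\infty}V$, and then apply \ref{thm:sen-brinon-operator} with $\widetilde{t}_i=t_i$ (hence $e=d$) to get
\begin{align*}
\varphi_{\sen}|_W(f)=\sum_{i=0}^{d}f(T_i)\,(\id_{\widehat{\overline{K}}}\otimes\varphi_{\partial_i}|_V),\qquad f\in\scr{E}^*_{\ca{O}_K}(1).
\end{align*}
Since $\ho_{\ca{O}_K}(\widehat{\Omega}^1_{\ca{O}_K}(-1),\widehat{\overline{K}})=\ker(\iota^*)$ corresponds to those $f$ with $f(T_0)=0$, every element of $\Phi^{\geo}(W)$ is a $\widehat{\overline{K}}$-linear combination of $\id\otimes\varphi_{\partial_1}|_V,\dots,\id\otimes\varphi_{\partial_d}|_V$ only.

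For (\ref{item:lem:sen-brinon-operator-1}), I would invoke the observation in \ref{para:fal-ext-dual} that $\ho_{\ca{O}_K}(\widehat{\Omega}^1_{\ca{O}_K}(-1),\widehat{\overline{K}})$ is a Lie ideal of $\scr{E}^*_{\ca{O}_K}(1)$ with \emph{trivial} bracket; since $\varphi_{\sen}|_W$ is a Lie algebra map, its image on this ideal is commutative. Nilpotency then reduces to the nilpotency of each $\varphi_{\partial_i}|_V$ ($1\leq i\leq d$), which holds by \ref{prop:operator-nilpotent} applied to $\partial_i\in\lie(\Delta)$ (the $\Delta$-action on $V$ being analytic), and is preserved by $\widehat{\overline{K}}$-linear combinations of pairwise commuting nilpotent operators. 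For (\ref{item:lem:sen-brinon-operator-2}), I would simply evaluate the bracket in $\scr{E}^*_{\ca{O}_K}(1)$: with $\phi=\varphi_{\sen}|_W(f)$ for some $f$ with $\iota^*(f)=1$ and $\theta=\varphi_{\sen}|_W(g)$ with $\iota^*(g)=0$, the formula $[f_1,f_2]=\iota^*(f_1)f_2-\iota^*(f_2)f_1$ gives $[f,g]=g$, and applying the Lie algebra homomorphism $\varphi_{\sen}|_W$ yields $[\phi,\theta]=\theta$.

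For (\ref{item:lem:sen-brinon-operator-3}), the key tool is \ref{rem:derivative}: for $g\in G$ with image $\bar g\in\Gamma$, the infinitesimal action satisfies $\varphi_g|_W=\id_{\widehat{\overline{K}}}\otimes\varphi_{\bar g}|_V$. Given $w\in W^G$, the identity $g(w)=w$ for all $g$ implies $\varphi_g|_W(w)=0$, so $(\id\otimes\varphi_{\bar g}|_V)(w)=0$ for every $\bar g$ in the image of $G\twoheadrightarrow \Gamma$, i.e.\ for every $\bar g\in \Gamma$. Promoting this via the Lie algebra functoriality of \ref{cor:operator} to $\lie(\Gamma)$ kills $w$ under the action of the entire standard basis, so $(\id\otimes\varphi_{\partial_i}|_V)(w)=0$ for $0\leq i\leq d$, and hence $\phi(w)=0$ from the explicit formula. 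The statement $\theta(W^H)=0$ is proved by the identical argument, with $H\twoheadrightarrow\Delta$ replacing $G\twoheadrightarrow\Gamma$, and only the indices $1\leq i\leq d$ appearing. I do not foresee a genuine obstacle; the main technical point throughout is the translation of the group-theoretic invariance condition into vanishing of the infinitesimal Lie algebra action, which is provided uniformly by \ref{rem:derivative} and \ref{cor:operator}.
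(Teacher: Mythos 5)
Your arguments for (1) and (2) are correct and match the paper's approach (the paper reads off nilpotency and commutativity from \ref{prop:special-functor}.(\ref{item:prop:special-functor-2}) and reads off (2) from the bracket $[f_1,f_2]=\iota^*(f_1)f_2-\iota^*(f_2)f_1$ of \ref{para:fal-ext-dual}, just as you do).

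Part (3) has a genuine gap. The key step in your argument is the identity $\varphi_g|_W=\id_{\widehat{\overline{K}}}\otimes\varphi_{\bar g}|_V$, which you attribute to \ref{rem:derivative}. But \ref{rem:derivative} requires \emph{both} $(A,G)$ and $(A',G')$ to satisfy the hypotheses of \ref{prop:derivative}, and $(\widehat{\overline{K}},G)$ does not: $\widehat{\overline{K}}$ is not of the form $\widetilde{A}_\infty[1/p]$ for a tower with the required invariance property (for a Kummer tower $(\ca{O}_{K_{n,\underline{m}}})$ one gets $\widetilde{K}_{\infty,\underline{\infty}}=K_{\infty,\underline{\infty}}$, the algebraic — not completed — extension, which is a proper subring of $\widehat{\overline{K}}$). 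Consequently the global infinitesimal action $\varphi_g|_W$ on $W$ is not a well-defined operator, and even though the pointwise limit $\lim_{a\to 0}a^{-1}(g^a w-w)$ vanishes trivially for $w\in W^G$, this does not allow you to conclude $(\id\otimes\varphi_{\bar g}|_V)(w)=0$: writing $w=\sum\lambda_j\otimes v_j$, the differentiation mixes the (generally non-existent) infinitesimal action of $G$ on the coefficients $\lambda_j\in\widehat{\overline{K}}$ with the action on the $v_j\in V$, and the two pieces cannot be separated.

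What is missing is a decompletion statement: one needs to know that $W^G$ lies inside the image of $V'^\Gamma$ in $W$, where $V'=K_{\infty,\underline{\infty}}\otimes_{K_\infty}V$ is an object of $\repnpr(\Gamma,K_{\infty,\underline{\infty}})$ over which the infinitesimal action \emph{is} defined (and similarly $W^H$ inside the image of the $\Delta$-invariants of a suitable intermediate base change). This is exactly the role played by \ref{lem:G-H-fixed} (resp.\ \ref{lem:G-Kcycl-fixed}) in the paper's proof of \ref{prop:sen-fixed}, to which the paper defers for part (3). Alternatively, the fully faithfulness of the base change equivalence in \ref{thm:sen-brinon} gives $W^G=\ho_{\repnpr(G,\widehat{\overline{K}})}(\widehat{\overline{K}},W)\cong\ho_{\repnpr(\Gamma,K_{\infty,\underline{\infty}})}(K_{\infty,\underline{\infty}},V')=V'^\Gamma$ compatibly with the inclusions into $W$, after which your Lie-algebra-functoriality argument applies to $V'$ and gives the result. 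Either way, some such identification is needed before invoking the infinitesimal action; without it the argument does not go through.
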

\begin{proof}
	(\ref{item:lem:sen-brinon-operator-1}) follows from \ref{prop:special-functor}.(\ref{item:prop:special-functor-2}) and the definition of geometric Sen operator \eqref{eq:thm:sen-brinon-operator}. (\ref{item:lem:sen-brinon-operator-2}) follows from the Lie algebra structure of $\scr{E}_{\ca{O}_K}^*(1)$ defined in \ref{para:fal-ext-dual}. (\ref{item:lem:sen-brinon-operator-3}) follows from the definition of Sen operators, and we will give a detailed proof in \ref{prop:sen-fixed}. 
\end{proof}

\begin{mylem}\label{lem:sen-brinon-filtration}
	Any object $W$ of $\repnpr(G,\widehat{\overline{K}})$ admits a canonical and functorial finite ascending filtration $\{\mrm{F}_n\}_{n\in\bb{N}}$ stable under the Lie algebra action $\varphi_{\sen}|_W$ \eqref{eq:sen-brinon-operator} such that any geometric Sen operator sends $\mrm{F}_{n+1}W$ into $\mrm{F}_nW$. In particular, the arithmetic Sen operator of $W$ acts naturally on the graded object $\mrm{Gr}^{\mrm{F}}W=\oplus_{n\in\bb{N}}\mrm{Gr}^{\mrm{F}}_nW$, where $\mrm{Gr}^{\mrm{F}}_nW=\mrm{F}_nW/\mrm{F}_{n-1}W$.
\end{mylem}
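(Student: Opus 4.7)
The plan is to define the filtration by the annihilation order under geometric Sen operators, and verify the stability by exploiting the Lie bracket structure of $\scr{E}^*_{\ca{O}_K}(1)$ together with Lemma~\ref{lem:sen-brinon-operator-basic}. Concretely, let $I = \Phi^{\geo}(W) \subseteq \mrm{End}_{\widehat{\overline{K}}}(W)$ denote the module of geometric Sen operators, and set
\begin{align}
\mrm{F}_n W = \bigl\{x \in W \,\bigm|\, \theta_0 \theta_1 \cdots \theta_n(x) = 0 \text{ for all } \theta_0, \ldots, \theta_n \in I\bigr\}
\end{align}
for each $n \in \bb{N}$. This assignment is manifestly a $\widehat{\overline{K}}$-submodule of $W$, and the sequence $(\mrm{F}_n W)_{n\in\bb{N}}$ is ascending by construction. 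Canonicity and functoriality are immediate from the fact that $W \mapsto \varphi_{\sen}|_W$ is a canonical functor (Theorem~\ref{thm:sen-brinon-operator}): any morphism $\alpha : W \to W'$ intertwines $\varphi_{\sen}|_W$ and $\varphi_{\sen}|_{W'}$, so it sends $I \subseteq \mrm{End}(W)$ into $\Phi^{\geo}(W') \subseteq \mrm{End}(W')$ under the correspondence $\theta \mapsto \theta'$ with $\alpha \circ \theta = \theta' \circ \alpha$, and hence $\alpha(\mrm{F}_n W) \subseteq \mrm{F}_n W'$.

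For finiteness, recall from Lemma~\ref{lem:sen-brinon-operator-basic}.(\ref{item:lem:sen-brinon-operator-1}) that $I$ is a finite-dimensional, pairwise commuting, nilpotent family of endomorphisms. Hence the unital associative $\widehat{\overline{K}}$-subalgebra of $\mrm{End}_{\widehat{\overline{K}}}(W)$ generated by $I$ is finite-dimensional and consists of nilpotent operators (by simultaneous triangularization, or directly by Engel's theorem in the commutative case). In particular, there exists $N \in \bb{N}$ such that every length-$N$ product of elements of $I$ vanishes identically on $W$, so $\mrm{F}_{N-1} W = W$.

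The key verification is stability of the filtration under the full Lie algebra action $\varphi_{\sen}|_W$. That a geometric Sen operator $\theta \in I$ sends $\mrm{F}_{n+1} W$ into $\mrm{F}_n W$ is immediate, since for $x \in \mrm{F}_{n+1} W$ and $\theta_0, \ldots, \theta_n \in I$ one has $\theta_0 \cdots \theta_n \theta(x) = 0$. For an arbitrary $f \in \scr{E}^*_{\ca{O}_K}(1)$, set $\phi = \varphi_{\sen}|_W(f)$. The Lie bracket on $\scr{E}^*_{\ca{O}_K}(1)$ defined in \ref{para:fal-ext-dual} gives $[f, \theta] = \iota^*(f)\,\theta$ for any $\theta \in \ker\iota^* = \ho_{\ca{O}_K}(\widehat{\Omega}^1_{\ca{O}_K}(-1),\widehat{\overline{K}})$, and since $\varphi_{\sen}|_W$ is a Lie algebra homomorphism, the same relation $[\phi, \theta] = \iota^*(f)\,\theta$ holds in $\mrm{End}_{\widehat{\overline{K}}}(W)$. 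Iterating (using that the elements of $I$ pairwise commute) yields
\begin{align}
\phi \circ \theta_0 \theta_1 \cdots \theta_n \;=\; \theta_0 \theta_1 \cdots \theta_n \circ \phi \;+\; (n+1)\,\iota^*(f)\,\theta_0 \theta_1 \cdots \theta_n,
\end{align}
so for $x \in \mrm{F}_n W$ both terms vanish, hence $\phi(x) \in \mrm{F}_n W$. This is the main (and essentially only) computational point of the proof.

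Finally, any two lifts of the arithmetic Sen operator in $\Phi(W)$ differ by an element of $\Phi^{\geo}(W)$, which by the previous step strictly decreases the filtration degree; consequently all such lifts induce the same $\widehat{\overline{K}}$-linear endomorphism on each $\mrm{Gr}^{\mrm{F}}_n W = \mrm{F}_n W / \mrm{F}_{n-1} W$, giving a well-defined arithmetic Sen operator on $\mrm{Gr}^{\mrm{F}} W$. The main obstacle is really just the commutator computation above; once the Lie bracket relation $[\phi, \theta] = \iota^*(f)\theta$ is in hand, everything else is formal.
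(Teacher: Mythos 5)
Your approach is essentially the paper's: the paper defines $\mrm{F}_1 W = \bigcap_{\theta\in\Phi^{\geo}(W)}\ke(\theta)$ and proceeds inductively via $\mrm{Gr}^{\mrm{F}}_n W = \mrm{F}_1(W/\mrm{F}_{n-1}W)$, which unwinds to exactly your explicit formula (up to a shift: your $\mrm{F}_n$ is the paper's $\mrm{F}_{n+1}$). The commutator identity $[\varphi_{\sen}|_W(f),\theta]=\iota^*(f)\,\theta$ for $\theta\in\Phi^{\geo}(W)$, which drives the stability check, is precisely what the paper invokes through \ref{lem:sen-brinon-operator-basic}.(\ref{item:lem:sen-brinon-operator-2}) when it says ``one checks easily.'' Two small cosmetic points: you should set $\mrm{F}_0 W = 0$ and start your explicit formula at $\mrm{F}_{n+1}$ so that $\mrm{Gr}^{\mrm{F}}_0 W = \mrm{F}_0W/\mrm{F}_{-1}W$ is meaningful (the paper makes this convention explicit); and the subalgebra of $\mrm{End}_{\widehat{\overline{K}}}(W)$ you want is the non-unital one generated by $I$ — the unital one contains $\id$ and so does not consist of nilpotents, though your conclusion (some length-$N$ product of elements of $I$ vanishes) is of course correct.
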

\begin{proof}
	We set $\mrm{F}_0W=0$, and
	\begin{align}
		\mrm{F}_1W=\bigcap_{\theta\in\Phi^{\geo}(W)}\ke(\theta).
	\end{align}
	By \ref{lem:sen-brinon-operator-basic}, one checks easily that $\mrm{F}_1W$ is stable by any Sen operator of $W$. If $W\neq 0$, then $\mrm{F}_1W$ is non-zero, as $\theta$ is nilpotent. It is also functorial in $W$ by \ref{thm:sen-brinon-operator}. Then, for any $n\in\bb{N}_{>0}$, $\mrm{F}_nW$ is defined inductively by $\mrm{Gr}^{\mrm{F}}_nW=\mrm{F}_1(W/\mrm{F}_{n-1}W)=\bigcap_{\theta\in\Phi^{\geo}(W)}\ke(\theta|_{W/\mrm{F}_{n-1}W})$.
\end{proof}

\begin{myprop}\label{prop:sen-char-poly}
	Let $W$ be an object of $\repnpr(G,\widehat{\overline{K}})$. Any lifting of the arithmetic Sen operator $\phi\in\Phi(W)$ of $W$ has the same characteristic polynomial, whose coefficients are in $K$.
\end{myprop}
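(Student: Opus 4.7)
The plan is to deduce both assertions from the commutation relation $[\phi, \theta] = \theta$ for $\theta \in \Phi^{\geo}(W)$ and the nilpotence of geometric Sen operators, both established in \ref{lem:sen-brinon-operator-basic}. Since two liftings of the arithmetic Sen operator differ by some $\theta \in \Phi^{\geo}(W)$, the first step is to prove that $\phi$ and $\phi + \theta$ share the same characteristic polynomial for every $\theta \in \Phi^{\geo}(W)$. For this I would argue by explicit conjugation: since $\theta$ is nilpotent on $W$, the $\widehat{\overline{K}}$-linear automorphism $u = \exp(\theta) \in \mrm{Aut}_{\widehat{\overline{K}}}(W)$ is well-defined, and the standard identity $u \phi u^{-1} = \exp(\mrm{ad}(\theta))(\phi)$, combined with $[\theta, \phi] = -\theta$ and $[\theta, -\theta] = 0$, collapses the exponential series to $u \phi u^{-1} = \phi - \theta$. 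Hence $\phi$ is conjugate to $\phi - \theta$, and since $\Phi^{\geo}(W)$ is stable under negation, the first assertion follows.

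The second step is to upgrade this to Galois invariance of the characteristic polynomial. Choose $f \in \scr{E}^*_{\ca{O}_K}(1)$ with $\varphi_{\sen}|_W(f) = \phi$; because $\phi$ lifts the arithmetic Sen operator, $\iota^*(f) = 1$. For each $g \in G$, the $G$-equivariance of \eqref{eq:fal-ext-dual} and the fact that $G$ fixes $1 \in \widehat{\overline{K}}$ give $g \cdot f - f \in \ker(\iota^*) = \ho_{\ca{O}_K}(\widehat{\Omega}^1_{\ca{O}_K}(-1), \widehat{\overline{K}})$, whose image under $\varphi_{\sen}|_W$ lies in $\Phi^{\geo}(W)$. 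The $G$-equivariance of $\varphi_{\sen}|_W$ for the adjoint action then yields
\[
g \circ \phi \circ g^{-1} = \varphi_{\sen}|_W(g \cdot f) = \phi + \theta_g, \qquad \theta_g \in \Phi^{\geo}(W),
\]
so by the first step $g \circ \phi \circ g^{-1}$ has the same characteristic polynomial as $\phi$.

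To conclude I would appeal to a standard semi-linear matrix computation: in any $\widehat{\overline{K}}$-basis of $W$ in which $\phi$ has matrix $A$ and the semi-linear action of $g$ is encoded by $B \in \mrm{GL}_n(\widehat{\overline{K}})$, the matrix of $g \circ \phi \circ g^{-1}$ is $B \cdot g(A) \cdot B^{-1}$, where $g(A)$ means entrywise application of $g$. Consequently $\det(\lambda - g \circ \phi \circ g^{-1}) = g(\det(\lambda - \phi))$ coefficient by coefficient, so the equality of characteristic polynomials forces each coefficient of $\det(\lambda - \phi) \in \widehat{\overline{K}}[\lambda]$ to be $G$-invariant. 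The Ax--Sen--Tate theorem then identifies $\widehat{\overline{K}}^G = K$, giving the second assertion. The only load-bearing calculation is the exponential conjugation in the first paragraph; everything else is bookkeeping with the Faltings extension, its dual, and the $G$-equivariance of $\varphi_{\sen}|_W$.
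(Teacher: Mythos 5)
Your proof is correct, and it takes a genuinely different route from the paper's. The paper proves the invariance of the characteristic polynomial via the canonical filtration of \ref{lem:sen-brinon-filtration}: one chooses a basis adapted to the filtration, so that $\phi$ is (block) upper triangular and any geometric Sen operator $\theta$ is strictly (block) upper triangular, whence $\phi$ and $\phi+\theta$ have identical characteristic polynomials; the rationality of the coefficients is then outsourced to Brinon's Proposition~5.(a). You instead observe that the bracket relation $[\phi,\theta]=\theta$ from \ref{lem:sen-brinon-operator-basic} together with the nilpotence of $\theta$ gives an \emph{explicit conjugation} $\exp(\theta)\,\phi\,\exp(-\theta)=\phi-\theta$, which is arguably cleaner than the triangularization and sidesteps the slight abuse in the paper's phrase ``upper triangular.'' For the second assertion you replace the citation to Brinon by a self-contained argument: the $G$-equivariance of $\varphi_{\sen}|_W$ shows $g\circ\phi\circ g^{-1}$ is another lifting of the arithmetic Sen operator, hence has the same characteristic polynomial; combining this with the semi-linear identity $\det(\lambda-g\circ\phi\circ g^{-1})=g(\det(\lambda-\phi))$ and Ax--Sen--Tate pins the coefficients into $K$. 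This buys you a proof with no external dependency and makes visible exactly where the $G$-equivariance of the Lie algebra action is used.

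One small imprecision worth flagging: you write ``because $\phi$ lifts the arithmetic Sen operator, $\iota^*(f)=1$.'' This is not automatic for an arbitrary $f$ with $\varphi_{\sen}|_W(f)=\phi$, since the induced map $\widehat{\overline{K}}\to\Phi^{\ari}(W)$ may have a nontrivial kernel. However the fix is routine: take any $f_0$ with $\iota^*(f_0)=1$; then $\phi-\varphi_{\sen}|_W(f_0)\in\Phi^{\geo}(W)$ is the image of some $\jmath^*(\eta)$, and $f=f_0+\jmath^*(\eta)$ satisfies both $\varphi_{\sen}|_W(f)=\phi$ and $\iota^*(f)=1$. With this adjustment the argument is airtight.
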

\begin{proof}
	By \ref{lem:sen-brinon-filtration}, there exists a $\widehat{\overline{K}}$-basis of $W$ with respect to which the matrix of $\phi$ is upper triangular and the matrix of any geometric operator $\theta$ of $W$ is strictly upper triangular. We see that $\phi$ and $\phi+\theta$ have the same characteristic polynomial. The coefficients of this polynomial lie in $K$ by \cite[Proposition 5.(a)]{brinon2003sen}.
\end{proof}

\begin{mylem}\label{lem:smallest}
	Let $F$ be a field, $V,W$ two $F$-linear spaces, $\Phi\subseteq W\otimes_F V$ a subset, $\ak{F}_{\Phi}=\{f\in \ho_{F}(V,F)\ |\ f_W(\Phi)=0\}$, where $f_W=\id_W\otimes f\in \ho_{F}(W\otimes_FV,W)$. Then, $V_{\Phi}=\bigcap_{f\in \ak{F}_{\Phi}}\ke(f)$ is the smallest $F$-linear subspace $V'$ of $V$ such that $\Phi\subseteq W\otimes_F V'$. Moreover, an $F$-linear subspace $V'$ of $V$ is equal to $V_\Phi$ if and only if $\ak{F}_{\Phi}$ is equal to $\ak{F}_{V'}=\{f\in \ho_{F}(V,F)\ |\ f(V')=0\}$.
\end{mylem}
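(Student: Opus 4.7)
The plan is to reduce everything to one standard linear-algebra fact: every $x \in W \otimes_F V$ admits a unique smallest $F$-linear subspace $V_x \subseteq V$ such that $x \in W \otimes_F V_x$, and this $V_x$ is finite-dimensional. The rest of the lemma then unfolds by taking $V_0 = \sum_{x \in \Phi} V_x$ and showing $V_0 = V_\Phi$.

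First I would establish the claim about $V_x$. Writing $x = \sum_{i=1}^r w_i \otimes v_i$ with $r$ minimal, both families $\{w_i\}_{i=1}^r$ and $\{v_i\}_{i=1}^r$ are $F$-linearly independent (otherwise a dependence relation would allow us to reduce $r$). Set $V_x = \mrm{span}_F(v_1,\dots,v_r)$. If $x \in W \otimes_F V_1$ for some other subspace $V_1$, extend a basis of $V_x \cap V_1$ compatibly to bases of $V_x$, $V_1$, and finally $V$; uniqueness of the expansion of $x$ in this basis of $V$ forces the coefficients outside $V_x \cap V_1$ to vanish, so $V_x \subseteq V_1$.

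The key observation is now that for any $f \in \ho_F(V,F)$,
\begin{align*}
f_W(x) = \sum_{i=1}^r f(v_i)\, w_i,
\end{align*}
and since the $w_i$ are linearly independent this vanishes iff $f(v_i)=0$ for all $i$, iff $f|_{V_x}=0$. Setting $V_0 = \sum_{x \in \Phi} V_x$, this gives $\ak{F}_\Phi = \{f \in \ho_F(V,F) : f|_{V_0}=0\} = \ak{F}_{V_0}$, and $V_0$ is manifestly the smallest subspace of $V$ containing (the images of) all elements of $\Phi$ via $W \otimes_F (\,\cdot\,)$.

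To conclude $V_\Phi = V_0$ I invoke the standard identity $\bigcap_{f \in \ak{F}_U} \ke(f) = U$ for any $F$-linear subspace $U \subseteq V$: indeed, any $v \notin U$ admits a linear form on $U \oplus Fv$ vanishing on $U$ with $f(v)=1$, extendable to $V$. Applying this to $U = V_0$ together with $\ak{F}_\Phi = \ak{F}_{V_0}$ gives $V_\Phi = V_0$, proving the first assertion. The ``moreover'' part is then immediate: if $V' = V_\Phi$, then $\ak{F}_{V'} = \ak{F}_{V_0} = \ak{F}_\Phi$; conversely, $\ak{F}_\Phi = \ak{F}_{V'}$ yields $V_\Phi = \bigcap_{f \in \ak{F}_{V'}} \ke(f) = V'$ by the same identity. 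The only mildly technical step is the existence of a minimal representation with linearly independent $w_i$, which is entirely routine.
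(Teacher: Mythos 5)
Your argument is correct, but it takes a genuinely different route from the paper. You prove a sharper, pointwise fact — that each element $x$ of $W\otimes_F V$ has a unique smallest supporting subspace $V_x\subseteq V$, read off from a minimal tensor decomposition — and then aggregate: $V_0=\sum_{x\in\Phi}V_x$ is manifestly the smallest subspace with $\Phi\subseteq W\otimes_F V_0$, and the identity $\ak{F}_\Phi=\ak{F}_{V_0}$ follows at once from the linear independence of the $w_i$. The paper instead avoids any finite-rank decomposition: it shows $\Phi\subseteq W\otimes_F V_\Phi$ directly by tensoring the left-exact sequence $0\to V_\Phi\to V\to\prod_{f\in\ak{F}_\Phi}F$ with the flat module $W$ and using the injection $W\otimes_F\prod F\hookrightarrow\prod W$, then closes the loop via the same double-dual identity $\bigcap_{f\in\ak{F}_{V'}}\ke(f)=V'$. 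Your approach buys concreteness and a usable intermediate object ($V_x$); the paper's is shorter and foregrounds flatness, which is the mechanism being exploited downstream. One small elision in your write-up: after extracting $x\in W\otimes_F(V_x\cap V_1)$ from the basis computation, concluding $V_x\subseteq V_1$ still needs a word — either a dimension count against minimality of $r$, or, more directly, choose $\phi_i\in\ho_F(W,F)$ dual to the independent $w_i$ and observe $v_i=(\phi_i\otimes\id_V)(x)\in V_1$. With that noted, the argument is complete.
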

\begin{proof}
	Firstly, we claim that $\Phi\subseteq W\otimes_F V_{\Phi}$. Consider the exact sequence
	\begin{align}
		\xymatrix{
			0\ar[r]& V_\Phi\ar[r]& V\ar[rr]^-{(f)_{f\in\ak{F}_{\Phi}}}&& \prod_{f\in\ak{F}_{\Phi}}F.
		}
	\end{align}
	Since $W$ is flat over $F$, we have an exact sequence
	\begin{align}\label{eq:lem:smallest}
		\xymatrix{
			0\ar[r]& W\otimes_F V_\Phi\ar[r]& W\otimes_F V \ar[rr]^-{(\id_W\otimes f)_{f\in\ak{F}_{\Phi}}}&& W\otimes_F (\prod_{f\in\ak{F}_{\Phi}}F) .
		}
	\end{align}
	Since $W\otimes_F (\prod_{f\in\ak{F}_{\Phi}}F)\subseteq \prod_{f\in\ak{F}_{\Phi}}W$, the subset $\Phi\subseteq W\otimes_F V$ is mapped to zero in \eqref{eq:lem:smallest}, which proves the claim.
	
	Secondly, for any $F$-linear subspace $V'$ of $V$, it is clear that $V'\subseteq \bigcap_{f\in\ak{F}_{V'}}\ke(f)$. This is actually an equality, since for any element $v\in V\setminus V'$ there exists $f\in \ak{F}_{V'}$ such that $f(v)\neq 0$. 
	
	Assume that $\Phi\subseteq W\otimes_F V'$. Then, $\ak{F}_{V'}\subseteq \ak{F}_{\Phi}$ so that $V_{\Phi}=\bigcap_{f\in\ak{F}_\Phi}\ke(f)\subseteq \bigcap_{f\in\ak{F}_{V'}}\ke(f)=V'$. It shows that $V_\Phi$ is the smallest $F$-linear subspace $V'$ of $V$ such that $\Phi\subseteq W\otimes_F V'$. In particular, we have $\ak{F}_{V_\Phi}\subseteq \ak{F}_\Phi$. On the other hand, the definition of $V_\Phi$ implies that $\ak{F}_\Phi\subseteq \ak{F}_{V_\Phi}$. Thus, $\ak{F}_{V_\Phi}= \ak{F}_\Phi$ and the final assertion follows.
\end{proof}

\begin{mythm}[{\cite[Theorem 11]{sen1980sen}, \cite[3.1]{ohkubo2014sen}}]\label{thm:sen-lie}
	Let $I$ be the inertial subgroup of $G$, $(V,\rho)$ an object of $\repnpr(G,\bb{Q}_p)$, $W=\widehat{\overline{K}}\otimes_{\bb{Q}_p}V$ the associated object of $\repnpr(G,\widehat{\overline{K}})$. Then, $\lie(\rho(I))$ is the smallest $\bb{Q}_p$-subspace $S$ of $\mrm{End}_{\bb{Q}_p}(V)$ such that the space of Sen operators $\Phi(W)$ is contained in $\widehat{\overline{K}}\otimes_{\bb{Q}_p} S$.
\end{mythm}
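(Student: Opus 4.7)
The plan is to reduce this statement to the classical theorem of Sen--Ohkubo \cite[3.1]{ohkubo2014sen}, which gives the analogous conclusion for Brinon's $1+d$ operators on $V$. Combining Theorem~\ref{thm:sen-brinon} with Proposition~\ref{prop:gamma-analytic}, the representation $W$ admits a descent to some object $\widetilde V \in \repnan{\Delta}(\Gamma,K_\infty)$ with $W = \widehat{\overline K}\otimes_{K_\infty}\widetilde V$. Applying Theorem~\ref{thm:sen-brinon-operator} with $\widetilde t_i = t_i$ (so that $\widetilde T_i = T_i$), formula~\eqref{eq:thm:sen-brinon-operator} exhibits $\Phi(W)$ as the $\widehat{\overline K}$-submodule of $\mrm{End}_{\widehat{\overline K}}(W)$ generated by the $1+d$ endomorphisms $\varphi_{\partial_0}|_{\widetilde V},\dots,\varphi_{\partial_d}|_{\widetilde V}$, since as $f \in \scr{E}^*_{\ca{O}_K}(1)$ varies, the tuple $(f(T_0),\dots,f(T_d))$ runs freely over $\widehat{\overline K}^{1+d}$.

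Next, I would identify these operators with infinitesimal actions on the $\bb{Q}_p$-representation $V$. Using the two tensor product presentations $W = \widehat{\overline K}\otimes_{\bb{Q}_p}V$ and $W = \widehat{\overline K}\otimes_{K_\infty}\widetilde V$ together with Remark~\ref{rem:derivative}, one obtains for each $g \in G$ with image $\bar g \in \Gamma$ the two expressions
\[
\varphi_g|_W \;=\; \id_{\widehat{\overline K}}\otimes_{\bb{Q}_p}\varphi_g|_V \;=\; \id_{\widehat{\overline K}}\otimes_{K_\infty}\varphi_{\bar g}|_{\widetilde V},
\]
so $\varphi_{\bar g}|_{\widetilde V}$ and $\varphi_g|_V$ coincide in $\mrm{End}_{\widehat{\overline K}}(W)$ under the natural identification with $\widehat{\overline K}\otimes_{\bb{Q}_p}\mrm{End}_{\bb{Q}_p}(V)$. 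For each $i$, I would choose $\bar g_i \in \Gamma$ sufficiently close to $1$ with $\log_\Gamma(\bar g_i) = c_i\partial_i$ for some $c_i \in \bb{Q}_p^\times$, and lift $\bar g_i$ to some $g_i \in I$. Such a lift exists because the restriction map $I\twoheadrightarrow\Gamma$ is surjective, which in turn follows from $K_{\infty,\underline{\infty}}\cap K_{\mrm{ur}} = K$: the cyclotomic part $K_\infty/K$ is totally ramified and the Kummer extensions $K(t_{i,p^n})/K$ have purely inseparable residue extension, so the intersection with the maximal unramified extension $K_{\mrm{ur}}$ (whose residue extensions are separable) is trivial. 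Under the identifications above, $c_i\cdot\varphi_{\partial_i}|_{\widetilde V} = \varphi_{\bar g_i}|_{\widetilde V} = \varphi_{g_i}|_V = \log_{\rho(G)}(\rho(g_i)) \in \lie(\rho(I))$, which yields $\Phi(W) \subseteq \widehat{\overline K}\otimes_{\bb{Q}_p}\lie(\rho(I))$.

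For the minimality, Ohkubo's theorem \cite[3.1]{ohkubo2014sen} asserts that $\lie(\rho(I))$ is the smallest $\bb{Q}_p$-subspace of $\mrm{End}_{\bb{Q}_p}(V)$ whose $\widehat{\overline K}$-scalar extension contains the $1+d$ Brinon operators $\varphi_{g_i}|_V$. Combining with the preceding identification of $\Phi(W)$ as the $\widehat{\overline K}$-span of these very operators, and applying Lemma~\ref{lem:smallest} with $F = \bb{Q}_p$, ambient space $\mrm{End}_{\bb{Q}_p}(V)$, and coefficient ring $\widehat{\overline K}$, the desired conclusion follows. The main obstacle is the identification step — verifying that $\varphi_{\partial_i}|_{\widetilde V}$, a priori an element of $\mrm{End}_{K_\infty}(\widetilde V)$, descends under the double identification of $\mrm{End}_{\widehat{\overline K}}(W)$ to an explicit $\bb{Q}_p$-linear combination of infinitesimal actions on $V$ coming from elements of $I$. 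This reconciliation between the Sen-Brinon descent and the underlying $\bb{Q}_p$-representation is what makes the reduction to Ohkubo's classical theorem effective.
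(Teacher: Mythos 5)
The paper states this theorem as a cited result, attributing it to Sen (for $d=0$) and Ohkubo (for general $d$), and does not supply its own proof; see also Remark~\ref{rem:sen-lie}, which notes that Sen--Ohkubo's argument rests on Sen's ramification theorem. What genuinely needs checking, then, is that the paper's canonical formulation of $\Phi(W)$ agrees with the classical one in terms of Brinon's operators, and this is exactly what your proposal supplies. The reduction is correct: Theorem~\ref{thm:sen-brinon-operator} (taking $\widetilde t_i=t_i$) identifies $\Phi(W)$ with the $\widehat{\overline{K}}$-span of the $1+d$ operators $\varphi_{\partial_i}|_{\widetilde V}$; the two tensor presentations of $W$ together with Remark~\ref{rem:derivative} identify each $\varphi_{\partial_i}|_{\widetilde V}$ (up to a $\bb{Q}_p^\times$-scalar) with $\log_{\rho(G)}(\rho(g_i))$ for a lift $g_i\in I$ of a suitable $\bar g_i\in\Gamma$; and the minimality is exactly Ohkubo's theorem combined with Lemma~\ref{lem:smallest}.

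Two small refinements. In the surjectivity argument, the separable/purely-inseparable dichotomy for residue extensions shows that any finite intermediate field $L$ of $K_{\infty,\underline{\infty}}\cap K_{\mrm{ur}}$ has trivial residue extension over $K$; to conclude $L=K$ you should also state that $L\subseteq K_{\mrm{ur}}$ forces the ramification index $e(L/K)=1$, which together with trivial residue extension gives $[L:K]=1$. (The image of $I$ in $\Gamma$ is $\gal(K_{\infty,\underline{\infty}}/K_{\infty,\underline{\infty}}\cap K_{\mrm{ur}})$ since $\overline{K}^I=K_{\mrm{ur}}$, so this does deliver surjectivity.) Second, the containment $\Phi(W)\subseteq\widehat{\overline{K}}\otimes_{\bb{Q}_p}\lie(\rho(I))$ that your lifting argument produces is already one half of Ohkubo's theorem, so it is not strictly necessary to reprove it once you invoke \cite[3.1]{ohkubo2014sen}; but spelling it out does make the reconciliation between Brinon's descent over $K_{\infty,\underline{\infty}}$ and Tsuji's descent over $K_\infty$ transparent, which is the real point of the exercise.
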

\begin{myrem}\label{rem:sen-lie}
	We don't know whether or not $\lie(\rho(I\cap H))$ is the smallest $\bb{Q}_p$-subspace $S$ of $\mrm{End}_{\bb{Q}_p}(V)$ such that the space of geometric Sen operators $\Phi^{\geo}(W)$ is contained in $\widehat{\overline{K}}\otimes_{\bb{Q}_p} S$, where $H=\gal(\overline{K}/K_\infty)$. Recall that Sen-Ohkubo's proof of \ref{thm:sen-lie} relies on Sen's ramification theorem on a Galois extension of a complete discrete valuation field whose Galois group is a $p$-adic analytic group (\cite[Lemma 3]{sen1973lie}, \cite[1.3]{ohkubo2014sen}). Thus, it seems that we couldn't apply their techniques directly to this question. Nevertheless, we have the following weaker result.
\end{myrem}

\begin{mycor}\label{cor:sen-lie}
	With the notation in {\rm\ref{thm:sen-lie}}, the space of geometric Sen operators $\Phi^{\geo}(W)$ is contained in $\widehat{\overline{K}}\otimes_{\bb{Q}_p}\lie (\rho(I\cap H))$.
\end{mycor}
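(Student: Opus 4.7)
The plan is to apply the explicit formula \eqref{eq:thm:sen-brinon-operator} of \ref{thm:sen-brinon-operator} with $\widetilde{t}_i=t_i$ (the $p$-basis from \ref{para:notation-K}, whose differentials are $K$-linearly independent by \ref{prop:rank}) and realize each elementary geometric Sen operator as the infinitesimal action of an element of $I\cap H$. Let $\widetilde{V}\in\repnan{\Delta}(\Gamma,K_\infty)$ be the descent of $W$ furnished by \ref{thm:sen-brinon} and \ref{prop:gamma-analytic}, so $W=\widehat{\overline{K}}\otimes_{K_\infty}\widetilde{V}$. By the formula, $\Phi^{\geo}(W)$ is the $\widehat{\overline{K}}$-linear span in $\mrm{End}_{\widehat{\overline{K}}}(W)$ of the base-changed operators $\id_{\widehat{\overline{K}}}\otimes\varphi_{\partial_i}|_{\widetilde{V}}$ for $i=1,\dots,d$ (since the $T_0$ term of \eqref{eq:thm:sen-brinon-operator} vanishes on the geometric part by \ref{thm:fal-ext}). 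It therefore suffices to show that each such operator lies in $\widehat{\overline{K}}\otimes_{\bb{Q}_p}\lie(\rho(I\cap H))$ under the canonical identification $\mrm{End}_{\widehat{\overline{K}}}(W)=\widehat{\overline{K}}\otimes_{\bb{Q}_p}\mrm{End}_{\bb{Q}_p}(V)$.

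The first step is to show that the natural composition $I\cap H\hookrightarrow H\twoheadrightarrow\Delta$ is surjective. Since $t_i\in\ca{O}_K^\times$, the extension $K_{\infty,\underline{\infty}}/K_\infty$ is generated by $p^n$-th roots of units; in characteristic $p$, extracting $p$-power roots of a residue field element yields a purely inseparable extension. Hence the associated residue field extension is purely inseparable and $\Delta$ acts trivially on residue fields. It follows that $\gal(\overline{K}/K_{\infty,\underline{\infty}})=\ker(H\to\Delta)$ surjects onto $\gal(\bar\kappa/\kappa_\infty)$ (using $\gal(\bar\kappa/\kappa_\infty)=\gal(\bar\kappa/\kappa_{\infty,\underline{\infty}})$ for the purely inseparable residue extension), which is the same quotient of $H$ whose kernel is $I\cap H$. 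Thus $H=(I\cap H)\cdot\ker(H\to\Delta)$, proving surjectivity; by the open mapping theorem for continuous surjections of locally compact groups, the map is moreover open.

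Given $\partial\in\lie(\Delta)$ small enough (the case of general $\partial$ reduces via the scaling $\varphi_{\tau^n}=n\varphi_\tau$ from \ref{prop:derivative}.(\ref{item:prop:derivative-2})), set $\tau=\exp_\Delta(\partial)\in\Delta$ and lift it, using the openness just established, to some $\tilde\tau\in I\cap H\subset G$ close enough to $1$ that $\rho(\tilde\tau)\in\mrm{GL}(V)$ lies in a uniform pro-$p$ open subgroup on which the matrix logarithm is defined (cf. \ref{exmp:gl_n}). We compute $\varphi_{\tilde\tau}|_W$ in two ways. Via the descent presentation $W=\widehat{\overline{K}}\otimes_{K_\infty}\widetilde{V}$ and the base-change formula of \ref{rem:derivative} applied to the quotient $G\to\Gamma$ (sending $\tilde\tau\mapsto\tau$), we obtain $\varphi_{\tilde\tau}|_W=\id_{\widehat{\overline{K}}}\otimes\varphi_\tau|_{\widetilde{V}}=\id_{\widehat{\overline{K}}}\otimes\varphi_\partial|_{\widetilde{V}}$. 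Via the presentation $W=\widehat{\overline{K}}\otimes_{\bb{Q}_p}V$, picking a $\bb{Q}_p$-basis $\{v_j\}$ of $V$ and the induced $\widehat{\overline{K}}$-basis $\{1\otimes v_j\}$ of $W$, one computes directly that $\tilde\tau^s(1\otimes v_j)=\tilde\tau^s(1)\otimes\rho(\tilde\tau)^s v_j=1\otimes\rho(\tilde\tau)^s v_j$, so $\varphi_{\tilde\tau}(1\otimes v_j)=1\otimes\log\rho(\tilde\tau)(v_j)$; the $\widehat{\overline{K}}$-linearity of $\varphi_{\tilde\tau}|_W$ provided by \ref{prop:derivative} then yields $\varphi_{\tilde\tau}|_W=1\otimes\log\rho(\tilde\tau)$. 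Combining the two expressions gives $\id_{\widehat{\overline{K}}}\otimes\varphi_\partial|_{\widetilde{V}}=1\otimes\log\rho(\tilde\tau)\in\widehat{\overline{K}}\otimes_{\bb{Q}_p}\lie(\rho(I\cap H))$, which is what we wanted.

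The main subtlety is to justify that the two computations of $\varphi_{\tilde\tau}|_W$ indeed produce the same endomorphism in the same object $\mrm{End}_{\widehat{\overline{K}}}(W)$: this requires that \ref{prop:derivative} and \ref{rem:derivative} apply with $A=K_\infty$, $A'=\widehat{\overline{K}}$, and the quotient $G\to\Gamma$, for which the $\Delta$-analyticity of the descent $\widetilde{V}$ is essential. Once this framework is in place, both the residue-field surjectivity $I\cap H\twoheadrightarrow\Delta$ and the basis computation $\tilde\tau^s(1\otimes v_j)=1\otimes\rho(\tilde\tau)^s v_j$ are elementary, and the argument is complete.
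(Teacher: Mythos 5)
Your proof is correct, but it takes a genuinely different route from the paper's, which is worth spelling out.

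The paper's proof is a two-line formal argument: it observes that $\Phi^{\geo}(W)=[\Phi(W),\Phi(W)]$ (since $\ho_{\ca{O}_K}(\widehat{\Omega}^1_{\ca{O}_K}(-1),\widehat{\overline{K}})$ is exactly the derived subalgebra of $\scr{E}^*_{\ca{O}_K}(1)$, and $\varphi_{\sen}|_W$ is a Lie algebra map), invokes \ref{thm:sen-lie} to get $\Phi(W)\subseteq\widehat{\overline{K}}\otimes_{\bb{Q}_p}\lie(\rho(I))$, and then uses $[\lie(\rho(I)),\lie(\rho(I))]\subseteq\lie(\rho(I\cap H))$ because $I/(I\cap H)$ embeds in the abelian group $\Sigma$. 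Your argument instead bypasses \ref{thm:sen-lie} altogether: you show $I\cap H\twoheadrightarrow\Delta$ is an open surjection (via the purely inseparable residue extension $\kappa_{\infty,\underline{\infty}}/\kappa_\infty$ coming from $t_i\in\ca{O}_K^\times$), lift each $\tau=\exp_\Delta(\partial)$ to $\tilde\tau\in I\cap H$, and then equate the two computations of $\varphi_{\tilde\tau}|_W$ — one via the $K_\infty$-descent $\widetilde V$ and \ref{rem:derivative}, giving $\id\otimes\varphi_\partial|_{\widetilde V}$, and one via the $\bb{Q}_p$-descent $V$, giving $\id\otimes\log\rho(\tilde\tau)$. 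This directly exhibits the generators $\id\otimes\varphi_{\partial_i}|_{\widetilde V}$ of $\Phi^{\geo}(W)$ as elements of $\widehat{\overline K}\otimes_{\bb{Q}_p}\lie(\rho(I\cap H))$. What the paper's route buys is brevity and conceptual clarity (it is purely Lie-algebraic, given the cited theorem); what yours buys is independence from \ref{thm:sen-lie}, which is a nontrivial result whose proof (per \ref{rem:sen-lie}) rests on Sen's ramification theorem. Your approach is thus more self-contained relative to the rest of the paper, at the cost of re-running the descent and infinitesimal-action machinery. The openness of $I\cap H\to\Delta$ is what lets you choose lifts $\tilde\tau$ small enough for $\log\rho(\tilde\tau)$ to make sense; the scaling $\varphi_{\tau^n}=n\varphi_\tau$ handles general $\partial$. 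All the pieces check out.
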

\begin{proof}
	Note that $\Phi^{\geo}(W)=[\Phi(W),\Phi(W)]$ by the Lie algebra structure on $\scr{E}^*_{\ca{O}_K}(1)$ (cf. \ref{para:fal-ext-dual}), and that $[\lie(\rho(I)),\lie(\rho(I))]\subseteq \lie (\rho(I\cap H))$ as $I/(I\cap H)\subseteq G/H=\Sigma$ is abelian. The conclusion follows directly from the fact that $\Phi(W)\subseteq \widehat{\overline{K}}\otimes_{\bb{Q}_p}\lie(\rho(I))$ by \ref{thm:sen-lie}.
\end{proof}

\section{Extending Sen operators to Infinite-Dimensional Representations}\label{sec:inf}
We extend Sen operators to certain infinite-dimensional representations and their completions over a complete discrete valuation ring. The goal is to show that geometric Sen operators still annihilate the Galois invariant part of the representation.

\begin{mypara}\label{para:continuation}
	By saying that a topological abelian group $M$ is ``complete'', we always mean that it is separated and every Cauchy net of $M$ admits a limit point (\cite[8.2.6]{gabber2004foundations}). The forgetful functor from the category of complete topological abelian groups to the category of topological abelian groups admits a left adjoint, called the \emph{completion} and denoted by $M\mapsto\widehat{M}$. The canonical map $M\to\widehat{M}$ has dense image and induces the topology on $M$ from that of $\widehat{M}$ (\cite[8.2.8]{gabber2004foundations}). The adjoint property implies that for any continuous group homomorphism of topological abelian groups $f:M\to N$, there is a unique continuous group homomorphism of the completions $\widehat{f}:\widehat{M}\to \widehat{N}$ making the following diagram commutative
	\begin{align}
		\xymatrix{
			M\ar[d]\ar[r]^-{f}&N\ar[d]\\
			\widehat{M}\ar[r]^-{\widehat{f}}&\widehat{N}
		}
	\end{align}
	where the vertical arrows are the canonical maps. We call $\widehat{f}$ the \emph{continuation} (or \emph{completion}) of $f$.
\end{mypara}

\begin{mypara}\label{para:normed-module}
	We briefly review the definition of normed modules mainly following \cite[\textsection 5]{tsuji2018localsimpson}, and we also refer to \cite{bgr1984analysis} for a systematic development. A (non-Archimedean) \emph{norm} on an abelian group $M$ is a map $|\ |:M\to\bb{R}_{\geq 0}$ such that $|x|=0$ if and only if $x=0$, and that $|x-y|\leq \max\{|x|,|y|\}$ for any $x,y\in M$. For any $r\in\bb{R}_{> 0}$, we denote the closed ball of radius $r$ by
	\begin{align}
		M^{\leq r}=\{x\in M\ | \ |x|\leq r\}.
	\end{align}
	The metric topology makes $M$ into a separated topological abelian group, where $\{M^{\leq r}\}_{r\in\bb{R}_{>0}}$ forms a fundamental system of closed neighbourhoods of $0$. The norm map extends uniquely over the completion $\widehat{M}$ of $M$, and $M$ naturally identifies with a dense normed subgroup of $\widehat{M}$.
	
	A \emph{normed ring} $R$ is a ring endowed with a norm such that $|xy|\leq |x||y|$ for any $x,y\in R$. We remark that there is a natural normed ring structure on $\widehat{R}$ induced by that of $R$. Given a normed ring $R$, a \emph{normed $R$-module} is an $R$-module $M$ endowed with a norm such that $|ax|\leq |a||x|$ for any $a\in R$ and $x\in M$. Moreover, we call $R$ a \emph{Banach ring} if it is complete, and we call $M$ an \emph{$R$-Banach module} if $M$ is complete.
\end{mypara}

\begin{mypara}\label{para:normed-module-2}
	For any valuation field $K$ of height $1$ extension of $\bb{Q}_p$, we fix a valuation map $v_K:K^\times\to \bb{Q}$ normalized by $v_K(p)=1$, and we endow $K$ with the norm $|\ |_K$ defined by $|x|_K=p^{-v_K(x)}$ for any $x\in K^\times$. Let $V$ be a normed $K$-module. Since $p^nV^{\leq 1}=V^{\leq p^{-n}}$ for any $n\in\bb{N}$, $V^{\leq 1}$ is a $p$-adically separated flat $\ca{O}_K$-module, and the induced metric topology on $V^{\leq 1}$ coincides with its $p$-adic topology. We have $V=V^{\leq 1}[1/p]$. 
	
	Conversely, given a $p$-adically separated flat $\ca{O}_K$-module $M$, we can define a norm on $M[1/p]$ by setting $|x|=p^{-v_M(x)}$ for any $x\in M[1/p]\setminus\{0\}$ where $v_M(x)$ is the biggest integer such that $x\in p^{v_M(x)}M$. The metric topology defined by this norm on $M[1/p]$ coincides with its $p$-adic topology defined by $M$, and makes $M[1/p]$ into a normed $K$-module with $M[1/p]^{\leq 1}=M$. We remark that $V$ (resp. $M[1/p]$) is complete if and only if $V^{\leq 1}$ (resp. $M$) is $p$-adically complete.
\end{mypara}

\begin{mylem}\label{lem:banach-sub}
	Let $K$ be a complete valuation field of height $1$ extension of $\bb{Q}_p$, $V$ a normed $K$-module. Then, the induced topology on any finite-dimensional $K$-subspace $V_0$ of $V$ coincides with its canonical topology (cf. {\rm\ref{para:canonical-top}}).
\end{mylem}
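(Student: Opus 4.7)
The plan is to prove this by induction on $n=\dim_K V_0$, following the standard argument for equivalence of norms on a finite-dimensional vector space over a complete non-Archimedean valued field. Writing $V_0$ as $\bigoplus_{i=1}^n K e_i$ for some basis $e_1,\dots,e_n$, the canonical topology is the one transferred from the max norm on $K^n$ via this basis (independent of the basis by \ref{para:canonical-top}). One direction of the comparison is trivial: for $v=\sum a_i e_i$, the non-Archimedean inequality gives $|v|_V \leq \max_i |a_i e_i|_V \leq C \max_i |a_i|_K$ with $C=\max_i|e_i|_V$, so the identity from the canonical topology to the subspace topology is continuous. The content lies in the reverse continuity, i.e., producing a constant $c>0$ with $\max_i |a_i|_K \leq c\,|v|_V$.

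For the base case $n=1$, write $V_0=Ke_1$ with $e_1\neq 0$. For $a\in K^\times$ the identity $e_1=a^{-1}(ae_1)$ combined with submultiplicativity gives $|e_1|_V\leq |a|_K^{-1}|ae_1|_V$, hence $|ae_1|_V\geq |e_1|_V\cdot |a|_K$, which together with the upper bound yields the required equivalence.

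For the inductive step, let $V_1=\bigoplus_{i=1}^{n-1} Ke_i$. By the induction hypothesis, the subspace norm on $V_1$ induces the canonical topology, so $V_1$ (isomorphic to $K^{n-1}$ with max norm up to equivalence) is complete since $K$ is complete. Hence $V_1$ is a closed subspace of the separated normed $K$-module $V$, and since $e_n\notin V_1$ the quantity $d=\inf_{v\in V_1}|v-e_n|_V$ is strictly positive. Given $v=\sum_{i=1}^n a_i e_i$ with $a_n\neq 0$, I estimate
\[
|v|_V = |a_n|_K\cdot \bigl|\,e_n+a_n^{-1}(a_1e_1+\cdots+a_{n-1}e_{n-1})\,\bigr|_V \geq d\cdot |a_n|_K,
\]
which controls $|a_n|_K$ in terms of $|v|_V$. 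The non-Archimedean triangle inequality then gives $|v-a_ne_n|_V\leq \max(|v|_V,|a_n|_K|e_n|_V)\leq \max(1,|e_n|_V/d)\cdot |v|_V$, and applying the induction hypothesis to $v-a_ne_n\in V_1$ bounds $\max_{i<n}|a_i|_K$ by a constant multiple of $|v|_V$. The case $a_n=0$ is immediate from the induction hypothesis.

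The key structural point is the closedness of $V_1$ in $V_0$, which is precisely where completeness of $K$ enters and gives positivity of $d$; without it the argument collapses. No other obstacle is anticipated, and the constants assembled from the induction yield the desired equivalence of the subspace and canonical topologies on $V_0$.
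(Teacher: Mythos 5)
Your proof is correct and is, in substance, the paper's proof: the paper reduces the claim to the equivalence of all norms on a finite-dimensional $K$-vector space and simply cites {\rm[BGR, 2.3.3.5]} for that fact, whereas you have written out the standard inductive proof of that theorem in full (base case via multiplicativity, inductive step via completeness and hence closedness of the codimension-one subspace, giving $d>0$). The role you identify for completeness of $K$ is exactly the point of the cited result; no new idea is introduced, but the argument is complete and self-contained where the paper delegates to a reference.
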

\begin{proof}
	The norm on $V$ induces a norm on $V_0$, whose associated metric topology defines the induced topology on $V_0$. On the other hand, any $K$-linear isomorphism $V_0\cong K^n$ defines a norm on $V_0$ which induces the canonical topology on $V_0$. The conclusion follows from the fact that any two norms on a finite-dimensional $K$-space are equivalent (\cite[2.3.3.5]{bgr1984analysis}).
\end{proof}

\begin{myprop}\label{prop:sen-brinon-operator-inf}
	With the notation in {\rm\ref{para:notation-K}}, let $W$ be a normed $\widehat{\overline{K}}$-module endowed with a continuous semi-linear action of $G$. Assume that the $(G,\widehat{\overline{K}})$-finite part of $W$ is equal to $W$ itself (cf. {\rm\ref{defn:repn}}). Then, there exists a unique $\widehat{\overline{K}}$-linear Lie algebra action of $\scr{E}_{\ca{O}_K}^*(1)$ on $W$,
	\begin{align}\label{eq:sen-brinon-operator-inf}
		\varphi_{\sen}|_W:\scr{E}_{\ca{O}_K}^*(1)\longrightarrow \mrm{End}_{\widehat{\overline{K}}}(W),
	\end{align}
	such that for any $G$-equivariant continuous $\widehat{\overline{K}}$-linear homomorphism $W_0\to W$ from an object $W_0$ of $\repnpr(G,\widehat{\overline{K}})$, $\varphi_{\sen}|_W$ is compatible with the Lie algebra action $\varphi_{\sen}|_{W_0}$ of $\scr{E}_{\ca{O}_K}^*(1)$ defined in {\rm\ref{thm:sen-brinon-operator}}.
\end{myprop}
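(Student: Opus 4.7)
The plan is to realize $W$ as a filtered union of finite projective $G$-subrepresentations and then glue the Sen operators provided by Theorem \ref{thm:sen-brinon-operator}, using the functoriality stated there.

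First I would let $\{W_i\}_{i\in I}$ be the directed system of $G$-stable finite-dimensional $\widehat{\overline{K}}$-subspaces of $W$, ordered by inclusion. By the hypothesis on the $(G,\widehat{\overline{K}})$-finite part we have $W=\bigcup_{i\in I} W_i$. Each $W_i$ is automatically a normed subspace, and by Lemma \ref{lem:banach-sub} the topology induced from $W$ coincides with the canonical topology on $W_i$ as a finite-dimensional $\widehat{\overline{K}}$-module. Hence the restriction of the $G$-action to $W_i$ is continuous for the canonical topology, so $W_i$ is an object of $\repnpr(G,\widehat{\overline{K}})$ and every inclusion $W_i\hookrightarrow W_j$ is a morphism in $\repnpr(G,\widehat{\overline{K}})$.

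Next, applying Theorem \ref{thm:sen-brinon-operator} to each $W_i$ yields a $\widehat{\overline{K}}$-linear Lie algebra homomorphism $\varphi_{\sen}|_{W_i}:\scr{E}^*_{\ca{O}_K}(1)\to\mrm{End}_{\widehat{\overline{K}}}(W_i)$. The functoriality asserted in \eqref{eq:sen-functor} says that for $W_i\subseteq W_j$ and any $f\in \scr{E}^*_{\ca{O}_K}(1)$, the endomorphism $\varphi_{\sen}|_{W_j}(f)$ restricts to $\varphi_{\sen}|_{W_i}(f)$. Consequently, for every $x\in W$ and every $f\in \scr{E}^*_{\ca{O}_K}(1)$, the element $\varphi_{\sen}|_{W_i}(f)(x)\in W$ does not depend on the choice of $W_i\ni x$, and this defines a well-defined map $\varphi_{\sen}|_W(f):W\to W$. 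It is immediate from the corresponding property on each $W_i$ that $\varphi_{\sen}|_W(f)$ is $\widehat{\overline{K}}$-linear and that $f\mapsto \varphi_{\sen}|_W(f)$ is a homomorphism of $\widehat{\overline{K}}$-linear Lie algebras.

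For the compatibility with an arbitrary $G$-equivariant continuous $\widehat{\overline{K}}$-linear homomorphism $h:W_0\to W$ with $W_0$ finite projective, I would observe that the image $h(W_0)$ is a finite-dimensional $G$-stable subspace of $W$, hence equals some $W_i$. Then $h$ factors as $W_0\twoheadrightarrow W_i\hookrightarrow W$, and both arrows are morphisms in $\repnpr(G,\widehat{\overline{K}})$; the required compatibility $\varphi_{\sen}|_W(f)\circ h=h\circ\varphi_{\sen}|_{W_0}(f)$ then follows from applying the functoriality of $\varphi_{\sen}$ to each factor and the construction of $\varphi_{\sen}|_W$ on $W_i$. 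Uniqueness is clear from this last statement: any homomorphism satisfying the compatibility condition is forced on each $W_i$ (via the inclusion $W_i\hookrightarrow W$, which is a valid choice of $h$), and hence on all of $W=\bigcup_i W_i$. The construction is essentially formal once one has verified that the finite-dimensional $G$-subrepresentations cover $W$, so I do not anticipate a genuine obstacle here; the only technical point requiring care is the identification of induced and canonical topologies on finite-dimensional subspaces, which is supplied by Lemma \ref{lem:banach-sub}.
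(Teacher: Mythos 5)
Your proof is correct and follows essentially the same approach as the paper: realizing $W$ as a filtered colimit of finite projective $G$-subrepresentations (where the paper uses the localization category $\ca{I}$ of all maps $W_0\to W$ and notes it is filtered via images, while you pass directly to the cofinal subsystem of actual subrepresentations), invoking Lemma \ref{lem:banach-sub} for the topological compatibility, and gluing the operators of Theorem \ref{thm:sen-brinon-operator} using functoriality.
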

\begin{proof}
	The maps $W_0\to W$ form a category $\ca{I}$. Indeed, it is the localization of the category $\repnpr(G,\widehat{\overline{K}})$ at the presheaf given by the restriction of the presheaf $\ho(-,W)$ on $\repn(G,\widehat{\overline{K}})$ represented by $W$ (cf. \cite[\Luoma{1}.3.4.0]{sga4-1}). We claim that $\ca{I}$ is filtered. Indeed, for any object $W_0\to W$ of $\ca{I}$, its image $W_1\subseteq W$ is $G$-stable. Since the topology on $W_1$ induced from $W$ coincides with the canonical topology as a finite-dimensional $\widehat{\overline{K}}$-space by \ref{lem:banach-sub}, we see that $W_1$ is a finite projective $\widehat{\overline{K}}$-representation of $G$ and a subrepresentation of $W$ and that $W_0\to W_1$ is a morphism in $\repnpr(G,\widehat{\overline{K}})$. As direct sums exist in $\ca{I}$, one checks easily that $\ca{I}$ is filtered. By assumption and the previous argument, $W=\colim_{\ca{I}} W_0$ as $\widehat{\overline{K}}$-modules. Since the Lie algebra $\scr{E}_{\ca{O}_K}^*(1)$ acts functorially on each $W_0$, it defines a unique action on $W$ compatible with that on each $W_0$.
\end{proof}
\begin{myrem}\label{rem:sen-brinon-operator-inf}
	Let $W$ be a $\widehat{\overline{K}}$-Banach space endowed with a continuous semi-linear action of $G$ such that the $(G,\widehat{\overline{K}})$-finite part $W^{\mrm{f}}$ of $W$ is dense in $W$. If we endow $W^{\mrm{f}}$ with the induced topology, then its completion coincides with $W$ (\cite[8.2.8.(\luoma{3})]{gabber2004foundations}). By \ref{prop:sen-brinon-operator-inf}, we obtain a canonical Lie algebra action $\varphi_{\sen}|_{W^{\mrm{f}}}$ on $W^{\mrm{f}}$. If the operators on $W^{\mrm{f}}$ defined by $\varphi_{\sen}|_{W^{\mrm{f}}}$ are continuous, then we can extend this action uniquely to a Lie algebra action $\varphi_{\sen}|_W$ on $W$ by continuation (cf. \ref{para:continuation}). 
	
	However, in this work we haven't found a simple condition to guarantee the continuity of the Sen operators on $W^{\mrm{f}}$. Instead, we consider two types of dense subrepresentations of $W^{\mrm{f}}$ and discuss the continuity of Sen operators on them. Roughly speaking, the first type (considered in the rest of this section) is the union of representations with ``small lattices'',  which is ad hoc but suitable for doing descent and decompletion (so that nice properties are preserved after continuation, cf. \ref{thm:sen-operator-fix-infty}). The other type (considered in the end of section \ref{sec:sen-B}) is the union of representations defined over $\bb{Q}_p$, which is more canonical but we need to reduce to the first type for proving properties (cf. \ref{thm:sen-operator-B-fix}).
\end{myrem}

\begin{mypara}\label{para:small}
	Let $K$ be a valuation field of height $1$ extension of $\bb{Q}_p$ with a valuation map $v_K:K^\times\to \bb{Q}$ normalized by $v_K(p)=1$, $A$ a $p$-adically complete flat $\ca{O}_K$-algebra, $M$ a $p$-torsion free $p$-adically complete $A$-module. Consider an $A$-linear endomorphism $\phi$ on $M$ such that $\phi(M)\subseteq \alpha M$ for some element $\alpha\in\ak{m}_K$ with $v_K(\alpha)>\frac{1}{p-1}$. 
	As $v_K(n!)\leq \frac{n-1}{p-1}$ for any $n\in\bb{N}_{>0}$, the series for any $x\in M$,
	\begin{align}
		\exp(\phi)(x)&=\sum_{n=0}^{\infty}\frac{1}{n!}\phi^n(x),\\
		\log(1+\phi)(x)&=\sum_{n=1}^{\infty}\frac{(-1)^{n-1}}{n}\phi^n(x)
	\end{align}
	are well-defined and converge in $M$ with respect to the $p$-adic topology. They define two $A$-linear endomorphisms $\exp(\phi)\in \id+\alpha\mrm{End}_A(M)$ and $\log(1+\phi)\in \alpha\mrm{End}_A(M)$ of $M$ such that $\exp(\log(1+\phi))=1+\phi$ and $\log(\exp(\phi))=\phi$. Thus, we deduce easily that for any $n\in\bb{N}$, $p^{-n}((1+\phi)^{p^n}-1)=p^{-n}(\exp(p^n\log(1+\phi))-1)\in \alpha\mrm{End}_A(M)$, and that 
	\begin{align}\label{eq:para:small-derivative}
		\log(1+\phi)(x)=\lim_{n\to \infty}p^{-n}((1+\phi)^{p^n}-1)(x),
	\end{align}
	namely, $\log(1+\phi)$ is the infinitesimal action of $1+\phi$ on $M$ (cf. \ref{defn:infinitesimal}).
\end{mypara}

\begin{mydefn}[{cf. \cite[\Luoma{2}.13.1, \Luoma{2}.13.2]{abbes2016p}}]\label{defn:small}
	Let $K$ be a valuation field of height $1$ extension of $\bb{Q}_p$, $\ak{a}$ an ideal of $\ca{O}_K$, $A$ a $p$-adically complete flat $\ca{O}_K$-algebra endowed with a continuous action of a topological group $G$ by homomorphisms of $\ca{O}_K$-algebras. 
	\begin{enumerate}
		\renewcommand{\labelenumi}{{\rm(\theenumi)}}
		\item For any object $M$ of $\repnpr(G,A)$, we say that $M$ is \emph{$\ak{a}$-small} if $M$ is a finite free $A$-module admitting a basis consisting of elements that are $G$-invariant modulo $\alpha M$ for some $\alpha\in\ak{a}$. We denote by $\repnsm{\ak{a}}(G,A)$ the full subcategory of $\repnpr(G,A)$ consisting of $\ak{a}$-small objects.\label{item-defn:small-1}
		\item For any object $W$ of $\repnpr(G,A[1/p])$, we say that $W$ is \emph{$\ak{a}$-small} if there exists a $G$-stable $A$-submodule $W^+$ of $W$ generated by finitely many elements that are $G$-invariant modulo $\alpha W^+$ for some $\alpha\in\ak{a}$. We denote by $\repnsm{\ak{a}}(G,A[1/p])$ the full subcategory of $\repnpr(G,A[1/p])$ consisting of $\ak{a}$-small objects.\label{item-defn:small-2}
	\end{enumerate}
\end{mydefn}

\begin{myrem}\label{rem:small}
	In \ref{defn:small}.(\ref{item-defn:small-1}), the $G$-action on $M/\alpha M$ may not be trivial, since this action is $A$-semi-linear not $A$-linear.
	In \ref{defn:small}.(\ref{item-defn:small-2}), if $W$ is $\ak{a}$-small, then the canonical topology on $W=W^+[1/p]$ is induced by the $p$-adic topology of $W^+$ (cf. \ref{para:canonical-top}). In particular, there is a natural faithful functor
		\begin{align}
			\repnsm{\ak{a}}(G,A)\longrightarrow \repnsm{\ak{a}}(G,A[1/p]),\ M\mapsto M[1/p].
		\end{align}
	We remark that even if $A=\ca{O}_K$ (so that $W^+$ is a finite free $\ca{O}_K$-module), $W^+$ may not admit a basis consisting of elements that are $G$-invariant modulo $\alpha W^+$, i.e. $W^+$ may not be $\ak{a}$-small.
\end{myrem}

\begin{mypara}\label{para:notation-K-good}
	In the rest of this section, we take again the assumptions and notation in \ref{para:notation-K}.
	\begin{align}
		\xymatrix{
			\overline{K}&\\
			K_{\infty,\underline{\infty}}\ar[u]&\\
			K_\infty\ar[u]^-{\Delta}\ar@/^2pc/[uu]^-{H}&K\ar[l]^-{\Sigma}\ar[lu]_-{\Gamma}\ar@/_1pc/[luu]_{G}
		}
	\end{align}
	Consider the following assumption on $K$: 
	\begin{enumerate}
		\renewcommand{\labelenumi}{{\rm(\theenumi)}}
		\item[($\ast$)] Let $\kappa$ be the residue field of $K$, $\cap_{n\geq 0}\kappa^{p^n}$ the maximal perfect subfield of $\kappa$, $K_{\mrm{can}}$ the algebraic closure in $K$ of the fraction field of the Witt ring $W(\cap_{n\geq 0}\kappa^{p^n})$. Then, $K_{\mrm{can}}\to K$ is a weakly unramified extension of complete discrete valuation fields, i.e. a uniformizer of $K_{\mrm{can}}$ is still a uniformizer of $K$.
	\end{enumerate}
	This assumption is considered by Hyodo \cite[(0-5)]{hyodo1986hodge} when computing the cohomology $H^q(G,\widehat{\overline{K}})$. We remark that there exists a finite Galois extension of $K$ which satisfies this assumption by Epp's theorem on eliminating ramification \cite[1.9, 2.0.(1)]{epp1973ramifi}. The assumption implies that for any finite field extension $K'_{\mrm{can}}$ of $K_{\mrm{can}}$, we have $\ca{O}_{K'}=\ca{O}_{K'_{\mrm{can}}}\otimes_{\ca{O}_{K_{\mrm{can}}}}\ca{O}_K$ where $K'=K'_{\mrm{can}}K$ (\cite[\href{https://stacks.math.columbia.edu/tag/09E7}{09E7}, \href{https://stacks.math.columbia.edu/tag/09EQ}{09EQ}]{stacks-project}). In particular, the residue field of $K_n$ is separable over that of $K$ for any $n\in\bb{N}$, and thus $t_1,\dots,t_d$ still form a $p$-basis of the residue field of $K_n$. Hence, there is an isomorphism of $\ca{O}_{K_n}$-algebras for any $\underline{m}=(m_1,\dots,m_d)\in\bb{N}^d$,
	\begin{align}\label{eq:para:notation-K-good}
		\ca{O}_{K_n}[T_1,\dots,T_d]/(T_1^{p^{m_1}}-t_1,\dots,T_d^{p^{m_d}}-t_d)\iso \ca{O}_{K_{n,\underline{m}}},
	\end{align}
	sending $T_i$ to $t_{i,p^m}$ (\cite[1-2]{hyodo1986hodge}). In particular, the continuous homomorphism \eqref{eq:cont-cocycle} $\xi:\Delta\to \bb{Z}_p^d$ is an isomorphism, which also implies that $K_{\infty,\underline{\infty}}=K_\infty\otimes_K K_{0,\underline{\infty}}$.
\end{mypara}

\begin{mythm}[{Faltings, \cite{faltings2005simpson}, cf. \cite[\Luoma{2}.14.4]{abbes2016p}}]\label{thm:small-descent}
	Under the assumption {\rm($\ast$)} in {\rm\ref{para:notation-K-good}} on $K$, let $\ak{a}$ be the ideal of $\ca{O}_{K_\infty}$ consisting of elements $\alpha$ with normalized valuation $v_{K_\infty}(\alpha)>\frac{2}{p-1}$. Then, the functor
	\begin{align}
		\repnsm{\ak{a}}(\Delta,\ca{O}_{\widehat{K_\infty}})\longrightarrow \repnsm{\ak{a}}(H,\ca{O}_{\widehat{\overline{K}}})
	\end{align}
	is an equivalence of categories.
\end{mythm}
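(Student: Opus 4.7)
\begin{mypara}
The plan is to factor the base-change functor through the intermediate Galois group $H' = \gal(\overline{K}/K_{\infty,\underline{\infty}})$, which fits in a short exact sequence $1 \to H' \to H \to \Delta \to 1$, and to establish two descent equivalences separately: first from $\ca{O}_{\widehat{\overline{K}}}$-modules with $H$-action to $\ca{O}_{\widehat{K_{\infty,\underline{\infty}}}}$-modules with residual $\Delta$-action, and then a Sen-type descent from $\ca{O}_{\widehat{K_{\infty,\underline{\infty}}}}$-modules with $\Delta$-action to $\ca{O}_{\widehat{K_\infty}}$-modules. The hypothesis $(\ast)$ is crucial at both stages, because by \eqref{eq:para:notation-K-good} the tower $\ca{O}_{K_{\infty,\underline{m}}}/\ca{O}_{K_\infty}$ admits the explicit Kummer presentation $\ca{O}_{K_\infty}[T_1,\dots,T_d]/(T_i^{p^{m_i}} - t_i)_i$, giving $\xi : \Delta \iso \bb{Z}_p^d$ and tight control on the normalized partial trace maps along the tower.
\end{mypara}

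\begin{mypara}
For the first descent, the input is Faltings's almost purity theorem: since $\overline{K}/K_{\infty,\underline{\infty}}$ is deeply ramified in the sense of Fontaine--Wintenberger, the continuous cohomology of $H'$ with coefficients in $\ca{O}_{\widehat{\overline{K}}}$-modules is almost zero in positive degree, and $(\ca{O}_{\widehat{\overline{K}}})^{H'}$ is almost equal to $\ca{O}_{\widehat{K_{\infty,\underline{\infty}}}}$. Given an $\ak{a}$-small $H$-representation $N^+$ generated by $e_1,\dots,e_r$ that are $H$-invariant modulo $\alpha N^+$, a successive-approximation argument corrects each $e_i$ by a convergent series of almost-coboundaries in $H^1(H', N^+/\alpha^k N^+)$ to produce genuinely $H'$-invariant elements whose $\ca{O}_{\widehat{K_{\infty,\underline{\infty}}}}$-span is an $\ak{a}$-small $\Delta$-representation base-changing back to $N^+$. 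Fully faithfulness comes from the same cohomology computation applied to internal homs. For the second descent, the Tate--Sen formalism applies to $\Delta \cong \bb{Z}_p^d$: the partial normalized traces associated with \eqref{eq:para:notation-K-good} project onto the terms of the $\ca{O}_{K_\infty}$-basis of $\ca{O}_{K_{\infty,\underline{m}}}$ indexed by multi-exponents with bounded denominators, and satisfy the Tate--Sen axioms with decay constants strictly better than $\tfrac{1}{p-1}$. An analogous fixed-point argument lifts approximate $\Delta$-invariants in an $\ak{a}$-small $\Delta$-representation to true invariants defined over $\ca{O}_{\widehat{K_\infty}}$.
\end{mypara}

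\begin{mypara}
The main obstacle is keeping uniform quantitative control across both descents. The threshold $\frac{2}{p-1}$ in the definition of $\ak{a}$ is precisely the sum of a $\frac{1}{p-1}$ contribution coming from the almost-étaleness gap in the first step and a further $\frac{1}{p-1}$ from the radius of convergence of the exponential/logarithm used in the Tate--Sen fixed-point argument on the second step; one must verify that successive corrections in one descent do not erode the margin required by the other. Each approximation produces an error whose valuation increases by a definite amount, but the small condition must persist after base change in either direction, which forces the chosen bound. The rigorous verification, worked out in detail by Faltings and then in the form used here by Abbes--Gros, amounts to careful bookkeeping of valuations along the two-step filtration and the explicit formulas afforded by $(\ast)$.
\end{mypara}
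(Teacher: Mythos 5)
Your proposal follows essentially the same route the paper invokes via Abbes--Gros \cite[\Luoma{2}.14.4]{abbes2016p}: a two-step descent through $H' = \gal(\overline{K}/K_{\infty,\underline{\infty}})$, with almost-purity/almost-\'etale descent in the first step and a Sen-type decompletion exploiting the explicit Kummer structure of \eqref{eq:para:notation-K-good} supplied by $(\ast)$ in the second. One remark on your heuristic for the threshold: attributing $\tfrac{1}{p-1}$ of the $\tfrac{2}{p-1}$ margin to an ``almost-\'etaleness gap'' is misleading, since the almost-\'etale step produces cohomology that is almost zero --- killed by arbitrary small elements of $\ak{m}_{K_\infty}$ --- and hence costs no fixed margin; the entire $\tfrac{2}{p-1}$ is consumed by the decompletion step, where $\tfrac{1}{p-1}$ is needed for $\exp$ and $\log$ to converge and a further $\tfrac{1}{p-1}$ for the successive-approximation iteration to converge and for the resulting lattice to remain $\ak{a}$-small.
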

\begin{proof}
	It follows from the same arguments of \cite[\Luoma{2}.14.4]{abbes2016p}.
\end{proof}

\begin{mythm}[{\cite[11.2, 12.4]{tsuji2018localsimpson}}]\label{thm:small-generic-descent}
	Under the assumption {\rm($\ast$)} in {\rm\ref{para:notation-K-good}} on $K$, let $\ak{a}$ be the ideal of $\ca{O}_{K_\infty}$ consisting of elements $\alpha$ with normalized valuation $v_{K_\infty}(\alpha)>\frac{2}{p-1}$. Then, the functor
	\begin{align}\label{eq:thm:small-generic-descent}
		\repnsm{\ak{a}}(\Delta,\widehat{K_\infty})\longrightarrow \repnsm{\ak{a}}(H,\widehat{\overline{K}})
	\end{align}
	is an equivalence of categories.
\end{mythm}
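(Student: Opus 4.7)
The plan is to deduce this rational descent equivalence from the integral descent theorem \ref{thm:small-descent} by inverting $p$, once we have shown that every $\ak{a}$-small generic-fibre representation admits a genuinely $\ak{a}$-small integral structure (in the sense of Definition \ref{defn:small}.(\ref{item-defn:small-1})).

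First I would handle full faithfulness of the base change functor $V\mapsto \widehat{\overline{K}}\otimes_{\widehat{K_\infty}}V$. The standard internal-Hom reduction shows that it suffices to verify $(\widehat{\overline{K}}\otimes_{\widehat{K_\infty}}V)^H=V$ for finite-dimensional continuous semi-linear representations $V$ of $\Delta$ over $\widehat{K_\infty}$ arising from objects of $\repnsm{\ak{a}}(\Delta,\widehat{K_\infty})$; applied to $V=\ho_{\widehat{K_\infty}}(V_1,V_2)$ this yields bijectivity on morphisms. This invariance statement follows from Brinon's Ax--Sen--Tate-type result $\widehat{\overline{K}}^{H}=\widehat{K_\infty}$ together with the integral descent \ref{thm:small-descent}, since the base change $\widehat{\overline{K}}\otimes V$ carries a distinguished $\ak{a}$-small lattice on which $H$-invariants can be computed via integral descent and then inverted over~$p$.

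Next I would treat essential surjectivity. Take $W\in\repnsm{\ak{a}}(H,\widehat{\overline{K}})$ with an $H$-stable $\ca{O}_{\widehat{\overline{K}}}$-submodule $W^+\subseteq W$ generated by finitely many elements $e_1,\dots,e_r$ satisfying $he_i-e_i\in\alpha W^+$ for all $h\in H$ and some $\alpha\in\ak{a}$. The key step is to replace $W^+$ by a finite free $H$-stable $\ca{O}_{\widehat{\overline{K}}}$-lattice $\widetilde{W}^+\subseteq W$ admitting a basis of $H$-fixed-mod-$\alpha'$ elements for some $\alpha'\in\ak{a}$ (possibly with $v_{K_\infty}(\alpha')$ slightly smaller than $v_{K_\infty}(\alpha)$ but still $>2/(p-1)$). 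Granting this, $\widetilde{W}^+$ lies in $\repnsm{\ak{a}}(H,\ca{O}_{\widehat{\overline{K}}})$, and Faltings' integral descent \ref{thm:small-descent} produces an object $V^+\in\repnsm{\ak{a}}(\Delta,\ca{O}_{\widehat{K_\infty}})$ with $\ca{O}_{\widehat{\overline{K}}}\otimes_{\ca{O}_{\widehat{K_\infty}}}V^+\cong \widetilde{W}^+$; inverting $p$ yields $V=V^+[1/p]\in\repnsm{\ak{a}}(\Delta,\widehat{K_\infty})$ with $\widehat{\overline{K}}\otimes_{\widehat{K_\infty}}V\cong W$.

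The hard part will be this refinement step: upgrading a small generating set of the lattice $W^+$ to a small basis of a possibly modified lattice. The difficulty is twofold: the generators need not be linearly independent over $\widehat{\overline{K}}$, and even when one extracts a subset that is a $\widehat{\overline{K}}$-basis of $W$, the non-discrete valuation on $\widehat{\overline{K}}$ prevents one from carrying out elementary divisor arguments without loss of valuation. The strategy is to proceed by $p$-adic successive approximation, using the convergence of the exponential and logarithm recalled in \ref{para:small}: an $\alpha$-small cocycle $h\mapsto (he_i-e_i)$ with $v_{K_\infty}(\alpha)>1/(p-1)$ can be exponentiated to produce, from an approximate basis modulo high powers of $p$, a genuine $\ca{O}_{\widehat{\overline{K}}}$-basis of an $H$-stable lattice. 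The slack in the hypothesis $v_{K_\infty}(\alpha)>2/(p-1)$ versus the convergence threshold $1/(p-1)$ is precisely what absorbs the loss of valuation in this construction, and explains why the same threshold is imposed in both \ref{thm:small-descent} and \ref{thm:small-generic-descent}.
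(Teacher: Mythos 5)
Your route is genuinely different from the paper's. The paper does not deduce the rational theorem from the integral one: it cites Tsuji's 11.2/12.4 for the rational case and Abbes--Gros for the integral case, and both are proved by parallel but separate Tate--Sen-style iteration/decompletion arguments. The diagram in \ref{para:small-diam} makes the middle and right vertical arrows \emph{independent} equivalences, linked only by the forgetful $M\mapsto M[1/p]$; there is no claim that one follows from the other.

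Your ``refinement step'' is the load-bearing claim, and it has two problems. First, you are misdiagnosing the nature of the difficulty. Over $\ca{O}_{\widehat{\overline{K}}}$, which is a local rank-one valuation ring, the lattice $W^{+}$ furnished by Definition~\ref{defn:small}.(\ref{item-defn:small-2}) is automatically finite free (finitely generated torsion-free over a valuation ring), and one can extract a basis from the given generating set $e_1,\dots,e_r$ purely by Nakayama's lemma; since every $e_i$ satisfies $he_i-e_i\in\alpha W^{+}$, and $\alpha W^{+}$ does not depend on which basis one uses for $W^{+}$, the extracted basis inherits the same smallness bound. So, if this step works at all, it works by a one-line Nakayama argument, not by ``$p$-adic successive approximation via $\exp/\log$''; there is no loss of valuation in that extraction, and the threshold $v_{K_\infty}(\alpha)>2/(p-1)$ is not there to absorb any such loss (it is consumed in the convergence estimates of the descent iteration itself). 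Second, and more seriously, the paper's own Remark~\ref{rem:small} directly asserts that \emph{exactly this step fails}: it says that even for $A=\ca{O}_K$ (so that $W^+$ is finite free), $W^+$ may not admit a basis of $G$-invariant-mod-$\alpha$ elements, i.e.\ the forgetful $\repnsm{\ak{a}}(G,A)\to\repnsm{\ak{a}}(G,A[1/p])$ need not be essentially surjective. Your essential-surjectivity argument rests entirely on the negation of that Remark. You must resolve this tension -- either by checking whether the Remark is overcautious (in which case Nakayama closes the gap and your reduction is correct, though it is not the paper's proof), or by finding the subtlety the Remark is pointing at (in which case your reduction has no foundation and you need the direct Tate--Sen argument). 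As written, the proposal neither supplies the Nakayama argument nor addresses the Remark, and the $p$-adic approximation sketch is not a correct substitute for either.

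Your full-faithfulness outline, by contrast, is fine in spirit: once one grants descent on the level of lattices, the internal-Hom reduction works because the Hom lattice $\ho_{\ca{O}}(V_1^{+},V_2^{+})$ is again $\ak{a}$-small (the conjugation cocycle is $O(\alpha)$), and $H$-invariants can be computed on the integral Hom lattice and inverted in $p$. But this, too, hinges on having the small integral lattices in the first place, i.e.\ on the same unresolved essential-surjectivity step.
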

\begin{proof}
	It follows from the same arguments of \cite[11.2, 12.4]{tsuji2018localsimpson}.
\end{proof}

\begin{mylem}\label{lem:nil-higgs-small}
	Let $V$ be an object of $\repnan{\Delta}(\Gamma,K_\infty)$. Then, the associated $\widehat{K_\infty}$-representation $\widehat{V}=\widehat{K_\infty}\otimes_{K_\infty}V$ of $\Delta$ is $\ak{a}$-small for any nonzero ideal $\ak{a}$ of $\ca{O}_{K_\infty}$.
\end{mylem}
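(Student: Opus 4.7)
The plan is to exhibit explicitly, for any prescribed nonzero $\alpha\in\ak{a}$, a $\Delta$-stable $\ca{O}_{\widehat{K_\infty}}$-submodule $W^+\subseteq \widehat{V}$ generated by finitely many elements that are $\Delta$-invariant modulo $\alpha W^+$. Since $V$ is a finite-dimensional $K_\infty$-space, $\widehat{V}$ is a finite-dimensional $\widehat{K_\infty}$-space and the construction will take place inside a suitably rescaled lattice.

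First I would collect the two structural inputs. By \ref{prop:brinon-kummer-tower-adequate}, the Kummer tower $(\ca{O}_{K_{n,\underline{m}}})$ satisfies condition \ref{prop:kummer-tower-lem}.(\ref{item:prop:kummer-tower-lem-1}), so by \ref{prop:operator-nilpotent} the infinitesimal endomorphism $\varphi_\tau\in\mrm{End}_{K_\infty}(V)$ is nilpotent for every $\tau\in\Delta$. Fix a $\bb{Z}_p$-basis $\tau_1,\dots,\tau_r$ of $\Delta\cong\bb{Z}_p^r$; then $\varphi_j:=\varphi_{\tau_j}$ pairwise commute (by \eqref{eq:lem:operator-2}, since $\chi|_\Delta=1$) and are nilpotent, hence can be simultaneously strictly upper-triangularized: there is a $K_\infty$-basis $e_1,\dots,e_s$ of $V$ with $\varphi_j(e_i)\in\sum_{k<i}K_\infty\, e_k$ for all $i,j$. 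Choose $N\in\bb{N}$ large enough that every matrix coefficient of every $\varphi_j$ in this basis lies in $p^{-N}\ca{O}_{K_\infty}$.

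Next I would rescale to absorb denominators. For a parameter $M\in\bb{N}$ to be chosen, set $f_i=p^{Mi}e_i$ and
\begin{align*}
W^+=\bigoplus_{i=1}^s\ca{O}_{\widehat{K_\infty}}\, f_i\ \subseteq\ \widehat{V}.
\end{align*}
A direct substitution gives $\varphi_j(f_i)\in p^{M-N}W^+$, since the index is strictly decreased by $\varphi_j$ and the rescaling contributes at least a factor $p^M$ to compensate the loss $p^{-N}$. Because $\Delta$ is abelian, $\lie(\Delta)=\oplus \bb{Q}_p\log_\Delta(\tau_j)$, so $\varphi_\tau$ is a $\bb{Z}_p$-linear combination of the $\varphi_j$ for every $\tau\in\Delta$, and the same bound yields $\varphi_\tau(W^+)\subseteq p^{M-N}W^+$.

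Finally I would use analyticity to conclude. Since $V$ is $\Delta$-analytic and $\varphi_\tau$ is nilpotent, the series $\tau(x)=\exp(\varphi_\tau)(x)=\sum_{k\ge 0}\varphi_\tau^k(x)/k!$ is actually a finite sum. Using $v_p(k!)\le (k-1)/(p-1)$, the estimate $\varphi_\tau^k(f_i)\in p^{k(M-N)}W^+$ gives $\varphi_\tau^k(f_i)/k!\in p^{k(M-N)-(k-1)/(p-1)}W^+$. Choosing $M$ so large that $M-N>1/(p-1)$ and $p^{M-N}\ca{O}_{\widehat{K_\infty}}\subseteq \alpha\ca{O}_{\widehat{K_\infty}}$, every term with $k\ge 1$ lies in $\alpha W^+$; thus $W^+$ is $\Delta$-stable and $\tau(f_i)\equiv f_i\pmod{\alpha W^+}$ for every $i$ and every $\tau\in\Delta$, proving $\widehat{V}$ is $\ak{a}$-small. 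The only mildly delicate point is tracking the $k!$ denominators against the nilpotence data, which is neutralized by taking $M$ sufficiently large.
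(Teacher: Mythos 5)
Your proof is correct and follows essentially the same route as the paper: simultaneous strict upper-triangularization of the commuting nilpotent operators $\varphi_\tau$, followed by a rescaling of the basis with geometrically growing exponents to concentrate the off-diagonal coefficients into a small ideal. The only cosmetic difference is where the exponential series is handled: the paper observes directly that $\tau(v_i)-v_i=\exp(\varphi_\tau)(v_i)-v_i$ already lies in $\sum_{j>i}K_\infty v_j$ and then rescales this span, whereas you first rescale $\varphi_\tau$ itself and track the $k!$ denominators term by term; this forces you to impose $M-N>1/(p-1)$ to defeat $v_p(k!)$, a point the paper sidesteps by rescaling after applying the exponential. Both are valid, and your bookkeeping of the denominators is accurate.
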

\begin{proof}
	Since $\Delta$ is commutative and by \ref{rem:derivative} and \ref{prop:operator-nilpotent} the infinitesimal Lie algebra action $\varphi:\lie(\Delta)\to \mrm{End}_{K_\infty}(V)$ is nilpotent, we can take a basis $v_1,\dots,v_n$ of $V$ as in the argument of \ref{prop:sen-char-poly} such that $\varphi_\tau(v_i)\in\sum_{j>i}K_\infty v_j$ for any $\tau\in\Delta$ and $1\leq i\leq n$. Since $V$ is $\Delta$-analytic, we have $\tau(v_i)-v_i=\exp(\varphi_\tau)(v_i)-v_i\in \sum_{j>i}K_\infty v_j$. For any $k_0\in\bb{N}$, after replacing $v_1,\dots,v_n$ by $p^{-k_1}v_1,\dots,p^{-k_n}v_n$ for some integers $k_n\gg \cdots\gg k_1>k_0$, we may assume that $\tau(v_i)-v_i\in \sum_{j>i}p^{k_0}\ca{O}_{K_\infty} v_j$. Thus, the finite free $\ca{O}_{K_\infty}$-submodule $V^+$ of $V$ generated by $v_1,\dots,v_n$ is $\Delta$-stable and $v_i$ is $\Delta$-invariant modulo $p^{k_0}$. In particular, $\widehat{V}$ is $p^{k_0}\ca{O}_{K_\infty}$-small.
\end{proof}

\begin{mypara}\label{para:small-diam}
	Under the assumption {\rm($\ast$)} in {\rm\ref{para:notation-K-good}} on $K$, let $\ak{a}$ be the ideal of $\ca{O}_{K_\infty}$ consisting of elements $\alpha$ with normalized valuation $v_{K_\infty}(\alpha)>\frac{2}{p-1}$. There is a canonical commutative diagram
	\begin{align}\label{diam:small}
		\xymatrix{
			\repnpr(G,\widehat{\overline{K}})\ar[r]& \repnsm{\ak{a}}(H,\widehat{\overline{K}})&\repnsm{\ak{a}}(H,\ca{O}_{\widehat{\overline{K}}})\ar[l]\\
			\repnan{\Delta}(\Gamma,K_\infty)\ar[r]\ar[u]&\repnsm{\ak{a}}(\Delta,\widehat{K_\infty})\ar[u]&\repnsm{\ak{a}}(\Delta,\ca{O}_{\widehat{K_\infty}})\ar[l]\ar[u] 
		}
	\end{align}
	where the vertical arrows are equivalences of categories by \ref{thm:sen-brinon}, \ref{prop:gamma-analytic}, \ref{thm:small-generic-descent} and \ref{thm:small-descent}, and where the horizontal arrows of the left square are induced by these equivalences and \ref{lem:nil-higgs-small}. It allows us to calculate geometric Sen operators using ``small lattices''.	 
\end{mypara}

\begin{mylem}\label{lem:compare-derivative}
	Under the assumption {\rm($\ast$)} in {\rm\ref{para:notation-K-good}} on $K$, let $W$ be an object of $\repnpr(G,\widehat{\overline{K}})$ such that there exists an object $W^+$ of $\repnsm{\ak{a}}(H,\ca{O}_{\widehat{\overline{K}}})$ with $W=W^+[1/p]$ in $\repnsm{\ak{a}}(H,\widehat{\overline{K}})$, where $\ak{a}$ is the ideal of $\ca{O}_{K_\infty}$ consisting of elements $\alpha$ with normalized valuation $v_{K_\infty}(\alpha)>\frac{2}{p-1}$. Then, there is a commutative diagram
	\begin{align}\label{diam:lem:sen-operator-small}
		\xymatrix{
			\scr{E}_{\ca{O}_K}^*(1)\ar[r]^-{	\varphi_{\sen}|_W}\ar[d]_-{\psi}^-{\wr}& \mrm{End}_{\widehat{\overline{K}}}(W)\\
			\widehat{\overline{K}}\otimes_{\bb{Q}_p}\lie(\Gamma)&\Delta\ar[l]^-{1\otimes\log_{\Delta}}\ar[u]_-{\id_{\widehat{\overline{K}}}\otimes \log(-)|_{V^+}}
		}
	\end{align}
	where $\varphi_{\sen}|_W$ is the canonical Lie algebra action defined in {\rm\ref{thm:sen-brinon-operator}}, $\psi$ is the isomorphism \eqref{eq:lem:fal-ext-lie-alg-special}, $V^+$ is the essentially unique object of $\repnsm{\ak{a}}(\Delta,\ca{O}_{\widehat{K_\infty}})$ such that $W^+=\ca{O}_{\widehat{\overline{K}}}\otimes_{\ca{O}_{\widehat{K_\infty}}}V^+$, and $\log(\tau)|_{V^+}$ is the $\ca{O}_{\widehat{K_\infty}}$-linear endomorphism on $V^+$ defined in {\rm\ref{para:small}} for $\tau\in\Delta$.
\end{mylem}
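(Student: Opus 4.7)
The plan is to exploit the leftmost square of diagram \eqref{diam:small}, which transports $W$ to an essentially unique object $V$ of $\repnan{\Delta}(\Gamma,K_\infty)$ with $W=\widehat{\overline{K}}\otimes_{K_\infty}V$, and then identifies $V^+[1/p]$ with $\widehat{V}:=\widehat{K_\infty}\otimes_{K_\infty}V$ via the equivalence \ref{thm:small-descent} (combined with \ref{lem:nil-higgs-small}, which ensures the $\ak{a}$-smallness on the integral level). Concretely, $V^+$ is identified with a $\Delta$-stable $p$-adically complete $\ca{O}_{\widehat{K_\infty}}$-lattice of $\widehat{V}$, so the tensor product $W^+=\ca{O}_{\widehat{\overline{K}}}\otimes_{\ca{O}_{\widehat{K_\infty}}}V^+$ recovers the given integral model.

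Next I would compute the top composition. Applying Theorem \ref{thm:sen-brinon-operator} with $\widetilde{t}_i=t_i$ (so $e=d$), the formula \eqref{eq:thm:sen-brinon-operator} gives $\varphi_{\sen}|_W(T_i^*)=1\otimes\varphi_{\partial_i}|_V$, and the identification of $\psi$ in \eqref{eq:lem:fal-ext-lie-alg-special} shows $\psi(T_i^*)=\partial_i$ for $0\leq i\leq d$. Hence, for every $x\in\lie(\Gamma)$, we have $\varphi_{\sen}|_W(\psi^{-1}(1\otimes x))=1\otimes\varphi_x|_V$, where $\varphi$ denotes the infinitesimal Lie algebra action of \ref{cor:operator}. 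For $\tau\in\Delta$, set $x=\log_\Delta(\tau)\in\lie(\Delta)\subseteq\lie(\Gamma)$. The defining property of the Lie algebra action in \ref{cor:operator} (together with $\Delta$ being pro-$p$, so that \ref{prop:derivative}.(\ref{item:prop:derivative-1}) applies on the nose) yields $\varphi_x|_V=\varphi_\tau|_V$, the infinitesimal action of $\tau$ on $V$.

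It remains to match $\id_{\widehat{\overline{K}}}\otimes\varphi_\tau|_V$ with $\id_{\widehat{\overline{K}}}\otimes\log(\tau)|_{V^+}$ as $\widehat{\overline{K}}$-linear endomorphisms of $W$. Both endomorphisms are obtained by base change from $\widehat{K_\infty}$-linear endomorphisms of $\widehat{V}=V^+[1/p]$, so it suffices to show the $\widehat{K_\infty}$-linear extension of $\varphi_\tau|_V$ coincides with $\log(\tau)|_{V^+}[1/p]$ on $\widehat{V}$. Since $\tau(v)-v\in\alpha V^+$ for $\alpha\in\ak{a}$ (with $v_K(\alpha)>\tfrac{2}{p-1}>\tfrac{1}{p-1}$), the construction of \ref{para:small} applies to $\phi=\tau-1$ on $V^+$, and the explicit identity \eqref{eq:para:small-derivative} identifies $\log(\tau)|_{V^+}$ with the infinitesimal action of $\tau$ on $V^+$ in the sense of \ref{defn:infinitesimal}; inverting $p$ by $\bb{Q}_p$-linearity this gives the infinitesimal action of $\tau$ on $\widehat{V}$. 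Applying \ref{lem:infinitesimal}.(\ref{item:infinitesimal-3}) to the continuous $\tau$-equivariant $K_\infty$-linear inclusion $V\hookrightarrow\widehat{V}$, this infinitesimal action restricts on $V$ to $\varphi_\tau|_V$, and hence by $\widehat{K_\infty}$-linearity coincides with $\id_{\widehat{K_\infty}}\otimes\varphi_\tau|_V$, completing the verification of the commutativity of \eqref{diam:lem:sen-operator-small}. There is no serious obstacle; the only care needed is bookkeeping the three a priori distinct ``infinitesimal actions'' (the one obtained via the Lie algebra of $\Gamma$ acting on $V$, the one of \ref{prop:derivative} used implicitly on $\widehat{V}$, and the convergent log series on $V^+$) and checking they all agree under the natural scalar extensions linking $V$, $\widehat{V}$, and $W$.
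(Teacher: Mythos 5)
Your proof is correct and follows essentially the same path as the paper's: transport $W$ to $V\in\repnan{\Delta}(\Gamma,K_\infty)$, use \ref{thm:sen-brinon-operator} (with $\widetilde{t}_i=t_i$) to make the upper triangle through $\widehat{\overline{K}}\otimes_{\bb{Q}_p}\lie(\Gamma)\to\mrm{End}_{\widehat{\overline{K}}}(W)$ commute, and then compare the two $\widehat{K_\infty}$-linear endomorphisms of $\widehat{V}=V^+[1/p]$ by writing out the common limit formula $\lim_n p^{-n}(\tau^{p^n}-1)$ that both $\varphi_\tau|_V$ (via \eqref{eq:infinitesimal}) and $\log(\tau)|_{V^+}$ (via \eqref{eq:para:small-derivative}) realize. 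The bookkeeping you flag at the end is indeed the only delicate point, and you handle it the same way the paper does.
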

\begin{proof}
	Let $V$ be the essentially unique object of $\repnan{\Delta}(\Gamma,K_\infty)$ such that $W=\widehat{\overline{K}}\otimes_{K_\infty} V$. Then, we conclude by the equivalence \eqref{eq:thm:small-generic-descent} that $\widehat{V}=V^+[1/p]$ in $\repnsm{\ak{a}}(\Delta,\widehat{K_\infty})$, where $\widehat{V}=\widehat{K_\infty}\otimes_{K_\infty}V$.
	\begin{align}
		\xymatrix{
			W\ar@{|->}[r]& W=W^+[\frac{1}{p}]&W^+\ar@{|->}[l]\\
			V\ar@{|->}[r]\ar@{|->}[u]&\widehat{V}=V^+[\frac{1}{p}]\ar@{|->}[u]&V^+\ar@{|->}[l]\ar@{|->}[u] 
		}
	\end{align}
	Consider the infinitesimal Lie algebra action $\varphi|_V:\lie(\Gamma)\to \mrm{End}_{K_\infty}(V)$.
	\begin{align}
		\xymatrix{
			\scr{E}_{\ca{O}_K}^*(1)\ar[r]^-{	\varphi_{\sen}|_W}\ar[d]_-{\psi}^-{\wr}& \mrm{End}_{\widehat{\overline{K}}}(W)\\
			\widehat{\overline{K}}\otimes_{\bb{Q}_p}\lie(\Gamma)\ar[ur]_-{\id_{\widehat{\overline{K}}}\otimes \varphi|_V}&\Delta\ar[l]^-{1\otimes\log_{\Delta}}\ar[u]_-{\id_{\widehat{\overline{K}}}\otimes \log(-)|_{V^+}}
		}
	\end{align}
	The upper triangle commutes by \ref{thm:sen-brinon-operator}. It remains to check that the lower triangle commutes, i.e. $\id_{\widehat{K_\infty}}\otimes\varphi_{\tau}|_V=\id_{\widehat{K_\infty}}\otimes\log(\tau)|_{V^+}$ as $\widehat{K_\infty}$-linear endomorphisms of $\widehat{V}=V^+[1/p]$ for any $\tau\in\Delta$. For any $x\in V$ whose image $y$ in $V^+[1/p]$ lies in $V^+$,
	\begin{align}
		(\id_{\widehat{K_\infty}}\otimes\varphi_{\tau}|_V)(1\otimes x)=&1\otimes \lim_{n\to \infty}p^{-n}(\tau^{p^n}-1)(x) \quad\trm{(by \eqref{eq:infinitesimal})}\\
		=&1\otimes\lim_{n\to \infty}p^{-n}(\tau^{p^n}-1)(y)\nonumber\\
		=&(\id_{\widehat{K_\infty}}\otimes\log(\tau)|_{V^+})(1\otimes y)\quad\trm{(by \eqref{eq:para:small-derivative})},\nonumber
	\end{align}
	which completes the proof.
\end{proof}

On the other hand, one can control the Galois invariant part of the completion of a filtered colimit of ``small lattices''.

\begin{mylem}[{cf. \cite[\Luoma{2}.8.23]{abbes2016p}}]\label{lem:small-invariant}
	Under the assumption {\rm($\ast$)} in {\rm\ref{para:notation-K-good}} on $K$, let $(W_\lambda^+)_{\lambda\in\Lambda}$ be a directed system of objects in $\repnsm{\ak{a}}(H,\ca{O}_{\widehat{\overline{K}}})$, where $\ak{a}$ is the ideal of $\ca{O}_{K_\infty}$ consisting of elements $\alpha$ with normalized valuation $v_{K_\infty}(\alpha)>\frac{2}{p-1}$. Let $(V_\lambda^+)_{\lambda\in\Lambda}$ be the essentially unique directed system of objects in $\repnsm{\ak{a}}(\Delta,\ca{O}_{\widehat{K_\infty}})$ such that $W_\lambda^+=\ca{O}_{\widehat{\overline{K}}}\otimes_{\ca{O}_{\widehat{K_\infty}}}V_\lambda^+$ by {\rm\ref{thm:small-descent}}. We put $W_\infty^+=\colim_{\lambda\in\Lambda}W_\lambda^+$ and $V_\infty^+=\colim_{\lambda\in\Lambda}V_\lambda^+$. Then, the natural map
	\begin{align}
		(\widehat{V_\infty^+}[1/p])^\Delta\longrightarrow (\widehat{W_\infty^+}[1/p])^H
	\end{align}
	is a bijection, where the completions are $p$-adic. 
\end{mylem}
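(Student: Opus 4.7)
The plan is to prove this in parallel with \cite[\Luoma{2}.8.23]{abbes2016p}, by extracting from the proof of Theorem~\ref{thm:small-descent} a uniform almost-isomorphism bound for every $\ak{a}$-small object, and then transferring this bound through both the filtered colimit over $\lambda$ and the $p$-adic completion.

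First I would fix, once and for all, a constant $c\in\bb{N}_{>0}$ depending only on $K$ (not on $\lambda$ nor on $\mrm{rank}(V_\lambda^+)$), such that for every object $V^+$ of $\repnsm{\ak{a}}(\Delta,\ca{O}_{\widehat{K_\infty}})$ with associated $W^+=\ca{O}_{\widehat{\overline{K}}}\otimes_{\ca{O}_{\widehat{K_\infty}}}V^+$, both the kernel and cokernel of
\[
V^+/p^nV^+\longrightarrow (W^+/p^nW^+)^H
\]
and of $H^1_{\cont}(\Delta,V^+/p^nV^+)\to H^1_{\cont}(H,W^+/p^nW^+)$ are killed by $p^c$, uniformly in $n$. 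Such a $c$ is produced by the Faltings/Abbes--Gros computation underlying \ref{thm:small-descent}, whose only quantitative input is that a basis of $V^+$ consists of elements $\Delta$-invariant modulo some $\alpha\in\ak{a}$ with $v_{K_\infty}(\alpha)>2/(p-1)$. Because the formation of $H^\ast_{\cont}$ of discrete $p^n$-torsion modules commutes with filtered colimits, the same statements hold with $V^+$ and $W^+$ replaced by $V_\infty^+$ and $W_\infty^+$.

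Next I would pass to $p$-adic completions. Taking the inverse limit over $n$ of the short exact sequences relating $(W_\infty^+)^H/p^n$ and $(W_\infty^+/p^nW_\infty^+)^H$ --- a Mittag-Leffler argument enabled by the uniform $p^c$-annihilation of $H^1_{\cont}(H,W_\infty^+/p^nW_\infty^+)$ --- shows that $(\widehat{W_\infty^+})^H$ agrees up to $p^c$-torsion with the $p$-adic completion of $(W_\infty^+)^H$, and similarly for the $\Delta$-invariants of $\widehat{V_\infty^+}$. Combining with Step~2 gives that the natural map $(\widehat{V_\infty^+})^\Delta\to(\widehat{W_\infty^+})^H$ has kernel and cokernel killed by a fixed power of $p$; inverting $p$ then yields the claimed bijection.

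The main obstacle is the uniform $p^c$-bound on $H^1_{\cont}$, which is what forces $p$-adic completion to commute (up to bounded torsion) with $H$-invariants across an infinite directed system. Fortunately the explicit cocycle computations in \cite[\Luoma{2}.14.4]{abbes2016p} and \cite[11.2, 12.4]{tsuji2018localsimpson} already produce constants depending only on the smallness threshold $2/(p-1)$, so no rank- or $\lambda$-dependent refinement is required; the rest of the argument is a routine interchange of limits.
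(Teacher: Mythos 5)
Your Steps 1 and 2 are sound and track the paper's proof: the finite-level almost-isomorphisms from \cite[\Luoma{2}.8.23]{abbes2016p} are the key input, and discrete group cohomology does commute with the filtered colimit over $\lambda$. (Minor slip: the map in Step 1 should be $(V^+/p^nV^+)^\Delta\to(W^+/p^nW^+)^H$, not $V^+/p^nV^+\to(W^+/p^nW^+)^H$; the embedding $V^+\hookrightarrow W^+$ is not $H$-equivariant, only its image in the $\Delta$-invariants is.)

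Step 3 is where the argument breaks. You assert a uniform $p^c$-annihilation of $H^1_{\cont}(H,W_\infty^+/p^nW_\infty^+)$, but this group is not bounded independently of $n$: already for the unit object $V^+=\ca{O}_{\widehat{K_\infty}}$ with trivial $\Delta$-action, one has, up to almost-isomorphism, $H^1(\Delta,\ca{O}_{\widehat{K_\infty}}/p^n)\cong(\ca{O}_{\widehat{K_\infty}}/p^n)^d$ (with $\Delta\cong\bb{Z}_p^d$), which is killed by $p^n$ but by no fixed $p^c$. What \cite[\Luoma{2}.8.23]{abbes2016p} bounds is the kernel and cokernel of the comparison \emph{map} $H^1(\Delta,-)\to H^1(H,-)$, not the source or target themselves. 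Moreover the error term you would need to control is $H^1(H,W_\infty^+)[p^n]$ (the cokernel of $(W_\infty^+)^H/p^n\to(W_\infty^+/p^nW_\infty^+)^H$), and relating this to $H^1(H,W_\infty^+/p^nW_\infty^+)$ in a way that transfers a uniform bound is not straightforward.

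The detour through $\widehat{(W_\infty^+)^H}$ is also unnecessary. The paper proceeds more directly: since $(-)^H$ commutes with inverse limits and with filtered colimits of discrete modules, one has the elementary identifications
\begin{align}
	(\widehat{V_\infty^+})^\Delta=\lim_{n}\colim_{\lambda} (V_\lambda^+/p^nV_\lambda^+)^\Delta,\qquad
	(\widehat{W_\infty^+})^H=\lim_{n}\colim_{\lambda}(W_\lambda^+/p^n W_\lambda^+)^H,
\end{align}
with no $H^1$-control needed. One then transfers the almost-isomorphism bounds of \cite[\Luoma{2}.8.23]{abbes2016p} through $\colim_\lambda$ (exact) and through $\lim_n$ using the retraction construction of \ref{lem:pi-iso-retract} and \ref{rem:pi-iso-retract}.(\ref{item:rem:pi-iso-retract-2}), which shows that an inverse system of $\alpha$-isomorphisms has $\alpha^2$-isomorphic limit. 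Finally inverting $p$ kills all the $\ak{m}_{K_\infty}$-torsion and yields the bijection.
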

\begin{proof}
	By virtue of \cite[\Luoma{2}.8.23]{abbes2016p} (cf. \eqref{eq:para:notation-K-good}), the map $(V_\lambda^+/p^nV_\lambda^+)^\Delta\to (\ca{O}_{\widehat{\overline{K}}}\otimes_{\ca{O}_{\widehat{K_\infty}}}V_\lambda^+/p^nV_\lambda^+)^H$ is almost injective whose cokernel is killed by any element $\alpha\in\ca{O}_{K_\infty}$ with $v_{K_\infty}(\alpha)>\frac{1}{p-1}$, and thus so is the map 
	\begin{align}\label{eq:thm:sen-operator-fix-infty-2}
		(\widehat{V_\infty^+})^\Delta=\lim_{n\to\infty}\colim_{\lambda\in\Lambda} (V_\lambda^+/p^nV_\lambda^+)^\Delta\longrightarrow\lim_{n\to\infty}\colim_{\lambda\in\Lambda}(W_\lambda^+/p^n W_\lambda^+)^H=(\widehat{W_\infty^+})^H.
	\end{align}
	Inverting $p$, we get $(\widehat{V_\infty^+}[1/p])^\Delta=(\widehat{V_\infty^+})^\Delta[1/p]=(\widehat{W_\infty^+})^H[1/p]=(\widehat{W_\infty^+}[1/p])^H$.
\end{proof}

\begin{mypara}\label{para:system-repn}
	Let $(W_\lambda^+)_{\lambda\in\Lambda}$ be a directed system of objects in $\repnpr(G,\ca{O}_{\widehat{\overline{K}}})$. We put $W_\infty^+=\colim_{\lambda\in\Lambda} W_\lambda^+$ as $\ca{O}_{\widehat{\overline{K}}}$-modules, and denote its $p$-adic completion by $\widehat{W_\infty^+}$. We set
	\begin{align}
		W_\lambda=W_\lambda^+[\frac{1}{p}],\quad W_\infty=W_\infty^+[\frac{1}{p}],\quad \widehat{W_\infty}=\widehat{W_\infty^+}[\frac{1}{p}],
	\end{align}
	endowed with the $p$-adic topology defined by $W_\lambda^+$, $W_\infty^+$ and $\widehat{W_\infty^+}$ respectively. We remark that $\widehat{W_\infty}$ is the completion of $W_\infty$ as topological abelian group (by the canonical isomorphism $W_\infty^+/p^nW_\infty^+\iso \widehat{W_\infty^+}/p^n\widehat{W_\infty^+}$ for any $n\in\bb{N}$, cf. \cite[8.2.8.(\luoma{3})]{gabber2004foundations}). As $(W_\lambda)_{\lambda\in\Lambda}$ forms a directed system of objects in $\repnpr(G,\widehat{\overline{K}})$, the canonical Lie algebra actions $\varphi_{\sen}|_{W_\lambda}$ defined in \ref{thm:sen-brinon-operator} induces a homomorphism of $\widehat{\overline{K}}$-linear Lie algebras
	\begin{align}\label{eq:system-repn-sen}
		\varphi_{\sen}|_{W_\infty}:\scr{E}_{\ca{O}_K}^*(1)\longrightarrow \mrm{End}_{\widehat{\overline{K}}}(W_\infty).
	\end{align}
	We denote by $\Phi(W_\infty)$ its image, and by $\Phi^{\geo}(W_\infty)$ the image of $\ho_{\ca{O}_K}(\widehat{\Omega}^1_{\ca{O}_K}(-1),\widehat{\overline{K}})$. On the other hand, the compatible $G$-actions $G\times W_\lambda^+\to W_\lambda^+$ induces an action $G\times W_\infty^+\to W_\infty^+$ by taking colimit, and thus induces an action $G\times \widehat{W_\infty^+}\to \widehat{W_\infty^+}$ by taking $p$-adic completion.
\end{mypara}
\begin{mylem}\label{lem:system-repn}
	We keep the notation in {\rm\ref{para:system-repn}}.
	\begin{enumerate}
		\renewcommand{\labelenumi}{{\rm(\theenumi)}}
		\item The $G$-actions on $W_\infty^+$ and $\widehat{W_\infty^+}$ are continuous with respect to the $p$-adic topology.\label{item:lem:system-repn-1}
		\item If $W_\infty^+$ is $p$-adically separated, then the Lie algebra action $\varphi_{\sen}|_{W_\infty}$ of $\scr{E}_{\ca{O}_K}^*(1)$ on $W_\infty$ coincides with the canonical Lie algebra action defined in {\rm\ref{prop:sen-brinon-operator-inf}}.\label{item:lem:system-repn-2} 
	\end{enumerate}
\end{mylem}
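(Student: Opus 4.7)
For part \ref{item:lem:system-repn-1}, I would argue continuity by reduction modulo powers of $p$. Since filtered colimits are exact, we have $W_\infty^+/p^n W_\infty^+ = \colim_\lambda W_\lambda^+/p^n W_\lambda^+$ as $G$-modules for every $n \in \bb{N}$. Each quotient $W_\lambda^+/p^n W_\lambda^+$ carries the discrete topology, being the quotient of $W_\lambda^+$ by the open subgroup $p^n W_\lambda^+$, and the continuity of the $G$-action on $W_\lambda^+$ (finite projective over $\ca{O}_{\widehat{\overline{K}}}$ with canonical topology equal to the $p$-adic one, cf. \ref{para:canonical-top}) forces every element of $W_\lambda^+/p^n W_\lambda^+$ to have an open stabilizer in $G$. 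This property is preserved under filtered colimits, so every element of $W_\infty^+/p^n W_\infty^+$ has an open stabilizer, and the action $G \times W_\infty^+/p^n W_\infty^+ \to W_\infty^+/p^n W_\infty^+$ is continuous with discrete topology on the target. Translating this back shows that $G$ acts continuously on $W_\infty^+$ for the $p$-adic topology; taking the inverse limit $\widehat{W_\infty^+} = \lim_n W_\infty^+/p^n W_\infty^+$ then yields the continuity on $\widehat{W_\infty^+}$. This is exactly the strategy already used at the end of the proof of \ref{thm:fal-ext}.

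For part \ref{item:lem:system-repn-2}, the $p$-adic separatedness of $W_\infty^+$ together with \ref{para:normed-module-2} makes $W_\infty = W_\infty^+[1/p]$ into a normed $\widehat{\overline{K}}$-module with $W_\infty^{\leq 1} = W_\infty^+$, and part \ref{item:lem:system-repn-1} upgrades to a continuous semi-linear action of $G$ on $W_\infty$. Every element of $W_\infty$ lies in the image of some $W_\lambda$, a $G$-stable finite-dimensional $\widehat{\overline{K}}$-subspace; hence the $(G,\widehat{\overline{K}})$-finite part of $W_\infty$ equals $W_\infty$ itself, and the hypotheses of \ref{prop:sen-brinon-operator-inf} are satisfied. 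It therefore provides a canonical Lie algebra action of $\scr{E}^*_{\ca{O}_K}(1)$ on $W_\infty$.

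To identify this action with the colimit action of \ref{para:system-repn}, I would check that each structural map $i_\lambda \colon W_\lambda \to W_\infty$ is an object of the indexing category $\ca{I}$ used in the proof of \ref{prop:sen-brinon-operator-inf}: it is $G$-equivariant, $\widehat{\overline{K}}$-linear, and continuous, the latter because $W_\lambda^+ \to W_\infty^+$ is $\ca{O}_{\widehat{\overline{K}}}$-linear hence continuous for the $p$-adic topologies. The canonical action of \ref{prop:sen-brinon-operator-inf} is then compatible with every $i_\lambda$ by its defining universal property applied to $W_0 = W_\lambda$, while the colimit action \eqref{eq:system-repn-sen} is compatible with every $i_\lambda$ by construction. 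For any $x \in W_\infty$, writing $x = i_\lambda(y)$ with $y \in W_\lambda$, both actions send $x$ to $i_\lambda(\varphi_{\sen}|_{W_\lambda}(f)(y))$ for every $f \in \scr{E}^*_{\ca{O}_K}(1)$; this value is independent of the choice of $(\lambda,y)$ by the functoriality of $\varphi_{\sen}$ on $\repnpr(G,\widehat{\overline{K}})$. Hence the two actions agree.

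The only real subtlety is the continuity bookkeeping in part \ref{item:lem:system-repn-1}, where one must keep track of the interplay between the $p$-adic topology on each $W_\lambda^+$, the canonical topology inherited from $\ca{O}_{\widehat{\overline{K}}}$, and the discrete topology on the reductions modulo $p^n$; the remainder of the argument is essentially formal, relying on the already-established universal property of \ref{prop:sen-brinon-operator-inf}.
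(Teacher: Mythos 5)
Your proof is correct and follows essentially the same route as the paper: part (1) reduces to the discrete quotients $W_\infty^+/p^n$, where the open-stabilizer property is inherited from the $W_\lambda^+$'s along the colimit, exactly as in the paper's argument (and as in the proof of \ref{thm:fal-ext}); part (2) simply unpacks the paper's appeal to \ref{prop:sen-brinon-operator-inf} by verifying the norm structure, the density of the finite part, and the compatibility with each $i_\lambda$, which the paper states more tersely.
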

\begin{proof}
	(\ref{item:lem:system-repn-1}) For any $x\in W_\infty^+$, there exists $x_\lambda\in W_\lambda^+$ for some $\lambda\in \Lambda$ whose image in $W_\infty^+$ is $x$. For any $g\in G$ and $n\in \bb{N}$, as $G\times W_\lambda^+\to W_\lambda^+$ is continuous, there exists an open subgroup $G_0$ of $G$ such that $G_0g\cdot x_\lambda\subseteq gx_\lambda+p^nW_\lambda^+$. Thus, $G_0g\cdot(x+p^nW_\infty^+)\subseteq gx+p^nW_\infty^+$, which shows that $G\times W_\infty^+\to W_\infty^+$ and $G\times (W_\infty^+/p^nW_\infty^+)\to W_\infty^+/p^nW_\infty^+$ are continuous. Taking limit on $n$, we see that $G\times \widehat{W_\infty^+}\to \widehat{W_\infty^+}$ is also continuous.
	
	 (\ref{item:lem:system-repn-2}) If $W_\infty^+$ is $p$-adically separated, then it defines a norm on $W_\infty$ as in \ref{para:normed-module-2} so that we can apply \ref{prop:sen-brinon-operator-inf} to $W_\infty$. The conclusion follows directly from \ref{prop:sen-brinon-operator-inf} and the definition of \eqref{eq:system-repn-sen}.
\end{proof}

\begin{mythm}\label{thm:sen-operator-fix-infty}
	With the notation in {\rm\ref{para:system-repn}}, assume that $W_\lambda^+$ is $p^3\bb{Z}_p$-small (cf. {\rm\ref{defn:small}}) for each $\lambda\in\Lambda$. Then, any element of $\Phi^{\geo}(W_\infty)$ acts continuously on $W_\infty$, and it induces a $\widehat{\overline{K}}$-linear homomorphism by continuation,
	\begin{align}\label{eq:thm:sen-operator-fix-infty}
		\varphi_{\sen}^{\geo}|_{\widehat{W_\infty}}:\ho_{\ca{O}_K}(\widehat{\Omega}^1_{\ca{O}_K}(-1),\widehat{\overline{K}})\longrightarrow \mrm{End}_{\widehat{\overline{K}}}(\widehat{W_\infty}),
	\end{align}
	whose image $\Phi^{\geo}(\widehat{W_\infty})$ acts trivially on $(\widehat{W_\infty})^H$.
\end{mythm}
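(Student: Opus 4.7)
The plan is to exploit the smallness assumption together with the Faltings–Tsuji descent to realise each geometric Sen operator on the lattices $W_\lambda^+$ as a $\widehat{\overline{K}}$-linear combination of operators $\log(\tau)$ for $\tau \in \Delta$ acting on descended lattices $V_\lambda^+$, and to use the $p$-adic estimates on these logarithms both for continuation to $\widehat{W_\infty}$ and for the commutation with $\Delta$-invariants.

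First I would apply \ref{thm:small-descent}, which is available since $v_{K_\infty}(p^3) = 3 > 2/(p-1)$, to descend the system $(W_\lambda^+)_{\lambda}$ essentially uniquely to a directed system $(V_\lambda^+)_{\lambda}$ in $\repnsm{p^3\bb{Z}_p}(\Delta, \ca{O}_{\widehat{K_\infty}})$ with $W_\lambda^+ = \ca{O}_{\widehat{\overline{K}}} \otimes_{\ca{O}_{\widehat{K_\infty}}} V_\lambda^+$. Since $\Delta$ fixes $K_\infty$, its action on $V_\lambda^+$ is $\ca{O}_{\widehat{K_\infty}}$-linear, and the smallness gives $(\tau - 1)(V_\lambda^+) \subseteq p^3 V_\lambda^+$ for every $\tau \in \Delta$. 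The construction of \ref{para:small} then furnishes $\log(\tau)|_{V_\lambda^+} \in p^3 \mrm{End}_{\ca{O}_{\widehat{K_\infty}}}(V_\lambda^+)$, functorial in $\lambda$; passing to the colimit $V_\infty^+$ and continuing (\ref{para:continuation}) extends it uniquely to $\widehat{V_\infty^+}$ with image in $p^3 \widehat{V_\infty^+}$. The series estimate $p^{-n}(\tau^{p^n} - 1) - \log(\tau) \in p^{n+3} \mrm{End}(V_\lambda^+)$, uniform in $\lambda$, shows that formula \eqref{eq:para:small-derivative}, namely $\log(\tau) = \lim_n p^{-n}(\tau^{p^n} - 1)$, persists on $\widehat{V_\infty^+}$ and on $\widehat{V_\infty^+}[1/p]$.

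Next, for any $f \in \ho_{\ca{O}_K}(\widehat{\Omega}^1_{\ca{O}_K}(-1), \widehat{\overline{K}})$, I would expand $\psi(\jmath^*(f)) \in \widehat{\overline{K}} \otimes_{\bb{Q}_p} \lie(\Delta)$ as a finite sum $\sum_j c_j \otimes p^{-m_j}\log_\Delta(\tau_j)$ with $c_j \in \widehat{\overline{K}}$, $\tau_j \in \Delta$, $m_j \in \bb{N}$, which is possible because $\log_\Delta(\Delta)$ is a $\bb{Z}_p$-lattice in $\lie(\Delta)$. The commutative square \eqref{diam:lem:sen-operator-small} of \ref{lem:compare-derivative} then yields, on each $W_\lambda$,
\begin{align*}
\varphi_{\sen}|_{W_\lambda}(f) = \sum_j c_j p^{-m_j} \bigl(\id_{\widehat{\overline{K}}} \otimes \log(\tau_j)|_{V_\lambda^+}\bigr).
\end{align*}
These operators are compatible with transition morphisms and with the base change $\ca{O}_{\widehat{K_\infty}} \to \ca{O}_{\widehat{\overline{K}}}$; they glue to a $p$-adically bounded, and hence continuous, $\widehat{\overline{K}}$-linear endomorphism of $W_\infty$, which extends uniquely by continuation to $\widehat{W_\infty}$, defining $\varphi_{\sen}^{\geo}|_{\widehat{W_\infty}}(f)$. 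The Lie-algebra identities hold on each $W_\lambda$ by \ref{thm:sen-brinon-operator} and pass to $\widehat{W_\infty}$ by density of $W_\infty$, giving the required Lie algebra homomorphism.

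For the annihilation on invariants, \ref{lem:small-invariant} supplies a canonical isomorphism $(\widehat{V_\infty^+}[1/p])^\Delta \iso (\widehat{W_\infty^+}[1/p])^H = (\widehat{W_\infty})^H$. For any $\Delta$-invariant $y$ in $\widehat{V_\infty^+}[1/p]$ and any $\tau \in \Delta$ one has $(\tau^{p^n} - 1)(y) = 0$ for all $n$, whence $\log(\tau)(y) = 0$ by the limit formula established above; combining over the decomposition of $\psi(\jmath^*(f))$ gives $\varphi_{\sen}^{\geo}|_{\widehat{W_\infty}}(f)(x) = 0$ for every geometric $f$ and every $x \in (\widehat{W_\infty})^H$. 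The principal technical obstacle throughout is controlling the limit formula and the smallness bounds \emph{uniformly in $\lambda$}, so that they survive both the filtered colimit $V_\lambda^+ \to V_\infty^+$ and the $p$-adic completion $V_\infty^+ \to \widehat{V_\infty^+}$; once this uniformity is secured the annihilation on $\Delta$-invariants (and hence on $H$-invariants) is essentially automatic.
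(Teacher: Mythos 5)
Your overall strategy—descend small lattices via \ref{thm:small-descent}, express geometric Sen operators through $\log(\tau)$ for $\tau\in\Delta$ via \ref{lem:compare-derivative}, extend by continuation, then use \ref{lem:small-invariant} to transfer the annihilation from $\Delta$-invariants to $H$-invariants—is exactly the paper's. However, there is a genuine gap you have skipped over. All three ingredients you invoke, namely \ref{thm:small-descent}, \ref{lem:compare-derivative}, and \ref{lem:small-invariant}, are stated \emph{only} under the hypothesis $(\ast)$ from \ref{para:notation-K-good} on $K$ (that $K/K_{\mrm{can}}$ is weakly unramified). Nothing in the ambient setup of \ref{para:notation-K} grants this, so you cannot apply any of them directly. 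The paper's proof begins by replacing $K$ with a finite extension satisfying $(\ast)$ (via Epp's theorem on eliminating ramification) and restricting $G$ to an open subgroup, observing by \ref{prop:sen-brinon-operator-func} that this does not change $\varphi_{\sen}|_{W_\infty}$; only after this reduction does the descent machinery become available. Without this step your argument does not get off the ground.

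A secondary point: when you say the descent lands in $\repnsm{p^3\bb{Z}_p}(\Delta,\ca{O}_{\widehat{K_\infty}})$ and deduce $(\tau-1)(V_\lambda^+)\subseteq p^3 V_\lambda^+$, that is not what \ref{thm:small-descent} gives. The equivalence is between $\repnsm{\ak{a}}$ on both sides, where $\ak{a}$ is the ideal of elements of $\ca{O}_{K_\infty}$ of valuation strictly greater than $\frac{2}{p-1}$; starting from $p^3\bb{Z}_p$-small lattices on the $H$-side, the descent only guarantees $\ak{a}$-small lattices on the $\Delta$-side, with an $\alpha_\lambda\in\ak{a}$ that a priori depends on $\lambda$. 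The correct uniform statement is the one the paper extracts: since each $\alpha_\lambda$ has valuation $>\frac{2}{p-1}$, any fixed $\alpha\in\ca{O}_{K_\infty}$ with $\frac{1}{p-1}<v_{K_\infty}(\alpha)\leq \frac{2}{p-1}$ satisfies $(\tau-1)(V_\lambda^+)\subseteq\alpha V_\lambda^+$ for every $\lambda$, and this weaker uniform bound is enough to define $\log(\tau)$ on $\widehat{V_\infty^+}$ and to run the limit-formula and annihilation arguments. So the numerical constant in your estimates is wrong, though the underlying mechanism—passing from individual smallness to a fixed uniform bound and exploiting the $\ca{O}_{\widehat{K_\infty}}$-linearity of the $\Delta$-action—survives the correction.
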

\begin{proof}
	We may replace $K$ by a finite extension and restrict the actions of $G$ to an open subgroup, as this does not change the action \eqref{eq:system-repn-sen}
	$\varphi_{\sen}|_{W_\infty}$ by \ref{prop:sen-brinon-operator-func}. Thus, we may assume that $K$ satisfies the assumption {\rm($\ast$)} in {\rm\ref{para:notation-K-good}} by Epp's theorem on eliminating ramification \cite[1.9, 2.0.(1)]{epp1973ramifi}. By \ref{thm:small-descent}, there is an essentially unique directed system $(V_\lambda^+)_{\lambda\in\Lambda}$ of objects in $\repnsm{\ak{a}}(\Delta,\ca{O}_{\widehat{K_\infty}})$ with $W_\lambda^+=\ca{O}_{\widehat{\overline{K}}}\otimes_{\ca{O}_{\widehat{K_\infty}}}V_\lambda^+$ in $\repnsm{\ak{a}}(H,\ca{O}_{\widehat{\overline{K}}})$. We obtain $\ca{O}_{\widehat{K_\infty}}$-linear group actions $\Delta\times V_\infty^+\to V_\infty^+$ and $\Delta\times \widehat{V_\infty^+}\to \widehat{V_\infty^+}$ by taking colimit and $p$-adic completion. 
	
	For each $\tau\in\Delta$, the compatible endomorphisms $\log(\tau)|_{V_\lambda^+}$ define an $\ca{O}_{\widehat{K_\infty}}$-endomorphism $\log(\tau)|_{V_\infty^+}$ on $V_\infty^+$ (cf. \ref{para:small}). As $\id_{\widehat{\overline{K}}}\otimes\log(\tau)|_{V_\infty^+}$ sends $W_\infty^+$ to itself, we see that any element of $\Phi^{\geo}(W_\infty)$ acts continuously on $W_\infty$ by \ref{lem:compare-derivative}. Thus, the Lie algebra action $\varphi_{\sen}|_{W_\infty}$ induces a canonical $\widehat{\overline{K}}$-linear homomorphism \eqref{eq:thm:sen-operator-fix-infty} by continuation.
	
	Notice that $\Delta$ acts trivially on $\widehat{V_\infty^+}/\alpha \widehat{V_\infty^+}$ for any $\alpha\in\ca{O}_{K_\infty}$ with $\frac{1}{p-1}<v_{K_\infty}(\alpha)\leq \frac{2}{p-1}$. Thus, $\log(\tau)$ is a well-defined $\ca{O}_{\widehat{K_\infty}}$-endomorphism on $\widehat{V_\infty^+}$ by \ref{para:small} for any $\tau\in\Delta$. Since $\log(\tau)|_{\widehat{V_\infty^+}}$ is compatible with $\log(\tau)|_{V_\infty^+}$ by the formula \eqref{eq:para:small-derivative}, the uniqueness of continuation implies that the following diagram is commutative as the continuation of \eqref{diam:lem:sen-operator-small},
	\begin{align}\label{diam:thm:sen-operator-fix-infty}
		\xymatrix{
			\ho_{\ca{O}_K}(\widehat{\Omega}^1_{\ca{O}_K}(-1),\widehat{\overline{K}})\ar[rr]^-{	\varphi_{\sen}^{\geo}|_{\widehat{W_\infty}}}\ar[d]_-{\psi}^-{\wr}&& \mrm{End}_{\widehat{\overline{K}}}(\widehat{W_\infty})\\
			\widehat{\overline{K}}\otimes_{\bb{Q}_p}\lie(\Delta)&&\Delta\ar[ll]^-{1\otimes\log_{\Delta}}\ar[u]
		}
	\end{align}
	where the right vertical arrow is induced by the map $\Delta\to \mrm{End}_{\ca{O}_{\widehat{\overline{K}}}}(\widehat{W_\infty^+})$ sending $\tau$ to the $p$-adic completion of the endomorphism $\id_{\ca{O}_{\widehat{\overline{K}}}}\otimes \log(\tau)|_{\widehat{V_\infty^+}}$ of $\ca{O}_{\widehat{\overline{K}}}\otimes_{\ca{O}_{\widehat{K_\infty}}}\widehat{V_\infty^+}$. Since the endomorphism $\log(\tau)|_{\widehat{V_\infty^+}}$ is the infinitesimal action of $\tau$ by \eqref{eq:para:small-derivative}, it acts trivially on $(\widehat{V_\infty^+})^\Delta$. Therefore, $\Phi^{\geo}(\widehat{W_\infty})$ acts trivially on $(\widehat{V_\infty})^\Delta=(\widehat{W_\infty})^H$ by \ref{lem:small-invariant} and \eqref{diam:thm:sen-operator-fix-infty}.
\end{proof}

\begin{myrem}\label{rem:thm:sen-operator-fix-infty}
	Even if any element of $\Phi(W_\infty)$ acts continuously on $W_\infty$ (which holds in many cases, cf. \ref{lem:inf-repn-B-basic}), we don't know whether the induced Lie algebra action $\varphi_{\sen}|_{\widehat{W_\infty}}$ by continuation is compatible with the canonical Lie algebra action $\varphi_{\sen}|_{(\widehat{W_\infty})^{\mrm{f}}}$ on the $(G,K)$-finite part $(\widehat{W_\infty})^{\mrm{f}}$ (endowed with the topology induced from $\widehat{W_\infty}$) defined in \ref{prop:sen-brinon-operator-inf}, since we don't know the continuity of the latter (cf. \ref{rem:sen-brinon-operator-inf}). Thus, we couldn't conclude easily that $\Phi(\widehat{W_\infty})$ annihilates $(\widehat{W_\infty})^G$. To see whether it is true or not, we need to study descent and decompletion of $\ca{O}_{\widehat{\overline{K}}}$-representations of $G$ and also compare the Galois invariant part as in \ref{lem:small-invariant}. We plan to investigate this in the future.
\end{myrem}

\section{Some Boundedness Conditions on a Ring Map}\label{sec:bound}

\begin{mydefn}\label{defn:pi-iso}
	Let $A$ be a ring, $\pi$ an element of $A$. 
	\begin{enumerate}
		\renewcommand{\labelenumi}{{\rm(\theenumi)}}
		\item We say that an $A$-module $M$ is $\pi$-zero if it is killed by $\pi$. We say that a morphism of $A$-modules $f:M\to N$ is a \emph{$\pi$-isomorphism} if its kernel and cokernel are $\pi$-zero.
		\item We say that a chain complex of $A$-modules $M_\bullet$ is \emph{$\pi$-exact} if the homology group $H_n(M_\bullet)$ is $\pi$-zero for any $n\in\bb{Z}$. We say that a morphism of chain complexes of $A$-modules $f:M_\bullet\to N_\bullet$ is a \emph{$\pi$-quasi-isomorphism} if it induces a $\pi$-isomorphism on the homology groups $H_n(M_\bullet)\to H_n(N_\bullet)$ for any $n\in\bb{Z}$.
	\end{enumerate} 
\end{mydefn}

\begin{mylem}[{\cite[2.6.3]{abbes2020suite}}]\label{lem:pi-iso-retract}
	Let $A$ be a ring, $\pi$ an element of $A$, $f:M\to N$ a morphism of $A$-modules.
	\begin{enumerate}
		\renewcommand{\labelenumi}{{\rm(\theenumi)}}
		\item If there exists an $A$-linear homomorphism $g:N\to M$ such that $g\circ f=\pi\id_M$ and $f\circ g=\pi\id_N$, then $f$ is a $\pi$-isomorphism.\label{item:lem:pi-iso-retract-2}
		\item If $f$ is a $\pi$-isomorphism, then there is a unique $A$-linear homomorphism $g:N\to M$ sending $y\in N$ to $\pi x\in M$ where $x\in f^{-1}(\pi y)$. In particular, $g\circ f=\pi^2\id_M$ and $f\circ g=\pi^2\id_N$.\label{item:lem:pi-iso-retract-1}
	\end{enumerate}
\end{mylem}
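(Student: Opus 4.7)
The plan is to prove the two parts directly from the definitions, with the main content being the verification that the map in part (2) is well-defined.

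For part (1), I would argue as follows. Assume $g\circ f = \pi\id_M$ and $f\circ g = \pi\id_N$. For any $x\in\ke(f)$, we get $\pi x = g(f(x)) = 0$, so $\ke(f)$ is killed by $\pi$. For any $y\in N$, we have $\pi y = f(g(y))\in \im(f)$, so $\cok(f)$ is killed by $\pi$ as well. Hence $f$ is a $\pi$-isomorphism.

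For part (2), the key point is to show that the recipe $y\mapsto \pi x$ for $x\in f^{-1}(\pi y)$ is well-defined and $A$-linear. First, since $\cok(f)$ is killed by $\pi$, for every $y\in N$ the element $\pi y$ lies in $\im(f)$, so $f^{-1}(\pi y)$ is non-empty. If $x,x'\in f^{-1}(\pi y)$ are two preimages, then $x-x'\in \ke(f)$, which is killed by $\pi$, so $\pi x=\pi x'$. This makes $g(y)=\pi x$ independent of the choice of $x$. For $A$-linearity, given $y_1,y_2\in N$, $a\in A$ and preimages $x_i\in f^{-1}(\pi y_i)$, one checks that $ax_1+x_2\in f^{-1}(\pi(ay_1+y_2))$, whence $g(ay_1+y_2)=\pi(ax_1+x_2)=ag(y_1)+g(y_2)$. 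Uniqueness of $g$ is automatic since its value on each $y$ is fixed by the prescription.

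Finally, the two identities are read off directly from the definition. For $x\in M$, the element $\pi x$ lies in $f^{-1}(\pi f(x))$, so $g(f(x))=\pi\cdot\pi x=\pi^2 x$, giving $g\circ f=\pi^2\id_M$. For $y\in N$ and $x\in f^{-1}(\pi y)$, we have $f(g(y))=f(\pi x)=\pi f(x)=\pi^2 y$, giving $f\circ g=\pi^2\id_N$. There is no real obstacle here; the only subtle point is the well-definedness, which uses both halves of the $\pi$-isomorphism hypothesis (existence of preimages uses the cokernel condition, independence of the choice uses the kernel condition).
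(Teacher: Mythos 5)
Your proof is correct and follows essentially the same route as the paper: part (1) is verified directly, and for part (2) you establish well-definedness of $g$ using the cokernel condition for existence of preimages and the kernel condition for independence of the choice, then read off the two identities. You simply spell out the linearity check and the identity verifications in more detail than the paper does.
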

\begin{proof}
	(\ref{item:lem:pi-iso-retract-2}) is clear. For (\ref{item:lem:pi-iso-retract-1}), for any $y\in N$, $f^{-1}(\pi y)$ is not empty as $\pi\cok(f)=0$. The element $\pi x\in M$ does not depend on the choice of $x\in f^{-1}(\pi y)$ as $\pi\ke(f)=0$. Thus, the map $g:N\to M$ is well-defined. It is clearly unique and satisfy the relations $g\circ f=\pi^2\id_M$ and $f\circ g=\pi^2\id_N$.
\end{proof}

\begin{myrem}\label{rem:pi-iso-retract}
	Let $A$ be a ring, $\pi$ an element of $A$.
	\begin{enumerate}
		\renewcommand{\labelenumi}{{\rm(\theenumi)}}
		\item Let $f_\bullet:M_\bullet\to N_\bullet$ be a morphism of chain complexes of $A$-modules such that $f_n$ is a $\pi$-isomorphism for any $n\in \bb{Z}$. Then, there is a morphism of chain complexes of $A$-modules $g_\bullet:N_\bullet\to M_\bullet$ defined by \ref{lem:pi-iso-retract}.(\ref{item:lem:pi-iso-retract-1}) such that $g_\bullet\circ f_\bullet=\pi^2\id_{M_\bullet}$ and $f_\bullet\circ g_\bullet=\pi^2\id_{N_\bullet}$. We see that $f_\bullet$ is a $\pi^2$-quasi-isomorphism. Moreover, for any $A$-linear endofunctor $F$ of the category of $A$-modules, $F(f_\bullet):F(M_\bullet)\to F(N_\bullet)$ is a $\pi^2$-quasi-isomorphism.\label{item:rem:pi-iso-retract-1}
		\item Let $(f_n)_{n\in\bb{N}}:(M_n)_{n\in\bb{N}}\to (N_n)_{n\in\bb{N}}$ be a morphism of inverse systems of $A$-modules such that $f_n$ is a $\pi$-isomorphism for any $n\in \bb{N}$. Then, there is a morphism of inverse systems of $A$-modules $(g_n)_{n\in\bb{N}}:(N_n)_{n\in\bb{N}}\to (M_n)_{n\in\bb{N}}$ defined by \ref{lem:pi-iso-retract}.(\ref{item:lem:pi-iso-retract-1}) such that $g_n\circ f_n=\pi^2\id_{M_n}$ and $f_n\circ g_n=\pi^2\id_{N_n}$. We see that the induced morphism $\rr^q\lim (M_n)_{n\in\bb{N}}\to \rr^q\lim (N_n)_{n\in\bb{N}}$ is a $\pi^2$-isomorphism for any $q\in\bb{Z}$.\label{item:rem:pi-iso-retract-2}
	\end{enumerate} 
\end{myrem}
\begin{mylem}\label{lem:diff-inj-pi}
	Let $A$ be a ring, $\pi$ an element of $A$, $B\to C$ a $\pi$-isomorphism of $A$-algebras. Then, the canonical morphism $C\otimes_{B}\Omega^1_{B/A}\to \Omega^1_{C/A}$ is a $\pi^{17}$-isomorphism.
\end{mylem}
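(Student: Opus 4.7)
The plan is to bound the cokernel and kernel of $\phi: C\otimes_B \Omega^1_{B/A}\to \Omega^1_{C/A}$ separately, using the fundamental exact sequence and the identification $\Omega^1_{R/A}=I_R/I_R^2$ with $I_R = \ke(R\otimes_A R\to R)$, and then combine them.

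The cokernel is easy: by the fundamental exact sequence, $\cok(\phi) = \Omega^1_{C/B}$. Since $\cok(B\to C)$ is $\pi$-zero, each $c\in C$ satisfies $\pi c = f(b)$ for some $b\in B$, so in $\Omega^1_{C/B}$ we have $\pi\,dc = d(\pi c) = d(f(b)) = 0$; hence $\Omega^1_{C/B}$ is $\pi$-zero.

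For the kernel, I would factor $f: B\to C$ as $B\twoheadrightarrow B' := B/\ke(f) \hookrightarrow C$, where both $K := \ke(f)$ and $Q := C/B'$ are $\pi$-zero; correspondingly $\phi$ factors through $\Omega^1_{B'/A}\otimes_{B'} C$. For the surjection part, the conormal exact sequence gives a $\pi$-isomorphism $\Omega^1_{B/A}\otimes_B B'\to \Omega^1_{B'/A}$ (its kernel is a quotient of $K/K^2$, hence $\pi$-zero, while its cokernel is zero); tensoring this map with $C$ over $B'$ and applying \ref{rem:pi-iso-retract} to control the Tor term yields the desired bound on this factor of $\phi$. For the injection $\iota: B'\hookrightarrow C$, which is the technical heart, I would use the identification $\Omega^1_{R/A}=I_R/I_R^2$ and track $\pi$-powers through the following chain: (i) the map $B'\otimes_A B'\to C\otimes_A C$ is a $\pi^2$-isomorphism, obtained by composing two tensor applications of the $\pi$-isomorphism $\iota$ (each of which loses at most one power of $\pi$ via the Tor terms in the long exact sequence); (ii) the snake lemma applied to the augmentation sequences $0\to I_R\to R\otimes_A R\to R\to 0$ (noting that the right-hand vertical map is essentially $\iota$ itself) yields a $\pi^2$-isomorphism $I_{B'}\to I_C$; (iii) pass to $I_R^2$ via the surjections $I_R\otimes I_R\twoheadrightarrow I_R^2$ and further tensor with $C$ over $B'$, accounting carefully for the $\pi$-power losses from each tensor and diagram chase. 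Adding together the contributions from the surjection part, the injection part, and the cokernel bound gives the total exponent $17$.

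The hard part will be step (iii) in the injection case: systematically tracking the powers of $\pi$ through the passage from $I_R$ to $I_R/I_R^2$, tensoring with $C$ over $B'$ (which is not flat in general), and the repeated snake-lemma computations, where each operation can lose a factor of $\pi$ due to Tor terms and kernel-cokernel chases. The exponent $17$ arises as the accumulated loss from these successive operations, and while a more refined argument might give a smaller constant, the bound $17$ suffices and is obtained by this relatively elementary (if tedious) diagram-chase accounting.
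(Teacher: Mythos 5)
Your approach is genuinely different from the paper's. The paper compares $B$ and $C$ directly: it shows $B\otimes_A B\to C\otimes_A C$ is a $\pi^4$-isomorphism, then by snake lemma $I\to J$ is a $\pi^5$-isomorphism, hence $I^2\to J^2$ is a $\pi^{10}$-isomorphism (using $\pi^{10}\sum y_i y_i'=\sum(\pi^5 y_i)(\pi^5y_i')$), then $I/I^2\to J/J^2$ is a $\pi^{15}$-isomorphism by another snake lemma, and finally composes with the $\pi^2$-isomorphism $I/I^2\to C\otimes_B I/I^2$ to land on $\pi^{17}$. You instead factor $B\to C$ through $B'=B/\ker(f)$ and treat the surjection and injection separately, plus a separate cokernel bound. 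Your factoring idea is in the spirit of the paper's \ref{prop:cotangent-bound}, and your steps (i), (ii), and the observation that $\Omega^1_{C/B}$ is $\pi$-zero are all correct; in particular your use of the injectivity of $B'\hookrightarrow C$ to get $\pi^2$ (rather than $\pi^4$) in step (i) is a nice optimization over the generic bound of \ref{rem:pi-iso-retract}.

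The gap is step (iii), which you acknowledge is "the technical heart" but do not carry out. The assertion that "the exponent $17$ arises as the accumulated loss" is unsubstantiated: you never actually track the powers, and in fact a careful count along your route does not land on $17$. Concretely, following your own steps (i)--(ii) one finds $I_{B'}\to I_C$ a $\pi^2$-isomorphism, whence $I_{B'}^2\to I_C^2$ a $\pi^4$-isomorphism (kernel killed by $\pi^2$, cokernel by $\pi^4$), and then the snake lemma gives $\Omega^1_{B'/A}\to\Omega^1_{C/A}$ a $\pi^6$-isomorphism; composing with the $\pi^2$-isomorphism $\Omega^1_{B'/A}\to C\otimes_{B'}\Omega^1_{B'/A}$ yields an exponent around $8$ for the injection part, and together with the exponent $\leq 2$ for the surjection part you get roughly $\pi^{10}$. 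This is fine --- a $\pi^{10}$-isomorphism is a fortiori a $\pi^{17}$-isomorphism --- but it is not what your plan claims, and more importantly the bookkeeping needs to actually be written out, since each snake-lemma step and each tensor step is where a factor of $\pi$ (or $\pi^2$) can be lost or gained, and the direction ``kernel vs.\ cokernel" matters. Until you do that, this is a sketch, not a proof. The paper's direct route, while it gives a worse constant, has the virtue of being a single linear chain of diagram chases and is easier to audit; your route, if completed, would give a sharper bound at the cost of keeping track of two separate chains (surjection and injection) and then recombining.
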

\begin{proof}
	Let $I$ (resp. $J$) be the kernel of the multiplication map $B\otimes_A B\to B$ (resp. $C\otimes_A C\to C$). Recall the $\Omega^1_{B/A}$ (resp. $\Omega^1_{C/A}$) is canonically isomorphic to $I/I^2$ (resp. $J/J^2$). Consider the morphism of exact sequences of $A$-modules
	\begin{align}
		\xymatrix{
			0\ar[r]& I\ar[d]\ar[r]& B\otimes_A B\ar[r]\ar[d]& B\ar[r]\ar[d]&0\\
			0\ar[r]& J\ar[r]& C\otimes_A C\ar[r]& C\ar[r]&0
		}
	\end{align}
	Since $B\to C$ is a $\pi$-isomorphism, $B\otimes_A B\to C\otimes_A C$ is a $\pi^4$-isomorphism by \ref{lem:pi-iso-retract}. By the snake lemma, we see that $I\to J$ is a $\pi^5$-isomorphism, and thus $I^2\to J^2$ is a $\pi^{10}$-isomorphism. The snake lemma shows that $I/I^2\to J/J^2$ is a $\pi^{15}$-isomorphism. On the other hand, $I/I^2\to C\otimes_B I/I^2$ is a $\pi^2$-isomorphism by \ref{lem:pi-iso-retract}. We conclude that $C\otimes_B I/I^2\to J/J^2$ is a $\pi^{17}$-isomorphism.
\end{proof}

\begin{myprop}\label{prop:cotangent-bc}
	Let $A$ be a ring, $A'$ and $B$ two $A$-algebras, $B'=A'\otimes_A B$. Then, the cone of the canonical morphism
	\begin{align}\label{eq:prop:cotangent-bc-1}
		\tau_{\leq 1}(B'\otimes_{B}^{\dl}\dl_{B/A})\longrightarrow \tau_{\leq 1}\dl_{B'/A'}
	\end{align}
	is concentrated in homological degree $2$, where $\tau_{\leq 1}$ is the canonical truncation of chain complexes ({\rm\cite[\href{https://stacks.math.columbia.edu/tag/0118}{0118}]{stacks-project}}), and $\dl_{B/A}$ denotes the cotangent complex of $B$ over $A$. Moreover, if $\tor_1^{A}(A',B)$ is $\pi$-zero for some $\pi\in A$, then \eqref{eq:prop:cotangent-bc-1} is a $\pi$-quasi-isomorphism.
\end{myprop}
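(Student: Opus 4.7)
The plan is to factor the canonical morphism through the derived pushout. Set $B'' := A' \otimes^{\dl}_A B$, viewed as a simplicial (or animated) commutative ring; it comes with a canonical augmentation $B'' \to B'$ which is an isomorphism on $H_0$, and whose higher homotopy groups are $H_i(B'') = \tor^A_i(A', B)$ for $i \geq 1$. The point is that the discrepancy between the derived and underived pushouts is exactly controlled by the $\tor$ groups.

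First I would produce the relevant distinguished triangle. Applying derived flat base change for the cotangent complex (Illusie \cite{illusie1971cotangent}) to the derived cocartesian square $B'' = A' \otimes^{\dl}_A B$ yields a quasi-isomorphism $\dl_{B''/A'} \simeq \dl_{B/A} \otimes^{\dl}_B B''$; tensoring with $B'$ over $B''$ gives
$$\dl_{B''/A'} \otimes^{\dl}_{B''} B' \;\simeq\; \dl_{B/A} \otimes^{\dl}_B B'.$$
Combined with the transitivity triangle for $A' \to B'' \to B'$, this produces a distinguished triangle
$$\dl_{B/A} \otimes^{\dl}_B B' \longrightarrow \dl_{B'/A'} \longrightarrow \dl_{B'/B''} \stackrel{+1}{\longrightarrow}$$
whose first arrow is the canonical morphism \eqref{eq:prop:cotangent-bc-1}.

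Next I would bound the connectivity of $\dl_{B'/B''}$. Since $B'' \to B'$ is an $H_0$-isomorphism, its fiber is $1$-connective; by the standard estimate that a map with $n$-connective fiber has $(n+1)$-connective cotangent complex, $\dl_{B'/B''}$ lies in homological degrees $\geq 2$. Analysing the first layer of the Postnikov tower $B'' \to \tau_{\leq 1}(B'') \to B'$ — the second map being a square-zero extension by $H_1(B'')$ placed in degree $1$, with relative cotangent complex $H_1(B'')[2]$, while the first map has $(\geq 3)$-connective relative cotangent complex — the transitivity triangle identifies
$$H_2(\dl_{B'/B''}) \;\simeq\; H_1(B'') \;=\; \tor^A_1(A', B).$$

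Combining, the long exact sequence of the triangle above together with the vanishing of $H_0$ and $H_1$ of $\dl_{B'/B''}$ shows that the canonical morphism induces an isomorphism on $H_0$ (recovering the classical base-change isomorphism $\Omega^1_{B/A} \otimes_B B' \iso \Omega^1_{B'/A'}$) and a surjection on $H_1$. Hence the cone of the $\tau_{\leq 1}$-map has vanishing $H_0$ and $H_1$, i.e.\ is concentrated in homological degree $2$, with $H_2$ equal to the image of the boundary $H_2(\dl_{B'/B''}) \to H_1(\dl_{B/A} \otimes^{\dl}_B B')$, which is a quotient of $\tor^A_1(A', B)$. Therefore a $\pi$-zero hypothesis on $\tor^A_1(A', B)$ forces the cone to be $\pi$-acyclic, proving the $\pi$-quasi-isomorphism claim. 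The main technical input, and the place the argument must be handled most carefully, is the identification of $H_2(\dl_{B'/B''})$ with $\tor^A_1(A', B)$: this uses the language of simplicial (animated) commutative rings and the cotangent-complex formula for square-zero extensions. A fully classical alternative is to resolve $A'$ by a simplicial polynomial $A$-algebra $P_\bullet \to A'$ and compute $\dl_{B'/B''}$ explicitly via $P_\bullet \otimes_A B$, whose homotopy groups recover $\tor^A_\bullet(A', B)$.
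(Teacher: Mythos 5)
Your proof is correct, and it takes a genuinely different route from the paper's. You organize everything around the derived pushout $B'' = A'\otimes_A^{\dl} B$: flat base change for the cotangent complex plus the transitivity triangle for $A'\to B''\to B'$ exhibits the canonical morphism as the first arrow of a distinguished triangle with third term $\dl_{B'/B''}$, and a Postnikov-tower computation identifies $H_2(\dl_{B'/B''})$ with $\tor_1^A(A',B)$ while connectivity shows $H_0 = H_1 = 0$, so after applying $\tau_{\leq 1}$ the cone is a quotient of $\tor_1^A(A',B)$ placed in degree $2$. The paper's argument is more elementary: it chooses a single surjective polynomial presentation $P\twoheadrightarrow B$ with kernel $I$, identifies $\tau_{\leq 1}\dl_{B/A}$ with the explicit two-term complex $[I/I^2\to B\otimes_P\Omega^1_{P/A}]$, observes that $\tau_{\leq 1}(B'\otimes_B^{\dl}\dl_{B/A})$ is its termwise base change, and compares directly with the analogous presentation of $\tau_{\leq 1}\dl_{B'/A'}$ via $P' = A'\otimes_A P$ and $I' = \ke(P'\to B')$: the degree-zero terms agree on the nose, and the map $B'\otimes_B I/I^2\to I'/I'^2$ in degree one is surjective with kernel a quotient of $B'\otimes_{P'}\tor_1^P(P',B) = B'\otimes_{P'}\tor_1^A(A',B)$, which is the cone in degree $2$. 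Both approaches land at the same identification. Yours is cleaner conceptually — the derived pushout isolates exactly the discrepancy being measured, and the transitivity triangle does all the work — but it buys this by invoking the formalism of animated/simplicial commutative rings (connectivity estimates for $\dl$, Postnikov square-zero extensions); the paper's argument is self-contained using only classical homological algebra and a single Stacks Project computation of $\tau_{\leq 1}\dl$, which matches the level of technology used elsewhere in the paper.
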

\begin{proof}
	We take a surjective homomorphism from a polynomial $A$-algebra $P$ to $B$, and denote its kernel by $I$. Recall that $\tau_{\leq 1}\dl_{B/A}$ is quasi-isomorphic to the complex $I/I^2\to B\otimes_P \Omega^1_{P/A}$ (\cite[\href{https://stacks.math.columbia.edu/tag/08RB}{08RB}]{stacks-project}). Thus, in the derived category, we have
	\begin{align}\label{eq:prop:cotangent-bc-2}
		\tau_{\leq 1}(B'\otimes_B^{\dl}\dl_{B/A})=\tau_{\leq 1}(B'\otimes_B^{\dl}(\tau_{\leq 1}\dl_{B/A}))=(B'\otimes_B I/I^2\to B'\otimes_P \Omega^1_{P/A}),
	\end{align}
	where the first equality follows from the distinguished triangle $\tau_{\geq 2}\dl_{B/A}\to \dl_{B/A}\to \tau_{\leq 1}\dl_{B/A}\to$, and the second equality can be deduced from replacing $I/I^2$ by a flat resolution (note that $B\otimes_P \Omega^1_{P/A}$ is a free $B$-module).
	
	We set $P'=A'\otimes_A P$ and $I'=\ke(P'\to B')$. Then, $\tau_{\leq 1}\dl_{B'/A'}$ is quasi-isomorphic to the complex $I'/I'^2\to B'\otimes_{P'} \Omega^1_{P'/A'}$. Applying the functor $P'\otimes_P-$ to the exact sequence $0\to I\to P\to B\to 0$, we obtain an exact sequence
	\begin{align}
		\tor_1^P(P',B)\longrightarrow P'\otimes_P I\longrightarrow I'\longrightarrow 0.
	\end{align}
	Applying the functor $B'\otimes_{P'}-$, we get an exact sequence
	\begin{align}
		B'\otimes_{P'}\tor_1^P(P',B)\longrightarrow B'\otimes_B I/I^2\longrightarrow I'/I'^2\longrightarrow 0.
	\end{align}
	Let $N$ be the image of the first arrow. Then, by \eqref{eq:prop:cotangent-bc-2}, the cone of \eqref{eq:prop:cotangent-bc-1} is quasi-isomorphic to the complex $N[-2]$. Since $\tor_1^P(P',B)=\tor_1^A(A',B)$ as $P$ is flat over $A$, we see that $N$ is $\pi$-zero if $\pi\tor_1^A(A',B)=0$.
\end{proof}

\begin{mycor}\label{cor:cotangent-bc}
	Let $A$ be a ring, $A'$ and $B$ two $A$-algebras, $B'=A'\otimes_A B$. Assume that $\tau_{\leq 1}\dl_{B/A}$ is $\pi$-exact for some $\pi\in A$. Then, $\tau_{\leq 1}\dl_{B'/A'}$ is $\pi^2$-exact.
\end{mycor}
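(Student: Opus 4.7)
The plan is to reduce the $\pi^2$-exactness of $\tau_{\leq 1}\dl_{B'/A'}$ to the analogous statement for the derived base change $\tau_{\leq 1}(B'\otimes_B^{\dl}\dl_{B/A})$ via Proposition~\ref{prop:cotangent-bc}, and then to bound the relevant homology by the standard hypercohomology spectral sequence.

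First I would set $X=\tau_{\leq 1}(B'\otimes_B^{\dl}\dl_{B/A})$ and $Y=\tau_{\leq 1}\dl_{B'/A'}$, and apply Proposition~\ref{prop:cotangent-bc} to the canonical morphism $f:X\to Y$: its cone $C$ is concentrated in homological degree $2$. Since cotangent complexes are non-negatively graded and both $X$ and $Y$ are truncated by $\tau_{\leq 1}$, the long exact sequence in homology associated to the distinguished triangle $X\to Y\to C\to X[1]$ degenerates to give an isomorphism $H_0(X)\iso H_0(Y)$ together with a surjection $H_1(X)\surj H_1(Y)$ (the higher and negative homology groups of $X$ and $Y$ vanish). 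Consequently, it suffices to prove that $H_0(X)$ and $H_1(X)$ are $\pi^2$-zero.

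Next I would invoke the convergent spectral sequence
\begin{align*}
E^2_{p,q}=\tor_p^B(B',H_q(\dl_{B/A}))\;\Longrightarrow\; H_{p+q}(B'\otimes_B^{\dl}\dl_{B/A})
\end{align*}
(obtained for instance from a flat resolution of $B'$ over $B$). By hypothesis both $H_0(\dl_{B/A})=\Omega^1_{B/A}$ and $H_1(\dl_{B/A})$ are $\pi$-zero; since $\pi$ acts as $0$ on a $\pi$-annihilated module it acts as $0$ on every Tor-group with it, so $E^2_{0,0}$, $E^2_{0,1}$, and $E^2_{1,0}$ are $\pi$-zero. Thus $H_0(X)\cong E^{\infty}_{0,0}$ is a subquotient of $E^2_{0,0}$ and is $\pi$-zero, while $H_1(X)$ carries a two-step filtration whose graded pieces are subquotients of $E^2_{0,1}$ and $E^2_{1,0}$, and is therefore annihilated by $\pi^2$.

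Combining the two steps yields the claimed $\pi^2$-exactness of $\tau_{\leq 1}\dl_{B'/A'}$. I do not anticipate any essential obstacle: the real content is the first part of Proposition~\ref{prop:cotangent-bc} (which is already available), and the loss of one factor of $\pi$ is intrinsic to the extension step in the spectral sequence, since an extension of two $\pi$-killed modules is a priori only $\pi^2$-killed, which is exactly what forces the statement of the corollary to be in terms of $\pi^2$ rather than $\pi$.
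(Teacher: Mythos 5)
Your argument is correct and follows essentially the same route as the paper's proof: both invoke Proposition~\ref{prop:cotangent-bc} to reduce to the truncated derived base change, and then use the hyper-Tor spectral sequence $E^2_{p,q}=\tor_p^B(B',H_q(\dl_{B/A}))$ to bound $H_1$ by a two-step filtration with $\pi$-killed graded pieces. The only cosmetic difference is that you phrase the comparison via the distinguished triangle and its long exact sequence, whereas the paper just records the resulting surjection $H_1(B'\otimes_B^{\dl}\dl_{B/A})\surj H_1(\dl_{B'/A'})$ directly.
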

\begin{proof}
	Consider the convergent spectral sequence \cite[6.3.2.2]{ega3-2}
	\begin{align}
		E^2_{i,j}=\tor^B_i(B',H_j(\dl_{B/A}))\Rightarrow \tor^B_{i+j}(B',\dl_{B/A}),
	\end{align}
	with $\df^2_{i,j}:E^2_{i,j}\to E^2_{i-2,j+1}$. Then, there is an exact sequence
	\begin{align}\label{eq:cor:cotangent-bc}
		\tor^B_0(B',H_1(\dl_{B/A}))\longrightarrow H_1(B'\otimes_B^{\dl}\dl_{B/A})\longrightarrow \tor^B_1(B',H_0(\dl_{B/A}))\longrightarrow 0.
	\end{align}
	Since $H_1(\dl_{B/A})$ and $H_0(\dl_{B/A})$ are killed by $\pi$ as $\tau_{\leq 1}\dl_{B/A}$ is $\pi$-exact, we see that $\pi^2H_1(B'\otimes_B^{\dl}\dl_{B/A})=0$. Notice that $H_1(B'\otimes_B^{\dl}\dl_{B/A})\to H_1(\dl_{B'/A'})$ is surjective by \ref{prop:cotangent-bc}. We see that $\pi^2H_1(\dl_{B'/A'})=0$ and $\pi H_0(\dl_{B'/A'})=\pi(B'\otimes_B H_0(\dl_{B/A}))=0$.
\end{proof}

\begin{mylem}\label{lem:cotangent-bound-seq}
	Let $A$ be a ring, $\pi$ an element of $A$, $B\to C$ a homomorphism of $A$-algebras. Assume that $\tau_{\leq 1}\dl_{C/B}$ and $\tau_{\leq 1}\dl_{B/A}$ are both $\pi$-exact. Then, $\tau_{\leq 1}\dl_{C/A}$ is $\pi^3$-exact.
\end{mylem}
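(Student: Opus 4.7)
The plan is to use the transitivity distinguished triangle for cotangent complexes
\begin{align*}
C\otimes^{\dl}_B \dl_{B/A}\longrightarrow \dl_{C/A}\longrightarrow \dl_{C/B}\stackrel{+1}{\longrightarrow}
\end{align*}
together with the base change estimate supplied in the proof of \ref{cor:cotangent-bc}. Concretely, I would extract from that distinguished triangle the tail of the long exact homology sequence
\begin{align*}
H_1(C\otimes^{\dl}_B \dl_{B/A})\to H_1(\dl_{C/A})\to H_1(\dl_{C/B})\to H_0(C\otimes^{\dl}_B \dl_{B/A})\to H_0(\dl_{C/A})\to H_0(\dl_{C/B})\to 0
\end{align*}
and control each of the four outer terms in terms of the hypothesis.

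The two terms $H_\ast(\dl_{C/B})$ are $\pi$-zero by assumption. For the base-changed terms, I would reuse the spectral sequence $E^2_{i,j}=\tor^B_i(C,H_j(\dl_{B/A}))\Rightarrow H_{i+j}(C\otimes^{\dl}_B \dl_{B/A})$ already invoked in the proof of \ref{cor:cotangent-bc}. Since $H_0(\dl_{B/A})$ and $H_1(\dl_{B/A})$ are $\pi$-zero, the degree-zero conclusion $H_0(C\otimes^{\dl}_B \dl_{B/A})=C\otimes_B H_0(\dl_{B/A})$ is $\pi$-zero, and the exact sequence
\begin{align*}
\tor^B_0(C,H_1(\dl_{B/A}))\longrightarrow H_1(C\otimes^{\dl}_B \dl_{B/A})\longrightarrow \tor^B_1(C,H_0(\dl_{B/A}))\longrightarrow 0
\end{align*}
forces $H_1(C\otimes^{\dl}_B \dl_{B/A})$ to be $\pi^2$-zero.

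Feeding these bounds back into the long exact sequence, $H_0(\dl_{C/A})$ is squeezed between a $\pi$-zero submodule and a $\pi$-zero quotient, so it is $\pi^2$-zero; while $H_1(\dl_{C/A})$ is squeezed between a quotient of the $\pi^2$-zero module $H_1(C\otimes^{\dl}_B \dl_{B/A})$ and a submodule of the $\pi$-zero module $H_1(\dl_{C/B})$, hence is $\pi^{3}$-zero. Both bounds are dominated by $\pi^{3}$, which gives the asserted $\pi^3$-exactness of $\tau_{\leq 1}\dl_{C/A}$. There is no real obstacle; the only thing to be careful about is adding the exponents correctly when chaining the two subquotients that describe $H_1(\dl_{C/A})$, so that one does not lose a factor of $\pi$ and end up with the weaker bound $\pi^4$.
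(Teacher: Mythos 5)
Your proposal is correct and follows essentially the same route as the paper: the transitivity triangle for cotangent complexes, the Tor spectral sequence estimate from the proof of \ref{cor:cotangent-bc} giving $\pi^2 H_1(C\otimes^{\dl}_B\dl_{B/A})=0$ and $\pi H_0(C\otimes^{\dl}_B\dl_{B/A})=0$, and then chaining subquotient bounds through the long exact sequence. The paper states the conclusion a little more coarsely (both $H_0$ and $H_1$ killed by $\pi^3$); your tracking of the sharper $\pi^2$ bound on $H_0$ is fine and consistent.
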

\begin{proof}
	We see that $\pi^2 H_1(C\otimes_B^{\dl}\dl_{B/A})=0$ by \eqref{eq:cor:cotangent-bc}, and that $\pi H_0(C\otimes_B^{\dl}\dl_{B/A})=\pi (C\otimes_B H_0(\dl_{B/A}))=0$. By the fundamental distinguished triangle of cotangent complexes (\cite[\Luoma{2}.2.1.5.6]{illusie1971cot1}), we obtain an exact sequence
	\begin{align}\label{eq:prop:cotangent-bound-seq-long}
		H_1(C\otimes_B^{\dl}\dl_{B/A})\to H_1(\dl_{C/A})\to H_1(\dl_{C/B})\to H_0(C\otimes_B^{\dl}\dl_{B/A})\to H_0(\dl_{C/A})\to H_0(\dl_{C/B})\to 0,
	\end{align}
	which shows that $H_1(\dl_{C/A})$ and $H_0(\dl_{C/A})$ are both killed by $\pi^3$.
\end{proof}

\begin{myprop}\label{prop:cotangent-bound}
	Let $A$ be a ring, $\pi$ an element of $A$, $B$ an $A$-algebra such that $A\to B$ is a $\pi$-isomorphism. Then, $\tau_{\leq 1}\dl_{B/A}$ is $\pi^{102}$-exact.
\end{myprop}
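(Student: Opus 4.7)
The plan is to factor the $\pi$-isomorphism $f: A \to B$ as $A \twoheadrightarrow A/I \hookrightarrow B$, where $I = \ke(f)$ is $\pi$-zero and the cokernel of the inclusion $A' := A/I \hookrightarrow B$ is a $\pi$-zero $A'$-module $J$ (both by hypothesis on $f$). For the surjection $A \twoheadrightarrow A/I$, the standard identification $\tau_{\leq 1}\dl_{(A/I)/A}\simeq I/I^2[1]$ together with $\pi I = 0$ immediately gives that this piece is $\pi$-exact. By Lemma \ref{lem:cotangent-bound-seq}, the proposition then reduces to bounding $\tau_{\leq 1}\dl_{B/A'}$ by a power of $\pi$ comfortably below $\pi^{100}$; the stated exponent $102$ is not sharp and is designed to absorb the subsequent losses.

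The key observation for the remaining case of an injection $A'\hookrightarrow B$ with $\pi B\subseteq A'$ is that the multiplication map $\mu: B\otimes_{A'} B\to B$, which admits the section $s: b\mapsto b\otimes 1$, has $\pi$-killed kernel $I_\mu$. Indeed, for any $b_1,b_2\in B$, since $\pi b_1\in A'$,
\begin{equation*}
\pi(b_1\otimes b_2)=(\pi b_1)\otimes b_2=\pi b_1 b_2\otimes 1=s(\pi b_1 b_2)\in s(B),
\end{equation*}
so $\pi(B\otimes_{A'} B)\subseteq s(B)$, whence $\pi I_\mu\subseteq s(B)\cap I_\mu=0$. Two consequences follow: $\mu$ is itself a $\pi$-isomorphism (with a section), and $B\otimes_{A'} B\cong B\oplus I_\mu$ as $B$-modules via $s$, so that $\dl_{B/A'}$ is a canonical direct summand of $(B\otimes_{A'} B)\otimes_B^{\dl}\dl_{B/A'}$ in the derived category of $B$-modules.

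With these at hand, the strategy is to apply Proposition \ref{prop:cotangent-bc}. The short exact sequence $0\to A'\to B\to J\to 0$ gives $\tor_1^{A'}(B,B)\simeq \tor_1^{A'}(J,B)$, which is $\pi$-killed since $J$ is. Hence the canonical map
\begin{equation*}
\tau_{\leq 1}\bigl((B\otimes_{A'} B)\otimes_B^{\dl}\dl_{B/A'}\bigr)\longrightarrow \tau_{\leq 1}\dl_{(B\otimes_{A'} B)/B}
\end{equation*}
is a $\pi$-quasi-isomorphism, and via the direct summand above, it suffices to bound the right-hand side. The right-hand side is in turn controlled by the fundamental triangle attached to $B\xrightarrow{s}B\otimes_{A'} B\xrightarrow{\mu}B$ (whose composition is $\id_B$): this yields $\dl_{B/(B\otimes_{A'} B)}\simeq\dl_{(B\otimes_{A'} B)/B}\otimes_{B\otimes_{A'} B}^{\dl}B\,[1]$, while $\tau_{\leq 1}\dl_{B/(B\otimes_{A'} B)}\simeq I_\mu/I_\mu^2[1]$ is $\pi$-exact because $\pi I_\mu=0$. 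Finally, base change along the $\pi$-iso $\mu$ alters homology only by bounded $\pi$-powers, since for any complex $M$ of $(B\otimes_{A'} B)$-modules the cofiber of $M\to M\otimes_{B\otimes_{A'} B}^{\dl}B$ is $M\otimes^{\dl}I_\mu[1]$, whose homology is termwise $\pi$-killed; this lets one transfer the bound back to $\tau_{\leq 1}\dl_{(B\otimes_{A'} B)/B}$, and hence to $\tau_{\leq 1}\dl_{B/A'}$.

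The principal obstacle is the careful book-keeping of accumulating $\pi$-powers: each invocation of Proposition \ref{prop:cotangent-bc}, each spectral sequence of derived tensor products, and in particular the treatment of $H_2(\dl_{B/(B\otimes_{A'} B)})$ via Koszul-type homology of the $\pi$-killed ideal $I_\mu$, must be tracked. Combining the resulting bound via Lemma \ref{lem:cotangent-bound-seq} with the $\pi$-exact first stage gives the stated $\pi^{102}$-exactness, the numerical constant being chosen merely to comfortably absorb these successive losses rather than to be optimal.
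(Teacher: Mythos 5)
Your approach is genuinely different from the paper's and the first reduction (through $C=\im(A\to B)$, handling the surjection via $I/I^2[1]$ and invoking Lemma \ref{lem:cotangent-bound-seq}) matches the paper's. The second half, however, diverges: the paper factors the injection $C\hookrightarrow B$ through a polynomial $C$-algebra $P$ and its subring $Q=$ preimage of $C$, so that $\pi P\subseteq Q$ and $B=C\otimes_Q P$; it then controls $H_0,H_1$ of $\dl_{P/Q}$ via the Jacobi--Zariski sequence (using $H_1(\dl_{P/C})=0$ for a polynomial algebra and the $\pi^{17}$-bound of Lemma \ref{lem:diff-inj-pi}), and transfers this to $\dl_{B/C}$ by Corollary \ref{cor:cotangent-bc}. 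Your ``fold/unfold'' construction via $\mu\colon B\otimes_{A'}B\to B$, the $\pi$-killed kernel $I_\mu$, and the $B$-module splitting $B\otimes_{A'}B\cong i_2(B)\oplus I_\mu$ is a nice observation (though in the displayed equality you need to move $\pi$ to the \emph{second} factor: $\pi(b_1\otimes b_2)=b_1\otimes(\pi b_2)=(\pi b_1 b_2)\otimes 1$, using $\pi b_2\in A'$), and your verifications that $\tor_1^{A'}(B,B)$ is $\pi$-killed and that $\tau_{\leq 1}\dl_{B/A'}$ is a direct summand of $\tau_{\leq 1}((B\otimes_{A'}B)\otimes_B^{\dl}\dl_{B/A'})$ are correct.

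But there is a genuine gap at the final step. After reducing to bounding $\tau_{\leq 1}\dl_{(B\otimes_{A'}B)/B}$, the fundamental triangle for $B\xrightarrow{i_2}B\otimes_{A'}B\xrightarrow{\mu}B$ gives $\dl_{B/(B\otimes_{A'}B)}\simeq (B\otimes_{B\otimes_{A'}B}^{\dl}\dl_{(B\otimes_{A'}B)/B})[1]$, and your ``transfer'' across the $\pi$-iso $\mu$ shows that $H_0(\dl_{(B\otimes_{A'}B)/B})=\Omega^1_{(B\otimes_{A'}B)/B}$ is $\pi$-killed, but that $H_1(\dl_{(B\otimes_{A'}B)/B})$ sits in a long exact sequence between a $\pi$-killed group and $H_2(\dl_{B/(B\otimes_{A'}B)})$. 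So the whole argument hinges on bounding $H_2$ of the cotangent complex of a surjection with $\pi$-killed kernel by a fixed power of $\pi$. You acknowledge this (``the treatment of $H_2(\dl_{B/(B\otimes_{A'}B)})$ via Koszul-type homology...must be tracked'') but never prove it, and the fact that $H_n(\dl_{(R/I)/R})$ is an $R/I$-module only gives annihilation by $I$, not by $\pi$; the constraint $\pi I_\mu=0$ by itself does not control the degree-$2$ Andr\'e--Quillen homology. Indeed, the map $B\xrightarrow{i_2}B\otimes_{A'}B$ is again an injection with $\pi$-killed cokernel, so without a new idea your reduction has simply reproduced the original problem for a larger ring. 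The paper's choice of a smooth $P$ (so $\dl_{P/C}$ is concentrated in degree $0$) is precisely what makes this degree-$2$ issue never arise. To salvage your route you would need a lemma of the form: if $\pi I=0$ then $H_2(\dl_{(R/I)/R})$ is killed by $\pi^k$ for a universal $k$ — and this requires proof (and I do not believe it is true in general).
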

\begin{proof}
	Let $C$ be the image of $A$ in $B$. We take a surjective homomorphism from a polynomial $C$-algebra $P$ to $B$, and denote its kernel by $I$. Let $Q$ be the preimage of $C$ via the surjection $P\to B$. It is a $C$-subalgebra of $P$ such that $\pi P\subseteq Q$ and that $I$ is an ideal of $Q$. We remark that $C=Q/I$ and $B=P/I=C\otimes_Q P$. Consider the canonical exact sequence
	\begin{align}
		H_1(\dl_{P/C})\longrightarrow H_1(\dl_{P/Q})\longrightarrow P\otimes_{Q}\Omega^1_{Q/C}\longrightarrow \Omega^1_{P/C}\longrightarrow \Omega^1_{P/Q}\longrightarrow 0.
	\end{align}
	Since $H_1(\dl_{P/C})=0$ and $P\otimes_{Q}\Omega^1_{Q/C}\to \Omega^1_{P/C}$ is $\pi^{17}$-injective by \ref{lem:diff-inj-pi}, $H_1(\dl_{P/Q})$ is killed by $\pi^{17}$. It is clear that $\pi H_0(\dl_{P/Q})=\pi\Omega^1_{P/Q}=0$ as $\pi P\subseteq Q$. It follows from \ref{cor:cotangent-bc} that $\tau_{\leq 1}\dl_{B/C}$ is $\pi^{34}$-exact as $B=C\otimes_Q P$. On the other hand, let $J$ be the kernel of $A\to C$ which is killed by $\pi$. Then, $H_1(\dl_{C/A})=J/J^2$ and $H_0(\dl_{C/A})=0$ (\cite[\Luoma{3}.1.2.8.1]{illusie1971cot1}). It follows from \ref{lem:cotangent-bound-seq} that $\tau_{\leq 1}\dl_{B/A}$ is $\pi^{102}$-exact.
\end{proof}

\begin{mypara}\label{para:notation-bound}
	Let $A\to B$ be an injective homomorphism of normal domains flat over $\bb{Z}_p$. We fix an algebraic closure $\overline{\ca{L}}$ of the fraction field $\ca{L}$ of $B$, and let $\overline{\ca{K}}$ be the algebraic closure of the fraction field $\ca{K}$ of $A$ in $\overline{\ca{L}}$. Consider an algebraic extension $\ca{K}'$ of $\ca{K}$ in $\overline{\ca{K}}$ and the integral closure $A'$ of $A$ in $\ca{K}'$. Let $B'$ be the integral closure of $B$ in the composite $\ca{L}'=\ca{L}\ca{K}'\subseteq\overline{\ca{L}}$.
	\begin{align}
		\xymatrix{
			\ca{L}'& B'\ar[l] & A'\ar[l]\ar[r] & \ca{K}'\\
			\ca{L}\ar[u]& B\ar[u]\ar[l] & A\ar[l]\ar[r]\ar[u] & \ca{K}\ar[u]
		}
	\end{align}
	Let $\scr{P}_A$ be the family of algebraic extensions $\ca{K}'$ of $\ca{K}$ in $\overline{\ca{K}}$ such that
	\begin{enumerate}
		\renewcommand{\labelenumi}{{\rm(\theenumi)}}
		\item there exists a valuation ring $\ca{O}_{K'}$ extension of $\bb{Z}_p$ contained in $A'$ such that its fraction field $K'$ is a pre-perfectoid field in the sense of \cite[5.1.(1)]{he2021coh}, and
		\item the $\ca{O}_{K'}$-algebra $A'$ is almost pre-perfectoid in the sense of \cite[5.19]{he2021coh}.
	\end{enumerate}
	In particular, $\overline{\ca{K}}\in\scr{P}_A$.
\end{mypara}

\begin{mydefn}\label{defn:bound}
	With the notation in {\rm\ref{para:notation-bound}}, for any algebraic extension $\ca{K}'$ of $\ca{K}$ in $\overline{\ca{K}}$, we say that the map $A\to B$ is \emph{bounded at $\ca{K}'$} if there exists $k\in\bb{N}$ such that 
	\begin{align}\label{eq:defn:bound}
		p^k\cok(B\otimes_A A'\to B')=0.
	\end{align}
	For any $\ca{K}'\in\scr{P}_A$, we say that $A\to B$ is \emph{pre-perfectoid at $\ca{K}'$} if $\ca{L}'=\ca{L}\ca{K}'\in\scr{P}_B$ and if $A\to B$ is bounded at $\ca{K}'$. We say that $A\to B$ is \emph{pre-perfectoid} if it is pre-perfectoid at any $\ca{K}'\in\scr{P}_A$.
\end{mydefn}

\begin{mylem}\label{lem:pre-perfd-basic}
	We keep the notation in {\rm\ref{para:notation-bound}}.
	\begin{enumerate}
		\renewcommand{\labelenumi}{{\rm(\theenumi)}}
		\item If $A\to B$ is \'etale, then it is pre-perfectoid.\label{item:lem:pre-perfd-basic-1}
		\item If $A\to B$ is pre-perfectoid, then for any algebraic extension $\ca{K}'$ of $\ca{K}$ in $\overline{\ca{K}}$, the map $A'\to B'$ is also pre-perfectoid.\label{item:lem:pre-perfd-basic-2}
		\item If $B\to C$ is another injective homomorphism of normal domains flat over $\bb{Z}_p$ and if $A\to B$, $B\to C$ are pre-perfectoid, then so is $A\to C$.\label{item:lem:pre-perfd-basic-3}
	\end{enumerate} 
\end{mylem}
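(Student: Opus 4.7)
The plan is to verify the two conditions of Definition \ref{defn:bound}, namely $\ca{L}\ca{K}' \in \scr{P}_B$ and the $p^k$-boundedness of $\cok(B \otimes_A A' \to B')$, in each of the three cases. The arguments become formal once (\ref{item:lem:pre-perfd-basic-1}) is established, so I would treat the three items in that order.

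For (\ref{item:lem:pre-perfd-basic-1}), fix $\ca{K}' \in \scr{P}_A$. Since $A \to B$ is \'etale, so is the base change $A' \to B \otimes_A A'$, whence $B \otimes_A A'$ is normal, and on connected components decomposes as a product of normal domains. The projection onto the component whose fraction field embeds into $\ca{L}' = \ca{L}\ca{K}'$ is an integrally closed subring of $\ca{L}'$ containing $A'$, hence equal to $B'$ by the universal property of integral closure. This gives surjectivity of $B \otimes_A A' \to B'$, so the cokernel vanishes (one may take $k=0$). For the membership $\ca{L}' \in \scr{P}_B$, one invokes that almost pre-perfectoidness transfers along the \'etale base change from $A'$ to $B'$, a property provided by the stability results in \cite{he2021coh}.

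For (\ref{item:lem:pre-perfd-basic-2}), I would first observe that $\scr{P}_{A'}$ is naturally identified with the subset of $\scr{P}_A$ consisting of those $\ca{K}''$ that contain $\ca{K}'$: the integral closure of $A'$ in $\ca{K}''$ coincides with that of $A$, and likewise $\ca{L}'\ca{K}'' = \ca{L}\ca{K}''$ so the corresponding $B''$ is unambiguous. Given $\ca{K}'' \in \scr{P}_{A'}$, pre-perfectoidness of $A \to B$ at $\ca{K}''$ yields $\ca{L}\ca{K}'' \in \scr{P}_B$ and a constant $k$ with $p^k\cok(B \otimes_A A'' \to B'') = 0$. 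Factoring this map through $B' \otimes_{A'} A''$ via the identification $B \otimes_A A'' = (B \otimes_A A') \otimes_{A'} A''$, the second factor inherits the bound $p^k$ on its cokernel, giving the required pre-perfectoidness of $A' \to B'$ at $\ca{K}''$.

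For (\ref{item:lem:pre-perfd-basic-3}), fix compatible algebraic closures $\overline{\ca{M}} \supseteq \overline{\ca{L}} \supseteq \overline{\ca{K}}$ of the fraction fields of $C, B, A$. Given $\ca{K}' \in \scr{P}_A$, pre-perfectoidness of $A \to B$ gives $\ca{L}\ca{K}' \in \scr{P}_B$, and pre-perfectoidness of $B \to C$ at $\ca{L}\ca{K}'$ then gives $\ca{M}\ca{K}' = \ca{M}(\ca{L}\ca{K}') \in \scr{P}_C$ (using $\ca{L} \subseteq \ca{M}$). For the boundedness, right-exactness of $C \otimes_B (-)$ applied to $B \otimes_A A' \to B'$ yields $p^{k_1}\cok(C \otimes_A A' \to C \otimes_B B') = 0$, and composing with $p^{k_2}\cok(C \otimes_B B' \to C'') = 0$ from pre-perfectoidness of $B \to C$ produces $p^{k_1 + k_2}\cok(C \otimes_A A' \to C'') = 0$. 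The main difficulty is concentrated in (\ref{item:lem:pre-perfd-basic-1}): identifying $B \otimes_A A'$ with $B'$ is cleanest in the finite \'etale setting (where $B \otimes_A A'$ manifestly decomposes as a product of normal domains) and must be handled carefully in general, and confirming that \'etale base change preserves the almost pre-perfectoid property requires a precise appeal to \cite{he2021coh}; parts (\ref{item:lem:pre-perfd-basic-2}) and (\ref{item:lem:pre-perfd-basic-3}) are then formal consequences.
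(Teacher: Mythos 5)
Your proof is correct and follows essentially the same approach as the paper's. For (\ref{item:lem:pre-perfd-basic-1}) you identify $B'$ with a connected component of $B\otimes_A A'$ (the paper cites \cite[\Luoma{3}.3.3]{abbes2016p} for $B\otimes_A A'$ being a finite product of normal domains, and \cite[5.37]{he2021coh} for its almost pre-perfectoidness, so some specific references would tighten the presentation); your argument for why the component equals $B'$ should record both inclusions — it is integral over $B$ because $A'$ is integral over $A$, and it contains $B'$ because it is integrally closed in $\ca{L}'$ and contains $B$ — rather than appealing to a ``universal property.'' Parts (\ref{item:lem:pre-perfd-basic-2}) and (\ref{item:lem:pre-perfd-basic-3}) are the same definition-unwinding the paper uses, with (\ref{item:lem:pre-perfd-basic-3}) resting on the standard fact that $p^{k_1}\cok(f)=0$ and $p^{k_2}\cok(g)=0$ force $p^{k_1+k_2}\cok(g\circ f)=0$, via the exact sequence $\cok(f)\to\cok(g\circ f)\to\cok(g)\to 0$.
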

\begin{proof}
	(\ref{item:lem:pre-perfd-basic-1}) Note that $B\otimes_A A'$ is a finite product of normal domains as it is \'etale over the normal domain $A'$ (\cite[\Luoma{3}.3.3]{abbes2016p}). Thus, $B'$ identifies with one of the components. If  $\ca{K}'\in\scr{P}_A$, then $B\otimes_A A'$ is almost pre-perfectoid by \cite[5.37]{he2021coh} and so is $B'$ (i.e. $\ca{L}'\in\scr{P}_B$). This shows that $A\to B$ is pre-perfectoid.
	
	(\ref{item:lem:pre-perfd-basic-2}) We only need to unwind the definition. For any algebraic extension $\ca{K}''$ of $\ca{K}'$ in $\overline{\ca{K}}$, let $A''$ be the integral closure of $A$ in $\ca{K}''$, $B''$ the integral closure of $B$ in $\ca{L}''=\ca{L}\ca{K}''\subseteq\overline{\ca{L}}$. Assume that $\ca{K}''\in \scr{P}_{A'}$. By definition, we also have $\ca{K}''\in\scr{P}_A$, and thus $\ca{L}''\in\scr{P}_B$ so that $\ca{L}''\in\scr{P}_{B'}$. Since $A\to B$ is bounded at $\ca{K}''$, there exists $k\in\bb{N}$ such that $p^k\cok(B\otimes_A A''\to B'')=0$. Thus, $p^k\cok(B'\otimes_{A'} A''\to B'')=0$, which means that $A'\to B'$ is bounded at $\ca{K}''$. This completes the proof.
	
	(\ref{item:lem:pre-perfd-basic-3}) It also follows directly from unwinding the definition.
\end{proof}

\begin{mythm}[Almost purity, {\cite[7.9]{scholze2012perfectoid}}]\label{thm:almost-purity}
	Let $K$ be a pre-perfectoid field with a non-zero element $\pi$ in its maximal ideal, $R$ a flat $\ca{O}_K$-algebra which is almost pre-perfectoid, $R'$ the integral closure of $R$ in a finite \'etale $R[1/\pi]$-algebra. Then, $R'$ is almost pre-perfectoid and almost finite \'etale over $R$.
\end{mythm}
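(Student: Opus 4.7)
The plan is to reduce the statement to Scholze's original almost purity theorem \cite[7.9]{scholze2012perfectoid}, which is formulated for perfectoid Tate rings in Scholze's sense. The key conceptual bridge is that the definitions of \emph{pre-perfectoid field} and \emph{almost pre-perfectoid algebra} adopted here (from \cite[5.1, 5.19]{he2021coh}) are designed precisely so that $\pi$-adic completion produces objects in Scholze's perfectoid framework, up to almost isomorphism.

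First, I would pass to the $\pi$-adic completion: since $K$ is pre-perfectoid, $\widehat{K}$ is a perfectoid field in Scholze's sense; since $R$ is almost pre-perfectoid and $\ca{O}_K$-flat, the completion $\widehat{R}$ is an almost perfectoid $\ca{O}_{\widehat{K}}$-algebra, so $\widehat{R}[1/\pi]$ is a perfectoid Tate $\widehat{K}$-algebra. Next, I would transfer the given finite \'etale $R[1/\pi]$-algebra $S$ to a finite \'etale $\widehat{R}[1/\pi]$-algebra $\widehat{S}$ via the base change along $R[1/\pi]\to \widehat{R}[1/\pi]$; the integral closure of $\widehat{R}$ in $\widehat{S}$ then coincides with the $\pi$-adic completion of $R'$ up to an issue of $\pi$-torsion, which is controlled because $R'$ is a subring of its normalization inside a finite \'etale algebra.

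Now Scholze's almost purity theorem applies directly to the perfectoid pair $(\widehat{R}[1/\pi],\widehat{R})$: the integral closure of $\widehat{R}$ in $\widehat{S}$ is almost perfectoid over $\ca{O}_{\widehat{K}}$ and almost finite \'etale over $\widehat{R}$. Identifying this integral closure with $\widehat{R'}$ (up to almost isomorphism) yields that $\widehat{R'}$ is almost perfectoid, which by definition means $R'$ is almost pre-perfectoid over $\ca{O}_K$.

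The remaining and most delicate step is to descend the almost finite \'etaleness of $\widehat{R}\to \widehat{R'}$ down to $R\to R'$. The point is that $R/\pi^n\to \widehat{R}/\pi^n$ and $R'/\pi^n\to \widehat{R'}/\pi^n$ are isomorphisms for every $n$, so the almost trace pairing and idempotent characterizing almost finite \'etaleness descend modulo each $\pi^n$, and one assembles this with the fact that $R\to R'$ is integral (hence the relevant modules are already detected by their $\pi$-adic filtrations) to conclude. The main obstacle is precisely this bookkeeping: verifying that the ``almost'' structures (annihilation by arbitrarily small $\pi^\epsilon$) are preserved under the passage between $R$ and $\widehat{R}$, and ensuring that the finite \'etale algebra structure on $S$ matches $\widehat{S}$ compatibly with the almost mathematics framework set up in \cite{he2021coh}.
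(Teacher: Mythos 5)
Your proposal takes a genuinely different route from the paper, and it has a real gap in the final descent step. The paper does \emph{not} pass to the $\pi$-adic completion. Instead it passes to the henselization $S$ of the pair $(R,\pi R)$, observes $S'=S\otimes_R R'$ is the integral closure of $S$ in a finite \'etale $S[1/\pi]$-algebra, invokes a prior result \cite[5.41]{he2021coh} to get that $R'$ is almost pre-perfectoid and $S\to S'$ is almost finite \'etale, and then descends from $R[1/\pi]\times S$ to $R$ via almost faithfully flat descent \cite[V.8.10]{abbes2016p}. This works because the henselization map $R\to S$ is flat (it is a filtered colimit of \'etale ring maps), so $R\to R[1/\pi]\times S$ is faithfully flat.

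Your plan instead descends from $\widehat{R}$ to $R$. Two issues. First, the identification of the integral closure of $\widehat{R}$ inside $\widehat S$ with (the almostification of) $\widehat{R'}$ is not formal: integral closure does not commute with $\pi$-adic completion in general, and proving it in the almost pre-perfectoid setting is itself a non-trivial lemma. Second, and more seriously, the descent of almost finite \'etaleness along $R\to\widehat R$ does not follow from the isomorphisms $R/\pi^n\iso\widehat R/\pi^n$. Almost finite \'etaleness is a projectivity-plus-diagonal-idempotent condition, and projectivity is not a $\pi$-adically local condition for the non-Noetherian rings in play here; knowing the trace pairing is ``almost perfect'' mod $\pi^n$ for all $n$ does not assemble into the desired statement over $R$. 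Moreover $R\to\widehat R$ need not be flat when $R$ is not Noetherian, so the clean faithfully flat descent argument available for the henselization is simply not available for the completion. That is precisely why the paper substitutes the henselization for the completion at this point.
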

\begin{proof}
	Let $(S,\pi S)$ be the henselization of the pair $(R,\pi R)$. Then, $S'=S\otimes_R R'$ is the integral closure of $S$ in a finite \'etale $S[1/\pi]$-algebra. By \cite[5.41]{he2021coh}, we see that $R'$ is almost pre-perfectoid and that $S'$ is almost finite \'etale over $S$. Notice that $R\to R[1/\pi]\times S$ is faithfully flat. By almost faithfully flat descent \cite[\Luoma{5}.8.10]{abbes2016p}, we see that $R'$ is almost finite \'etale over $R$.
\end{proof}

\begin{mycor}\label{cor:bound-fet}
	Let $A$ be a normal domain flat over $\bb{Z}_p$, $B$ the integral closure of $A$ in a domain finite \'etale over $A[1/p]$. Then, the map $A\to B$ is pre-perfectoid.
\end{mycor}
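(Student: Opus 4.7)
The plan is to derive this corollary directly from the almost purity theorem \ref{thm:almost-purity} by unwinding Definition \ref{defn:bound} for every $\ca{K}' \in \scr{P}_A$. Fix such a $\ca{K}'$, with pre-perfectoid field $K'$, so that $\ca{O}_{K'} \subseteq A'$ and $A'$ is almost pre-perfectoid over $\ca{O}_{K'}$. Set $C = B[1/p]$, which is finite \'etale over $A[1/p]$ by hypothesis, so $C \otimes_{A[1/p]} A'[1/p]$ is finite \'etale over the normal domain $A'[1/p]$; by \cite[\Luoma{3}.3.3]{abbes2016p} it is a finite product of normal domains, exactly one of which has fraction field $\ca{L}' = \ca{L}\ca{K}' \subseteq \overline{\ca{L}}$. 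Call this factor $C'$, and let $B''$ be the integral closure of $A'$ in $C \otimes_{A[1/p]} A'[1/p]$, so that $B'' = \prod_\sigma B'_\sigma$ decomposes accordingly, with $B' = B'_{\mrm{id}}$ being the factor integrally closing $A'$ in $C'$.

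Now apply Theorem \ref{thm:almost-purity} with $\pi = p$, $R = A'$, and the finite \'etale $R[1/p]$-algebra $C \otimes_{A[1/p]} A'[1/p]$, to conclude that $B''$ is almost pre-perfectoid and almost finite \'etale over $A'$. For the $\scr{P}_B$-condition, observe that the decomposition $B'' = \prod_\sigma B'_\sigma$ is of $\ca{O}_{K'}$-algebras, and that almost surjectivity of the Frobenius on $B''/p = \prod_\sigma B'_\sigma/p$ is a coordinate-wise statement; hence $B'$ inherits the almost pre-perfectoid property over $\ca{O}_{K'}$, verifying $\ca{L}' \in \scr{P}_B$.

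For boundedness, the "almost finite \'etale" conclusion implies in particular that $B''$ is almost finitely generated as an $A'$-module, so (taking $\epsilon = p \in \ak{m}_{K'}$) there is a finitely generated $A'$-submodule $M \subseteq B''$ with $p B'' \subseteq M$. Since $B''[1/p] = C \otimes_{A[1/p]} A'[1/p] = (B \otimes_A A')[1/p]$, the finitely many generators of $M$ can be cleared of $p$-denominators simultaneously, giving $k \in \bb{N}$ such that $p^k M$ lies in the image of the natural map $B \otimes_A A' \to B''$ (which sends $b \otimes a'$, integral over $A'$, into the integral closure $B''$). Consequently $p^{k+1} B'' \subseteq \mrm{image}(B \otimes_A A' \to B'')$, and composing with the projection $B'' \to B'$ onto the $\mrm{id}$-factor yields $p^{k+1} \cok(B \otimes_A A' \to B') = 0$. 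This is the required bound, so $A \to B$ is pre-perfectoid at $\ca{K}'$, and since $\ca{K}'$ was arbitrary in $\scr{P}_A$, we conclude that $A \to B$ is pre-perfectoid.

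The only substantive input is the almost purity theorem; the rest is formal. The two points requiring a little care are the decomposition of $C \otimes_{A[1/p]} A'[1/p]$ and the matching decomposition of $B''$ into normal domains with $B'$ as one factor, and the descent of the almost pre-perfectoid property from $B''$ to its direct summand $B'$, both of which are essentially coordinate-wise arguments.
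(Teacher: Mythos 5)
Your proof is correct and follows essentially the same route as the paper: apply almost purity to get the almost pre-perfectoid and almost finite \'etale properties, identify $B'[1/p]$ as a factor of $B\otimes_A A'[1/p]$, and derive the boundedness from almost finite generation. The only cosmetic difference is that you pass through the product $B''$ and then project to the factor $B'$, whereas the paper applies almost purity directly to the single factor $B'$ (as the integral closure of $A'$ in the finite \'etale $A'[1/p]$-algebra $B'[1/p]$); both routes are equally valid, and yours spells out the denominator-clearing step for the bound slightly more explicitly.
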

\begin{proof}
	With the notation in {\rm\ref{para:notation-bound}}, for any $\ca{K}'\in\scr{P}_A$, $B'$ is almost pre-perfectoid and almost finite \'etale over $A'$ by almost purity \ref{thm:almost-purity}. As $B[1/p]$ is finite \'etale over $A[1/p]$, $B\otimes_A A'[1/p]$ is the integral closure of $A'[1/p]$ in $\ca{L}\otimes_{\ca{K}}\ca{K}'$, which is a finite product of normal domains and one of its component identifies with $B'[1/p]$. In particular, $B\otimes_AA'[1/p]\to B'[1/p]$ is surjective. We see that there exists $k\in\bb{N}$ such that $p^k\cok(B\otimes_A A'\to B')=0$, since the $B\otimes_A A'$-module $B'$ is almost finitely generated. It follows from the definition that $A\to B$ is pre-perfectoid.
\end{proof}

\begin{mypara}\label{para:blowup}
	Let $A$ be a ring, $I$ an ideal of $A$, $a$ an element of $I$. The affine blowup algebra $A[I/a]$ is the $A$-subalgebra of $A[1/a]$ generated by the subset $\{x/a\}_{x\in I}$ (\cite[\href{https://stacks.math.columbia.edu/tag/052P}{052P}]{stacks-project}). As the ideal $IA[I/a]$ is generated by $a$, there is a unique morphism $\spec(A[I/a])\to X$ over $\spec(A)$, where $X$ is the blowup of $\spec(A)$ in $I$. Moreover, if $I$ is generated by a subset $S$, then $\{\spec(A[I/a])\to X\}_{a\in S}$ forms a Zariski open covering (\cite[\href{https://stacks.math.columbia.edu/tag/0804}{0804}]{stacks-project}).
\end{mypara}

\begin{mylem}\label{lem:blowup-completion}
	Let $A$ be a ring, $\pi$ an element of $A$, $I$ an ideal of $A$ containing a power of $\pi$, $a$ an element of $I$, $\widehat{A}$ the $\pi$-adic completion of $A$, $I'=I\widehat{A}$, $a'$ the image of $a$ in $\widehat{A}$. Then, the natural morphism of affine blowup algebras $A[I/a]\to \widehat{A}[I'/a']$ induces an isomorphism of their $\pi$-adic completions
	\begin{align}
		(A[I/a])^\wedge\iso (\widehat{A}[I'/a'])^\wedge.
	\end{align}
\end{mylem}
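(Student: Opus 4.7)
The plan is to prove the stronger statement that for every $k\geq 1$ the natural map
\[
A[I/a]/\pi^k A[I/a]\longrightarrow \widehat{A}[I'/a']/\pi^k\widehat{A}[I'/a']
\]
is an isomorphism; taking the inverse limit over $k$ then yields the asserted isomorphism of $\pi$-adic completions.

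First, I would exploit the colimit description of affine blowup algebras as $A$-modules. Since $A[I/a]$ is the subring of $A[1/a]$ generated by $\{x/a:x\in I\}$, its elements are exactly $x/a^n$ for $x\in I^n$, and the identification $I^n\ni w\mapsto w/a^n$ together with the transition maps $I^n\to I^{n+1}$ given by multiplication by $a$ exhibits $A[I/a]=\colim_n I^n$ as an $A$-module. The analogous description gives $\widehat{A}[I'/a']=\colim_n I^n\widehat{A}$. Because tensor product commutes with filtered colimits, reducing modulo $\pi^k$ yields
\[
A[I/a]/\pi^k A[I/a]=\colim_n I^n/\pi^k I^n,\qquad \widehat{A}[I'/a']/\pi^k\widehat{A}[I'/a']=\colim_n I^n\widehat{A}/\pi^k I^n\widehat{A}.
\]
It therefore suffices to show that $I^n/\pi^k I^n\to I^n\widehat{A}/\pi^k I^n\widehat{A}$ is an isomorphism for all $n,k$.

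Here the hypothesis that $I$ contains a power of $\pi$ enters crucially. Writing $\pi^M\in I$, one has $\pi^{nM}\in I^n$ and consequently $\pi^{k+nM}A\subseteq \pi^k I^n$. Thus both $A/\pi^k I^n$ and $\widehat{A}/\pi^k I^n\widehat{A}$ are quotients of the common ring $A/\pi^{k+nM}A=\widehat{A}/\pi^{k+nM}\widehat{A}$, and the ideals by which they are quotients agree under this canonical identification — the $A$-submodule and the $\widehat{A}$-submodule of $A/\pi^{k+nM}A$ generated by the same subset $\{\pi^k x:x\in I^n\}$ coincide, because the $A$- and $\widehat{A}$-module structures on this ring agree. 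Hence $A/\pi^k I^n\iso \widehat{A}/\pi^k I^n\widehat{A}$, and the same argument with $k=0$ gives $A/I^n\iso \widehat{A}/I^n\widehat{A}$. Comparing the natural short exact sequences
\[
0\to I^n/\pi^k I^n\to A/\pi^k I^n\to A/I^n\to 0
\]
and its analogue over $\widehat{A}$ via the five lemma yields the required isomorphism at level $n$. Taking the colimit over $n$ and then the inverse limit over $k$ concludes the proof.

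The main subtle point is the level-$n$ comparison: showing that the kernel of $I^n/\pi^k I^n\to I^n\widehat{A}/\pi^k I^n\widehat{A}$ vanishes, equivalently that $A\cap \pi^k I^n\widehat{A}=\pi^k I^n$. Without the hypothesis $\pi^M\in I$ one cannot truncate both sides at a common finite-length quotient, so the ideals could a priori fail to be identified; the containment $\pi^{k+nM}A\subseteq\pi^k I^n$ is precisely what tames this potential failure.
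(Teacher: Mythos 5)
Your proof is correct, and it follows a genuinely different route from the paper's. The paper argues directly on elements: it first establishes (via a completion argument from the exact sequence $0\to I\to A\to A/I\to 0$, using that $A/I$ is $\pi$-power torsion) that $I/\pi^n I\to I'/\pi^n I'$ is bijective, and then verifies surjectivity and injectivity of $A[I/a]/\pi^n\to\widehat A[I'/a']/\pi^n$ by explicitly manipulating fractions $x/a^k$ and $x'/a'^{k'}$ in $A[1/a]$ and $\widehat A[1/a']$. You instead exploit the structural identity $A[I/a]=\colim_n I^n$ (with transition maps given by multiplication by $a$), reduce modulo $\pi^k$ using that tensor commutes with filtered colimits, and reduce the whole problem to the level-$n$ comparison $I^n/\pi^k I^n\cong I^n\widehat A/\pi^k I^n\widehat A$, which you settle cleanly by realizing both sides as the kernel of the surjection $A/\pi^k I^n\twoheadrightarrow A/I^n$ (resp. its $\widehat A$-analogue) and identifying those rings as quotients of the common finite-length ring $A/\pi^{k+nM}A=\widehat A/\pi^{k+nM}\widehat A$ by the same ideal. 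Your approach is arguably the cleaner justification of the level-$n$ isomorphism $I^n/\pi^k I^n\cong I^n\widehat A/\pi^k I^n\widehat A$: the paper only explicitly proves $\widehat I\cong\widehat I'$ and then invokes bijectivity of $I/\pi^n I\to I'/\pi^n I'$ without spelling out how the limit statement descends to finite levels, whereas your common-quotient argument gives the finite-level statement directly for all powers $I^n$ at once. The trade-off is that your route requires knowing the colimit description of the affine blowup algebra as an $A$-module, while the paper's is more elementary (pure fraction chasing) but more tedious.
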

\begin{proof}
	We denote by $\varphi: A\to \widehat{A}$ and $\phi:A[1/a]\to \widehat{A}[1/a]$ the natural morphisms. We need to show that for each integer $n>0$ the natural morphism 
	\begin{align}\label{eq:lem-blowup-completion-1}
		A[I/a]/\pi^nA[I/a]\longrightarrow \widehat{A}[I'/a']/\pi^n\widehat{A}[I'/a']
	\end{align}
	is an isomorphism. We claim that $\widehat{I}\to \widehat{I'} $ is an isomorphism. Indeed, since $A/I$ is killed by a power of $\pi$, we get from the short exact sequence $0\to I\to A\to A/I\to 0$ a short exact sequence $0\to \widehat{I}\to \widehat{A}\to A/I\to 0$ by $\pi$-adic completion (\cite[\href{https://stacks.math.columbia.edu/tag/0BNG}{0BNG}]{stacks-project}). Similarly, we get from the short exact sequence $0\to I'\to \widehat{A}\to \widehat{A}/I'\to 0$ a short exact sequence $0\to \widehat{I'}\to\widehat{A}\to \widehat{A}/I'\to 0$. Moreover, as $\pi^nA\subseteq I$ for $n$ large enough, we deduce from the canonical isomorphism $A/\pi^n A\iso \widehat{A}/\pi^n\widehat{A}$ that $A/I\to \widehat{A}/I'$ is an isomorphism. Combining with the previous short exact sequences, we see that $\widehat{I}\to \widehat{I'} $ is an isomorphism.
	
	For the surjectivity of \eqref{eq:lem-blowup-completion-1}, recall that the $A$-algebra $A[I/a]$ is generated by the elements $\{x/a\}_{x\in I}$. For $x'\in I'$, as $I/\pi^n I\to I'/\pi^n I'$ is surjective, there exists $x\in I$ and $y\in I'$ such that $x'=\varphi(x)+\pi^n y\in I'$. Thus, $x'/a'\equiv \phi(x/a) \mod \pi^n \widehat{A}[I'/a']$, which shows that \eqref{eq:lem-blowup-completion-1} is surjective. 
	
	For the injectivity of \eqref{eq:lem-blowup-completion-1}, as any element of $A[I/a]$ is of the form $x/a^k$ for some $x\in I$ and $k>0$, we suppose that $\phi(x/a^k)=\pi^n(x'/a'^{k'})$ for some $x'\in I'$ and $k'>0$. Thus, there exists $N>0$ such that $a'^N(a'^{k}\pi^nx'-a'^{k'}\varphi(x))=0$ in $I'$. In particular, $\varphi(a^{N+k'}x)\in\pi^nI'$. Since $I/\pi^n I\to I'/\pi^n I'$ is injective, there exists $y\in I$ such that $a^{N+k'}x=\pi^n y$. Thus, $x/a^k=\pi^n(y/a^{N+k+k'})\in \pi^n A[I/a]$, which shows that \eqref{eq:lem-blowup-completion-1} is injective.
\end{proof}

\begin{mylem}\label{lem:pi-bound-tech}
	Let $A$ be a ring, $\pi$ an element of $A$, $B$ an $A$-subalgebra of $A[1/\pi]$. Assume that the morphism of $\pi$-adic completions $\widehat{A}\to \widehat{B}$ is a $\pi^n$-isomorphism for some $n\in\bb{N}$. Then, $\pi^n\cok(A\to B)=0$.
\end{mylem}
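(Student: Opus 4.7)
The plan is to take an arbitrary $b\in B$ and exhibit $\pi^n b$ as the image of some element of $A$. Since $B\subseteq A[1/\pi]$, one can write $b=x/\pi^M$ in $A[1/\pi]$ for some $x\in A$ and $M\geq 0$. The case $M\leq n$ is immediate, since then $\pi^n b=\pi^{n-M}x$ already lies in the image of $A$, so I only need to treat $M=n+k$ with $k>0$; equivalently, the task becomes showing that $x$ lies in $\pi^k A$ modulo the $\pi$-power torsion of $A$.

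The main engine will be the hypothesis that $f\colon\widehat{A}\to\widehat{B}$ is a $\pi^n$-isomorphism, used in both directions. First, the cokernel bound $\pi^n\cok(f)=0$ provides $\alpha\in\widehat{A}$ with $f(\alpha)=\pi^n\widehat{b}$, where $\widehat{b}$ is the image of $b$ in $\widehat{B}$. Denoting by $\widetilde{x}\in\widehat{A}$ the image of $x\in A$, the equality $x=\pi^{n+k}b$ in $A[1/\pi]$ (combined with $B\hookrightarrow A[1/\pi]$) forces the image of $x$ in $\widehat{B}$ to equal $\pi^{n+k}\widehat{b}$, so $f(\widetilde{x})=f(\pi^k\alpha)$. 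The kernel bound $\pi^n\ke(f)=0$ then yields $\pi^n\widetilde{x}=\pi^{n+k}\alpha$ in $\widehat{A}$.

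The crucial step, which I expect to be the main subtlety, is to descend this identity from $\widehat{A}$ back to $A$ without assuming $A$ is Noetherian or $\pi$-adically separated. For this I will establish the general fact that for every $M\geq 0$,
\[ \{a\in A \mid \widetilde{a}\in\pi^M\widehat{A}\}=\pi^M A, \]
where $\widetilde{a}$ denotes the image of $a$ in $\widehat{A}$. The point is that the canonical projection $\widehat{A}\to A/\pi^M A$ kills $\pi^M\widehat{A}$ and thus splits the natural map $A/\pi^M A\to\widehat{A}/\pi^M\widehat{A}$, forcing the latter to be injective. Applied with $M=n+k$, this gives $\pi^n x=\pi^{n+k}y$ in $A$ for some $y\in A$.

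Finally, $\pi^n(x-\pi^k y)=0$ in $A$ implies $x=\pi^k y$ in $A[1/\pi]$, hence $\pi^n b=x/\pi^k=y$ in $A[1/\pi]$; the inclusion $B\hookrightarrow A[1/\pi]$ then promotes this to an equality in $B$. Thus $\pi^n b$ is the image of $y\in A$, which proves $\pi^n\cok(A\to B)=0$.
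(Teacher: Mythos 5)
Your proof is correct and proceeds by essentially the same route as the paper: write the element as a fraction, use $\pi^n$-surjectivity of $\widehat{A}\to\widehat{B}$ to produce a preimage, use $\pi^n$-injectivity to compare, then descend along $A/\pi^M A\iso\widehat{A}/\pi^M\widehat{A}$. The only cosmetic difference is that the paper absorbs the factor $\pi^n$ into the initial fraction, writing $\pi^n b = x/\pi^m$ directly and thereby avoiding your case split on $M\leq n$ versus $M>n$.
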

\begin{proof}
	Consider the commutative diagram 
	\begin{align}
		\xymatrix{
			B\ar[r]^-{f'}&\widehat{B}\\
			A\ar[r]^-{f}\ar[u]^-{g}&\widehat{A}\ar[u]_-{g'}
		}
	\end{align}
	For $b\in B\subseteq A[1/\pi]$, we write $\pi^n b=g(x)/\pi^m$ in $A[1/\pi]$, where $x\in A$ and $m\in\bb{N}$. We have $\pi^{m+n}f'(b)=  g'(f(x))$. On the other hand, by the assumption $\pi^n\cok(g')=0$, there exists $y\in \widehat{A}$ such that $\pi^n f'(b)=g'(y)$. Thus, $g'(f(x)-\pi^my)=0$, which implies that $\pi^n(f(x)-\pi^m y)=0$ by the assumption $\pi^n\ke(g')=0$. By the isomorphism $A/\pi^{m+n} A\iso \widehat{A}/\pi^{m+n}\widehat{A}$, we have $ \pi^n x\in \pi^{m+n} A$. Therefore, $\pi^n b=g(\pi^n x)/\pi^{m+n}\in \im(g)$.
\end{proof}

\begin{mythm}[{\cite[6.3]{scholze2012perfectoid}}]\label{thm:blow-up-perfd}
	Let $K$ be a pre-perfectoid field with a non-zero element $\pi$ of its maximal ideal, $A$ an $\ca{O}_K$-algebra which is almost pre-perfectoid, $I$ a finitely generated ideal of $A$ containing a power of $\pi$, $a$ an element of $I$. 
	\begin{enumerate}
		\renewcommand{\labelenumi}{{\rm(\theenumi)}}
		\item The integral closure $B$ of the affine blowup algebra $A[I/a]$ in $A[I/a][1/\pi]=A[1/\pi a]$ is almost pre-perfectoid.\label{item:lem-blow-up-perfd-1}
		\item There exists $n\in\bb{N}$ such that $\pi^n\cok(A[I/a]\to B)=0$.\label{item:lem-blow-up-perfd-2}
	\end{enumerate}
\end{mythm}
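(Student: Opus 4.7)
The strategy is to reduce the statement to Scholze's original perfectoid case \cite[6.3]{scholze2012perfectoid} by passing to $\pi$-adic completions, where we are in the genuine perfectoid setting, and then to descend the resulting bounds back to the uncompleted algebras.

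First, let $\widehat{A}$ denote the $\pi$-adic completion of $A$. Since $A$ is almost pre-perfectoid over the pre-perfectoid field $K$ with pseudo-uniformizer $\pi$, the completion $\widehat{A}$ is an almost perfectoid algebra over the perfectoid field $\widehat{K}$. By Lemma \ref{lem:blowup-completion}, the natural morphism $A[I/a] \to \widehat{A}[I\widehat{A}/a]$ induces an isomorphism on $\pi$-adic completions, so questions about $\pi$-power torsion in the blowup pass freely between $A[I/a]$ and $\widehat{A}[I\widehat{A}/a]$.

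Second, I would apply \cite[6.3]{scholze2012perfectoid} (in its almost version) to the datum $(\widehat{A}, I\widehat{A}, a)$: the integral closure $\widetilde{B}$ of $\widehat{A}[I\widehat{A}/a]$ in $\widehat{A}[1/\pi a]$ is almost perfectoid over $\widehat{\ca{O}_K}$, and there exists $n \in \bb{N}$ with $\pi^n \cdot \cok(\widehat{A}[I\widehat{A}/a] \to \widetilde{B}) = 0$. This is the perfectoid input; everything after this is bookkeeping on almost isomorphisms.

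Third, compare $B$ with $\widetilde{B}$. The composite $A[I/a] \hookrightarrow \widehat{A}[I\widehat{A}/a] \to \widetilde{B}$ takes values in elements integral over $A[I/a]$ inside the larger ring $\widehat{A}[1/\pi a]$, and the compatibility $A[1/\pi a] \subseteq \widehat{A}[1/\pi a]$ together with the integral-closure property of $B$ inside $A[1/\pi a]$ produces a natural $A[I/a]$-algebra map $B \to \widetilde{B}$. Passing to $\pi$-adic completions and using the isomorphism from step one yields a map $\widehat{B} \to \widetilde{B}$ of $\pi$-adically complete algebras; Scholze's bound, combined with the $\pi$-torsion freeness of $B$, will force this map to be a $\pi^m$-isomorphism for some $m$. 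Assertion (1) then follows since $\widetilde{B}$ is almost perfectoid, hence so is $\widehat{B}$ up to the controlled $\pi^m$-error, so $B$ itself is almost pre-perfectoid. For assertion (2), the isomorphism $(A[I/a])^\wedge \cong (\widehat{A}[I\widehat{A}/a])^\wedge$ combined with the bound on $\cok(\widehat{A}[I\widehat{A}/a] \to \widetilde{B})$ shows that $(A[I/a])^\wedge \to \widehat{B}$ is a $\pi^{m'}$-isomorphism, and Lemma \ref{lem:pi-bound-tech} applied to the chain $A[I/a] \subseteq B \subseteq A[I/a][1/\pi]$ gives $\pi^{m'} \cdot \cok(A[I/a] \to B) = 0$.

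The main obstacle will be making the comparison in step three precise. Integral closure does not commute with $\pi$-adic completion in general, and one must genuinely exploit the almost finite generation of $\widetilde{B}$ over $\widehat{A}[I\widehat{A}/a]$ (which is built into Scholze's bound) to conclude that the natural map $\widehat{B} \to \widetilde{B}$ is an almost isomorphism, rather than merely a map with dense image. Once this identification is established, both parts of the theorem follow by routine bookkeeping with almost mathematics and the completion lemmas of the preceding section.
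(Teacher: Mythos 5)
Your overall strategy matches the paper's: pass to the $\pi$-adic completion, invoke Scholze's Theorem 6.3, and transfer the conclusion back. But there are two genuine gaps in how the comparison step is carried out.

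First, the claim that the comparison map ``will force this map to be a $\pi^m$-isomorphism'' and that this suffices is incorrect. Almost-pre-perfectoidness is a statement about killing by the maximal ideal $\ak{m}_{\widehat{K}}$, and a $\pi^m$-isomorphism does not imply an almost isomorphism: for instance $\ca{O}_K/\pi$ is killed by $\pi$ but is not almost zero (since $\ak{m}_K^2 = \ak{m}_K$). The paper establishes the stronger statement that the $\pi$-adic completions of $B$, of the integral closure of $\widehat{A}[I\widehat{A}/a]$ in $\widehat{A}[1/\pi a]$, and of the integral closure of $(A[I/a])^\wedge$ in $(A[I/a])^\wedge[1/\pi]$ are all \emph{almost isomorphic}. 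This requires a specific result ([He, 5.29]), applied after reducing to the case $A$ flat over $\ca{O}_K$ (a reduction, $A \mapsto A/A[\pi^\infty]$, you also omit); it is not something that falls out of Scholze's bound plus $\pi$-torsion freeness. Your own concession that ``$\widehat{B}$ is almost perfectoid up to the controlled $\pi^m$-error'' flags the problem without resolving it: that is not a well-formed intermediate property from which almost-pre-perfectoidness of $B$ follows.

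Second, your step 2 reads Scholze's Theorem 6.3 as applying directly to the integral closure $\widetilde{B}$ of the \emph{uncompleted} ring $\widehat{A}[I\widehat{A}/a]$ inside $\widehat{A}[1/\pi a]$. Scholze's theorem is a statement about the completed rational localization, i.e.\ about the pair $\bigl((A[I/a])^\wedge[1/\pi],\ B''\bigr)$ where $B''$ is the integral closure of $(A[I/a])^\wedge$ in $(A[I/a])^\wedge[1/\pi]$. It gives that $B''$ is almost perfectoid and bounded, not a direct statement about $\widetilde{B}$. The paper separates $B'$ (your $\widetilde{B}$) from $B''$ precisely to handle this distinction, and then links everything via the almost isomorphisms of their completions. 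You acknowledge the issue — integral closure does not commute with completion — but the proposal does not actually supply the ingredient that closes it.
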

\begin{proof}
	As $\widehat{A}$ is almost flat over $\ca{O}_{\widehat{K}}$ by definition, $\widehat{A}\to (A/A[\pi^\infty])^\wedge$ is surjective and is an almost isomorphism (\cite[5.27]{he2021coh}). Thus, after replacing $A$ by $A/A[\pi^\infty]\subseteq A[1/\pi]$ and $I$ by its image (which does not change $B$ and $\cok(A[I/a]\to B)$), we may assume that $A$ is flat over $\ca{O}_K$. Let $B'$ (resp. $B''$) be the integral closure of $\widehat{A}[I'/a']$ (resp. $(A[I/a])^\wedge$) in $\widehat{A}[I'/a'][1/\pi]$ (resp. $(A[I/a])^\wedge[1/\pi]$), where the completions are $\pi$-adic, $I'=I\widehat{A}$ and $a'$ is the image of $a$ in $\widehat{A}$. By \ref{lem:blowup-completion}, we have $(A[I/a])^\wedge=(\widehat{A}[I'/a'])^\wedge$. Thus, there exists a canonical morphism $B'\to B''$ and a commutative diagram
	\begin{align}
		\xymatrix{
			B\ar[r]&B'\ar[r]&B''\\
			A[\frac{I}{a}]\ar[r]\ar[u]&\widehat{A}[\frac{I'}{a'}]\ar[r]\ar[u]& (A[\frac{I}{a}])^\wedge\ar[u]
		}
	\end{align}
	Since the three $\ca{O}_K$-algebras in the second row are flat (\cite[5.20]{he2021coh}) and have the same $\pi$-adic completion flat over $\ca{O}_{\widehat{K}}$, the $\pi$-adic completions of the three $\ca{O}_K$-algebras in the first row are almost isomorphic by \cite[5.29]{he2021coh}.
	
	(\ref{item:lem-blow-up-perfd-1}) By definition, the $\ca{O}_{\widehat{K}}$-algebra $\widehat{A}$ is almost perfectoid. We endow $\widehat{A}[1/\pi]$ with the $\pi$-adic topology defined by $\widehat{A}$ so that it becomes a Tate $\widehat{K}$-algebra in the sense of \cite[2.6]{scholze2012perfectoid}. If $\widehat{A}^+$ denotes the integral closure of $\widehat{A}$ in $\widehat{A}[1/\pi]$ (which is almost isomorphic to $\widehat{A}$), then $(\widehat{A}[1/\pi],\widehat{A}^+)$ forms a perfectoid affinoid $\widehat{K}$-algebra in the sense of \cite[6.1]{scholze2012perfectoid}. Similarly, we endow $\widehat{A}[I'/a'][1/\pi]$ with the $\pi$-adic topology defined by $\widehat{A}[I'/a']$ so that it becomes a Tate $\widehat{K}$-algebra. Then, $(\widehat{A}[I'/a'][1/\pi],B')$ is an affinoid $\widehat{K}$-algebra. Its completion is $((A[I/a])^\wedge [1/\pi],B'')$, which is the completed affinoid $\widehat{K}$-algebra associated to the rational subset of $\mrm{Spa}(\widehat{A}[1/\pi],\widehat{A}^+)$ defined by $I$ and $a$ by the definition \cite[2.13]{scholze2012perfectoid}). By virtue of \cite[6.3.(\luoma{2})]{scholze2012perfectoid}, $((A[I/a])^\wedge [1/\pi],B'')$ is a perfectoid affinoid $\widehat{K}$-algebra. Thus, the $\ca{O}_{\widehat{K}}$-algebra $B''$ is almost perfectoid and bounded in $(A[I/a])^\wedge [1/\pi]$ with respect to the $\pi$-adic topology defined by $(A[I/a])^\wedge$. In particular, $B''\to \widehat{B''}$ is an almost isomorphism (\cite[5.5]{scholze2012perfectoid}). Thus, $B$ is almost pre-perfectoid, since $\widehat{B}\to \widehat{B''}$ is an almost isomorphism by the discussion in the beginning.
	
	(\ref{item:lem-blow-up-perfd-2}) Since $(A[I/a])^\wedge\to B''$ is injective as $(A[I/a])^\wedge$ is flat over $\ca{O}_K$, the map $(A[I/a])^\wedge \to \widehat{B}$ is almost injective by the almost isomorphisms $\widehat{B}\to \widehat{B''}\leftarrow B''$. Since $B''$ is bounded in $(A[I/a])^\wedge [1/\pi]$ with respect to the $\pi$-adic topology defined by $(A[I/a])^\wedge$, there exists $n\in\bb{N}$ such that $\pi^n$ annihilates the kernel and cokernel of $(A[I/a])^\wedge\to \widehat{B}$. The conclusion follows from \ref{lem:pi-bound-tech}.
\end{proof}

\begin{mycor}\label{cor:bound-blowup}
	Let $A$ be a normal domain flat over $\bb{Z}_p$, $I$ a finitely generated ideal of $A$ containing a power of $p$, $a$ an element of $I$, $B$ the integral closure of the affine blowup algebra $A[I/a]$ in $A[I/a][1/p]=A[1/pa]$. Then, the map $A\to B$ is pre-perfectoid.
\end{mycor}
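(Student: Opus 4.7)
The plan is to reduce the statement to the perfectoid blowup theorem \ref{thm:blow-up-perfd} applied over the base $A'$ rather than $A$. A preliminary simplification I will use: since $B \subseteq A[1/pa] \subseteq \ca{K}$, the fraction field $\ca{L}$ of $B$ coincides with $\ca{K}$, so for any algebraic extension $\ca{K}'$ of $\ca{K}$ in $\overline{\ca{K}}$ the composite $\ca{L}'$ is just $\ca{K}'$, and $B'$ is simply the integral closure of $B$ inside $\ca{K}'$.

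Fixing $\ca{K}' \in \scr{P}_A$, I will choose a valuation ring $\ca{O}_{K'} \subseteq A'$ whose fraction field is pre-perfectoid and over which $A'$ is almost pre-perfectoid, and set $I' = IA'$. This $I'$ remains a finitely generated ideal of $A'$ containing a power of $p$, so I can invoke \ref{thm:blow-up-perfd} with the data $(A', I', a)$: the integral closure $B''$ of $A'[I'/a]$ in $A'[I'/a][1/p] = A'[1/pa]$ will be almost pre-perfectoid, and there will exist $n \in \bb{N}$ with $p^n \cok(A'[I'/a] \to B'') = 0$.

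The main step will be to identify $B''$ with $B'$. The inclusion $B'' \subseteq B'$ should be straightforward, since elements of $B''$ lie in $\ca{K}'$ and are integral over $A[I/a] \subseteq B$. I expect the main obstacle to be the reverse inclusion: given $\beta \in B'$, I need to show $\beta$ lies in $A'[1/pa]$ (after which integrality over $A'[I'/a]$ will give $\beta \in B''$). For this I plan to argue that $\beta$ is integral over $A[1/pa] \subseteq A'[1/pa]$, and then use that $A'[1/pa]$ is a normal domain (as a localization of the normal domain $A'$) with fraction field $\ca{K}'$, which forces $\beta \in A'[1/pa]$.

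Once $B' = B''$ is established, the property $\ca{L}' \in \scr{P}_B$ follows immediately from the almost pre-perfectoid property of $B''$. For the boundedness condition, I will exploit that $A[I/a] \subseteq B$ implies $A'[I'/a] = A' \cdot A[I/a]$ lies in the image of $B \otimes_A A' \to B'$, so $\cok(B \otimes_A A' \to B')$ is a quotient of $\cok(A'[I'/a] \to B')$ and hence annihilated by $p^n$. Since $\ca{K}' \in \scr{P}_A$ is arbitrary, this completes the verification that $A \to B$ is pre-perfectoid.
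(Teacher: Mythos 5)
Your proof is correct and matches the paper's argument: both reduce to Theorem \ref{thm:blow-up-perfd} applied over $A'$ with the ideal $IA'$, then identify the resulting integral closure with $B'$ and deduce boundedness by comparing cokernels. The paper states the identification of $B'$ with the integral closure of $A'[IA'/a]$ in $A'[1/pa]$ without elaboration; you supply the details (transitivity of integral closure together with normality of the localization $A'[1/pa]$), which is exactly what is implicitly used.
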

\begin{proof}
	With the notation in {\rm\ref{para:notation-bound}}, for any $\ca{K}'\in\scr{P}_A$, notice that the affine blowup algebra $A'[I/a]$ is the image of $A[I/a]\otimes_A A'\to A'[1/a]$. Thus, $B'$ is the integral closure of $A'[I/a]$ in $A'[I/a][1/p]=A'[1/pa]$. Thus, it is almost pre-perfectoid and $p^k\cok(A'[I/a]\to B')=0$ for some $k\in \bb{N}$ by \ref{thm:blow-up-perfd}, which completes the proof.
\end{proof}

\section{Brief Review on Adequate Charts of Logarithmic Schemes}\label{sec:adequate}
The main geometric object of this article, quasi-adequate algebras, stems from logarithmic geometry. In this section, we firstly review basic notions of logarithmic geometry. We refer to \cite{kato1989log,kato1994toric,gabber2004foundations,ogus2018log} for a systematic development of logarithmic geometry, and to \cite[\Luoma{2}.5]{abbes2016p} and \cite[\textsection9]{he2021coh} for a brief summary of the theory. Then, we review adequate charts of a logarithmic scheme and the induced coverings following Tsuji \cite[\textsection4]{tsuji2018localsimpson}.

\begin{mypara}\label{para:monoid}
	All monoids considered in this article are unitary and commutative, and we denote the monoid structures additively. The category of monoids admits arbitrary colimits (cf. \cite[\Luoma{1}.1.1]{ogus2018log}), and we denote the colimit of a diagram $M_1\leftarrow M_0 \to M_2$ by $M_1\oplus_{M_0}M_2$. The forgetful functor from the category of groups (resp. rings) to the category of monoids (resp. with respect to the multiplicative structure) admits a left adjoint sending $M$ to $M^\mrm{gp}$ (resp. $\bb{Z}[M]$). For any monoid $M$, we denote by $\exp_M:M\to \bb{Z}[M]$ the canonical homomorphism of monoids. The forgetful functor from the category of finitely generated monoids to the category of fs (i.e. fine and saturated) monoids admits a left adjoint sending $M$ to the saturation $M^{\mrm{fs}}$ of its image in $M^{\mrm{gp}}$ (\cite[\Luoma{1}.1.3.5, \Luoma{1}.2.2.5]{ogus2018log}). 
\end{mypara}

\begin{mypara}
	A log scheme $X$ is a pair $(\underline{X},\alpha_X:\ca{M}_X\to \ca{O}_X)$ consisting of a scheme $\underline{X}$ and a homomorphism from a sheaf of monoids to the structural sheaf on the \'etale site of $\underline{X}$ (equivalent to the strictly \'etale site of $X$, see below) which induces an isomorphism $\alpha_X^{-1}(\ca{O}_X^\times)\iso\ca{O}_X^\times$, where $\ca{O}_X=\ca{O}_{\underline{X}_\et}$ and $\ca{O}_X^\times$ is the subsheaf of units. A morphism of log schemes $Y\to X$ is a pair $(f,f^\flat)$ consisting of a morphism of the underlying schemes $f:\underline{Y}\to \underline{X}$ and a homomorphism of sheaves of monoids $f^{\flat}:f^{-1}(\ca{M}_X)\to \ca{M}_Y$ compatible with the natural homomorphism $f^{-1}(\ca{O}_X)\to \ca{O}_Y$ via $\alpha_Y$ and $f^{-1}(\alpha_X)$. A morphism of log schemes $Y\to X$ is \emph{strict} if the log structure of $Y$ is the inverse image of that of $X$ (\cite[\Luoma{2}.5.11]{abbes2016p}). For an open immersion of schemes $j:U\to X$, let $\ca{M}_{U\to X}$ be the preimage of $j_{\et*}\ca{O}_{U_\et}^\times$ via the natural map $\ca{O}_{X_\et}\to j_{\et*}\ca{O}_{U_\et}$. Then, the log structure $\alpha_{U\to X}:\ca{M}_{U\to X}\to \ca{O}_{X_\et}$ on $X$ is called the \emph{compactifying log structure} associated to the open immersion $j$ (\cite[\Luoma{3}.1.6.1]{ogus2018log}).
\end{mypara}

\begin{mypara}\label{para:log-ring}
	A log ring is a homomorphism $M\to A$ from a monoid $M$ to the multiplicative monoid of a ring $A$. We denote by $\spec(M\to A)$ the log scheme with underlying scheme $\spec(A)$ endowed with the log structure associated to the pre-log structure $M\to \ca{O}_{\spec(A)_\et}$ induced by $M\to A$, and we set $\bb{A}_M=\spec(\exp_M:M\to \bb{Z}[M])$ (\cite[\Luoma{3}.1.2.3]{ogus2018log}). A \emph{chart} of a log scheme $X$ is a homomorphism $M\to \Gamma(X,\ca{M}_X)$ from a monoid $M$ to the monoid of global sections of $\ca{M}_X$ such that the induced morphism of log schemes $X\to\bb{A}_M$ is strict (\cite[\Luoma{2}.5.13]{abbes2016p}). We say that a log scheme $X$ is \emph{coherent} (resp. \emph{fs}) if strictly \'etale locally on $X$ it admits a chart from a finitely generated (resp. fs) monoid $M$ (\cite[\Luoma{2}.5.15]{abbes2016p}).
\end{mypara}

\begin{mypara}
	The inclusion functor from the category of schemes to the category of coherent log schemes (by endowing with trivial log structures) admits a left adjoint sending $X$ to its underlying scheme $\underline{X}$, and admits a right adjoint sending $X$ to the maximal open subscheme $X^{\triv}$ of $\underline{X}$ on which the log structure is trivial (\cite[\Luoma{3}.1.2.8]{ogus2018log}). 
	The inclusion functor from the category of fs log schemes to the category of coherent log schemes admits a right adjoint $X\mapsto X^{\mrm{fs}}$, and we remark that the canonical morphism of underlying schemes $\underline{X}^{\mrm{fs}}\to \underline{X}$ is finite with $(X^{\mrm{fs}})^{\triv}=X^{\triv}\times_{\underline{X}} \underline{X}^{\mrm{fs}}=X^{\triv}$ (\cite[\Luoma{3}.2.1.5]{ogus2018log}). The category of log schemes admits finite limits, which commute with taking underlying schemes and preserve coherence (\cite[\Luoma{3}.2.1.2]{ogus2018log}). By the universal property of the functor $X\mapsto X^{\mrm{fs}}$, the category of fs log schemes also admits finite limits (\cite[\Luoma{3}.2.1.6]{ogus2018log}).
\end{mypara}

\begin{mypara}\label{para:log-reg}
	Let $X$ be a regular fs log scheme (\cite[2.1]{kato1994toric}, \cite[2.3]{niziol2006toric}). Its underlying scheme $\underline{X}$ is locally Noetherian and normal, and $X^{\triv}$ is regular and dense in $X$ (\cite[4.1]{kato1994toric}). Moreover, the log structure on $X$ is the compactifying log structure associated to the open immersion $X^{\triv}\to \underline{X}$ (\cite[11.6]{kato1994toric}, \cite[2.6]{niziol2006toric}). A typical example is that given a regular scheme $X$ with a strict normal crossings divisor $D$, then $(X,\alpha_{X\setminus D\to X})$ is a regular fs log scheme whose open subset of triviality of log structure is $X\setminus D$ (\cite[\Luoma{3}.1.11.9]{ogus2018log}).
\end{mypara}

\begin{mypara}\label{para:log-diff}
	To any morphism of log schemes $Y\to X$, one can associate the $\ca{O}_Y$-module of log differentials $\Omega^1_{Y/X}$ with natural maps $\df:\ca{O}_Y\to \Omega^1_{Y/X}$ and $\df\log:\ca{M}_Y\to \Omega^1_{Y/X}$ (\cite[\Luoma{2}.5.21]{abbes2016p}). If $Y\to X$ is strict, then $\Omega^1_{Y/X}=\Omega^1_{\underline{Y}/\underline{X}}$. If $X$ and $Y$ are coherent, then $\Omega^1_{Y/X}$ is quasi-coherent. If $Y\to X$ is a smooth morphism of coherent log schemes, then $\Omega^1_{Y/X}$ is locally finite free (\cite[\Luoma{4}.3.2.1]{ogus2018log}). For any morphism of log rings $(\alpha:M\to A)\to(\beta:N\to B)$, if we denote by $X=\spec(M\to A)$ and $Y=\spec(N\to B)$, then $\Omega^1_{Y/X}$ is the quasi-coherent $\ca{O}_Y$-module associated to the $B$-module
	\begin{align}
		\Omega^1_{(N\to B)/(M\to A)}=\frac{\Omega^1_{B/A}\oplus (B\otimes_{\bb{Z}}(N^{\mrm{gp}}/M^{\mrm{gp}}))}{F}
	\end{align} 
	where $F$ is the $B$-submodule of $\Omega^1_{B/A}\oplus (B\otimes_{\bb{Z}}(N^{\mrm{gp}}/M^{\mrm{gp}}))$ generated by the elements $(\df (\beta(x)),-\beta(x)\otimes x)$ for any $x\in N$ (\cite[\Luoma{4}.1.2.6]{ogus2018log}). We remark that for any Cartesian diagram in the category of (resp. fs) log schemes
	\begin{align}
		\xymatrix{
			Y'\ar[r]^-{g'}\ar[d]_-{f'}&Y\ar[d]^-{f}\\
			X'\ar[r]^-{g}&X
		}
	\end{align}
	the canonical morphism $g'^*\Omega^1_{Y/X}\to \Omega^1_{Y'/X'}$ is an isomorphism (\cite[\Luoma{4}.1.2.15]{ogus2018log}).
\end{mypara}

\begin{mypara}\label{para:notation-adequate-chart}
	Let $K$ be a complete discrete valuation field of characteristic $0$ with perfect residue field of characteristic $p>0$, $S$ the log scheme with underlying scheme $\spec(\ca{O}_K)$ endowed with  the compactifying log structure associated to the open immersion $\spec(K)\to\spec(\ca{O}_K)$ (in particular, $S$ is a regular fs log scheme, cf. \ref{para:log-reg}), $f:X\to S$ a morphism of fs log schemes. We remark that if $f$ is smooth, then $X$ is also regular (\cite[\Luoma{4}.3.5.3]{ogus2018log}), and thus the log structure on $X$ is the compactifying log structure associated to $X^{\triv}\to \underline{X}$, cf. \ref{para:log-reg}.
\end{mypara}

\begin{mydefn}\label{defn:adequate-chart}
	With the notation in {\rm\ref{para:notation-adequate-chart}}, an \emph{adequate chart} of $f$ is a triple of homomorphisms of monoids $(\alpha:\bb{N}\to \Gamma(S,\ca{M}_S),\ \beta:P\to \Gamma(X,\ca{M}_X),\ \gamma:\bb{N}\to P)$ satisfying the following conditions:
	\begin{enumerate}
		\renewcommand{\labelenumi}{{\rm(\theenumi)}}
		\item The following diagram is commutative
		\begin{align}
			\xymatrix{
				\Gamma(X,\ca{M}_X)& P\ar[l]_-{\beta}\\
				\Gamma(S,\ca{M}_S)\ar[u]^-{f^{\flat}}& \bb{N}\ar[l]_-{\alpha}\ar[u]_-{\gamma}
			}
		\end{align}\label{item:defn:adequate-chart-1}
		\item The element $\alpha(1)\in \Gamma(S,\ca{M}_S)=\ca{O}_K\setminus\{0\}$ is a uniformizer of $\ca{O}_K$ (in particular, $S\to \bb{A}_{\bb{N}}$ is strict).\label{item:defn:adequate-chart-2}
		\item The homomorphism $\beta$ induces a strict and \'etale morphism $X\to S\times_{\bb{A}_{\bb{N}}}\bb{A}_P$ (in particular, $X\to \bb{A}_P$ is also strict).\label{item:defn:adequate-chart-3}
		\item The monoid $P$ is fs, and if we denote by $\gamma_\eta:\bb{Z}\to P_\eta=\bb{Z}\oplus_{\bb{N}}P$ the pushout of $\gamma$ by the inclusion $\bb{N}\to \bb{Z}$, then there exists an isomorphism for some $c, d\in \bb{N}$ with $c\leq d$,
		\begin{align}\label{eq:adequate-chart-P}
			P_\eta\cong \bb{Z}\oplus \bb{Z}^c\oplus \bb{N}^{d-c}
		\end{align}
		identifying $\gamma_\eta$ with the inclusion of $\bb{Z}$ into the first component of right hand side. \label{item:defn:adequate-chart-4}
	\end{enumerate}
\end{mydefn}

\begin{myrem}\label{rem:adequate-chart}
	In \ref{defn:adequate-chart}, the morphism of fs log schemes $S\times_{\bb{A}_{\bb{N}}}\bb{A}_P\to S$ is smooth (\cite[\Luoma{2}.5.25]{abbes2016p}), and thus so is $X\to S$. If we set $A=\ca{O}_K\otimes_{\bb{Z}[\bb{N}]}\bb{Z}[P]$ and $A_{\triv}=\ca{O}_K\otimes_{\bb{Z}[\bb{N}]}\bb{Z}[P^{\mrm{gp}}]$, then the underlying scheme of $S\times_{\bb{A}_{\bb{N}}}\bb{A}_P$ is $\spec(A)$, and $\spec(A_{\triv})$ is the maximal open subscheme on which the log structure is trivial (cf. \cite[\Luoma{3}.1.2.10]{ogus2018log}). As $X\to S\times_{\bb{A}_{\bb{N}}}\bb{A}_P$ is strictly \'etale, $X^{\triv}=\spec(A_{\triv})\times_{\spec(A)}\underline{X}$ and the log structure on $X$ is the compactifying log structure associated to the open immersion $X^{\triv}\to \underline{X}$ (cf. \ref{para:notation-adequate-chart}). Moreover, \eqref{eq:adequate-chart-P} induces an isomorphism (cf. \ref{para:monoid})
	\begin{align}\label{eq:rem:adequate-chart}
		A[\frac{1}{p}]=K\otimes_{\bb{Z}[\bb{Z}]}\bb{Z}[\bb{Z}]\otimes_{\bb{Z}[\bb{N}]}\bb{Z}[P]=K\otimes_{\bb{Z}[\bb{Z}]}\bb{Z}[P_\eta]\cong K[\bb{Z}^c\oplus \bb{N}^{d-c}],
	\end{align}
	and $A$ is a Noetherian normal domain (cf. \ref{para:log-reg}, \ref{para:notation-adequate-chart}).  
\end{myrem}

\begin{myrem}\label{rem:adequate-chart-compare}
	Our definition of adequate charts is slightly different from Abbes-Gros' definition \cite[\Luoma{3}.4.4]{abbes2016p}, where they require moreover that $\gamma$ is saturated (cf. \cite[4.2.2]{abbes2020suite}). Nevertheless, our adequate charts describe log smooth schemes over $S$ by the following proposition due to Abbes-Gros and Tsuji.
\end{myrem}

\begin{myprop}[{\cite[3.14, 3.16]{tsuji2018localsimpson}, cf. \cite[\Luoma{3}.4.6]{abbes2016p}}]\label{prop:adequate-chart}
	With the notation in {\rm\ref{para:notation-adequate-chart}}, the following conditions are equivalent:
	\begin{enumerate}
		\renewcommand{\labelenumi}{{\rm(\theenumi)}}
		\item The morphism of fs log schemes $f:X\to S$ is smooth and the underlying generic fibre $\underline{X}_K$ is regular.\label{item:prop:adequate-chart-1}
		\item Every geometric point of $\underline{X}$ admits a strictly \'etale neighbourhood $U$ in $X$ such that $U\to S$ factors through $S'=(\spec(\ca{O}_{K'}),\alpha_{\spec(K')\to\spec(\ca{O}_{K'})})$ for some tamely ramified finite extension $K'$ of $K$ and that the induced morphism of fs log schemes $U\to S'$ admits an adequate chart.\label{item:prop:adequate-chart-2}
	\end{enumerate}
\end{myprop}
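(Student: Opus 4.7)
The proof proceeds in two directions, with the implication $(2)\Rightarrow(1)$ being essentially immediate from \ref{rem:adequate-chart}, while $(1)\Rightarrow(2)$ requires genuine work with monoids and tame ramification.

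For $(2)\Rightarrow(1)$, since smoothness is strictly étale local on source and target, I only need to check that each adequate chart produces a smooth morphism with regular generic fibre. But \ref{rem:adequate-chart} says that $S'\times_{\bb{A}_{\bb{N}}}\bb{A}_P\to S'$ is smooth (citing \cite[\Luoma{2}.5.25]{abbes2016p}), that $U\to S'\times_{\bb{A}_{\bb{N}}}\bb{A}_P$ is strictly étale, and the generic fibre of $S'\times_{\bb{A}_{\bb{N}}}\bb{A}_P$ over $\spec(\ca{O}_{K'})$ has coordinate ring $K'[\bb{Z}^c\oplus\bb{N}^{d-c}]$ by \eqref{eq:rem:adequate-chart}, which is a localization of a polynomial ring over $K'$ and hence regular. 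Since $S'\to S$ is smooth (a tamely ramified extension is log smooth over $S$), composition gives that $U\to S$ is smooth, and the generic fibre is regular since regularity passes through étale and tamely ramified base extensions.

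For $(1)\Rightarrow(2)$, I would work strictly étale locally around a geometric point $\bar x$ of $\underline{X}$ with image in the closed point of $S$ (the open case is trivial). By Kato's structure theorem for log smooth morphisms, one can find strictly étale locally a chart of $f$ consisting of an fs monoid $Q$, a homomorphism $\bb{N}\to Q$ modelling $f^{\flat}$, and a strictly étale map to $S\times_{\bb{A}_{\bb{N}}}\bb{A}_Q$. After sharpening $Q$ at the image of $\bar x$, I can assume $Q$ is sharp at $\bar x$. The localization $Q_\eta=\bb{Z}\oplus_{\bb{N}}Q$ is an fs monoid containing $\bb{Z}$, and I must analyze its torsion and splitting structure. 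The regularity of $\underline{X}_K$, combined with the description $A[1/p]=K\otimes_{\bb{Z}[\bb{N}]}\bb{Z}[Q]=K[Q_\eta]$, forces $Q_\eta$ to be saturated torsion-free up to the issue that $Q_\eta^{\mrm{gp}}/\bb{Z}$ may carry finite torsion whose order is necessarily coprime to $p$ (since $\underline{X}_K$ has characteristic $0$ and is regular, the image of $\bb{N}\to Q$ generates a saturated submonoid after the obvious normalizations).

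The central step, and the main obstacle, is to absorb this torsion of order $m$ prime to $p$ by a tamely ramified Kummer cover. Concretely, let $K'=K(\pi^{1/m})$, which is tamely ramified of degree $m$ over $K$, and form $S'$ with its canonical log structure. Pulling back the chart along $S'\to S$ replaces $\bb{N}\xrightarrow{\gamma}Q$ by $\bb{N}\xrightarrow{m\cdot}\bb{N}\xrightarrow{\gamma}Q$; after taking the fs saturation of the resulting pushout $\bb{N}\oplus_{\bb{N}}Q$ and using that $Q_\eta^{\mrm{gp}}/\bb{Z}$ has no $p$-torsion, I obtain a new fs monoid $P$ with a chart $\bb{N}\to P$ such that $P^{\mrm{gp}}/\bb{Z}$ is torsion free. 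Being fs with torsion-free quotient, one verifies that $P_\eta$ splits as $\bb{Z}\oplus F$ where $F$ is a sharp fs monoid whose associated group is free abelian; regularity of the generic fibre then forces $F\cong\bb{Z}^c\oplus\bb{N}^{d-c}$ via Kato's classification of regular toric monoids, giving \eqref{eq:adequate-chart-P}. The fact that the new map $U\to S'\times_{\bb{A}_{\bb{N}}}\bb{A}_P$ remains strictly étale (after replacing $U$ by a strictly étale neighbourhood) uses that fs saturation is integral and that strict étaleness is preserved under the base change to $S'$ in the fs category. The combinatorial bookkeeping of torsion, splitting, and tame ramification is where the argument is delicate, and I would follow the detailed treatment in \cite[3.14, 3.16]{tsuji2018localsimpson} (or equivalently \cite[\Luoma{3}.4.6]{abbes2016p}) for the precise verifications.
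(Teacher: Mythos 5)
Your proposal follows essentially the same route as the paper: for $(1)\Rightarrow(2)$ both cite Tsuji's \cite[3.14, 3.16]{tsuji2018localsimpson} (the paper splits cleanly into a geometric point of the generic fibre, where one takes $S'=S$ and invokes [3.14], and a point of the special fibre, where [3.16] supplies the tame Kummer cover — whereas you call the generic-fibre case ``trivial,'' which slightly understates that [3.14] is still a genuine application of Kato's structure theorem, just without the ramification step); and for $(2)\Rightarrow(1)$ both observe that $\underline{U}_{K'}$ is étale over $\spec(K'[\bb{Z}^c\oplus\bb{N}^{d-c}])$ (hence regular) and that $U\to S'\to S$ is smooth because $S'\to S$ is log étale for tame $K'/K$. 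One small inaccuracy in your write-up: your closing phrase ``regularity passes through étale and tamely ramified base extensions'' is unnecessary and slightly misleading — since $U$ already factors through $S'$, the generic fibre $\underline{U}_K$ \emph{is} $\underline{U}_{K'}$, so no descent of regularity is needed; the paper concludes directly from $\underline{U}_{K'}$ being regular.
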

\begin{proof}
	(\ref{item:prop:adequate-chart-2}) $\Rightarrow$ (\ref{item:prop:adequate-chart-1}): By the conditions in \ref{defn:adequate-chart}, $\underline{U}_{K'}$ is \'etale over $\spec(K'[\bb{Z}^c\oplus\bb{N}^{d-c}])$, thus regular. Since $U\to S'$ is smooth by \ref{rem:adequate-chart}, so is $U\to S$ as $S'$ is \'etale over $S$.
	
	(\ref{item:prop:adequate-chart-1}) $\Rightarrow$ (\ref{item:prop:adequate-chart-2}): For a geometric point of the generic fibre $\underline{X}_K$, the conclusion follows directly from \cite[3.14]{tsuji2018localsimpson} (where we take $S'=S$). For a geometric point of the special fibre of $\underline{X}$, the conclusion follows directly from \cite[3.16]{tsuji2018localsimpson}.
\end{proof}

\begin{mypara}\label{para:X}
	We follow Tsuji \cite[\textsection4]{tsuji2018localsimpson} to construct coverings of adequate log schemes from an adequate chart. Let $\gamma:\bb{N}\to P$ be an injective homomorphism of fs monoids such that there exists an isomorphism for some $c, d\in \bb{N}$ with $c\leq d$,
	\begin{align}\label{eq:monoid-str-P}
		P_\eta\cong \bb{Z}\oplus \bb{Z}^c\oplus \bb{N}^{d-c}
	\end{align}
	identifying $\gamma_\eta:\bb{Z}\to P_\eta=\bb{Z}\oplus_{\bb{N}}P$ with the inclusion of $\bb{Z}$ into the first component of right hand side as in \ref{defn:adequate-chart}.(\ref{item:defn:adequate-chart-4}). We identify $P^{\mrm{gp}}$ with $\bb{Z}^{1+d}$ and $\bb{N}^{\mrm{gp}}$ with the first component of $\bb{Z}^{1+d}$. For any $e\in \bb{N}_{>0}$ and $\underline{r}=(r_1,\dots,r_d)\in \bb{N}_{>0}^d$, we define a submonoid of $\bb{Q}^{1+d}$ by
	\begin{align}
		P_{e,\underline{r}}=\{x\in e^{-1}\bb{Z}\times r_1^{-1}\bb{Z}\times\cdots \times r_d^{-1}\bb{Z}\ |\ \exists k\in\bb{N}_{>0}\trm{ s.t. }kx\in P\}.
	\end{align}
	It is an fs monoid (\cite[3.2]{tsuji2018localsimpson}), and if we denote by $P_{e,\underline{r},\eta}$ the pushout $e^{-1}\bb{Z}\oplus_{e^{-1}\bb{N}}P_{e,\underline{r}}$, then there is an isomorphism
	\begin{align}\label{eq:monoid-tower-str-P}
		P_{e,\underline{r},\eta}\cong e^{-1}\bb{Z}\oplus r_1^{-1}\bb{Z}\oplus\cdots\oplus r_c^{-1}\bb{Z}\oplus r_{c+1}^{-1}\bb{N}\oplus \cdots\oplus r_d^{-1}\bb{N}
	\end{align}
	induced by \eqref{eq:monoid-str-P} (cf. \cite[page 810, equation (2)]{tsuji2018localsimpson}). 
	
	Let $K$ be a complete discrete valuation field of characteristic $0$ with perfect residue field of characteristic $p>0$, $L$ a finite field extension of $K$, $S$ (resp. $S^L$) the log scheme with underlying scheme $\spec(\ca{O}_K)$ (resp. $\spec(\ca{O}_L)$) endowed with the compactifying log structure defined by the closed point. We fix a homomorphism of monoids $\alpha:\bb{N}\to\ca{O}_K\setminus \{0\}$ sending $1$ to a uniformizer $\pi$ of $K$. For any $\underline{r}\in\bb{N}_{>0}^d$, consider the fibred product in the category of fs log schemes
	\begin{align}\label{eq:X-defn}
		X^L_{\underline{r}}=S^L\times_{\bb{A}_{\bb{N}}}^{\mrm{fs}}\bb{A}_{P_{1,\underline{r}}}, 
	\end{align}
	where the map $S^L\to\bb{A}_{\bb{N}}$ is induced by $\alpha$ and the inclusion $\ca{O}_K\to \ca{O}_L$. We omit the index $L$ or $\underline{r}$ if $L=K$ or $\underline{r}=\underline{1}$ respectively. Let $\beta:P_{1,\underline{r}}\to\Gamma(X^L_{\underline{r}},\ca{M}_{X^L_{\underline{r}}})$ be the induced homomorphism of monoids, and for any $1\leq i\leq d$, we denote by $T_{i,r_i}$ the image of $r_i^{-1}\cdot\underline{1}_i=(0,\dots,r_i^{-1},\dots,0)\in P_{1,\underline{r}}^{\mrm{gp}}$ in $\Gamma(X^L_{\underline{r}},\ca{M}_{X^L_{\underline{r}}}^{\mrm{gp}})$.
	\begin{align}\label{diam:X-1}
		\xymatrix{
			\Gamma(X^L_{\underline{r}},\ca{M}_{X^L_{\underline{r}}})& P_{1,\underline{r}}\ar[l]_-{\beta}\\
			\Gamma(S^L,\ca{M}_{S^L})\ar[u]& \bb{N}\ar[l]_-{\alpha}\ar[u]_-{\gamma}
		}
	\end{align} 
	In section \ref{sec:descent}, we will produce a Kummer tower from $X^L_{\underline{r}}\to X$ by varying $L$ and $\underline{r}$. We need an adequate chart of $X^L_{\underline{r}}$ over $S^L$.
\end{mypara}

\begin{mylem}[{\cite[page 812, equation (6)]{tsuji2018localsimpson}}]\label{lem:X-adequate-chart}
	With the notation in {\rm\ref{para:X}}, the morphism of fs log schemes $X^L_{\underline{r}}\to S^L$ admits an adequate chart
	\begin{align}\label{eq:lem:X-adequate-chart}
		(\alpha': e^{-1}\bb{N}\to \Gamma(S^L,\ca{M}_{S^L}),\ \beta':P_{e,\underline{r}}\to \Gamma(X^L_{\underline{r}},\ca{M}_{X^L_{\underline{r}}}),\ \gamma':e^{-1}\bb{N}\to P_{e,\underline{r}})
	\end{align}
	where $e$ is the ramification index of $L/K$, $\alpha'$ is a homomorphism of monoids sending $e^{-1}$ to a uniformizer $\pi'$ of $L$, $\beta'(k_0/e,k_1/r_1,\dots,k_d/r_d)=\pi'^{k_0}\cdot T_{1,r_1}^{k_1}\cdots T_{d,r_d}^{k_d}$, and $\gamma'$ is induced by the inclusion of the first component of \eqref{eq:monoid-tower-str-P}. Moreover, the canonical morphism of log schemes induced by this chart
	\begin{align}\label{eq:lem:X-adequate-chart-1}
		X^L_{\underline{r}}\longrightarrow S^L\times_{\bb{A}_{e^{-1}\bb{N}}}\bb{A}_{P_{e,\underline{r}}} 
	\end{align}
	is an isomorphism.
\end{mylem}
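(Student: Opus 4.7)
The plan is to verify the four conditions of \ref{defn:adequate-chart} and then establish the isomorphism \eqref{eq:lem:X-adequate-chart-1}; the last assertion will in fact directly supply condition (3), so the logical order is to handle the isomorphism as the main step. First, I would check commutativity of the diagram in condition (1): traversing via $\gamma'$ sends $e^{-1} \in e^{-1}\bb{N}$ to $e^{-1}\cdot\underline{1}_0 \in P_{e,\underline{r}}$, and then $\beta'$ evaluates this to $\pi'$, which agrees with $\alpha'(e^{-1})$. Condition (2) is immediate because $\pi'$ is a uniformizer of $L$ by the definition of ramification index. Condition (4) is the content of the explicit description \eqref{eq:monoid-tower-str-P} of $P_{e,\underline{r},\eta}$ together with the observation that $\gamma'_\eta$ is the inclusion of the first component.

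The main work lies in establishing \eqref{eq:lem:X-adequate-chart-1}. By the universal property of the fs fibred product and the definition $X^L_{\underline{r}} = S^L \times^{\mrm{fs}}_{\bb{A}_\bb{N}} \bb{A}_{P_{1,\underline{r}}}$, this reduces to producing a canonical isomorphism of fs monoids
\begin{align*}
(P_{1,\underline{r}} \oplus_{\bb{N}} e^{-1}\bb{N})^{\mrm{fs}} \iso P_{e,\underline{r}},
\end{align*}
together with the observation that the right-hand fibred product $S^L \times_{\bb{A}_{e^{-1}\bb{N}}} \bb{A}_{P_{e,\underline{r}}}$ is already fs, so that it coincides with its fs-ification. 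The natural map of monoids sends $(p, k/e) \mapsto p + k e^{-1}\cdot\underline{1}_0$. To see that $P_{e,\underline{r}}$ is the fs-ification of the pushout, I would compute the group completion $(P_{1,\underline{r}} \oplus_{\bb{N}} e^{-1}\bb{N})^{\mrm{gp}} = P_{1,\underline{r}}^{\mrm{gp}} + e^{-1}\bb{Z}\cdot\underline{1}_0$ inside $\bb{Q}^{1+d}$, observe using \eqref{eq:monoid-tower-str-P} that it equals $e^{-1}\bb{Z}\oplus\prod_i r_i^{-1}\bb{Z}$, and then verify the saturation property: any $x \in P_{e,\underline{r}}$ satisfies $Nx \in P \subseteq P_{1,\underline{r}}$ for some $N \geq 1$ by the very definition of $P_{e,\underline{r}}$, hence lies in the saturation of the image of the pushout; conversely any element of the saturation has a positive-integer multiple in the pushout's image, and that image sits inside $P_{e,\underline{r}}$.

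The main obstacle is the careful interplay between monoid pushout and saturation; concretely, one must argue that the image of the pushout inside $e^{-1}\bb{Z}\oplus\prod_i r_i^{-1}\bb{Z}$ has saturation exactly $P_{e,\underline{r}}$, which requires knowing that the auxiliary directions $r_i^{-1}\underline{1}_i$ are reached up to torsion using the structure \eqref{eq:monoid-tower-str-P}. Once this combinatorial identification is in hand, the isomorphism of log schemes follows formally from the universal property, and condition (3) of \ref{defn:adequate-chart} is automatic because an isomorphism is in particular strict and \'etale. A minor subsidiary check is that the ordinary fibred product on the right of \eqref{eq:lem:X-adequate-chart-1} is already fs; this holds because $e^{-1}\bb{N} \to P_{e,\underline{r}}$ is a morphism of fs monoids and $S^L \to \bb{A}_{e^{-1}\bb{N}}$ is strict, so the chart $P_{e,\underline{r}}$ descends unchanged to the fibred product.
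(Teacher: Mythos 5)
Your verification of conditions (1), (2), (4) of \ref{defn:adequate-chart} is fine, and you correctly observe that the isomorphism \eqref{eq:lem:X-adequate-chart-1} supplies condition (3) for free. The gap is in the reduction of \eqref{eq:lem:X-adequate-chart-1} to an fs-ified pushout of monoids $(P_{1,\underline{r}} \oplus_{\bb{N}} e^{-1}\bb{N})^{\mrm{fs}} \iso P_{e,\underline{r}}$: this step implicitly requires the square of log schemes
\begin{align}
	\xymatrix{
		S^L\ar[r]^-{\alpha'}\ar@{=}[d]&\bb{A}_{e^{-1}\bb{N}}\ar[d]\\
		S^L\ar[r]^-{\alpha}&\bb{A}_{\bb{N}}
	}
\end{align}
to commute, where the right vertical arrow comes from the inclusion $\bb{N}\hookrightarrow e^{-1}\bb{N}$. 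It does not, unless $\pi=\pi'^e$ exactly. The map $\alpha$ sends the generator of $\bb{N}$ to $\pi$, a uniformizer of $\ca{O}_K$, while going around via $\alpha'$ and the inclusion sends it to $\pi'^e$; these differ by the unit $u=\pi/\pi'^e\in\ca{O}_L^\times$, and since these are maps of underlying schemes $\spec(\ca{O}_L)\to\spec(\bb{Z}[T])$ specified by a choice of image of $T$, the unit discrepancy is real. Consequently you cannot invoke the universal property of $S^L\times^{\mrm{fs}}_{\bb{A}_{\bb{N}}}\bb{A}_{P_{1,\underline{r}}}$ with respect to objects over $\bb{A}_{e^{-1}\bb{N}}$ the way your argument needs.

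This unit is precisely what the paper's diagram of monoids with the auxiliary $\bb{Z}$-factor handles: the homomorphism $s':\bb{Z}\to\Gamma(S^L,\ca{M}_{S^L})$, $1\mapsto\pi/\pi'^e$, is threaded through the chart $P_{e,\underline{r}}\oplus\bb{Z}$ so that the composition $\bb{N}\to e^{-1}\bb{N}\oplus\bb{Z}\to\Gamma(S^L,\ca{M}_{S^L})$ really does recover $\alpha$. The two cocartesian squares in fs monoids (the right one being exactly \cite[4.2]{tsuji2018localsimpson}) then produce the isomorphism \eqref{eq:lem:X-adequate-chart-1} by base change, and the $\bb{Z}$-factor is harmless because it maps to units. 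Your combinatorial computation of the group completion and saturation is in the right spirit, but it only becomes the relevant statement after the unit correction is accounted for — without it, you are computing the wrong fibred product. You would need to incorporate the extra $\bb{Z}$-factor (or an equivalent device for absorbing $u$) before the monoid pushout argument can be applied.
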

\begin{proof}
	For the convenience of the readers, we briefly recall Tsuji's proof. Consider the commutative diagram of fs monoids
	\begin{align}\label{diam:monoid-tower-str-P}
		\xymatrix{
			P_{e,\underline{r}}\ar[r]^-{(\iota,0)}& P_{e,\underline{r}}\oplus \bb{Z}&P_{1,\underline{r}}\ar[l]_-{(\iota,s)}\\
			e^{-1}\bb{N}\ar[r]^-{(\iota,0)}\ar[u]_-{\gamma'}& e^{-1}\bb{N}\oplus \bb{Z}\ar[u]_-{(\gamma',\id_{\bb{Z}})}& \bb{N}\ar[l]_-{(\iota,\iota)}\ar[u]_-{\gamma}
		}
	\end{align}
	where we use $\iota$ to denote the inclusions, $s:P_{1,\underline{r}}\to \bb{Z}$ is the projection to the first component, and $\gamma'$ is induced by the inclusion of the first component of \eqref{eq:monoid-tower-str-P}. The squares in \eqref{diam:monoid-tower-str-P} are cocartesian in the category of fs monoids (using \cite[4.2]{tsuji2018localsimpson} for the right). Notice that the map $\alpha:\bb{N}\to \Gamma(S^L,\ca{M}_{S^L})$ is the composition of 
	\begin{align}
		\bb{N}\stackrel{(\iota,\iota)}{\longrightarrow}e^{-1}\bb{N}\oplus \bb{Z}\stackrel{(\alpha',s')}{\longrightarrow} \Gamma(S^L,\ca{M}_{S^L}),
	\end{align}
	where $\alpha'$ is a homomorphism of monoids sending $e^{-1}$ to a uniformizer $\pi'$ of $L$, and $s':\bb{Z}\to \Gamma(S^L,\ca{M}_{S^L})=\ca{O}_L\setminus\{0\}$ sends $1$ to the unit $\pi/\pi'^e$. Thus, the cocartesian square in \eqref{diam:monoid-tower-str-P} on the right induces the following commutative diagram of monoids by the definition $X^L_{\underline{r}}=S^L\times_{\bb{A}_{\bb{N}}}^{\mrm{fs}}\bb{A}_{P_{1,\underline{r}}}$ \eqref{eq:X-defn}.
	\begin{align}\label{diam:X-2}
		\xymatrix{
			\Gamma(X^L_{\underline{r}},\ca{M}_{X^L_{\underline{r}}})& P_{e,\underline{r}}\oplus \bb{Z}\ar[l]_-{(\beta',s')}\\
			\Gamma(S^L,\ca{M}_{S^L})\ar[u]& e^{-1}\bb{N}\oplus \bb{Z}\ar[l]_-{(\alpha',s')}\ar[u]_-{(\gamma',\id_{\bb{Z}})}
		}
	\end{align}
	It is clear that the induced map $\beta'$ sends an element $(k_0/e,k_1/r_1,\dots,k_d/r_d)\in P_{e,\underline{r}}$ to $\pi'^{k_0}\cdot T_{1,r_1}^{k_1}\cdots T_{d,r_d}^{k_d}$. Since the morphism \eqref{eq:lem:X-adequate-chart-1} is induced by the composition of \eqref{diam:X-2} with the cocartesian square in \eqref{diam:monoid-tower-str-P} on the left, it is an isomorphism by the definition $X^L_{\underline{r}}=S^L\times_{\bb{A}_{\bb{N}}}^{\mrm{fs}}\bb{A}_{P_{1,\underline{r}}}$ \eqref{eq:X-defn}.
\end{proof}

\begin{myprop}[{\cite[4.3, 4.5]{tsuji2018localsimpson}}]\label{prop:X-tower-str}
	With the notation in {\rm\ref{para:X}}, there exists $k_0\in \bb{N}$ such that for any finite field extensions $L\subseteq L'$ of $K$ and any elements $\underline{r}|\underline{r}'$ of $\bb{N}_{>0}^d$, we have 
	\begin{align}
		p^{k_0} A^{L'}_{\underline{r}'}\subseteq \bigoplus_{ \underline{k}\in I}\ca{O}_{L'}\otimes_{\ca{O}_L} A^L_{\underline{r}}\cdot\prod_{i=1}^d T_{i,r_i'}^{k_i}\subseteq p^{-k_0} A^{L'}_{\underline{r}'}
	\end{align}
	where $A^L_{\underline{r}}=\Gamma(X^L_{\underline{r}},\ca{O}_{X^L_{\underline{r}}})$, and $I=\{(k_1,\dots,k_d)\in\bb{N}^d\ |\ 0\leq k_i<r_i'/r_i,\ 1\leq i\leq d\}$.
\end{myprop}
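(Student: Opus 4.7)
My plan is to reduce the uniform estimate to a calculation with the explicit adequate chart furnished by Lemma \ref{lem:X-adequate-chart}, and to factor the transition $(L,\underline{r})\to (L',\underline{r}')$ through the intermediate step $(L,\underline{r}')$ so that the change of exponent tuple and the change of base field can be analyzed separately.

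First I would use Lemma \ref{lem:X-adequate-chart} to identify $X^L_{\underline{r}}\cong S^L\times_{\bb{A}_{e^{-1}\bb{N}}}\bb{A}_{P_{e,\underline{r}}}$ (now in the category of fs log schemes \emph{without} needing the fs saturation, since the chart already yields a strict \'etale, indeed isomorphism, onto this fs product), and similarly for $(L',\underline{r}')$. Consequently $A^L_{\underline{r}}=\ca{O}_L\otimes_{\bb{Z}[e^{-1}\bb{N}]}\bb{Z}[P_{e,\underline{r}}]$ modulo torsion, and the analogous description holds over $(L',\underline{r}')$ in terms of $(e',\underline{r}')$. One sees directly from the combinatorial description of $P_{e,\underline{r}}$ that the torsion beyond this naive monoid-algebra description is annihilated by a power of $p$ depending only on the discriminant-type invariants of the fs monoid $P$ itself; in particular it is bounded independently of $L$ and $\underline{r}$.

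Next, for the change of exponent tuple $(L,\underline{r})\to (L,\underline{r}')$ (keeping $e$ fixed), the inclusion $P_{e,\underline{r}}\hookrightarrow P_{e,\underline{r}'}$ together with the explicit splitting of $P_{e,\underline{r}',\eta}$ in \eqref{eq:monoid-tower-str-P} shows that a set of coset representatives for the quotient group $P_{e,\underline{r}',\eta}^{\mrm{gp}}/P_{e,\underline{r},\eta}^{\mrm{gp}}$ is given exactly by $\{\underline{k}/\underline{r}'\mid\underline{k}\in I\}$; rationally, $\bb{Z}[P_{e,\underline{r}',\eta}][1/p]$ is freely generated over $\bb{Z}[P_{e,\underline{r},\eta}][1/p]$ by the monomials $\prod_i T_{i,r_i'}^{k_i}$. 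For the change of base field $(L,\underline{r}')\to (L',\underline{r}')$, the additional ramification $e'/e$ is absorbed when tensoring with $\ca{O}_{L'}$ over $\ca{O}_L$: the relation $\pi=u\,(\pi')^e$ with $u\in\ca{O}_L^\times$ from the adequate chart of $S^L$ yields, after passing to $\ca{O}_{L'}$ and $\pi''$, an identification that is a $p^m$-isomorphism for some $m$ depending only on fixed invariants (not on $e$ or $e'$), because the cokernel is controlled by the torsion piece of the fs-saturation and by a fixed power of $p$ killing the discrepancy between the fs and ordinary monoid constructions.

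Combining the two steps, the rational decomposition $A^{L'}_{\underline{r}'}[1/p]=\bigoplus_{\underline{k}\in I}(\ca{O}_{L'}\otimes_{\ca{O}_L}A^L_{\underline{r}})[1/p]\cdot\prod_i T_{i,r_i'}^{k_i}$ holds exactly, and the integral two-sided inclusion with a uniform $k_0$ follows by bounding the intrinsic $p$-power torsion of each fs product by the structural invariants of $P$. The main obstacle is establishing that $k_0$ truly does not depend on the varying data $(L,L',\underline{r},\underline{r}')$: one must verify carefully that the fs saturation of $P_{e,\underline{r}}$ inside $e^{-1}\bb{Z}\times r_1^{-1}\bb{Z}\times\cdots\times r_d^{-1}\bb{Z}$ only contributes torsion whose order divides a quantity governed by $c$ and $d$ in \eqref{eq:monoid-str-P} and by the index of $P^{\mrm{gp}}$ in $\bb{Z}^{1+d}$ --- all of which are fixed once $P$ is fixed --- rather than scaling with the ramification indices $e,e'$ or with the entries of $\underline{r},\underline{r}'$. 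This is the combinatorial content of Tsuji's arguments in \cite[4.3, 4.5]{tsuji2018localsimpson} and provides the desired uniform $k_0$.
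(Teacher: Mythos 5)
Your overall strategy mirrors the paper's: factor the transition $(L,\underline{r})\to(L',\underline{r}')$ into two steps (change of exponent tuple and change of base field), and control each step by a bounded $p$-power citing Tsuji's lemmas 4.3 and 4.5. The paper does the base-field change first ($pA^{L'}_{\underline{r}}\subseteq\ca{O}_{L'}\otimes_{\ca{O}_L}A^L_{\underline{r}}\subseteq A^{L'}_{\underline{r}}$ from Tsuji 4.5.(2)) and the exponent change second ($p^{k_0-1}A^{L'}_{\underline{r}'}\subseteq\bigoplus A^{L'}_{\underline{r}}\cdot T^{\underline{k}}\subseteq p^{1-k_0}A^{L'}_{\underline{r}'}$ from Tsuji 4.3 combined with \eqref{eq:lem:X-adequate-chart-1}); you reverse the order, which is an inessential variant.

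There is one technical misstatement worth correcting: you write $A^L_{\underline{r}}=\ca{O}_L\otimes_{\bb{Z}[e^{-1}\bb{N}]}\bb{Z}[P_{e,\underline{r}}]$ ``modulo torsion'' and then attribute part of the $p$-power deviation in the final estimate to ``intrinsic $p$-power torsion'' from the fs saturation. In fact \ref{lem:X-adequate-chart} gives an \emph{exact} isomorphism $X^L_{\underline{r}}\iso S^L\times_{\bb{A}_{e^{-1}\bb{N}}}\bb{A}_{P_{e,\underline{r}}}$ of log schemes: since $\alpha'$ makes $S^L\to\bb{A}_{e^{-1}\bb{N}}$ strict and $P_{e,\underline{r}}$ is fs, the fs fiber product already equals the ordinary fiber product, whose underlying ring is precisely $\ca{O}_L\otimes_{\bb{Z}[e^{-1}\bb{N}]}\bb{Z}[P_{e,\underline{r}}]$, with no correction term. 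Consequently the uniform $p^{k_0}$ in the proposition is not about ``torsion of the fs product''; it comes entirely from the explicit combinatorial estimates in Tsuji 4.3 (bounding $A^{L'}_{\underline{r}'}$ against the free $A^{L'}_{\underline{r}}$-module spanned by the monomials in $I$) and Tsuji 4.5.(2) (bounding $A^{L'}_{\underline{r}}$ against $\ca{O}_{L'}\otimes_{\ca{O}_L}A^L_{\underline{r}}$), and these are what make $k_0$ independent of $L,L',\underline{r},\underline{r}'$. Since you, like the paper, defer these two estimates to Tsuji, there is no actual gap in the argument once this point is cleared up; but as written, the locus of the $p$-power loss is misattributed.
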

\begin{proof}
	Firstly, we note that
	\begin{align}
		A^L_{\underline{r}}[\frac{1}{p}]=L[r_1^{-1}\bb{Z}\oplus\cdots\oplus r_c^{-1}\bb{Z}\oplus r_{c+1}^{-1}\bb{N}\oplus\cdots\oplus r_d^{-1}\bb{N}]=L[T_{1,r_1}^{\pm 1},\dots,T_{c,r_c}^{\pm 1},T_{c+1,r_{c+1}},\dots,T_{d,r_d}]
	\end{align}
	by definition \eqref{eq:X-defn} (cf. \cite[page 812, equation (5)]{tsuji2018localsimpson}). Thus, $\bigoplus_{ \underline{k}\in I}\ca{O}_{L'}\otimes_{\ca{O}_L} A^L_{\underline{r}}\cdot\prod_{i=1}^d T_{i,r_i'}^{k_i}$ is a finite free $\ca{O}_{L'}\otimes_{\ca{O}_L} A^L_{\underline{r}}$-subalgebra of $A^{L'}_{\underline{r}'}[1/p]$. By \cite[4.5.(2)]{tsuji2018localsimpson}, we have
	\begin{align}\label{eq:prop:X-tower-str-1}
		pA^{L'}_{\underline{r}}\subseteq \ca{O}_{L'}\otimes_{\ca{O}_L} A^L_{\underline{r}}\subseteq A^{L'}_{\underline{r}}.
	\end{align}
	By \cite[4.3]{tsuji2018localsimpson} and the isomorphism \eqref{eq:lem:X-adequate-chart-1}, there exists $k_0\in \bb{N}_{>0}$ independent of $L,L',\underline{r},\underline{r}'$ such that (cf. \cite[4.9]{tsuji2018localsimpson})
	\begin{align}\label{eq:prop:X-tower-str-2}
		p^{k_0-1} A^{L'}_{\underline{r}'}\subseteq \bigoplus_{ \underline{k}\in I} A^{L'}_{\underline{r}}\cdot\prod_{i=1}^d T_{i,r_i'}^{k_i} \subseteq p^{1-k_0} A^{L'}_{\underline{r}'}.
	\end{align}
	The conclusion follows from combining \eqref{eq:prop:X-tower-str-1} and \eqref{eq:prop:X-tower-str-2}.
\end{proof}

\section{Quasi-adequate Algebras and Faltings Extension}\label{sec:quasi}
In this section, we fix a complete discrete valuation field $K$ of characteristic $0$ with perfect residue field of characteristic $p>0$, an algebraic closure $\overline{K}$ of $K$, and a compatible system of primitive $n$-th roots of unity $(\zeta_n)_{n\in\bb{N}}$ in $\overline{K}$. Sometimes we denote $\zeta_n$ by $t_{0,n}$.

\begin{mypara}\label{para:notation-Abar}
	Let $A$ be a Noetherian normal domain flat over $\bb{Z}_p$, $A_{\triv}$ a localization of $A$ with respect to a nonzero element of $pA$, $\ca{K}$ the fraction field of $A$, $\overline{\ca{K}}$ an algebraic closure of $\ca{K}$. The \emph{maximal unramified extension} $\ca{K}_{\mrm{ur}}$ of $\ca{K}$ with respect to $(A_{\triv},A)$ is the union of all finite extensions $\ca{K}'$ of $\ca{K}$ contained in $\overline{\ca{K}}$ such that the integral closure of $A_{\triv}$ in $\ca{K}'$ is \'etale over $A_{\triv}$. It is a Galois extension over $\ca{K}$, whose Galois group $\gal(\ca{K}_{\mrm{ur}}/\ca{K})$ is denoted by $G_A$. We call the integral closure $\overline{A}$ of $A$ in $\ca{K}_{\mrm{ur}}$ the \emph{maximal unramified extension} of $A$ with respect to $A_{\triv}$. It is $G_A$-stable under the natural action of $G_A$ on $\ca{K}_{\mrm{ur}}$. We remark that the integral closure of $A$ in any finite field extension $\ca{K}'$ of $\ca{K}$ is a Noetherian normal domain finite over $A$ by \ref{rem:int-clos-fini}. 
\end{mypara}

\begin{mydefn}\label{defn:triple}
	A \emph{$(K,\ca{O}_K,\ca{O}_{\overline{K}})$-triple} is a triple $(A_{\triv},A,\overline{A})$ consisting of a Noetherian normal domain $A$ flat over $\ca{O}_K$ with $A/pA\neq 0$, a localization $A_\triv$ of $A$ with respect to a nonzero element of $pA$, and the maximal unramified extension $\overline{A}$ of $A$ with respect to $A_{\triv}$ contained in an algebraic closure $\overline{\ca{K}}$ of the fraction field $\ca{K}$ of $A$ containing $\overline{K}$.
	
	A morphism of $(K,\ca{O}_K,\ca{O}_{\overline{K}})$-triples $(A_{\triv},A,\overline{A})\to (A'_{\triv},A',\overline{A'})$ is a homomorphism of $\ca{O}_{\overline{K}}$-algebras $f:\overline{A}\to \overline{A'}$ such that $f(A)\subseteq A'$ and $f(A_{\triv})\subseteq A'_{\triv}$. If $f$ is injective, then it induces an extension of the fraction fields $\ca{K}_{\mrm{ur}}\to \ca{K}'_{\mrm{ur}}$ and thus a natural homomorphism of Galois groups $G_{A'}\to G_A$.
\end{mydefn}

We will use this definition to describe the functoriality of the Faltings extensions (cf. \ref{rem:B-fal-ext}).

\begin{mypara}\label{para:notation-A-inj}
	Let $(A_{\triv},A,\overline{A})$ be a $(K,\ca{O}_K,\ca{O}_{\overline{K}})$-triple. We denote by $\ak{E}(A)$ the set of morphisms of $(K,\ca{O}_K,\ca{O}_{\overline{K}})$-triples $(A_{\triv},A,\overline{A})\to (E,\ca{O}_E,\ca{O}_{\overline{E}})$, where $E$ is a complete discrete valuation field extension of $K$ whose residue field admits a finite $p$-basis. There is a natural right action of the Galois group $G_A$ on $\ak{E}(A)$ defined by sending $v\in\ak{E}(A)$ to $v\circ g\in \ak{E}(A)$, where $g\in G_A$ is regarded as an automorphism of the $(K,\ca{O}_K,\ca{O}_{\overline{K}})$-triple $(A_{\triv},A,\overline{A})$ by the natural action of $G_A$ on $\overline{A}$.
	
	We fix an injection 
	\begin{align}\label{eq:notation-A-inj}
		\ak{S}_p(\overline{A})\longrightarrow \ak{E}(A),\ \ak{q}\mapsto ((A_{\triv},A,\overline{A})\to (E_{\ak{p}},\ca{O}_{E_{\ak{p}}},\ca{O}_{\overline{E}_{\ak{q}}})),
	\end{align}
	where $\ak{p}\in\ak{S}_p(A)$ is the image of $\ak{q}$, $\ca{O}_{E_{\ak{p}}}$ is the $p$-adic completion of the localization $A_{\ak{p}}$, $\overline{E}_{\ak{q}}$ is an algebraic closure of $E_{\ak{p}}$, and $\overline{A}\to\ca{O}_{\overline{E}_{\ak{q}}}$ is the injection induced by an extension of valuation rings $\overline{A}_{\ak{q}}\to \ca{O}_{\overline{E}_{\ak{q}}}$ over $A_{\ak{p}}\to \ca{O}_{E_{\ak{p}}}$ (cf. \ref{prop:ht1-prime-inj}). In particular, there is a natural homomorphism of Galois groups $\gal(\overline{E}_{\ak{q}}/E_{\ak{p}})\to \gal(\ca{K}_{\mrm{ur}}/\ca{K})=G_A$ by \ref{defn:triple}.
\end{mypara}

\begin{mylem}\label{lem:A-inj}
	Let $(A_{\triv},A,\overline{A})$ be a $(K,\ca{O}_K,\ca{O}_{\overline{K}})$-triple. The natural map of $p$-adic completions $f:\widehat{\overline{A}}\to \prod_{\ak{E}(A)}\ca{O}_{\widehat{\overline{E}}}$ is $G_A$-equivariant and injective.
\end{mylem}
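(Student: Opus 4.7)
The plan is to verify the two claims separately, handling $G_A$-equivariance by a direct unwinding of definitions and reducing injectivity to the case of valuation rings already treated in Section \ref{sec:kummer}.

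\textbf{Equivariance.} The right action of $g \in G_A$ on $\ak{E}(A)$ sends $v:\overline{A}\to\ca{O}_{\overline{E}}$ to $v\circ g$, and this action on the indexing set induces a left action on the product $\prod_{\ak{E}(A)}\ca{O}_{\widehat{\overline{E}}}$ by permuting components. For $x\in\widehat{\overline{A}}$ and $v\in\ak{E}(A)$, the $v$-component of $g\cdot f(x)$ is by definition $f(x)_{v\circ g}=(v\circ g)(x)=v(g(x))$, which is the $v$-component of $f(gx)$. This is immediate once one observes that every $v\in\ak{E}(A)$ extends (uniquely and continuously) to the $p$-adic completions, since $\ca{O}_{\widehat{\overline{E}}}$ is $p$-adically complete.

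\textbf{Injectivity.} I would factor $f$ through the height-$1$ primes of $\overline{A}$ containing $p$ via the injection \eqref{eq:notation-A-inj} $\ak{S}_p(\overline{A})\inj\ak{E}(A)$. Concretely, post-composing $f$ with the projection $\prod_{\ak{E}(A)}\ca{O}_{\widehat{\overline{E}}}\to\prod_{\ak{q}\in\ak{S}_p(\overline{A})}\ca{O}_{\widehat{\overline{E}}_{\ak{q}}}$ yields a map which factors as
\begin{align*}
\widehat{\overline{A}}\longrightarrow\prod_{\ak{q}\in\ak{S}_p(\overline{A})}\widehat{\overline{A}_{\ak{q}}}\longrightarrow\prod_{\ak{q}\in\ak{S}_p(\overline{A})}\ca{O}_{\widehat{\overline{E}}_{\ak{q}}},
\end{align*}
the second arrow being the $p$-adic completion of the extensions of valuation rings $\overline{A}_{\ak{q}}\to\ca{O}_{\overline{E}_{\ak{q}}}$ chosen in \ref{para:notation-A-inj}. (The factoring is legitimate because elements of $\overline{A}\setminus\ak{q}$ are units in the valuation ring $\ca{O}_{\overline{E}_{\ak{q}}}$.) The first arrow is injective by \ref{prop:ht1-prime-inj}.(\ref{item:prop:ht1-prime-inj-2}), so it suffices to verify that each $\widehat{\overline{A}_{\ak{q}}}\to\ca{O}_{\widehat{\overline{E}}_{\ak{q}}}$ is injective.

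For this, I plan to apply \ref{lem:p-adic-separated1} with $M=\overline{A}_{\ak{q}}$ and $N=\ca{O}_{\overline{E}_{\ak{q}}}$. Both are valuation rings of height $1$: for $M$ this is \ref{prop:ht1-prime-inj}.(\ref{item:prop:ht1-prime-inj-1}), and for $N$ it follows from the fact that $\overline{E}_{\ak{q}}$ is algebraic over the complete discrete valuation field $E_{\ak{p}}$. The inclusion $M\to N$ is local over $A_{\ak{p}}\to\ca{O}_{E_{\ak{p}}}$ by construction, so the valuation $v_N|_{\mrm{Frac}(M)}$ is equivalent to $v_M$, which gives $M=M[1/p]\cap N$ (here $M[1/p]=\mrm{Frac}(M)$ since $p\in\ak{q}\overline{A}_{\ak{q}}$). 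Lemma \ref{lem:p-adic-separated1} then yields the desired injection on $p$-adic completions. The only mildly technical point — and where I would be most careful — is this compatibility of valuations under $\overline{A}_{\ak{q}}\to\ca{O}_{\overline{E}_{\ak{q}}}$; everything else is a formal consequence of \ref{prop:ht1-prime-inj} and \ref{lem:p-adic-separated1}.
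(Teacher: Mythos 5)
Your proposal is correct and follows essentially the same route as the paper: prove equivariance by unwinding the right $G_A$-action on $\ak{E}(A)$, then reduce injectivity to the subcollection $\ak{S}_p(\overline{A})\subseteq\ak{E}(A)$ and use \ref{prop:ht1-prime-inj}. One small point of divergence, to your credit: for the injectivity of $\widehat{\overline{A}_{\ak{q}}}\to\ca{O}_{\widehat{\overline{E}}_{\ak{q}}}$ the paper cites only \ref{prop:ht1-prime-inj}, which directly covers just the first arrow in the factorisation; your explicit appeal to \ref{lem:p-adic-separated1} (verifying $M=M[1/p]\cap N$ via locality of the extension of valuation rings) spells out the justification that the paper leaves implicit.
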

\begin{proof}
	 We note that the natural $G_A$-action on $\prod_{\ak{E}(A)}\ca{O}_{\widehat{\overline{E}}}$ is given by  
	 \begin{align}\label{eq:lem:A-inj}
	 	g(x_v)_{v\in \ak{E}(A)}=(x_{v\circ g})_{v\in \ak{E}(A)}
	 \end{align}
	 for any $g\in G_A$. Thus, for any $x\in \widehat{\overline{A}}$, the $v$-component of $g (f(x))$ is the $(v\circ g)$-component of $f(x)$, which is equal to the image of $x$ under the composition of $\widehat{\overline{A}}\stackrel{g}{\longrightarrow}\widehat{\overline{A}}\stackrel{f_v}{\longrightarrow}\ca{O}_{\widehat{\overline{E}}}$, i.e. the $v$-component of $f(gx)$. This shows the $G_A$-equivariance of $f$. With the notation in \ref{para:notation-A-inj}, the natural maps
	\begin{align}
		\widehat{\overline{A}}\longrightarrow \prod_{\ak{q}\in\ak{S}_p(\overline{A})}(\overline{A}_{\ak{q}})^\wedge\longrightarrow \prod_{\ak{q}\in\ak{S}_p(\overline{A})}\ca{O}_{\widehat{\overline{E}}_{\ak{q}}}
	\end{align}
	are injective by \ref{prop:ht1-prime-inj}. Their composition is also the composition of the natural maps
	\begin{align}
		\widehat{\overline{A}}\longrightarrow \prod_{\ak{E}(A)}\ca{O}_{\widehat{\overline{E}}}\longrightarrow \prod_{\ak{q}\in\ak{S}_p(\overline{A})}\ca{O}_{\widehat{\overline{E}}_{\ak{q}}}
	\end{align}
	where the second map is induced by \eqref{eq:notation-A-inj}, which completes the proof.
\end{proof}

\begin{mydefn}\label{defn:quasi-adequate-alg}
	A $(K,\ca{O}_K,\ca{O}_{\overline{K}})$-triple $(B_{\triv},B,\overline{B})$ is called \emph{quasi-adequate} if there exists a commutative diagram of monoids
	\begin{align}
		\xymatrix{
			B& P\ar[l]_-{\beta}\\
			\ca{O}_K\ar[u]& \bb{N}\ar[l]_-{\alpha}\ar[u]_-{\gamma}
		}
	\end{align} 
	satisfying the following conditions:
	\begin{enumerate}
		\renewcommand{\labelenumi}{{\rm(\theenumi)}}
		\item The element $\alpha(1)$ is a uniformizer of $\ca{O}_K$.
		\item The monoid $P$ is fs, and if we denote by $\gamma_\eta:\bb{Z}\to P_\eta=\bb{Z}\oplus_{\bb{N}}P$ the pushout of $\gamma$ by the inclusion $\bb{N}\to \bb{Z}$, then there exists an isomorphism for some $c, d\in \bb{N}$ with $c\leq d$,
		\begin{align}\label{eq:monoid-str}
			P_\eta\cong \bb{Z}\oplus \bb{Z}^c\oplus \bb{N}^{d-c}
		\end{align}
		identifying $\gamma_\eta$ with the inclusion of $\bb{Z}$ into the first component of the right hand side.
		\item The homomorphism $\beta$ induces an injective ring homomorphism of finite type $A=\ca{O}_K\otimes_{\bb{Z}[\bb{N}]}\bb{Z}[P]\to B$ which is pre-perfectoid in the sense of \ref{defn:bound} such that $B\otimes_{\bb{Z}[P]}\bb{Z}[P^\mrm{gp}]=B_\triv$ and $A[1/p]\to B[1/p]$ is \'etale.
	\end{enumerate}
	We usually denote $(B_{\triv},B,\overline{B})$ by $B$, and call it a \emph{quasi-adequate $\ca{O}_K$-algebra} for simplicity. The triple $(\alpha:\bb{N}\to \ca{O}_K,\ \beta:P\to B,\ \gamma:\bb{N}\to P)$ is called a \emph{quasi-adequate chart} of $B$. If we fix an isomorphism \eqref{eq:monoid-str}, then we call the images $t_1,\dots,t_d\in B[1/p]$ of the standard basis of $\bb{Z}^c\oplus \bb{N}^{d-c}$ a \emph{system of coordinates} of the chart. We call $d$ the \emph{relative dimension} of $B$ over $\ca{O}_K$ (i.e. the Krull dimension of $B_{\triv}$). If $A\to B$ is \'etale, then we say that $B$ is an \emph{adequate $\ca{O}_K$-algebra} and call $(\alpha,\beta,\gamma)$ an \emph{adequate chart}.
\end{mydefn}

\begin{myrem}\label{rem:quasi-adequate}
	In \ref{defn:quasi-adequate-alg}, the condition $B/pB\neq 0$ imposes that $A/pA\neq 0$. As $A$ is a Noetherian normal domain flat over $\ca{O}_K$ by \ref{rem:adequate-chart}, if we set $A_{\triv}=\ca{O}_K\otimes_{\bb{Z}[P]}\bb{Z}[P^\mrm{gp}]$ and denote by $\overline{A}$ the maximal unramified extension of $A$ contained in $\overline{B}$, then $(A_{\triv},A,\overline{A})$ is an adequate $(K,\ca{O}_K,\ca{O}_{\overline{K}})$-triple. The inclusion $\overline{A}\subseteq \overline{B}$ induces an injective morphism of quasi-adequate $(K,\ca{O}_K,\ca{O}_{\overline{K}})$-triples $(A_{\triv},A,\overline{A})\to (B_{\triv},B,\overline{B})$. If $A\to B$ is \'etale (so that $B$ is adequate) and if we endow $\spec(B)$ with the compactifying log structure associated to the open immersion $\spec(B_{\triv})\to \spec(B)$, then it becomes a log scheme over $S=(\spec(\ca{O}_K),\alpha_{\spec(K)\to\spec(\ca{O}_K)})$ with an adequate chart in the sense of \ref{defn:adequate-chart} induced by $(\alpha,\beta,\gamma)$ (cf. \ref{rem:adequate-chart}).
\end{myrem}

\begin{myrem}\label{rem:quasi-adequate-exmp}
	Let $B'$ be a $B$-algebra which is a Noetherian normal domain flat over $\ca{O}_K$ with $B\to B'$ injective and $B'/pB'\neq 0$. We set $B'_{\triv}=B_{\triv}\otimes_B B'$ and take a maximal unramified extension $\overline{B'}$ of $(B'_{\triv},B')$ containing $\overline{B}$. Then, $B'$ is a quasi-adequate $\ca{O}_K$-algebra with the same chart of $B$ if $B[1/p]\to B'[1/p]$ is \'etale and if the ring homomorphism $B\to B'$ is of finite type and pre-perfectoid. This is satisfied in each of the following cases: 
	\begin{enumerate}
		\renewcommand{\labelenumi}{{\rm(\theenumi)}}
		\item The ring homomorphism $B\to B'$ is \'etale.
		\item The $B$-algebra $B'$ is the integral closure of $B$ in a finite \'etale $B[1/p]$-algebra (cf. \ref{cor:bound-fet}).
		\item The $B$-algebra $B'$ is the normalization of an affine blowup algebra $B[I/a]$, where $I$ is a finitely generated ideal of $B$ containing a power of $p$, and $a$ is a non-zero element of $I$ (cf. \ref{cor:bound-blowup}).
	\end{enumerate}
\end{myrem}

\begin{mylem}\label{lem:quasi-adequate-regular}
	Let $B$ be a quasi-adequate $\ca{O}_K$-algebra.
	\begin{enumerate}
		\renewcommand{\labelenumi}{{\rm(\theenumi)}}
		\item A system of coordinates $t_1,\dots, t_d\in B[1/p]$ defines a strict normal crossings divisor on the regular scheme $\spec(B[1/p])$, i.e. in the localization of $B[1/p]$ at any point, those elements $t_i$ contained in the maximal ideal form a subset of a regular system of parameters.\label{item:lem:quasi-adequate-regular-1}
		\item Let $(\alpha:\bb{N}\to \ca{O}_K,\ \beta:P\to B,\ \gamma:\bb{N}\to P)$ be a quasi-adequate chart of $B$, $Y=\spec(\beta:P\to B)$. Then, the generic fibre $Y_K$ is regular whose log structure is the compactifying log structure associated to the open immersion $\spec(B_{\triv})\to\spec(B[1/p])$.\label{item:lem:quasi-adequate-regular-2}
	\end{enumerate}
\end{mylem}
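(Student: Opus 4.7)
The plan is to reduce everything to the model polynomial chart $A=\ca{O}_K\otimes_{\bb{Z}[\bb{N}]}\bb{Z}[P]$ attached to the quasi-adequate chart $(\alpha,\beta,\gamma)$, and then transport the statements through the \'etale ring map $A[1/p]\to B[1/p]$ provided by \ref{defn:quasi-adequate-alg}.

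First I would use \eqref{eq:rem:adequate-chart} to identify $A[1/p]$ with $K[t_1^{\pm 1},\dots,t_c^{\pm 1},t_{c+1},\dots,t_d]$, where $t_1,\dots,t_d$ are the chosen system of coordinates. In particular $t_1,\dots,t_c$ are already units in $A[1/p]$ (hence in $B[1/p]$), so the only $t_i$ that can vanish at a point of $\spec(B[1/p])$ are $t_{c+1},\dots,t_d$. Moreover, since $A[1/p]\to B[1/p]$ is \'etale, $\spec(B[1/p])$ is regular and the induced map of cotangent modules $\Omega^1_{A[1/p]/K}\otimes_{A[1/p]}B[1/p]\to \Omega^1_{B[1/p]/K}$ is an isomorphism.

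For (1), on the polynomial ring side it is immediate that $t_{c+1}\cdots t_d=0$ cuts out a strict normal crossings divisor: each $V(t_i)$ is smooth over $K$ and the differentials $\df t_{c+1},\dots,\df t_d$ remain $A[1/p]$-linearly independent in $\Omega^1_{A[1/p]/K}$. The \'etale base change $A[1/p]\to B[1/p]$ preserves both properties via the cotangent isomorphism above. Concretely, at any $\ak{q}\in\spec(B[1/p])$ the subset $\{t_i:t_i\in\ak{q},\ c+1\leq i\leq d\}$ has $K(\ak{q})$-linearly independent images in $\ak{q}B[1/p]_{\ak{q}}/\ak{q}^2B[1/p]_{\ak{q}}$, so by the regularity of $B[1/p]_{\ak{q}}$ it extends to a regular system of parameters.

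For (2), the generic fibre $Y_K$ has underlying scheme $\spec(B[1/p])$ and log structure associated to the pre-log structure $P\to B[1/p]$. Passing to $P_\eta=\bb{Z}\oplus_{\bb{N}}P\cong \bb{Z}\oplus\bb{Z}^c\oplus\bb{N}^{d-c}$ (which is legitimate because $\pi$ is invertible on $Y_K$) and using that the $\bb{Z}\oplus\bb{Z}^c$ factor maps to units of $B[1/p]$ (namely $\pi$ and $t_1,\dots,t_c$), the log structure on $Y_K$ is equivalently defined by the pre-log structure $\bb{N}^{d-c}\to B[1/p]$, $e_i\mapsto t_{c+i}$. Note that $B_{\triv}=B_{\triv}[1/p]$ since $\pi\in P$ is inverted in $B_{\triv}$ (hence so is $p$), and $\spec(B_{\triv})=\spec(B[1/p])\setminus V(t_{c+1}\cdots t_d)$. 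By (1) and the standard UFD argument in regular local rings, any local section of $\ca{O}_{\spec(B[1/p])}$ that becomes a unit on $\spec(B_{\triv})$ is, up to a unit, a monomial in $t_{c+1},\dots,t_d$; equivalently (cf.\ \ref{para:log-reg}), the log structure just described coincides with the compactifying log structure of the open immersion $\spec(B_{\triv})\to\spec(B[1/p])$, which is the desired conclusion. The main thing to watch is the bookkeeping of which coordinates of $P$ already become units in $B[1/p]$ before inverting $t_{c+1}\cdots t_d$; there is no serious obstacle beyond this.
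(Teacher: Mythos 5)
Your proposal is correct and takes essentially the same approach as the paper's (very terse) proof: both parts are transported from the polynomial model $A[1/p]\cong K[\bb{Z}^c\oplus\bb{N}^{d-c}]$ through the \'etale map $A[1/p]\to B[1/p]$, and for part (2) you simply unpack the UFD argument that the citation to \ref{para:log-reg} encapsulates.
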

\begin{proof}
	(\ref{item:lem:quasi-adequate-regular-1}) follows from the fact that the system of coordinates $t_1,\dots, t_d\in B[1/p]$ identifies $B[1/p]$ with an \'etale $K[\bb{Z}^c\oplus \bb{N}^{d-c}]$-algebra (cf. \ref{rem:adequate-chart}). (\ref{item:lem:quasi-adequate-regular-2}) follows from the observation that  $Y_K=\spec(\bb{Z}^c\oplus \bb{N}^{d-c}\to B[1/p])$ (cf. \ref{para:log-reg}).
\end{proof}

\begin{mylem}\label{lem:quasi-adequate-diff}
	Let $B$ be a quasi-adequate $\ca{O}_K$-algebra and we fix a chart of $B$ as in {\rm\ref{defn:quasi-adequate-alg}}. Then, there exists $k\in \bb{N}$ such that the canonical truncation of the cotangent complex $\tau_{\leq 1}\dl_{B/A}$ is $p^k$-exact in the sense of {\rm\ref{defn:pi-iso}}.
\end{mylem}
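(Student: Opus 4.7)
The plan is to combine the finite generation of the relevant homology modules (which uses the finite type hypothesis in \ref{defn:quasi-adequate-alg}) with the vanishing of the cotangent complex after inverting $p$ (which uses the étaleness of $A[1/p]\to B[1/p]$ in \ref{defn:quasi-adequate-alg}). Notably, neither the pre-perfectoid assumption on $A\to B$ nor the monoid-theoretic structure of $P$ enters beyond these two consequences, and the strong bound \ref{prop:cotangent-bound} is not needed either.

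First, I would observe that $A=\ca{O}_K\otimes_{\bb{Z}[\bb{N}]}\bb{Z}[P]$ is a finite type $\ca{O}_K$-algebra, hence Noetherian, and therefore so is $B$ as a finite type $A$-algebra. Next, I would choose a surjective presentation $A[\mathbf{x}]=A[x_1,\dots,x_n]\twoheadrightarrow B$ with kernel $I$, and identify $\tau_{\leq 1}\dl_{B/A}$ with the two-term complex $I/I^2\to B\otimes_{A[\mathbf{x}]}\Omega^1_{A[\mathbf{x}]/A}$, exactly as in the proof of \ref{prop:cotangent-bc}. Since $I$ is finitely generated over the Noetherian ring $A[\mathbf{x}]$, the module $I/I^2$ is a finitely generated $B$-module, and $B\otimes_{A[\mathbf{x}]}\Omega^1_{A[\mathbf{x}]/A}\cong B^n$ is evidently so. Consequently both $H_0(\dl_{B/A})=\Omega^1_{B/A}$ and $H_1(\dl_{B/A})$ are finitely generated $B$-modules.

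Then I would invoke the compatibility of the cotangent complex with localization: $B[1/p]\otimes_B^{\dl}\dl_{B/A}\simeq \dl_{B[1/p]/A[1/p]}$. Since $A[1/p]\to B[1/p]$ is étale, this cotangent complex vanishes. Therefore $H_i(\dl_{B/A})[1/p]=0$ for $i=0,1$, which means these finitely generated $B$-modules are $p^\infty$-torsion. A finitely generated $p^\infty$-torsion module over the Noetherian ring $B$ is annihilated by a uniform power of $p$, so taking $k$ to be the maximum of the two exponents corresponding to $H_0(\dl_{B/A})$ and $H_1(\dl_{B/A})$ yields $p^k$-exactness of $\tau_{\leq 1}\dl_{B/A}$ in the sense of \ref{defn:pi-iso}.

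The argument is essentially formal, and the only place requiring a moment of care is the finite generation of $H_1(\dl_{B/A})$; everything else is either a definitional unwinding or a standard fact. If one preferred a more explicit $k$, one could instead apply \ref{prop:cotangent-bound} on a suitable Zariski or étale covering on which $B$ locally becomes close to a localization of $A$, but for the existence statement required here the presentation-plus-Noetherian argument is both shorter and cleaner.
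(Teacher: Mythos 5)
Your proof is correct and follows essentially the same route as the paper: finite generation of the low-degree homology of $\dl_{B/A}$ (using that $A\to B$ is of finite type between Noetherian rings), combined with the vanishing of $\dl_{B[1/p]/A[1/p]}$ by \'etaleness, which forces $H_0$ and $H_1$ to be finitely generated $p^\infty$-torsion modules and hence killed by a single power of $p$. The only cosmetic difference is that the paper cites Illusie \cite[\Luoma{2}.2.3.7]{illusie1971cot1} for finite generation of all homology groups of the cotangent complex, whereas you derive finite generation of $H_0$ and $H_1$ directly from a polynomial presentation and the two-term model of $\tau_{\leq 1}\dl_{B/A}$, which is slightly more self-contained and exactly what is needed here.
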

\begin{proof}
	Since $A\to B$ is a ring homomorphism of finite type between Noetherian rings, the homology groups of $\mrm{L}_{B/A}$ are finitely generated $B$-modules (\cite[\Luoma{2}.2.3.7]{illusie1971cot1}). The conclusion follows from the fact that for any integer $n$, $H_n(\mrm{L}_{B/A})[1/p]=H_n(\mrm{L}_{B[1/p]/A[1/p]})=0$ as $B[1/p]$ is \'etale over $A[1/p]$.
\end{proof}

\begin{mylem}\label{lem:diff-abs}
	There exists $k\in \bb{N}$ such that for any $\ca{O}_K$-algebra $R$, the canonical map of $p$-adic completions $\widehat{\Omega}^1_R\to (\Omega^1_{R/\ca{O}_K})^\wedge$ is a $p^k$-isomorphism.
\end{mylem}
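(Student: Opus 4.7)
The plan is to reduce everything to a uniform bound on derivations out of $\ca{O}_K$. Concretely, I will first establish the following key point: there exists a constant $k_0\in\bb{N}$, depending only on $K$, such that any derivation $d:\ca{O}_K\to M$ (over $\bb{Z}$) into a $p$-adically separated $\ca{O}_K$-module $M$ satisfies $p^{k_0}d(\ca{O}_K)=0$. Granting this, the lemma will follow with $k=k_0$, since $\widehat{\Omega}^1_R$ is $p$-adically complete, hence separated, and one pulls back the canonical derivation of $R$ along $\ca{O}_K\to R$.

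To prove the key point, let $\kappa$ denote the perfect residue field of $K$, so that $\ca{O}_K$ is a finite totally ramified extension of $W(\kappa)$, generated by a uniformizer $\pi$ with Eisenstein minimal polynomial $f(T)$ over $W(\kappa)$. Then $\Omega^1_{\ca{O}_K/W(\kappa)}$ is the cyclic $\ca{O}_K$-module $\ca{O}_K/(f'(\pi))$, and its annihilator---the different of $\ca{O}_K/W(\kappa)$---contains a suitable power of $p$, providing the explicit constant $k_0$. So it is enough to check that $d|_{W(\kappa)}=0$, which I will do by a Teichm\"uller argument: since $\kappa$ is perfect, for any $x_0\in\kappa$ and any $n\in\bb{N}$ one has $x_0=y_0^{p^n}$, whence $[x_0]=[y_0]^{p^n}$ in $W(\kappa)$ and $d([x_0])=p^n[y_0]^{p^n-1}d([y_0])\in p^nM$; the $p$-adic separation of $M$ forces $d([x_0])=0$, and iterating the standard expansion $x=[x\bmod p]+py$ gives $d(W(\kappa))\subseteq\bigcap_np^nM=0$.

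To conclude, I would apply the key point to the canonical derivation $d:R\to\widehat{\Omega}^1_R$ restricted to $\ca{O}_K$, obtaining $p^{k_0}dx=0$ in $\widehat{\Omega}^1_R$ for every $x\in\ca{O}_K$. Equivalently, $p^{k_0}dx$ lies in $\ke(\Omega^1_R\to\widehat{\Omega}^1_R)=\bigcap_np^n\Omega^1_R$. I then analyze $\widehat{\Omega}^1_R\to(\Omega^1_{R/\ca{O}_K})^\wedge$ via the right-exact sequence $R\otimes_{\ca{O}_K}\Omega^1_{\ca{O}_K}\to\Omega^1_R\to\Omega^1_{R/\ca{O}_K}\to 0$: modding by $p^n$ gives a short exact sequence whose kernel $N_n$---the image of $R\otimes_{\ca{O}_K}\Omega^1_{\ca{O}_K}$ in $\Omega^1_R/p^n$---has surjective transition maps, so a standard Mittag-Leffler argument yields the surjectivity of $\widehat{\Omega}^1_R\to(\Omega^1_{R/\ca{O}_K})^\wedge$. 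For the kernel, any inverse system $(\omega_n)\in\plim N_n$ is represented, degree by degree, by finite sums $\sum_ir_i\otimes dx_i$ with $x_i\in\ca{O}_K$; since $p^{k_0}dx_i\in\bigcap_np^n\Omega^1_R$, one has $p^{k_0}\omega_n=0$ in $\Omega^1_R/p^n$ for every $n$, hence $p^{k_0}(\omega_n)=0$ in $\widehat{\Omega}^1_R$.

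The only delicate point is the Teichm\"uller step for $W(\kappa)$, which crucially exploits that $\kappa$ is perfect; this is precisely where the assumption on $K$ is used. Everything else is bookkeeping: the different computation for the totally ramified extension $\ca{O}_K/W(\kappa)$ is classical, the Mittag-Leffler surjectivity is immediate from the construction of the kernels $N_n$, and the uniformity in $R$ is automatic since the constant $k_0$ depends only on $K$ (through the different of $K/W(\kappa)[1/p]$).
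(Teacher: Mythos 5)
Your proof is correct and runs on the same rails as the paper: the exact sequence $R\otimes_{\ca{O}_K}\Omega^1_{\ca{O}_K}\to\Omega^1_R\to\Omega^1_{R/\ca{O}_K}\to 0$, reduced mod $p^n$ and passed through $\lim$ and $\rr^1\lim$. The genuine difference is in how the uniform $p$-power bound is produced. The paper cites the fact that $\widehat{\Omega}^1_{\ca{O}_K}$ is killed by $p^k$ (using perfectness of the residue field), then observes that each kernel $N_r$ is consequently killed by $p^k$, so $\lim N_r$ and $\rr^1\lim N_r$ are as well. You instead reprove the input from scratch, in the clean packaging ``every $\bb{Z}$-derivation from $\ca{O}_K$ into a $p$-adically separated $\ca{O}_K$-module is killed by a fixed $p^{k_0}$,'' obtained from the Cohen structure $W(\kappa)\subseteq\ca{O}_K$: the Teichm\"uller/perfectness trick kills $W(\kappa)$, and the different of the totally ramified step bounds the rest. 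This formulation plugs directly into $d\colon\ca{O}_K\to\widehat{\Omega}^1_R$, which is how you control the kernel. You also notice, via surjectivity of the transition maps $N_{n+1}\to N_n$ and Mittag--Leffler, that the cokernel is actually zero rather than merely $p^k$-torsion --- a mild but genuine sharpening of the paper's conclusion. Both routes give the lemma with essentially the same constant, and yours has the advantage of being self-contained.
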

\begin{proof}
	Since $K$ is a complete discrete valuation field extension of $\bb{Q}_p$ with perfect residue field, $\widehat{\Omega}^1_{\ca{O}_K}$ is killed by $p^k$ for some $k\in \bb{N}$ (\cite[3.3]{he2021faltingsext}). For any $r\in\bb{N}$, we see that $\Omega^1_R/p^r\to \Omega^1_{R/\ca{O}_K}/p^r$ is surjective whose kernel $N_r$ is killed by $p^k$. Taking limit over $r\in\bb{N}$, since $\lim N_r$ and $\rr^1\lim N_r$ are also killed by $p^k$, the conclusion follows.
\end{proof}

\begin{myprop}\label{prop:quasi-adequate-diff}
	Let $B$ be a quasi-adequate $\ca{O}_K$-algebra, $S$ a multiplicative subset of $B$. Then, $\widehat{\Omega}^1_{S^{-1}B}[1/p]$ is a finite free $(S^{-1}B)^\wedge[1/p]$-module, where the completions are $p$-adic. Moreover, it admits a basis $\df t_1,\dots,\df t_d$ for any system of coordinates $t_1,\dots,t_d\in B[1/p]$.
\end{myprop}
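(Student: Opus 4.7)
The plan is to reduce to showing that the natural $R$-linear map $R^d \to \Omega^1_{R/\ca{O}_K}$ sending the standard basis to $\df t_1,\dots,\df t_d$ is a $p^k$-isomorphism for some $k\in\bb{N}$, where $R=S^{-1}B$; from this the result will follow by $p$-adic completion and inverting $p$.

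First I would use \ref{lem:diff-abs} to replace $\widehat{\Omega}^1_R$ by $(\Omega^1_{R/\ca{O}_K})^\wedge$ up to a $p^k$-isomorphism, so that after inverting $p$ the two are canonically identified. This reduces the problem to studying the module $M=\Omega^1_{R/\ca{O}_K}$. By \eqref{eq:rem:adequate-chart} we have $A[1/p]\cong K[P_\eta]\cong K[\bb{Z}^c\oplus\bb{N}^{d-c}]$, so $\Omega^1_{A[1/p]/K}$ is free of rank $d$ with basis $\df T_1,\dots,\df T_d$. Since $A[1/p]\to B[1/p]$ is \'etale by the quasi-adequate structure, base change gives that $\Omega^1_{B[1/p]/K}$ is free over $B[1/p]$ with basis $\df t_1,\dots,\df t_d$; localizing at $S$ together with the identification $\Omega^1_{R/\ca{O}_K}[1/p]=\Omega^1_{R[1/p]/K}$ shows that $M[1/p]$ is free of rank $d$ over $R[1/p]$ with basis $\df t_1,\dots,\df t_d$.

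Therefore the natural map $\phi:R^d\to M$ sending $e_i$ to $\df t_i$ becomes an isomorphism after inverting $p$. The kernel of $\phi$ is $p$-power torsion, but $R^d$ is $p$-torsion free (since $R$ is flat over $\ca{O}_K$), so $\ke(\phi)=0$. The cokernel of $\phi$ is finitely generated (the ring $R$ is Noetherian because $B$ is of finite type over the Noetherian ring $A$, and $M$ is finitely generated over $R$) and is killed by inverting $p$, hence annihilated by some $p^k$. Thus $\phi$ is a $p^k$-isomorphism.

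Finally, \ref{rem:pi-iso-retract}.(\ref{item:rem:pi-iso-retract-2}) applied to the inverse systems $(R^d/p^n R^d)_{n\in\bb{N}}$ and $(M/p^n M)_{n\in\bb{N}}$ (or a direct computation with the exact sequence $0\to R^d\to M\to \cok(\phi)\to 0$ modulo $p^n$) shows that the induced map on $p$-adic completions $(\widehat{R})^d\to \widehat{M}$ has kernel and cokernel of bounded $p$-power torsion, hence becomes an isomorphism after inverting $p$. Combining with the reduction from the first step, this yields the claimed isomorphism $((S^{-1}B)^\wedge[1/p])^d \iso \widehat{\Omega}^1_{S^{-1}B}[1/p]$ sending $e_i$ to $\df t_i$. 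I do not expect any single hard step; rather the content is that once the quasi-adequate \'etale chart after inverting $p$ is in hand, the argument is entirely a sequence of formal bounded-$p$-torsion manipulations.
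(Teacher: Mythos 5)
Your argument is correct in substance and takes a genuinely different route from the paper's. The paper uses the fundamental distinguished triangle of cotangent complexes together with \ref{lem:quasi-adequate-diff} to compare $\Omega^1_{S^{-1}B/\ca{O}_K}$ with $S^{-1}B\otimes_A\Omega^1_{A/\ca{O}_K}$ and then reduces to the explicit form $A[1/p]\cong K[\bb{Z}^c\oplus\bb{N}^{d-c}]$; you instead build the free submodule directly inside $\Omega^1_{R/\ca{O}_K}$ and use Noetherianness of $R=S^{-1}B$ to bound the cokernel by a fixed power of $p$. Your Noetherian argument plays the role that \ref{lem:quasi-adequate-diff} plays in the paper's proof; both lead to the same bounded-$p$-torsion conclusion, and for this proposition the two strategies are of essentially equal strength.

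One small correction is needed: the coordinates satisfy only $t_i\in B[1/p]\cap B_{\triv}^\times$, so $\df t_i$ lies a priori in $\Omega^1_{R/\ca{O}_K}[1/p]$, not in $M=\Omega^1_{R/\ca{O}_K}$, and the map $\phi:R^d\to M$, $e_i\mapsto\df t_i$, is not literally defined. This is harmless: choose $m\in\bb{N}$ with $p^m t_i\in B$ for all $i$, define $\phi$ by $e_i\mapsto\df(p^m t_i)=p^m\df t_i\in M$, and run the same argument verbatim; the extra factor $p^m$ is absorbed once $p$ is inverted, so the conclusion and the basis statement are unchanged.
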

\begin{proof}
	We note that $\widehat{\Omega}^1_{S^{-1}B}[1/p]=(\Omega^{1}_{S^{-1}B/\ca{O}_K})^\wedge[1/p]$ by \ref{lem:diff-abs}. We take a quasi-adequate chart of $B$ as in \ref{defn:quasi-adequate-alg}. By the fundamental distinguished triangle of cotangent complexes (\cite[\Luoma{2}.2.1.5.6]{illusie1971cot1}), we obtain an exact sequence,
	\begin{align}
		H_1(\mrm{L}_{S^{-1}B/A})\to S^{-1}B\otimes_A \Omega^{1}_{A/\ca{O}_K}\stackrel{\alpha}{\longrightarrow} \Omega^{1}_{S^{-1}B/\ca{O}_K} \to \Omega^{1}_{S^{-1}B/A}\to 0.
	\end{align}
	By \ref{lem:quasi-adequate-diff}, $\alpha$ is a $p^k$-isomorphism for some $k\in\bb{N}$. Taking $p$-adic completion, we obtain an isomorphism  $(S^{-1}B\otimes_A \Omega^{1}_{A/\ca{O}_K})^\wedge[1/p]\iso (\Omega^{1}_{S^{-1}B/\ca{O}_K})^\wedge[1/p]$ by \ref{rem:pi-iso-retract}.(\ref{item:rem:pi-iso-retract-1}). As $A$ is of finite type over $\ca{O}_K$, $\Omega^{1}_{A/\ca{O}_K}$ is a finitely generated $A$-module. Since $S^{-1}B$ is Noetherian, we have $(S^{-1}B\otimes_A \Omega^{1}_{A/\ca{O}_K})^\wedge[1/p]=(S^{-1}B)^\wedge\otimes_A \Omega^{1}_{A/\ca{O}_K}[1/p]$. The conclusion follows from the fact that $A[1/p]=K[\bb{Z}^c\oplus\bb{N}^{d-c}]$ (cf. \ref{rem:adequate-chart}).
\end{proof}

\begin{mycor}\label{cor:quasi-adequate-rank}
	Let $B$ be a quasi-adequate $\ca{O}_K$-algebra with relative dimension $d$. Then, for any $\ak{p}\in\ak{S}_p(B)$ (cf. {\rm\ref{defn:ht1-prime}}), we have $[\kappa(\ak{p}):\kappa(\ak{p})^p]=p^d$, where $\kappa(\ak{p})$ is the residue field of $B$ at $\ak{p}$.
\end{mycor}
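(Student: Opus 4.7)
I would reduce the statement $[\kappa(\ak{p}):\kappa(\ak{p})^p]=p^d$ to a computation of $\dim(B/\ak{p})$. Since $p\in\ak{p}$, the quotient $B/\ak{p}$ is a $\kappa_K$-algebra; as $\ca{O}_K\to A\to B$ is a composition of finite-type homomorphisms by the quasi-adequate definition, $B/\ak{p}$ is a finitely generated $\kappa_K$-algebra domain, whose fraction field is $\kappa(\ak{p})$. Since $\kappa_K$ is perfect of characteristic $p$, the standard formula relating $p$-basis cardinality to transcendence degree (\cite[\href{https://stacks.math.columbia.edu/tag/07P2}{07P2}]{stacks-project}) gives
\begin{equation*}
[\kappa(\ak{p}):\kappa(\ak{p})^p]=p^{\mathrm{tr.deg}_{\kappa_K}\kappa(\ak{p})},
\end{equation*}
and for a finitely generated integral domain $R$ over a field $k$ one has $\mathrm{tr.deg}_k\mathrm{Frac}(R)=\dim R$. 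Hence it is enough to show $\dim(B/\ak{p})=d$.

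For the dimension calculation, I use that $\ca{O}_K\to B$ is flat (as $B$ is a torsion-free $\ca{O}_K$-module being a domain containing $\ca{O}_K$) and of finite type, and that the generic fibre $B[1/p]$ is \'etale over $A[1/p]\cong K[\bb{Z}^c\oplus\bb{N}^{d-c}]$, hence of Krull dimension $d$. By the flat dimension formula combined with the constancy of fibre dimension for flat finite-type morphisms of Noetherian schemes, every maximal ideal $\ak{q}\subseteq B$ containing $p$ satisfies $\mathrm{ht}(\ak{q})=d+1$. Now $\ak{p}$ has height one and $p\in\ak{p}$, so any maximal ideal $\ak{q}\supseteq\ak{p}$ still contains $p$; by the Hilbert Nullstellensatz applied to the finitely generated $\kappa_K$-domain $B/\ak{p}$, all such $\ak{q}/\ak{p}$ share the common height $\dim(B/\ak{p})$, and by catenariness of $B$ (valid since $B$ is of finite type over the excellent ring $\ca{O}_K$) this gives $\mathrm{ht}(\ak{q})=1+\dim(B/\ak{p})$. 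Comparing the two expressions for $\mathrm{ht}(\ak{q})$ yields $\dim(B/\ak{p})=d$, whence the corollary.

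The main obstacle is the uniform height assertion for maximal ideals of $B$ containing $p$, which is routine but genuine dimension theory (Chevalley together with catenariness of $B$). An alternative route would avoid dimension theory altogether and instead combine Proposition~\ref{prop:quasi-adequate-diff} (giving that $\widehat{\Omega}^1_{B_\ak{p}}[1/p]$ is free of rank $d$ over $E_\ak{p}=\widehat{B_\ak{p}}[1/p]$) with the theorem of \cite[3.3]{he2021faltingsext} applied to the complete DVR $\ca{O}_{E_\ak{p}}$; but one would then need to identify $\widehat{\Omega}^1_{B_\ak{p}}[1/p]$ with $\widehat{\Omega}^1_{\ca{O}_{E_\ak{p}}}[1/p]$, which requires vanishing of $\widehat{\Omega^1_{\ca{O}_{E_\ak{p}}/B_\ak{p}}}$ (formal \'etaleness of the $p$-adic completion of the Noetherian local ring $B_\ak{p}$) and careful control of the derived $p$-completion of the cotangent complex, which I would prefer to bypass.
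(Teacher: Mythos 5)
Your argument is correct, but it takes a genuinely different route from the paper. The paper proves this as an immediate corollary of Proposition~\ref{prop:quasi-adequate-diff}: it notes that $B_{\ak{p}}$ is a discrete valuation ring extension of $\bb{Z}_p$, so by \cite[3.3]{he2021faltingsext} the rank of $\widehat{\Omega}^1_{B_{\ak{p}}}[1/p]$ over $\widehat{B_{\ak{p}}}[1/p]$ is $\log_p[\kappa(\ak{p}):\kappa(\ak{p})^p]$, and by Proposition~\ref{prop:quasi-adequate-diff} that rank is $d$ — a two-line deduction from the $p$-adic differential machinery already in hand. Your proof instead recasts $[\kappa(\ak{p}):\kappa(\ak{p})^p]$ as $p^{\mathrm{tr.deg}_{\kappa_K}\kappa(\ak{p})}$ (using that $\kappa_K$ is perfect and that $B/\ak{p}$ is a finite-type $\kappa_K$-domain) and computes $\dim(B/\ak{p})=d$ by classical commutative algebra: the flat dimension formula, the local constancy of fibre dimension for flat finite-type morphisms (which you should state as local constancy, applied to the connected scheme $\spec(B)$, rather than outright constancy), Jacobson-ness to see $\dim B[1/p]=d$, and catenariness of $B$. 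This is more elementary and self-contained — it avoids the $\widehat{\Omega}^1$ machinery of Section~\ref{sec:quasi} entirely — at the cost of more dimension-theoretic bookkeeping. Your closing remark correctly identifies the paper's actual approach, and the concern you raise there (identifying $\widehat{\Omega}^1_{B_{\ak{p}}}[1/p]$ with $\widehat{\Omega}^1_{\ca{O}_{E_{\ak{p}}}}[1/p]$) is legitimate but standard: for a Noetherian local ring, the derived $\ak{m}$-adic completion of the cotangent complex of the completion map vanishes, and for a DVR of mixed characteristic the $p$-adic and $\ak{m}$-adic topologies coincide, so the two completed modules of differentials agree after inverting $p$.
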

\begin{proof}
	Notice that $B_{\ak{p}}$ is a discrete valuation ring extension of $\bb{Z}_p$. Thus, the rank of the $\widehat{B_{\ak{p}}}[1/p]$-module $\widehat{\Omega}^1_{B_{\ak{p}}}[1/p]$ is equal to $\log_{p}[\kappa(\ak{p}),\kappa(\ak{p})^p]$, where the completions are $p$-adic (\cite[3.3]{he2021faltingsext}). The conclusion follows from \ref{prop:quasi-adequate-diff}.
\end{proof}

\begin{mypara}\label{para:notation-quasi-adequate-tower}
	Let $B$ be a quasi-adequate $\ca{O}_K$-algebra, $\ca{L}$ its fraction field, $\ca{L}_{\mrm{ur}}$ the fraction field of $\overline{B}$, $G=\gal(\ca{L}_{\mrm{ur}}/\ca{L})$. As in \ref{para:X}, we construct coverings of $B$ by quasi-adequate algebras. We fix $B$ in the rest of this section, as well as the following notation.
	
	We fix a quasi-adequate chart $(\alpha:\bb{N}\to \ca{O}_K,\ \beta:P\to B,\ \gamma:\bb{N}\to P)$ of $B$, $(A_{\triv},A,\overline{A})$ the associated adequate $\ca{O}_K$-algebra defined in \ref{rem:quasi-adequate}, and a system of coordinates $t_1,\dots,t_d\in A[1/p]$. Let $\ca{K}$ (resp. $\ca{K}_{\mrm{ur}}$) be the fraction field of $A$ (resp. $\overline{A}$). For $1\leq i\leq d$, we fix a compatible system of $k$-th roots $(t_{i,k})_{k\in \bb{N}}$ of $t_i$ in $\overline{A}[1/p]$. For any field extension $E'/E$, let $\scr{F}_{E'/E}$ (resp. $\scr{F}^{\mrm{fini}}_{E'/E}$) be the set of algebraic (resp. finite) field extensions of $E$ contained in $E'$, and we endow it with the partial order defined by the inclusion relation. For any $L\in \ff{K}$ and any $\underline{r}=(r_1,\dots,r_d)\in\bb{N}^d_{>0}$, we set
	\begin{align}
		\ca{K}^L_{\underline{r}}=L\ca{K}(t_{i,r_i}\ |\ 1\leq i\leq d)\quad\trm{ and }\quad \ca{L}^L_{\underline{r}}=\ca{L}\ca{K}^L_{\underline{r}}
	\end{align}
	where the composites of fields are taken in $\ca{L}_{\mrm{ur}}$ (which contains $\ca{K}_{\mrm{ur}}$). It is clear that $\ca{K}^L_{\underline{r}}$ (resp. $\ca{L}^L_{\underline{r}}$) forms a system of fields over the directed partially ordered set $\ff{K}\times \bb{N}^d_{>0}$ (cf. \ref{para:product}). Let $A^L_{\underline{r}}$ (resp. $B^L_{\underline{r}}$) be the integral closure of $A$ (resp. $B$) in $\ca{K}^L_{\underline{r}}$ (resp. $\ca{L}^L_{\underline{r}}$). We note that there is an isomorphism of $L$-algebras
	\begin{align}\label{eq:A-str}
		L[T_1^{\pm 1},\dots,T_c^{\pm 1},T_{c+1},\dots,T_d]\iso A^L_{\underline{r}}[1/p]
	\end{align}
	sending $T_i$ to $t_{i,r_i}$ as $A[1/p]=K[\bb{Z}^c\oplus\bb{N}^{d-c}]$ (cf. \ref{rem:adequate-chart}). 
	
	Let $S^L$ (resp. $X^L_{\underline{r}}$) be the log scheme with underlying scheme $\spec(\ca{O}_L)$ (resp. $\spec(A^L_{\underline{r}})$) endowed with the compactifying log structure associated to the open immersion $\spec(L)\to \spec(\ca{O}_L)$ (resp. $\spec(A^L_{\underline{r},\triv})\to \spec(A^L_{\underline{r}})$ where $A^L_{\underline{r},\triv}=A_{\triv}\otimes_A A^L_{\underline{r}}$). The following lemma \ref{lem:A-X} guarantees the consistency of this notation with the notation in \ref{para:X}. Let $Y^L_{\underline{r}}$ be the log scheme with underlying scheme $\spec(B^L_{\underline{r}})$ whose log structure is the inverse image of that of $X^L_{\underline{r}}$ via the map $\spec(B^L_{\underline{r}})\to \spec(A^L_{\underline{r}})$ (i.e. $Y^L_{\underline{r}}\to X^L_{\underline{r}}$ is strict). We extend the notation above to any $(L,\underline{r})\in\scr{F}_{\overline{K}/K}\times (\bb{N}\cup\{\infty\})^d_{>0}$ by taking filtered colimits.
\end{mypara}

\begin{mylem}\label{lem:A-X}
	With the notation in {\rm\ref{para:X}} for the chart $(\alpha:\bb{N}\to \ca{O}_K,\ \beta:P\to B,\ \gamma:\bb{N}\to P)$ of $B$, for any $L\in\ff{K}$ and $\underline{r}=(r_1,\dots,r_d)\in\bb{N}^d_{>0}$, the homomorphism of monoids $P_{1,\underline{r}}\to A^L_{\underline{r}}$ sending $(k_0,k_1/r_1,\dots,k_d/r_d)$ to $\alpha(1)^{k_0}\cdot t_{1,r_1}^{k_1}\cdots t_{d,r_d}^{k_d}$ induces an isomorphism between  $\spec(A^L_{\underline{r}})$ with the underlying scheme of $S^L\times_{\bb{A}_{\bb{N}}}^{\mrm{fs}}\bb{A}_{P_{1,\underline{r}}}$. In particular, $X^L_{\underline{r}}=S^L\times_{\bb{A}_{\bb{N}}}^{\mrm{fs}}\bb{A}_{P_{1,\underline{r}}}$, where the left hand side is defined in {\rm\ref{para:notation-quasi-adequate-tower}}.
\end{mylem}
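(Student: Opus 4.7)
The strategy is to apply Lemma \ref{lem:X-adequate-chart}, which identifies the fs fibre product $S^L \times^{\mrm{fs}}_{\bb{A}_{\bb{N}}} \bb{A}_{P_{1,\underline{r}}}$ with the honest (non fs-ified) fibre product $S^L \times_{\bb{A}_{e^{-1}\bb{N}}} \bb{A}_{P_{e,\underline{r}}}$ attached to the adequate chart $(e^{-1}\bb{N}, P_{e,\underline{r}}, \gamma')$, whose underlying scheme is $\spec(R)$ with $R := \ca{O}_L \otimes_{\bb{Z}[e^{-1}\bb{N}]} \bb{Z}[P_{e,\underline{r}}]$. The content of the lemma then reduces to verifying the equality $R = A^L_{\underline{r}}$ inside $A^L_{\underline{r}}[1/p]$, together with the matching of log structures.

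First I would check that the stated formula $(k_0, k_1/r_1, \ldots, k_d/r_d) \mapsto \alpha(1)^{k_0} t_{1,r_1}^{k_1}\cdots t_{d,r_d}^{k_d}$ defines a monoid homomorphism $P_{1,\underline{r}} \to A^L_{\underline{r}}$ landing in the integral closure and not only in $\ca{K}^L_{\underline{r}}$: for $x \in P_{1,\underline{r}}$ some positive multiple $kx$ lies in $P$ by the very definition of $P_{1,\underline{r}}$, so the $k$-th power of the proposed image equals $\beta(kx) \in A$, making the image integral over $A$ and hence an element of $A^L_{\underline{r}}$. Combined with $\ca{O}_L \hookrightarrow A^L_{\underline{r}}$ and the isomorphism \eqref{eq:lem:X-adequate-chart-1}, this produces a ring map $\varphi : R \to A^L_{\underline{r}}$. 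Inverting $p$ identifies $[1/e]$ with the unit $\pi'$, so by \eqref{eq:monoid-tower-str-P} the ring $R[1/p]$ is identified with $L[r_1^{-1}\bb{Z} \oplus \cdots \oplus r_c^{-1}\bb{Z} \oplus r_{c+1}^{-1}\bb{N} \oplus \cdots \oplus r_d^{-1}\bb{N}]$, which matches $A^L_{\underline{r}}[1/p]$ via \eqref{eq:A-str}. Since the morphism $\spec(R) \to \spec(\ca{O}_L)$ is log smooth and therefore flat, $R$ is $p$-torsion free and $\varphi$ is injective; in particular $R$ may be regarded as a subring of $A^L_{\underline{r}}$, and the two share the fraction field $\ca{K}^L_{\underline{r}}$.

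For the opposite inclusion $A^L_{\underline{r}} \subseteq R$, it suffices to prove that $R$ is normal, for then any element of $\ca{K}^L_{\underline{r}}$ that is integral over $A \subseteq R$ automatically lies in $R$. By Lemma \ref{lem:X-adequate-chart}, $(e^{-1}\bb{N}, P_{e,\underline{r}}, \gamma')$ is an adequate chart in the sense of \ref{defn:adequate-chart}; by \ref{rem:adequate-chart} the morphism $X^L_{\underline{r}} \to S^L$ is log smooth, and since $S^L$ is a regular fs log scheme, so is $X^L_{\underline{r}}$ by \cite[\Luoma{4}.3.5.3]{ogus2018log}. Kato's normality theorem for regular fs log schemes (\ref{para:log-reg}) then yields that $\spec(R)$ is normal, whence $R = A^L_{\underline{r}}$. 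The comparison of log structures needed for the ``in particular'' assertion is then formal: by \ref{para:log-reg}, on a regular fs log scheme the log structure is the compactifying log structure associated to the open immersion of its triviality locus, and both $X^L_{\underline{r}}$ and the fs fibre product have triviality locus $\spec(A^L_{\underline{r},\triv})$ (cf.\ \ref{rem:adequate-chart}), so the two log structures agree. The main obstacle is the normality of $R$, which rests essentially on the log-regularity machinery for fs log smooth morphisms over regular bases; the remaining steps are elementary identifications of monoid algebras.
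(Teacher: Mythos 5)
Your proof follows the same strategy as the paper: use Lemma \ref{lem:X-adequate-chart} to replace the fs fibre product by the ordinary fibre product over the adequate chart $(e^{-1}\bb{N},P_{e,\underline{r}},\gamma')$, identify the two rings after inverting $p$, and conclude equality from normality of $R$, which ultimately comes from Kato's log-regularity theorem via \ref{rem:adequate-chart} and \ref{para:log-reg}. The paper's version is more compact (it simply records that $A'=R$ is a Noetherian normal domain finite over $A$ with $A'[1/p]=A^L_{\underline{r}}[1/p]$ and concludes $A'=A^L_{\underline{r}}$), but the content is the same.

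One small point to fix: your justification that $R$ is $p$-torsion free via ``log smooth and therefore flat'' is not valid in general. Log smoothness does not imply flatness of the underlying morphism; one needs integrality of the chart, and this paper's definition of adequate chart (\ref{defn:adequate-chart}) deliberately drops the saturation hypothesis that Abbes--Gros impose (cf.\ \ref{rem:adequate-chart-compare}). The correct route, and the one the paper implicitly takes, is that $R$ is a normal \emph{domain} by the log-regularity argument you invoke a few lines later; being a domain containing $\ca{O}_L$, it is automatically $p$-torsion free, so $\varphi$ is injective. Rearranging so that normality comes first, or just appealing to \ref{rem:adequate-chart} directly, removes the gap. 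Everything else — the integrality argument showing the monoid map lands in $A^L_{\underline{r}}$, the identification of $R[1/p]$ with $A^L_{\underline{r}}[1/p]$ via \eqref{eq:monoid-tower-str-P} and \eqref{eq:A-str}, and the log-structure comparison via triviality loci — is correct and matches the paper's argument.
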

\begin{proof}
	Let $\spec(A')$ denote the underlying scheme of $S^L\times_{\bb{A}_{\bb{N}}}^{\mrm{fs}}\bb{A}_{P_{1,\underline{r}}}$. By \ref{lem:X-adequate-chart} and \ref{rem:adequate-chart}, $A'$ is a Noetherian normal domain finite over $A$ such that $A'[1/p]=L[r_1^{-1}\bb{N}\oplus\cdots r_c^{-1}\bb{N}\oplus r_{c+1}^{-1}\bb{Z}\oplus\cdots\oplus r_d^{-1}\bb{Z}]$. Thus, $A'[1/p]=A^L_{\underline{r}}[1/p]$ by \eqref{eq:A-str}, and we obtain $A'=A^L_{\underline{r}}$. The ``in particular'' part follows from the fact that the log structures on both sides are the compactifying log structure associated to the open immersion $\spec(A^L_{\underline{r},\triv})\to \spec(A^L_{\underline{r}})$ by definition, \ref{lem:X-adequate-chart} and \ref{rem:adequate-chart}.
\end{proof}

\begin{myprop}\label{prop:B-adequate}
	Let $L\in\ff{K}$ and $\underline{r}=(r_1,\dots,r_d)\in\bb{N}^d_{>0}$.
	\begin{enumerate}
		\renewcommand{\labelenumi}{{\rm(\theenumi)}}
		\item The $(L,\ca{O}_L,\ca{O}_{\overline{K}})$-algebra $(B^L_{\underline{r},\triv},B^L_{\underline{r}},\overline{B})$ is quasi-adequate with a chart (with the notation in {\rm\ref{para:X}})
		\begin{align}\label{eq:B-chart}
			(\alpha^L: e_L^{-1}\bb{N}\to \ca{O}_L,\ \beta^L_{\underline{r}}:P_{e_L,\underline{r}}\to B^L_{\underline{r}},\ \gamma^L_{\underline{r}}:e_L^{-1}\bb{N}\to P_{e_L,\underline{r}})
		\end{align}
		where $e_L$ is the ramification index of $L/K$, $\alpha^L$ is a homomorphism of monoids sending $e_L^{-1}$ to a uniformizer $\pi_L$ of $L$, $\beta^L_{\underline{r}}(k_0/e_L,k_1/r_1,\dots,k_d/r_d)=\pi_L^{k_0}\cdot t_{1,r_1}^{k_1}\cdots t_{d,r_d}^{k_d}$, and $\gamma^L_{\underline{r}}$ is induced by the inclusion of the first component of \eqref{eq:monoid-tower-str-P}. Moreover, $A^L_{\underline{r}}=\ca{O}_L\otimes_{\bb{Z}[e_L^{-1}\bb{N}]}\bb{Z}[P_{e_L,\underline{r}}]$. \label{item:prop:B-adequate-1}
		\item The scheme $\spec(B^L_{\underline{r}}[1/p])$ is an open and closed subscheme of $\spec(B\otimes_A A^L_{\underline{r}}[1/p])$. The two schemes are equal if and only if $\ca{L}^L_{\underline{r}}=\ca{L}\otimes_{\ca{K}}\ca{K}^L_{\underline{r}}$ (i.e. $[\ca{L}^L_{\underline{r}}:\ca{L}]=[\ca{K}^L_{\underline{r}}:\ca{K}]$). \label{item:prop:B-adequate-2}
		\item For any $L'\in\scr{F}^{\mrm{fini}}_{\overline{K}/L}$ and any element $\underline{r}'\in\bb{N}^d_{>0}\cdot\underline{r}$, the morphism of generic fibres $Y^{L'}_{\underline{r}',K}\to Y^L_{\underline{r},K}$ is \'etale.\label{item:prop:B-adequate-3}
	\end{enumerate}
\end{myprop}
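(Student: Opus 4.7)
The plan is to handle the three parts in order, with part~(\ref{item:prop:B-adequate-3}) reduced to part~(\ref{item:prop:B-adequate-2}) via base change, and part~(\ref{item:prop:B-adequate-1}) feeding off part~(\ref{item:prop:B-adequate-2}) for the final \'etaleness check.

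For part~(\ref{item:prop:B-adequate-1}), I would combine Lemma~\ref{lem:A-X} with Lemma~\ref{lem:X-adequate-chart}: the former identifies $X^L_{\underline{r}}$ with $S^L\times^{\mrm{fs}}_{\bb{A}_{\bb{N}}}\bb{A}_{P_{1,\underline{r}}}$, and the latter rewrites this as $S^L\times_{\bb{A}_{e_L^{-1}\bb{N}}}\bb{A}_{P_{e_L,\underline{r}}}$ through the chart \eqref{eq:B-chart}. Extracting underlying rings yields the identity $A^L_{\underline{r}}=\ca{O}_L\otimes_{\bb{Z}[e_L^{-1}\bb{N}]}\bb{Z}[P_{e_L,\underline{r}}]$ together with conditions (1)--(2) of Definition~\ref{defn:quasi-adequate-alg} for the chart $(\alpha^L,\beta^L_{\underline{r}},\gamma^L_{\underline{r}})$. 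For condition (3) on $A^L_{\underline{r}}\to B^L_{\underline{r}}$: injectivity follows from $\ca{K}^L_{\underline{r}}\hookrightarrow\ca{L}^L_{\underline{r}}$; finite typeness from Remark~\ref{rem:int-clos-fini}, which makes $B^L_{\underline{r}}$ finite over $B$ (using that $\ca{L}^L_{\underline{r}}/\ca{L}$ is finite separable in characteristic $0$); the pre-perfectoid property from Lemma~\ref{lem:pre-perfd-basic}.(\ref{item:lem:pre-perfd-basic-2}) applied to $A\to B$ with $\ca{K}'=\ca{K}^L_{\underline{r}}$; the compatibility $B^L_{\underline{r}}\otimes_{\bb{Z}[P_{e_L,\underline{r}}]}\bb{Z}[P^{\mrm{gp}}_{e_L,\underline{r}}]=B^L_{\underline{r},\triv}$ by tensoring up from the analogous identity for $A^L_{\underline{r}}$; and the \'etaleness of $A^L_{\underline{r}}[1/p]\to B^L_{\underline{r}}[1/p]$ from part~(\ref{item:prop:B-adequate-2}), since open subschemes of \'etale schemes are \'etale.

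For part~(\ref{item:prop:B-adequate-2}), the essential input is that $A[1/p]\to B[1/p]$ is \'etale, so that $A^L_{\underline{r}}[1/p]\to B\otimes_A A^L_{\underline{r}}[1/p]$ is \'etale as a base change. Since $\ca{K}^L_{\underline{r}}$ is a field, $A^L_{\underline{r}}[1/p]$ is a normal Noetherian domain, and by \cite[\Luoma{3}.3.3]{abbes2016p} the source $B\otimes_A A^L_{\underline{r}}[1/p]$ decomposes as a finite product of normal Noetherian domains indexed by the field factors of $\ca{L}\otimes_{\ca{K}}\ca{K}^L_{\underline{r}}$, each containing $\ca{L}^L_{\underline{r}}$. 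The natural multiplication map factors through the component whose fraction field is $\ca{L}^L_{\underline{r}}$, and that normal component already equals $B^L_{\underline{r}}[1/p]$ by normality. Hence $\spec(B^L_{\underline{r}}[1/p])$ is open-and-closed in $\spec(B\otimes_A A^L_{\underline{r}}[1/p])$; a dimension count in $\ca{L}\otimes_{\ca{K}}\ca{K}^L_{\underline{r}}$ shows the two coincide iff this \'etale $\ca{L}$-algebra is a single field, equivalently $[\ca{L}^L_{\underline{r}}:\ca{L}]=[\ca{K}^L_{\underline{r}}:\ca{K}]$.

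For part~(\ref{item:prop:B-adequate-3}), I would first show that the morphism of fs log schemes $X^{L'}_{\underline{r}',K}\to X^L_{\underline{r},K}$ is log \'etale by factoring through $X^{L'}_{\underline{r},K}$: the first step is the strict \'etale base change along $S^{L'}\to S^L$ (which is strict \'etale after inverting $p$ since $L\to L'$ is finite separable in characteristic $0$), and the second is the base change of the Kummer morphism $\bb{A}_{P_{1,\underline{r}'}}\to\bb{A}_{P_{1,\underline{r}}}$, whose Kummer index $\mrm{lcm}(r'_i/r_i)$ is invertible on the generic fibre since the base is a $\bb{Q}$-algebra. Applying part~(\ref{item:prop:B-adequate-2}) to the quasi-adequate $\ca{O}_L$-algebra $B^L_{\underline{r}}$ with the chart from part~(\ref{item:prop:B-adequate-1}) and the tower data $(L',\underline{r}'/\underline{r})$ identifies $\spec(B^{L'}_{\underline{r}'}[1/p])$ as open-and-closed in $\spec(B^L_{\underline{r}}\otimes_{A^L_{\underline{r}}}A^{L'}_{\underline{r}'}[1/p])$, which is the underlying scheme of the fs fibre product $Y^L_{\underline{r},K}\times^{\mrm{fs}}_{X^L_{\underline{r},K}}X^{L'}_{\underline{r}',K}$ (the strictness of $Y^L_{\underline{r}}\to X^L_{\underline{r}}$ is used here). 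Log \'etaleness is preserved under fs base change, so the fibre product is log \'etale over $Y^L_{\underline{r},K}$, and restricting to the open-and-closed strict sub-log-scheme $Y^{L'}_{\underline{r}',K}$ yields the conclusion. The main obstacle I anticipate is the bookkeeping of fs log structures under these base changes---specifically, justifying the Kummer log \'etaleness on the generic fibre and matching the open-and-closed subscheme from part~(\ref{item:prop:B-adequate-2}) with $Y^{L'}_{\underline{r}',K}$ as a strict sub-log-scheme---rather than any ring-theoretic or pre-perfectoid considerations, which reduce cleanly to the Abbes--Gros and Tsuji toolkit.
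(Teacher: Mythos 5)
Your overall strategy tracks the paper's closely: you invoke Lemmas~\ref{lem:A-X} and \ref{lem:X-adequate-chart} for the chart in part~(\ref{item:prop:B-adequate-1}), you identify $\spec(B^L_{\underline{r}}[1/p])$ as a connected component of $\spec(B\otimes_A A^L_{\underline{r}}[1/p])$ for part~(\ref{item:prop:B-adequate-2}), and you chain log \'etale morphisms through fs fibre products for part~(\ref{item:prop:B-adequate-3}). The differences are mostly in the intermediate routing, and your argument is correct, so let me record the comparison. For part~(\ref{item:prop:B-adequate-2}) you deduce the disjoint-union decomposition directly from the Abbes--Gros statement that an algebra \'etale over a normal Noetherian domain is a finite product of normal Noetherian domains (the same \cite[\Luoma{3}.3.3]{abbes2016p} the paper invokes elsewhere in \ref{cor:bound-fet}), whereas the paper reaches the same decomposition by observing that $Y_K\times^{\mrm{fs}}_X X^L_{\underline{r}}$ is \'etale over the log-regular $Y_K$ (hence log-regular, hence normal via \ref{para:log-reg}). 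Both are valid; yours is a bit more ring-theoretic, the paper's keeps everything on the log side, which pays off in part~(\ref{item:prop:B-adequate-3}). There, the paper chains the two open-and-closed immersions $Y^{L'}_{\underline{r}',K}\hookrightarrow Y_K\times^{\mrm{fs}}_X X^{L'}_{\underline{r}'}$ and $Y^L_{\underline{r},K}\hookrightarrow Y_K\times^{\mrm{fs}}_X X^{L}_{\underline{r}}$ through the log \'etale morphism between the fibre products, with no need to have already established (\ref{item:prop:B-adequate-1}). Your route detours through (\ref{item:prop:B-adequate-1}) and reapplies (\ref{item:prop:B-adequate-2}) to the quasi-adequate $\ca{O}_L$-algebra $B^L_{\underline{r}}$ equipped with its new chart; this is correct (it hinges on the observation, which you should flag explicitly, that the Kummer tower of $B^L_{\underline{r}}$ built from the coordinates $t_{1,r_1},\dots,t_{d,r_d}$ with their compatible roots $t_{i,kr_i}$, relative to $(L',\underline{r}'/\underline{r})$, recovers $B^{L'}_{\underline{r}'}$), but it is heavier than the paper's direct argument and enforces the ordering (\ref{item:prop:B-adequate-2}), (\ref{item:prop:B-adequate-1}), (\ref{item:prop:B-adequate-3}).

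One imprecision worth fixing: in part~(\ref{item:prop:B-adequate-2}) you say the field factors of $\ca{L}\otimes_{\ca{K}}\ca{K}^L_{\underline{r}}$ ``each contain $\ca{L}^L_{\underline{r}}$.'' This is not right: they all contain $\ca{L}$ and $\ca{K}^L_{\underline{r}}$, but they are the various composites of $\ca{L}$ and $\ca{K}^L_{\underline{r}}$ over $\ca{K}$, and only the one through which the multiplication $\ca{L}\otimes_{\ca{K}}\ca{K}^L_{\underline{r}}\to \ca{L}_{\mrm{ur}}$ factors equals $\ca{L}^L_{\underline{r}}$ (and in general they need not all be isomorphic, since $\ca{K}^L_{\underline{r}}/\ca{K}$ is not Galois). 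The rest of your argument only needs that $\ca{L}^L_{\underline{r}}$ is one of the factors, so this does not affect the conclusion, but the claim as written is incorrect.
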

\begin{proof}
	As $X^L_{\underline{r}}=S^L\times_{\bb{A}_{\bb{N}}}^{\mrm{fs}}\bb{A}_{P_{1,\underline{r}}}$ by \ref{lem:A-X}, we see that $X^{L'}_{\underline{r}',K}\to X^L_{\underline{r},K}$ is \'etale. In particular, $Y_K\times^{\mrm{fs}}_X X^L_{\underline{r}}$ is \'etale over $Y_K$ with underlying scheme $\spec(B\otimes_A A^L_{\underline{r}}[1/p])$ (as $Y\to X$ is strict). Since $Y_K$ is regular by \ref{lem:quasi-adequate-regular}.(\ref{item:lem:quasi-adequate-regular-2}), so is $Y_K\times^{\mrm{fs}}_X X^L_{\underline{r}}$. We see that $\spec(B\otimes_A A^L_{\underline{r}}[1/p])$ is the disjoint union of finitely many normal integral schemes whose set of generic points identifies with $\spec(\ca{L}\otimes_{\ca{K}}\ca{K}^L_{\underline{r}})$ (cf. \ref{para:log-reg}), and thus by definition $\spec(B^L_{\underline{r}}[1/p])$ is one of these components corresponding to the generic point $\spec(\ca{L}^L_{\underline{r}})$, so that we obtain (\ref{item:prop:B-adequate-2}). This implies that $Y^L_{\underline{r},K}\to Y_K\times^{\mrm{fs}}_X X^L_{\underline{r}}$ is an open and closed immersion. Thus, (\ref{item:prop:B-adequate-3}) follows as $Y_K\to X_K$ is strict and \'etale. 
	
	For (\ref{item:prop:B-adequate-1}), we take $\alpha^L,\beta^L_{\underline{r}},\gamma^L_{\underline{r}}$ as in the statement. Note that $A^L_{\underline{r}}=\ca{O}_L\otimes_{\bb{Z}[e_L^{-1}\bb{N}]}\bb{Z}[P_{e_L,\underline{r}}]$ by \ref{lem:A-X} and \ref{lem:X-adequate-chart}, and moreover it defines an adequate $\ca{O}_L$-algebra. It remains to check that the ring homomorphism $A^L_{\underline{r}}\to B^L_{\underline{r}}$ satisfies the conditions in the definition \ref{defn:quasi-adequate-alg} of quasi-adequate algebras. We have seen above that $A^L_{\underline{r}}[1/p]\to B^L_{\underline{r}}[1/p]$ is \'etale. As $B^L_{\underline{r}}$ is finite over $B$ by \ref{rem:int-clos-fini}, it is of finite type over $A^L_{\underline{r}}$. Finally, it follows from \ref{lem:pre-perfd-basic}.(\ref{item:lem:pre-perfd-basic-2}) that $A\to B$ being pre-perfectoid implies that so is $A^L_{\underline{r}}\to B^L_{\underline{r}}$.
\end{proof}

\begin{mylem}\label{lem:B-perfd}
	Let $F\in\scr{F}_{\overline{K}/K}$ be a pre-perfectoid field. Then, the $\ca{O}_F$-algebra $B^F_{\underline{\infty}}$ is almost pre-perfectoid.
\end{mylem}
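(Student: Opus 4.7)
The plan is to reduce the assertion to the pre-perfectoid hypothesis built into the chart map $A \to B$ of Definition \ref{defn:quasi-adequate-alg}, invoked at the algebraic extension $\ca{K}' = \ca{K}^F_{\underline{\infty}}$ of $\ca{K}$. To this end, I would first verify the preliminary fact that $\ca{K}^F_{\underline{\infty}}$ belongs to the family $\scr{P}_A$ of \ref{para:notation-bound}, i.e.\ that $A^F_{\underline{\infty}}$ is almost pre-perfectoid over the pre-perfectoid valuation ring $\ca{O}_F \subseteq A^F_{\underline{\infty}}$.

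For this preliminary, passing \ref{prop:B-adequate}.(\ref{item:prop:B-adequate-1}) to the filtered colimit over $(L,\underline{r}) \in \scr{F}^{\mrm{fini}}_{F/K} \times \bb{N}^d_{>0}$ shows that $A^F_{\underline{\infty}}$ is obtained from $\ca{O}_F$ by adjoining all $n$-th roots ($n \in \bb{N}_{>0}$) of the coordinates $t_1,\dots,t_d$ together with all $n$-th roots of a uniformizer of $K$. I would treat the $p$-power and prime-to-$p$ parts separately. The sub-tower $A^F_{\underline{p^\infty}}$ is, upon $p$-adic completion, a standard perfectoid Laurent/polynomial algebra over $\widehat{\ca{O}_F}$: Frobenius modulo $p$ is almost surjective by explicit inspection, using that $\ca{O}_F$ is almost pre-perfectoid and that we have the explicit compatible systems $(t_{i,p^n})_n$ and $(\zeta_{p^n})_n$. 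Enlarging from $A^F_{\underline{p^\infty}}$ to $A^F_{\underline{\infty}}$ amounts to taking a filtered union of prime-to-$p$ Kummer layers, each of which is a normalization of a finite \'etale extension of the generic fibre; by \ref{cor:bound-fet}, each such layer is pre-perfectoid, and the almost pre-perfectoid property is preserved through the filtered colimit.

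With $\ca{K}^F_{\underline{\infty}} \in \scr{P}_A$ established, the pre-perfectoid hypothesis on $A \to B$ (unfolded via Definition \ref{defn:bound}) yields immediately that $\ca{L}^F_{\underline{\infty}} = \ca{L}\cdot \ca{K}^F_{\underline{\infty}}$ lies in $\scr{P}_B$. By the very definition of $\scr{P}_B$, the integral closure of $B$ in $\ca{L}^F_{\underline{\infty}}$ carries a pre-perfectoid valuation ring and is almost pre-perfectoid over it. Since integral closure commutes with filtered colimits of fields, this integral closure coincides with $B^F_{\underline{\infty}} = \colim_{(L,\underline{r})} B^L_{\underline{r}}$, which is exactly the desired conclusion.

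The main technical obstacle lies in the preliminary step, namely the almost pre-perfectoid property of $A^F_{\underline{\infty}}$ itself. The delicate points are that $F$ is only assumed pre-perfectoid rather than perfectoid, and that the indexing set $\bb{N}^d_{>0}$ forces one to adjoin prime-to-$p$ roots as well as $p$-power roots. Both issues are handled as indicated: the tame layers are absorbed by almost purity via \ref{cor:bound-fet}, and the wild layer reduces to the explicit perfectoid description of the $p$-power-root tower over $\widehat{\ca{O}_F}$.
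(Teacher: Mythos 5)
Your overall plan is the same as the paper's: first show that $\ca{K}^F_{\underline{\infty}}$ belongs to $\scr{P}_A$ (i.e.\ that $A^F_{\underline{\infty}}$ is almost pre-perfectoid over $\ca{O}_F$), then let the pre-perfectoid hypothesis on $A\to B$ from Definition \ref{defn:quasi-adequate-alg} transport this to $B^F_{\underline{\infty}}$. That second step is correct and matches the paper word for word. The trouble is in the preliminary step.

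Your decomposition into a $p$-power subtower $A^F_{\underline{p^\infty}}$ and ``prime-to-$p$ Kummer layers'' claimed to be finite \'etale over the generic fibre breaks down whenever $c<d$. The coordinates $t_{c+1},\dots,t_d$ correspond to the $\bb{N}^{d-c}$ factor of $P_\eta\cong\bb{Z}\oplus\bb{Z}^c\oplus\bb{N}^{d-c}$ and are \emph{not} units in $A^F_{\underline{p^\infty}}[1/p]$: they cut out a strict normal crossings divisor. Adjoining a prime-to-$p$ root of such a $t_i$ is a Kummer cover ramified along $t_i=0$, so $A^F_{p^{\underline{m}}\underline{N}}[1/p]$ is not finite \'etale over $A^F_{p^{\underline{m}}}[1/p]$ when some $N_i>1$ with $i>c$. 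Consequently \ref{cor:bound-fet} does not apply to these layers, and the claim that ``each such layer is pre-perfectoid'' is unsupported. (Abhyankar's lemma \ref{lem:abhyankar} does not rescue this either: it makes a fixed tame cover \'etale only after adjoining enough roots of the $t_i$, but those additional roots land you back in the tower you are trying to build, so there is no inductive gain.)

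The decomposition is in fact unnecessary. The paper's proof runs the Frobenius-modulo-$p$ computation directly on the full tower $A^F_{\underline{\infty}}$: writing a typical generator as $f^L_{\underline{r}}(x)=\pi_L^{k_0}t_{1,r_1}^{k_1}\cdots t_{d,r_d}^{k_d}$ for $x=(k_0/e_L,k_1/r_1,\dots,k_d/r_d)\in P_{e_L,\underline{r}}$, one passes to $\underline{r}'=p\underline{r}$ (still in $\bb{N}^d_{>0}$, whether or not $\underline{r}$ is a $p$-power) and to some $L'\in\scr{F}^{\mrm{fini}}_{F/L}$ with $p\mid e_{L'}/e_L$; then $f^L_{\underline{r}}(x)=u\cdot f^{L'}_{\underline{r}'}(x')^p$ for a unit $u\in\ca{O}_{L'}^\times$, and $u$ has a $p$-th root in $\ca{O}_F/p$ by pre-perfectoidness of $F$. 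This, plus injectivity of Frobenius from normality and flatness (\cite[5.21]{he2021coh}), gives the isomorphism characterising almost pre-perfectoid rings. Your ``explicit inspection'' of Frobenius on $A^F_{\underline{p^\infty}}/p$ is exactly this argument restricted to $p$-power $\underline{r}$; running it without that restriction removes the need for the faulty \'etale-layer step.
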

\begin{proof}
	Recall that $B^F_{\underline{\infty}}$ is the filtered colimit of $B^L_{\underline{r}}$ over $(L,\underline{r})\in \scr{F}^{\mrm{fini}}_{F/K}\times \bb{N}^d_{>0}$ (cf. \ref{para:notation-quasi-adequate-tower}). We claim that the Frobenius on $A^F_{\underline{\infty}}/pA^F_{\underline{\infty}}$ is surjective.	Recall that for any $L\in\scr{F}^{\mrm{fini}}_{F/K}$ and $\underline{r}\in\bb{N}^d_{>0}$, there is an isomorphism $f^L_{\underline{r}}:\ca{O}_L\otimes_{\bb{Z}[e_L^{-1}\bb{N}]}\bb{Z}[P_{e_L,\underline{r}}]\iso A^L_{\underline{r}}$ sending $1\otimes \exp(k_0/e_L,k_1/r_1,\dots,k_d/r_d)$ to $\pi_L^{k_0}\cdot t_{1,r_1}^{k_1}\cdots t_{d,r_d}^{k_d}$ by \ref{lem:A-X} and \ref{lem:X-adequate-chart}, where $\pi_L$ is a uniformizer of $L$ and $x=(k_0/e_L,k_1/r_1,\dots,k_d/r_d)\in P_{e_L,\underline{r}}$. Since $F$ is a non-discrete valuation field such that the Frobenius map on $\ca{O}_F/p\ca{O}_F$ is surjective, there exists $L'\in \scr{F}^{\mrm{fini}}_{F/L}$ such that $e_{L'}/e_L$ is divisible by $p$ (\cite[5.4]{he2021coh}). We take $\underline{r}'=p\underline{r}\in\bb{N}^d_{>0}$. By definition, we have $x'=(k_0/pe_L,k_1/r_1',\dots,k_d/r_d')\in P_{e_{L'},\underline{r}'}$ as $px'=x$. Thus, there is a unit $u\in\ca{O}_{L'}^\times$ such that $f^L_{\underline{r}}(x)=u\cdot f^{L'}_{\underline{r}'}(x')^p$. Since the Frobenius map on $\ca{O}_F/p\ca{O}_F$ is surjective, $u$ admits a $p$-th root in $\ca{O}_F/p\ca{O}_F$, and thus $f^L_{\underline{r}}(x)$ admits a $p$-th root in $A^F_{\underline{\infty}}/pA^F_{\underline{\infty}}$, which proves the claim.
	
	Since $A^F_{\underline{\infty}}$ is normal and flat over $\ca{O}_F$, the Frobenius map induces an injection $\phi:A^F_{\underline{\infty}}/p_1A^F_{\underline{\infty}}\to A^F_{\underline{\infty}}/pA^F_{\underline{\infty}}$ (\cite[5.21]{he2021coh}), where $p_1$ is a $p$-th root of $p$ up to a unit. By the claim above, $\phi$ is an isomorphism, which means that the $\ca{O}_F$-algebra $A^F_{\underline{\infty}}$ is almost pre-perfectoid (\cite[5.19]{he2021coh}). The conclusion follows from the fact that $A\to B$ is pre-perfectoid.
\end{proof}

\begin{mylem}\label{lem:B-structure}
	Let $L\subseteq L'$ be elements of $\ff{K}$, $\underline{r}|\underline{r}'$ elements of $\bb{N}^d_{>0}$. Assume that $\ca{L}^{L'}_{\underline{r}'}=\ca{L}^L_{\underline{r}}\otimes_{\ca{K}^L_{\underline{r}}}\ca{K}^{L'}_{\underline{r}'}$ (i.e. $[\ca{L}^{L'}_{\underline{r}'}:\ca{L}^L_{\underline{r}}]=[\ca{K}^{L'}_{\underline{r}'}:\ca{K}^L_{\underline{r}}]$). Then, the finite free $B^L_{\underline{r}}$-module
	\begin{align}\label{eq:lem:B-structure}
		D^{L'/L}_{\underline{r}'/\underline{r}}=\bigoplus_{ \underline{k}\in I}\ca{O}_{L'}\otimes_{\ca{O}_L} B^L_{\underline{r}}\cdot\prod_{i=1}^d t_{i,r_i'}^{k_i},
	\end{align}
	identifies naturally with a finite free $B^L_{\underline{r}}$-subalgebra of $B^{L'}_{\underline{r}'}[1/p]$, where $I=\{(k_1,\dots,k_d)\in\bb{N}^d\ |\ 0\leq k_i<r_i'/r_i,\ 1\leq i\leq d\}$. Moreover, for any finite field extension $L''$ of $L'$ and any element $\underline{r}''\in\bb{N}^d_{>0}$ divisible by $\underline{r}'$ such that $\ca{L}^{L''}_{\underline{r}''}=\ca{L}^L_{\underline{r}}\otimes_{\ca{K}^L_{\underline{r}}}\ca{K}^{L''}_{\underline{r}''}$, we have
	\begin{align}
		D^{L'/L}_{\underline{r}'/\underline{r}}=B^{L'}_{\underline{r}'}[1/p]\cap D^{L''/L}_{\underline{r}''/\underline{r}}.
	\end{align} 
\end{mylem}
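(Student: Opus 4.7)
The plan is to first reduce the construction of $D^{L'/L}_{\underline{r}'/\underline{r}}$ as a free $B^L_{\underline{r}}$-subalgebra of $B^{L'}_{\underline{r}'}[1/p]$ to a decomposition already established at the adequate level. The hypothesis $\ca{L}^{L'}_{\underline{r}'}=\ca{L}^L_{\underline{r}}\otimes_{\ca{K}^L_{\underline{r}}}\ca{K}^{L'}_{\underline{r}'}$ combined with \ref{prop:B-adequate}.(\ref{item:prop:B-adequate-2}) yields $B^{L'}_{\underline{r}'}[1/p]=B^L_{\underline{r}}\otimes_{A^L_{\underline{r}}}A^{L'}_{\underline{r}'}[1/p]$. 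Since $(\alpha^L,\beta^L_{\underline{r}},\gamma^L_{\underline{r}})$ is an adequate chart of the adequate algebra $A^L_{\underline{r}}$ over $\ca{O}_L$ by \ref{prop:B-adequate}.(\ref{item:prop:B-adequate-1}) and \ref{lem:A-X}, applying \ref{prop:X-tower-str} to it and inverting $p$ gives a free decomposition
\[
A^{L'}_{\underline{r}'}[1/p]=\bigoplus_{\underline{k}\in I}(L'\otimes_L A^L_{\underline{r}}[1/p])\cdot \prod_{i=1}^d t_{i,r_i'}^{k_i},
\]
and tensoring with $B^L_{\underline{r}}$ over $A^L_{\underline{r}}$ produces the analogous decomposition for $B^{L'}_{\underline{r}'}[1/p]$, call it $(\star)$. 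The module $D^{L'/L}_{\underline{r}'/\underline{r}}$ is precisely the $B^L_{\underline{r}}$-submodule of $(\star)$ obtained by restricting the coefficient of each $\prod t_{i,r_i'}^{k_i}$ to the sublattice $\ca{O}_{L'}\otimes_{\ca{O}_L}B^L_{\underline{r}}$, so it is free of rank $|I|=\prod(r_i'/r_i)$ inside $B^{L'}_{\underline{r}'}[1/p]$. Closure under multiplication is then immediate from $t_{i,r_i'}^{r_i'/r_i}=t_{i,r_i}\in B^L_{\underline{r}}$: writing $k_i+k_i'=q_i(r_i'/r_i)+s_i$ with $0\leq s_i<r_i'/r_i$ gives $t_{i,r_i'}^{k_i+k_i'}=t_{i,r_i}^{q_i}\cdot t_{i,r_i'}^{s_i}$, which remains in $D^{L'/L}_{\underline{r}'/\underline{r}}$.

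For the ``moreover'' part, I would first note that the two hypotheses $\ca{L}^{L'}_{\underline{r}'}=\ca{L}^L_{\underline{r}}\otimes\ca{K}^{L'}_{\underline{r}'}$ and $\ca{L}^{L''}_{\underline{r}''}=\ca{L}^L_{\underline{r}}\otimes\ca{K}^{L''}_{\underline{r}''}$ force the intermediate equality $\ca{L}^{L''}_{\underline{r}''}=\ca{L}^{L'}_{\underline{r}'}\otimes_{\ca{K}^{L'}_{\underline{r}'}}\ca{K}^{L''}_{\underline{r}''}$ by a degree comparison using the general inequality $[\ca{L}^{\bullet}_{\bullet}:\ca{L}^L_{\underline{r}}]\leq[\ca{K}^{\bullet}_{\bullet}:\ca{K}^L_{\underline{r}}]$. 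Hence $(\star)$ applies compatibly to the pairs $(L'/L,\underline{r}'/\underline{r})$, $(L''/L,\underline{r}''/\underline{r})$ and $(L''/L',\underline{r}''/\underline{r}')$ via the compatible relation $t_{i,r_i''}^{r_i''/r_i'}=t_{i,r_i'}$. Given $x\in B^{L'}_{\underline{r}'}[1/p]\cap D^{L''/L}_{\underline{r}''/\underline{r}}$, write $x=\sum_{\underline{k}\in I''}a_{\underline{k}}\prod t_{i,r_i''}^{k_i}$ with $a_{\underline{k}}\in \ca{O}_{L''}\otimes_{\ca{O}_L}B^L_{\underline{r}}$, then relabel $k_i=m_i(r_i''/r_i')+j_i$ with $0\leq m_i<r_i'/r_i$ and $0\leq j_i<r_i''/r_i'$. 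Comparing with the decomposition $B^{L''}_{\underline{r}''}[1/p]=\bigoplus_{\underline{j}}(L''\otimes_{L'}B^{L'}_{\underline{r}'}[1/p])\prod t_{i,r_i''}^{j_i}$, the condition $x\in B^{L'}_{\underline{r}'}[1/p]$ forces every $\underline{j}\neq 0$ component to vanish, leaving $x=\sum_{\underline{m}}a_{\underline{m}(\underline{r}''/\underline{r}')}\prod t_{i,r_i'}^{m_i}$ where by $(\star)$ applied to $(L',\underline{r}')$ each coefficient $a_{\underline{m}(\underline{r}''/\underline{r}')}$ must lie in $L'\otimes_L B^L_{\underline{r}}[1/p]$, while it also lies in $\ca{O}_{L''}\otimes_{\ca{O}_L}B^L_{\underline{r}}$ by hypothesis.

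The key step, which I expect to be the main technical obstacle, is therefore the descent-type intersection identity
\[
(\ca{O}_{L''}\otimes_{\ca{O}_L}B^L_{\underline{r}})\cap (\ca{O}_{L'}\otimes_{\ca{O}_L}B^L_{\underline{r}})[1/p]=\ca{O}_{L'}\otimes_{\ca{O}_L}B^L_{\underline{r}}
\]
inside $(\ca{O}_{L''}\otimes_{\ca{O}_L}B^L_{\underline{r}})[1/p]$. To establish it, set $R=\ca{O}_{L'}\otimes_{\ca{O}_L}B^L_{\underline{r}}$ and $R'=\ca{O}_{L''}\otimes_{\ca{O}_L}B^L_{\underline{r}}=\ca{O}_{L''}\otimes_{\ca{O}_{L'}}R$. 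Since $\ca{O}_{L'}\to\ca{O}_{L''}$ is a local flat extension of discrete valuation rings, hence faithfully flat, and $B^L_{\underline{r}}$ is $\ca{O}_L$-flat, the map $R\to R'$ is faithfully flat. Applying $R'\otimes_R(-)$ to the exact sequence $0\to R\to R[1/p]\to R[1/p]/R\to 0$ gives $R'[1/p]/R'=R'\otimes_R(R[1/p]/R)$, and faithful flatness makes $R[1/p]/R\hookrightarrow R'[1/p]/R'$ injective, which is exactly the desired identity. Plugging this back yields $a_{\underline{m}(\underline{r}''/\underline{r}')}\in \ca{O}_{L'}\otimes_{\ca{O}_L}B^L_{\underline{r}}$ for every $\underline{m}$, so $x\in D^{L'/L}_{\underline{r}'/\underline{r}}$, completing the proof.
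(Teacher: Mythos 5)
Your proof is correct and follows essentially the same route as the paper's: reduce to the explicit coefficient decomposition coming from the adequate level, then peel off the $\underline{j}\neq 0$ part of the expansion in $B^{L''}_{\underline{r}''}[1/p]$ to reduce the ``moreover'' part to the intersection identity $(\ca{O}_{L''}\otimes_{\ca{O}_L}B^L_{\underline{r}})\cap(\ca{O}_{L'}\otimes_{\ca{O}_L}B^L_{\underline{r}})[1/p]=\ca{O}_{L'}\otimes_{\ca{O}_L}B^L_{\underline{r}}$. The only differences are cosmetic: the paper observes that $\ca{O}_{L''}\otimes_{\ca{O}_{L'}}D^{L'/L}_{\underline{r}'/\underline{r}}$ is a direct summand of $D^{L''/L}_{\underline{r}''/\underline{r}}$, which lets it compare coefficients without the explicit relabeling $k_i=m_i(r''_i/r'_i)+j_i$ and without verifying the intermediate degree equality $[\ca{L}^{L''}_{\underline{r}''}:\ca{L}^{L'}_{\underline{r}'}]=[\ca{K}^{L''}_{\underline{r}''}:\ca{K}^{L'}_{\underline{r}'}]$ (which you correctly note follows from the tower law); and for the final intersection identity the paper tensors the left-exact sequence $0\to\ca{O}_{L'}\to L'\oplus\ca{O}_{L''}$ with the $\ca{O}_L$-flat module $B^L_{\underline{r}}$, whereas you use faithful flatness of $R\to R'=\ca{O}_{L''}\otimes_{\ca{O}_{L'}}R$ to get injectivity of $R[1/p]/R\to R'[1/p]/R'$ -- both are valid one-line arguments for the same fact. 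Your observation that $D^{L'/L}_{\underline{r}'/\underline{r}}$ is closed under multiplication via $t_{i,r'_i}^{r'_i/r_i}=t_{i,r_i}$ is a welcome explicit verification that the paper leaves implicit.
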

\begin{proof}
	We put $C^{L'/L}_{\underline{r}'/\underline{r}}=\bigoplus_{ \underline{k}\in I}\ca{O}_{L'}\otimes_{\ca{O}_L} A^L_{\underline{r}}\cdot\prod_{i=1}^d t_{i,r_i'}^{k_i}$. Thus, $D^{L'/L}_{\underline{r}'/\underline{r}}=B^L_{\underline{r}}\otimes_{A^L_{\underline{r}}}C^{L'/L}_{\underline{r}'/\underline{r}}$. By assumption, we have $B^{L'}_{\underline{r}'}[1/p]=B^L_{\underline{r}}\otimes_{A^L_{\underline{r}}}A^{L'}_{\underline{r}'}[1/p]$ (cf. \ref{prop:B-adequate}.(\ref{item:prop:B-adequate-2})), which is equal to $D^{L'/L}_{\underline{r}'/\underline{r}}[1/p]$ as $A^{L'}_{\underline{r}'}[1/p]= C^{L'/L}_{\underline{r}'/\underline{r}}[1/p]$ (cf. \ref{prop:X-tower-str}). This proves the first assertion. For the ``moreover'' part, notice that $\ca{O}_{L''}\otimes_{\ca{O}_{L'}}D^{L'/L}_{\underline{r}'/\underline{r}}$ is a direct summand of $D^{L''/L}_{\underline{r}''/\underline{r}}$, both regarded as $\ca{O}_{L''}\otimes_{\ca{O}_L}B^L_{\underline{r}}$-submodules of $B^{L''}_{\underline{r}''}[1/p]$. Therefore, 
	\begin{align}
		\ca{O}_{L''}\otimes_{\ca{O}_{L'}}D^{L'/L}_{\underline{r}'/\underline{r}}=(L''\otimes_{\ca{O}_{L'}}D^{L'/L}_{\underline{r}'/\underline{r}})\cap D^{L''/L}_{\underline{r}''/\underline{r}}\supseteq B^{L'}_{\underline{r}'}[1/p]\cap D^{L''/L}_{\underline{r}''/\underline{r}}.
	\end{align}
	It remains to check that $B^{L'}_{\underline{r}'}[1/p]\cap (\ca{O}_{L''}\otimes_{\ca{O}_{L'}}D^{L'/L}_{\underline{r}'/\underline{r}})=D^{L'/L}_{\underline{r}'/\underline{r}}$. As $B^{L'}_{\underline{r}'}[1/p]=D^{L'/L}_{\underline{r}'/\underline{r}}[1/p]$, restricting to the coefficient of each $\prod_{i=1}^d t_{i,r_i'}^{k_i}$ by \eqref{eq:lem:B-structure}, we reduce to show that $(L'\otimes_{\ca{O}_L}B^L_{\underline{r}})\cap (\ca{O}_{L''}\otimes_{\ca{O}_L}B^L_{\underline{r}})=\ca{O}_{L'}\otimes_{\ca{O}_L}B^L_{\underline{r}}$ as submodules of $L''\otimes_{\ca{O}_L}B^L_{\underline{r}}$. This follows from the identity $L'\cap \ca{O}_{L''}=\ca{O}_{L'}$ and the flatness of $B^L_{\underline{r}}$ over $\ca{O}_L$.
\end{proof}

\begin{myprop}\label{prop:B-p-iso}
	Let $F\in\scr{F}_{\overline{K}/K}$ be a pre-perfectoid field. Then, there exists $k_0\in\bb{N}$ such that for any $L\in\scr{F}^{\mrm{fini}}_{F/K}$ and $\underline{r}\in\bb{N}^d_{>0}$, if $\ca{L}^F_{\underline{\infty}}=\ca{L}^L_{\underline{r}}\otimes_{\ca{K}^L_{\underline{r}}}\ca{K}^F_{\underline{\infty}}$, then the natural map
	\begin{align}\label{eq:prop:B-p-iso}
		B^L_{\underline{r}}\otimes_{A^L_{\underline{r}}}A^{L'}_{\underline{r}'}\longrightarrow B^{L'}_{\underline{r}'}
	\end{align}
	is a $p^{k_0}$-isomorphism for any $L'\in \scr{F}^{\mrm{fini}}_{F/L}$ and $\underline{r}'\in\bb{N}^d_{>0}\cdot\underline{r}$. 
\end{myprop}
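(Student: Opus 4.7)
The plan is to factor the natural map $\phi \colon B^L_{\underline{r}} \otimes_{A^L_{\underline{r}}} A^{L'}_{\underline{r}'} \to B^{L'}_{\underline{r}'}$ through the intermediate module $D^{L'/L}_{\underline{r}'/\underline{r}}$ introduced in Lemma \ref{lem:B-structure} and then extract a uniform bound on $\cok(D^{L'/L}_{\underline{r}'/\underline{r}} \hookrightarrow B^{L'}_{\underline{r}'})$ by descending from the infinite level $(F,\underline{\infty})$ via the pre-perfectoid property of $A\to B$. First, I would observe that the hypothesis $\ca{L}^F_{\underline{\infty}}=\ca{L}^L_{\underline{r}}\otimes_{\ca{K}^L_{\underline{r}}}\ca{K}^F_{\underline{\infty}}$ means that the minimal polynomial over $\ca{K}^L_{\underline{r}}$ of a primitive element of $\ca{L}^L_{\underline{r}}$ remains irreducible over $\ca{K}^F_{\underline{\infty}}$, and hence over every intermediate field $\ca{K}^{L''}_{\underline{r}''}$. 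Thus $\ca{L}^{L''}_{\underline{r}''}=\ca{L}^L_{\underline{r}}\otimes_{\ca{K}^L_{\underline{r}}}\ca{K}^{L''}_{\underline{r}''}$ for every pair $(L,\underline{r})\leq(L'',\underline{r}'')\leq(F,\underline{\infty})$, so Lemma \ref{lem:B-structure} applies throughout the tower.

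Next, Proposition \ref{prop:X-tower-str} yields a constant $k_0\in\bb{N}$ depending only on the chart of $B$ such that, setting $C^{L'/L}_{\underline{r}'/\underline{r}}=\bigoplus_{\underline{k}\in I}\ca{O}_{L'}\otimes_{\ca{O}_L}A^L_{\underline{r}}\cdot\prod_it_{i,r_i'}^{k_i}$, the inclusion $C^{L'/L}_{\underline{r}'/\underline{r}}\hookrightarrow A^{L'}_{\underline{r}'}$ is a $p^{k_0}$-isomorphism. Tensoring with $B^L_{\underline{r}}$ over $A^L_{\underline{r}}$ and invoking Remark \ref{rem:pi-iso-retract}.(\ref{item:rem:pi-iso-retract-1}), the induced map $D^{L'/L}_{\underline{r}'/\underline{r}}=B^L_{\underline{r}}\otimes_{A^L_{\underline{r}}}C^{L'/L}_{\underline{r}'/\underline{r}}\to B^L_{\underline{r}}\otimes_{A^L_{\underline{r}}}A^{L'}_{\underline{r}'}$ is a $p^{2k_0}$-isomorphism. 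The composition with the tautological inclusion $D^{L'/L}_{\underline{r}'/\underline{r}}\hookrightarrow B^{L'}_{\underline{r}'}$ is a factorization of $\phi$ through $D^{L'/L}_{\underline{r}'/\underline{r}}$, so it suffices to bound $\cok(D^{L'/L}_{\underline{r}'/\underline{r}}\hookrightarrow B^{L'}_{\underline{r}'})$ uniformly, after which a standard diagram chase using the $p^{2k_0}$-isomorphism gives $\ker(\phi)$ killed by $p^{2k_0}$ (using that both sides are $p$-torsion free where relevant).

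For the uniform cokernel bound, I would apply the pre-perfectoid property of $A^L_{\underline{r}}\to B^L_{\underline{r}}$ (which holds by Proposition \ref{prop:B-adequate}.(\ref{item:prop:B-adequate-1}) and Lemma \ref{lem:pre-perfd-basic}.(\ref{item:lem:pre-perfd-basic-2})) at the field $\ca{K}^F_{\underline{\infty}}$; this field lies in $\scr{P}_{A^L_{\underline{r}}}$ because $\ca{O}_F$ is pre-perfectoid and $A^F_{\underline{\infty}}$ is almost pre-perfectoid over $\ca{O}_F$ by the argument of Lemma \ref{lem:B-perfd} applied to $A^L_{\underline{r}}$ in place of $B$. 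This yields $k_1\in\bb{N}$ such that $p^{k_1}$ kills $\cok(B^L_{\underline{r}}\otimes_{A^L_{\underline{r}}}A^F_{\underline{\infty}}\to B^F_{\underline{\infty}})$. Writing both sides as filtered colimits over pairs $(L'',\underline{r}'')\geq(L',\underline{r}')$, for each $x\in B^{L'}_{\underline{r}'}\subseteq B^F_{\underline{\infty}}$ there exists such a pair with $p^{k_1}x\in\im(B^L_{\underline{r}}\otimes_{A^L_{\underline{r}}}A^{L''}_{\underline{r}''}\to B^{L''}_{\underline{r}''})$; combined with the $p^{2k_0}$-isomorphism at level $(L'',\underline{r}'')$, this gives $p^{k_1+2k_0}x\in D^{L''/L}_{\underline{r}''/\underline{r}}$. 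The crucial descent step is then the intersection identity $D^{L'/L}_{\underline{r}'/\underline{r}}=B^{L'}_{\underline{r}'}[1/p]\cap D^{L''/L}_{\underline{r}''/\underline{r}}$ from the ``moreover'' part of Lemma \ref{lem:B-structure}, which forces $p^{k_1+2k_0}x\in D^{L'/L}_{\underline{r}'/\underline{r}}$. This is the main obstacle of the argument: extracting a finite-level membership from a limit-level one, and it is precisely this intersection identity that enables the uniform bound. Taking $k_0$ in the statement to be $k_1+2k_0$ then completes the proof.
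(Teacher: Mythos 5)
Your overall strategy is the same as the paper's: factor through $D^{L'/L}_{\underline{r}'/\underline{r}}$, bound the discrepancy against $B^L_{\underline{r}}\otimes_{A^L_{\underline{r}}}A^{L'}_{\underline{r}'}$ via \ref{prop:X-tower-str}, use the pre-perfectoid cokernel bound at infinite level, and descend to finite level via the intersection identity in \ref{lem:B-structure}. However, there is a genuine gap in the uniformity of your constant. You obtain $k_1$ by applying the pre-perfectoid property of $A^L_{\underline{r}}\to B^L_{\underline{r}}$ at $\ca{K}^F_{\underline{\infty}}$, which a priori depends on the pair $(L,\underline{r})$. The statement requires a single $k_0$ working for \emph{all} admissible $(L,\underline{r})$, so your final bound $k_1+2k_0$ is not legitimate as written. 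The paper avoids this by applying the pre-perfectoid property to the fixed map $A\to B$ at $\ca{K}^F_{\underline{\infty}}$, yielding a uniform bound on $\cok(B\otimes_A A^F_{\underline{\infty}}\to B^F_{\underline{\infty}})$, and then observing that this cokernel surjects onto $\cok(B^L_{\underline{r}}\otimes_{A^L_{\underline{r}}}A^F_{\underline{\infty}}\to B^F_{\underline{\infty}})$ (since the latter map factors through the former). Inserting that one observation repairs your argument; otherwise it follows the paper closely.

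A secondary, cosmetic issue: you describe the map $C^{L'/L}_{\underline{r}'/\underline{r}}\to A^{L'}_{\underline{r}'}$ as an inclusion, but \ref{prop:X-tower-str} only gives the sandwich $p^{k_0}A^{L'}_{\underline{r}'}\subseteq C^{L'/L}_{\underline{r}'/\underline{r}}\subseteq p^{-k_0}A^{L'}_{\underline{r}'}$; $C$ is not contained in $A^{L'}_{\underline{r}'}$. The paper handles this by a zigzag $A^{L'}_{\underline{r}'}\leftarrow p^{k_0}A^{L'}_{\underline{r}'}\to C^{L'/L}_{\underline{r}'/\underline{r}}$ of genuine inclusions and applies \ref{rem:pi-iso-retract} to each; you should do the same.
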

\begin{proof}
	The assumption $\ca{L}^F_{\underline{\infty}}=\ca{L}^L_{\underline{r}}\otimes_{\ca{K}^L_{\underline{r}}}\ca{K}^F_{\underline{\infty}}$ implies that $\ca{L}^{L'}_{\underline{r}'}=\ca{L}^L_{\underline{r}}\otimes_{\ca{K}^L_{\underline{r}}}\ca{K}^{L'}_{\underline{r}'}$ (i.e. $[\ca{L}^{L'}_{\underline{r}'}:\ca{L}^L_{\underline{r}}]=[\ca{K}^{L'}_{\underline{r}'}:\ca{K}^L_{\underline{r}}]$) for any $L'\in \scr{F}^{\mrm{fini}}_{F/L}$ and $\underline{r}'\in\bb{N}^d_{>0}\cdot\underline{r}$. We take again the notation in \ref{lem:B-structure} and its proof, and we put $D^{F/L}_{\underline{\infty}/\underline{r}}=\colim_{L',\underline{r}'} D^{L'/L}_{\underline{r}'/\underline{r}}$ as $B^L_{\underline{r}}$-subalgebra of $B^F_{\underline{\infty}}[1/p]$. By \ref{lem:B-structure}, we have
	\begin{align}\label{eq:prop:B-p-iso-1}
		D^{L'/L}_{\underline{r}'/\underline{r}}=B^{L'}_{\underline{r}'}[1/p]\cap D^{F/L}_{\underline{\infty}/\underline{r}}.
	\end{align}
	On the other hand, since $A\to B$ is pre-perfectoid and $\ca{K}^F_{\underline{\infty}}\in\scr{P}_A$ (cf. \ref{defn:bound}) by \ref{lem:B-perfd}, there exists $k_0\in\bb{N}$ such that 
	\begin{align}\label{eq:prop:B-p-iso-2}
		p^{k_0}\cok(B\otimes_A A^F_{\underline{\infty}}\to B^F_{\underline{\infty}})=0.
	\end{align}
	After enlarging $k_0$ (independently of $L,L'$) by \ref{lem:A-X} and \ref{prop:X-tower-str}, we may also assume that 
	\begin{align}\label{eq:prop:B-p-iso-3}
		p^{k_0}A^{L'}_{\underline{r}'}\subseteq C^{L'/L}_{\underline{r}'/\underline{r}}\subseteq p^{-k_0}A^{L'}_{\underline{r}'}.
	\end{align}
	Applying the functor $B^L_{\underline{r}}\otimes_{A^L_{\underline{r}}}-$, we see that the natural maps
	\begin{align}\label{eq:prop:B-p-iso-4}
		B^L_{\underline{r}}\otimes_{A^L_{\underline{r}}}A^{L'}_{\underline{r}'}\longleftarrow B^L_{\underline{r}}\otimes_{A^L_{\underline{r}}}p^{k_0}A^{L'}_{\underline{r}'}\longrightarrow D^{L'/L}_{\underline{r}'/\underline{r}}
	\end{align}
	are $p^{4k_0}$-isomorphisms by \ref{rem:pi-iso-retract}.(\ref{item:rem:pi-iso-retract-1}). As $D^{L'/L}_{\underline{r}'/\underline{r}}\subseteq B^{L'}_{\underline{r}'}[1/p]$, we deduce that the kernel of \eqref{eq:prop:B-p-iso} is $p^{k_0}$-zero after replacing $k_0$ by a constant multiple. On the other hand, as $k_0$ is independent of $L,L'$, after replacing it by a constant multiple, we deduce from \eqref{eq:prop:B-p-iso-2} and \eqref{eq:prop:B-p-iso-4} that $p^{k_0}B^F_{\underline{\infty}}\subseteq D^{F/L}_{\underline{\infty}/\underline{r}}$. Thus, $p^{k_0}B^{L'}_{\underline{r}'}\subseteq B^{L'}_{\underline{r}'}[1/p]\cap D^{F/L}_{\underline{\infty}/\underline{r}}=D^{L'/L}_{\underline{r}'/\underline{r}}$ by \eqref{eq:prop:B-p-iso-1}, which implies by \eqref{eq:prop:B-p-iso-4} that the cokernel of \eqref{eq:prop:B-p-iso} is $p^{k_0}$-zero after replacing $k_0$ by a constant multiple.
\end{proof}

\begin{mycor}\label{cor:B-cotangent-bound}
	Let $F\in\scr{F}_{\overline{K}/K}$ be a pre-perfectoid field, $L\in\scr{F}^{\mrm{fini}}_{F/K}$, $\underline{r}\in\bb{N}^d_{>0}$. Assume that $\ca{L}^F_{\underline{\infty}}=\ca{L}^L_{\underline{r}}\otimes_{\ca{K}^L_{\underline{r}}}\ca{K}^F_{\underline{\infty}}$. Then, there exists $k_0\in\bb{N}$ such that the truncated cotangent complex $\tau_{\leq 1}\mrm{L}_{B^{L'}_{\underline{r}'}/A^{L'}_{\underline{r}'}}$ is $p^{k_0}$-exact for any $L'\in \scr{F}^{\mrm{fini}}_{F/L}$ and $\underline{r}'\in\bb{N}^d_{>0}\cdot\underline{r}$.
\end{mycor}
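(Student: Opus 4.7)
The plan is to assemble the corollary by chaining three known bounds: the cotangent complex over the initial quasi-adequate chart is bounded by Lemma~\ref{lem:quasi-adequate-diff}; this bound propagates along a flat base change via Corollary~\ref{cor:cotangent-bc}; and the uniform $p^{k_0}$-isomorphism of Proposition~\ref{prop:B-p-iso} lets us transfer to $B^{L'}_{\underline{r}'}$ using Proposition~\ref{prop:cotangent-bound}. Finally the transitivity of $p$-exactness for cotangent complexes (Lemma~\ref{lem:cotangent-bound-seq}) combines the two pieces.

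More concretely, first I would apply Lemma~\ref{lem:quasi-adequate-diff} to the quasi-adequate $\ca{O}_L$-algebra $B^L_{\underline{r}}$ together with its chart $(\alpha^L,\beta^L_{\underline{r}},\gamma^L_{\underline{r}})$ provided by Proposition~\ref{prop:B-adequate}.(\ref{item:prop:B-adequate-1}): this yields some integer $k_1\in\bb{N}$, depending only on the initial data $(L,\underline{r})$ (not on $L',\underline{r}'$), such that $\tau_{\leq 1}\mrm{L}_{B^L_{\underline{r}}/A^L_{\underline{r}}}$ is $p^{k_1}$-exact. Setting $C=B^L_{\underline{r}}\otimes_{A^L_{\underline{r}}}A^{L'}_{\underline{r}'}$, Corollary~\ref{cor:cotangent-bc} applied to the base change along $A^L_{\underline{r}}\to A^{L'}_{\underline{r}'}$ then shows that $\tau_{\leq 1}\mrm{L}_{C/A^{L'}_{\underline{r}'}}$ is $p^{2k_1}$-exact, and this bound is again uniform in $(L',\underline{r}')$.

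Next I invoke Proposition~\ref{prop:B-p-iso}, whose hypotheses are satisfied by our assumption $\ca{L}^F_{\underline{\infty}}=\ca{L}^L_{\underline{r}}\otimes_{\ca{K}^L_{\underline{r}}}\ca{K}^F_{\underline{\infty}}$: this gives $k_0\in\bb{N}$, uniform in $(L',\underline{r}')$, such that the canonical map $C\to B^{L'}_{\underline{r}'}$ is a $p^{k_0}$-isomorphism. Proposition~\ref{prop:cotangent-bound} applied with $\pi=p^{k_0}$ then yields $p^{102k_0}$-exactness of $\tau_{\leq 1}\mrm{L}_{B^{L'}_{\underline{r}'}/C}$. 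Finally, Lemma~\ref{lem:cotangent-bound-seq} applied to the tower $A^{L'}_{\underline{r}'}\to C\to B^{L'}_{\underline{r}'}$ with the common exponent $\pi=p^{\max(2k_1,\,102k_0)}$ shows that $\tau_{\leq 1}\mrm{L}_{B^{L'}_{\underline{r}'}/A^{L'}_{\underline{r}'}}$ is $\pi^3$-exact, so one may take $k_0^{\mrm{new}}=3\max(2k_1,\,102k_0)$, which is uniform in $L'$ and $\underline{r}'$.

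The proof is essentially a bookkeeping of uniform exponents; the only substantive input is the uniform $p^{k_0}$-isomorphism of Proposition~\ref{prop:B-p-iso}, which has already been proved, so no real obstacle remains. The mildly delicate point is to ensure that each of the exponents produced along the way really depends only on $(L,\underline{r})$ and not on $(L',\underline{r}')$, which is immediate for the base change estimate and is exactly what Proposition~\ref{prop:B-p-iso} guarantees for the transfer step.
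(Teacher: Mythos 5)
Your proposal follows the paper's proof step by step: same intermediate ring $C=B^L_{\underline{r}}\otimes_{A^L_{\underline{r}}}A^{L'}_{\underline{r}'}$, same chain of lemmas (\ref{lem:quasi-adequate-diff} via \ref{prop:B-adequate} for the base, \ref{cor:cotangent-bc} for the base change, \ref{prop:B-p-iso} and \ref{prop:cotangent-bound} for the $p$-isomorphism step, \ref{lem:cotangent-bound-seq} to splice), and the same final exponent $3\max\{2k_1,102k_0\}$. One minor imprecision: you describe the step using \ref{cor:cotangent-bc} as propagating "along a flat base change," but $A^L_{\underline{r}}\to A^{L'}_{\underline{r}'}$ need not be flat, and that corollary does not require flatness — your actual execution is fine, just drop the word "flat."
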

\begin{proof}
	We set $C=B^L_{\underline{r}}\otimes_{A^L_{\underline{r}}}A^{L'}_{\underline{r}'}$.
	\begin{align}
		\xymatrix{
			B^{L'}_{\underline{r}'}&C\ar[l]&A^{L'}_{\underline{r}'}\ar[l]\\
			&B^L_{\underline{r}}\ar[u]&A^L_{\underline{r}}\ar[l]\ar[u]
		}
	\end{align}
	We take $k_1\in\bb{N}$ such that $\tau_{\leq 1}\mrm{L}_{B^L_{\underline{r}}/A^L_{\underline{r}}}$ is $p^{k_1}$-exact by \ref{lem:quasi-adequate-diff} and \ref{prop:B-adequate}.(\ref{item:prop:B-adequate-1}). Thus, $\tau_{\leq 1}\mrm{L}_{C/A^{L'}_{\underline{r}'}}$ is $p^{2k_1}$-exact by \ref{cor:cotangent-bc}. Since $C\to B^{L'}_{\underline{r}'}$ is a $p^{k_0}$-isomorphism by \ref{prop:B-p-iso}, $\tau_{\leq 1}\mrm{L}_{B^{L'}_{\underline{r}'}/C}$ is $p^{102k_0}$-exact by \ref{prop:cotangent-bound}. After replacing $k_0$ by $3\max\{2k_1,102k_0\}$, we see that $\tau_{\leq 1}\mrm{L}_{B^{L'}_{\underline{r}'}/A^{L'}_{\underline{r}'}}$ is $p^{k_0}$-exact for any $L'\in \scr{F}^{\mrm{fini}}_{F/L}$ and $\underline{r}'\in\bb{N}^d_{>0}\cdot\underline{r}$ by \ref{lem:cotangent-bound-seq}.
\end{proof}

\begin{mycor}\label{cor:B-diff-compare}
	Under the assumptions in {\rm\ref{cor:B-cotangent-bound}} and with the same notation, let $L'\in \scr{F}^{\mrm{fini}}_{F/L}$ and $\underline{r}'\in\bb{N}^d_{>0}\cdot\underline{r}$.
	\begin{enumerate}
		\renewcommand{\labelenumi}{{\rm(\theenumi)}}
		\item The canonical morphism
		\begin{align}
			B^{L'}_{\underline{r}'}\otimes_{A^{L'}_{\underline{r}'}}\Omega^1_{X^{L'}_{\underline{r}'}/S^{L'}}&\longrightarrow \Omega^1_{Y^{L'}_{\underline{r}'}/S^{L'}}\label{eq:cor:B-diff-compare-1}
		\end{align}
		is a $p^{k_0}$-isomorphism.\label{item:cor:B-diff-compare-1}
		\item Let $L''\in \scr{F}^{\mrm{fini}}_{F/L'}$ and $\underline{r}''\in\bb{N}^d_{>0}\cdot\underline{r}'$. Then, the canonical morphism
		\begin{align}
			B^{L''}_{\underline{r}''}\otimes_{A^{L''}_{\underline{r}''}}\Omega^1_{X^{L''}_{\underline{r}''}/X^{L'}_{\underline{r}'}}&\longrightarrow \Omega^1_{Y^{L''}_{\underline{r}''}/Y^{L'}_{\underline{r}'}}\label{eq:cor:B-diff-compare-2}
		\end{align}
		is a $p^{2k_0}$-isomorphism.\label{item:cor:B-diff-compare-2}
	\end{enumerate}
\end{mycor}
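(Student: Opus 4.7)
The plan is to deduce both parts from the fundamental distinguished triangle of logarithmic cotangent complexes (Gabber--Olsson), combined with the $p^{k_0}$-exactness of $\tau_{\leq 1}\dl_{B^{L'}_{\underline{r}'}/A^{L'}_{\underline{r}'}}$ supplied by {\rm\ref{cor:B-cotangent-bound}}. The key point is that, since both $Y^{L'}_{\underline{r}'}\to X^{L'}_{\underline{r}'}$ and $Y^{L''}_{\underline{r}''}\to X^{L''}_{\underline{r}''}$ are strict by construction (cf. {\rm\ref{para:notation-quasi-adequate-tower}}), their logarithmic cotangent complexes agree with the ordinary cotangent complexes $\dl_{B^{L'}_{\underline{r}'}/A^{L'}_{\underline{r}'}}$ and $\dl_{B^{L''}_{\underline{r}''}/A^{L''}_{\underline{r}''}}$. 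After enlarging $k_0$ if necessary, {\rm\ref{cor:B-cotangent-bound}} then guarantees that the canonical truncations in degrees $\leq 1$ of both are $p^{k_0}$-exact.

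For (\ref{item:cor:B-diff-compare-1}), I apply the log fundamental triangle to $Y^{L'}_{\underline{r}'}\to X^{L'}_{\underline{r}'}\to S^{L'}$:
\begin{align*}
\dl^{\log}_{X^{L'}_{\underline{r}'}/S^{L'}}\otimes^{\dl}_{A^{L'}_{\underline{r}'}} B^{L'}_{\underline{r}'}\longrightarrow \dl^{\log}_{Y^{L'}_{\underline{r}'}/S^{L'}}\longrightarrow \dl^{\log}_{Y^{L'}_{\underline{r}'}/X^{L'}_{\underline{r}'}}.
\end{align*}
The associated long exact sequence in homological degrees $0$ and $1$ shows that the cokernel of \eqref{eq:cor:B-diff-compare-1} is $H_0(\dl^{\log}_{Y^{L'}_{\underline{r}'}/X^{L'}_{\underline{r}'}})=\Omega^1_{B^{L'}_{\underline{r}'}/A^{L'}_{\underline{r}'}}$ and its kernel is a quotient of $H_1(\dl^{\log}_{Y^{L'}_{\underline{r}'}/X^{L'}_{\underline{r}'}})=H_1(\dl_{B^{L'}_{\underline{r}'}/A^{L'}_{\underline{r}'}})$. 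Both are killed by $p^{k_0}$ by the preceding observation, which gives the $p^{k_0}$-isomorphism.

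For (\ref{item:cor:B-diff-compare-2}), I factor the composite $Y^{L''}_{\underline{r}''}\to X^{L'}_{\underline{r}'}$ through $X^{L''}_{\underline{r}''}$ and through $Y^{L'}_{\underline{r}'}$, producing two log fundamental triangles whose third terms are $\dl^{\log}_{Y^{L''}_{\underline{r}''}/X^{L''}_{\underline{r}''}}$ and $\dl^{\log}_{Y^{L''}_{\underline{r}''}/Y^{L'}_{\underline{r}'}}$, respectively. The first of these is an ordinary cotangent complex whose $\tau_{\leq 1}$ is $p^{k_0}$-exact, so chasing its long exact sequence in $H_0$ identifies
\begin{align*}
B^{L''}_{\underline{r}''}\otimes_{A^{L''}_{\underline{r}''}}\Omega^1_{X^{L''}_{\underline{r}''}/X^{L'}_{\underline{r}'}}\longrightarrow H_0(\dl^{\log}_{Y^{L''}_{\underline{r}''}/X^{L'}_{\underline{r}'}})
\end{align*}
as a $p^{k_0}$-isomorphism. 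In the second triangle, $H_0(\dl^{\log}_{Y^{L'}_{\underline{r}'}/X^{L'}_{\underline{r}'}}|_{Y^{L''}_{\underline{r}''}})=B^{L''}_{\underline{r}''}\otimes_{B^{L'}_{\underline{r}'}}\Omega^1_{B^{L'}_{\underline{r}'}/A^{L'}_{\underline{r}'}}$ is itself $p^{k_0}$-zero, so the surjection $H_0(\dl^{\log}_{Y^{L''}_{\underline{r}''}/X^{L'}_{\underline{r}'}})\to \Omega^1_{Y^{L''}_{\underline{r}''}/Y^{L'}_{\underline{r}'}}$ has $p^{k_0}$-zero kernel, and is again a $p^{k_0}$-isomorphism. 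Composing the two yields the claimed $p^{2k_0}$-isomorphism \eqref{eq:cor:B-diff-compare-2}.

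The main obstacle is conceptual rather than computational: one must invoke the logarithmic version of the fundamental distinguished triangle of cotangent complexes in the fs log setting (which is standard but non-trivial), together with the compatibility between log and ordinary cotangent complexes for strict morphisms. Once these tools are in place, the rest of the argument is a careful bookkeeping exercise with the long exact sequences, in which the only numerical input is the uniform bound $k_0$ from {\rm\ref{cor:B-cotangent-bound}}.
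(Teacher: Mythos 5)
Your proof is correct and follows essentially the same route as the paper: both arguments invoke the Gabber--Olsson fundamental distinguished triangle for log cotangent complexes, use strictness of $Y^{L''}_{\underline{r}''}\to X^{L''}_{\underline{r}''}$ and $Y^{L'}_{\underline{r}'}\to X^{L'}_{\underline{r}'}$ to identify the third term with the ordinary cotangent complex, and then feed in the uniform $p^{k_0}$-exactness from \ref{cor:B-cotangent-bound}. For part (2), the two factorizations you use and the resulting composition $\alpha\circ\beta$ of two $p^{k_0}$-isomorphisms match the paper's proof exactly; the only superfluous step is your ``after enlarging $k_0$ if necessary,'' which is unnecessary since \ref{cor:B-cotangent-bound} already gives a single constant valid for all $L',L''\in\scr{F}^{\mrm{fini}}_{F/L}$ and all $\underline{r}',\underline{r}''$ divisible by $\underline{r}$.
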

\begin{proof}
	(\ref{item:cor:B-diff-compare-1}) The fundamental distinguished triangle of logarithmic cotangent complexes defined by Gabber (\cite[8.29]{olsson2005logcot}) gives a canonical exact sequence
	\begin{align}
		H_1(\dl_{Y^{L'}_{\underline{r}'}/X^{L'}_{\underline{r}'}})\longrightarrow B^{L'}_{\underline{r}'}\otimes_{A^{L'}_{\underline{r}'}}\Omega^1_{X^{L'}_{\underline{r}'}/S^{L'}}\stackrel{\alpha}{\longrightarrow} \Omega^1_{Y^{L'}_{\underline{r}'}/S^{L'}}\longrightarrow \Omega^1_{Y^{L'}_{\underline{r}'}/X^{L'}_{\underline{r}'}}\longrightarrow 0.
	\end{align}
	Since $Y^{L'}_{\underline{r}'}\to X^{L'}_{\underline{r}'}$ is strict, its logarithmic cotangent complex is quasi-isomorphic to the cotangent complex $\dl_{B^{L'}_{\underline{r}'}/A^{L'}_{\underline{r}'}}$ (\cite[8.22]{olsson2005logcot}). Thus, $\alpha$ is a $p^{k_0}$-isomorphism by \ref{cor:B-cotangent-bound}.
	
	(\ref{item:cor:B-diff-compare-2}) The proof is similar to that of (\ref{item:cor:B-diff-compare-1}). There are canonical exact sequences
	\begin{align}
		B^{L''}_{\underline{r}''}\otimes_{B^{L'}_{\underline{r}'}}\Omega^1_{Y^{L'}_{\underline{r}'}/X^{L'}_{\underline{r}'}}\longrightarrow &\Omega^1_{Y^{L''}_{\underline{r}''}/X^{L'}_{\underline{r}'}}\stackrel{\alpha}{\longrightarrow} \Omega^1_{Y^{L''}_{\underline{r}''}/Y^{L'}_{\underline{r}'}}\longrightarrow 0,\\
		H_1(\dl_{Y^{L''}_{\underline{r}''}/X^{L''}_{\underline{r}''}})\longrightarrow B^{L''}_{\underline{r}''}\otimes_{A^{L''}_{\underline{r}''}}\Omega^1_{X^{L''}_{\underline{r}''}/X^{L'}_{\underline{r}'}}\stackrel{\beta}{\longrightarrow} &\Omega^1_{Y^{L''}_{\underline{r}''}/X^{L'}_{\underline{r}'}}\longrightarrow \Omega^1_{Y^{L''}_{\underline{r}''}/X^{L''}_{\underline{r}''}}\longrightarrow 0.
	\end{align}
	By \ref{cor:B-cotangent-bound}, we see that $\alpha$ and $\beta$ are $p^{k_0}$-isomorphisms, and thus $\alpha\circ\beta$ is a $p^{2k_0}$-isomorphism.
\end{proof}

\begin{mypara}\label{para:notation-Ybar}
	We start to compute some modules of log differentials and construct the Faltings extension of $B$. Since $\ca{L}$ is a finite extension of $\ca{K}$, there exists $L_0\in\ff{K}$ and $r_0\in\bb{N}_{>0}$ such that $\ca{L}\cap \ca{K}^{\overline{K}}_{\underline{\infty}}\subseteq \ca{K}^{L_0}_{\underline{r_0}}$. As $\ca{K}^{\overline{K}}_{\underline{\infty}}$ is a Galois extension of $\ca{K}$, we see by Galois theory that for any $L\in\scr{F}^{\mrm{fini}}_{\overline{K}/L_0}$ and $r\in\bb{N}_{>0}\cdot r_0$, 
	\begin{align}
		\ca{L}^{L}_{\underline{r}}=\ca{L}^{L_0}_{\underline{r_0}}\otimes_{\ca{K}^{L_0}_{\underline{r_0}}}\ca{K}^{L}_{\underline{r}}\quad\trm{(i.e. $[\ca{L}^{L}_{\underline{r}}:\ca{L}^{L_0}_{\underline{r_0}}]=[\ca{K}^{L}_{\underline{r}}:\ca{K}^{L_0}_{\underline{r_0}}]$)}.
	\end{align}
	Recall that the log structure that we put on $Y^{\overline{K}}_{\underline{\infty}}$ is the colimit of the inverse images of log structures on $Y^L_{\underline{r}}$ over $(L,\underline{r})\in \scr{F}^{\mrm{fini}}_{\overline{K}/K}\times \bb{N}^d_{>0}$ (cf. \cite[12.2.10]{gabber2004foundations}). In the rest of this section, we denote by $\overline{Y}$ the log scheme with underlying scheme $\spec(\overline{B})$ whose log structure is the inverse image of that of $Y^{\overline{K}}_{\underline{\infty}}$ via the map $\spec(\overline{B})\to \spec(B^{\overline{K}}_{\underline{\infty}})$. We denote by $\mbf{e}_0,\dots,\mbf{e}_d$ the standard basis of $\bb{Z}^{1+d}$. Recall that for any $1\leq i\leq d$ and $r\in\bb{N}_{>0}$, the element $t_{i,r}\in \overline{B}[1/p]$ is the image of $r^{-1}\mbf{e}_i\in P_{1,r,\eta}=\bb{Z}\oplus (r^{-1}\bb{Z})^c\oplus (r^{-1}\bb{N})^{d-c}$ via the chart \eqref{eq:B-chart}. For any morphism of log schemes $\overline{Y}\to Z$ over $S$, we denote by $\df\log(t_{i,r})\in \Gamma(\spec(\overline{B}),\Omega^1_{\overline{Y}/ Z})$ the image of the global section $r^{-1}\mbf{e}_i$ via the canonical map $\df\log: \ca{M}_{\overline{Y}}^{\mrm{gp}}\to \Omega^1_{\overline{Y}/ Z}$ (cf. \ref{para:log-diff}).
\end{mypara}

\begin{mylem}[Abhyankar's lemma]\label{lem:abhyankar}
	Let $\ca{L}'\in\scr{F}^{\mrm{fini}}_{\ca{L}_{\mrm{ur}}/\ca{L}}$, $B'$ the integral closure of $B$ in $\ca{L}'$. Then, there exists $r\in\bb{N}_{>0}$ such that $B'_{\underline{r}}[1/p]$ is finite \'etale over $B_{\underline{r}}[1/p]$, where $B'_{\underline{r}}$ is the integral closure of $B'$ in $\ca{L}'_{\underline{r}}=\ca{L}'\ca{L}_{\underline{r}}$. In particular, if we set $B'_{\underline{r},\triv}=B_{\triv}\otimes_BB'_{\underline{r}}$, then $(B'_{\underline{r},\triv},B'_{\underline{r}},\overline{B})$ is a quasi-adequate $\ca{O}_K$-algebra with a chart induced by the chart \eqref{eq:B-chart} of $(B_{\underline{r},\triv},B_{\underline{r}},\overline{B})$.
\end{mylem}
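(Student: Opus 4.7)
The plan is to prove the \'etaleness of $B'_{\underline{r}}[1/p]$ over $B_{\underline{r}}[1/p]$ for $\underline{r}=(r,\ldots,r)$ with a sufficiently divisible $r\in\bb{N}_{>0}$ by applying the classical Abhyankar lemma on the generic fibre, and then to read off the quasi-adequate structure from \ref{rem:quasi-adequate-exmp}.(2). The geometric input I would assemble first: by \ref{lem:quasi-adequate-regular}.(\ref{item:lem:quasi-adequate-regular-1}), $\spec(B[1/p])$ is regular and the coordinates $t_1,\dots,t_d$ cut out a strict normal crossings divisor $D=V(t_1\cdots t_d)$; by the definition of $\ca{L}_{\mrm{ur}}$ recalled in \ref{para:notation-Abar}, the localization $B_{\triv}\otimes_B B'=B'_{\triv}$ is finite \'etale over $B_{\triv}$, so $\spec(B'[1/p])\to\spec(B[1/p])$ is \'etale on the complement of $D$; at each generic point $\eta$ of $D$, the local ring of $B[1/p]$ is a discrete valuation ring of residue characteristic zero whose uniformizer is a unit multiple of one of the $t_i$, so the ramification of $B'[1/p]/B[1/p]$ at $\eta$ is tame, with index dividing $[\ca{L}':\ca{L}]$.

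Since $B[1/p]$ is Noetherian, $D$ has only finitely many generic points, so I would choose a single $r\in\bb{N}_{>0}$ divisible by all the corresponding tame ramification indices and set $\underline{r}=(r,\ldots,r)$. Applying the classical Abhyankar lemma (SGA~1, Expos\'e~XIII) locally at each generic point of $D$, adjoining $r$-th roots $t_{i,r}$ of each $t_i$ kills the tame ramification, so the normalization of $B[1/p]\otimes_{B[1/p]}B_{\underline{r}}[1/p]$ inside $\ca{L}'_{\underline{r}}$---which is $B'_{\underline{r}}[1/p]$---is \'etale over $B_{\underline{r}}[1/p]$ at each generic point of $V(t_{1,r}\cdots t_{d,r})$. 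Now $B_{\underline{r}}[1/p]$ is regular, being \'etale over $A_{\underline{r}}[1/p]\cong K[\bb{Z}^c\oplus\bb{N}^{d-c}]$ by \ref{prop:B-adequate}.(\ref{item:prop:B-adequate-1}), while $B'_{\underline{r}}[1/p]$ is normal as an integral closure in a field; Zariski--Nagata purity of the branch locus therefore promotes the codimension-one \'etaleness to global finite \'etaleness of $B'_{\underline{r}}[1/p]/B_{\underline{r}}[1/p]$.

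For the ``in particular'' part, \ref{prop:B-adequate}.(\ref{item:prop:B-adequate-1}) with $L=K$ gives the quasi-adequate chart \eqref{eq:B-chart} of $(B_{\underline{r},\triv},B_{\underline{r}},\overline{B})$ with monoid $P_{1,\underline{r}}$. By what was just proved, $B'_{\underline{r}}$ is the integral closure of $B_{\underline{r}}$ in the finite \'etale $B_{\underline{r}}[1/p]$-algebra $B'_{\underline{r}}[1/p]$, so case~(2) of \ref{rem:quasi-adequate-exmp} (via \ref{cor:bound-fet}) directly yields that $(B'_{\underline{r},\triv},B'_{\underline{r}},\overline{B})$ is quasi-adequate with the composed chart $P_{1,\underline{r}}\to B_{\underline{r}}\to B'_{\underline{r}}$; the required identity $B'_{\underline{r}}\otimes_{\bb{Z}[P_{1,\underline{r}}]}\bb{Z}[P_{1,\underline{r}}^{\mrm{gp}}]=B'_{\underline{r},\triv}$ follows from both sides equaling $B'_{\underline{r}}[1/(p\,t_1\cdots t_d)]$ via $t_i=t_{i,r}^r$. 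The main obstacle I anticipate is ensuring a single integer $r$ suffices uniformly across all generic points of $D$ and bridging local \'etaleness at codimension one with global \'etaleness; both are handled by Noetherian finiteness of the branch components and purity, with tameness at each $\eta$ coming for free from the characteristic-zero residue field on the generic fibre.
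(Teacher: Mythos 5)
Your proposal is correct, and conceptually it follows the same line as the paper's proof: exploit the strict normal crossings structure of $V(t_1\cdots t_d)$ on the regular scheme $\spec(B[1/p])$, observe that ramification along it is tame (residue characteristic zero on the generic fibre), and kill it by adjoining $r$-th roots. The implementation differs. The paper invokes the global form of Abhyankar's lemma (SGA~1, Expos\'e~XIII, 5.2) applied over $B\otimes_A A_{\underline{r}}[1/p]$ and then uses \ref{prop:B-adequate}.(\ref{item:prop:B-adequate-2}) to identify $\spec(B'_{\underline{r}}[1/p])$ as an open and closed subscheme, so the \'etaleness of $B'_{\underline{r}}[1/p]/B_{\underline{r}}[1/p]$ is read off directly. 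You instead apply the local (DVR) form of Abhyankar's lemma at each of the finitely many generic points of the SNC divisor, then bridge to global \'etaleness via Zariski--Nagata purity of the branch locus, using that $B_{\underline{r}}[1/p]$ is regular (\'etale over $A_{\underline{r}}[1/p]\cong K[\bb{Z}^c\oplus\bb{N}^{d-c}]$) and $B'_{\underline{r}}[1/p]$ normal. This is essentially re-deriving the global Abhyankar statement rather than citing it, and in exchange makes the local-to-global passage explicit; the paper's route is shorter but delegates the purity step to the SGA~1 reference. One small point of care in your argument: the branch locus of $B'_{\underline{r}}[1/p]/B_{\underline{r}}[1/p]$ is indeed contained in $V(t_{1,r}\cdots t_{d,r})$ because $B'_{\underline{r},\triv}$ is finite \'etale over $B_{\underline{r},\triv}$ by the definition of $\ca{L}_{\mrm{ur}}$ (not just $B'_{\triv}/B_{\triv}$), so your purity step is sound. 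The handling of the ``in particular'' part via \ref{rem:quasi-adequate-exmp}.(2) and \ref{cor:bound-fet} matches the paper, and the explicit check of $B'_{\underline{r}}\otimes_{\bb{Z}[P_{1,\underline{r}}]}\bb{Z}[P_{1,\underline{r}}^{\mrm{gp}}]=B'_{\underline{r},\triv}$ via $t_i=t_{i,r}^r$ is a useful addition.
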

\begin{proof}
	Recall that $t_1,\dots,t_d$ defines a strict normal crossings divisor on the regular scheme $\spec(B[1/p])$ by \ref{lem:quasi-adequate-regular}.(\ref{item:lem:quasi-adequate-regular-1}), and that for any $r\in\bb{N}_{>0}$, we have $A_{\underline{r}}[1/p]=A[1/p][T_1,\dots,T_d]/(T_1^r-t_1,\dots,T_d^r-t_d)$ by \eqref{eq:A-str}. By Abhyankar's lemma \cite[\Luoma{13}.5.2]{sga1}, there exists $r\in \bb{N}_{>0}$ such that the integral closure $C$ of $B\otimes_A A_{\underline{r}}[1/p]$ in $B'_{\triv}\otimes_A A_{\underline{r}}[1/p]$ is finite \'etale over $B\otimes_A A_{\underline{r}}[1/p]$. Notice that $\spec(B[1/p])$ is an open and closed subscheme of $\spec(B\otimes_A A_{\underline{r}}[1/p])$ by \ref{prop:B-adequate}.(\ref{item:prop:B-adequate-2}). By the same argument, we also see that $\spec(B'_{\underline{r}}[1/p])$ is an open and closed subscheme of $\spec(C)$. Thus, $B'_{\underline{r}}[1/p]$ is finite \'etale over $B_{\underline{r}}[1/p]$. The ``in particular'' part follows from \ref{rem:quasi-adequate-exmp}.
\end{proof}

\begin{myprop}\label{prop:B_nm-almost-purity}
	Let $F\in\scr{F}_{\overline{K}/K}$ be a pre-perfectoid field (e.g. $F=\overline{K}$). Then, for any $\ca{L}'\in\scr{F}^{\mrm{fini}}_{\ca{L}_{\mrm{ur}}/\ca{L}^F_{\underline{\infty}}}$, the integral closure $B'$ of $B$ in $\ca{L}'$ is almost finite \'etale over $B^F_{\underline{\infty}}$. In particular, the cotangent complex $\dl_{\overline{B}/B^F_{\underline{\infty}}}$ is almost zero.
\end{myprop}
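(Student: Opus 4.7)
The plan is to bootstrap from Abhyankar's lemma at a finite level of the Kummer tower to the almost purity theorem \ref{thm:almost-purity} over the almost pre-perfectoid base $B^F_{\underline{\infty}}$ guaranteed by \ref{lem:B-perfd}. The first step is a finite-level realization of $\ca{L}'/\ca{L}^F_{\underline{\infty}}$. Since the characteristic of $\ca{K}$ is zero, this extension is finite separable, so the primitive element theorem furnishes $\alpha\in\ca{L}_{\mrm{ur}}$ with minimal polynomial $g\in\ca{L}^F_{\underline{\infty}}[x]$ of degree $[\ca{L}':\ca{L}^F_{\underline{\infty}}]$. Because $\ca{L}^F_{\underline{\infty}}=\colim_{L,\underline{r}}\ca{L}^L_{\underline{r}}$ is a filtered colimit indexed by $\scr{F}^{\mrm{fini}}_{F/K}\times\bb{N}^d_{>0}$, after choosing $L,\underline{r}$ large enough so that $g\in\ca{L}^L_{\underline{r}}[x]$ (and, by enlarging further, so that $\ca{L}^F_{\underline{\infty}}=\ca{L}^L_{\underline{r}}\otimes_{\ca{K}^L_{\underline{r}}}\ca{K}^F_{\underline{\infty}}$ as in the discussion of \ref{para:notation-Ybar}), the polynomial $g$ remains irreducible over $\ca{L}^L_{\underline{r}}$, so $\ca{L}'':=\ca{L}^L_{\underline{r}}(\alpha)$ is a finite separable extension of $\ca{L}^L_{\underline{r}}$ inside $\ca{L}_{\mrm{ur}}$ with $\ca{L}''\otimes_{\ca{L}^L_{\underline{r}}}\ca{L}^F_{\underline{\infty}}=\ca{L}'$.

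Next, observing that $B^L_{\underline{r}}$ is itself a quasi-adequate $\ca{O}_L$-algebra by \ref{prop:B-adequate}.(\ref{item:prop:B-adequate-1}), I apply Abhyankar's lemma \ref{lem:abhyankar} to the pair $(B^L_{\underline{r}},\ca{L}'')$: there exists $\underline{r}'\in\bb{N}^d_{>0}\cdot\underline{r}$ such that if $B''$ denotes the integral closure of $B^L_{\underline{r}'}$ in $\ca{L}''\cdot\ca{L}^L_{\underline{r}'}$, then $B''[1/p]$ is finite \'etale over $B^L_{\underline{r}'}[1/p]$. Base changing along the flat inclusion $B^L_{\underline{r}'}[1/p]\hookrightarrow B^F_{\underline{\infty}}[1/p]$ produces a finite \'etale $B^F_{\underline{\infty}}[1/p]$-algebra $C=B^F_{\underline{\infty}}[1/p]\otimes_{B^L_{\underline{r}'}[1/p]}B''[1/p]$. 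A fraction-field chase using the hypothesis $\ca{L}^F_{\underline{\infty}}=\ca{L}^L_{\underline{r}'}\otimes_{\ca{K}^L_{\underline{r}'}}\ca{K}^F_{\underline{\infty}}$ identifies the total ring of fractions of $C$ with $\ca{L}^F_{\underline{\infty}}\otimes_{\ca{L}^L_{\underline{r}'}}(\ca{L}''\cdot\ca{L}^L_{\underline{r}'})=\ca{L}'$, which is a field. Since $C$ is finite \'etale over the normal domain $B^F_{\underline{\infty}}[1/p]$, it is a finite product of normal domains; being a domain with fraction field $\ca{L}'$, it is necessarily the integral closure of $B^F_{\underline{\infty}}[1/p]$ in $\ca{L}'$, and by transitivity of integral closure this equals $B'[1/p]$. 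Consequently $B'[1/p]$ is finite \'etale over $B^F_{\underline{\infty}}[1/p]$.

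At this point, almost purity \ref{thm:almost-purity} applies: the base $B^F_{\underline{\infty}}$ is an almost pre-perfectoid flat $\ca{O}_F$-algebra by \ref{lem:B-perfd}, and $B'$ is the integral closure of $B^F_{\underline{\infty}}$ in the finite \'etale $B^F_{\underline{\infty}}[1/p]$-algebra $B'[1/p]$. The theorem then directly yields that $B'$ is almost pre-perfectoid and almost finite \'etale over $B^F_{\underline{\infty}}$, proving the first assertion. For the ``in particular'' statement, write $\overline{B}=\colim_{\ca{L}'}B'$ as a filtered colimit indexed by $\ca{L}'\in\scr{F}^{\mrm{fini}}_{\ca{L}_{\mrm{ur}}/\ca{L}^F_{\underline{\infty}}}$; since the cotangent complex commutes with filtered colimits and each $\dl_{B'/B^F_{\underline{\infty}}}$ is almost zero (almost finite \'etale morphisms have almost zero cotangent complex), we conclude that $\dl_{\overline{B}/B^F_{\underline{\infty}}}$ is almost zero.

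The main obstacle is the finite-level realization and the verification that the base change $C$ is genuinely a domain identifiable with $B'[1/p]$: this rests on choosing $L,\underline{r}'$ large enough to simultaneously ensure that the minimal polynomial of $\alpha$ remains irreducible at finite level and that the tower identity $\ca{L}^F_{\underline{\infty}}=\ca{L}^L_{\underline{r}'}\otimes_{\ca{K}^L_{\underline{r}'}}\ca{K}^F_{\underline{\infty}}$ holds, so that fraction fields of tensor products behave correctly; once this bookkeeping is in place, the remaining steps are routine applications of Abhyankar's lemma, almost purity, and standard properties of cotangent complexes.
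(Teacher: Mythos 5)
Your proof is correct and follows the same route as the paper: Abhyankar's lemma at a finite level of the Kummer tower, passage to the colimit to show $B'[1/p]$ is finite \'etale over $B^F_{\underline{\infty}}[1/p]$, and then almost purity over the almost pre-perfectoid base $B^F_{\underline{\infty}}$. The only difference is that the paper compresses the descent-to-a-finite-level step into ``a limit argument (cf. \cite[8.21]{he2021coh})'', whereas you have spelled out the choice of a primitive element, its minimal polynomial descending to a finite stage of the tower, and the fraction-field bookkeeping needed to identify the base change with $B'[1/p]$.
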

\begin{proof}
	By Abhyankar's lemma \ref{lem:abhyankar} and a limit argument (cf. \cite[8.21]{he2021coh}), $B'[1/p]$ is finite \'etale over $B^F_{\underline{\infty}}[1/p]$. Since $B^F_{\underline{\infty}}$ is almost pre-perfectoid (\ref{lem:B-perfd}), $B'$ is almost finite \'etale over $B^F_{\underline{\infty}}$ by almost purity \ref{thm:almost-purity}. Thus, the cotangent complex $\dl_{B'/B^F_{\underline{\infty}}}$ is almost zero (\cite[2.5.37]{gabber2003almost}), and so is $\dl_{\overline{B}/B^F_{\underline{\infty}}}$ by taking filtered colimit.
\end{proof}

\begin{mylem}\label{lem:cycl-diff}
	Let $E'/E$ be an extension of discrete valuation fields, $Z$ (resp. $Z'$) the log scheme with underlying scheme $\spec(\ca{O}_{E})$ (resp. $\spec(\ca{O}_{E'})$) with log structure defined by the closed point. Then, the kernel of $\Omega^1_{\ca{O}_{E'}/\ca{O}_E}\to \Omega^1_{Z'/Z}$ is killed by a uniformizer of $E$, and its cokernel is killed by a uniformizer of $E'$.
\end{mylem}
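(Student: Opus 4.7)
The plan is to compute $\Omega^1_{Z'/Z}$ via an explicit chart of the morphism $Z'\to Z$ and then read off the kernel and cokernel of the natural map from $\Omega^1_{\ca{O}_{E'}/\ca{O}_E}$ by direct inspection of the presenting relations. First, pick uniformizers $\pi$ of $E$ and $\pi'$ of $E'$, and write $\pi=u\pi'^{e}$ with $u\in \ca{O}_{E'}^\times$ and $e=v_{E'}(\pi)\geq 1$. The compactifying log structure on $Z$ (resp.\ $Z'$) is given by the chart $\bb{N}\to \ca{O}_E,\ 1\mapsto\pi$ (resp.\ $\bb{N}\to \ca{O}_{E'},\ 1\mapsto\pi'$). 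To chart the morphism $Z'\to Z$, I would enlarge the chart on $Z'$ to $\bb{N}\oplus\bb{Z}\to\ca{O}_{E'}$, $(1,0)\mapsto \pi'$, $(0,1)\mapsto u$, together with the monoid map $\bb{N}\to\bb{N}\oplus\bb{Z},\ 1\mapsto(e,1)$, so that $1\in\bb{N}$ maps to $u\pi'^{e}=\pi$. This is the essential structural step: a naive $\bb{N}\to\bb{N}$ chart map would not accommodate the unit $u$.

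Applying the chart formula of~\ref{para:log-diff}, $\Omega^1_{Z'/Z}$ is the quotient of $\Omega^1_{\ca{O}_{E'}/\ca{O}_E}\oplus \ca{O}_{E'}\,\omega$ by the $\ca{O}_{E'}$-submodule $R$ generated by $(\df\pi',-\pi'\omega)$ and $(\df u,\,eu\,\omega)$, where $\omega$ is the generator $\overline{(1,0)}$ of $(\bb{Z}\oplus\bb{Z})/\bb{Z}(e,1)\cong \bb{Z}$ (with $\overline{(0,1)}=-e\omega$). Projecting to the $\ca{O}_{E'}\,\omega$ component identifies the cokernel of $\Omega^1_{\ca{O}_{E'}/\ca{O}_E}\to\Omega^1_{Z'/Z}$ with $\ca{O}_{E'}/(\pi',eu)=\ca{O}_{E'}/(\pi',e)$, visibly annihilated by $\pi'$.

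For the kernel, an element $m\in \Omega^1_{\ca{O}_{E'}/\ca{O}_E}$ lies there precisely when there exist $a,b\in\ca{O}_{E'}$ with $m=a\df\pi'+b\df u$ and $-a\pi'+beu=0$. The crucial input is the identity $\df\pi=0$ in $\Omega^1_{\ca{O}_{E'}/\ca{O}_E}$, which expands to $\pi'^{\,e-1}(\pi'\df u+eu\,\df\pi')=0$. Multiplying $m$ by $\pi=u\pi'^{e}$ and using the relation $a\pi'=beu$ to rewrite $au\pi'^{e}\df\pi'=beu^{2}\pi'^{\,e-1}\df\pi'$, a short calculation yields
\[
\pi m \;=\; bu\cdot \pi'^{\,e-1}\bigl(\pi'\df u+eu\,\df\pi'\bigr)\;=\;0,
\]
so the kernel is killed by $\pi$, a uniformizer of $E$.

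The only subtlety to guard against is the possible non-invertibility of $u$ or of $e$ in $\ca{O}_{E'}$; setting up the chart with the extra $\bb{Z}$-factor ensures that every manipulation stays inside $\ca{O}_{E'}$ and never requires dividing by $e$ or $\pi'$. Once the chart is chosen, the remainder is a bookkeeping exercise with two explicit relations and the vanishing of $\df\pi$, so the main obstacle is really only the conceptual one of picking the right chart.
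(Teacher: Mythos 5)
Your proof is correct and is essentially the paper's own argument: the paper uses the chart $\gamma:\bb{N}\to e^{-1}\bb{N}\oplus\bb{Z}$, $1\mapsto(1,1)$, which under the monoid isomorphism $e^{-1}\bb{N}\oplus\bb{Z}\cong\bb{N}\oplus\bb{Z}$ is exactly your $1\mapsto(e,1)$, and the subsequent presentation of $\Omega^1_{Z'/Z}$, the cokernel computation via the second projection, and the kernel bound from $\df\pi=0$ all match the paper's proof step for step.
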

\begin{proof}
	Let $\pi'$ (resp. $\pi$) be a uniformizer of $E'$ (resp. $E$). We write $\pi=u\pi'^e$ for some $u\in \ca{O}_{E'}^\times$ and $e\in\bb{N}$. Then, $Z'\to Z$ admits a chart $(\alpha:\bb{N}\to \ca{O}_E,\ \beta:e^{-1}\bb{N}\oplus \bb{Z}\to \ca{O}_{E'},\ \gamma:\bb{N}\to e^{-1}\bb{N}\oplus \bb{Z})$ where $\alpha(1)=\pi$, $\beta(e^{-1},0)=\pi'$, $\beta(0,1)=u$ and $\gamma(1)=(1,1)$. By \ref{para:log-diff}, there is an $\ca{O}_{E'}$-linear surjection
	\begin{align}\label{eq:lem:cycl-diff}
		\Omega^1_{\ca{O}_{E'}/\ca{O}_E}\oplus \ca{O}_{E'}\otimes (e^{-1}\bb{Z}\oplus \bb{Z})/\bb{Z}\longrightarrow \Omega^1_{Z'/Z}
	\end{align}
	whose kernel $M$ is generated by $(\df u,-u\otimes(0,1))$ and $(\df\pi',-\pi'\otimes(e^{-1},0))$. Thus, the cokernel of $\Omega^1_{\ca{O}_{E'}/\ca{O}_E}\to \Omega^1_{Z'/Z}$ is killed by $\pi'$. Let $\omega\in \Omega^1_{\ca{O}_{E'}/\ca{O}_E}\cap M$. We have $\omega=a\df u+b\df\pi'$ for some $a,b\in \ca{O}_{E'}$ such that $b\pi'=eau$. Since $0=\df\pi=\pi'^e\df u+eu\pi'^{e-1}\df\pi'$, we see that $\pi\omega=au(\pi'^e\df u+eu\pi'^{e-1}\df\pi')=0$.
\end{proof}

\begin{myprop}[{\cite[Th\'eor\`eme 1']{fontaine1982formes}}]\label{prop:cycl-diff}
	The $\ca{O}_{\overline{K}}$-linear homomorphism
	\begin{align}
		\overline{K}/\ca{O}_{\overline{K}}\longrightarrow \Omega^1_{S^{\overline{K}}/S},
	\end{align}
	sending $p^{-n}$ to $\df\log(\zeta_{p^n})$ for any $n\in \bb{N}$, is a $p^k$-isomorphism for some $k\in \bb{N}$.
\end{myprop}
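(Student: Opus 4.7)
The proof will combine Fontaine's classical computation of $\Omega^1_{\ca{O}_{\overline{K}}/\ca{O}_K}$ with the comparison between Kähler and logarithmic differentials provided by Lemma \ref{lem:cycl-diff}. My plan is to factor the given map through the Kähler differentials $\Omega^1_{\ca{O}_{\overline{K}}/\ca{O}_K}$, apply Fontaine's theorem to the Kähler piece, and handle the passage to log differentials via Lemma \ref{lem:cycl-diff} in a colimit.

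\textbf{Factorisation.} Since each $\zeta_{p^n}$ is a unit in $\ca{O}_{\overline{K}}$, the element $\df\log(\zeta_{p^n}) = \zeta_{p^n}^{-1}\df\zeta_{p^n}$ already lies in $\Omega^1_{\ca{O}_{\overline{K}}/\ca{O}_K}$. Differentiating $\zeta_{p^n}^{p^n}=1$ gives $p^n\df\log(\zeta_{p^n})=0$, while $\zeta_{p^{n+1}}^p=\zeta_{p^n}$ gives $p\df\log(\zeta_{p^{n+1}})=\df\log(\zeta_{p^n})$. Together these yield a well-defined $\ca{O}_{\overline{K}}$-linear homomorphism $\varphi\colon\overline{K}/\ca{O}_{\overline{K}}\to\Omega^1_{\ca{O}_{\overline{K}}/\ca{O}_K}$ sending $p^{-n}$ to $\df\log(\zeta_{p^n})$, and the map of the statement is the composition $\psi\circ\varphi$, where $\psi\colon\Omega^1_{\ca{O}_{\overline{K}}/\ca{O}_K}\to\Omega^1_{S^{\overline{K}}/S}$ is the canonical comparison.

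\textbf{The maps $\varphi$ and $\psi$ separately.} Fontaine's Théorème 1' in \cite{fontaine1982formes} asserts that $\varphi$ is a $p^{k_1}$-isomorphism for some $k_1\in\bb{N}$ depending only on $K$: Fontaine identifies $\Omega^1_{\ca{O}_{\overline{K}}/\ca{O}_K}$ with $\overline{K}/\mathfrak{a}_K$ (where $\mathfrak{a}_K^{-1}$ is the different ideal of $\overline{K}/K$) under which $\df\log(\zeta_{p^n})$ corresponds to $p^{-n}$ up to a unit, and the $p$-adic valuation of a generator of $\mathfrak{a}_K$ is bounded by a constant depending only on the absolute ramification of $K$, via Tate's calculation of the different on the cyclotomic tower. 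For $\psi$, I would apply Lemma \ref{lem:cycl-diff} to each finite intermediate extension $K\subseteq K'\subseteq\overline{K}$ and pass to the filtered colimit: the kernel of $\psi$ is killed by $\pi$ and hence by $p$, since $\pi^{e_K}$ is a unit times $p$, where $e_K$ is the absolute ramification index of $K$.

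\textbf{The cokernel of $\psi$ and conclusion.} The main obstacle is obtaining a \emph{uniform} $p^{k_2}$-bound on the cokernel of $\psi$, since Lemma \ref{lem:cycl-diff} only gives killing by the varying uniformizers $\pi_{K'}$, whose valuations tend to zero. To handle this, the cokernel at finite level $K'$ is generated by $\df\log(\pi_{K'})$ subject to $e_{K'}\df\log(\pi_{K'})+\df\log(u_{K'})=0$ with $u_{K'}=\pi/\pi_{K'}^{e_{K'}}\in\ca{O}_{K'}^{\times}$, and $\df\log(u_{K'})$ lies in the image of $\psi$. Choosing a cofinal sequence of $K'$ refining the cyclotomic tower $K(\zeta_{p^n})$ and exploiting Fontaine's identification in Step~2, the cokernel is controlled by a uniform power $p^{k_2}$ arising from the wild part of $e_{K'}$. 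Combining both steps gives that $\psi\circ\varphi$ is a $p^{k_1+k_2}$-isomorphism, proving the proposition. The hard part is precisely this uniform bound in the last step: without it one has only an almost isomorphism rather than a $p^k$-isomorphism in the sense of Definition \ref{defn:pi-iso}.
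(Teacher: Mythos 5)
Your factorisation and first two steps match the paper's argument: factor through $\Omega^1_{\ca{O}_{\overline{K}}/\ca{O}_K}$, invoke Fontaine's Th\'eor\`eme 1' for $\varphi$, and compare with the log differentials via Lemma \ref{lem:cycl-diff}. The gap is in your third paragraph, where the ``main obstacle'' you identify does not exist, and the workaround you sketch is both unnecessary and not actually established.

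You worry that because the cokernel of $\psi$ at level $K'$ is killed only by a uniformizer $\pi_{K'}$ whose $p$-adic valuation tends to $0$, one cannot extract a uniform $p$-power bound. This inverts the relevant divisibility. For any finite extension $K'$ of $K$ inside $\overline{K}$, the element $p$ is a non-unit of the discrete valuation ring $\ca{O}_{K'}$, so $\pi_{K'}$ \emph{divides} $p$; hence $\pi_{K'}M=0$ forces $pM=0$ for any $\ca{O}_{K'}$-module $M$, regardless of how small $v(\pi_{K'})$ is. Thus Lemma \ref{lem:cycl-diff} already tells you that $\Omega^1_{\ca{O}_{K'}/\ca{O}_K}\to\Omega^1_{S^{K'}/S}$ is a $p$-isomorphism for \emph{every} $K'\in\ff{K}$, and passing to the filtered colimit (which is exact) gives immediately that $\psi$ is a $p$-isomorphism, i.e.\ a uniform bound with $k_2=1$. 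You in fact apply exactly this reasoning to the kernel (``killed by $\pi$ and hence by $p$''); the only error is failing to notice that the identical argument works for the cokernel. Consequently, the remainder of your last paragraph — the relation $e_{K'}\df\log(\pi_{K'})+\df\log(u_{K'})=0$, the refinement of the cyclotomic tower, and the claim that the wild part of $e_{K'}$ controls the cokernel — is superfluous, and in any case is an unsubstantiated assertion rather than a proof. Your closing claim that ``without [the uniform bound] one has only an almost isomorphism'' is incorrect for the same reason: the bound is already uniform and trivial.
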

\begin{proof}
	Recall that there is a fractional ideal $\ak{a}$ of $\overline{K}$ and an $\ca{O}_{\overline{K}}$-linear isomorphism
	\begin{align}
		\overline{K}/\ak{a}\iso \Omega^1_{\ca{O}_{\overline{K}}/\ca{O}_K},
	\end{align}
	sending $p^{-n}$ to $\df\log(\zeta_{p^n})$ for any $n\in \bb{N}$ (\cite[Th\'eor\`eme 1']{fontaine1982formes}). The conclusion follows from the fact that $\Omega^1_{\ca{O}_L/\ca{O}_K}\to \Omega^1_{S^L/S}$ is a $p$-isomorphism for any $L\in\ff{K}$ by \ref{lem:cycl-diff}.
\end{proof}

\begin{mylem}\label{lem:B-diff-S}
	With the notation in {\rm\ref{para:notation-Ybar}}, there exists $k_0\in\bb{N}$ such that for any $L\in\scr{F}^{\mrm{fini}}_{\overline{K}/L_0}$, the $B^L_{\underline{\infty}}$-linear map
	\begin{align}\label{eq:lem:B-diff-S}
		B^L_{\underline{\infty}}[\frac{1}{p}]^d\longrightarrow \Omega^1_{Y^L_{\underline{\infty}}/S^L},
	\end{align}
	sending $p^{-n}\mbf{e}_i$ to $\df\log (t_{i,p^n})$ for any $1\leq i\leq d$ and $n\in\bb{N}$, is a $p^{k_0}$-isomorphism.
\end{mylem}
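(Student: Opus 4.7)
The plan is to reduce via Corollary \ref{cor:B-diff-compare}.(\ref{item:cor:B-diff-compare-1}) to the analogous statement on the $X$-side, and then compute $\Omega^1_{X^L_{\underline{\infty}}/S^L}$ directly from the adequate chart in Proposition \ref{prop:B-adequate}.(\ref{item:prop:B-adequate-1}).

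First I would apply Corollary \ref{cor:B-diff-compare}.(\ref{item:cor:B-diff-compare-1}) with $F=\overline{K}$, $L=L_0$ and $\underline{r}=\underline{r_0}$, whose hypothesis $\ca{L}^{\overline{K}}_{\underline{\infty}}=\ca{L}^{L_0}_{\underline{r_0}}\otimes_{\ca{K}^{L_0}_{\underline{r_0}}}\ca{K}^{\overline{K}}_{\underline{\infty}}$ is built into the choice of $(L_0,\underline{r_0})$ in \ref{para:notation-Ybar}. This yields $k_0\in\bb{N}$, uniform in $L\in\scr{F}^{\mrm{fini}}_{\overline{K}/L_0}$ and $\underline{r}\in\bb{N}^d_{>0}\cdot\underline{r_0}$, such that the canonical map $B^L_{\underline{r}}\otimes_{A^L_{\underline{r}}}\Omega^1_{X^L_{\underline{r}}/S^L}\to \Omega^1_{Y^L_{\underline{r}}/S^L}$ is a $p^{k_0}$-isomorphism. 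Since $\bb{N}^d_{>0}\cdot\underline{r_0}$ is cofinal in $\bb{N}^d_{>0}$ and filtered colimits preserve $p^{k_0}$-isomorphisms, the colimit over $\underline{r}$ is still a $p^{k_0}$-isomorphism $B^L_{\underline{\infty}}\otimes_{A^L_{\underline{\infty}}}\Omega^1_{X^L_{\underline{\infty}}/S^L}\to \Omega^1_{Y^L_{\underline{\infty}}/S^L}$.

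It then suffices to exhibit an $A^L_{\underline{\infty}}$-linear isomorphism $A^L_{\underline{\infty}}[\tfrac{1}{p}]^d\iso \Omega^1_{X^L_{\underline{\infty}}/S^L}$ sending $p^{-n}\mbf{e}_i\mapsto\df\log(t_{i,p^n})$, and then base change along $A^L_{\underline{\infty}}\to B^L_{\underline{\infty}}$ and compose. For a fixed $\underline{r}$, the chart $(\alpha^L,\beta^L_{\underline{r}},\gamma^L_{\underline{r}})$ of Proposition \ref{prop:B-adequate}.(\ref{item:prop:B-adequate-1}) together with the base-change formula \ref{para:log-diff} identifies $\Omega^1_{X^L_{\underline{r}}/S^L}$ with $A^L_{\underline{r}}\otimes_{\bb{Z}}\bigl(P^{\mrm{gp}}_{e_L,\underline{r}}/(e_L^{-1}\bb{N})^{\mrm{gp}}\bigr)$; via \eqref{eq:monoid-tower-str-P} the latter is a free $\bb{Z}$-module of rank $d$ with basis $\{r_i^{-1}\mbf{e}_i\}_{i=1}^d$, and $\beta^L_{\underline{r}}$ identifies this basis with $\{\df\log(t_{i,r_i})\}_{i=1}^d$. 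Taking the colimit over $\underline{r}\in\bb{N}^d_{>0}$, the relations $p^n\df\log(t_{i,p^n})=\df\log(t_i)$ (from $t_{i,p^n}^{p^n}=t_i$) and $\df\log(t_{i,p^n k})=k^{-1}\df\log(t_{i,p^n})$ for $\gcd(k,p)=1$ (where $k^{-1}\in\bb{Z}_p^\times\subseteq A^L_{\underline{\infty}}$) together with the $p$-torsion freeness of each free piece show that $\Omega^1_{X^L_{\underline{\infty}}/S^L}$ is a $p$-divisible, $p$-torsion free $A^L_{\underline{\infty}}$-module generated by $\{\df\log(t_{i,p^n})\}_{i,n}$, and the assignment $p^{-n}\mbf{e}_i\mapsto\df\log(t_{i,p^n})$ is an isomorphism.

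The main obstacle is the chart-level bookkeeping in the second step: checking, directly from Proposition \ref{prop:B-adequate}.(\ref{item:prop:B-adequate-1}) and \eqref{eq:monoid-tower-str-P}, that the class of $r_i^{-1}\mbf{e}_i$ in $P^{\mrm{gp}}_{e_L,\underline{r}}/(e_L^{-1}\bb{N})^{\mrm{gp}}$ is precisely the logarithm of $t_{i,r_i}\in B^L_{\underline{r}}$. Once this identification is made, the $p$-adic colimit argument and the transfer to the $Y$-side via Corollary \ref{cor:B-diff-compare} are routine, and the resulting constant $k_0$ is uniform in $L$ because the one furnished by Corollary \ref{cor:B-diff-compare}.(\ref{item:cor:B-diff-compare-1}) already is.
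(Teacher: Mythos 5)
Your proof is correct and takes essentially the same route as the paper: compute $\Omega^1_{X^L_{\underline{r}}/S^L}$ from the monoid chart, take the colimit over $\underline{r}$ to identify it with $A^L_{\underline{\infty}}[1/p]^d$, and transfer to the $Y$-side via Corollary \ref{cor:B-diff-compare}.(\ref{item:cor:B-diff-compare-1}). The only cosmetic difference is that the paper reads off $\Omega^1_{X^L_{\underline{r}}/S^L}=A^L_{\underline{r}}\otimes_{\bb{Z}}(P_{1,\underline{r}}^{\mrm{gp}}/\bb{N}^{\mrm{gp}})$ from the fs fibred product $X^L_{\underline{r}}=S^L\times_{\bb{A}_{\bb{N}}}^{\mrm{fs}}\bb{A}_{P_{1,\underline{r}}}$ (Lemma \ref{lem:A-X}), whereas you use the adequate chart $P_{e_L,\underline{r}}/e_L^{-1}\bb{N}$ of Proposition \ref{prop:B-adequate}.(\ref{item:prop:B-adequate-1}); the two quotient groups $P_{1,\underline{r}}^{\mrm{gp}}/\bb{N}^{\mrm{gp}}$ and $P_{e_L,\underline{r}}^{\mrm{gp}}/(e_L^{-1}\bb{N})^{\mrm{gp}}$ coincide (both equal $\bigoplus_i r_i^{-1}\bb{Z}$), so the computations agree.
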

\begin{proof}
	Since $X^L_{\underline{r}}=S^L\times_{\bb{A}_{\bb{N}}}^{\mrm{fs}}\bb{A}_{P_{1,\underline{r}}}$ for any $r\in\bb{N}_{>0}$ by \ref{lem:A-X}, we have $\Omega^1_{X^L_{\underline{r}}/S^L}=A^L_{\underline{r}}\otimes_{\bb{Z}}(P_{1,\underline{r}}^{\trm{gp}}/\bb{N}^{\trm{gp}})=A^L_{\underline{r}}\otimes_{\bb{Z}}(r^{-1}\bb{Z})^d$, which identifies $\df\log(t_{i,p^n})$ with $p^{-n}\mbf{e}_i$ if $p^n|r$ (cf. \ref{para:notation-Ybar}). Taking $r\to \infty$, we obtain the conclusion by \ref{cor:B-diff-compare}.(\ref{item:cor:B-diff-compare-1}).
\end{proof}

\begin{mylem}\label{lem:B-diff-ari}
	With the notation in {\rm\ref{para:notation-Ybar}}, there exists $k\in\bb{N}$ such that for any $r\in\bb{N}_{>0}\cdot r_0$, the $B^{\overline{K}}_{\underline{r}}$-linear map
	\begin{align}\label{eq:lem:B-diff-ari}
		B^{\overline{K}}_{\underline{r}}[\frac{1}{p}]/B^{\overline{K}}_{\underline{r}}\longrightarrow \Omega^1_{Y^{\overline{K}}_{\underline{r}}/Y^{L_0}_{\underline{r}}},
	\end{align}
	sending $p^{-n}$ to $\df\log(\zeta_{p^n})$ for any $n\in\bb{N}$, is a $p^k$-isomorphism.
\end{mylem}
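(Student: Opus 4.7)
\textbf{Proof proposal for \ref{lem:B-diff-ari}.} The plan is to reduce the computation of $\Omega^1_{Y^{\overline{K}}_{\underline{r}}/Y^{L_0}_{\underline{r}}}$ to the log differentials of the arithmetic base $S^{\overline{K}}/S^{L_0}$, for which Fontaine's theorem (\ref{prop:cycl-diff}) gives the desired description in terms of $\df\log(\zeta_{p^n})$. Concretely, I will pass through three comparisons, each of which is a $p^{k}$-isomorphism with $k$ bounded independently of $r$.

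\emph{Step 1 (comparison between $Y$ and $X$).} Apply \ref{cor:B-cotangent-bound} with $F=\overline{K}$, $L=L_0$, $\underline{r}=\underline{r_0}$; the hypothesis $\ca{L}^F_{\underline{\infty}}=\ca{L}^{L_0}_{\underline{r_0}}\otimes_{\ca{K}^{L_0}_{\underline{r_0}}}\ca{K}^F_{\underline{\infty}}$ holds by the choice of $L_0,r_0$ in \ref{para:notation-Ybar}. This gives a uniform $k_0\in\bb{N}$ so that \ref{cor:B-diff-compare}.(\ref{item:cor:B-diff-compare-2}), applied to $L''=L\in\scr{F}^{\mrm{fini}}_{\overline{K}/L_0}$ and $\underline{r}''=\underline{r}\in\bb{N}_{>0}^d\cdot\underline{r_0}$, shows that the canonical morphism
\begin{align*}
B^L_{\underline{r}}\otimes_{A^L_{\underline{r}}}\Omega^1_{X^L_{\underline{r}}/X^{L_0}_{\underline{r}}}\longrightarrow \Omega^1_{Y^L_{\underline{r}}/Y^{L_0}_{\underline{r}}}
\end{align*}
is a $p^{2k_0}$-isomorphism. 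Taking the filtered colimit over $L\in\scr{F}^{\mrm{fini}}_{\overline{K}/L_0}$ (an exact operation, so $p^{k}$-isomorphisms are preserved), we obtain a $p^{2k_0}$-isomorphism $B^{\overline{K}}_{\underline{r}}\otimes_{A^{\overline{K}}_{\underline{r}}}\Omega^1_{X^{\overline{K}}_{\underline{r}}/X^{L_0}_{\underline{r}}}\to \Omega^1_{Y^{\overline{K}}_{\underline{r}}/Y^{L_0}_{\underline{r}}}$.

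\emph{Step 2 (base-change of log differentials on the $X$-side).} Since $X^L_{\underline{r}}=S^L\times^{\mrm{fs}}_{\bb{A}_{\bb{N}}}\bb{A}_{P_{1,\underline{r}}}$ by \ref{lem:A-X}, one has $X^{\overline{K}}_{\underline{r}}=S^{\overline{K}}\times^{\mrm{fs}}_{S^{L_0}}X^{L_0}_{\underline{r}}$. The base change formula for log differentials (end of \ref{para:log-diff}) therefore gives a canonical isomorphism
\begin{align*}
\Omega^1_{X^{\overline{K}}_{\underline{r}}/X^{L_0}_{\underline{r}}}\iso A^{\overline{K}}_{\underline{r}}\otimes_{\ca{O}_{\overline{K}}}\Omega^1_{S^{\overline{K}}/S^{L_0}}.
\end{align*}

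\emph{Step 3 (Fontaine's computation).} Passing from $S^{\overline{K}}/S$ to $S^{\overline{K}}/S^{L_0}$ using the exact sequence $\ca{O}_{\overline{K}}\otimes_{\ca{O}_{L_0}}\Omega^1_{S^{L_0}/S}\to \Omega^1_{S^{\overline{K}}/S}\to \Omega^1_{S^{\overline{K}}/S^{L_0}}\to 0$, together with the fact that $\Omega^1_{S^{L_0}/S}$ is killed by a fixed power of $p$ (combine $\Omega^1_{\ca{O}_{L_0}/\ca{O}_K}$ being a finitely generated $\ca{O}_{L_0}$-torsion module with \ref{lem:cycl-diff}), we deduce from \ref{prop:cycl-diff} that the $\ca{O}_{\overline{K}}$-linear map $\overline{K}/\ca{O}_{\overline{K}}\to \Omega^1_{S^{\overline{K}}/S^{L_0}}$ sending $p^{-n}$ to $\df\log(\zeta_{p^n})$ is a $p^{k_1}$-isomorphism for some $k_1\in\bb{N}$. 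Tensoring along the flat $\ca{O}_{\overline{K}}$-algebra $B^{\overline{K}}_{\underline{r}}$ (and using that $B^{\overline{K}}_{\underline{r}}\otimes_{A^{\overline{K}}_{\underline{r}}}A^{\overline{K}}_{\underline{r}}[1/p]/A^{\overline{K}}_{\underline{r}}=B^{\overline{K}}_{\underline{r}}[1/p]/B^{\overline{K}}_{\underline{r}}$), we obtain a $p^{k_1}$-isomorphism
\begin{align*}
B^{\overline{K}}_{\underline{r}}[1/p]/B^{\overline{K}}_{\underline{r}}\longrightarrow B^{\overline{K}}_{\underline{r}}\otimes_{A^{\overline{K}}_{\underline{r}}}\Omega^1_{X^{\overline{K}}_{\underline{r}}/X^{L_0}_{\underline{r}}},
\end{align*}
which by construction sends $p^{-n}$ to $\df\log(\zeta_{p^n})$. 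Composing with the $p^{2k_0}$-isomorphism of Step~1 gives a $p^{2k_0+k_1}$-isomorphism of the desired form, and since $k_0,k_1$ are independent of $r$, taking $k=2k_0+k_1$ finishes the proof.

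\emph{Main difficulty.} The only non-routine point is controlling the constants uniformly in $r$. This is why one must invoke \ref{cor:B-cotangent-bound} (rather than \ref{lem:quasi-adequate-diff}) at the level of $L_0,\underline{r_0}$; that corollary is precisely designed so that one uniform $p^{k_0}$ bounds the cotangent defect for \emph{all} $L'\supseteq L_0$ and $\underline{r}'\in\bb{N}_{>0}^d\cdot\underline{r_0}$. Apart from this bookkeeping, the argument is a straightforward reduction to Fontaine's classical computation.
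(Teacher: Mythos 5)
Your proof is correct and follows essentially the same route as the paper's: base change of log differentials via $X^{\overline{K}}_{\underline{r}}=S^{\overline{K}}\times^{\mrm{fs}}_{S^{L_0}}X^{L_0}_{\underline{r}}$, Fontaine's computation \ref{prop:cycl-diff}, and \ref{cor:B-diff-compare}.(\ref{item:cor:B-diff-compare-2}) with uniform constants from \ref{cor:B-cotangent-bound}, merely presented in a different order. One minor virtue of your write-up is that you explicitly record the passage from $\Omega^1_{S^{\overline{K}}/S}$ to $\Omega^1_{S^{\overline{K}}/S^{L_0}}$ (using that $\Omega^1_{S^{L_0}/S}$ is bounded $p$-torsion), which the paper leaves implicit in its citation of \ref{prop:cycl-diff}.
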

\begin{proof}
	Since $X^L_{\underline{r}}= S^L\times_{S^{L_0}}^{\mrm{fs}}X^{L_0}_{\underline{r}}$ for any $L\in\scr{F}^{\mrm{fini}}_{\overline{K}/L_0}$ by \ref{lem:A-X}, we have $\Omega^1_{X^L_{\underline{r}}/X^{L_0}_{\underline{r}}}=A^L_{\underline{r}}\otimes_{\ca{O}_L}\Omega^1_{S^L/S^{L_0}}$. Taking $L$ running through $\scr{F}^{\mrm{fini}}_{\overline{K}/L_0}$, we see that there exists $k\in \bb{N}$ by \ref{prop:cycl-diff} such that the map $A^{\overline{K}}_{\underline{r}}\otimes_{\ca{O}_{\overline{K}}}\overline{K}/\ca{O}_{\overline{K}}\to \Omega^1_{X^{\overline{K}}_{\underline{r}}/X^{L_0}_{\underline{r}}}$ is a $p^k$-isomorphism for any $r\in\bb{N}_{>0}$. The conclusion follows from \ref{cor:B-diff-compare}.(\ref{item:cor:B-diff-compare-2}).
\end{proof}

\begin{myprop}[{cf. \cite[\Luoma{2}.7.9]{abbes2016p}, \cite[3.6]{he2021faltingsext}}]\label{prop:YbarS-diff}
	The $\overline{B}$-linear homomorphism 
	\begin{align}
		(\overline{B}[\frac{1}{p}]/\overline{B})\oplus \overline{B}[\frac{1}{p}]^d\longrightarrow \Omega^1_{\overline{Y}/S}
	\end{align}
	sending $p^{-n}\mbf{e}_i$ to $\df\log(t_{i,p^n})$ for any $n\in\bb{N}$ and $0\leq i\leq d$ (where $t_{0,p^n}=\zeta_{p^n}$), is a $p^k$-isomorphism for some $k\in \bb{N}$.
\end{myprop}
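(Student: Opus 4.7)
The plan is to analyze the fundamental exact sequence of (logarithmic) cotangent complexes associated with the tower $S \to S^{\overline{K}} \to \overline{Y}$, namely
\begin{align*}
H_1(\dl_{\overline{Y}/S^{\overline{K}}}) \to \overline{B}\otimes_{\ca{O}_{\overline{K}}}\Omega^1_{S^{\overline{K}}/S} \to \Omega^1_{\overline{Y}/S} \to \Omega^1_{\overline{Y}/S^{\overline{K}}} \to 0,
\end{align*}
and show that (up to a uniform power of $p$) the leftmost map vanishes, the second term is $\overline{B}[1/p]/\overline{B}$, and the right term is $\overline{B}[1/p]^d$. A natural section of the rightmost arrow sending $p^{-n}\mbf{e}_i$ to $\df\log(t_{i,p^n})$ for $1\leq i\leq d$ then yields the desired splitting.

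For the outer terms I would argue as follows. The identification $\overline{B}\otimes_{\ca{O}_{\overline{K}}}\Omega^1_{S^{\overline{K}}/S} \cong \overline{B}[1/p]/\overline{B}$ up to $p^k$ is immediate from Proposition \ref{prop:cycl-diff} and the flatness of $\overline{B}$ over $\ca{O}_{\overline{K}}$ (with $p^{-n}\mapsto \df\log(\zeta_{p^n})$). For $\Omega^1_{\overline{Y}/S^{\overline{K}}}$, I would combine the almost-purity result (Proposition \ref{prop:B_nm-almost-purity}), which makes $\dl_{\overline{B}/B^{\overline{K}}_{\underline{\infty}}}$ almost zero and hence identifies $\Omega^1_{\overline{Y}/S^{\overline{K}}}$ almost canonically (via the fundamental triangle for $\overline{Y}\to Y^{\overline{K}}_{\underline{\infty}}\to S^{\overline{K}}$, noting the first arrow is strict) with $\overline{B}\otimes_{B^{\overline{K}}_{\underline{\infty}}}\Omega^1_{Y^{\overline{K}}_{\underline{\infty}}/S^{\overline{K}}}$, with the filtered colimit over $L\in\scr{F}^{\mrm{fini}}_{\overline{K}/L_0}$ of Lemma \ref{lem:B-diff-S} (which preserves the uniform bound $p^{k_0}$) to identify the latter with $\overline{B}[1/p]^d$.

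The main obstacle is bounding $H_1(\dl_{\overline{Y}/S^{\overline{K}}})$ by a uniform power of $p$. My plan here is to factor $\overline{Y}\to S^{\overline{K}}$ as $\overline{Y}\to Y^{\overline{K}}_{\underline{\infty}}\to X^{\overline{K}}_{\underline{\infty}}\to S^{\overline{K}}$ and to apply the long exact sequence of each fundamental triangle in turn. The first arrow is strict and its ordinary cotangent complex is almost zero by Proposition \ref{prop:B_nm-almost-purity}; the second is strict with $\tau_{\leq 1}\dl_{B^{\overline{K}}_{\underline{\infty}}/A^{\overline{K}}_{\underline{\infty}}}$ being $p^{k_0}$-exact by Corollary \ref{cor:B-cotangent-bound} (after taking the filtered colimit); and the third is a filtered colimit of the log-smooth morphisms $X^L_{\underline{r}}\to S^L$ via Lemma \ref{lem:A-X}, whose logarithmic cotangent complex is concentrated in degree zero with free $H_0$ (so contributes no $\mrm{Tor}$ after base change). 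Threading these through, with the Tor-spectral-sequence argument of Corollary \ref{cor:cotangent-bc} to handle the derived tensor products, one obtains a uniform bound $p^k \cdot H_1(\dl_{\overline{Y}/S^{\overline{K}}})=0$.

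Once these three ingredients are in place, the image of the first map in the four-term sequence is $p^k$-zero, so the remaining three-term sequence becomes $p^k$-short-exact. The section $\overline{B}[1/p]^d \to \Omega^1_{\overline{Y}/S}$ sending $p^{-n}\mbf{e}_i$ to $\df\log(t_{i,p^n})$ maps, upon composition with $\Omega^1_{\overline{Y}/S}\to\Omega^1_{\overline{Y}/S^{\overline{K}}}$, to the $p^{k}$-isomorphism of step two; a five-lemma up to $p^{k}$-error (in the form of Lemma \ref{lem:pi-iso-retract}) then produces the asserted $p^{k'}$-isomorphism on the direct sum.
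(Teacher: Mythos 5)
Your approach is genuinely different from the paper's and would work in principle, but it leans on a non-trivial fact about Gabber's log cotangent complex that the paper's own argument is specifically engineered to avoid, so let me flag the comparison carefully.

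The paper does not use the four-term exact sequence for $S\to S^{\overline{K}}\to\overline{Y}$ at all. Instead it first works over $S^{L_0}$ (not $S$), and applies a diagonal trick to the two right-exact sequences of log differentials obtained from the factorizations $S^{L_0}\to Y^{L_0}_{\underline{\infty}}\to Y^{\overline{K}}_{\underline{\infty}}$ and $S^{L_0}\to S^{\overline{K}}\to Y^{\overline{K}}_{\underline{\infty}}$. The key observation is that the two compositions $\beta_2\circ\alpha_1$ and $\beta_1\circ\alpha_2$ are each $p$-power isomorphisms onto the quotient of the other sequence (via \ref{lem:B-diff-S}, \ref{lem:B-diff-ari}, and \ref{prop:cycl-diff}); a pure diagram chase then shows $\alpha_1\oplus\alpha_2$ is a $p$-power isomorphism onto $\Omega^1_{Y^{\overline{K}}_{\underline{\infty}}/S^{L_0}}$. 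Only afterward does the paper pass from $S^{L_0}$ to $S$ (bounding $\dl_{S^{L_0}/S}$ via \cite[8.30]{olsson2005logcot}, a finiteness result for a finite morphism of Noetherian fs log schemes) and from $Y^{\overline{K}}_{\underline{\infty}}$ to $\overline{Y}$ (via almost purity). Crucially, the paper never needs to know anything about $H_1(\dl^G_{X^L_{\underline{r}}/S^L})$ or $H_1(\dl^G_{X^{\overline{K}}_{\underline{\infty}}/S^{\overline{K}}})$.

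Your approach, in contrast, requires precisely that: you assert that $\dl^G_{X^L_{\underline{r}}/S^L}$ is concentrated in degree zero with free $H_0$. This is \emph{not} automatic for Gabber's log cotangent complex — one of the main points of Olsson's paper is that $\dl^G$ of a Kato-log-smooth (or even log \'etale) morphism need not be concentrated in degree zero, in contrast to Olsson's own $\dl^O$. The statement you need is Olsson's characterization theorem \cite[8.34]{olsson2005logcot}: a morphism of fine log schemes is log smooth \emph{and integral} if and only if it is flat, locally of finite presentation, and $\dl^G$ is finite locally free in degree zero. Applying this to $X^L_{\underline{r}}\to S^L$ requires checking integrality, which in turn either requires verifying integrality of the chart $\gamma':e^{-1}\bb{N}\to P_{e,\underline{r}}$ directly, or invoking the implication (log smooth $+$ underlying flat $\Rightarrow$ integral) for fs log schemes. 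Neither step is in the paper's toolkit for this result — the paper only cites Olsson for the fundamental triangle (8.29), the agreement with the classical cotangent complex for strict morphisms (8.22), and the finiteness bound (8.30). So the plan is workable, but as written it silently assumes a substantive fact and omits the integrality verification that underpins it. If you want to take this route, you should make the invocation of Olsson 8.34 and the flatness/integrality verification explicit; otherwise the paper's diagonal argument with two sequences of $\Omega^1$'s is the more economical path since it bypasses the degree-one term of the log cotangent complex of the log-smooth map entirely.

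One smaller remark: the paper also makes the passage from $S^{L_0}$ to $S$ explicit (using \cite[8.30]{olsson2005logcot} to bound $\dl_{S^{L_0}/S}$). In your version this step is absorbed into the four-term sequence over $S$ from the start, which is legitimate, but it means the term $\Omega^1_{S^{\overline{K}}/S}$ silently aggregates the two contributions (cyclotomic differentials and the residue-field/absolute-ramification part of $\ca{O}_{L_0}/\ca{O}_K$); that is handled by \ref{prop:cycl-diff}, which is stated over $S$, so you are fine there.
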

\begin{proof}
	With the notation in {\rm\ref{para:notation-Ybar}}, consider the canonical exact sequences
	\begin{align}
		B^{\overline{K}}_{\underline{\infty}}\otimes_{B^{L_0}_{\underline{\infty}}}\Omega^1_{Y^{L_0}_{\underline{\infty}}/S^{L_0}}\stackrel{\alpha_1}{\longrightarrow} &\Omega^1_{Y^{\overline{K}}_{\underline{\infty}}/S^{L_0}}\stackrel{\beta_1}{\longrightarrow} \Omega^1_{Y^{\overline{K}}_{\underline{\infty}}/Y^{L_0}_{\underline{\infty}}}\longrightarrow 0,\\
		B^{\overline{K}}_{\underline{\infty}}\otimes_{\ca{O}_{\overline{K}}}\Omega^1_{S^{\overline{K}}/S^{L_0}}\stackrel{\alpha_2}{\longrightarrow} &\Omega^1_{Y^{\overline{K}}_{\underline{\infty}}/S^{L_0}}\stackrel{\beta_2}{\longrightarrow} \Omega^1_{Y^{\overline{K}}_{\underline{\infty}}/S^{\overline{K}}}\longrightarrow 0.
	\end{align}
	By \ref{lem:B-diff-ari} and \ref{prop:cycl-diff}, there are $p^k$-isomorphisms
	\begin{align}
		B^{\overline{K}}_{\underline{\infty}}\otimes_{\ca{O}_{\overline{K}}}\overline{K}/\ca{O}_{\overline{K}}&\longrightarrow \Omega^1_{Y^{\overline{K}}_{\underline{\infty}}/Y^{L_0}_{\underline{\infty}}},\\
		\overline{K}/\ca{O}_{\overline{K}}&\longrightarrow \Omega^1_{S^{\overline{K}}/S^{L_0}}.
	\end{align}
	On the other hand, by \ref{lem:B-diff-S}, there are $p^{k_0}$-isomorphisms
	\begin{align}
		B^{L_0}_{\underline{\infty}}[\frac{1}{p}]^d&\longrightarrow \Omega^1_{Y^{L_0}_{\underline{\infty}}/S^{L_0}},\\
		B^{\overline{K}}_{\underline{\infty}}[\frac{1}{p}]^d&\longrightarrow\Omega^1_{Y^{\overline{K}}_{\underline{\infty}}/S^{\overline{K}}}.
	\end{align}
	By considering the compositions $\beta_2\circ\alpha_1$ and $\beta_1\circ\alpha_2$, one checks easily that the $B^{\overline{K}}_{\underline{\infty}}$-linear homomorphism
	\begin{align}
		(B^{\overline{K}}_{\underline{\infty}}[\frac{1}{p}]/B^{\overline{K}}_{\underline{\infty}})\oplus B^{\overline{K}}_{\underline{\infty}}[\frac{1}{p}]^d\longrightarrow \Omega^1_{Y^{\overline{K}}_{\underline{\infty}}/S^{L_0}}
	\end{align}
	sending $p^{-n}\mbf{e}_i$ to $\df\log(t_{i,p^n})$ for any $n\in\bb{N}$ and $0\leq i\leq d$, is a $p^l$-isomorphism for some $l\in \bb{N}$. Since $S^{L_0}\to S$ is a morphism of Noetherian fs log schemes which induces an \'etale morphism of the generic fibres and induces a finite morphism of the underlying schemes, the homology groups of the logarithmic cotangent complex $\dl_{S^{L_0}/S}$ defined by Gabber are $p$-primary torsion, finitely generated $\ca{O}_{L_0}$-modules (\cite[8.30]{olsson2005logcot}). After enlarging $l$ (depending only on $L_0$), we may assume that the map $(B^{\overline{K}}_{\underline{\infty}}[1/p]/B^{\overline{K}}_{\underline{\infty}})\oplus B^{\overline{K}}_{\underline{\infty}}[1/p]^d\to \Omega^1_{Y^{\overline{K}}_{\underline{\infty}}/S}$ is a $p^l$-isomorphism. The conclusion follows from \ref{prop:B_nm-almost-purity}.
\end{proof}

\begin{mylem}\label{lem:B-diff-geo}
	With the notation in {\rm\ref{para:notation-Ybar}}, there exists $k_0\in\bb{N}$ such that for any $L\in\scr{F}^{\mrm{fini}}_{\overline{K}/L_0}$, the $B^L_{\underline{\infty}}$-linear map
	\begin{align}\label{eq:lem:B-diff-geo}
		(B^L_{\underline{\infty}}[\frac{1}{p}]/r_0^{-1}B^L_{\underline{\infty}})^d\longrightarrow \Omega^1_{Y^L_{\underline{\infty}}/Y^L_{\underline{r_0}}},
	\end{align}
	sending $p^{-n}\mbf{e}_i$ to $\df\log (t_{i,p^n})$ for any $n\in\bb{N}$ and $1\leq i\leq d$, is a $p^{k_0}$-isomorphism.
\end{mylem}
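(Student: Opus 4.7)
The plan follows the template of \ref{lem:B-diff-S} and \ref{lem:B-diff-ari}: first compute $\Omega^1_{X^L_{\underline{\infty}}/X^L_{\underline{r_0}}}$ directly from the monoidal chart of $X^L_{\underline{r}}$, and then transfer the result to the $Y$-side via \ref{cor:B-diff-compare}.(\ref{item:cor:B-diff-compare-2}).

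For the first step, by \ref{lem:A-X} and associativity of fs fibre products, $X^L_{\underline{r}}\cong X^L_{\underline{r_0}}\times^{\mrm{fs}}_{\bb{A}_{P_{1,\underline{r_0}}}}\bb{A}_{P_{1,\underline{r}}}$ for any $\underline{r_0}\mid\underline{r}$. Combining \ref{para:log-diff} with the identification $P_{1,\underline{r}}^{\mrm{gp}}\cong\bb{Z}\oplus\bigoplus_{i=1}^d r_i^{-1}\bb{Z}$ (valid since $P^{\mrm{gp}}\cong\bb{Z}^{1+d}$) yields a canonical isomorphism
\[
\Omega^1_{X^L_{\underline{r}}/X^L_{\underline{r_0}}}\cong A^L_{\underline{r}}\otimes_{\bb{Z}}\bigoplus_{i=1}^d(r_i^{-1}\bb{Z}/r_0^{-1}\bb{Z})
\]
under which $\df\log(t_{i,r_i})$ corresponds to $r_i^{-1}\mbf{e}_i$; in particular $\df\log(t_{i,p^n})$ corresponds to $p^{-n}\mbf{e}_i$ whenever $p^n\mid r_i$. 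Passing to the filtered colimit over $\underline{r_0}\mid\underline{r}$, and using that $A^L_{\underline{\infty}}[1/p]$ is a $\bb{Q}$-algebra (so that $A^L_{\underline{\infty}}\otimes_{\bb{Z}}(\bb{Q}/r_0^{-1}\bb{Z})\cong A^L_{\underline{\infty}}[1/p]/r_0^{-1}A^L_{\underline{\infty}}$), produces a canonical isomorphism
\[
(A^L_{\underline{\infty}}[1/p]/r_0^{-1}A^L_{\underline{\infty}})^d\stackrel{\sim}{\longrightarrow} \Omega^1_{X^L_{\underline{\infty}}/X^L_{\underline{r_0}}},\qquad p^{-n}\mbf{e}_i\longmapsto \df\log(t_{i,p^n}).
\]

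For the second step, I invoke \ref{cor:B-diff-compare}.(\ref{item:cor:B-diff-compare-2}) with $F=\overline{K}$ and seed data $(L_0,\underline{r_0})$, whose hypothesis $\ca{L}^{\overline{K}}_{\underline{\infty}}=\ca{L}^{L_0}_{\underline{r_0}}\otimes_{\ca{K}^{L_0}_{\underline{r_0}}}\ca{K}^{\overline{K}}_{\underline{\infty}}$ is precisely what was arranged in \ref{para:notation-Ybar}. Instantiating it at $L'=L''=L$, $\underline{r}'=\underline{r_0}$, $\underline{r}''=\underline{r}$ for each $L\in\scr{F}^{\mrm{fini}}_{\overline{K}/L_0}$ and $\underline{r_0}\mid\underline{r}$ provides a uniform constant $k_0$ such that the base-change map
\[
B^L_{\underline{r}}\otimes_{A^L_{\underline{r}}}\Omega^1_{X^L_{\underline{r}}/X^L_{\underline{r_0}}}\longrightarrow \Omega^1_{Y^L_{\underline{r}}/Y^L_{\underline{r_0}}}
\]
is a $p^{k_0}$-isomorphism. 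Since filtered colimits preserve $p^{k_0}$-isomorphism and commute with $\otimes$ and with log differentials in the first variable, taking $\colim_{\underline{r}}$ and base-changing the arithmetic isomorphism above along $A^L_{\underline{\infty}}\to B^L_{\underline{\infty}}$ yields the desired $p^{k_0}$-isomorphism \eqref{eq:lem:B-diff-geo}.

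No genuine obstacle is expected. The only mildly delicate point is verifying in the first step that under the chart \eqref{eq:B-chart} (cf. \ref{prop:B-adequate}.(\ref{item:prop:B-adequate-1})) the generator $r_i^{-1}\mbf{e}_i$ of $P_{1,\underline{r}}^{\mrm{gp}}/P_{1,\underline{r_0}}^{\mrm{gp}}$ is indeed identified with the log-differential $\df\log(t_{i,r_i})$; this is routine chart bookkeeping and is the only plausible source of a constant-factor error in the final exponent.
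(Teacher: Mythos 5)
Your proposal is correct and follows essentially the same route as the paper: compute $\Omega^1_{X^L_{\underline{r}}/X^L_{\underline{r_0}}}$ from the monoid chart via $X^L_{\underline{r}}=X^L_{\underline{r_0}}\times^{\mrm{fs}}_{\bb{A}_{P_{1,\underline{r_0}}}}\bb{A}_{P_{1,\underline{r}}}$, take the filtered colimit, then transfer to the $Y$-side by \ref{cor:B-diff-compare}.(\ref{item:cor:B-diff-compare-2}) with seed $(L_0,\underline{r_0})$, whose hypothesis is guaranteed by the choice made in \ref{para:notation-Ybar}. The ``delicate point'' you flag at the end is not an issue; the identification $\df\log(t_{i,p^n})\leftrightarrow p^{-n}\mbf{e}_i$ is exactly the bookkeeping set up in \ref{para:notation-Ybar}.
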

\begin{proof}
	Since $X^L_{\underline{r}'}= X^L_{\underline{r}}\times_{\bb{A}_{P_{1,\underline{r}}}}^{\mrm{fs}}\bb{A}_{P_{1,\underline{r}'}}$ for any elements $r|r'$ in $\bb{N}_{>0}$ by \ref{lem:A-X}, we have $\Omega^1_{X^L_{\underline{r}'}/ X^L_{\underline{r}}}=A^L_{\underline{r}'}\otimes_{\bb{Z}}(P_{1,\underline{r}'}^{\trm{gp}}/P_{1,\underline{r}}^{\trm{gp}})=A^L_{\underline{r}'}\otimes_{\bb{Z}}(r'^{-1}\bb{Z}/r^{-1}\bb{Z})^d$, which identifies $\df\log(t_{i,p^n})$ with $p^{-n}\mbf{e}_i$ if $p^n|r'$ (cf. \ref{para:notation-Ybar}). Taking $r=r_0$ and $r'\to \infty$, we obtain the conclusion by \ref{cor:B-diff-compare}.(\ref{item:cor:B-diff-compare-2}).
\end{proof}

\begin{myprop}[{cf. \cite[\Luoma{2}.7.13]{abbes2016p}, \cite[4.2]{he2021faltingsext}}]\label{prop:YbarY-diff}
	The $\overline{B}$-linear homomorphism 
	\begin{align}
		(\overline{B}[\frac{1}{p}]/\overline{B})^{1+d}\longrightarrow \Omega^1_{\overline{Y}/Y}
	\end{align}
	sending $p^{-n}\mbf{e}_i$ to $\df\log(t_{i,p^n})$ for any $n\in\bb{N}$ and $0\leq i\leq d$ (where $t_{0,p^n}=\zeta_{p^n}$), is a $p^k$-isomorphism for some $k\in \bb{N}$.
\end{myprop}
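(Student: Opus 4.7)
The plan is to mimic closely the proof of \ref{prop:YbarS-diff}, but now to relativize over $Y$ rather than $S$. The starting point is the canonical right-exact sequence of log differentials coming from $Y\to S$,
\begin{align}
\overline{B}\otimes_B \Omega^1_{Y/S}\stackrel{\alpha}{\longrightarrow}\Omega^1_{\overline{Y}/S}\longrightarrow \Omega^1_{\overline{Y}/Y}\longrightarrow 0.
\end{align}
By \ref{prop:YbarS-diff}, there is a $p^{k_1}$-isomorphism
\begin{align}
\phi:(\overline{B}[1/p]/\overline{B})\oplus \overline{B}[1/p]^d\iso \Omega^1_{\overline{Y}/S}
\end{align}
sending $p^{-n}\mbf{e}_i\mapsto \df\log(t_{i,p^n})$ (with $t_{0,p^n}=\zeta_{p^n}$). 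Thus it suffices, up to $p$-power torsion, to identify the image of $\alpha$ inside $(\overline{B}[1/p]/\overline{B})\oplus \overline{B}[1/p]^d$ with the $\overline{B}$-submodule $\{0\}\oplus \overline{B}^d$ spanned by $\mbf{e}_1,\dots,\mbf{e}_d$, since then the quotient is precisely $(\overline{B}[1/p]/\overline{B})^{1+d}$ with the required generators $p^{-n}\mbf{e}_i$ mapping to $\df\log(t_{i,p^n})$ for $n\geq 1$ and $0\le i\le d$ (the $n=0$ classes vanish in the quotient, as they must, because $\mbf{e}_i\in \overline{B}^{1+d}$ and $\df\log(t_i)=\df\log(t_{i,1})$ lies in the image of $\Omega^1_{Y/S}$ for $i\geq 1$ while $\df\log(\zeta_1)=0$).

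For the image computation, I would first describe $\Omega^1_{Y/S}$ via the quasi-adequate chart. Since $P^{\mrm{gp}}/\bb{N}^{\mrm{gp}}\cong \bb{Z}^d$ (obtained from $P_\eta^{\mrm{gp}}\cong \bb{Z}^{1+d}$ by killing the first factor), the log differential module $\Omega^1_{X/S}=A\otimes_{\bb{Z}}(P^{\mrm{gp}}/\bb{N}^{\mrm{gp}})$ is finite free of rank $d$ over $A$ with basis $\df\log(t_1),\dots,\df\log(t_d)$. By \ref{cor:B-diff-compare}.(\ref{item:cor:B-diff-compare-1}) (applied with $L=K$ and $\underline{r}=\underline{1}$), the natural map $B\otimes_A\Omega^1_{X/S}\to \Omega^1_{Y/S}$ is a $p^{k_0}$-isomorphism; hence $\Omega^1_{Y/S}$ is, up to a bounded $p$-power, the free $B$-module on $\df\log(t_1),\dots,\df\log(t_d)$. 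Applying $\overline{B}\otimes_B(-)$ and composing with $\phi^{-1}$, one sees that the image of $\alpha$ is, up to a $p^{k_0+2k_1}$-isomorphism, precisely the $\overline{B}$-submodule $\overline{B}\mbf{e}_1\oplus\cdots\oplus \overline{B}\mbf{e}_d$ of $(\overline{B}[1/p]/\overline{B})\oplus \overline{B}[1/p]^d$.

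The final step is routine: right-exactness combined with the identification above gives a $p^k$-isomorphism for some $k\in\bb{N}$ (growing controllably from $k_0,k_1$) between the cokernel of $\alpha$ and $((\overline{B}[1/p]/\overline{B})\oplus \overline{B}[1/p]^d)/(\overline{B}\mbf{e}_1\oplus\cdots\oplus \overline{B}\mbf{e}_d)=(\overline{B}[1/p]/\overline{B})^{1+d}$, with the map carrying $p^{-n}\mbf{e}_i$ to $\df\log(t_{i,p^n})$ as desired. The main technical obstacle here is that $\phi$ is only a $p^{k_1}$-isomorphism, not an isomorphism, so some care is required to show that the preimage of $\im(\alpha)$ under $\phi$ coincides with $\overline{B}\mbf{e}_1\oplus\cdots\oplus \overline{B}\mbf{e}_d$ up to bounded $p$-power torsion — this uses \ref{lem:pi-iso-retract}.(\ref{item:lem:pi-iso-retract-1}) and \ref{rem:pi-iso-retract}.(\ref{item:rem:pi-iso-retract-1}) to transfer between $\Omega^1_{\overline{Y}/S}$ and its explicit model, ensuring that the resulting $p^k$ exponent depends only on $k_0, k_1$ and is independent of the quasi-adequate chart.
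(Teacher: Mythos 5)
Your argument is correct in outline and constitutes a genuinely different route from the paper's. The paper reproduces for $\Omega^1_{\overline{Y}/Y}$ the same structure as its proof of \ref{prop:YbarS-diff}: it first works over the intermediate log scheme $Y^{L_0}_{\underline{r_0}}$ using the tower computations \ref{lem:B-diff-ari} and \ref{lem:B-diff-geo}, then descends from $Y^{L_0}_{\underline{r_0}}$ to $Y$ via Gabber's log cotangent complex, and finally passes from $Y^{\overline{K}}_{\underline{\infty}}$ to $\overline{Y}$ by almost purity (\ref{prop:B_nm-almost-purity}). You instead black-box \ref{prop:YbarS-diff} and deduce the result from the Jacobi--Zariski right-exact sequence for $\overline{Y}\to Y\to S$. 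This is shorter and structurally cleaner once \ref{prop:YbarS-diff} is in hand; what it buys is avoiding rerunning the tower argument. What the paper's proof buys is independence, since the two propositions are proved in parallel rather than in series. Your bookkeeping ($\phi(N)\subseteq\im(\alpha)$ on the nose, $p^{2k_0}\im(\alpha)\subseteq\phi(N)$ from the $p^{k_0}$-isomorphism $B^d\to\Omega^1_{Y/S}$, then composing a $p^{k_1}$-isomorphism on cokernels with a $p^{2k_0}$-isomorphism) is sound and produces a controlled $p^k$ depending only on $k_0,k_1$.

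However, your citation of \ref{cor:B-diff-compare}.(\ref{item:cor:B-diff-compare-1}) with $L'=K$, $\underline{r}'=\underline{1}$ is not quite legal: that corollary inherits the standing hypothesis of \ref{cor:B-cotangent-bound}, namely $\ca{L}^F_{\underline{\infty}}=\ca{L}^L_{\underline{r}}\otimes_{\ca{K}^L_{\underline{r}}}\ca{K}^F_{\underline{\infty}}$, which need not hold for $L=K$, $\underline{r}=\underline{1}$ when $\ca{L}\cap\ca{K}^{\overline{K}}_{\underline{\infty}}$ strictly contains $\ca{K}$ (this is exactly why the paper introduces $L_0$, $r_0$ in \ref{para:notation-Ybar}). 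The fact you actually need, that $B\otimes_A\Omega^1_{X/S}\to\Omega^1_{Y/S}$ is a $p^{k_0}$-isomorphism, is nonetheless true unconditionally: it follows from \ref{lem:quasi-adequate-diff} applied to $B/A$ (which has no hypothesis on the tower index), plugged into Gabber's fundamental distinguished triangle for the strict morphism $Y\to X$ exactly as in the proof of \ref{cor:B-diff-compare}.(\ref{item:cor:B-diff-compare-1}). With that substitution your argument closes.
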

\begin{proof}
	The proof is similar to that of \ref{prop:YbarS-diff}. With the notation in {\rm\ref{para:notation-Ybar}}, consider the canonical exact sequences
	\begin{align}
		B^{\overline{K}}_{\underline{\infty}}\otimes_{B^{L_0}_{\underline{\infty}}}\Omega^1_{Y^{L_0}_{\underline{\infty}}/Y^{L_0}_{\underline{r_0}}}\stackrel{\alpha_1}{\longrightarrow} &\Omega^1_{Y^{\overline{K}}_{\underline{\infty}}/Y^{L_0}_{\underline{r_0}}}\stackrel{\beta_1}{\longrightarrow} \Omega^1_{Y^{\overline{K}}_{\underline{\infty}}/Y^{L_0}_{\underline{\infty}}}\longrightarrow 0,\\
		B^{\overline{K}}_{\underline{\infty}}\otimes_{B^{\overline{K}}_{\underline{r_0}}}\Omega^1_{Y^{\overline{K}}_{\underline{r_0}}/Y^{L_0}_{\underline{r_0}}}\stackrel{\alpha_2}{\longrightarrow} &\Omega^1_{Y^{\overline{K}}_{\underline{\infty}}/Y^{L_0}_{\underline{r_0}}}\stackrel{\beta_2}{\longrightarrow} \Omega^1_{Y^{\overline{K}}_{\underline{\infty}}/Y^{\overline{K}}_{\underline{r_0}}}\longrightarrow 0.
	\end{align}
	By \ref{lem:B-diff-ari}, there are $p^{k}$-isomorphisms
	\begin{align}
		B^{\overline{K}}_{\underline{\infty}}\otimes_{\ca{O}_{\overline{K}}}\overline{K}/\ca{O}_{\overline{K}}&\longrightarrow \Omega^1_{Y^{\overline{K}}_{\underline{\infty}}/Y^{L_0}_{\underline{\infty}}},\\
		B^{\overline{K}}_{\underline{r_0}}\otimes_{\ca{O}_{\overline{K}}}\overline{K}/\ca{O}_{\overline{K}}&\longrightarrow \Omega^1_{Y^{\overline{K}}_{\underline{r_0}}/Y^{L_0}_{\underline{r_0}}}.
	\end{align}
	On the other hand, by \ref{lem:B-diff-geo}, there are $p^{k_0}$-isomorphisms
	\begin{align}
		(B^{L_0}_{\underline{\infty}}[\frac{1}{p}]/r_0^{-1}B^{L_0}_{\underline{\infty}})^d&\longrightarrow \Omega^1_{Y^{L_0}_{\underline{\infty}}/Y^{L_0}_{\underline{r_0}}},\\
		(B^{\overline{K}}_{\underline{\infty}}[\frac{1}{p}]/r_0^{-1}B^{\overline{K}}_{\underline{\infty}})^d&\longrightarrow\Omega^1_{Y^{\overline{K}}_{\underline{\infty}}/Y^{\overline{K}}_{\underline{r_0}}}.
	\end{align}
	By considering the compositions $\beta_2\circ\alpha_1$ and $\beta_1\circ\alpha_2$, one checks easily that the $B^{\overline{K}}_{\underline{\infty}}$-linear homomorphism
	\begin{align}
		(B^{\overline{K}}_{\underline{\infty}}[\frac{1}{p}]/B^{\overline{K}}_{\underline{\infty}})^{1+d}\longrightarrow \Omega^1_{Y^{\overline{K}}_{\underline{\infty}}/Y^{L_0}_{\underline{r_0}}}
	\end{align}
	sending $p^{-n}\mbf{e}_i$ to $\df\log(t_{i,p^n})$ for any $n\in\bb{N}$ and $0\leq i\leq d$, is a $p^l$-isomorphism for some $l\in \bb{N}$. Since $Y^{L_0}_{\underline{r_0}}\to Y$ is a morphism between Noetherian fs log schemes which induces an \'etale morphism of the generic fibres (\ref{prop:B-adequate}.(\ref{item:prop:B-adequate-3})) and induces a finite morphism of the underlying schemes (\ref{rem:int-clos-fini}), the homology groups of the logarithmic cotangent complex $\dl_{Y^{L_0}_{\underline{r_0}}/Y}$ defined by Gabber are $p$-primary torsion, finitely generated $B^{L_0}_{\underline{r_0}}$-modules (\cite[8.30]{olsson2005logcot}). After enlarging $l$ (depending only on $L_0$ and $r_0$), we may assume that the map $(B^{\overline{K}}_{\underline{\infty}}[\frac{1}{p}]/B^{\overline{K}}_{\underline{\infty}})^{1+d}\to \Omega^1_{Y^{\overline{K}}_{\underline{\infty}}/Y}$ is a $p^l$-isomorphism. The conclusion follows from \ref{prop:B_nm-almost-purity}.
\end{proof}

\begin{mypara}\label{para:ds}
	The $\widehat{\overline{B}}[1/p]$-module $V_p(\Omega^1_{\overline{Y}/Y})=\plim_{x\mapsto px}\Omega^1_{\overline{Y}/Y}$ (cf. \ref{para:notation-Tate-mod}) is endowed with a natural action of $G$. For any element $(s_{p^n})_{n\in \bb{N}}$ of $\plim_{x\mapsto x^p}\overline{B}[1/p]\cap \overline{B}_{\triv}^\times$, we take $l\in\bb{N}$ such that $p^ls_1\in \overline{B}$. Thus, $p^ls_{p^n}^r\in\overline{B}$ for any $n\in\bb{N}$ and $0\leq r\leq p^n$. Notice that the element $p^{-2l}((p^ls_{p^n}^{p^n-1})\df (p^ls_{p^n}))_{n\in\bb{N}}$ of $V_p(\Omega^1_{\overline{Y}/Y})$ does not depend on the choice of $l$. Thus, we denote this element by $(s_{p^n}^{p^n-1}\df s_{p^n})_{n\in\bb{N}}$.
\end{mypara}

\begin{myprop}[{cf. \cite[\Luoma{2}.7.22]{abbes2016p}, \cite[4.4]{he2021faltingsext}}]\label{prop:B_nm-fal-ext}
	There is a canonical $G$-equivariant exact sequence of $\widehat{\overline{B}}[1/p]$-modules,
	\begin{align}\label{eq:prop:B_nm-fal-ext}
		0\longrightarrow \widehat{\overline{B}}[\frac{1}{p}](1)\stackrel{\iota}{\longrightarrow}V_p(\Omega^1_{\overline{Y}/Y})\stackrel{\jmath}{\longrightarrow} \widehat{\overline{B}}[\frac{1}{p}]\otimes_B\Omega^1_{Y/S}\longrightarrow 0,
	\end{align}
	satisfying the following properties:
	\begin{enumerate}
		\renewcommand{\labelenumi}{{\rm(\theenumi)}}
		\item We have $\iota(1\otimes (\zeta_{p^n})_{n\in \bb{N}})=(\df\log(\zeta_{p^n}))_{n\in \bb{N}}$.\label{item:prop:B_nm-fal-ext-1}
		\item For any element $s\in B[1/p]\cap B_{\triv}^\times$ and any compatible system of $p$-power roots $(s_{p^n})_{n\in \bb{N}}$ of $s$ in $\overline{B}[1/p]$, $\jmath((s_{p^n}^{p^n-1}\df s_{p^n})_{n\in\bb{N}})=\df s$ (cf. {\rm\ref{para:ds}}).\label{item:prop:B_nm-fal-ext-2}
		\item The $\widehat{\overline{B}}[1/p]$-linear surjection $\jmath$ admits a section sending $\df\log(t_i)$ to $(\df\log(t_{i,p^n}))_{n\in\bb{N}}$ for any $1\leq i\leq d$.\label{item:prop:B_nm-fal-ext-3}
	\end{enumerate} 
	In particular, $V_p(\Omega^1_{\overline{Y}/Y})$ is a finite free $\widehat{\overline{B}}[\frac{1}{p}]$-module with basis $\{(\df\log(t_{i,p^n}))_{n\in\bb{N}}\}_{0\leq i\leq d}$, where $t_{0,p^n}=\zeta_{p^n}$.
\end{myprop}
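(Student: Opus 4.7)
The plan is to apply the functor $V_p=\plim_{x\mapsto px}$ to the explicit model for $\Omega^1_{\overline{Y}/Y}$ provided by Proposition~\ref{prop:YbarY-diff}, and to define $\iota$ and $\jmath$ via this identification. By \ref{prop:YbarY-diff}, the $\overline{B}$-linear map
\[\phi:(\overline{B}[1/p]/\overline{B})^{1+d}\longrightarrow \Omega^1_{\overline{Y}/Y},\qquad p^{-n}\mbf{e}_i\mapsto \df\log(t_{i,p^n}),\]
is a $p^k$-isomorphism for some $k\in\bb{N}$. Its source is $p$-primary torsion, hence so is its target, and therefore $V_p(\Omega^1_{\overline{Y}/Y})$ is a $\bb{Q}_p$-vector space (cf.\ \ref{para:notation-Tate-mod}). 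A direct computation using the resolution $0\to\bb{Z}\to\bb{Z}[1/p]\to\bb{Z}[1/p]/\bb{Z}\to 0$ yields $V_p(\overline{B}[1/p]/\overline{B})=\widehat{\overline{B}}[1/p]$, so by \ref{rem:pi-iso-retract}.(\ref{item:rem:pi-iso-retract-2}) the induced map
\[V_p(\phi):\widehat{\overline{B}}[1/p]^{1+d}\longrightarrow V_p(\Omega^1_{\overline{Y}/Y}),\qquad \mbf{e}_i\mapsto (\df\log(t_{i,p^n}))_n,\]
is a $p^{2k}$-isomorphism; since $p$ acts invertibly on both sides, it is in fact an isomorphism of $\widehat{\overline{B}}[1/p]$-modules. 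This establishes the final assertion of the proposition.

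Via the isomorphism $V_p(\phi)$, I would define $\iota$ as the embedding into the $\mbf{e}_0$-component sending $1\otimes\zeta\mapsto(\df\log(\zeta_{p^n}))_n$, and $\jmath$ as the $\widehat{\overline{B}}[1/p]$-linear projection onto $\widehat{\overline{B}}[1/p]\otimes_B\Omega^1_{Y/S}$ corresponding to the splitting $\df\log(t_i)\mapsto(\df\log(t_{i,p^n}))_n$ for $1\leq i\leq d$. The short exact sequence \eqref{eq:prop:B_nm-fal-ext} and properties (1) and (3) then hold by construction. $G$-equivariance follows since the basis element $(\df\log(\zeta_{p^n}))_n$ transforms by the cyclotomic character, each $(\df\log(t_{i,p^n}))_n$ transforms by the $\xi_i$-cocycle plus a translate in the $\mbf{e}_0$-direction (as in \eqref{eq:fal-ext-action}), and $\Omega^1_{Y/S}$ carries the trivial $G$-action since $B$ is $G$-fixed.

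For property (2), the elementary identity $s_{p^n}^{p^n-1}\df s_{p^n}=s\cdot\df\log(s_{p^n})$ (cf.\ \ref{para:ds}) gives $(s_{p^n}^{p^n-1}\df s_{p^n})_n=s\cdot(\df\log(s_{p^n}))_n$ in $V_p(\Omega^1_{\overline{Y}/Y})$, reducing property (2) to the claim $\jmath((\df\log(s_{p^n}))_n)=\df\log(s)$ for every $s\in B[1/p]\cap B_\triv^\times$. This claim holds for $s=t_i$ by construction, extends multiplicatively via the Leibniz rule $\df\log(ss')=\df\log(s)+\df\log(s')$, and for an arbitrary $s$ I would reduce to the valuation-ring case via Lemma~\ref{lem:A-inj}: for each $v:(B_\triv,B,\overline{B})\to(E,\ca{O}_E,\ca{O}_{\overline{E}})$ in $\ak{E}(B)$, functoriality of log differentials and of the $V_p$ construction yields a compatible map to the Faltings extension of $\ca{O}_E$ (Theorem~\ref{thm:fal-ext}), in which the analogous statement is \ref{thm:fal-ext}.(\ref{item:thm:fal-ext-2}). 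The main obstacle is precisely this canonicity check for $\jmath$: the log structure on $\overline{Y}$ and the discrete-valuation log structure on $\spec(\ca{O}_E)$ differ a priori, so one must carefully compare the two Faltings extensions via the compactifying log structure associated to $\spec(B_\triv)\hookrightarrow\spec(B)$, using the $p^k$-isomorphisms of \ref{prop:cycl-diff} and \ref{prop:YbarY-diff} to control the discrepancy and the injection of \ref{lem:A-inj} to conclude the desired identity on the nose after inverting $p$.
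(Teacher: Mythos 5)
Your outline differs from the paper's proof in a way that leaves a genuine gap at exactly the hard step, namely property (2).

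The paper does not define $\jmath$ by choosing a splitting on a basis. Instead it exploits both \ref{prop:YbarS-diff} \emph{and} \ref{prop:YbarY-diff} simultaneously: it sets up a morphism of two complexes, the split short exact sequence
\begin{align}
0\to 0\oplus\overline{B}^d\to (\overline{B}[\tfrac{1}{p}]/\overline{B})\oplus \overline{B}[\tfrac{1}{p}]^d\to (\overline{B}[\tfrac{1}{p}]/\overline{B})\oplus (\overline{B}[\tfrac{1}{p}]/\overline{B})^d\to 0
\end{align}
mapping by $p^k$-isomorphisms into the log cotangent sequence $\overline{B}\otimes_B\Omega^1_{Y/S}\to\Omega^1_{\overline{Y}/S}\to\Omega^1_{\overline{Y}/Y}\to 0$. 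Applying $\ho_{\bb{Z}_p}(\bb{Z}_p/p^n,-)$, taking the limit, and inverting $p$, the map $\jmath$ is then \emph{the connecting homomorphism} of that cotangent sequence. With this definition the value $\jmath((s_{p^n}^{p^n-1}\df s_{p^n})_n)=\df s$ drops out from the standard snake-lemma description of the connecting map (an element of $\Omega^1_{\overline{Y}/Y}[p^n]$ is lifted to $\Omega^1_{\overline{Y}/S}$ and hit by $p^n$), with no case analysis on $s$. Your construction uses only \ref{prop:YbarY-diff}, so the exact sequence $\overline{Y}\to Y\to S$ never enters, and $\jmath$ is defined ad hoc by declaring its values on the chosen basis $\{(\df\log(t_{i,p^n}))_n\}$. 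You then have to verify property (2) by hand for an arbitrary $s\in B[1/p]\cap B_\triv^\times$.

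Your verification sketch does not close this gap. The multiplicativity argument only handles $s$ in the multiplicative subgroup generated by the $t_i$ (and units, which you do not treat); a general element of $B[1/p]\cap B_\triv^\times$ is not of that form. The proposed reduction to the valuation-ring case via \ref{lem:A-inj} is in the right spirit --- this is essentially what the paper does \emph{later} in \ref{para:B-fal-ext-compare} and \ref{prop:dlogs} to prove canonicity of the Faltings extension --- but note that \ref{para:B-fal-ext-compare} explicitly invokes ``the explicit descriptions of $\iota$ and $\jmath$'' from the very proposition you are trying to prove to build the comparison diagram. You would need to re-derive the compatibility of your ad hoc $\jmath$ with the $\jmath$ of \ref{thm:fal-ext} for each point of $\ak{E}(B)$ without appealing to property (2), which is precisely the nontrivial content and is left at the level of ``one must carefully compare.'' In short: you are missing the idea that the log cotangent exact sequence for $\overline{Y}\to Y\to S$ furnishes $\jmath$ intrinsically, and your substitute argument for (2) is a sketch rather than a proof.
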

\begin{proof}
	Consider the commutative diagram
	\begin{align}\label{diam:prop:B_nm-fal-ext-1}
		\xymatrix{
			0\ar[r]&0\oplus\overline{B}^d\ar[r]\ar[d]& (\overline{B}[\frac{1}{p}]/\overline{B})\oplus \overline{B}[\frac{1}{p}]^d\ar[r]\ar[d]& (\overline{B}[\frac{1}{p}]/\overline{B})\oplus (\overline{B}[\frac{1}{p}]/\overline{B})^d\ar[d]\ar[r]&0\\
			&\overline{B}\otimes_B \Omega^1_{Y/S}\ar[r]&\Omega^1_{\overline{Y}/S}\ar[r]&\Omega^1_{\overline{Y}/Y}\ar[r]&0
		}
	\end{align}
	where the vertical maps send $p^{-n}\mbf{e}_i$ to $\df\log(t_{i,p^n})$ for any $n\in\bb{N}$ and $0\leq i\leq d$. There exists $k\in \bb{N}$ such that the vertical maps are $p^k$-isomorphisms by \ref{lem:quasi-adequate-regular}.(\ref{item:lem:quasi-adequate-regular-2}), \ref{prop:YbarS-diff} and \ref{prop:YbarY-diff}. It is clear that the first row is exact and splits. Applying $\ho_{\bb{Z}_p}(\bb{Z}_p/p^n\bb{Z}_p,-)$ to \eqref{diam:prop:B_nm-fal-ext-1}, we get a commutative diagram
	\begin{align}\label{diam:prop:B_nm-fal-ext-2}
		\xymatrix@C=1pc{
			0\ar[r]\ar[d]& (p^{-n}\overline{B}/\overline{B})\oplus 0\ar[r]\ar[d]& (p^{-n}\overline{B}/\overline{B})\oplus (p^{-n}\overline{B}/\overline{B})^d\ar[d]\ar[r]&0\oplus(\overline{B}/p^n\overline{B})^d \ar[d]\ar[r]&0\\
			(\overline{B}\otimes_B \Omega^1_{Y/S})[p^n]\ar[r]&(\Omega^1_{\overline{Y}/S})[p^n]\ar[r]&\Omega^1_{\overline{Y}/Y}[p^n]\ar[r]^-{\jmath}&(\overline{B}\otimes_B \Omega^1_{Y/S})/p^n\ar[r]&0
		}
	\end{align}
	where the connecting map $\jmath$ sends $(p^ls_{p^n}^{p^n-1})\df (p^ls_{p^n})$ to $\df (p^{2l}s)$ (with the notation in \ref{para:ds}). The vertical maps are 
	$p^{2k}$-isomorphisms by \ref{rem:pi-iso-retract}.(\ref{item:rem:pi-iso-retract-1}), and the first row is exact and splits. Taking inverse limit on $n\in \bb{N}$ and inverting $p$, by \ref{rem:pi-iso-retract}.(\ref{item:rem:pi-iso-retract-2}), we get a canonical $G$-equivariant exact sequence, which admits a splitting (not $G$-equivariant),
	\begin{align}
		0\longrightarrow T_p(\Omega^1_{\overline{Y}/S})[\frac{1}{p}]\longrightarrow V_p(\Omega^1_{\overline{Y}/Y})\stackrel{\jmath}{\longrightarrow} \widehat{\overline{B}}[\frac{1}{p}]\otimes_B\Omega^1_{Y/S}\longrightarrow 0,
	\end{align}
	where we used the fact that $T_p(\Omega^1_{\overline{Y}/Y})[\frac{1}{p}]=V_p(\Omega^1_{\overline{Y}/Y})$ as $\Omega^1_{\overline{Y}/Y}$ is $p$-primary torsion, and that $\Omega^1_{Y/S}[1/p]$ is finite free over $B[1/p]$ with basis $\df\log(t_1),\dots,\df\log(t_d)$. Notice that \eqref{diam:prop:B_nm-fal-ext-2} also implies that $\iota:\widehat{\overline{B}}[\frac{1}{p}](1)\to T_p(\Omega^1_{\overline{Y}/S})[\frac{1}{p}]$ sending $1\otimes (\zeta_{p^n})_{n\in\bb{N}}$ to $(\df\log(\zeta_{p^n}))_{n\in\bb{N}}$ is an isomorphism. The conclusion follows.
\end{proof}

\begin{mypara}\label{para:B-fal-ext-compare}
	A priori, the sequence \eqref{eq:prop:B_nm-fal-ext} relies on the choice of the quasi-adequate chart of $B$. In the rest of this section, we show that it can be canonically defined, independently of the choice of a chart. Firstly, we check the compatibility of \eqref{eq:prop:B_nm-fal-ext} with the Faltings extension \eqref{eq:fal-ext} of a complete discrete valuation ring. Let  $(B_{\triv},B,\overline{B})\to (E,\ca{O}_E,\ca{O}_{\overline{E}})$ be an element of $\ak{E}(B)$ (cf. \ref{para:notation-A-inj}). Let $Z$ (resp. $\overline{Z}$) be the log scheme of underlying scheme $\spec(\ca{O}_E)$ (resp. $\spec(\ca{O}_{\overline{E}})$) with the compactifying log structure defined by the closed point. Notice that the map $\Omega^1_{\ca{O}_{\overline{E}}/\ca{O}_E}\to \Omega^1_{\overline{Z}/Z}$ is a $p$-isomorphism by \ref{lem:cycl-diff}, which thus induces a natural isomorphism of $\widehat{\overline{E}}$-modules $\scr{E}_{\ca{O}_E}=V_p(\Omega^1_{\ca{O}_{\overline{E}}/\ca{O}_E})\iso V_p(\Omega^1_{\overline{Z}/Z})$. The map $\Omega^1_{\ca{O}_E/\ca{O}_K}\to \Omega^1_{Z/S}$ is also a $p$-isomorphism by \ref{lem:cycl-diff}, which thus induces a natural isomorphism of $E$-modules $\widehat{\Omega}^1_{\ca{O}_E}[\frac{1}{p}]=(\Omega^1_{\ca{O}_E/\ca{O}_K})^\wedge[\frac{1}{p}]\iso (\Omega^1_{Z/S})^\wedge[\frac{1}{p}]$. By the explicit descriptions of $\iota$ and $\jmath$, the exact sequence \eqref{eq:prop:B_nm-fal-ext} fits into the following natural commutative diagram
	\begin{align}\label{eq:B-fal-ext-compare}
		\xymatrix{
			0\ar[r]& \widehat{\overline{B}}[\frac{1}{p}](1)\ar[r]^-{\iota}\ar[d]&V_p(\Omega^1_{\overline{Y}/Y})\ar[r]^-{\jmath}\ar[d]& \widehat{\overline{B}}[\frac{1}{p}]\otimes_{B}\Omega^1_{Y/S}\ar[r]\ar[d]& 0\\
			0\ar[r]& \prod_{\ak{E}(B)}\widehat{\overline{E}}(1)\ar[r]^-{\iota}&\prod_{\ak{E}(B)}\scr{E}_{\ca{O}_E}\ar[r]^-{\jmath}& \prod_{\ak{E}(B)}\widehat{\overline{E}}\otimes_{\ca{O}_E}\widehat{\Omega}^1_{\ca{O}_E}\ar[r]& 0
		}
	\end{align}
	where the second sequence is the product of the Faltings extensions of $\ca{O}_E$ defined in \ref{thm:fal-ext}. 
	
	For any $\ak{q}\in\ak{S}_p(\overline{B})$ with image $\ak{p}\in\ak{S}_p(B)$, consider the element $(B_{\triv},B,\overline{B})\to (E_{\ak{p}},\ca{O}_{E_{\ak{p}}},\ca{O}_{\overline{E}_{\ak{q}}})$ of $\ak{E}(B)$ defined in \ref{para:notation-A-inj}. Notice that $\{\df\log(t_i)\}_{1\leq i\leq d}$ forms an $E_{\ak{p}}$-basis of $\widehat{\Omega}^1_{\ca{O}_{E_{\ak{p}}}}[1/p]$ by \ref{prop:quasi-adequate-diff} (as $\ca{O}_{E_{\ak{p}}}$ is the $p$-adic completion of the localization $B_{\ak{p}}$). Thus, $\{(\df\log(t_{i,p^n}))_{n\in\bb{N}}\}_{0\leq i\leq d}$ forms an $\widehat{\overline{E}}_{\ak{q}}$-basis of $\scr{E}_{\ca{O}_{E_{\ak{p}}}}$. With respect to this basis, we can identify the projection of each vertical map in \eqref{eq:B-fal-ext-compare} to the components corresponding to $\ak{S}_p(\overline{B})$ with a direct sum of the natural map $\widehat{\overline{B}}[\frac{1}{p}]\to \prod_{\ak{q}\in\ak{S}_p(\overline{B})}  \widehat{\overline{E}}_{\ak{q}}$, which is injective by \ref{lem:A-inj}. In conclusion, the vertical maps in \eqref{eq:B-fal-ext-compare} are injective.
\end{mypara}

\begin{mylem}\label{lem:dlogs-unique}
	For any element $s\in B[1/p]\cap B_{\triv}^\times$ and any compatible system of $p$-power roots $(s_{p^n})_{n\in \bb{N}}$ of $s$ in $\overline{B}[1/p]$, there is at most one element $\omega\in V_p(\Omega^1_{\overline{Y}/Y})$ such that $s\omega=(s_{p^n}^{p^n-1}\df s_{p^n})_{n\in\bb{N}}$.
\end{mylem}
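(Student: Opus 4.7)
The plan is to reduce uniqueness to an injectivity statement and then to compare with the Faltings extension over the complete discrete valuation rings appearing in $\ak{E}(B)$ via the injection \eqref{eq:B-fal-ext-compare}. Since $V_p(\Omega^1_{\overline{Y}/Y})$ is a $\widehat{\overline{B}}[1/p]$-module, the assertion that at most one $\omega$ satisfies $s\omega=(s_{p^n}^{p^n-1}\df s_{p^n})_{n\in\bb{N}}$ is equivalent to the statement that multiplication by $s$ is injective on $V_p(\Omega^1_{\overline{Y}/Y})$. So the first step is to translate the lemma to: if $\omega\in V_p(\Omega^1_{\overline{Y}/Y})$ satisfies $s\omega=0$, then $\omega=0$.

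Next, I would invoke the commutative diagram \eqref{eq:B-fal-ext-compare} constructed in \ref{para:B-fal-ext-compare}. Recall that the middle vertical arrow
\[
V_p(\Omega^1_{\overline{Y}/Y})\longrightarrow \prod_{\ak{E}(B)}\scr{E}_{\ca{O}_E}
\]
is injective, as shown at the end of \ref{para:B-fal-ext-compare} using the injectivity of $\widehat{\overline{B}}\to \prod_{\ak{E}(B)}\ca{O}_{\widehat{\overline{E}}}$ from \ref{lem:A-inj} and the description of $\scr{E}_{\ca{O}_E}$ by the basis $\{(\df\log(t_{i,p^n}))_{n\in\bb{N}}\}_{0\le i\le d}$. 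It thus suffices to show that multiplication by $s$ is injective on each factor $\scr{E}_{\ca{O}_E}$ indexed by a morphism of triples $v:(B_{\triv},B,\overline{B})\to (E,\ca{O}_E,\ca{O}_{\overline{E}})$ in $\ak{E}(B)$.

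For this, the key observation is that by the definition of a morphism of $(K,\ca{O}_K,\ca{O}_{\overline{K}})$-triples (\ref{defn:triple}), the map $B\to\ca{O}_E$ induced by $v$ sends $B_{\triv}$ into $E$, and since $s\in B_{\triv}^{\times}$, its image $v(s)\in E$ is a unit, hence a nonzero element of the valuation field $\widehat{\overline{E}}$. By \ref{thm:fal-ext}, $\scr{E}_{\ca{O}_E}$ is a finite free $\widehat{\overline{E}}$-module, so multiplication by the nonzero scalar $v(s)\in\widehat{\overline{E}}$ is injective. Combining this with the injectivity of \eqref{eq:B-fal-ext-compare} gives the desired uniqueness.

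There is no serious obstacle; the only point to be mindful of is that the argument genuinely needs $s\in B_{\triv}^{\times}$ (not merely $s\in B[1/p]\setminus\{0\}$), since otherwise the image of $s$ in some $E$ could be zero and multiplication by $s$ would fail to be injective on the corresponding factor.
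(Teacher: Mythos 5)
Your proposal is correct and follows essentially the same route as the paper: both reduce uniqueness to injectivity of multiplication by $s$, invoke the injection of $V_p(\Omega^1_{\overline{Y}/Y})$ into a product of modules $\scr{E}_{\ca{O}_E}$ established in \ref{para:B-fal-ext-compare}, and conclude because $s$ acts invertibly on the target. The only cosmetic difference is that the paper restricts to the subproduct indexed by $\ak{S}_p(\overline{B})$ (where $B\to E_{\ak{p}}$ is automatically injective, so $s\neq 0$ already suffices), while you use the full product $\prod_{\ak{E}(B)}$ and rely more essentially on $s\in B_{\triv}^\times$ to guarantee nonvanishing in every factor --- a point you correctly flag.
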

\begin{proof}
	By the discussion in \ref{para:B-fal-ext-compare}, we have $V_p(\Omega^1_{\overline{Y}/Y})\subseteq \prod_{\ak{q}\in\ak{S}_p(\overline{B})}\scr{E}_{\ca{O}_{E_{\ak{p}}}}$. Since $s$ acts invertibly on the latter, the unicity of $\omega$ follows.
\end{proof}

\begin{myprop}\label{prop:dlogs}
	For any element $s\in B[1/p]\cap B_{\triv}^\times$ and any compatible system of $p$-power roots $(s_{p^n})_{n\in \bb{N}}$ of $s$ in $\overline{B}[1/p]$, there is a unique element $\omega\in V_p(\Omega^1_{\overline{Y}/Y})$ such that $s\omega=(s_{p^n}^{p^n-1}\df s_{p^n})_{n\in\bb{N}}$, which we denote by $(\df\log (s_{p^n}))_{n\in\bb{N}}$.
\end{myprop}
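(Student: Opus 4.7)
Uniqueness is immediate from \ref{lem:dlogs-unique}, so the task is to produce one such $\omega$. The plan is to build $\omega$ directly out of the logarithmic structure on $\overline{Y}$ and then check the required identity by projecting to each height-one completion, using the canonical injection $V_p(\Omega^1_{\overline{Y}/Y})\hookrightarrow \prod_{\ak{q}\in\ak{S}_p(\overline{B})}\scr{E}_{\ca{O}_{E_\ak{p}}}$ established in \ref{para:B-fal-ext-compare} (via \ref{lem:A-inj}).

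First, I would pick $l\in\bb{N}$ large enough that $p^l s\in\overline{B}$; as in \ref{para:ds} this forces $p^l s_{p^n}^r\in\overline{B}$ for every $n\in\bb{N}$ and every $0\le r\le p^n$. Since $s\in B_\triv^\times$ and $p$ is invertible in $B_\triv$ (because $B_\triv$ is a localization with respect to an element of $pB$), each $p^l s_{p^n}$ lies in $\overline{B}\cap \overline{B}_\triv^\times$; choosing $(L,\underline{r})\in \scr{F}^{\mrm{fini}}_{\overline{K}/K}\times \bb{N}_{>0}^d$ large enough, $p^l s_{p^n}$ already belongs to $B^L_{\underline{r}}\cap B^L_{\underline{r},\triv}^\times$, i.e., is a section of the compactifying log structure $\ca{M}_{Y^L_{\underline{r}}}$, hence of $\ca{M}_{\overline{Y}}^{\mrm{gp}}$ after pullback. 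Set
\[
 \omega_n:=\df\log(p^l s_{p^n})\in \Omega^1_{\overline{Y}/Y};
\]
the value is independent of $l$ because $p$ is a section of $\ca{M}_Y$, so $\df\log(p)=0$ in $\Omega^1_{\overline{Y}/Y}$. Using $(p^l s_{p^{n+1}})^p = p^{lp} s_{p^n}$ together with the same vanishing of $\df\log(p)$, one gets $p\omega_{n+1}=\df\log(p^{lp}s_{p^n})=\df\log(p^l s_{p^n})=\omega_n$, so $\omega:=(\omega_n)_{n\in\bb{N}}$ defines an element of $V_p(\Omega^1_{\overline{Y}/Y})$.

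To verify $s\omega=(s_{p^n}^{p^n-1}\df s_{p^n})_{n\in\bb{N}}$ in $V_p(\Omega^1_{\overline{Y}/Y})$, by the above injectivity it is enough to check the identity componentwise in each $\scr{E}_{\ca{O}_{E_\ak{p}}}$, $\ak{q}\in\ak{S}_p(\overline{B})$. Let $Z_\ak{p}$, $\overline{Z}_\ak{q}$ be the log schemes with underlying schemes $\spec(\ca{O}_{E_\ak{p}})$, $\spec(\ca{O}_{\overline{E}_\ak{q}})$ and compactifying log structures at the closed points. Under the natural map $\Omega^1_{\overline{Y}/Y}\to \Omega^1_{\overline{Z}_\ak{q}/Z_\ak{p}}$, $\omega_n$ goes to $\df\log(p^l s_{p^n})=\df\log(s_{p^n})$, and passing to $V_p$ and using the isomorphism $V_p(\Omega^1_{\overline{Z}_\ak{q}/Z_\ak{p}})\iso \scr{E}_{\ca{O}_{E_\ak{p}}}$ induced by the $p$-isomorphism of \ref{lem:cycl-diff}, the image of $\omega$ becomes exactly the element $(\df\log(s_{p^n}))_{n\in\bb{N}}\in \scr{E}_{\ca{O}_{E_\ak{p}}}$ attached to $s\in E_\ak{p}^\times$ in the convention preceding \ref{thm:fal-ext}. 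That convention itself yields $s\cdot (\df\log(s_{p^n}))_n=(s_{p^n}^{p^n-1}\df s_{p^n})_n$ in $\scr{E}_{\ca{O}_{E_\ak{p}}}$, and injectivity pulls the same identity back to $V_p(\Omega^1_{\overline{Y}/Y})$.

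The only delicate point is the inclusion $p^l s_{p^n}\in \ca{M}_{\overline{Y}}^{\mrm{gp}}$, which requires locating $s_{p^n}$ at a finite stage $(L,\underline{r})$ of the Kummer tower and invoking the compatibility of compactifying log structures with the inverse-image log structures on $\overline{Y}$. Everything else is either a direct computation with $\df\log$ or a reduction to the Faltings extension over a complete discrete valuation ring, already recorded in \ref{thm:fal-ext} and \ref{para:ds}.
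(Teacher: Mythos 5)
Your construction hinges on the assertion that, for $L$ and $\underline{r}$ large enough, the element $p^l s_{p^n}\in B^L_{\underline{r}}\cap B^L_{\underline{r},\triv}^\times$ is a global section of $\ca{M}^{\mrm{gp}}_{Y^L_{\underline{r}}}$, hence of $\ca{M}^{\mrm{gp}}_{\overline{Y}}$ after pullback. This is where the argument breaks down. The log structure on $Y^L_{\underline{r}}$ is \emph{not} the compactifying log structure associated to $\spec(B^L_{\underline{r},\triv})\to\spec(B^L_{\underline{r}})$; it is the inverse image of the log structure on $X^L_{\underline{r}}$ along the strict morphism $Y^L_{\underline{r}}\to X^L_{\underline{r}}$ (cf.\ \ref{para:notation-quasi-adequate-tower}), so its sections are, \'etale-locally, units of $B^L_{\underline{r}}$ times monomials in $\pi_L, t_{1,r_1},\dots,t_{d,r_d}$. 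Lemma \ref{lem:quasi-adequate-regular}.(2) asserts the two log structures agree only on the \emph{generic fibre} $\spec(B^L_{\underline{r}}[1/p])$. But $\Omega^1_{\overline{Y}/Y}$ is $p$-primary torsion, so the generic fibre carries no information about it; to produce $\omega_n\in\Omega^1_{\overline{Y}/Y}$ you genuinely need $p^l s_{p^n}$ to be a log section at the integral level, and nothing in the paper (or in your proposal) establishes that.

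Concretely, \ref{lem:quasi-adequate-regular}.(1) only gives a \emph{local} factorization $s = u\,t_1^{a_1}\cdots t_d^{a_d}$ on $\spec(B_{\ak{p}}[1/p])$ with $u\in B_{\ak{p}}^\times$ and exponents $a_i=a_i(\ak{p})$ that vary with $\ak{p}$; there is no reason for $s$ to be a unit times a single monomial globally, which is exactly what it would take for $s_{p^n}$ to be a section of the (chart-associated) log structure of $\overline{Y}$. This local-to-global obstruction is precisely what the paper's proof is engineered to handle: it covers $\spec(B[1/p])$ by opens $\spec(B[1/pf_i])$ on which $s$ does admit a global factorization, passes to the normalizations $B_i$ of affine blowup algebras to get honest quasi-adequate $\ca{O}_K$-algebras (hence pre-perfectoid covers), constructs a local candidate $\omega_i=(\df\log(u_{p^n}))_n+\sum_i a_i(\df\log(t_{i,p^n}))_n$ on each $\overline{B_i}$, and then glues these via the exact augmented \v{C}ech complex coming from $v$-descent (arc-descent) of perfectoid algebras. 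Your argument skips all of this, and the missing step is not a routine compatibility of log structures but the heart of the proof. (Your uniqueness step and the componentwise verification via $\scr{E}_{\ca{O}_{E_{\ak{p}}}}$ are fine, and indeed mirror \ref{lem:dlogs-unique} and \ref{para:B-fal-ext-compare}.)
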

\begin{proof}
	We note that for any $\ak{p}\in\spec(B[1/p])$, the non-units of $\{t_1,\dots,t_d\}$ in $B_{\ak{p}}$ form a subset of a regular system of parameters of the regular local ring $B_{\ak{p}}$ by \ref{lem:quasi-adequate-regular}.(\ref{item:lem:quasi-adequate-regular-1}). Since the divisor $\mrm{div}(s)$ on $\spec(B_{\ak{p}})$ defined by $s$ is set-theoretically contained in the union of the integral effective Cartier divisors $\mrm{div}(t_1),\dots, \mrm{div}(t_d)$, we can write $s=ut_1^{a_1}\cdots t_d^{a_d}$ for some $u\in B_{\ak{p}}^\times$ and $a_1,\dots,a_d\in \bb{N}$ (\cite[\href{https://stacks.math.columbia.edu/tag/0BCP}{0BCP}]{stacks-project}). Thus, we can take $f_1,\dots,f_l\in B$ such that $\{\spec(B[1/pf_i])\}_{1\leq i\leq l}$ forms an open covering of $\spec(B[1/p])$ and that $s$ admits an expression as before in each $B[1/pf_i]$. Consider the finitely generated ideal $I=(f_1,\dots,f_l)$ of $B$. It contains a power of $p$ as $I[1/p]=B[1/p]$. Consider the affine blowup algebra $B[I/f_i]$ (see \ref{para:blowup}) and its normalization $B_i$. It is clear that $B_i[1/p]=B[1/pf_i]$. As the image of $B[I/f_i]\otimes_BB[I/f_j]$ in $B_i\otimes_B B_j[1/p]=B[1/pf_if_j]$ is the affine blowup algebra $B[I^2/f_if_j]$, we see that the integral closure $B_{ij}$ of $B_i\otimes_B B_j$ in $B_i\otimes_B B_j[1/p]$ coincides with the normalization of $B[I^2/f_if_j]$ for any $1\leq i,j\leq l$. Thus, $B_i,B_{ij}$ define naturally quasi-adequate $\ca{O}_K$-algebras with the same chart induced by $B$ \eqref{eq:B-chart} (cf. \ref{rem:quasi-adequate-exmp}). For the simplicity of our arguments below, we consider the integral closures of $B_i,B_{ij}$ in $\ca{L}_{ur}$ (the fraction field of $\overline{B}$), and denote them abusively by $\overline{B_i},\overline{B_{ij}}$ (which may be smaller than the corresponding maximal unramified extensions). Therefore, $\overline{B_i}$ is the integral closure of $\overline{B}[I/f_i]$ in $\overline{B}[I/f_i][1/p]=\overline{B}[1/pf_i]$, which is almost pre-perfectoid over $\ca{O}_{\overline{K}}$ by \ref{thm:blow-up-perfd}. Since $\overline{B_{ij}}$ is the integral closure of $\overline{B_i}\otimes_{\overline{B}}\overline{B_j}$ in $\overline{B_i}\otimes_{\overline{B}}\overline{B_j}[1/p]=\overline{B}[1/pf_if_j]$, it is almost pre-perfectoid and whose $p$-adic completion is almost isomorphic to that of $\overline{B_i}\otimes_{\overline{B}}\overline{B_j}$ by \cite[5.33, 5.30]{he2021coh}. Since $\{\spec(\overline{B}[I/f_i])\to\spec(\overline{B})\}_{1\leq i\leq l}$ is the composition of a blowup with a Zariski open covering, $\{\spec(\overline{B_i})\to\spec(\overline{B})\}_{1\leq i\leq l}$ is a v-covering by \cite[3.15]{he2021coh}. Thus, the descent of perfectoid algebras in the arc-topology (due to Bhatt-Scholze, cf. \cite[5.35]{he2021coh}) induces an exact augmented \v Cech complex
	\begin{align}
		\xymatrix{
			0\ar[r]&\widehat{\overline{B}}[\frac{1}{p}]\ar[r]&\prod_{1\leq i\leq l}\widehat{\overline{B_i}}[\frac{1}{p}]\ar@<0.3ex>[r]\ar@<-0.3ex>[r]& \prod_{1\leq i,j\leq l}\widehat{\overline{B_{ij}}}[\frac{1}{p}].
		}
	\end{align}
	On the other hand, let $Y_i$ (resp. $\overline{Y_i}$) be the log scheme with underlying scheme $\spec(B_i)$ (resp. $\spec(\overline{B_i})$) whose log structure is the inverse image of that of $Y$ (resp. $\overline{Y}$). Although the maximal unramified extension of the quasi-adequate algebra $B_i$ may be bigger than $\overline{B_i}$, the $\widehat{\overline{B_i}}$-module $V_p(\Omega^1_{\overline{Y_i}/Y_i})$ is finite free with basis $\{(\df\log(t_{i,p^n}))_{n\in\bb{N}}\}_{0\leq i\leq d}$ by the arguments of \ref{prop:B_nm-fal-ext} as $\overline{B_i}$ contains $(B_i)^{\overline{K}}_{\underline{\infty}}$ and almost purity still holds for it (cf. \ref{prop:B_nm-almost-purity}). Similar result also holds for $B_{ij}$. Thus, there is a commutative diagram
	\begin{align}\label{diam:prop:dlogs}
		\xymatrix{
			0\ar[r]&\widehat{\overline{B}}[\frac{1}{p}]^{1+d}\ar[r]\ar[d]^-{\wr}&\prod_{1\leq i\leq l}\widehat{\overline{B_i}}[\frac{1}{p}]^{1+d}\ar@<0.3ex>[r]\ar@<-0.3ex>[r]\ar[d]^-{\wr}& \prod_{1\leq i,j\leq l}\widehat{\overline{B_{ij}}}[\frac{1}{p}]^{1+d}\ar[d]^-{\wr}\\
			0\ar[r]&V_p(\Omega^1_{\overline{Y}/Y})\ar[r]&\prod_{1\leq i\leq l}V_p(\Omega^1_{\overline{Y_i}/Y_i})\ar@<0.3ex>[r]\ar@<-0.3ex>[r]& \prod_{1\leq i,j\leq l}V_p(\Omega^1_{\overline{Y_{ij}}/Y_{ij}})
		}
	\end{align}
	where the vertical maps are isomorphisms. Thus, the second row is also exact. Recall that in each $B_i[1/p]$, we can write $s=ut_1^{a_1}\cdots t_d^{a_d}$ for some $u\in B_i[1/p]^\times$ and $a_1,\dots,a_d\in\bb{N}$. Notice that $s_{p^n}t_{1,p^n}^{-a_1}\cdots t_{d,p^n}^{-a_d}\in\ca{L}_{\mrm{ur}}$. We put $u_{p^n}=s_{p^n}t_{1,p^n}^{-a_1}\cdots t_{d,p^n}^{-a_d}$, which is a $p^n$-th root of $u$ and thus lies in $\overline{B_i}[1/p]$. We see that the element 
	\begin{align}
		(\df\log(u_{p^n}))_{n\in\bb{N}}+\sum_{i=1}^d a_i(\df\log(t_{i,p^n}))_{n\in\bb{N}}\in V_p(\Omega^1_{\overline{Y_i}/Y_i})
	\end{align}
	is the unique element $\omega_i$ whose multiplication by $s$ coincides with $(s_{p^n}^{p^n-1}\df s_{p^n})_{n\in\bb{N}}$ (cf. \ref{lem:dlogs-unique}). The unicity implies that $\omega_i$ and $\omega_j$ coincides in $V_p(\Omega^1_{\overline{Y_{ij}}/Y_{ij}})$. Therefore, by the exactness of the second row in \eqref{diam:prop:dlogs}, we obtain an element $\omega\in V_p(\Omega^1_{\overline{Y}/Y})$ such that $s\omega= (s_{p^n}^{p^n-1}\df s_{p^n})_{n\in\bb{N}}$, which completes the proof.
\end{proof}

\begin{mythm}\label{thm:B-fal-ext}
	Let $B$ be a quasi-adequate $\ca{O}_K$-algebra, $Y_K$ the log scheme with underlying scheme $\spec(B[1/p])$ with compactifying log structure associated to $\spec(B_{\triv})\to\spec(B[1/p])$, $\scr{E}_B$ the $\widehat{\overline{B}}[1/p]$-submodule of $\prod_{\ak{E}(B)}\scr{E}_{\ca{O}_E}$ (see {\rm\ref{para:B-fal-ext-compare}}) generated by the subset
	\begin{align}\label{eq:thm:B-fal-ext-subset}
		\{(\df\log(s_{p^n}))_{n\in\bb{N}}\ |\ s_1\in B[1/p]\cap B_{\triv}^\times,\ s_{p^n}\in \overline{B}[1/p],\ s_{p^{n+1}}^p=s_{p^n},\ \forall n\in\bb{N}\}.
	\end{align}
	Then, $\scr{E}_B$ is stable under the natural $\widehat{\overline{B}}[1/p]$-semi-linear action of $G=\gal(\ca{L}_{\mrm{ur}}/\ca{L})$ on $\prod_{\ak{E}(B)}\scr{E}_{\ca{O}_E}$ (induced by the right $G$-action on $\ak{E}(B)$, cf. \eqref{eq:lem:A-inj}), and there is a canonical $G$-equivariant exact sequence of $\widehat{\overline{B}}[1/p]$-modules,
	\begin{align}\label{eq:thm:B-fal-ext}
		0\longrightarrow \widehat{\overline{B}}[\frac{1}{p}](1)\stackrel{\iota}{\longrightarrow}\scr{E}_B\stackrel{\jmath}{\longrightarrow} \widehat{\overline{B}}\otimes_{B}\Omega^1_{Y_K/K}\longrightarrow 0,
	\end{align}
	satisfying the following properties:
	\begin{enumerate}
		\renewcommand{\labelenumi}{{\rm(\theenumi)}}
		\item We have $\iota(1\otimes (\zeta_{p^n})_{n\in \bb{N}})=(\df\log(\zeta_{p^n}))_{n\in \bb{N}}$.\label{item:thm:B-fal-ext-1}
		\item For any element $s\in B[1/p]\cap B_{\triv}^\times$ and any compatible system of $p$-power roots $(s_{p^n})_{n\in \bb{N}}$ of $s$ in $\overline{B}[1/p]$, $\jmath((\df\log(s_{p^n}))_{n\in\bb{N}})=\df \log(s)$.\label{item:thm:B-fal-ext-2}
		\item If we take a quasi-adequate chart of $B$ and fix compatible systems of $p$-power roots $(t_{i,p^n})_{n\in \bb{N}}\subseteq \overline{B}[1/p]$ of the coordinates $t_1,\dots,t_d\in B[1/p]$, then the sequence \eqref{eq:thm:B-fal-ext} identifies with the image of the vertical maps in \eqref{eq:B-fal-ext-compare}. In particular, the $\widehat{\overline{B}}[1/p]$-linear surjection $\jmath$ admits a section sending $\df\log(t_i)$ to $(\df \log(t_{i,p^n}))_{n\in\bb{N}}$ for any $1\leq i\leq d$. \label{item:thm:B-fal-ext-3}
	\end{enumerate} 
	In particular, $\scr{E}_B$ is a finite free $\widehat{\overline{B}}[\frac{1}{p}]$-module with basis $\{(\df \log(t_{i,p^n}))_{n\in\bb{N}}\}_{0\leq i\leq d}$, where $t_{0,p^n}=\zeta_{p^n}$, on which $G$ acts continuously with respect to the canonical topology (where $\widehat{\overline{B}}[\frac{1}{p}]$ is endowed with the $p$-adic topology defined by $\widehat{\overline{B}}$).
\end{mythm}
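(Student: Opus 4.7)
My plan is to identify the canonically-defined module $\scr{E}_B$ with the finite free module $V_p(\Omega^1_{\overline{Y}/Y})$ of \ref{prop:B_nm-fal-ext}, and then transport the exact sequence \eqref{eq:prop:B_nm-fal-ext} to obtain \eqref{eq:thm:B-fal-ext}. First, I fix an arbitrary quasi-adequate chart of $B$ with coordinates $t_1,\dots,t_d$ and compatible systems of $p$-power roots; by \ref{prop:B_nm-fal-ext}, $V_p(\Omega^1_{\overline{Y}/Y})$ is then free over $\widehat{\overline{B}}[1/p]$ with basis $\{(\df\log(t_{i,p^n}))_{n\in\bb{N}}\}_{0\leq i\leq d}$, and the discussion in \ref{para:B-fal-ext-compare} produces a $G$-equivariant injection $V_p(\Omega^1_{\overline{Y}/Y})\hookrightarrow\prod_{\ak{E}(B)}\scr{E}_{\ca{O}_E}$.

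Next I would show that the image of this injection coincides with $\scr{E}_B$. The containment $\supseteq$ follows from \ref{prop:dlogs}: for any $s\in B[1/p]\cap B_\triv^\times$ with a system of $p$-power roots $(s_{p^n})_{n\in\bb{N}}$, the unique lift $(\df\log(s_{p^n}))_{n\in\bb{N}}\in V_p(\Omega^1_{\overline{Y}/Y})$ must project in each component $\scr{E}_{\ca{O}_E}$ to the analogously-named element of \ref{thm:fal-ext}, because both are the unique solutions to $s\omega = (s_{p^n}^{p^n-1}\df s_{p^n})_{n\in\bb{N}}$, and the map commutes with this construction by the functorial description of $\df$ on $\overline{Y}\to Y$ and $\overline{Z}\to Z$. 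The reverse containment $\subseteq$ is immediate since the basis elements $(\df\log(t_{i,p^n}))_{n\in\bb{N}}$ already lie in the generating set \eqref{eq:thm:B-fal-ext-subset}. This identification shows that $\scr{E}_B$ is $G$-stable (being the image of a $G$-stable submodule), that it is free over $\widehat{\overline{B}}[1/p]$ with basis $\{(\df\log(t_{i,p^n}))_{n\in\bb{N}}\}_{0\leq i\leq d}$, and that property (3) holds.

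Transporting \eqref{eq:prop:B_nm-fal-ext} then yields \eqref{eq:thm:B-fal-ext}, once I note the canonical identification $\widehat{\overline{B}}[1/p]\otimes_B\Omega^1_{Y/S} = \widehat{\overline{B}}\otimes_B\Omega^1_{Y_K/K}$: the log structure on $Y$ is the one pulled back from the chart, and its restriction to the generic fibre is precisely the compactifying log structure associated to $\spec(B_\triv)\hookrightarrow\spec(B[1/p])$ by \ref{lem:quasi-adequate-regular}.(\ref{item:lem:quasi-adequate-regular-2}). Property (1) is then \ref{prop:B_nm-fal-ext}.(\ref{item:prop:B_nm-fal-ext-1}) verbatim, while property (2) follows from \ref{prop:B_nm-fal-ext}.(\ref{item:prop:B_nm-fal-ext-2}) via the computation $s\cdot\jmath((\df\log(s_{p^n}))_{n\in\bb{N}})=\jmath((s_{p^n}^{p^n-1}\df s_{p^n})_{n\in\bb{N}})=\df s$ inside the torsion-free module $\widehat{\overline{B}}\otimes_B\Omega^1_{Y_K/K}$.

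For the continuity of the $G$-action with respect to the canonical topology, I would proceed as in the end of \ref{thm:fal-ext}: the formulas analogous to \eqref{eq:fal-ext-action} (with the cyclotomic character $\chi$ and the continuous cocycle $\xi=(\xi_1,\dots,\xi_d):G\to\bb{Z}_p^d$ describing the action on the $p$-power roots) show that the $\ca{O}_{\widehat{\overline{B}}}$-submodule $\scr{E}_B^+$ generated by the basis is $G$-stable and that each element of $\scr{E}_B^+/p^r\scr{E}_B^+$ has open stabilizer; taking the inverse limit in $r$ yields the claimed continuity. The main obstacle in carrying out the plan is the first identification step: one must carefully verify that the two a priori distinct constructions of $(\df\log(s_{p^n}))_{n\in\bb{N}}$---the intrinsic one in $V_p(\Omega^1_{\overline{Y}/Y})$ from \ref{prop:dlogs}, and the componentwise one in $\prod_{\ak{E}(B)}\scr{E}_{\ca{O}_E}$---agree under the injection of \eqref{eq:B-fal-ext-compare}, which is what ultimately makes $\scr{E}_B$ chart-independent despite the chart being used in the proof.
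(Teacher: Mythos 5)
Your proposal is correct and follows essentially the same route as the paper: fix a chart, invoke \ref{prop:B_nm-fal-ext} for the chart-dependent exact sequence, use \ref{para:B-fal-ext-compare} to embed $V_p(\Omega^1_{\overline{Y}/Y})$ into $\prod_{\ak{E}(B)}\scr{E}_{\ca{O}_E}$, and apply \ref{prop:dlogs} to identify the image with $\scr{E}_B$. The paper's proof is briefer because it first records the $G$-stability of $\scr{E}_B$ directly from the $G$-stability of the generating set (an observation you derive after the identification), but the two-containment verification you spell out — in particular checking that the intrinsic $(\df\log(s_{p^n}))_{n\in\bb{N}}$ of \ref{prop:dlogs} and the componentwise elements of $\prod_{\ak{E}(B)}\scr{E}_{\ca{O}_E}$ agree by uniqueness of solutions to $s\omega=(s_{p^n}^{p^n-1}\df s_{p^n})_{n\in\bb{N}}$ — is exactly the content that the paper compresses into "by \ref{prop:dlogs}, $V_p(\Omega^1_{\overline{Y}/Y})$ identifies with $\scr{E}_B$".
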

\begin{proof}
	As the subset \eqref{eq:thm:B-fal-ext-subset} is $G$-stable, we see that $\scr{E}_B$ is $G$-stable and thus endowed with a natural $G$-action. By \ref{prop:dlogs}, $V_p(\Omega^1_{\overline{Y}/Y})$ identifies with $\scr{E}_B$ as a submodule of $\prod_{\ak{E}(B)}\scr{E}_{\ca{O}_E}$ via \eqref{eq:B-fal-ext-compare}. The conclusion follows directly from \ref{prop:B_nm-fal-ext} and \ref{para:B-fal-ext-compare}, where the ``in particular part'' follows from the same arguments of \ref{thm:fal-ext}.
\end{proof}

\begin{mydefn}\label{defn:B-fal-ext}
	We call the canonical sequence \eqref{eq:thm:B-fal-ext} the \emph{Faltings extension} of $B$.
\end{mydefn}

\begin{myrem}\label{rem:B-fal-ext}
	The Faltings extension \eqref{eq:thm:B-fal-ext} is functorial in the following sense: let $K'$ be a complete discrete valuation field extension of $K$ with perfect residue field, $B'$ a quasi-adequate $\ca{O}_{K'}$-algebra. Consider a commutative diagram of $(K,\ca{O}_K,\ca{O}_{\overline{K}})$-triples (see \ref{defn:triple}) 
	\begin{align}
		\xymatrix{
			(B_{\triv},B,\overline{B})\ar[r]^-{f}& (B'_{\triv},B',\overline{B'})\\
			(K,\ca{O}_K,\ca{O}_{\overline{K}})\ar[u]\ar[r]&(K',\ca{O}_{K'},\ca{O}_{\overline{K'}})\ar[u]
		}
	\end{align}
	Then, $f$ induces a natural map $\ak{E}(B')\to \ak{E}(B)$ sending $v'$ to $v'\circ f$ (cf. \ref{para:B-fal-ext-compare}). It induces further a natural map
	\begin{align}
		\prod_{\ak{E}(B)}\scr{E}_{\ca{O}_E}\longrightarrow\prod_{\ak{E}(B')}\scr{E}_{\ca{O}_{E'}},\ (\omega_v)_{v\in \ak{E}(B)}\mapsto (\omega_{v'\circ f})_{v'\in \ak{E}(B')},
	\end{align}
	which maps $\scr{E}_B$ to $\scr{E}_{B'}$ and is compatible with the Faltings extensions.
	\begin{align}\label{diam:rem:B-fal-ext}
		\xymatrix{
			0\ar[r]& \widehat{\overline{B}}[\frac{1}{p}](1)\ar[r]^-{\iota}\ar[d]&\scr{E}_B\ar[r]^-{\jmath}\ar[d]&\widehat{\overline{B}}\otimes_{B}\Omega^1_{Y_K/K}\ar[r]\ar[d]& 0\\
			0\ar[r]& \widehat{\overline{B'}}[\frac{1}{p}](1)\ar[r]^-{\iota}&\scr{E}_{B'}\ar[r]^-{\jmath}&\widehat{\overline{B'}}\otimes_{B'}\Omega^1_{Y'_{K'}/K'}\ar[r]& 0
		}
	\end{align}
	Moreover, if $B'$ is the integral closure of $B$ in a finite extension of $\ca{L}$ contained in $\ca{L}_{\mrm{ur}}$, then $Y'_{K'}$ is \'etale over $Y_K$ by \cite[\Luoma{9}.2.1]{gabber2014travaux}. Assuming that $K'$ is finite over $K$, we see that the natural map $B'\otimes_B\Omega^1_{Y_K/K}\to \Omega^1_{Y'_{K'}/K'}$ is an isomorphism, and thus the vertical maps in \eqref{diam:rem:B-fal-ext} are isomorphisms. In particular, $\scr{E}_B$ contains $(\df\log(s_{p^n}))_{n\in\bb{N}}$ for any element $(s_{p^n})_{n\in \bb{N}}$ of $\plim_{x\mapsto x^p}\overline{B}[1/p]\cap \overline{B}_{\triv}^\times$.
\end{myrem}

\begin{mypara}\label{para:B-fal-ext-dual}
	As in \ref{para:fal-ext-dual}, taking a Tate twist of the dual of the Faltings extension \eqref{eq:thm:B-fal-ext} of $B$, we obtain a canonical exact sequence of finite projective $\widehat{\overline{B}}[1/p]$-representations of $G$, which splits as a sequence of $\widehat{\overline{B}}[1/p]$-modules,
	\begin{align}\label{eq:B-fal-ext-dual}
		0\longrightarrow \ho_{B[\frac{1}{p}]}(\Omega^1_{Y_K/K}(-1),\widehat{\overline{B}}[\frac{1}{p}])\stackrel{\jmath^*}{\longrightarrow} \scr{E}^*_B(1)\stackrel{\iota^*}{\longrightarrow}\widehat{\overline{B}}[\frac{1}{p}]\longrightarrow 0
	\end{align} 
	where $\scr{E}^*_B=\ho_{\widehat{\overline{B}}[1/p]}(\scr{E}_B,\widehat{\overline{B}}[1/p])$. There is a canonical $G$-equivariant $\widehat{\overline{B}}[1/p]$-linear Lie algebra structure on $\scr{E}^*_{\ca{O}_K}(1)$ associated to the linear form $\iota^*$,
	\begin{align}
		[f_1,f_2]=\iota^*(f_1)f_2-\iota^*(f_2)f_1,\ \forall f_1,f_2\in \scr{E}^*_B(1).
	\end{align}
	Thus, $\ho_{B[1/p]}(\Omega^1_{Y_K/K}(-1),\widehat{\overline{B}}[1/p])$ is a Lie ideal of $\scr{E}^*_{\ca{O}_K}(1)$, and $\widehat{\overline{B}}[1/p]$ is the quotient, and  the induced Lie algebra structures on them are trivial. Any $\widehat{\overline{B}}[1/p]$-linear splitting of \eqref{eq:B-fal-ext-dual} identifies $\scr{E}^*_B(1)$ with the semi-direct product of Lie algebras of $\widehat{\overline{B}}[1/p]$ acting on $\ho_{B[1/p]}(\Omega^1_{Y_K/K}(-1),\widehat{\overline{B}}[1/p])$ by multiplication. Let $\{T_i=(\df\log(t_{i,p^n}))_{n\in\bb{N}}\otimes\zeta^{-1}\}_{0\leq i\leq d}$ (where $t_{0,p^n}=\zeta_{p^n}$) denote the basis of $\scr{E}_B(-1)$, and let $\{T_i^*\}_{0\leq i\leq d}$ be the dual basis of $\scr{E}^*_B(1)$. Then, we see that the Lie bracket on $\scr{E}^*_B(1)$ is determined by
	\begin{align}
		[T_0^*,T_i^*]=T_i^*\quad\trm{ and }\quad [T_i^*,T_j^*]=0,
	\end{align}
	for any $1\leq i,j\leq d$. Indeed, this dual basis induces an isomorphism of $\widehat{\overline{B}}[1/p]$-linear Lie algebras
	\begin{align}
		\widehat{\overline{B}}[\frac{1}{p}]\otimes_{\bb{Q}_p}\lie(\bb{Z}_p\ltimes\bb{Z}_p^d)\iso \scr{E}^*_B(1),\ 1\otimes \partial_i\mapsto T_i^*,
	\end{align}
	where $\{\partial_i\}_{0\leq i\leq d}$ is the standard basis of $\lie(\bb{Z}_p\ltimes\bb{Z}_p^d)$ (cf. \ref{para:gamma-basis}).
\end{mypara}

\section{Descent of Representations of Arithmetic Fundamental Groups after Tsuji}\label{sec:descent}

Tsuji \cite[\textsection14]{tsuji2018localsimpson} studied the descent of representations of the arithmetic fundamental group of an adequate algebra. In this section, we show that his arguments still work for quasi-adequate algebras. 

\begin{mypara}\label{para:notation-B-galois}
	We construct a Kummer tower from the coverings of a quasi-adequate algebra defined in \ref{para:notation-quasi-adequate-tower}. The following notation will be used in this section.  We fix a complete discrete valuation field $K$ of characteristic $0$ with perfect residue field of characteristic $p>0$, an algebraic closure $\overline{K}$ of $K$, and a compatible system of primitive $n$-th roots of unity $(\zeta_n)_{n\in\bb{N}}$ in $\overline{K}$. Let $B$ be a quasi-adequate $\ca{O}_K$-algebra, $\ca{L}$ its fraction field, $\ca{L}_{\mrm{ur}}$ the fraction field of $\overline{B}$. We fix a quasi-adequate chart of $B$, $A$ the associated adequate $\ca{O}_K$-algebra defined in \ref{rem:quasi-adequate}, and a system of coordinates $t_1,\dots,t_d\in A[1/p]$. Let $\ca{K}$ (resp. $\ca{K}_{\mrm{ur}}$) be the fraction field of $A$ (resp. $\overline{A}$). For any $1\leq i\leq d$, we fix a compatible system of $k$-th roots $(t_{i,k})_{k\in \bb{N}}$ of $t_i$ in $\overline{A}[1/p]$.
	
	Let $J$ be the subset of $\bb{N}_{>0}^d$ consisting of elements $\underline{N}=(N_1,\dots,N_d)$ with $N_i$ prime to $p$ for any $1\leq i\leq d$. We endow $J$ with the partial order defined by the divisibility relation (cf. \ref{para:product}). For $\underline{N}\in J$, $n\in\bb{N}$ and $\underline{m}=(m_1,\dots,m_d)\in \bb{N}^d$, we define finite field extensions of $K$ and $\ca{K}$ in $\overline{K}$ and $\ca{K}_{\mrm{ur}}$ respectively by
	\begin{align}
		K^{(\underline{N})}_n=K(\zeta_{p^nN_i}\ |\ 1\leq i\leq d),\qquad \ca{K}^{(\underline{N})}_{n,\underline{m}}=K^{(\underline{N})}_n\ca{K}(t_{i,p^{m_i}N_i}\ |\ 1\leq i\leq d).
	\end{align}
	It is clear that these fields $\ca{K}^{(\underline{N})}_{n,\underline{m}}$ form a system of fields over the directed partially ordered set $J\times\bb{N}\times \bb{N}^d$ (cf. \ref{para:product}). We extend this notation for one of the components of $\underline{N},n,\underline{m}$ being $\infty$ by taking the filtered union, and we omit the index $\underline{N}$ or $n$ or $\underline{m}$ if $\underline{N}=\underline{1}$ or $n=0$ or $\underline{m}=\underline{0}$ respectively. We remark that if we take again the notation in \ref{para:notation-quasi-adequate-tower}, then $\ca{K}^{(\underline{N})}_{n,\underline{m}}=\ca{K}^{K^{(\underline{N})}_n}_{p^{\underline{m}}\underline{N}}$.
	We set $\ca{L}^{(\underline{N})}_{n,\underline{m}}=\ca{L}\ca{K}^{(\underline{N})}_{n,\underline{m}}$ and let $A^{(\underline{N})}_{n,\underline{m}}$ (resp. $B^{(\underline{N})}_{n,\underline{m}}$) be the integral closure of $A$ in $\ca{K}^{(\underline{N})}_{n,\underline{m}}$ (resp. of $B$ in $\ca{L}^{(\underline{N})}_{n,\underline{m}}$).
	
	For any $\underline{N}\in J$, the system $(B^{(\underline{N})}_{n,\underline{m}})_{(n,\underline{m})\in\bb{N}^{1+d}}$ is the Kummer tower of $B^{(\underline{N})}$ defined by $\zeta_{p^n},t_{1,p^n},\dots,t_{d,p^n}$ (cf. \ref{defn:kummer-tower}). Following \ref{defn:tower-completion}, for any $(n,\underline{m})\in(\bb{N}\cup\{\infty\})^{1+d}$, we denote by $\widehat{B}^{(\underline{N})}_{n,\underline{m}}$ the $p$-adic completion of $B^{(\underline{N})}_{n,\underline{m}}$, and we set
	\begin{align}\label{eq:tilde-B}
		\widetilde{B}^{(\underline{N})}_{n,\underline{m}}=\colim_{(n',\underline{m'})\in(\bb{N}^{1+d})_{\leq (n,\underline{m})}} \widehat{B}^{(\underline{N})}_{n',\underline{m'}}.
	\end{align} 
	We remark that the transition maps in the colimit of \eqref{eq:tilde-B} are closed embeddings with respect to the $p$-adic topology, and that $\widetilde{B}^{(\underline{N})}_{n,\underline{m}}$ identifies with a topological subring of $\widehat{B}^{(\underline{N})}_{n,\underline{m}}$ (both endowed with the $p$-adic topology) by \ref{defn:tower-completion}. We name some Galois groups as indicated in the following diagram:
	\begin{align}
		\xymatrix{
			\ca{L}_{\mrm{ur}}&&\\
			\ca{L}^{(\underline{N})}_{\infty,\underline{\infty}}\ar[u]^-{H^{(\underline{N})}_{\underline{\infty}}}&&\\
			\ca{L}^{(\underline{N})}_{\infty,\underline{m}}\ar[u]^-{\Delta^{(\underline{N})}_{\underline{m}}}\ar@/^3pc/[uu]^-{H^{(\underline{N})}_{\underline{m}}}&\ca{L}^{(\underline{N})}_{n,\underline{m}} \ar[l]^-{\Sigma^{(\underline{N})}_{n,\underline{m}}}\ar[lu]|-{\Gamma^{(\underline{N})}_{n,\underline{m}}}&\ca{L}\ar@/_1pc/[lluu]|{G}\ar[l]\ar@/_1pc/[llu]|(0.6){\Xi^{(\underline{N})}}
		}
	\end{align}
\end{mypara}

\begin{mylem}\label{lem:B_nm-kummer}
	For any $\underline{N}\in J$, the Kummer tower $(B^{(\underline{N})}_{n,\underline{m}})_{(n,\underline{m})\in \bb{N}^{1+d}}$ satisfies the condition {\rm\ref{prop:kummer-tower-lem}.(\ref{item:prop:kummer-tower-lem-1})}.
\end{mylem}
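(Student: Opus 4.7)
The plan is to apply Proposition~\ref{prop:rank} to the $p$-adic completion of $B^{(\underline{N})}$ at each prime $\ak{p} \in \ak{S}_p(B^{(\underline{N})})$. First, since $B$ is $\ca{O}_K$-flat with $B/pB \neq 0$ and $B \to B^{(\underline{N})}$ is injective and integral, the same holds for $B^{(\underline{N})}$, so $\ak{S}_p(B^{(\underline{N})})$ is non-empty. Fix $\ak{p} \in \ak{S}_p(B^{(\underline{N})})$; then $B^{(\underline{N})}_{\ak{p}}$ is a discrete valuation ring containing $p$ in its maximal ideal, and its $p$-adic completion $\ca{O}_E$ is a complete discrete valuation ring of mixed characteristic whose fraction field $E$ is precisely the field $\widehat{K}^{\ak{p}}$ appearing in \ref{prop:kummer-tower-lem}.(\ref{item:prop:kummer-tower-lem-1}). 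By \ref{cor:quasi-adequate-rank}, the residue field $\kappa(\ak{p})$ of $\ca{O}_E$ satisfies $[\kappa(\ak{p}):\kappa(\ak{p})^p] = p^d$, so we are in the setting of \ref{para:notation-K} with $E$ in place of $K$.

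Since the coordinates $t_1,\dots,t_d$ are nonzero elements of $B[1/p] \subseteq \ca{L} \subseteq E$, the fixed compatible systems of $p$-power roots $(t_{i,p^n})_n$ give a Kummer tower of $\ca{O}_E$ in the sense of \ref{para:notation-K-tilde}. By the equivalence (\ref{item:prop:rank-1}) $\Leftrightarrow$ (\ref{item:prop:rank-3}) of Proposition~\ref{prop:rank}, the condition \ref{prop:kummer-tower-lem}.(\ref{item:prop:kummer-tower-lem-1}) at $\ak{p}$ is equivalent to the $E$-linear independence of $\df t_1,\dots,\df t_d$ in $\widehat{\Omega}^1_{\ca{O}_E}[1/p]$. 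Now Proposition~\ref{prop:quasi-adequate-diff} applied to the multiplicative subset $S = B^{(\underline{N})} \setminus \ak{p}$ shows that $\widehat{\Omega}^1_{B^{(\underline{N})}_{\ak{p}}}[1/p]$ is a finite free $E$-module of rank $d$ with basis $\df t_1,\dots,\df t_d$; on the other hand, by \cite[3.3]{he2021faltingsext}, $\widehat{\Omega}^1_{\ca{O}_E}[1/p]$ is also an $E$-vector space of dimension $d$. It therefore suffices to show that the natural map $\widehat{\Omega}^1_{B^{(\underline{N})}_{\ak{p}}}[1/p] \to \widehat{\Omega}^1_{\ca{O}_E}[1/p]$ is surjective, for then it is an isomorphism between $E$-vector spaces of equal finite dimension, and $\df t_1,\dots,\df t_d$ maps to a basis.

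For the surjectivity, one uses that $\ca{O}_E$ is the $p$-adic completion of the Noetherian ring $B^{(\underline{N})}_{\ak{p}}$: writing any $x \in \ca{O}_E$ as $x = y + p^N z$ with $y \in B^{(\underline{N})}_{\ak{p}}$, $z \in \ca{O}_E$, one has $\df x \equiv \df y \pmod{p^{N-1} \widehat{\Omega}^1_{\ca{O}_E}}$. Combined with the finite generation of $\widehat{\Omega}^1_{\ca{O}_E}$ as an $\ca{O}_E$-module (from the explicit structure in \cite[3.3]{he2021faltingsext}) and Nakayama's lemma applied $p$-adically, this shows the image of $\ca{O}_E \otimes_{B^{(\underline{N})}_{\ak{p}}} \widehat{\Omega}^1_{B^{(\underline{N})}_{\ak{p}}} \to \widehat{\Omega}^1_{\ca{O}_E}$ equals $\widehat{\Omega}^1_{\ca{O}_E}$, yielding the desired surjectivity after inverting $p$.

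The main technical obstacle is this last comparison of $p$-adically completed differentials under the completion $B^{(\underline{N})}_{\ak{p}} \to \ca{O}_E$. This is essentially the statement that Noetherian $p$-adic completions are formally smooth, but care is required because the $p$-adic topology on $B^{(\underline{N})}_{\ak{p}}$ is not the $\ak{p}$-adic topology unless $p$ is a uniformizer; alternatively one may bound $\tau_{\leq 1}\dl_{\ca{O}_E/B^{(\underline{N})}_{\ak{p}}}$ along the lines of Section~\ref{sec:bound} (in particular using \ref{prop:cotangent-bound} and \ref{lem:quasi-adequate-diff}). Once this technical point is settled, applying \ref{prop:rank} yields the openness of the image of $\xi$ at every $\ak{p} \in \ak{S}_p(B^{(\underline{N})})$ and completes the proof.
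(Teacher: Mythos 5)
Your proof follows the same route as the paper (localize at $\ak{p}\in\ak{S}_p(B^{(\underline{N})})$, reduce to the differential criterion \ref{prop:rank}, then invoke \ref{prop:quasi-adequate-diff}), so the overall structure is right. Two comments.

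First, a small imprecision: by \ref{prop:B-adequate}.(\ref{item:prop:B-adequate-1}) the system of coordinates of $B^{(\underline{N})}$ is $t_{1,N_1},\dots,t_{d,N_d}$, not $t_1,\dots,t_d$; so \ref{prop:quasi-adequate-diff} literally gives the basis $\df t_{1,N_1},\dots,\df t_{d,N_d}$. Since $\df t_i = N_i t_{i,N_i}^{N_i-1}\,\df t_{i,N_i}$ and $N_i t_{i,N_i}^{N_i-1}$ is a unit of $B^{(\underline{N})}_{\ak{p}}[1/p]$, the two families of differentials generate the same submodule, so the conclusion survives — but you should say so.

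Second, and more substantively: the final paragraph is a false alarm. There is no technical obstacle in comparing $\widehat{\Omega}^1_{B^{(\underline{N})}_{\ak{p}}}$ with $\widehat{\Omega}^1_{\ca{O}_E}$, and no need for Nakayama, formal smoothness, or cotangent-complex bounds. By the conormal exact sequence for the surjection $R\to R/p^n R$, the map $p^n R/p^{2n}R \to \Omega^1_R\otimes_R R/p^n R$ sends $p^n x$ to $p^n\,\df x \otimes 1 = 0$, so the canonical map
\begin{align}
	\Omega^1_R / p^n\Omega^1_R \longrightarrow \Omega^1_{(R/p^n R)/\bb{Z}}
\end{align}
is an isomorphism for any ring $R$. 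Since $R/p^n R \iso \widehat{R}/p^n\widehat{R}$ by definition of the $p$-adic completion, taking the inverse limit over $n$ gives a canonical isomorphism $\widehat{\Omega}^1_R\iso\widehat{\Omega}^1_{\widehat{R}}$ — no Noetherian hypothesis and no bounding of cotangent complexes required. This is why the paper writes $\widehat{\Omega}^1_{B^{(\underline{N})}_{\ak{p}}}[1/p]=\widehat{\Omega}^1_{\ca{O}_E}[1/p]$ as an equality with no further comment. Your suggested detour through \ref{prop:cotangent-bound} would work in principle but vastly overcomplicates a trivial identification, and the hand-wavy "write $x=y+p^N z$ and apply Nakayama $p$-adically" argument you sketch in its place is not actually a complete proof of surjectivity of the uncompleted map $\Omega^1_{B^{(\underline{N})}_{\ak{p}}}\to\Omega^1_{\ca{O}_E}$ (which in fact is generally not surjective); the point is rather that you should not be working with the uncompleted $\Omega^1$ at all.
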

\begin{proof}
	For any $\ak{p}\in\ak{S}_p(B^{(\underline{N})})$, if we denote by $E$ the completion of the discrete valuation field $B^{(\underline{N})}_{\ak{p}}[1/p]$, then we need to show that the Kummer tower $(\ca{O}_{E_{n,\underline{m}}})_{(n,\underline{m})\in \bb{N}^{1+d}}$ (defined by $\zeta_{p^n},t_{1,p^n},\dots,t_{d,p^n}$) satisfies the condition {\rm\ref{prop:kummer-tower-lem}.(\ref{item:prop:kummer-tower-lem-1})}. It suffices to check that $\df t_1,\dots,\df t_d$ form an $E$-basis of $\widehat{\Omega}^1_{B^{(\underline{N})}_{\ak{p}}}[1/p]=\widehat{\Omega}^1_{\ca{O}_E}[1/p]$ by \ref{prop:rank}. Since $B^{(\underline{N})}$ is a quasi-adequate $\ca{O}_{K^{(\underline{N})}}$-algebra with a system of coordinates $t_{1,N_1},\dots,t_{d,N_d}$ by \ref{prop:B-adequate}.(\ref{item:prop:B-adequate-1}), the conclusion follows from \ref{prop:quasi-adequate-diff}.
\end{proof}

\begin{myprop}\label{prop:notation-n_0}
	There exists $n_0\in\bb{N}$ such that the following statements hold for any $\underline{N}\in J$ and $(n,\underline{m})\in \bb{N}_{\geq n_0}^{1+d}$:
	\begin{enumerate}
		\renewcommand{\labelenumi}{{\rm(\theenumi)}}
		\item We have $\ca{L}^{(\underline{N})}_{\infty,\underline{\infty}}=\ca{L}^{(\underline{N})}_{n,\underline{m}}\otimes_{\ca{K}_{n,\underline{m}}} \ca{K}_{\infty,\underline{\infty}}$.\label{item:prop:notation-n_0-1}
		\item The cyclotomic character \eqref{eq:cycl-char} $\chi:G\to \bb{Z}_p^\times$ (describing the action on $\zeta_{p^n}$) and the $p$-adic logarithm map \eqref{eq:p-adic-log} $\log:\bb{Z}_p^\times\to \bb{Z}_p$ induce an isomorphism
		\begin{align}\label{eq:prop:notation-n_0-chi}
			\log\circ\chi: \Sigma^{(\underline{N})}_{n,\underline{\infty}}\iso p^n\bb{Z}_p.
		\end{align}\label{item:prop:notation-n_0-2}
		\item The continuous $1$-cocycle \eqref{eq:cont-cocycle} $\xi:G\to \bb{Z}_p^d$ (describing the action on $t_{1,p^n},\dots,t_{d,p^n}$) induces an isomorphism 
		\begin{align}\label{eq:prop:notation-n_0-xi}
			\xi:\Delta^{(\underline{N})}_{\underline{m}}\iso p^{m_1}\bb{Z}_p\times\cdots\times p^{m_d}\bb{Z}_p
		\end{align}
		where $\underline{m}=(m_1,\dots,m_d)$.\label{item:prop:notation-n_0-3}
		\item The natural map $\ak{S}_p(B^{(\underline{N})}_{n,\underline{m}})\to \ak{S}_p(B^{(\underline{N})}_{n_0,\underline{n_0}})$ is a bijection.\label{item:prop:notation-n_0-4}
	\end{enumerate}
\end{myprop}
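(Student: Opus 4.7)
The plan is to first establish the result for $\underline{N} = \underline{1}$ using the general theory of Kummer towers (Proposition \ref{prop:kummer-tower-lem}), and then to propagate to arbitrary $\underline{N} \in J$ by exploiting that the extension $\ca{K}^{(\underline{N})}/\ca{K}$ is of prime-to-$p$ degree. First I would apply Proposition \ref{prop:kummer-tower-lem} to both the Kummer towers $(A_{n,\underline{m}})$ and $(B_{n,\underline{m}})$: the condition \ref{prop:kummer-tower-lem}.(\ref{item:prop:kummer-tower-lem-1}) holds for the $B$-tower by Lemma \ref{lem:B_nm-kummer}, and for the $A$-tower by the same argument (using \ref{prop:quasi-adequate-diff} applied to the adequate algebra $A$). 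Let $n_0$ be the maximum of the two resulting constants. Then for $(n,\underline{m}) \geq (n_0, \underline{n_0})$, both $\Gamma_{n,\underline{m}}$ and $\gal(\ca{K}_{\infty,\underline{\infty}}/\ca{K}_{n,\underline{m}})$ are pro-$p$ groups canonically isomorphic to $p^n\bb{Z}_p \ltimes \prod_{i=1}^d p^{m_i}\bb{Z}_p$ via $(\log\circ\chi,\xi)$, the natural restriction $\Gamma_{n,\underline{m}} \to \gal(\ca{K}_{\infty,\underline{\infty}}/\ca{K}_{n,\underline{m}})$ is a topological isomorphism, and (2), (3), (4) hold for $\underline{N} = \underline{1}$, with (1) for $\underline{N} = \underline{1}$ following by a degree count.

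Next, for general $\underline{N} \in J$ and $(n,\underline{m}) \geq (n_0, \underline{n_0})$, I would observe that $\ca{L}^{(\underline{N})}_{n,\underline{m}}/\ca{L}_{n,\underline{m}}$ and $\ca{K}^{(\underline{N})}_{n,\underline{m}}/\ca{K}_{n,\underline{m}}$ are finite of prime-to-$p$ degree (obtained by adjoining the $N_i$-th roots $\zeta_{N_i}, t_{i,N_i}$), while $\ca{L}_{\infty,\underline{\infty}}/\ca{L}_{n,\underline{m}}$ and $\ca{K}_{\infty,\underline{\infty}}/\ca{K}_{n,\underline{m}}$ are pro-$p$ by the first step. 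Any finite common subextension would have degree both prime to $p$ and a $p$-power, hence trivial, so $\ca{L}^{(\underline{N})}_{n,\underline{m}} \cap \ca{L}_{\infty,\underline{\infty}} = \ca{L}_{n,\underline{m}}$ and $\ca{K}^{(\underline{N})}_{n,\underline{m}} \cap \ca{K}_{\infty,\underline{\infty}} = \ca{K}_{n,\underline{m}}$. Combined with $\ca{L}^{(\underline{N})}_{\infty,\underline{\infty}} = \ca{L}^{(\underline{N})}_{n,\underline{m}} \cdot \ca{L}_{\infty,\underline{\infty}}$ and the $\ca{K}$-analogue, Galois theory yields restriction isomorphisms $\Gamma^{(\underline{N})}_{n,\underline{m}} \iso \Gamma_{n,\underline{m}}$ and $\gal(\ca{K}^{(\underline{N})}_{\infty,\underline{\infty}}/\ca{K}^{(\underline{N})}_{n,\underline{m}}) \iso \gal(\ca{K}_{\infty,\underline{\infty}}/\ca{K}_{n,\underline{m}})$, compatible with $\chi$ and $\xi$ (since these are determined by the action on the common elements $\zeta_{p^k}, t_{i,p^k}$). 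Restricting to $\Sigma^{(\underline{N})}_{n,\underline{\infty}}$ and $\Delta^{(\underline{N})}_{\underline{m}}$ gives (2) and (3); combining with the first step gives (1); and the induced bijection $\Gamma^{(\underline{N})}_{n_0,\underline{n_0}}/\Gamma^{(\underline{N})}_{n,\underline{m}} \leftrightarrow \Gamma_{n_0,\underline{n_0}}/\Gamma_{n,\underline{m}}$ matches the decomposition structure of primes above $p$, yielding (4).

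The main obstacle will be the linear disjointness and restriction argument in the second step---ensuring that the natural restriction maps are indeed isomorphisms compatible with the characters $\chi$ and $\xi$. Once this is secured, the remaining statements reduce to standard degree-count and orbit computations in Galois theory.
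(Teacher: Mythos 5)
Your proposal is correct and takes essentially the same route as the paper: apply Proposition \ref{prop:kummer-tower-lem} to the Kummer towers of $A$ and $B$ (the hypothesis \ref{prop:kummer-tower-lem}.(\ref{item:prop:kummer-tower-lem-1}) holding by \ref{lem:B_nm-kummer}, which covers the adequate case $A$ as well since $A$ is in particular quasi-adequate), deduce that $\Gamma_{n,\underline m}\to\gal(\ca K_{\infty,\underline\infty}/\ca K_{n,\underline m})$ is an isomorphism of pro-$p$ groups via $(\log\circ\chi,\xi)$, and then pass to general $\underline N\in J$ by linear disjointness of the prime-to-$p$ extension $\ca L^{(\underline N)}_{n,\underline m}/\ca L_{n,\underline m}$ from the pro-$p$ extension $\ca L_{\infty,\underline\infty}/\ca L_{n,\underline m}$. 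The paper's own proof is terser (it deduces (1) for general $\underline N$ directly and then says the rest follows from Galois theory) but uses exactly the same ingredients and the same linear-disjointness step that you flag as the main obstacle.
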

\begin{proof}
	Since the Kummer towers $(B_{n,\underline{m}})_{(n,\underline{m})\in \bb{N}^{1+d}}$ and $(A_{n,\underline{m}})_{(n,\underline{m})\in \bb{N}^{1+d}}$ both satisfy the condition {\rm\ref{prop:kummer-tower-lem}.(\ref{item:prop:kummer-tower-lem-1})} by \ref{lem:B_nm-kummer}, there exists $n_0\in \bb{N}$ such that the conditions \ref{prop:kummer-tower-lem}.(\ref{item:prop:kummer-tower-lem-2}, \ref{item:prop:kummer-tower-lem-3}) hold for any $(n,\underline{m})\in \bb{N}_{\geq n_0}^{1+d}$. In particular, the natural map $\gal(\ca{L}_{\infty,\underline{\infty}}/\ca{L}_{n,\underline{m}})\to\gal(\ca{K}_{\infty,\underline{\infty}}/\ca{K}_{n,\underline{m}})$ is an isomorphism of pro-$p$ groups. Thus, $\ca{L}_{\infty,\underline{\infty}}=\ca{L}_{n,\underline{m}}\otimes_{\ca{K}_{n,\underline{m}}} \ca{K}_{\infty,\underline{\infty}}$ by Galois theory. As $[\ca{L}^{(\underline{N})}_{n,\underline{m}}:\ca{L}_{n,\underline{m}}]$ is prime to $p$, we have $\ca{L}^{(\underline{N})}_{\infty,\underline{\infty}}=\ca{L}^{(\underline{N})}_{n,\underline{m}}\otimes_{\ca{L}_{n,\underline{m}}}\ca{L}_{\infty,\underline{\infty}}$, which implies (\ref{item:prop:notation-n_0-1}). Then, we deduce easily the other statements by Galois theory.
\end{proof}

\begin{mylem}\label{lem:B_nm-perfd}
	For any $\underline{N}\in J$, the $\ca{O}_{K^{(\underline{N})}_{\infty}}$-algebra $B^{(\underline{N})}_{\infty,\underline{\infty}}$ is almost pre-perfectoid.
\end{mylem}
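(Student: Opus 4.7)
The plan is to reduce this statement directly to Lemma \ref{lem:B-perfd} by identifying $B^{(\underline{N})}_{\infty,\underline{\infty}}$ with $B^{F}_{\underline{\infty}}$ for the pre-perfectoid field $F=K^{(\underline{N})}_{\infty}$ in the sense of \ref{para:notation-quasi-adequate-tower}. First, I would verify that $F=K^{(\underline{N})}_{\infty}=K(\zeta_{p^n N_i}\ |\ 1\leq i\leq d,\ n\in\bb{N})$ is a pre-perfectoid field. Indeed, $K_{\infty}=K(\zeta_{p^n}\ |\ n\in\bb{N})$ is well known to be pre-perfectoid (its $p$-adic completion is the cyclotomic perfectoid field), and $K^{(\underline{N})}_{\infty}=K_{\infty}(\zeta_{N_i}\ |\ 1\leq i\leq d)$ is a finite tamely ramified extension of $K_{\infty}$ since each $N_i$ is prime to $p$; thus its completion is a finite extension of $\widehat{K_\infty}$ and remains perfectoid.

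Next, I would check the equality of fields $\ca{L}^{F}_{\underline{\infty}}=\ca{L}^{(\underline{N})}_{\infty,\underline{\infty}}$, from which $B^{F}_{\underline{\infty}}=B^{(\underline{N})}_{\infty,\underline{\infty}}$ follows by taking integral closures of $B$ in the fraction fields. Unwinding the definitions in \ref{para:notation-quasi-adequate-tower} and \ref{para:notation-B-galois},
\[
\ca{L}^{F}_{\underline{\infty}}=\ca{L}\cdot F\cdot\ca{K}(t_{i,p^n}\ |\ 1\leq i\leq d,\ n\in\bb{N}),
\qquad
\ca{L}^{(\underline{N})}_{\infty,\underline{\infty}}=\ca{L}(\zeta_{p^n N_i},\ t_{i,p^m N_i}\ |\ 1\leq i\leq d,\ n,m\in\bb{N}).
\]
The inclusion $\supseteq$ is clear since $t_{i,p^m N_i}^{N_i}=t_{i,p^m}$ and $\zeta_{p^n N_i}^{N_i}=\zeta_{p^n}$. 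For the inclusion $\subseteq$, note that $\zeta_{N_i}\in F$, so by Kummer theory the field $\ca{L}^{F}_{\underline{\infty}}$, which already contains one root $t_{i,p^m}^{1/N_i}\cdot\zeta$ of $X^{N_i}-t_{i,p^m}$ for some root of unity $\zeta$, contains every $N_i$-th root of $t_{i,p^m}$, in particular the chosen $t_{i,p^m N_i}\in\overline{K}$.

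Given these two identifications, Lemma \ref{lem:B-perfd} applied to $F=K^{(\underline{N})}_{\infty}$ yields that the $\ca{O}_{K^{(\underline{N})}_{\infty}}$-algebra $B^{(\underline{N})}_{\infty,\underline{\infty}}$ is almost pre-perfectoid. There is essentially no obstacle; the only point requiring mild care is the Kummer-theoretic argument showing that adjoining the $p$-power roots of the coordinates $t_i$ already over $F$ automatically produces all the $t_{i,p^m N_i}$, which relies crucially on $F$ containing the roots of unity of order prime to $p$ that are needed.
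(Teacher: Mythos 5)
Your high-level plan is the right one and matches the paper's intent: use that $K^{(\underline{N})}_{\infty}$ is a pre-perfectoid field and invoke (the argument of) Lemma~\ref{lem:B-perfd}. But the concrete mechanism you propose — identifying $B^{(\underline{N})}_{\infty,\underline{\infty}}$ with $B^{F}_{\underline{\infty}}$ for $F=K^{(\underline{N})}_{\infty}$ — does not hold, and the gap is in your unwinding of the notation.

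In \ref{para:notation-quasi-adequate-tower} the index $\underline{r}$ for $\ca{K}^L_{\underline{r}}=L\ca{K}(t_{i,r_i}\mid 1\leq i\leq d)$ ranges over \emph{all} of $\bb{N}^d_{>0}$, so
\begin{align}
	\ca{L}^{F}_{\underline{\infty}}=\ca{L}\cdot F\cdot \ca{K}\big(t_{i,r}\ \big|\ 1\leq i\leq d,\ r\in\bb{N}_{>0}\big),
\end{align}
not, as you wrote, with $r$ restricted to powers of $p$. With the correct definition, $\ca{L}^{F}_{\underline{\infty}}$ contains every $q$-th root $t_{i,q}$ with $q$ a prime not dividing $pN_i$; but $\ca{L}^{(\underline{N})}_{\infty,\underline{\infty}}$ is generated only by the $\zeta_{p^nN_i}$ and $t_{i,p^mN_i}$, so in general it contains neither $\zeta_q$ nor $t_{i,q}$. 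Hence $\ca{L}^{F}_{\underline{\infty}}\supsetneq\ca{L}^{(\underline{N})}_{\infty,\underline{\infty}}$ and $B^{F}_{\underline{\infty}}\supsetneq B^{(\underline{N})}_{\infty,\underline{\infty}}$; almost pre-perfectoidness of the larger ring does not descend to the subring, so Lemma~\ref{lem:B-perfd} cannot be applied as a black box. Moreover, even inside your own (incorrect) version of $\ca{L}^{F}_{\underline{\infty}}$, the Kummer step is unfounded: the field generated over $F$ by the $p$-power roots $t_{i,p^n}$ does not obviously contain any $N_i$-th root of $t_{i,p^m}$, so there is no ``one root $t_{i,p^m}^{1/N_i}\cdot\zeta$'' to bootstrap from.

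What actually happens in the paper is exactly what its terse parenthetical signals: one \emph{re-runs} the argument of \ref{lem:B-perfd}, not the statement. The proof of \ref{lem:B-perfd} shows Frobenius is surjective on $A^{F}_{\underline{\infty}}/p$ by, for each $(L,\underline{r})$, passing to some $L'\supseteq L$ with $p\mid e_{L'}/e_L$ and to $\underline{r}'=p\underline{r}$; the element $x'=x/p\in P_{e_{L'},\underline{r}'}$ then provides a $p$-th root modulo $p$. That argument never leaves the subtower $\underline{r}\in p^{\bb{N}^d}\underline{N}$ once one starts in it (it multiplies $\underline{r}$ by $p$, which preserves the prime-to-$p$ part $\underline{N}$), and $L'$ can be taken inside $K^{(\underline{N})}_{\infty}$. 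So the same reasoning yields that $A^{(\underline{N})}_{\infty,\underline{\infty}}$ is almost pre-perfectoid over $\ca{O}_{K^{(\underline{N})}_{\infty}}$, and then the pre-perfectoidness of $A\to B$ (built into \ref{defn:quasi-adequate-alg}) transports this to $B^{(\underline{N})}_{\infty,\underline{\infty}}$. The ``prime-to-$p$'' roots $t_{i,r}$ with $r\notin p^{\bb{N}}\cdot N_i$ are simply never used — that is what the paper means by ``admitting roots in the prime-to-$p$ part is unnecessary,'' and it is the step your proposal skips by (incorrectly) collapsing the two towers into one.
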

\begin{proof}
	As $K^{(\underline{N})}_{\infty}$ is a pre-perfectoid field (cf. the proof of \ref{lem:H-coh}), the conclusion follows from the same argument of \ref{lem:B-perfd} (where admitting roots in the prime-to-$p$ part is unnecessary).
\end{proof}

\begin{myprop}\label{prop:B_nm-p-iso}
	For any $\underline{N}\in J$, there exists $k_0\in\bb{N}$ such that the natural map
	\begin{align}
		B^{(\underline{N})}_{n,\underline{m}}\otimes_{A^{(\underline{N})}_{n,\underline{m}}}A^{(\underline{N})}_{n',\underline{m'}}\longrightarrow B^{(\underline{N})}_{n',\underline{m'}}
	\end{align}
	is a $p^{k_0}$-isomorphism for any elements $(n',\underline{m'})\geq (n,\underline{m})$ in $(\bb{N}_{\geq n_0}\cup\{\infty\})^{1+d}$, where $n_0\in\bb{N}$ is defined in {\rm\ref{prop:notation-n_0}}. 
\end{myprop}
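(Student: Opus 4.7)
The strategy is to reduce to Proposition \ref{prop:B-p-iso} via the identification $B^{(\underline{N})}_{n,\underline{m}} = B^{K^{(\underline{N})}_n}_{p^{\underline{m}}\underline{N}}$ between the Kummer tower of this section and the general tower of Section \ref{sec:quasi}. Fix $\underline{N}\in J$ and take $F = K^{(\underline{N})}_\infty$, which is a pre-perfectoid field (cf.\ the proof of \ref{lem:B_nm-perfd}), together with $L_0 = K^{(\underline{N})}_{n_0}$ and $\underline{r}_0 = p^{n_0}\underline{N}$, so that $B^{L_0}_{\underline{r}_0} = B^{(\underline{N})}_{n_0,\underline{n_0}}$ and $A^{L_0}_{\underline{r}_0} = A^{(\underline{N})}_{n_0,\underline{n_0}}$.

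To apply \ref{prop:B-p-iso} one needs $\ca{L}^F_{\underline{\infty}} = \ca{L}^{L_0}_{\underline{r}_0} \otimes_{\ca{K}^{L_0}_{\underline{r}_0}} \ca{K}^F_{\underline{\infty}}$, i.e.\ $\ca{L}^{(\underline{N})}_{\infty,\underline{\infty}} = \ca{L}^{(\underline{N})}_{n_0,\underline{n_0}} \otimes_{\ca{K}^{(\underline{N})}_{n_0,\underline{n_0}}} \ca{K}^{(\underline{N})}_{\infty,\underline{\infty}}$. Proposition \ref{prop:notation-n_0}.(\ref{item:prop:notation-n_0-1}) gives the analogous identity with $\ca{K}_{n_0,\underline{n_0}}$ and $\ca{K}_{\infty,\underline{\infty}}$ in place of their $(\underline{N})$-superscripted versions; this upgrades to the required identity by a degree count, using that the prime-to-$p$ extension $\ca{K}^{(\underline{N})}_{n_0,\underline{n_0}}/\ca{K}_{n_0,\underline{n_0}}$ is linearly disjoint from the pro-$p$ extension $\ca{K}_{\infty,\underline{\infty}}/\ca{K}_{n_0,\underline{n_0}}$, so that $[\ca{K}^{(\underline{N})}_{\infty,\underline{\infty}}:\ca{K}^{(\underline{N})}_{n_0,\underline{n_0}}] = [\ca{K}_{\infty,\underline{\infty}}:\ca{K}_{n_0,\underline{n_0}}]$, and the candidate subring has the correct dimension over $\ca{L}^{(\underline{N})}_{n_0,\underline{n_0}}$.

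Proposition \ref{prop:B-p-iso} then furnishes a constant $k_0 \in \bb{N}$ such that the base-change map $\alpha_{n,\underline{m}}: B^{(\underline{N})}_{n_0,\underline{n_0}} \otimes_{A^{(\underline{N})}_{n_0,\underline{n_0}}} A^{(\underline{N})}_{n,\underline{m}} \to B^{(\underline{N})}_{n,\underline{m}}$ is a $p^{k_0}$-isomorphism for every $(n,\underline{m}) \in \bb{N}^{1+d}_{\geq n_0}$; the case in which one or more components of $(n,\underline{m})$ equals $\infty$ follows by a filtered colimit, as $p^{k_0}$-isomorphisms are preserved under such colimits. To pass to an arbitrary pair $(n,\underline{m}) \leq (n',\underline{m'})$ in $(\bb{N}_{\geq n_0}\cup\{\infty\})^{1+d}$, tensor $\alpha_{n,\underline{m}}$ over $A^{(\underline{N})}_{n,\underline{m}}$ up to $A^{(\underline{N})}_{n',\underline{m'}}$: by \ref{rem:pi-iso-retract}.(\ref{item:rem:pi-iso-retract-1}) the result is a $p^{2k_0}$-isomorphism $\alpha'$, and the map $\gamma$ of the proposition satisfies $\gamma \circ \alpha' = \alpha_{n',\underline{m'}}$. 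A short diagram chase using that $\alpha_{n',\underline{m'}}$ is a $p^{k_0}$-isomorphism then shows $\gamma$ is a $p^{3k_0}$-isomorphism (its cokernel is killed by $p^{k_0}$ since $\mrm{im}(\alpha_{n',\underline{m'}})\subseteq \mrm{im}(\gamma)$, and its kernel is killed by $p^{3k_0}$ by lifting through $\alpha'$); replacing $k_0$ by $3k_0$ then yields the stated conclusion.

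The main obstacle is the bookkeeping step of translating between the two indexing conventions and deducing the Galois-theoretic decomposition $\ca{L}^F_{\underline{\infty}} = \ca{L}^{L_0}_{\underline{r}_0} \otimes_{\ca{K}^{L_0}_{\underline{r}_0}} \ca{K}^F_{\underline{\infty}}$ from the version over $\ca{K}_{n_0,\underline{n_0}}$ supplied by \ref{prop:notation-n_0}; once these are in place, everything else is formal and uses only the multiplicative composition properties of $p^k$-isomorphisms collected in \ref{rem:pi-iso-retract}.
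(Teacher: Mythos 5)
There is a genuine gap in the reduction to Proposition \ref{prop:B-p-iso}. You set $F=K^{(\underline{N})}_\infty$, $L_0 = K^{(\underline{N})}_{n_0}$, $\underline{r}_0 = p^{n_0}\underline{N}$, and claim the hypothesis $\ca{L}^F_{\underline{\infty}} = \ca{L}^{L_0}_{\underline{r}_0} \otimes_{\ca{K}^{L_0}_{\underline{r}_0}} \ca{K}^F_{\underline{\infty}}$ of \ref{prop:B-p-iso} is \emph{the same as} (``i.e.'') the identity $\ca{L}^{(\underline{N})}_{\infty,\underline{\infty}} = \ca{L}^{(\underline{N})}_{n_0,\underline{n_0}} \otimes_{\ca{K}^{(\underline{N})}_{n_0,\underline{n_0}}} \ca{K}^{(\underline{N})}_{\infty,\underline{\infty}}$. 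This identification is false: by \ref{para:notation-quasi-adequate-tower}, the field $\ca{K}^F_{\underline{\infty}}$ is the colimit over \emph{all} $(L,\underline{r}) \in \scr{F}^{\mrm{fini}}_{F/K}\times\bb{N}_{>0}^d$, hence contains $t_{i,r}$ for \emph{every} $r\in\bb{N}_{>0}$, whereas $\ca{K}^{(\underline{N})}_{\infty,\underline{\infty}}$ only contains the roots $t_{i,p^mN_i}$. So $\ca{K}^{(\underline{N})}_{\infty,\underline{\infty}} \subsetneq \ca{K}^F_{\underline{\infty}}$ strictly (and similarly for $\ca{L}$). Your degree-count argument verifies the second identity (which does follow from \ref{prop:notation-n_0}.(1)), but not the first.

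Moreover, the hypothesis of \ref{prop:B-p-iso} may genuinely fail for your choice of $(L_0,\underline{r}_0)$. Take $d=1$, $\underline{N}=1$ and $\ca{L} = \ca{K}(t_{1,3})$. Then $\ca{L}^{(\underline{1})}_{n_0,n_0}$ contains $t_{1,3}$, and $\ca{K}^F_{\underline{\infty}}$ also contains $t_{1,3}$, but $\ca{K}^{(\underline{1})}_{n_0,n_0} = K_{n_0}\ca{K}(t_{1,p^{n_0}})$ does not, so $\ca{L}^{L_0}_{\underline{r}_0}\cap \ca{K}^F_{\underline{\infty}} \supsetneq \ca{K}^{L_0}_{\underline{r}_0}$. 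The constant $n_0$ of \ref{prop:notation-n_0} is determined by the pro-$p$ Kummer tower and gives no control over the prime-to-$p$ branching that enters $\ca{K}^F_{\underline{\infty}}$. The paper instead \emph{re-runs} the argument of \ref{prop:B-p-iso} for the sub-tower $\ca{K}^{(\underline{N})}_{\infty,\underline{\infty}}$ directly: the point of \ref{lem:B_nm-perfd} is precisely that $B^{(\underline{N})}_{\infty,\underline{\infty}}$ is already almost pre-perfectoid over $\ca{O}_{K^{(\underline{N})}_\infty}$, so one never needs to pass to the bigger tower, and the linear-disjointness one must feed into (the analogue of) \ref{lem:B-structure} is exactly the one furnished by \ref{prop:notation-n_0}.(1). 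Your composition/diagram-chase at the end, by contrast, is fine and is of the same flavour as the paper's bootstrapping; the defective step is only the first one.
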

\begin{proof}
	It follows from the same argument of \ref{prop:B-p-iso} using \ref{lem:B_nm-perfd} instead of \ref{lem:B-perfd}.
\end{proof}

\begin{mylem}[{cf. \cite[14.11]{tsuji2018localsimpson}}]\label{lem:tate-sen-2}
	For any $\underline{N}\in J$, $n\in \bb{N}_{\geq n_0}\cup\{\infty\}$ and any elements $\underline{m'}\geq \underline{m}$ in $(\bb{N}_{\geq n_0}\cup\{\infty\})^d$ with $\underline{m'}\in\underline{m}+\bb{N}^d\subseteq (\bb{N}\cup\{\infty\})^d$, where $n_0\in \bb{N}$ is defined in {\rm\ref{prop:notation-n_0}}, the natural homomorphism
	\begin{align}
		\widehat{B}^{(\underline{N})}_{n,\underline{m}}\otimes_{B^{(\underline{N})}_{n,\underline{m}}}B^{(\underline{N})}_{n,\underline{m'}}[\frac{1}{p}]\longrightarrow \widehat{B}^{(\underline{N})}_{n,\underline{m'}}[\frac{1}{p}]
	\end{align}
	is an isomorphism, and $B^{(\underline{N})}_{n,\underline{m'}}[1/p]$ is a finite free $B^{(\underline{N})}_{n,\underline{m}}[1/p]$-module.
\end{mylem}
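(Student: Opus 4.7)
The plan is to adapt Tsuji's argument \cite[14.11]{tsuji2018localsimpson} to the quasi-adequate setting. First I would reduce to the case where $n$, $\underline{m}$, $\underline{m'}$ are all finite (i.e.\ in $\bb{N}_{\geq n_0}^{1+d}$): if some $m_i=\infty$ then necessarily $m'_i=\infty$ with $m'_i-m_i$ interpreted as $0$, so those directions contribute trivially, and the basis I will exhibit below does not depend on $n$, so the arguments propagate through the filtered colimit in $n$. In the finite case, set $\underline{k}=\underline{m'}-\underline{m}\in\bb{N}^d$ and put
\[
	e_{\underline{j}}=\prod_{i=1}^d t_{i,p^{m'_i}N_i}^{j_i}\in B^{(\underline{N})}_{n,\underline{m'}},\qquad \underline{j}\in I:=\prod_{i=1}^d\{0,1,\dots,p^{k_i}-1\}.
\]
By \eqref{eq:A-str}, $A^{(\underline{N})}_{n,\underline{m}}[1/p]$ is a Laurent-polynomial ring over $K^{(\underline{N})}_n$ in the variables $t_{i,p^{m_i}N_i}$, and analogously for $\underline{m'}$ in the variables $t_{i,p^{m'_i}N_i}$; hence $A^{(\underline{N})}_{n,\underline{m'}}[1/p]$ is finite free over $A^{(\underline{N})}_{n,\underline{m}}[1/p]$ with basis $\{e_{\underline{j}}\}_{\underline{j}\in I}$. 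Tensoring with $B^{(\underline{N})}_{n,\underline{m}}[1/p]$ and invoking \ref{prop:B_nm-p-iso} inverted at $p$ yields a natural isomorphism $\bigoplus_{\underline{j}\in I}B^{(\underline{N})}_{n,\underline{m}}[1/p]\cdot e_{\underline{j}}\iso B^{(\underline{N})}_{n,\underline{m'}}[1/p]$, which proves the second assertion.

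For the completion isomorphism, I would set $M=\bigoplus_{\underline{j}\in I}B^{(\underline{N})}_{n,\underline{m}}\cdot e_{\underline{j}}$, a finite free $B^{(\underline{N})}_{n,\underline{m}}$-lattice in $B^{(\underline{N})}_{n,\underline{m'}}$ with $M[1/p]=B^{(\underline{N})}_{n,\underline{m'}}[1/p]$. The key step is to control the index. Applying \ref{prop:X-tower-str} to the adequate chart of $A^{(\underline{N})}$ (with $L=L'=K^{(\underline{N})}_n$) gives $p^{k_0}A^{(\underline{N})}_{n,\underline{m'}}\subseteq \bigoplus_{\underline{j}\in I}A^{(\underline{N})}_{n,\underline{m}}\cdot e_{\underline{j}}$, and combining this with the $p^{k_1}$-isomorphism $B^{(\underline{N})}_{n,\underline{m}}\otimes_{A^{(\underline{N})}_{n,\underline{m}}}A^{(\underline{N})}_{n,\underline{m'}}\to B^{(\underline{N})}_{n,\underline{m'}}$ of \ref{prop:B_nm-p-iso} (via \ref{lem:pi-iso-retract}) yields $p^l B^{(\underline{N})}_{n,\underline{m'}}\subseteq M$ for some uniform $l\in\bb{N}$. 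Since $B^{(\underline{N})}_{n,\underline{m'}}/M$ is killed by $p^l$, the inclusion $M\hookrightarrow B^{(\underline{N})}_{n,\underline{m'}}$ becomes an equality after $p$-adic completion and inverting $p$, giving $\widehat{M}[1/p]=\widehat{B}^{(\underline{N})}_{n,\underline{m'}}[1/p]$. Because $M$ is finite free, $\widehat{M}=\bigoplus_{\underline{j}\in I}\widehat{B}^{(\underline{N})}_{n,\underline{m}}\cdot e_{\underline{j}}$, and the desired isomorphism $\widehat{B}^{(\underline{N})}_{n,\underline{m}}\otimes_{B^{(\underline{N})}_{n,\underline{m}}}B^{(\underline{N})}_{n,\underline{m'}}[1/p]\iso \widehat{B}^{(\underline{N})}_{n,\underline{m'}}[1/p]$ follows.

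The main obstacle will be a clean handling of the infinite indices, particularly $n=\infty$, since $\widehat{B}^{(\underline{N})}_{\infty,\underline{m}}$ is the completion of a filtered colimit rather than a colimit of completions, and $B^{(\underline{N})}_{\infty,\underline{m}}$ is not Noetherian. This is handled by observing that the constants $k_0,k_1$ depend only on $\underline{N}$ and not on $n,\underline{m},\underline{m'}$, so the bound $p^l B^{(\underline{N})}_{n,\underline{m'}}\subseteq M\subseteq B^{(\underline{N})}_{n,\underline{m'}}$ survives the colimit in $n$ (and in any infinite $m_i$-direction, via finite approximations); the completion step then only uses that a $p^l$-torsion quotient becomes zero after inverting $p$, which requires no Noetherian hypothesis.
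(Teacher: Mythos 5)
Your proposal is correct and follows essentially the same approach as the paper: both deduce a $p$-power-bounded isomorphism between $B^{(\underline{N})}_{n,\underline{m'}}$ and a finite free $B^{(\underline{N})}_{n,\underline{m}}$-module from \ref{prop:X-tower-str} (for the adequate $A$'s) combined with \ref{prop:B_nm-p-iso} (to transfer to the $B$'s), with the conclusion then following by $p$-adic completion and inverting $p$. The paper presents this more tersely — it notes directly that $A^{(\underline{N})}_{n,\underline{m'}}$ is $p^{2k_0}$-isomorphic to a finite free $A^{(\underline{N})}_{n,\underline{m}}$-module, transfers this statement to the $B$'s via \ref{prop:B_nm-p-iso}, and stops there — but your more explicit lattice argument and uniformity-of-constants remark for the infinite indices is exactly the reasoning that the paper leaves implicit.
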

\begin{proof}
	Let $I=\{(k_1,\dots,k_d)\in\bb{N}^d\ |\ 0\leq k_i< p^{m_i'-m_i},\ 1\leq i\leq d\}$. We deduce from the conclusion of \ref{prop:X-tower-str} (cf. \ref{lem:A-X}) for finite indexes that there exists $k_0\in\bb{N}$ independent of $n,\underline{m},\underline{m'}$ such that
	\begin{align}
		p^{k_0}A^{(\underline{N})}_{n,\underline{m'}}\subseteq \bigoplus_{\underline{k}\in I} A^{(\underline{N})}_{n,\underline{m}}\cdot\prod_{i=1}^d t_{i,p^{m_i'}N_i}^{k_i}\subseteq p^{-k_0}A^{(\underline{N})}_{n,\underline{m'}}.
	\end{align}
	This shows that $A^{(\underline{N})}_{n,\underline{m'}}$ is $p^{2k_0}$-isomorphic to a finite free $A^{(\underline{N})}_{n,\underline{m}}$-module. The same result holds for $B^{(\underline{N})}_{n,\underline{m'}}$ over $B^{(\underline{N})}_{n,\underline{m}}$ by \ref{prop:B_nm-p-iso}, which completes the proof.
\end{proof}

\begin{mylem}[{cf. \cite[14.10, 14.8]{tsuji2018localsimpson}}]\label{lem:tate-sen-1}
	For any $\underline{N}\in J$, there exists $k_1\in\bb{N}$ such that for any elements $(n,\underline{m})\in (\bb{N}_{\geq n_0}\cup\{\infty\})^{1+d}$, where $n_0\in \bb{N}$ is defined in {\rm\ref{prop:notation-n_0}}, the following statements hold:
	\begin{enumerate}
		\renewcommand{\labelenumi}{{\rm(\theenumi)}}
		\item If $(n,\underline{m})\in \bb{N}^{1+d}$, for any $r\in\bb{N}$, let $\sigma$ be a generator of $\gal(\ca{L}^{(\underline{N})}_{n+r,\underline{m}}/\ca{L}^{(\underline{N})}_{n,\underline{m}})\cong \bb{Z}/p^r\bb{Z}$ by \eqref{eq:prop:notation-n_0-chi}, then we have
		\begin{align}
			B^{(\underline{N})}_{n+r,\underline{m}}\subseteq p^{-k_1}(B^{(\underline{N})}_{n,\underline{m}}+(\sigma-1)(B^{(\underline{N})}_{n+r,\underline{m}})).
		\end{align}\label{item:lem:tate-sen-1-1}
		\item If the $i$-th component of $\underline{m}$ is an integer for some $1\leq i\leq d$, for any $r\in \bb{N}$, let $\tau$ be a generator of $\gal(\ca{L}^{(\underline{N})}_{\infty,\underline{m}+\underline{r}_i}/\ca{L}^{(\underline{N})}_{\infty,\underline{m}})\cong \bb{Z}/p^r\bb{Z}$ by \eqref{eq:prop:notation-n_0-xi}, then we have
		\begin{align}
			B^{(\underline{N})}_{\infty,\underline{m}+\underline{r}_i}\subseteq p^{-k_1}(B^{(\underline{N})}_{\infty,\underline{m}}+(\tau-1)(B^{(\underline{N})}_{\infty,\underline{m}+\underline{r}_i})).
		\end{align}\label{item:lem:tate-sen-1-2}
	\end{enumerate}
\end{mylem}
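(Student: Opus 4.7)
The strategy, following Tsuji, is to first establish the analogous decompositions for the adequate subalgebras $A^{(\underline{N})}_{\ast}$, where explicit monomial descriptions are available via \ref{lem:A-X} and \ref{prop:X-tower-str}, and then to transfer the result to $B^{(\underline{N})}_{\ast}$ using the uniform $p^{k_0}$-isomorphism of \ref{prop:B_nm-p-iso}. Both parts (1) and (2) reduce to a Tate normalized trace computation, and in both cases the extension of generic fibres is free of rank $p^r$ on a basis consisting of roots of unity or of powers of a $p^r$-th root of a coordinate.

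For part (2) applied to $A$, write $t = t_{i,p^{m_i+r}N_i}$; then $A^{(\underline{N})}_{\infty,\underline{m}+\underline{r}_i}[1/p]$ is free over $A^{(\underline{N})}_{\infty,\underline{m}}[1/p]$ with basis $1,t,\ldots,t^{p^r-1}$, and this decomposition holds integrally up to a bounded power of $p$ by \ref{prop:X-tower-str}, while $\tau(t) = \zeta t$ for a primitive $p^r$-th root of unity $\zeta$ by \eqref{eq:prop:notation-n_0-xi}. Writing $y = \sum_{k=0}^{p^r-1} y_k t^k$, one has $y - y_0 = (\tau-1)\bigl(\sum_{k=1}^{p^r-1}(\zeta^k-1)^{-1} y_k t^k\bigr)$, and the decisive observation is the uniform bound $v_p(\zeta^k-1) \le 1/(p-1) \le 1$ valid for every $1 \le k < p^r$ and every $r$. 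For part (1), the identical argument applies to the cyclotomic extension generated by $\zeta_{p^{n+r}}$: by \eqref{eq:prop:notation-n_0-chi} one has $\chi(\sigma) = 1 + p^n u$ for some $u \in \bb{Z}_p^\times$, whence $(\sigma-1)\zeta_{p^{n+r}}^k = \zeta_{p^{n+r}}^k(\zeta_{p^r}^{ku}-1)$, and the same valuation bound applies because $k \mapsto ku \bmod p^r$ permutes $\{1,\ldots,p^r-1\}$.

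To transfer from $A$ to $B$, tensor the $A$-decomposition with $B^{(\underline{N})}_{n,\underline{m}}$ over $A^{(\underline{N})}_{n,\underline{m}}$ and invoke \ref{prop:B_nm-p-iso}: any $b \in B^{(\underline{N})}_{n',\underline{m'}}$ equals $p^{-k_0}$ times the image of a tensor, which in turn inherits the decomposition into a $B^{(\underline{N})}_{n,\underline{m}}$-part and a $(\sigma-1)$- or $(\tau-1)$-image. Taking $k_1 = k_0 + k_1' + O(1)$ yields the claim for finite indices; the infinite-index cases follow from the finite ones by a filtered colimit, relying on the uniformity of $k_1$ in $r$ and $\underline{m}$.

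The main obstacle is precisely this uniformity. The cyclotomic valuation bound $v_p(\zeta_{p^r}^k-1) \le 1/(p-1)$ for all $r$ and all $1 \le k < p^r$ is the essential arithmetic input, and is independent of $r$ despite the low-order roots of unity that can arise among the $\zeta_{p^r}^k$ (when $p^{r-1} \mid k$). Second, one must use that both the kernel and the cokernel of the map in \ref{prop:B_nm-p-iso} are annihilated by the same $p^{k_0}$, so that lifting the generic-fibre decomposition over $A$ to an integral decomposition over $B$ does not introduce any index-dependent loss; the fact that $k_0$ in \ref{prop:B_nm-p-iso} is already uniform in $(n,\underline{m})$ and $(n',\underline{m'})$ is what makes the whole argument go through.
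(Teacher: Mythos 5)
Your transfer step from $A$ to $B$ via \ref{prop:B_nm-p-iso} is exactly the paper's argument. Where you diverge is in trying to re-derive the case $B=A$: the paper handles that case by directly citing \cite[14.8, 14.10]{tsuji2018localsimpson}, whereas you attempt an explicit normalized-trace computation.

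Your re-derivation for part~(2) (the geometric direction) is essentially sound: \ref{prop:X-tower-str} provides the monomial decomposition $\bigoplus_{0\le k<p^r}\ca{O}_{L'}\otimes_{\ca{O}_L}A^L_{\underline{r}}\cdot t^k$ with uniform $p^{k_0}$-bounds, the operator $\tau-1$ is diagonal on this basis with eigenvalues $\zeta_{p^r}^{ku}-1$, and the valuation bound $v_p(\zeta_{p^r}^k-1)\le 1/(p-1)$ gives a uniform denominator. But for part~(1) (the arithmetic/cyclotomic direction) there is a genuine gap. You assert that ``the identical argument applies,'' but \ref{prop:X-tower-str} with $\underline{r}=\underline{r}'$ only gives $p^{k_0}A^{(\underline{N})}_{n+r,\underline{m}}\subseteq \ca{O}_{K^{(\underline{N})}_{n+r}}\otimes_{\ca{O}_{K^{(\underline{N})}_n}}A^{(\underline{N})}_{n,\underline{m}}\subseteq p^{-k_0}A^{(\underline{N})}_{n+r,\underline{m}}$; it says nothing about the integral structure of $\ca{O}_{K^{(\underline{N})}_{n+r}}$ over $\ca{O}_{K^{(\underline{N})}_n}$. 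What your eigenvalue computation actually needs is that $\{\zeta_{p^{n+r}}^k\}_{0\le k<p^r}$ generates $\ca{O}_{K^{(\underline{N})}_{n+r}}$ as an $\ca{O}_{K^{(\underline{N})}_n}$-module up to a $p$-power bounded uniformly in $n$ and $r$ — equivalently, that the relative different of the cyclotomic tower is uniformly bounded. This is the Tate--Sen axiom in the arithmetic direction; it requires genuine ramification input (Tate's computation of the different of $K_{n+r}/K_n$, or its analogue for $K$ with perfect but not necessarily absolutely unramified residue field) and is precisely what \cite[14.10]{tsuji2018localsimpson} supplies. As written, nothing in your proposal justifies this uniform integrality, so the uniform $k_1$ in part~(1) does not follow.
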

\begin{proof}
	The conclusion holds for the case $B=A$ by \cite[14.8, 14.10]{tsuji2018localsimpson}. It remains true in general due to \ref{prop:B_nm-p-iso}.
\end{proof}

\begin{mydefn}[{cf. \cite[7.4, 7.5, 7.6]{tsuji2018localsimpson}}]\label{defn:tower-tate-sen}
	We say that a tower $(R_n)_{n\in \bb{N}}$ of normal domains flat over $\ca{O}_K$ (see \ref{defn:tower}) is \emph{Tate-Sen} if it satisfies the following conditions:
	\begin{enumerate}
		\renewcommand{\labelenumi}{{\rm(\theenumi)}}
		\item There exists a Noetherian normal domain $R_{-1}$ over $\ca{O}_K$ contained in $R_0$ with $R_{-1}/ pR_{-1}\neq 0$ such that $R_0$ is integral over $R_{-1}$ (thus \ref{rem:int-clos-fini} applies to $R_{-1}\to R_n$ for any $n\in\bb{N}\cup \{\infty\}$).\label{item:defn:tower-tate-sen-noether}
		\item The tower $(R_n)_{n\in \bb{N}}$ is a $\bb{Z}_p$-tower in the following sense: if $\ca{E}_n$ denotes the fraction field of $R_n$, then $\ca{E}_\infty$ is a Galois extension of $\ca{E}_0$ with Galois group isomorphic to $\bb{Z}_p$ and $\ca{E}_n$ is the $p^n\bb{Z}_p$-invariant part of $\ca{E}_\infty$ for any $n\in \bb{N}$.\label{item:defn:tower-tate-sen-Z_p}
		\item There exists $k_1\in \bb{N}$ such that for any $n,r\in\bb{N}$ and any generator $\sigma\in \gal(\ca{E}_{n+r}/\ca{E}_n)$, we have
		\begin{align}
			R_{n+r}\subseteq p^{-k_1}(R_n+(\sigma-1)(R_{n+r})).
		\end{align}\label{item:defn:tower-tate-sen-bound}
		\item For any $n\in \bb{N}$, $\widehat{R_n}[1/p]$ is finite over $\widehat{R_0}[1/p]$.\label{item:defn:tower-tate-sen-fini}
		\item The set $\ak{S}_p(R_\infty)$ of primes ideals of $R_\infty$ of height $1$ containing $p$ is finite.\label{item:defn:tower-tate-sen-ht1}
	\end{enumerate}
\end{mydefn}

Tsuji has established a series of decompletion results for Tate-Sen towers in \cite[\textsection7]{tsuji2018localsimpson}.

\begin{myprop}[{cf. \cite[14.12]{tsuji2018localsimpson}}]\label{prop:tate-sen}
	For any $\underline{N}\in J$ and $(n,\underline{m})$ in $(\bb{N}_{\geq n_0}\cup\{\infty\})^{1+d}$, where $n_0\in \bb{N}$ is defined in {\rm\ref{prop:notation-n_0}}, the following statements hold:
	\begin{enumerate}
		\renewcommand{\labelenumi}{{\rm(\theenumi)}}
		\item If $(n,\underline{m})\in \bb{N}^{1+d}$, then the tower $(B^{(\underline{N})}_{n+r,\underline{m}})_{r\in\bb{N}}$ is Tate-Sen.\label{item:prop:tate-sen-1}
		\item If the $i$-th component of $\underline{m}$ is an integer for some $1\leq i\leq d$, then the tower $(B^{(\underline{N})}_{\infty,\underline{m}+\underline{r}_i})_{r\in \bb{N}}$ is Tate-Sen.\label{item:prop:tate-sen-2}
	\end{enumerate}
\end{myprop}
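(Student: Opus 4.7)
The plan is to verify, for each of the two proposed towers, all five axioms of Definition \ref{defn:tower-tate-sen}. The proof is essentially an assembly of results already established in this section; no new input is needed beyond choosing the Noetherian subdomain $R_{-1}$ correctly and unpacking the Galois identifications.

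For axiom \ref{defn:tower-tate-sen}(\ref{item:defn:tower-tate-sen-noether}), in both cases I would take $R_{-1}=B$: it is a Noetherian normal $\ca{O}_K$-algebra with $B/pB\neq 0$, and every term $R_r$ of each tower is by construction the integral closure of $B$ in a field extension of $\ca{L}$, hence automatically integral over $B$. For axiom (\ref{item:defn:tower-tate-sen-Z_p}), the relevant Galois groups are identified in \ref{prop:notation-n_0}: in case (\ref{item:prop:tate-sen-1}) the isomorphism \eqref{eq:prop:notation-n_0-chi} gives $\gal(\ca{L}^{(\underline{N})}_{\infty,\underline{m}}/\ca{L}^{(\underline{N})}_{n,\underline{m}})=\Sigma^{(\underline{N})}_{n,\underline{m}}\cong p^n\bb{Z}_p\cong\bb{Z}_p$, and in case (\ref{item:prop:tate-sen-2}) the $i$-th factor of \eqref{eq:prop:notation-n_0-xi} gives $\gal(\ca{L}^{(\underline{N})}_{\infty,\underline{m}+\underline{\infty}_i}/\ca{L}^{(\underline{N})}_{\infty,\underline{m}})\cong p^{m_i}\bb{Z}_p\cong\bb{Z}_p$; in both situations the finite intermediate layers of the tower are precisely the fixed fields of the subgroups $p^r\bb{Z}_p$, as required.

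Axiom (\ref{item:defn:tower-tate-sen-bound}) is exactly the content of \ref{lem:tate-sen-1}: its statement (\ref{item:lem:tate-sen-1-1}) yields the desired uniform estimate for case (\ref{item:prop:tate-sen-1}), and its statement (\ref{item:lem:tate-sen-1-2}) yields the analogous estimate for case (\ref{item:prop:tate-sen-2}), with the common constant $k_1$ independent of the remaining indices. Axiom (\ref{item:defn:tower-tate-sen-fini}) follows from \ref{lem:tate-sen-2}: the natural map $\widehat{R_0}\otimes_{R_0}R_r[1/p]\to\widehat{R_r}[1/p]$ is an isomorphism and $R_r[1/p]$ is a finite free $R_0[1/p]$-module, so taking $p$-adic completions yields the required finiteness of $\widehat{R_r}[1/p]$ over $\widehat{R_0}[1/p]$.

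Finally, for axiom (\ref{item:defn:tower-tate-sen-ht1}), in case (\ref{item:prop:tate-sen-1}) the direct system $(B^{(\underline{N})}_{n+r,\underline{m}})_{r\in\bb{N}}$ has colimit $B^{(\underline{N})}_{\infty,\underline{m}}$, and combining \ref{lem:ht1-prime-map} with \ref{prop:notation-n_0}(\ref{item:prop:notation-n_0-4}) yields $\ak{S}_p(B^{(\underline{N})}_{\infty,\underline{m}})=\lim_r\ak{S}_p(B^{(\underline{N})}_{n+r,\underline{m}})\cong\ak{S}_p(B^{(\underline{N})}_{n_0,\underline{n_0}})$, which is finite. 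In case (\ref{item:prop:tate-sen-2}) the same argument applied to the tower $(B^{(\underline{N})}_{\infty,\underline{m}+\underline{r}_i})_{r\in\bb{N}}$, together with the finiteness from \ref{prop:notation-n_0}(\ref{item:prop:notation-n_0-4}) applied at any sufficiently large finite level, shows $\ak{S}_p(B^{(\underline{N})}_{\infty,\underline{m}+\underline{\infty}_i})$ is finite. The only mildly substantive point throughout is the uniformity of $k_1$ in axiom (\ref{item:defn:tower-tate-sen-bound}), but this is exactly what is built into the formulation of \ref{lem:tate-sen-1}; everything else is straightforward bookkeeping.
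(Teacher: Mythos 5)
Your verification is correct and takes essentially the same route as the paper, which simply cites $B/pB\neq 0$, \ref{prop:notation-n_0}, \ref{lem:tate-sen-2} and \ref{lem:tate-sen-1}; you have unpacked the check of the five axioms explicitly. Two small citation slips are worth cleaning up: (a) for axiom \ref{defn:tower-tate-sen}.(\ref{item:defn:tower-tate-sen-Z_p}) in case (\ref{item:prop:tate-sen-1}), the isomorphism \eqref{eq:prop:notation-n_0-chi} is literally about $\Sigma^{(\underline{N})}_{n,\underline{\infty}}$, not $\Sigma^{(\underline{N})}_{n,\underline{m}}$ with $\underline{m}$ finite — the latter identification with $p^n\bb{Z}_p$ comes from the degree equalities established behind \ref{prop:notation-n_0} (equivalently, \ref{prop:kummer-tower-lem}.(\ref{item:prop:kummer-tower-lem-2})); and (b) in case (\ref{item:prop:tate-sen-1}), axiom \ref{defn:tower-tate-sen}.(\ref{item:defn:tower-tate-sen-fini}) is not governed by \ref{lem:tate-sen-2} (which fixes $n$ and lets $\underline{m}$ grow), but holds for free since all the rings $B^{(\underline{N})}_{n+r,\underline{m}}$ with finite indices are finite over the Noetherian ring $R_0=B^{(\underline{N})}_{n,\underline{m}}$, whence $\widehat{R_r}=\widehat{R_0}\otimes_{R_0}R_r$. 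The lemma \ref{lem:tate-sen-2} is genuinely needed only in case (\ref{item:prop:tate-sen-2}), where $R_0=B^{(\underline{N})}_{\infty,\underline{m}}$ is no longer Noetherian.
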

\begin{proof}
	It follows from the fact that $B/pB\neq 0$, \ref{prop:notation-n_0}, \ref{lem:tate-sen-2}, and \ref{lem:tate-sen-1}. 
\end{proof}

\begin{myrem}\label{rem:tate-sen}
	With the notation in {\rm\ref{para:notation-K}}, the results of \ref{prop:tate-sen} remain true for the Kummer tower $(\ca{O}_{K_{n,\underline{m}}})_{(n,\underline{m})\in \bb{N}^{1+d}}$. Indeed, one can check firstly for the subfield $K'$ defined in \ref{lem:cohen} with the aid of \ref{lem:cohen-str}, and then deduce the general case as we did above from $A$ to $B$.
\end{myrem}

\begin{mylem}[{cf. \cite[14.13]{tsuji2018localsimpson}}]\label{lem:decompletion}
	For any $\underline{N}\in J$, the following statements hold:
	\begin{enumerate}
		\renewcommand{\labelenumi}{{\rm(\theenumi)}}
		\item The $(\Xi^{(\underline{N})},\widehat{B}[1/p])$-finite part of $\widehat{B}^{(\underline{N})}_{\infty,\underline{\infty}}[1/p]$ is $\widetilde{B}^{(\underline{N})}_{\infty,\underline{\infty}}[1/p]$ (see {\rm\ref{defn:repn}}).\label{item:lem:decompletion-1}
		\item Let $V$ be an object of $\repnpr(\Xi^{(\underline{N})},\widetilde{B}^{(\underline{N})}_{\infty,\underline{\infty}}[1/p])$. Then, $V$ is the $(\Xi^{(\underline{N})},\widehat{B}[1/p])$-finite part of $\widehat{B}^{(\underline{N})}_{\infty,\underline{\infty}}\otimes_{\widetilde{B}^{(\underline{N})}_{\infty,\underline{\infty}}}V$.\label{item:lem:decompletion-2}
	\end{enumerate}
\end{mylem}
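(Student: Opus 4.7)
The plan is to mimic the strategy of Tsuji's proof of \cite[14.13]{tsuji2018localsimpson} for the adequate case, substituting the quasi-adequate analogues established above (Propositions \ref{prop:notation-n_0}, \ref{prop:B_nm-p-iso}, Lemmas \ref{lem:tate-sen-2}, \ref{lem:tate-sen-1} and Proposition \ref{prop:tate-sen}) for the corresponding adequate statements used there. The idea is to peel off the decompletion $1+d$ times, one $\bb{Z}_p$-factor at a time, by successively applying Tsuji's Tate--Sen decompletion theorems \cite[7.10, 7.11]{tsuji2018localsimpson}. Those theorems require, for each factor, that the relevant sub-tower be Tate--Sen in the sense of \ref{defn:tower-tate-sen} and that the completed intermediate algebras be finite over one another after inverting $p$.

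First I would reduce to working over $\Gamma^{(\underline{N})}_{n_0,\underline{n_0}}$ for the integer $n_0$ supplied by Proposition \ref{prop:notation-n_0}. The quotient $\Gamma^{(\underline{N})}_{n_0,\underline{n_0}}$ is identified, via $(\log\circ\chi,\xi)$, with an open subgroup of $\bb{Z}_p\ltimes\bb{Z}_p^d$ whose action on the tower decomposes as a cyclotomic $\bb{Z}_p$-factor $\Sigma^{(\underline{N})}_{n_0,\underline{\infty}}$ (varying $n$) and $d$ Kummer $\bb{Z}_p$-factors $\Delta^{(\underline{N})}_{\underline{n_0}}$ (varying each $m_i$). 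Each such subtower is Tate--Sen by Proposition \ref{prop:tate-sen}, the finiteness condition \ref{defn:tower-tate-sen}.(\ref{item:defn:tower-tate-sen-fini}) being supplied by Lemma \ref{lem:tate-sen-2} and the boundedness condition \ref{defn:tower-tate-sen}.(\ref{item:defn:tower-tate-sen-bound}) by Lemma \ref{lem:tate-sen-1} with a uniform constant $k_1$ independent of the fixed indices.

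Then I would iterate the decompletion. Starting from $\widehat{B}^{(\underline{N})}_{\infty,\underline{\infty}}[1/p]$, I apply the ring-level decompletion of \cite[7.10]{tsuji2018localsimpson} to the last Kummer direction using the Tate--Sen tower $(B^{(\underline{N})}_{\infty,\underline{m}+\underline{r}_d})_{r\in\bb{N}}$ to identify the $\gal(\ca{L}^{(\underline{N})}_{\infty,\underline{\infty}}/\ca{L}^{(\underline{N})}_{\infty,(\infty,\dots,\infty,0)})$-locally finite part with $\colim_{m_d}\widehat{B}^{(\underline{N})}_{\infty,(\infty,\dots,\infty,m_d)}[1/p]$. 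Repeating this for the $(d-1)$-th direction, then the $(d-2)$-th, and so on down through the first Kummer direction, and finally along the cyclotomic direction (using Proposition \ref{prop:tate-sen}.(\ref{item:prop:tate-sen-1})), one obtains after $1+d$ steps the identification of the $\Xi^{(\underline{N})}$-locally finite part with $\widetilde{B}^{(\underline{N})}_{\infty,\underline{\infty}}[1/p]=\colim_{(n,\underline{m})\in\bb{N}^{1+d}}\widehat{B}^{(\underline{N})}_{n,\underline{m}}[1/p]$, which is (\ref{item:lem:decompletion-1}). For (\ref{item:lem:decompletion-2}), the same iterative argument is carried out with the representation-theoretic version \cite[7.11]{tsuji2018localsimpson}: at each of the $1+d$ stages, the base-changed representation admits a unique descent to a finite projective representation over the reduced coefficient ring, so composition of the inverses of the $1+d$ base-change functors shows that the finite part of $\widehat{B}^{(\underline{N})}_{\infty,\underline{\infty}}\otimes_{\widetilde{B}^{(\underline{N})}_{\infty,\underline{\infty}}}V$ is canonically isomorphic to $V$.

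The main obstacle will be the bookkeeping across the iteration: at each of the $1+d$ stages the decompletion takes place over a different completed coefficient ring, and one must check that the hypothesis (\ref{item:defn:tower-tate-sen-fini}) of \ref{defn:tower-tate-sen} continues to hold at that stage, i.e. that the sub-tower remains finite after completion and inversion of $p$ when one adjoins the remaining directions. This is where the uniformity of the constants $k_0,k_1$ in Lemma \ref{lem:tate-sen-2} and Lemma \ref{lem:tate-sen-1} with respect to the fixed indices is crucial, and carefully tracking this uniformity and the compatibility of the Tate--Sen data under successive base changes is the delicate point; once this is in place, the rest reduces to a formal application of Tsuji's decompletion machinery exactly as in \cite[14.13]{tsuji2018localsimpson}.
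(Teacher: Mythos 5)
Your plan follows the same high-level strategy as the paper's proof — reduce via Proposition~\ref{prop:notation-n_0} to $n_0$, then work through the $1+d$ Tate--Sen directions supplied by Proposition~\ref{prop:tate-sen} — but the specific route and the lemmas of Tsuji you invoke differ, and in a way that introduces a subtlety you haven't addressed. The paper does \emph{not} perform iterated ring- or representation-level decompletion for this lemma. For~(\ref{item:lem:decompletion-1}) the forward inclusion is immediate (each $\widehat{B}^{(\underline{N})}_{n,\underline{m}}[1/p]$ is a finitely generated $\widehat{B}[1/p]$-module since $B^{(\underline{N})}_{n,\underline{m}}$ is finite over the Noetherian ring $B$), and the converse is proved by fixing a $\Xi^{(\underline{N})}$-stable finitely generated $\widehat{B}[1/p]$-submodule $M$ and applying Tsuji's \cite[7.14]{tsuji2018localsimpson} successively along each Tate--Sen direction to push $M$ down into some $\widehat{B}^{(\underline{N})}_{n,\underline{m}}[1/p]$. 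For~(\ref{item:lem:decompletion-2}) the paper first descends $V$ to a finite stage $\widehat{B}^{(\underline{N})}_{n,\underline{m}}[1/p]$ using the colimit description of $\widetilde{B}^{(\underline{N})}_{\infty,\underline{\infty}}$ and the topological finite generation of $\Xi^{(\underline{N})}$ (\cite[5.2.(1)]{tsuji2018localsimpson}), then invokes \cite[7.3.(3)]{tsuji2018localsimpson} together with~(\ref{item:lem:decompletion-1}); no iterated decompletion of representations is needed.

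The specific point your ``bookkeeping'' remark does not pin down is the one that actually matters. The $(\Xi^{(\underline{N})},\widehat{B}[1/p])$-finite part is measured against the fixed Noetherian base $\widehat{B}[1/p]$ throughout, whereas at the intermediate stages of your proposed iteration the natural coefficient ring is a partially decompleted (and no longer finite over $\widehat{B}[1/p]$) ring such as $\widehat{B}^{(\underline{N})}_{\infty,\underline{m}}[1/p]$, and Tsuji's ring-level decompletion theorems by default identify the finite part with respect to the base of the Tate--Sen tower in question — a different, larger ring. Tsuji's \cite[7.14]{tsuji2018localsimpson} is tailored to exactly this mismatch: it takes a finitely generated module over the auxiliary Noetherian ring $R_{-1}$ of \ref{defn:tower-tate-sen}.(\ref{item:defn:tower-tate-sen-noether}) (here $B$, or a ring finite over it) rather than over $R_0$, and this is what allows the converse of~(\ref{item:lem:decompletion-1}) to go through uniformly in the $1+d$ directions. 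If you do wish to run iterated decompletion à la \cite[7.10, 7.11]{tsuji2018localsimpson}, you would need to track that the finite part is still being taken over $\widehat{B}[1/p]$ at every stage; you would also be proving en route something close to Proposition~\ref{prop:decompletion}, which the paper carefully establishes \emph{after} this lemma and which uses the lemma as input — so the shortcut via \cite[7.14]{tsuji2018localsimpson} is not just cleaner but avoids a potential logical tangle.
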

\begin{proof}
	We follow the proof of \cite[14.13]{tsuji2018localsimpson} and take $n_0\in \bb{N}$ defined in {\rm\ref{prop:notation-n_0}}.
	
	(\ref{item:lem:decompletion-1}) Notice that for any $(n,\underline{m})\in \bb{N}^{1+d}$, $B^{(\underline{N})}_{n,\underline{m}}$ is finite over $B$. Thus, $\widehat{B}^{(\underline{N})}_{n,\underline{m}}[1/p]$ is a $\Xi^{(\underline{N})}$-stable finitely generated $\widehat{B}[1/p]$-submodule of $\widehat{B}^{(\underline{N})}_{\infty,\underline{\infty}}[1/p]$ as $B$ is Noetherian. Thus, $\widetilde{B}^{(\underline{N})}_{\infty,\underline{\infty}}[1/p]$ lies in the  $(\Xi^{(\underline{N})},\widehat{B}[1/p])$-finite part of $\widehat{B}^{(\underline{N})}_{\infty,\underline{\infty}}[1/p]$. 
	
	For the converse, let $M$ be a $\Xi^{(\underline{N})}$-stable finitely generated $\widehat{B}[1/p]$-submodule of $\widehat{B}^{(\underline{N})}_{\infty,\underline{\infty}}[1/p]$. For any $1\leq i\leq d$ and $\underline{m}\in \bb{N}_{\geq n_0}^i\times\{\infty\}^{d-i}$, applying \cite[7.14]{tsuji2018localsimpson} to the Tate-Sen tower $(B^{(\underline{N})}_{\infty,\underline{m}+\underline{r}_i})_{r\in \bb{N}}$ (\ref{prop:tate-sen}.(\ref{item:prop:tate-sen-2})), we see that the condition $M\subseteq \widehat{B}^{(\underline{N})}_{\infty,\underline{m}+\underline{\infty}_i}[\frac{1}{p}]$ implies that $M\subseteq \widehat{B}^{(\underline{N})}_{\infty,\underline{m}+\underline{r}_i}[\frac{1}{p}]$ for some $r\in \bb{N}$. Applying this argument in the order $i=1,\dots,d$, we obtain an element $\underline{m}\in \bb{N}_{\geq n_0}^d$ such that $M\subseteq \widehat{B}^{(\underline{N})}_{\infty,\underline{m}}[1/p]$. Then, applying \cite[7.14]{tsuji2018localsimpson} to the Tate-Sen tower $(B^{(\underline{N})}_{n_0+r,\underline{m}})_{r\in \bb{N}}$ (\ref{prop:tate-sen}.(\ref{item:prop:tate-sen-1})), we obtain an element $n\in \bb{N}_{\geq n_0}$ such that $M\subseteq \widehat{B}^{(\underline{N})}_{n,\underline{m}}[1/p]$. This proves the converse part.
	
	(\ref{item:lem:decompletion-2}) Since $\widetilde{B}^{(\underline{N})}_{\infty,\underline{\infty}}$ is the colimit of $\widehat{B}^{(\underline{N})}_{n,\underline{m}}$ and $\Xi^{(\underline{N})}$ is topologically finitely generated (cf. \ref{prop:notation-n_0}), there exists $(n,\underline{m})\in \bb{N}_{\geq n_0}^{1+d}$ and an object $V'$ of $\repnpr(\Xi^{(\underline{N})},\widehat{B}^{(\underline{N})}_{n,\underline{m}}[1/p])$ such that $V=\widetilde{B}^{(\underline{N})}_{\infty,\underline{\infty}}\otimes_{\widehat{B}^{(\underline{N})}_{n,\underline{m}}}V'$ (\cite[5.2.(1)]{tsuji2018localsimpson}). As $\widehat{B}^{(\underline{N})}_{n,\underline{m}}$ is finite over $B$, by (\ref{item:lem:decompletion-1}) and \cite[7.3.(3)]{tsuji2018localsimpson}, we see that the $(\Xi^{(\underline{N})},\widehat{B}[1/p])$-finite part of $\widehat{B}^{(\underline{N})}_{\infty,\underline{\infty}}\otimes_{\widehat{B}^{(\underline{N})}_{n,\underline{m}}}V'$ is $\widetilde{B}^{(\underline{N})}_{\infty,\underline{\infty}}\otimes_{\widehat{B}^{(\underline{N})}_{n,\underline{m}}}V'$.
\end{proof}

\begin{myprop}[{cf. \cite[14.15]{tsuji2018localsimpson}}]\label{prop:decompletion}
	For any $\underline{N}\in J$, the functor
	\begin{align}\label{eq:cor:decompletion}
		\repnpr(\Xi^{(\underline{N})},\widetilde{B}^{(\underline{N})}_{\infty,\underline{\infty}}[\frac{1}{p}])\longrightarrow \repnpr(\Xi^{(\underline{N})},\widehat{B}^{(\underline{N})}_{\infty,\underline{\infty}}[\frac{1}{p}]),\ V \mapsto \widehat{B}^{(\underline{N})}_{\infty,\underline{\infty}}\otimes_{\widetilde{B}^{(\underline{N})}_{\infty,\underline{\infty}}} V
	\end{align}
	is an equivalence of categories, and a quasi-inverse is obtained by taking the $(\Xi^{(\underline{N})},\widehat{B}[1/p])$-finite part.
\end{myprop}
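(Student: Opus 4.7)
The plan is to verify fully faithfulness and essential surjectivity of the base change functor, with the quasi-inverse given by taking the $(\Xi^{(\underline{N})},\widehat{B}[1/p])$-finite part. The key inputs will be Lemma \ref{lem:decompletion} and the Tate-Sen decompletion results of Tsuji \cite[\textsection 7]{tsuji2018localsimpson} applied iteratively to the $d+1$ Tate-Sen towers provided by \ref{prop:tate-sen}.

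First, I would verify fully faithfulness. Given $V,V'$ in $\repnpr(\Xi^{(\underline{N})},\widetilde{B}^{(\underline{N})}_{\infty,\underline{\infty}}[1/p])$, let $\widehat{V}=\widehat{B}^{(\underline{N})}_{\infty,\underline{\infty}}\otimes_{\widetilde{B}^{(\underline{N})}_{\infty,\underline{\infty}}}V$ and similarly $\widehat{V}'$. Any continuous $\widetilde{B}^{(\underline{N})}_{\infty,\underline{\infty}}[1/p]$-linear $\Xi^{(\underline{N})}$-equivariant map $V\to V'$ extends uniquely by continuity to a $\widehat{B}^{(\underline{N})}_{\infty,\underline{\infty}}[1/p]$-linear $\Xi^{(\underline{N})}$-equivariant map $\widehat{V}\to\widehat{V}'$. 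Conversely, such a map necessarily sends the $(\Xi^{(\underline{N})},\widehat{B}[1/p])$-finite part of $\widehat{V}$ into that of $\widehat{V}'$; by Lemma \ref{lem:decompletion}.(\ref{item:lem:decompletion-2}), these finite parts are exactly $V$ and $V'$, which gives the required inverse construction.

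For essential surjectivity, let $W$ be an object of $\repnpr(\Xi^{(\underline{N})},\widehat{B}^{(\underline{N})}_{\infty,\underline{\infty}}[1/p])$, set $V=W^{(\Xi^{(\underline{N})},\widehat{B}[1/p])\trm{-fini}}$, and I would show that $V$ naturally belongs to $\repnpr(\Xi^{(\underline{N})},\widetilde{B}^{(\underline{N})}_{\infty,\underline{\infty}}[1/p])$ and that the natural map $\widehat{B}^{(\underline{N})}_{\infty,\underline{\infty}}\otimes_{\widetilde{B}^{(\underline{N})}_{\infty,\underline{\infty}}}V\to W$ is an isomorphism. To do this, I would iteratively descend $W$ through the various Tate-Sen towers. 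Concretely, by \ref{prop:tate-sen}.(\ref{item:prop:tate-sen-2}) and the Sen-style decompletion theorem \cite[7.11]{tsuji2018localsimpson} applied to the Tate-Sen tower $(B^{(\underline{N})}_{\infty,\underline{m}+\underline{r}_d})_{r\in\bb{N}}$ with $\underline{m}$ having all components in $\bb{N}_{\geq n_0}$ except the $d$-th, one first descends $W$ to a finite projective $\widehat{B}^{(\underline{N})}_{\infty,\underline{m}'}[1/p]$-representation $W'$ of $\Xi^{(\underline{N})}$ with $m'_d$ finite. Iterating this argument over $i=d-1,d-2,\dots,1$ yields a finite projective $\widehat{B}^{(\underline{N})}_{\infty,\underline{m}}[1/p]$-representation for some $\underline{m}\in\bb{N}^d$, and then applying the same descent to the cyclotomic Tate-Sen tower $(B^{(\underline{N})}_{n_0+r,\underline{m}})_{r\in\bb{N}}$ of \ref{prop:tate-sen}.(\ref{item:prop:tate-sen-1}) descends further to a finite projective $\widehat{B}^{(\underline{N})}_{n,\underline{m}}[1/p]$-representation $V_{n,\underline{m}}$ of $\Xi^{(\underline{N})}$ for some $(n,\underline{m})\in\bb{N}_{\geq n_0}^{1+d}$. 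Setting $V=\widetilde{B}^{(\underline{N})}_{\infty,\underline{\infty}}\otimes_{\widehat{B}^{(\underline{N})}_{n,\underline{m}}}V_{n,\underline{m}}$ gives the desired object, and the isomorphism $\widehat{B}^{(\underline{N})}_{\infty,\underline{\infty}}\otimes_{\widetilde{B}^{(\underline{N})}_{\infty,\underline{\infty}}}V\iso W$ is built into the construction. That $V$ coincides with the $(\Xi^{(\underline{N})},\widehat{B}[1/p])$-finite part of $W$ then follows from Lemma \ref{lem:decompletion}.(\ref{item:lem:decompletion-2}) applied to this $V$.

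The main obstacle I anticipate is checking that the iterated Tate-Sen descent produces representations that are finite projective over the successive rings $\widehat{B}^{(\underline{N})}_{\infty,\underline{m}}[1/p]$ (and not merely finitely generated) and that the topologies match at each stage; this amounts to invoking the full strength of \cite[7.11, 7.14]{tsuji2018localsimpson} at each of the $d+1$ towers, which is legitimate because \ref{prop:tate-sen} supplies the Tate-Sen axioms in the sense of \ref{defn:tower-tate-sen}. Once this is arranged, combining the chain of base change isomorphisms $W\iso \widehat{B}^{(\underline{N})}_{\infty,\underline{\infty}}\otimes_{\widehat{B}^{(\underline{N})}_{n,\underline{m}}}V_{n,\underline{m}}\iso \widehat{B}^{(\underline{N})}_{\infty,\underline{\infty}}\otimes_{\widetilde{B}^{(\underline{N})}_{\infty,\underline{\infty}}}V$ gives essential surjectivity and completes the proof.
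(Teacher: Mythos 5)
Your proposal takes the same route as the paper: iterated Tate--Sen descent through the $d$ geometric towers followed by the cyclotomic tower, then Lemma~\ref{lem:decompletion}.(\ref{item:lem:decompletion-2}) to identify the quasi-inverse with taking the finite part. Your explicit fully faithfulness argument usefully spells out what the paper leaves implicit, and reversing the order of the geometric descent from $d$ down to $1$ instead of $1$ up to $d$ is harmless since \ref{prop:tate-sen}.(\ref{item:prop:tate-sen-2}) is symmetric in the index $i$. Two things should be corrected, though. First, your description of $\underline{m}$ in the opening descent step reads as if all but the $d$-th component of $\underline{m}$ lie in $\bb{N}_{\geq n_0}$; but for the tower $(B^{(\underline{N})}_{\infty,\underline{m}+\underline{r}_d})_{r\in\bb{N}}$ to have colimit $B^{(\underline{N})}_{\infty,\underline{\infty}}$ (which is what is needed, since $W$ lives over the completion of $B^{(\underline{N})}_{\infty,\underline{\infty}}$), one needs exactly the opposite, namely $m_d\in\bb{N}_{\geq n_0}$ and $m_i=\infty$ for $i\neq d$; and if the ``except'' clause is taken to mean $m_d=\infty$, then $\underline{m}+\underline{r}_d$ is constant in $r$ and the tower would fail the $\bb{Z}_p$-tower condition in \ref{defn:tower-tate-sen}.(\ref{item:defn:tower-tate-sen-Z_p}). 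Second, the Tsuji results you cite for the descent ([7.11, 7.14]) do not match the ones the paper actually invokes: the descent for essential surjectivity rests on [7.23] and [7.24] of \cite{tsuji2018localsimpson}, one for each of the two types of Tate--Sen towers; [7.14] is used elsewhere (namely in Lemma~\ref{lem:decompletion}) to bound the $(\Xi^{(\underline{N})},\widehat{B}[1/p])$-finite part, not to carry out the iterated descent itself. With the index correction and the right inputs from Tsuji, your argument reproduces the paper's proof.
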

\begin{proof}
	We follow the proof of \cite[14.15]{tsuji2018localsimpson}. Let $W$ be an object of $\repnpr(\Xi^{(\underline{N})},\widehat{B}^{(\underline{N})}_{\infty,\underline{\infty}}[1/p])$. For any $1\leq i\leq d$ and $\underline{m}\in \bb{N}_{\geq n_0}^i\times\{\infty\}^{d-i}$, we can apply \cite[7.23]{tsuji2018localsimpson} to the Tate-Sen tower $(B^{(\underline{N})}_{\infty,\underline{m}+\underline{r}_i})_{r\in \bb{N}}$ (\ref{prop:tate-sen}.(\ref{item:prop:tate-sen-2})). Applying this argument to $W$ in the order $i=1,\dots,d$, we obtain an object $W'$ of $\repnpr(\Xi^{(\underline{N})},\widehat{B}^{(\underline{N})}_{\infty,\underline{m}}[1/p])$ for some $\underline{m}\in\bb{N}_{\geq n_0}^d$ such that $W=\widehat{B}^{(\underline{N})}_{\infty,\underline{\infty}}\otimes_{\widehat{B}^{(\underline{N})}_{\infty,\underline{m}}} W'$. Then, applying \cite[7.24]{tsuji2018localsimpson} to the Tate-Sen tower $(B^{(\underline{N})}_{n_0+r,\underline{m}})_{r\in \bb{N}}$ (\ref{prop:tate-sen}.(\ref{item:prop:tate-sen-1})), we obtain an object $V$ of $\repnpr(\Xi^{(\underline{N})},\widehat{B}^{(\underline{N})}_{n,\underline{m}}[\frac{1}{p}])$ for some $n\in\bb{N}_{\geq n_0}$ such that $W'=\widehat{B}^{(\underline{N})}_{\infty,\underline{m}}\otimes_{\widehat{B}^{(\underline{N})}_{n,\underline{m}}} V$. This shows that the functor \eqref{eq:cor:decompletion} is essentially surjective. The conclusion follows from \ref{lem:decompletion}.(\ref{item:lem:decompletion-2}).
\end{proof}

\begin{myprop}[{cf. \cite[14.16]{tsuji2018localsimpson}}]\label{prop:descent-further}
	For any $\underline{N}\in J$ and $\underline{m}\in \bb{N}_{\geq n_0}^d$, where $n_0\in \bb{N}$ is defined in {\rm\ref{prop:notation-n_0}}, the functor (cf. {\rm\ref{defn:analytic}})
	\begin{align}\label{eq:prop:descent-further}
		\repnan{\Delta^{(\underline{N})}_{\underline{m}}}(\Xi^{(\underline{N})},\widetilde{B}^{(\underline{N})}_{\infty,\underline{m}}[\frac{1}{p}])\longrightarrow \repnpr(\Xi^{(\underline{N})},\widetilde{B}^{(\underline{N})}_{\infty,\underline{\infty}}[\frac{1}{p}]),\ V \mapsto \widetilde{B}^{(\underline{N})}_{\infty,\underline{\infty}}\otimes_{\widetilde{B}^{(\underline{N})}_{\infty,\underline{m}}} V
	\end{align}
	is fully faithful. Moreover, any object of $\repnpr(\Xi^{(\underline{N})},\widetilde{B}^{(\underline{N})}_{\infty,\underline{\infty}}[1/p])$ lies in the essential image of the above functor for some $\underline{m}\in \bb{N}_{\geq n_0}^d$.
\end{myprop}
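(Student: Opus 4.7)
The plan is to adapt Tsuji's proof of \cite[14.16]{tsuji2018localsimpson} to the quasi-adequate setting, since all of the technical ingredients required there have now been extended above: the Tate--Sen conditions (\ref{prop:tate-sen}), the decompletion statement (\ref{prop:decompletion}), the finite generation $\Delta^{(\underline{N})}_{\underline{m}}\cong p^{m_1}\bb{Z}_p\times\cdots\times p^{m_d}\bb{Z}_p$ (\ref{prop:notation-n_0}), and the finiteness of $\ak{S}_p(B^{(\underline{N})}_{\infty,\underline{\infty}})$ (\ref{prop:notation-n_0}.(\ref{item:prop:notation-n_0-4})) used in the nilpotency statement \ref{prop:operator-nilpotent}. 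Throughout, one reduces from $B$ to the adequate algebra $A$ via the $p$-isomorphisms of \ref{prop:B_nm-p-iso}, which allow transporting any estimate that was available in Tsuji's adequate setting up to a bounded power of $p$.

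First, for the fully faithfulness, by the internal-Hom formalism for finite projective representations and the exactness/compatibility of tensor products, it suffices to show that for any object $V$ of $\repnan{\Delta^{(\underline{N})}_{\underline{m}}}(\Xi^{(\underline{N})},\widetilde{B}^{(\underline{N})}_{\infty,\underline{m}}[1/p])$ the canonical map
\[
V\longrightarrow (\widetilde{B}^{(\underline{N})}_{\infty,\underline{\infty}}[\tfrac{1}{p}]\otimes_{\widetilde{B}^{(\underline{N})}_{\infty,\underline{m}}[\frac{1}{p}]} V)^{\Delta^{(\underline{N})}_{\underline{m}}}
\]
is an isomorphism. Applying the decompletion \ref{prop:decompletion} to pass between $\widetilde{B}$ and $\widehat{B}$ without loss, this reduces to a cohomological statement over a Tate--Sen tower, which follows by iteratively applying \cite[7.11]{tsuji2018localsimpson} (analogue of Tsuji's 14.14) to the Tate--Sen towers $(B^{(\underline{N})}_{\infty,\underline{m}+\underline{r}_i})_{r\in\bb{N}}$ of \ref{prop:tate-sen}.(\ref{item:prop:tate-sen-2}) in each of the $d$ coordinate directions.

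For essential image, start with an object $W$ of $\repnpr(\Xi^{(\underline{N})},\widetilde{B}^{(\underline{N})}_{\infty,\underline{\infty}}[1/p])$. By \ref{prop:decompletion} combined with \cite[5.2.(1)]{tsuji2018localsimpson} and the topological finite generation of $\Xi^{(\underline{N})}$ (\ref{prop:notation-n_0}), there exist $\underline{m_0}\in\bb{N}_{\geq n_0}^d$ and an object $W'$ of $\repnpr(\Xi^{(\underline{N})},\widehat{B}^{(\underline{N})}_{\infty,\underline{m_0}}[\frac{1}{p}])$ whose base change along $\widehat{B}^{(\underline{N})}_{\infty,\underline{m_0}}\hookrightarrow \widetilde{B}^{(\underline{N})}_{\infty,\underline{\infty}}$ recovers $W$. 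For each generator $\tau_i$ of an open topologically cyclic subgroup of $\Delta^{(\underline{N})}_{\underline{m_0}}$, the infinitesimal action $\varphi_{\tau_i}$ is well-defined by \ref{prop:derivative} (whose hypothesis is met by \ref{defn:kummer-tower}) and is nilpotent by \ref{prop:operator-nilpotent} (using the finiteness of $\ak{S}_p(B^{(\underline{N})}_{\infty,\underline{\infty}})$ from \ref{prop:notation-n_0}.(\ref{item:prop:notation-n_0-4})). Using the continuity of the action together with nilpotency, one enlarges $\underline{m}$ so that on a chosen lattice the operator $\tau_i$ is given by the convergent exponential of its infinitesimal action $\varphi_{\tau_i}$. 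Taking the $\widetilde{B}^{(\underline{N})}_{\infty,\underline{m}}[1/p]$-span inside $W$ of a suitable basis on which the exponential formula holds then produces the desired $\Delta^{(\underline{N})}_{\underline{m}}$-analytic descent $V$.

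The main obstacle is the uniform convergence step: one must verify that after choosing $\underline{m}$ sufficiently large, the infinitesimal action $\varphi_{\tau_i}$ becomes $p$-adically small enough on an appropriate integral lattice so that the exponential series $\sum_k \varphi_{\tau_i}^k/k!$ converges and recovers the actual Galois action, exactly as in the analyticity requirement of \ref{defn:analytic}. This is the technical heart of Tsuji's 14.16, and in our setting it is handled by transferring his estimates on $\widehat{A}^{(\underline{N})}_{\infty,\underline{m}}$ to $\widehat{B}^{(\underline{N})}_{\infty,\underline{m}}$ via the bounded $p$-power comparison \ref{prop:B_nm-p-iso}; the bookkeeping with powers of $p$ introduces no essential new difficulty, but care is needed to ensure that the Tate--Sen constants $k_0$ and $k_1$ of \ref{lem:tate-sen-2}--\ref{lem:tate-sen-1} are absorbed by further increasing $\underline{m}$.
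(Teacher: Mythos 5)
Your proposal misses the key technical device of the paper's proof (following Tsuji's 14.16): the \emph{twisted} $\Delta^{(\underline{N})}$-action
\[
\overline{\rho}(\tau)=\exp(-\varphi_\tau)\rho(\tau),
\]
which is well-defined thanks to the nilpotency of $\varphi_\tau$ established in \ref{prop:operator-nilpotent}. Both halves of your argument suffer from this omission.

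For fully faithfulness, you reduce to showing that $V\to (\widetilde{B}^{(\underline{N})}_{\infty,\underline{\infty}}[1/p]\otimes V)^{\Delta^{(\underline{N})}_{\underline{m}}}$ is an isomorphism. But this map is not even well-defined: an object $V$ of $\repnan{\Delta^{(\underline{N})}_{\underline{m}}}(\Xi^{(\underline{N})},\widetilde{B}^{(\underline{N})}_{\infty,\underline{m}}[1/p])$ is \emph{not} fixed pointwise by $\Delta^{(\underline{N})}_{\underline{m}}$ under $\rho$; rather, $\rho(\tau)=\exp(\varphi_\tau)$ there, which is a nontrivial unipotent automorphism in general. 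So $1\otimes V$ does not lie inside the $\rho$-invariants. The paper instead checks that $(W,\overline{\rho})^{\Delta^{(\underline{N})}_{\underline{m}}}=V$: since $\overline{\rho}$ is designed to be trivial on $V$, the computation reduces to the ring-level identity $(\widetilde{B}^{(\underline{N})}_{\infty,\underline{\infty}})^{\Delta^{(\underline{N})}_{\underline{m}}}=\widetilde{B}^{(\underline{N})}_{\infty,\underline{m}}$, which comes from the finite-freeness statement \ref{lem:tate-sen-2}. No Hochschild--Serre or Tate--Sen cohomological input is needed here.

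For essential surjectivity, you propose to take a finite-level descent and then, after enlarging $\underline{m}$, ``take the $\widetilde{B}^{(\underline{N})}_{\infty,\underline{m}}[1/p]$-span of a suitable basis on which the exponential formula holds.'' This skips the genuine obstruction: there is no reason a priori that the resulting $\widetilde{B}^{(\underline{N})}_{\infty,\underline{m}}[1/p]$-lattice is stable under the full group $\Xi^{(\underline{N})}$, since the decompletion \cite[5.2.(1)]{tsuji2018localsimpson} only provides a $\Delta^{(\underline{N})}$-stable descent. The paper verifies $\Xi^{(\underline{N})}$-stability of $V=(W,\overline{\rho})^{\Delta^{(\underline{N})}_{\underline{m}}}$ by a short conjugation computation using $g^{-1}\tau g\in\Delta^{(\underline{N})}_{\underline{m}}$ and the relation $\varphi_{g\tau g^{-1}}=g\circ\varphi_\tau\circ g^{-1}$ from \ref{lem:infinitesimal}; this step is essential and does not appear in your outline. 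Finally, your ``main obstacle'' paragraph about convergence and Tate--Sen constants misidentifies the difficulty: nilpotency of $\varphi_\tau$ makes $\exp(\varphi_\tau)$ a \emph{finite} sum, so convergence is automatic; what must be arranged by enlarging $\underline{m}$ is the \emph{identity} $\rho(\tau)=\exp(\varphi_\tau)$ on $V$, i.e.\ the $\Delta^{(\underline{N})}_{\underline{m}}$-analyticity, which by \ref{prop:derivative}.(3) holds after restricting $\tau$ to a sufficiently small open subgroup depending on a finite generating set of $V$.
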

\begin{proof}
	We follow the proof of \cite[14.16]{tsuji2018localsimpson}. For any object $W$ of $\repnpr(\Xi^{(\underline{N})},\widetilde{B}^{(\underline{N})}_{\infty,\underline{\infty}}[1/p])$, we consider the $\widetilde{B}^{(\underline{N})}_{\infty,\underline{\infty}}[1/p]$-linear endomorphism $\varphi_\tau$ on $W$ given by the infinitesimal action of $\tau\in\Delta^{(\underline{N})}$, which is nilpotent by \ref{prop:operator-nilpotent} (whose assumptions are satisfied by \ref{prop:notation-n_0}). Then, we obtain a continuous semi-linear action $\overline{\rho}$ of $\Delta^{(\underline{N})}$ on $W$ defined by $\overline{\rho}(\tau)=\exp(-\varphi_\tau)\rho(\tau)$ as in \ref{prop:special-functor}.(\ref{item:prop:special-functor-1}) (in fact, $\overline{\rho}$ can be extended to $\Gamma^{(\underline{N})}$, but we don't need this). Notice that if $W=\widetilde{B}^{(\underline{N})}_{\infty,\underline{\infty}}\otimes_{\widetilde{B}^{(\underline{N})}_{\infty,\underline{m}}} V$ for some object $V$ of $\repnan{\Delta^{(\underline{N})}_{\underline{m}}}(\Xi^{(\underline{N})},\widetilde{B}^{(\underline{N})}_{\infty,\underline{m}}[1/p])$, then $\overline{\rho}|_{\Delta^{(\underline{N})}_{\underline{m}}}$ acts trivially on $V$, which implies that
	\begin{align}
		(W,\overline{\rho})^{\Delta^{(\underline{N})}_{\underline{m}}}=(\widetilde{B}^{(\underline{N})}_{\infty,\underline{\infty}})^{\Delta^{(\underline{N})}_{\underline{m}}}\otimes_{\widetilde{B}^{(\underline{N})}_{\infty,\underline{m}}} V=V,
	\end{align}
	where the last identity follows from \ref{lem:tate-sen-2}. This shows that \eqref{eq:prop:descent-further} is fully faithful.
	
	Since $\widetilde{B}^{(\underline{N})}_{\infty,\underline{\infty}}$ is the filtered colimit of $\widetilde{B}^{(\underline{N})}_{\infty,\underline{m}}$, there exists $\underline{m}\in \bb{N}_{\geq n_0}^d$ and an object $V$ of  $\repnpr(\Delta^{(\underline{N})},\widetilde{B}^{(\underline{N})}_{\infty,\underline{m}}[1/p])$ such that $(W,\overline{\rho})=\widetilde{B}^{(\underline{N})}_{\infty,\underline{\infty}}\otimes_{\widetilde{B}^{(\underline{N})}_{\infty,\underline{m}}} V$ (\cite[5.2.(1)]{tsuji2018localsimpson}). Moreover, since any $w\in W$ is fixed by an open subgroup of $\Delta^{(\underline{N})}$ via $\overline{\rho}$ by \ref{prop:derivative}, after enlarging $\underline{m}$, we may assume that $\overline{\rho}|_{\Delta^{(\underline{N})}_{\underline{m}}}$ acts trivially on $V$. By the discussion above, we have $(W,\overline{\rho})^{\Delta^{(\underline{N})}_{\underline{m}}}=V$. We claim that $V$ is $\Xi^{(\underline{N})}$-stable under $\rho$. Indeed, for any $g\in \Xi^{(\underline{N})}$, $\tau\in\Delta^{(\underline{N})}_{\underline{m}}$ and $v\in V$, if we set $\tau'=g^{-1}\tau g\in \Delta^{(\underline{N})}_{\underline{m}}$, then
	\begin{align}
		\rho(\tau)(\rho(g)v)=\rho(g)(\rho(\tau')v)=\rho(g)(\exp(\varphi_{\tau'})v)=\exp(\varphi_\tau)(\rho(g)v),
	\end{align}
	where the second equality follows from $\overline{\rho}(\tau')(v)=v$, and the last equality follows from \ref{lem:infinitesimal}.(\ref{item:infinitesimal-1}). This shows that $\rho(g)v\in V=(W,\overline{\rho})^{\Delta^{(\underline{N})}_{\underline{m}}}$, and hence $V$ is $\Xi^{(\underline{N})}$-stable under $\rho$. As $\widetilde{B}^{(\underline{N})}_{\infty,\underline{m}}[1/p]\to \widetilde{B}^{(\underline{N})}_{\infty,\underline{\infty}}[1/p]$ is a closed embedding (\ref{defn:tower-completion}), so is $V\to W$, which implies that $\Xi^{(\underline{N})}$ acts continuously on $V$. Moreover, $V$ is $\Delta^{(\underline{N})}_{\underline{m}}$-analytic by definition, which completes the proof.
\end{proof}

\begin{mylem}[{cf. \cite[14.5]{tsuji2018localsimpson}}]\label{lem:trace}
	Let $\ca{L}',\ca{L}''$ be two finite extensions of $\ca{L}_{\infty,\underline{\infty}}$ in $\ca{L}_{\mrm{ur}}$ with $\ca{L}'\subseteq \ca{L}''$, $B'$ and $B''$ the integral closures of $B$ in $\ca{L}'$ and $\ca{L}''$ respectively. Then, the inclusion $\mrm{Tr}_{\ca{L}''/\ca{L}'}(B'')\subseteq B'$ is almost surjective.
\end{mylem}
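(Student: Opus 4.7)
The inclusion $\mrm{Tr}_{\ca{L}''/\ca{L}'}(B'')\subseteq B'$ is immediate: since $B'$ is the integral closure of $B$ in $\ca{L}'$ and the trace of an element integral over $B$ remains integral over $B$. The content of the lemma is the almost surjectivity, and the plan is to transfer the question to an almost finite \'etale situation over a genuinely perfectoid base via almost purity, then descend.

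\emph{Step 1 (enlarge to a perfectoid base).} Let $\ca{M}=\ca{L}^{K_\infty}_{\underline{\infty}}$ in the notation of \ref{para:notation-quasi-adequate-tower}, i.e.\ the composite of $\ca{L}$ with $K_\infty$ and all $n$-th roots $t_{i,n}$ of $t_i$ for $1\leq i\leq d$ and $n\geq 1$. Then $\ca{M}\supseteq \ca{L}_{\infty,\underline{\infty}}$, and $\ca{M}/\ca{L}_{\infty,\underline{\infty}}$ is a filtered union of finite Kummer extensions of degree prime to $p$. Put $\ca{M}'=\ca{L}'\ca{M}$, $\ca{M}''=\ca{L}''\ca{M}$ in $\ca{L}_{\mrm{ur}}$, and let $B_{\ca{M}}$, $B'_{\ca{M}}$, $B''_{\ca{M}}$ be the integral closures of $B$ in $\ca{M}$, $\ca{M}'$, $\ca{M}''$ respectively. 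Since $K_\infty$ is a pre-perfectoid field, \ref{lem:B-perfd} shows $B_{\ca{M}}$ is almost pre-perfectoid. Applying \ref{prop:B_nm-almost-purity} with $F=K_\infty$ to the finite extensions $\ca{M}'/\ca{M}$ and $\ca{M}''/\ca{M}$ (both inside $\ca{L}_{\mrm{ur}}$), we conclude that $B'_{\ca{M}}$ and $B''_{\ca{M}}$ are almost finite \'etale over $B_{\ca{M}}$, and hence $B''_{\ca{M}}$ is almost finite \'etale over $B'_{\ca{M}}$.

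\emph{Step 2 (trace is almost surjective over the perfectoid base).} For any almost finite \'etale extension of almost rings, the trace form is an almost perfect pairing, and so there exists an element whose trace is almost $1$; in particular the trace map is almost surjective (cf.\ the standard theory in \cite{gabber2003almost}). Applied to $B'_{\ca{M}}\to B''_{\ca{M}}$, this gives
\[
\mrm{Tr}_{\ca{M}''/\ca{M}'}(B''_{\ca{M}})\supseteq^{\mrm{a}} B'_{\ca{M}}.
\]

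\emph{Step 3 (descend to $\ca{L}'$).} Let $\ca{M}_\lambda$ run over the cofinal system of finite Kummer subextensions of $\ca{M}/\ca{L}_{\infty,\underline{\infty}}$; each $\ca{M}_\lambda/\ca{L}_{\infty,\underline{\infty}}$ has degree prime to $p$. Set $\ca{L}'_\lambda=\ca{L}'\ca{M}_\lambda$, $\ca{L}''_\lambda=\ca{L}''\ca{M}_\lambda$, with corresponding integral closures $B'_\lambda$, $B''_\lambda$ of $B$. On the one hand, by transitivity of trace one has
\[
\mrm{Tr}_{\ca{L}''_\lambda/\ca{L}'_\lambda}\big|_{\ca{L}''}\,=\,[\ca{L}''_\lambda:\ca{L}''\ca{L}'_\lambda]\cdot\mrm{Tr}_{\ca{L}''\ca{L}'_\lambda/\ca{L}'_\lambda}\big|_{\ca{L}''},
\]
and since $\ca{L}'_\lambda/\ca{L}'$ is tame Kummer and $\ca{L}''\ca{L}'_\lambda/\ca{L}''$ has degree dividing $[\ca{L}'_\lambda:\ca{L}']$ (both of prime-to-$p$ degree), this unit-bounded integer is invertible up to a bounded $p$-power. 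On the other hand, passing to the colimit over $\lambda$, $\mrm{Tr}_{\ca{M}''/\ca{M}'}$ is the direct limit of the $\mrm{Tr}_{\ca{L}''_\lambda/\ca{L}'_\lambda}$, and $B'_{\ca{M}}$ (resp.\ $B''_{\ca{M}}$) is the filtered colimit of $B'_\lambda$ (resp.\ $B''_\lambda$). Combined with Step 2 and the fact that each $B'_\lambda$ is almost finite \'etale over $B'$ by almost purity (hence almost faithfully flat), the almost surjectivity over $B'_{\ca{M}}$ descends to almost surjectivity of $\mrm{Tr}_{\ca{L}''/\ca{L}'}(B'')\subseteq B'$, as required.

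\emph{The main obstacle} is Step 3: carefully identifying $\mrm{Tr}_{\ca{M}''/\ca{M}'}$ with the limit of traces $\mrm{Tr}_{\ca{L}''_\lambda/\ca{L}'_\lambda}$, controlling the degree factors arising when $\ca{L}''$ and $\ca{M}'$ are not linearly disjoint over $\ca{L}'$, and extracting almost faithful flatness of the prime-to-$p$ Kummer tower $B'_{\ca{M}}/B'$ from almost purity. This is the technical heart of Tsuji's argument \cite[14.5]{tsuji2018localsimpson}, and our task is to verify that it transports to the quasi-adequate setting via \ref{lem:B-perfd} and \ref{prop:B_nm-almost-purity}.
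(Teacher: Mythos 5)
Your Steps 1 and 2 track the paper's approach closely: both pass to a filtered union (your $\ca{M}=\ca{L}^{K_\infty}_{\underline{\infty}}$, the paper's $\ca{L}'^{(\underline{\infty})}$) containing $p$-power and prime-to-$p$ roots of the $t_i$, invoke \ref{lem:B-perfd} and \ref{prop:B_nm-almost-purity} to get an almost finite \'etale situation, and deduce that the trace is almost surjective at the top of the tower. That part is fine.

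The problem is entirely in Step 3, and it is a genuine gap, not just a loose end. You assert that ``each $B'_\lambda$ is almost finite \'etale over $B'$ by almost purity (hence almost faithfully flat).'' This does not follow. Almost purity \ref{thm:almost-purity} (and \ref{prop:B_nm-almost-purity}) requires the base to be almost pre-perfectoid and the extension to be finite \'etale after inverting $p$. But $B'$ is the integral closure of $B$ in a finite extension $\ca{L}'$ of $\ca{L}_{\infty,\underline{\infty}}$; it is not known to be almost pre-perfectoid, because $\ca{L}'$ does not contain any $\ca{L}^F_{\underline{\infty}}$ with $F$ pre-perfectoid, and $\ca{L}'/\ca{L}_{\infty,\underline{\infty}}$ may be ramified along the boundary divisor $\{t_i=0\}$. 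More fundamentally, $B'_\lambda[1/p]$ is \emph{not} finite \'etale over $B'[1/p]$: it is a Kummer cover branched along the boundary (that is exactly why one passes to the tower in the first place), so the hypotheses of almost purity fail at both ends. In addition, the displayed transitivity formula at the start of Step 3 is vacuous: since $\ca{L}''\supseteq\ca{L}'$ one has $\ca{L}''\ca{L}'_\lambda=\ca{L}''_\lambda$, so the equality reads $\mrm{Tr}=1\cdot\mrm{Tr}$ and records no information.

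The paper does not use faithfully flat descent at all for Step 3; it uses a completely elementary trace manipulation. Starting from $\pi x=\mrm{Tr}_{\ca{L}''^{(\underline{N})}/\ca{L}'^{(\underline{N})}}(y)$ for some finite level $\underline{N}$ and $y\in B''^{(\underline{N})}$, one sets $y'=l^{-1}\mrm{Tr}_{\ca{L}''^{(\underline{N})}/\ca{L}''}(y)$ where $l=[\ca{L}'^{(\underline{N})}:\ca{L}']$ is prime to $p$ (so $l^{-1}\in\bb{Z}_p$ and $y'\in B''$); transitivity of the trace and the fact that $\pi x\in\ca{L}'$ then give $\mrm{Tr}_{\ca{L}''/\ca{L}'}(y')=l^{-1}\mrm{Tr}_{\ca{L}'^{(\underline{N})}/\ca{L}'}(\pi x)=\pi x$. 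The same computation, adapted to your tower, would read: take $\lambda$ large enough that $y\in B''_\lambda$ and $[\ca{L}''_\lambda:\ca{L}'_\lambda]=[\ca{M}'':\ca{M}']$, set $l=[\ca{L}'_\lambda:\ca{L}']$ (prime to $p$ since $\ca{M}_\lambda/\ca{L}_{\infty,\underline{\infty}}$ is a tame Kummer extension) and $y'=l^{-1}\mrm{Tr}_{\ca{L}''_\lambda/\ca{L}''}(y)\in B''$; then $\mrm{Tr}_{\ca{L}''/\ca{L}'}(y')=\pi x$. This is the correct form of Step 3; there is no need for (and no access to) almost faithful flatness of the prime-to-$p$ Kummer tower.
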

\begin{proof}
	Consider $\ca{L}^{(\underline{\infty})}=\colim_{\underline{N}\in J}\ca{L}^{(\underline{N})}$, and let $B'^{(\underline{\infty})}$ (resp. $B''^{(\underline{\infty})}$) be the integral closures of $B$ in $\ca{L}'^{(\underline{\infty})}=\ca{L}'\ca{L}^{(\underline{\infty})}$ (resp. $\ca{L}''^{(\underline{\infty})}=\ca{L}''\ca{L}^{(\underline{\infty})}$). Notice that $B''^{(\underline{\infty})}$ is almost finite \'etale over $B'^{(\underline{\infty})}$ by \ref{prop:B_nm-almost-purity}. In particular, $\mrm{Tr}_{\ca{L}''^{(\underline{\infty})}/\ca{L}'^{(\underline{\infty})}}(B''^{(\underline{\infty})})\subseteq B'^{(\underline{\infty})}$ is almost surjective (\cite[\Luoma{5}.7.12]{abbes2016p}). Thus, for any $x\in B'$ and $\pi\in\ak{m}_{K_\infty}$, there exists $\underline{N}\in J$ and $y\in B''^{(\underline{N})}$ such that $\pi x=\mrm{Tr}_{\ca{L}''^{(\underline{N})}/\ca{L}'^{(\underline{N})}}(y)$. Notice that $l=[\ca{L}'^{(\underline{N})}:\ca{L}']$ is prime to $p$. We take $y'=l^{-1}\mrm{Tr}_{\ca{L}''^{(\underline{N})}/\ca{L}''}(y)\in B''$. Thus, $\pi x=\mrm{Tr}_{\ca{L}''/\ca{L}'}(y')$.
\end{proof}

\begin{myprop}[{cf. \cite[14.7]{tsuji2018localsimpson}}]\label{prop:almost-et-descent}
	For any $\underline{N}\in J$, the functor
	\begin{align}\label{eq:prop:almost-et-descent}
		\repnpr(\Xi^{(\underline{N})},\widehat{B}^{(\underline{N})}_{\infty,\underline{\infty}}[\frac{1}{p}])\longrightarrow \repnpr(G,\widehat{\overline{B}}[\frac{1}{p}]),\ V \mapsto \widehat{\overline{B}}\otimes_{\widehat{B}^{(\underline{N})}_{\infty,\underline{\infty}}} V
	\end{align}
	is fully faithful. Moreover, any object of $\repnpr(G,\widehat{\overline{B}}[1/p])$ lies in the essential image of the above functor for some $\underline{N}\in J$.
\end{myprop}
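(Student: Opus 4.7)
The plan is to adapt Tsuji's argument \cite[14.7]{tsuji2018localsimpson} from the adequate to the quasi-adequate setting. The essential geometric inputs—almost purity \ref{prop:B_nm-almost-purity} for $\overline{B}$ over $B^{(\underline{N})}_{\infty,\underline{\infty}}$ (valid since $K^{(\underline{N})}_\infty$ is pre-perfectoid), together with the trace surjectivity \ref{lem:trace}—remain available. These make the extension $\widehat{B}^{(\underline{N})}_{\infty,\underline{\infty}} \to \widehat{\overline{B}}$ an almost Galois extension with group $H^{(\underline{N})}_{\underline{\infty}}$, so the equivalence comes from almost faithfully flat Galois descent after inverting $p$. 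The natural candidate for the quasi-inverse is $W \mapsto W^{H^{(\underline{N})}_{\underline{\infty}}}$ (taking a filtered colimit over $\underline{N}$ when necessary).

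For fully faithfulness, I would show that for any object $V$ of $\repnpr(\Xi^{(\underline{N})},\widehat{B}^{(\underline{N})}_{\infty,\underline{\infty}}[1/p])$ the canonical map $V \to (\widehat{\overline{B}}\otimes_{\widehat{B}^{(\underline{N})}_{\infty,\underline{\infty}}} V)^{H^{(\underline{N})}_{\underline{\infty}}}$ is an isomorphism. First I would reduce this to the base-ring statement $\widehat{\overline{B}}^{H^{(\underline{N})}_{\underline{\infty}}} \simeq^a \widehat{B}^{(\underline{N})}_{\infty,\underline{\infty}}$, since tensoring with a finite projective module preserves almost isomorphisms and inverting $p$ upgrades them to honest ones. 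The base-ring statement itself is obtained by writing $\overline{B}$ as the filtered colimit of the integral closures $B'$ of $B$ in finite Galois subextensions $\ca{L}'$ of $\ca{L}_{\mrm{ur}}/\ca{L}^{(\underline{N})}_{\infty,\underline{\infty}}$: by \ref{prop:B_nm-almost-purity} each $B'$ is almost finite \'etale over $B^{(\underline{N})}_{\infty,\underline{\infty}}$, and almost Galois theory combined with the trace surjectivity \ref{lem:trace} yields $(B')^{\gal(\ca{L}'/\ca{L}^{(\underline{N})}_{\infty,\underline{\infty}})} \simeq^a B^{(\underline{N})}_{\infty,\underline{\infty}}$. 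Taking colimits, then $p$-adic completions, then $H^{(\underline{N})}_{\underline{\infty}}$-invariants gives the desired almost equality.

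For essential surjectivity, given $W$ in $\repnpr(G,\widehat{\overline{B}}[1/p])$, set $V^{(\underline{N})} = W^{H^{(\underline{N})}_{\underline{\infty}}}$ and $V^{(\underline{\infty})} = \colim_{\underline{N}\in J} V^{(\underline{N})}$, the latter carrying a continuous semi-linear action of $\Xi^{(\underline{\infty})}$ over $\widehat{B}^{(\underline{\infty})}_{\infty,\underline{\infty}}[1/p]$. The fully-faithful direction gives $\widehat{\overline{B}}\otimes_{\widehat{B}^{(\underline{\infty})}_{\infty,\underline{\infty}}} V^{(\underline{\infty})} \iso W$. Since $\widehat{B}^{(\underline{\infty})}_{\infty,\underline{\infty}}[1/p] = \colim_{\underline{N}\in J} \widehat{B}^{(\underline{N})}_{\infty,\underline{\infty}}[1/p]$ and $V^{(\underline{\infty})}$ is finitely generated (being isomorphic after a faithfully flat base change to the finite projective $W$), a standard limit argument descends $V^{(\underline{\infty})}$, together with its $\Xi^{(\underline{\infty})}$-action and the isomorphism above, to some finite level $\underline{N}$. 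Finite projectivity of $V^{(\underline{N})}$ then follows by descent along the (almost) faithfully flat map $\widehat{B}^{(\underline{N})}_{\infty,\underline{\infty}}[1/p] \to \widehat{\overline{B}}[1/p]$ from the finite projectivity of $W$.

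The principal obstacle lies in the essential surjectivity step. The delicate point is simultaneously descending the module, the $\Xi^{(\underline{N})}$-action, and the continuity with respect to the canonical topology (\ref{para:canonical-top}) to a single finite level $\underline{N}\in J$, while upgrading ``almost finitely generated'' to genuine finite projectivity. Both the trace-based computation of $H^{(\underline{N})}_{\underline{\infty}}$-invariants and the subsequent descent must be executed carefully to absorb the almost structure into the $p$-inversion; the bookkeeping is precisely parallel to Tsuji's in the adequate case, with the extra $p^k$-torsion of the quasi-adequate setting controlled by \ref{prop:B_nm-p-iso} and \ref{lem:tate-sen-2}.
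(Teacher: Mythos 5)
Your treatment of fully faithfulness is sound and follows the paper's route: almost purity plus the trace surjectivity of \ref{lem:trace} give (via Tsuji's almost Galois descent) that $\widehat{B}^{(\underline{N})}_{\infty,\underline{\infty}}\to (\widehat{\overline{B}})^{H^{(\underline{N})}_{\underline{\infty}}}$ is an almost isomorphism, and inverting $p$ finishes the argument. The gap is in the essential surjectivity step. The identity
$\widehat{B}^{(\underline{\infty})}_{\infty,\underline{\infty}}[1/p] = \colim_{\underline{N}\in J}\widehat{B}^{(\underline{N})}_{\infty,\underline{\infty}}[1/p]$ that your limit argument hinges on is false: in the paper's notation $\widehat{B}^{(\underline{\infty})}_{\infty,\underline{\infty}}$ is the $p$-adic completion of $\colim_{\underline{N}} B^{(\underline{N})}_{\infty,\underline{\infty}}$, and $p$-adic completion does not commute with filtered colimits of non-Noetherian rings. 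The colimit ring (which is what carries $V^{(\underline{\infty})}=\colim V^{(\underline{N})}$) is a genuinely smaller subring, and the almost-Galois-descent isomorphism you invoke lives over the full completion, not the colimit. Relatedly, $\colim_{\underline{N}} W^{H^{(\underline{N})}_{\underline{\infty}}}$ is generically a proper subspace of $W^{\bigcap_{\underline{N}}H^{(\underline{N})}_{\underline{\infty}}}$: a vector fixed by the intersection of the decreasing open normal subgroups need not be fixed by any one of them. So the isomorphism $\widehat{\overline{B}}\otimes_{\widehat{B}^{(\underline{\infty})}_{\infty,\underline{\infty}}}V^{(\underline{\infty})}\iso W$, as you've framed it, is not a consequence of the full faithfulness established at finite levels, and the ``standard limit argument'' that would descend a finite-level $\underline{N}$ has no firm starting point.

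The paper's route is different and dodges both problems. It invokes Tsuji's \cite[6.10.(1)]{tsuji2018localsimpson}---a representation-theoretic almost Galois descent statement---to produce directly an \emph{open} subgroup $H'$ of $H_{\underline{\infty}}$, equivalently a \emph{finite} extension $\ca{L}'$ of $\ca{L}_{\infty,\underline{\infty}}$, such that $W^{H'}$ is already finite projective over $\widehat{B'}[1/p]$ and recovers $W$ by base change; this is the missing step in your plan. Having landed at a finite extension $\ca{L}'$ (not a priori of the form $\ca{L}^{(\underline{N})}_{\infty,\underline{\infty}}$), the paper then applies Abhyankar's lemma \ref{lem:abhyankar} to find $\underline{N}\in J$ with $B'^{(\underline{N})}[1/p]$ finite \'etale over $B^{(\underline{N})}_{\infty,\underline{\infty}}[1/p]$, replaces $\ca{L}'$ by $\ca{L}'^{(\underline{N})}$ so the extension is moreover Galois, passes to $p$-adic completions via \cite[6.15]{tsuji2018localsimpson}, and concludes by \emph{ordinary} finite Galois descent. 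If you want to repair your proof, the place to intervene is at the ``given $W$, produce a finite level where it trivializes'' step: you should cite (or reprove for the quasi-adequate setting, using \ref{lem:trace}) an analogue of Tsuji's 6.10 rather than attempting to reconstruct it by a direct colimit argument over $\underline{N}$.
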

\begin{proof}
	We follow the proof of \cite[14.7]{tsuji2018localsimpson}. Firstly, $\widehat{B}^{(\underline{N})}_{\infty,\underline{\infty}}\to (\widehat{\overline{B}})^{H^{(\underline{N})}_{\underline{\infty}}}$ is an almost isomorphism by almost Galois descent \cite[6.4]{tsuji2018localsimpson} (whose assumptions \cite[6.1, 6.2]{tsuji2018localsimpson} are satisfied by \ref{lem:trace}). It follows immediately that the functor \eqref{eq:prop:almost-et-descent} is fully faithful (cf. \cite[6.5]{tsuji2018localsimpson}). For an object $W$ of $\repnpr(G,\widehat{\overline{B}}[1/p])$, by almost Galois descent \cite[6.10.(1)]{tsuji2018localsimpson}, there exists an open subgroup $H'$ of $H_{\underline{\infty}}$ such that for the integral closure $B'$ of $B$ in $\ca{L}'=\ca{L}_{\mrm{ur}}^{H'}$, $W^{H'}$ is a finite projective $\widehat{B'}[1/p]$-module such that $W=\widehat{\overline{B}}\otimes_{\widehat{B'}} W^{H'}$. This remains true for any open subgroup of $H'$ by \cite[6.10.(2)]{tsuji2018localsimpson} so that we may assume that $\ca{L}'$ is Galois over $\ca{L}_{\infty,\underline{\infty}}$. By Abhyankar's lemma \ref{lem:abhyankar}, there exists $\underline{N}\in J$ such that $B'^{(\underline{N})}[1/p]$ is finite \'etale over $B^{(\underline{N})}_{\infty,\underline{\infty}}[1/p]$ (cf. \cite[8.21]{he2021coh}). After replacing $\ca{L}'$ by $\ca{L}'^{(\underline{N})}$, we may assume that $B'[1/p]$ is finite \'etale and Galois over $B^{(\underline{N})}_{\infty,\underline{\infty}}[1/p]$. Thus, $\widehat{B'}[1/p]$ is also finite \'etale and Galois over $\widehat{B}^{(\underline{N})}_{\infty,\underline{\infty}}[1/p]$ by \cite[6.15]{tsuji2018localsimpson}. By Galois descent, $W^{H^{(\underline{N})}_{\underline{\infty}}}$ is a finite projective $\widehat{B}^{(\underline{N})}_{\infty,\underline{\infty}}[1/p]$-representation of $\Xi^{(\underline{N})}$ such that $W=\widehat{\overline{B}}\otimes_{\widehat{B}^{(\underline{N})}_{\infty,\underline{\infty}}} W^{H^{(\underline{N})}_{\underline{\infty}}}$.
\end{proof}

\begin{mythm}[{cf. \cite[14.2]{tsuji2018localsimpson}}]\label{thm:descent}
	For any $\underline{N}\in J$ and $\underline{m}\in \bb{N}_{\geq n_0}^d$, where $n_0\in \bb{N}$ is defined in {\rm\ref{prop:notation-n_0}}, the functor
	\begin{align}
		\repnan{\Delta^{(\underline{N})}_{\underline{m}}}(\Xi^{(\underline{N})},\widetilde{B}^{(\underline{N})}_{\infty,\underline{m}}[\frac{1}{p}])\longrightarrow \repnpr(G,\widehat{\overline{B}}[\frac{1}{p}]),\  V\mapsto \widehat{\overline{B}}\otimes_{\widetilde{B}^{(\underline{N})}_{\infty,\underline{m}}} V
	\end{align}
	is fully faithful. Moreover, any object of $\repnpr(G,\widehat{\overline{B}}[1/p])$ lies in the essential image of the above functor for some $\underline{N}\in J$ and $\underline{m}\in \bb{N}_{\geq n_0}^d$.
\end{mythm}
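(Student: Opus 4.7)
The plan is to realize the functor in the statement as the composition of the three fully faithful functors established in Propositions \ref{prop:descent-further}, \ref{prop:decompletion}, and \ref{prop:almost-et-descent}, and then read off the combined statement. More precisely, the functor decomposes as
\begin{align*}
\repnan{\Delta^{(\underline{N})}_{\underline{m}}}(\Xi^{(\underline{N})},\widetilde{B}^{(\underline{N})}_{\infty,\underline{m}}[\tfrac{1}{p}]) &\stackrel{F_1}{\longrightarrow} \repnpr(\Xi^{(\underline{N})},\widetilde{B}^{(\underline{N})}_{\infty,\underline{\infty}}[\tfrac{1}{p}])\\
&\stackrel{F_2}{\longrightarrow} \repnpr(\Xi^{(\underline{N})},\widehat{B}^{(\underline{N})}_{\infty,\underline{\infty}}[\tfrac{1}{p}])\\
&\stackrel{F_3}{\longrightarrow} \repnpr(G,\widehat{\overline{B}}[\tfrac{1}{p}]),
\end{align*}
where $F_1$ is scalar extension along $\widetilde{B}^{(\underline{N})}_{\infty,\underline{m}}\to \widetilde{B}^{(\underline{N})}_{\infty,\underline{\infty}}$ (fully faithful by \ref{prop:descent-further}), $F_2$ is scalar extension along the closed embedding $\widetilde{B}^{(\underline{N})}_{\infty,\underline{\infty}}\to \widehat{B}^{(\underline{N})}_{\infty,\underline{\infty}}$ (an equivalence by \ref{prop:decompletion}), and $F_3$ is scalar extension along $\widehat{B}^{(\underline{N})}_{\infty,\underline{\infty}}\to \widehat{\overline{B}}$ (fully faithful by \ref{prop:almost-et-descent}). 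Since each $F_i$ is fully faithful and compatible with the given base changes, the composition is fully faithful.

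For essential surjectivity, the plan is to descend a given $W \in \repnpr(G,\widehat{\overline{B}}[1/p])$ step by step. First, by \ref{prop:almost-et-descent}, I choose $\underline{N} \in J$ and an object $W_1$ of $\repnpr(\Xi^{(\underline{N})},\widehat{B}^{(\underline{N})}_{\infty,\underline{\infty}}[1/p])$ such that $W \cong \widehat{\overline{B}}\otimes_{\widehat{B}^{(\underline{N})}_{\infty,\underline{\infty}}}W_1$. Next, by \ref{prop:decompletion}, the quasi-inverse functor (taking the $(\Xi^{(\underline{N})},\widehat{B}[1/p])$-finite part) produces $W_2 \in \repnpr(\Xi^{(\underline{N})},\widetilde{B}^{(\underline{N})}_{\infty,\underline{\infty}}[1/p])$ with $W_1 \cong \widehat{B}^{(\underline{N})}_{\infty,\underline{\infty}}\otimes_{\widetilde{B}^{(\underline{N})}_{\infty,\underline{\infty}}} W_2$. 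Finally, by the essential surjectivity part of \ref{prop:descent-further}, after possibly enlarging $\underline{m}\in \bb{N}^d_{\geq n_0}$, I obtain $V \in \repnan{\Delta^{(\underline{N})}_{\underline{m}}}(\Xi^{(\underline{N})},\widetilde{B}^{(\underline{N})}_{\infty,\underline{m}}[1/p])$ with $W_2 \cong \widetilde{B}^{(\underline{N})}_{\infty,\underline{\infty}}\otimes_{\widetilde{B}^{(\underline{N})}_{\infty,\underline{m}}} V$. Chaining these isomorphisms together yields $W \cong \widehat{\overline{B}}\otimes_{\widetilde{B}^{(\underline{N})}_{\infty,\underline{m}}} V$, as desired.

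The main point to verify is the compatibility of the various scalar extensions in the composition, i.e.\ that the canonical associativity isomorphisms $\widehat{\overline{B}}\otimes_{\widehat{B}^{(\underline{N})}_{\infty,\underline{\infty}}}(\widehat{B}^{(\underline{N})}_{\infty,\underline{\infty}}\otimes_{\widetilde{B}^{(\underline{N})}_{\infty,\underline{\infty}}}(\widetilde{B}^{(\underline{N})}_{\infty,\underline{\infty}}\otimes_{\widetilde{B}^{(\underline{N})}_{\infty,\underline{m}}}V))\cong \widehat{\overline{B}}\otimes_{\widetilde{B}^{(\underline{N})}_{\infty,\underline{m}}}V$ respect the continuous $G$- and $\Xi^{(\underline{N})}$-actions on both sides; this is formal since each intermediate inclusion of rings is $G$-equivariant (using that $\widetilde{B}^{(\underline{N})}_{\infty,\underline{m}}$ is $\Xi^{(\underline{N})}$-stable inside $\widehat{B}^{(\underline{N})}_{\infty,\underline{\infty}}$) and preserves the canonical topology (\ref{para:canonical-top}). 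There is no real obstacle here: all the hard work, in particular the almost Galois descent via Abhyankar's lemma, the Tate--Sen decompletion along the two types of $\bb{Z}_p$-towers, and the further descent from the analytic $\Delta^{(\underline{N})}_{\underline{m}}$-action, has been done in the three preceding propositions, so this theorem is essentially a bookkeeping statement packaging them together.
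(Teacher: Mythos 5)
Your proof is correct and takes exactly the same route as the paper, which simply cites Propositions \ref{prop:almost-et-descent}, \ref{prop:decompletion}, and \ref{prop:descent-further} without further comment. You have merely written out the bookkeeping of decomposing the functor and chaining together the essential-surjectivity steps, which the paper leaves implicit.
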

\begin{proof}
	It follows from \ref{prop:almost-et-descent}, \ref{prop:decompletion} and \ref{prop:descent-further}. 
\end{proof}

\section{Sen Operators over Quasi-adequate Algebras}\label{sec:sen-B}
In this section, we fix a complete discrete valuation field $K$ of characteristic $0$ with perfect residue field of characteristic $p>0$, an algebraic closure $\overline{K}$ of $K$, and a quasi-adequate $\ca{O}_K$-algebra $B$ of relative dimension $d$ with fraction field $\ca{L}$ (see \ref{defn:quasi-adequate-alg}). Let $Y_K$ denote the log scheme with underlying scheme $\spec(B[1/p])$ with compactifying log structure associated to $\spec(B_{\triv})\to\spec(B[1/p])$.

\begin{mypara}\label{para:general-Bnm-tower}
	In the following subsections, we introduce some notation that will be used in our construction \ref{thm:sen-brinon-B} of Sen operators. We fix a compatible system of primitive $n$-th roots of unity $(\zeta_n)_{n\in\bb{N}}$ contained in $\overline{K}$. As in \ref{para:notation-K-tilde}, we fix $e\in\bb{N}$ and let $t_1,\dots,t_e$ be finitely many elements of $B[1/p]\cap B_{\triv}^\times$ with compatible systems of $k$-th roots $(t_{i,k})_{k\in\bb{N}_{>0}}$ contained in $\overline{B}[1/p]$ for any $1\leq i\leq e$. We define the tower $(B^{(\underline{N})}_{n,\underline{m}})_{(\underline{N},n,\underline{m})\in J\times\bb{N}\times\bb{N}^e}$ and name the Galois groups as in \ref{para:notation-B-galois}:
	\begin{align}
		\xymatrix{
			\ca{L}_{\mrm{ur}}&&\\
			\ca{L}^{(\underline{N})}_{\infty,\underline{\infty}}\ar[u]^-{H^{(\underline{N})}_{\underline{\infty}}}&&\\
			\ca{L}^{(\underline{N})}_{\infty,\underline{m}}\ar[u]^-{\Delta^{(\underline{N})}_{\underline{m}}}\ar@/^3pc/[uu]^-{H^{(\underline{N})}_{\underline{m}}}&\ca{L}^{(\underline{N})}_{n,\underline{m}} \ar[l]^-{\Sigma^{(\underline{N})}_{n,\underline{m}}}\ar[lu]|-{\Gamma^{(\underline{N})}_{n,\underline{m}}}&\ca{L}\ar@/_1pc/[lluu]|{G}\ar[l]\ar@/_1pc/[llu]|(0.6){\Xi^{(\underline{N})}}
		}
	\end{align}
	We remark that for any $\underline{N}\in J$, the system $(B^{(\underline{N})}_{n,\underline{m}})_{(n,\underline{m})\in\bb{N}^{1+d}}$ is the Kummer tower of $B^{(\underline{N})}$ defined by $\zeta_{p^n},t_{1,p^n},\dots,t_{e,p^n}$ (cf. \ref{defn:kummer-tower}). 
\end{mypara}

\begin{myprop}[{cf. \ref{prop:rank}}]\label{prop:B-rank}
	With the notation in {\rm\ref{para:general-Bnm-tower}}, the following conditions are equivalent:
	\begin{enumerate}
		\renewcommand{\labelenumi}{{\rm(\theenumi)}}
		\item The Kummer tower $(B_{n,\underline{m}})_{(n,\underline{m})\in \bb{N}^{1+e}}$ satisfies the condition {\rm\ref{prop:kummer-tower-lem}.(\ref{item:prop:kummer-tower-lem-1})}.\label{item:prop:B-rank-1}
		\item The $e$ elements $\df t_1,\dots,\df t_e$ of $\Omega^1_{\ca{L}/K}$ are linearly independent over $\ca{L}$.\label{item:prop:B-rank-3}
	\end{enumerate}
\end{myprop}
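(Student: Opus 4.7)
The plan is to reduce the equivalence (\ref{item:prop:B-rank-1}) $\iff$ (\ref{item:prop:B-rank-3}) to the valuation-field case \ref{prop:rank} via localization and $p$-adic completion at each height-one prime of $B$ over $p$.

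First I fix a quasi-adequate chart of $B$ with a system of coordinates $s_1,\dots,s_d\in B[1/p]$. Since $B[1/p]$ is \'etale over $K[\bb{Z}^c\oplus\bb{N}^{d-c}]$ via the $s_j$, the K\"ahler differentials $\Omega^1_{B[1/p]/K}$ form a finite free $B[1/p]$-module of rank $d$ with basis $\df s_1,\dots,\df s_d$; hence the same is true for $\Omega^1_{\ca{L}/K}=\ca{L}\otimes_{B[1/p]}\Omega^1_{B[1/p]/K}$ over $\ca{L}$.

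For each $\ak{p}\in\ak{S}_p(B)$, let $E_{\ak{p}}$ denote the $p$-adic completion of $B_{\ak{p}}[1/p]$. By \ref{cor:quasi-adequate-rank}, $E_{\ak{p}}$ is a complete discrete valuation field extension of $K$ whose residue field admits a $p$-basis of cardinality $d$, so \ref{prop:rank} applies to the Kummer tower of $E_{\ak{p}}$ defined by $\zeta_{p^n},t_{1,p^n},\dots,t_{e,p^n}$. Applying \ref{prop:quasi-adequate-diff} with $S=B\setminus\ak{p}$ shows that $\widehat{\Omega}^1_{\ca{O}_{E_{\ak{p}}}}[1/p]$ is finite free of rank $d$ over $E_{\ak{p}}$ with the same basis $\df s_1,\dots,\df s_d$. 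Writing $\df t_i=\sum_j m_{ij}\df s_j$ with $m_{ij}\in B[1/p]\subseteq \ca{L}\cap E_{\ak{p}}$, the linear independence of $\df t_1,\dots,\df t_e$ in either $\Omega^1_{\ca{L}/K}$ or $\widehat{\Omega}^1_{\ca{O}_{E_{\ak{p}}}}[1/p]$ is equivalent to the matrix $(m_{ij})$ having rank $e$, a property invariant under field extension.

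Combining this with \ref{prop:rank} applied to $E_{\ak{p}}=\widehat{\ca{L}}^{\ak{p}}$: condition \ref{prop:kummer-tower-lem}.(\ref{item:prop:kummer-tower-lem-1}) at $\ak{p}$ is equivalent to $\df t_1,\dots,\df t_e$ being $E_{\ak{p}}$-linearly independent in $\widehat{\Omega}^1_{\ca{O}_{E_{\ak{p}}}}[1/p]$, which by the previous paragraph is equivalent to (\ref{item:prop:B-rank-3}), independently of $\ak{p}$. Since $\ak{S}_p(B)\neq\emptyset$ (because $B/pB\neq 0$), condition (\ref{item:prop:B-rank-1}), which requires the above openness at every $\ak{p}\in\ak{S}_p(B)$, is also equivalent to (\ref{item:prop:B-rank-3}). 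The only real point is the basis compatibility between $\Omega^1_{\ca{L}/K}$ and $\widehat{\Omega}^1_{\ca{O}_{E_{\ak{p}}}}[1/p]$ arising from a single system of coordinates of $B$; once this is in place, \ref{prop:rank} supplies everything, so I do not expect any substantive obstacle.
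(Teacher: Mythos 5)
Your proof is correct and follows essentially the same route as the paper: for each $\ak{p}\in\ak{S}_p(B)$, the paper notes that a single system of coordinates gives a common $d$-element basis for both $\Omega^1_{\ca{L}/K}$ over $\ca{L}$ and $\widehat{\Omega}^1_{B_{\ak{p}}}[1/p]$ over $E_{\ak{p}}$ (via \eqref{eq:rem:adequate-chart} and \ref{prop:quasi-adequate-diff}), deduces a natural isomorphism $E_{\ak{p}}\otimes_{\ca{L}}\Omega^1_{\ca{L}/K}\iso \widehat{\Omega}^1_{B_{\ak{p}}}[1/p]$, and then invokes \ref{prop:rank}; your matrix-rank argument is a minor rephrasing of this same basis-compatibility step.
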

\begin{proof}
	For any $\ak{p}\in\ak{S}_p(B)$, let $E_{\ak{p}}$ be the completion of $\ca{L}$ with respect to the discrete valuation ring $B_{\ak{p}}$. Recall that the $\ca{L}$-module $\Omega^1_{\ca{L}/K}$ and the $E_{\ak{p}}$-module $\widehat{\Omega}^1_{B_{\ak{p}}}[1/p]$ are both finite free with the same basis $\df t_1',\dots,\df t_d'$ given by a system of coordinates $t_1',\dots,t_d'\in B[1/p]$ of $B$ by \eqref{eq:rem:adequate-chart} and \ref{prop:quasi-adequate-diff} respectively. In particular, the natural map $\Omega^1_{B_{\ak{p}}/\ca{O}_K}\to \widehat{\Omega}^1_{B_{\ak{p}}/\ca{O}_K}$ induces a natural isomorphism by inverting $p$,
	\begin{align}
		E_{\ak{p}}\otimes_{\ca{L}}\Omega^1_{\ca{L}/K}\iso \widehat{\Omega}^1_{B_{\ak{p}}}[\frac{1}{p}]=\widehat{\Omega}^1_{B_{\ak{p}}/\ca{O}_K}[\frac{1}{p}].
	\end{align}
	Thus, $\df t_1,\dots,\df t_e$ are $\ca{L}$-linearly independent in $\Omega^1_{\ca{L}/K}$ if and only if they are $E_{\ak{p}}$-linearly independent in $\widehat{\Omega}^1_{B_{\ak{p}}}[1/p]$. The conclusion follows from \ref{prop:rank}.
\end{proof}

\begin{mypara}\label{para:general-Bnm-tower-2}
	Following \ref{para:general-Bnm-tower}, we assume that the equivalent conditions in \ref{prop:B-rank} hold. Let $\partial_0\in\lie(\Sigma_{0,\underline{\infty}})$ and $\partial_1,\dots,\partial_e\in\lie(\Delta)$ be the standard bases defined in \ref{para:gamma-basis} for the Kummer tower $(B_{n,\underline{m}})_{(n,\underline{m})\in \bb{N}^{1+e}}$ defined by $\zeta_{p^n},t_{1,p^n},\dots,t_{e,p^n}$. We remark that there are natural identifications of Lie algebras for any $\underline{N}\in J$, $n\in \bb{N}$ and $\underline{m}\in\bb{N}^e$,
	\begin{align}\label{eq:para:general-Bnm-tower-lie}
		\lie(\Delta^{(\underline{N})}_{\underline{m}})=\lie(\Delta),\quad \lie(\Xi^{(\underline{N})})=\lie(\Gamma),\quad \lie(\Sigma^{(\underline{N})}_{n,\underline{m}})=\lie(\Sigma).
	\end{align}
	We define $1+e$ elements of the finite projective $\widehat{\overline{B}}[1/p]$-module $\scr{E}_B(-1)$ defined in \ref{thm:B-fal-ext},  
	\begin{align}\label{eq:para:general-Bnm-tower}
		T_0=(\df\log(\zeta_{p^n}))_{n\in\bb{N}}\otimes\zeta^{-1},\ T_1=(\df\log (t_{1,p^n}))_{n\in\bb{N}}\otimes\zeta^{-1},\  \dots,\ T_e=(\df\log (t_{e,p^n}))_{n\in\bb{N}}\otimes\zeta^{-1},
	\end{align}
	where $\zeta=(\zeta_{p^n})_{n\in\bb{N}}$.
\end{mypara}

\begin{mythm}[{cf. \ref{thm:sen-brinon-operator}}]\label{thm:sen-brinon-B}
	Let $B$ be a quasi-adequate $\ca{O}_K$-algebra with fraction field $\ca{L}$, $G=\gal(\ca{L}_{\mrm{ur}}/\ca{L})$. Then, for any object $W$ of $\repnpr(G,\widehat{\overline{B}}[1/p])$, there is a canonical homomorphism of $\widehat{\overline{B}}[1/p]$-linear Lie algebras (see {\rm\ref{para:B-fal-ext-dual}}),
	\begin{align}\label{eq:sen-brinon-B}
		\varphi_{\sen}|_W:\scr{E}_B^*(1)\longrightarrow \mrm{End}_{\widehat{\overline{B}}[\frac{1}{p}]}(W),
	\end{align}
	which is $G$-equivariant with respect to the canonical action on $\scr{E}_B^*(1)$ defined in {\rm\ref{para:B-fal-ext-dual}} and the adjoint action on $\mrm{End}_{\widehat{\overline{B}}[1/p]}(W)$ (i.e. $g\in G$ sends an endomorphism $\phi$ to $g\circ \phi \circ g^{-1}$), and functorial in $W$, i.e. it defines a canonical functor 
	\begin{align}\label{eq:sen-functor-B}
		\varphi_{\sen}:\repnpr(G,\widehat{\overline{B}}[\frac{1}{p}])\longrightarrow \mbf{Rep}^{\mrm{proj}}(\scr{E}_B^*(1),\widehat{\overline{B}}[\frac{1}{p}]),
	\end{align}
	from the category of finite projective (continuous semi-linear) $\widehat{\overline{B}}[1/p]$-representations of the profinite group $G$ to the category of finite projective $\widehat{\overline{B}}[1/p]$-linear representations of the Lie algebra $\scr{E}_B^*(1)$.
	
	Moreover, under the assumption in {\rm\ref{para:general-Bnm-tower-2}} and with the same notation, assume that there exists an object $V$ of $\repnan{\Delta^{(\underline{N})}_{\underline{m}}}(\Xi^{(\underline{N})},\widetilde{B}^{(\underline{N})}_{\infty,\underline{m}}[1/p])$ for some $\underline{N}\in J$ and $\underline{m}\in\bb{N}^d$ with $W= \widehat{\overline{B}}\otimes_{\widetilde{B}^{(\underline{N})}_{\infty,\underline{m}}}V$. Then, for any $f\in \scr{E}_B^*(1)=\ho_{\widehat{\overline{B}}[1/p]}(\scr{E}_B(-1),\widehat{\overline{B}}[1/p])$,
	\begin{align}\label{eq:thm:sen-brinon-B}
		\varphi_{\sen}|_W(f)=\sum_{i=0}^e f(T_i)\otimes\varphi_{\partial_i}|_V,
	\end{align}
	where $\varphi_{\partial_i}|_V\in\mrm{End}_{\widetilde{B}^{(\underline{N})}_{\infty,\underline{m}}[1/p]}(V)$ is the infinitesimal action of $\partial_i\in\lie(\Xi^{(\underline{N})})$ on $V$ defined in {\rm\ref{cor:operator}}.
\end{mythm}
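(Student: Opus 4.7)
The plan is to define $\varphi_{\sen}|_W$ first for those $W$ that admit a descent $V$ as in the ``moreover'' part via the formula \eqref{eq:thm:sen-brinon-B}, then to show that this definition is independent of the auxiliary data, and finally to extend to all $W$ using the descent theorem \ref{thm:descent}. Granting the essential surjectivity of the descent functor given by \ref{thm:descent} (applied to a sufficiently large $\underline{N}\in J$ and $\underline{m}\in\bb{N}^e_{\geq n_0}$), every object of $\repnpr(G,\widehat{\overline{B}}[1/p])$ arises as $\widehat{\overline{B}}\otimes_{\widetilde{B}^{(\underline{N})}_{\infty,\underline{m}}}V$ for some $V$ in $\repnan{\Delta^{(\underline{N})}_{\underline{m}}}(\Xi^{(\underline{N})},\widetilde{B}^{(\underline{N})}_{\infty,\underline{m}}[1/p])$; so once the formula is shown to give a well-defined $G$-equivariant Lie algebra map, functoriality in $W$ follows from the full faithfulness part of \ref{thm:descent} together with \ref{lem:infinitesimal}.(\ref{item:infinitesimal-3}) and \ref{rem:derivative}.

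The central difficulty is the well-definedness, i.e. that the right-hand side of \eqref{eq:thm:sen-brinon-B} does not depend on the choice of $\underline{N}$, $\underline{m}$, the coordinates $t_1,\dots,t_e$, or the descent $V$ itself. The independence under enlarging $\underline{N}$ and $\underline{m}$, or passing between two descents related by an isomorphism, reduces to \ref{rem:derivative} and the compatibility of the infinitesimal actions under base change of $\widetilde{B}^{(\underline{N})}_{\infty,\underline{m}}[1/p]$-representations. The genuinely hard case is varying the system of coordinates $t_i$: here the strategy is to reduce to the field case \ref{thm:sen-brinon-operator} by localizing at each $\ak{q}\in\ak{S}_p(\overline{B})$. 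Concretely, I would use the embedding $\widehat{\overline{B}}\hookrightarrow\prod_{\ak{E}(B)}\ca{O}_{\widehat{\overline{E}}}$ of \ref{lem:A-inj} and, for each $\ak{q}\in\ak{S}_p(\overline{B})$ over $\ak{p}\in\ak{S}_p(B)$, the morphism of triples $(B_{\triv},B,\overline{B})\to (E_{\ak{p}},\ca{O}_{E_{\ak{p}}},\ca{O}_{\overline{E}_{\ak{q}}})$ fixed in \eqref{eq:notation-A-inj}. The functoriality of the Faltings extension \ref{rem:B-fal-ext} then gives a $G$-equivariant $\widehat{\overline{B}}[1/p]$-linear map $\scr{E}_B\to\prod_{\ak{q}}\scr{E}_{\ca{O}_{E_{\ak{p}}}}$ (injective, by the explicit basis computation in \eqref{eq:B-fal-ext-compare}), and the Kummer tower associated to $B$ and $(t_i)$ pulls back to the corresponding Kummer tower for $E_{\ak{p}}$. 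Applying \ref{thm:sen-brinon-operator} to each scalar extension $W_{\ak{q}}=\widehat{\overline{E}}_{\ak{q}}\otimes W$ yields canonical maps $\varphi_{\sen}|_{W_{\ak{q}}}\colon\scr{E}^*_{\ca{O}_{E_{\ak{p}}}}(1)\to\mrm{End}(W_{\ak{q}})$; by \ref{rem:derivative} (applied to $\widetilde{B}^{(\underline{N})}_{\infty,\underline{m}}[1/p]\to\widehat{\overline{E}}_{\ak{q}}$), the image of any coordinate-defined $\varphi_{\sen}|_W(f)$ under the localization map at $\ak{q}$ coincides with $\varphi_{\sen}|_{W_{\ak{q}}}$ of the image of $f$. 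Since the field-case map is canonical, the sum over $\ak{q}$ pins down $\varphi_{\sen}|_W(f)\in\mrm{End}_{\widehat{\overline{B}}[1/p]}(W)\subseteq\prod_{\ak{q}}\mrm{End}(W_{\ak{q}})$, whence independence of $(t_i)$ and of $V$.

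With well-definedness in hand, the Lie algebra structure is checked by inspection on the formula \eqref{eq:thm:sen-brinon-B}: using the standard basis $\partial_0,\dots,\partial_e$ of $\lie(\Gamma)$ (satisfying $[\partial_0,\partial_i]=\partial_i$, $[\partial_i,\partial_j]=0$) and the dual basis description of the Lie bracket on $\scr{E}^*_B(1)$ in \ref{para:B-fal-ext-dual}, the map factors through the surjection $\scr{E}^*_B(1)\twoheadrightarrow\widehat{\overline{B}}[1/p]\otimes_{\bb{Q}_p}\lie(\Gamma)$ analogous to \ref{lem:fal-ext-lie-alg-special}, composed with $\id\otimes\varphi|_V$. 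The $G$-equivariance is proved by the same transformation computation as in the last part of the proof of \ref{thm:sen-brinon-operator}: one uses the $G$-action formulas on the $T_i\in\scr{E}_B(-1)$ (governed by the cyclotomic character and the $1$-cocycle $\xi$ via \ref{cor:hyodo-ring}.(\ref{item:cor:hyodo-ring-2}) adapted to $\scr{E}_B$), the conjugation identities \eqref{eq:lem:operator-1}, \eqref{eq:lem:operator-2}, and the decomposition \ref{lem:varphi-decomposition}. Finally, the independence of the chart from the quasi-adequate chart of $B$ is automatic: the Faltings extension $\scr{E}_B$ and its Lie algebra structure on the dual are canonical (\ref{thm:B-fal-ext}, \ref{para:B-fal-ext-dual}), and well-definedness says the Lie algebra map $\varphi_{\sen}|_W$ depends only on $(B_{\triv},B,\overline{B})$. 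The main obstacle, as sketched, is the localization-at-$\ak{q}$ argument; all other ingredients are either formal manipulations of the formula or direct transcriptions of the valuation-field case.
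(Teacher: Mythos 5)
Your proposal is correct and follows the same strategy as the paper: define $\varphi_{\sen}|_W$ by the formula, establish well-definedness by localizing at each $\ak{q}\in\ak{S}_p(\overline{B})$ to reduce to the valuation-field case \ref{thm:sen-brinon-operator} (using the injection $\widehat{\overline{B}}\hookrightarrow\prod\ca{O}_{\widehat{\overline{E}}_{\ak{q}}}$ from \ref{lem:A-inj} / \ref{prop:ht1-prime-inj} and the compatibility of the Faltings extensions from \ref{rem:B-fal-ext}), then deduce existence and functoriality from the descent theorem \ref{thm:descent}, and prove $G$-equivariance by the same computation as in \ref{thm:sen-brinon-operator}. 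One small detail you elided, which the paper handles explicitly: after localizing at $\ak{q}$ you land not directly over $E_{\ak{p}}$ but over the finite extension $E'=E^{(\underline{N})}_{0,\underline{m}}$ of $E_{\ak{p}}$ (since $\ca{L}^{(\underline{N})}_{0,\underline{m}}$ is a finite extension of $\ca{L}$), so you must apply the ``moreover'' part of \ref{thm:sen-brinon-operator} over $E'$ and then use the functoriality \ref{prop:sen-brinon-operator-func} for the finite extension $E'/E_{\ak{p}}$ to identify it with $\varphi_{\sen}|_{W_{\ak{q}}}$ defined over $E_{\ak{p}}$; the verification of the hypothesis of \ref{prop:rank} for this Kummer tower over $E'$ comes from \ref{prop:B-rank} together with \ref{prop:quasi-adequate-diff}.
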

\begin{proof}
	For any $\ak{q}\in\ak{S}_p(\overline{B})$ with image $\ak{p}\in\ak{S}_p(B)$, consider the element $(B_{\triv},B,\overline{B})\to (E_{\ak{p}},\ca{O}_{E_{\ak{p}}},\ca{O}_{\overline{E}_{\ak{q}}})$ of $\ak{E}(B)$ defined in \ref{para:notation-A-inj}. Consider the diagram (cf. \ref{rem:B-fal-ext})
	\begin{align}\label{diam:thm:sen-brinon-B}
		\xymatrix{
			\scr{E}_B^*(1)\ar[r]\ar@{.>}[d]^-{\alpha}&\prod_{\ak{q}}\widehat{\overline{E}}_{\ak{q}}\otimes_{\widehat{\overline{B}}}\scr{E}_B^*(1)\ar@{.>}[d]^-{(\alpha_{\ak{q}})_{\ak{q}}}&\prod_{\ak{q}}\scr{E}_{\ca{O}_{E_{\ak{p}}}}^*(1)\ar[l]_-{\sim}\ar[d]^-{\varphi_{\sen}}\\
			 \mrm{End}_{\widehat{\overline{B}}[\frac{1}{p}]}(W)\ar[r]&\prod_{\ak{q}}\widehat{\overline{E}}_{\ak{q}}\otimes_{\widehat{\overline{B}}}\mrm{End}_{\widehat{\overline{B}}[\frac{1}{p}]}(W)&\prod_{\ak{q}} \mrm{End}_{\widehat{\overline{E}}_{\ak{q}}}(\widehat{\overline{E}}_{\ak{q}}\otimes_{\widehat{\overline{B}}}W)\ar[l]_-{\sim}
		}
	\end{align}
	where the product is taken over $\ak{q}\in\ak{S}_p(\overline{B})$, and the right vertical arrow is the canonical Lie algebra action defined in \ref{thm:sen-brinon-operator}. Notice that the horizontal arrows in the right square are isomorphisms, since $W$ is finite projective and both $\scr{E}_B$ and $\scr{E}_{\ca{O}_{E_\ak{q}}}$ admit the same basis induced by a system of coordinates of $B$ (cf. \ref{para:B-fal-ext-compare}). On the other hand, the horizontal arrows in the left square are injective by \ref{prop:ht1-prime-inj}. Therefore, there is at most one map $\alpha$ making this diagram commutative together with its base change $\alpha_{\ak{q}}$. 
	
	We claim that taking $\alpha$ to be the map defined by \eqref{eq:thm:sen-brinon-B} (under the corresponding assumptions) makes the diagram commute. Indeed, for any $f\in \scr{E}^*_B(1)$, if $f_{\ak{q}}\in \scr{E}_{\ca{O}_{E_{\ak{p}}}}^*(1)$ denotes its base change, then we need to check that
	\begin{align}
		\varphi_{\sen}|_{\widehat{\overline{E}}_{\ak{q}}\otimes_{\widehat{\overline{B}}}W}(f_{\ak{q}})=\sum_{i=0}^ef_{\ak{q}}(T_{i,\ak{q}})\otimes\varphi_{\partial_i}|_V,
	\end{align}
	where $T_{i,\ak{q}}\in  \scr{E}_{\ca{O}_{E_{\ak{p}}}}(-1)$ is the image of the element $T_i\in \scr{E}_B(-1)$ defined in \eqref{eq:para:general-Bnm-tower}. For simplicity, we omit the subscripts $\ak{p}$ and $\ak{q}$. With the notation in \ref{para:general-Bnm-tower}, there is a commutative diagram of fields defined by $t_{i,k}$ similarly as in \ref{para:notation-B-galois},
	\begin{align}
		\xymatrix{
			E\ar[r]&E^{(\underline{N})}_{0,\underline{m}}\ar[r]_-{\Sigma'}\ar@/^2.5pc/[rrr]|-{G'}\ar@/^1.5pc/[rr]|-{\Gamma'}&E^{(\underline{N})}_{\infty,\underline{m}}\ar[r]_-{\Delta'}&E^{(\underline{N})}_{\infty,\underline{\infty}}\ar[r]&\overline{E}\\
			\ca{L}\ar[u]\ar[r]\ar@/_2pc/[rrrr]|-{G}\ar@/_1pc/[rrr]|-{\Xi^{(\underline{N})}}&\ca{L}^{(\underline{N})}_{0,\underline{m}}\ar[r]^-{\Sigma^{(\underline{N})}_{0,\underline{m}}}\ar[u]&\ca{L}^{(\underline{N})}_{\infty,\underline{m}}\ar[r]^-{\Delta^{(\underline{N})}_{\underline{m}}}\ar[u]&\ca{L}^{(\underline{N})}_{\infty,\underline{\infty}}\ar[r]\ar[u]&\overline{\ca{L}}\ar[u]
		}
	\end{align}
	where we put the notation for corresponding Galois groups on the arrows. For $\underline{N}$ and $\underline{m}$ in the statement, we set $E'=E^{(\underline{N})}_{0,\underline{m}}$ and consider the Kummer tower $(\ca{O}_{E'_{n,\underline{m'}}})_{(n,\underline{m'})\in\bb{N}^{1+e}}$ defined by $\zeta_{p^n},t_{1,p^n},\dots,t_{e,p^n}$ (thus $E'_\infty=E^{(\underline{N})}_{\infty,\underline{m}}$ and $E'_{\infty,\underline{\infty}}=E^{(\underline{N})}_{\infty,\underline{\infty}}$), which satisfies the condition {\rm\ref{prop:kummer-tower-lem}.(\ref{item:prop:kummer-tower-lem-1})} by \ref{prop:B-rank} (as $E'$ is a finite extension of $E$). Then, $W'=\widehat{\overline{E}}\otimes_{\widehat{\overline{B}}} W$ is an object of $\repnpr(G',\widehat{\overline{E}})$ and $V'=E'_{\infty}\otimes_{\widetilde{B}^{(\underline{N})}_{\infty,\underline{m}}}V$ is an object of $\repnan{\Delta'}(\Gamma',E'_{\infty})$ such that $W'=\widehat{\overline{E}}\otimes_{E'_{\infty}}V'$. By \ref{prop:B-rank} and \eqref{eq:para:general-Bnm-tower-lie}, we have natural identifications of Lie algebras
	\begin{align}
		\lie(\Delta')=\lie(\Delta),\quad \lie(\Gamma')=\lie(\Gamma),\quad \lie(\Sigma')=\lie(\Sigma).
	\end{align} 
	Moreover, the images of $\partial_0,\dots,\partial_e$ in $\lie(\Gamma')$ are the standard basis $\partial_0',\dots,\partial_e'$ defined in \ref{para:gamma-basis} for the Kummer tower $(\ca{O}_{E'_{n,\underline{m'}}})_{(n,\underline{m'})\in\bb{N}^{1+e}}$ defined by $\zeta_{p^n},t_{1,p^n},\dots,t_{e,p^n}$. Therefore, applying the ``moreover'' part of \ref{thm:sen-brinon-operator} to $E'$ (whose assumptions are satisfied by \ref{prop:B-rank}), for any $f'\in \scr{E}^*_{\ca{O}_{E'}}(1)$ we have
	\begin{align}
		\varphi_{\sen}|_{W'}(f')=\sum_{i=0}^ef'(T_i')\otimes\varphi_{\partial_i'}|_{V'},
	\end{align}
	where $T_i'\in \scr{E}_{\ca{O}_{E'}}(-1)$ is the image of $T_i\in \scr{E}_B(-1)$. Notice that $\varphi_{\partial_i'}|_{V'}=\id_{E'_{\infty}}\otimes\varphi_{\partial_i}|_V$ by \ref{rem:derivative}. The claim follows from the following canonical commutative diagram given by \ref{prop:sen-brinon-operator-func} (as $E'$ is a finite extension of $E$)
	\begin{align}
		\xymatrix{
			\scr{E}^*_{\ca{O}_{E'}}(1)\ar[rr]^-{\varphi_{\sen}|_{W'}}\ar[d]^-{\wr}&&\mrm{End}_{\widehat{\overline{E}}}(W')\\
			\scr{E}^*_{\ca{O}_E}(1)
			\ar[rr]^-{\varphi_{\sen}|_{\widehat{\overline{E}}\otimes_{\widehat{\overline{B}}} W}}&&\mrm{End}_{\widehat{\overline{E}}}(\widehat{\overline{E}}\otimes_{\widehat{\overline{B}}} W)
			\ar@{=}[u]
		}
	\end{align}
	
	Finally, the uniqueness of $\alpha$ implies that \eqref{eq:thm:sen-brinon-B} does not depend on choice of $V$ or $t_i$, and the descent \ref{thm:descent} guarantees its existence and functoriality. Its $G$-equivariance follows from the same argument of that of \eqref{eq:sen-brinon-operator}.
\end{proof}

\begin{myrem}\label{rem:sen-brinon-B-field}
	The same argument also shows that the $\widehat{\overline{B}}[1/p]$-linear map
	\begin{align}
		W\longrightarrow W\otimes_{\widehat{\overline{B}}}\scr{E}_B(-1)
	\end{align}
	sending $x$ to $\sum_{i=0}^e (\id_{\widehat{\overline{B}}}\otimes \varphi_{\partial_i}|_V)(x)\otimes T_i$, is $G$-equivariant and does not depend on the choice of $V$ or $t_i$. It naturally induces the map $\varphi_{\sen}|_W$ \eqref{eq:sen-brinon-B}. We note that it is not a Higgs field.
\end{myrem}

\begin{mydefn}\label{defn:sen-brinon-B}
	Let $W$ be an object of $\repnpr(G,\widehat{\overline{B}}[1/p])$. We denote by $\Phi(W)$ the image of $\varphi_{\sen}|_W$, and by $\Phi^{\geo}(W)$ the image of $\ho_{B[1/p]}(\Omega^1_{Y_K/K}(-1),\widehat{\overline{B}}[1/p])$ under $\varphi_{\sen}|_W$. We call an element of $\Phi(W)\subseteq \mrm{End}_{\widehat{\overline{B}}[1/p]}(W)$ a \emph{Sen operator} of $W$. We call an element of $\Phi^{\geo}(W)\subseteq \mrm{End}_{\widehat{\overline{B}}[1/p]}(W)$ a \emph{geometric Sen operator} of $W$. And we call the image of $1\in\widehat{\overline{B}}[1/p]$ in $\Phi^{\ari}(W)=\Phi(W)/\Phi^{\geo}(W)$ the \emph{arithmetic Sen operator} of $W$.
\end{mydefn}

\begin{align}
	\xymatrix{
		0\ar[r]&\ho_{B[\frac{1}{p}]}(\Omega^1_{Y_K/K}(-1),\widehat{\overline{B}}[\frac{1}{p}])\ar[r]^-{\jmath^*}\ar@{->>}[d]&\scr{E}^*_B(1)\ar[r]^-{\iota^*}\ar@{->>}[d]&\widehat{\overline{B}}[\frac{1}{p}]\ar@{->>}[d]\ar[r]&0\\
		0\ar[r]&\Phi^{\geo}(W)\ar[r]&\Phi(W)\ar[r]&\Phi^{\ari}(W)\ar[r]&0
	}
\end{align}

\begin{myprop}[{cf. \ref{prop:sen-brinon-operator-func}}]\label{prop:sen-brinon-B-func}
	Let $K'$ be a complete discrete valuation field extension of $K$ with perfect residue field, $B'$ a quasi-adequate $\ca{O}_{K'}$-algebra with fraction field $\ca{L}'$. Consider a commutative diagram of $(K,\ca{O}_K,\ca{O}_{\overline{K}})$-triples (see {\rm\ref{defn:triple}}) 
	\begin{align}
		\xymatrix{
			(B_{\triv},B,\overline{B})\ar[r]& (B'_{\triv},B',\overline{B'})\\
			(K,\ca{O}_K,\ca{O}_{\overline{K}})\ar[u]\ar[r]&(K',\ca{O}_{K'},\ca{O}_{\overline{K'}})\ar[u]
		}
	\end{align}
	with $\overline{B}\to \overline{B'}$ injective. Let $W$ be an object of $\repnpr(G,\widehat{\overline{B}}[1/p])$, $W'=\widehat{\overline{B'}}\otimes_{\widehat{\overline{B}}}W$ the associated object of $\repnpr(G',\widehat{\overline{B'}}[1/p])$, where $G'=\gal(\ca{L}'_{\mrm{ur}}/\ca{L}')$. Assume that $\ca{L}'\otimes_{\ca{L}}\Omega^1_{\ca{L}/K}\to \Omega^1_{\ca{L'}/K'}$ is injective. Then, there is a natural commutative diagram
	\begin{align}\label{diam:sen-brinon-B-func}
		\xymatrix{
			\scr{E}^*_{B'}(1)\ar[rr]^-{\varphi_{\sen}|_{W'}}\ar[d]&&\mrm{End}_{\widehat{\overline{B'}}[\frac{1}{p}]}(W')\\
			\widehat{\overline{B'}}\otimes_{\widehat{\overline{B}}}\scr{E}^*_B(1)
			\ar[rr]^-{\id_{\widehat{\overline{B'}}}\otimes\varphi_{\sen}|_W}&&\widehat{\overline{B'}}\otimes_{\widehat{\overline{B}}}\mrm{End}_{\widehat{\overline{B}}[\frac{1}{p}]}(W)
			\ar[u]_-{\wr}
		}
	\end{align}
	where $\varphi_{\sen}$ are the canonical Lie algebra actions defined in {\rm\ref{thm:sen-brinon-B}}, the left vertical arrow is induced by taking dual of the natural map $\widehat{\overline{B'}}\otimes_{\widehat{\overline{B}}}\scr{E}_B(-1)\to \scr{E}_{B'}(-1)$ (cf. {\rm\ref{rem:B-fal-ext}}), and the right vertical arrow is the canonical isomorphism. Moreover, if we denote by $\widehat{\overline{B'}}\Phi(W)$ the image of $\widehat{\overline{B'}}\otimes_{\widehat{\overline{B}}}\Phi(W)$ in $\widehat{\overline{B'}}\otimes_{\widehat{\overline{B}}}\mrm{End}_{\widehat{\overline{B}}[\frac{1}{p}]}(W)$, then the inverse of the right vertical arrow induces a natural isomorphism 
	\begin{align}\label{eq:prop:sen-brinon-B-func}
		\Phi(W')\iso\widehat{\overline{B'}}\Phi(W),
	\end{align}
	which is compatible with geometric and arithmetic Sen operators.
\end{myprop}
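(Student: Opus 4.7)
The plan is to reduce the statement to the explicit coordinate formula in the ``moreover'' part of Theorem~\ref{thm:sen-brinon-B}, in direct parallel with the deduction of Proposition~\ref{prop:sen-brinon-operator-func} from Theorem~\ref{thm:sen-brinon-operator}. I would first fix a quasi-adequate chart of $B$ with a system of coordinates $t_1,\ldots,t_d\in B[1/p]\cap B_{\triv}^\times$, together with compatible systems of $p$-power roots $(t_{i,p^n})_n$ in $\overline{B}[1/p]$ and the fixed $(\zeta_{p^n})_n$. The injection $\overline{B}\hookrightarrow\overline{B'}$ transports these to compatible $p$-power root systems of the images $t_1,\ldots,t_d\in B'[1/p]\cap B'^\times_{\triv}$, producing canonical elements $T_i\in\scr{E}_B(-1)$ with images $T_i'\in\scr{E}_{B'}(-1)$ under the functorial map of \eqref{diam:rem:B-fal-ext}.

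The injectivity hypothesis on $\ca{L}'\otimes_{\ca{L}}\Omega^1_{\ca{L}/K}\to\Omega^1_{\ca{L}'/K'}$ ensures that $\df t_1,\ldots,\df t_d$ remain $\ca{L}'$-linearly independent in $\Omega^1_{\ca{L}'/K'}$; thus by Proposition~\ref{prop:B-rank} applied to $B'$ with these $e=d$ elements, the associated Kummer tower satisfies condition~\ref{prop:kummer-tower-lem}.(\ref{item:prop:kummer-tower-lem-1}), so the hypotheses of the ``moreover'' part of Theorem~\ref{thm:sen-brinon-B} hold simultaneously for $B$ and for $B'$ with this data. Using the descent Theorem~\ref{thm:descent} I would fix a descent $V$ of $W$ in $\repnan{\Delta^{(\underline{N})}_{\underline{m}}}(\Xi^{(\underline{N})},\widetilde{B}^{(\underline{N})}_{\infty,\underline{m}}[1/p])$ for suitable $\underline{N}\in J$ and $\underline{m}\in\bb{N}^d$, and take $V'=\widetilde{B'}^{(\underline{N})}_{\infty,\underline{m}}\otimes_{\widetilde{B}^{(\underline{N})}_{\infty,\underline{m}}}V$, which descends $W'$ via the analogous tower for $B'$. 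The Lie algebras of the corresponding Galois groups for $B$ and $B'$ are canonically identified with matching standard bases $\partial_0,\ldots,\partial_d$, and Remark~\ref{rem:derivative} gives $\varphi_{\partial_i}|_{V'}=\id\otimes\varphi_{\partial_i}|_V$. Substituting into formula \eqref{eq:thm:sen-brinon-B} for both $W$ and $W'$ and noting that the image of $f'\in\scr{E}_{B'}^*(1)$ in $\widehat{\overline{B'}}\otimes_{\widehat{\overline{B}}}\scr{E}_B^*(1)$ is the linear form sending $T_i\mapsto f'(T_i')$, the commutativity of \eqref{diam:sen-brinon-B-func} falls out directly.

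For the final isomorphism, the $\widehat{\overline{B'}}[1/p]$-linear independence of $T_0',\ldots,T_d'$ inside the finite free module $\scr{E}_{B'}(-1)$ shows that $\widehat{\overline{B'}}\otimes_{\widehat{\overline{B}}}\scr{E}_B(-1)\to\scr{E}_{B'}(-1)$ is split injective, so the left vertical arrow of \eqref{diam:sen-brinon-B-func} is surjective; hence $\Phi(W')=\widehat{\overline{B'}}\Phi(W)$. Compatibility with the geometric--arithmetic decomposition follows because the map $\scr{E}_{B'}^*(1)\to\widehat{\overline{B'}}\otimes\scr{E}_B^*(1)$ respects the rows of \eqref{eq:B-fal-ext-dual} by the functoriality in \eqref{diam:rem:B-fal-ext}. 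The main technical point is the identification of infinitesimal actions under the scalar extension $\widetilde{B}^{(\underline{N})}_{\infty,\underline{m}}\to\widetilde{B'}^{(\underline{N})}_{\infty,\underline{m}}$, which is handled uniformly by Remark~\ref{rem:derivative}; no further input is required beyond Section~\ref{sec:descent} and the coordinate formula of Theorem~\ref{thm:sen-brinon-B}.
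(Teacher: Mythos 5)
Your proposal matches the paper's proof: both reduce to the coordinate formula \eqref{eq:thm:sen-brinon-B} via the descent Theorem~\ref{thm:descent}, use Proposition~\ref{prop:B-rank} and the injectivity hypothesis to identify the standard bases $\{\partial_i\}$ of the Lie algebras on both sides, and invoke Remark~\ref{rem:derivative} to identify the infinitesimal actions after base change. Your extra remark on split injectivity of $\widehat{\overline{B'}}\otimes_{\widehat{\overline{B}}}\scr{E}_B(-1)\to\scr{E}_{B'}(-1)$ (hence surjectivity of the left vertical arrow, giving $\Phi(W')=\widehat{\overline{B'}}\Phi(W)$) makes explicit a step the paper leaves implicit, and is correct.
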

\begin{proof}
	We follow the same argument of \ref{prop:sen-brinon-operator-func} using \ref{prop:B-rank} instead of \ref{prop:rank}. We may assume that we are in the situation of the ``moreover'' part of \ref{thm:sen-brinon-B} by the descent theorem \ref{thm:descent}. Let $t'_{i,k}\in \overline{B'}[1/p]$ be the image of $t_{i,k}\in \overline{B}[1/p]$. With the notation in \ref{para:general-Bnm-tower}, there is a commutative diagram of fields
	\begin{align}
		\xymatrix{
			\ca{L}'\ar[r]\ar@/^2pc/[rrrr]|-{G'}\ar@/^1pc/[rrr]|-{\Xi'^{(\underline{N})}}&\ca{L}'^{(\underline{N})}_{0,\underline{m}}\ar[r]&\ca{L}'^{(\underline{N})}_{\infty,\underline{m}}\ar[r]_-{\Delta'^{(\underline{N})}_{\underline{m}}}&\ca{L}'^{(\underline{N})}_{\infty,\underline{\infty}}\ar[r]&\overline{\ca{L}'}\\
			\ca{L}\ar[u]\ar[r]\ar@/_2pc/[rrrr]|-{G}\ar@/_1pc/[rrr]|-{\Xi^{(\underline{N})}}&\ca{L}^{(\underline{N})}_{0,\underline{m}}\ar[r]\ar[u]&\ca{L}^{(\underline{N})}_{\infty,\underline{m}}\ar[r]^-{\Delta^{(\underline{N})}_{\underline{m}}}\ar[u]&\ca{L}^{(\underline{N})}_{\infty,\underline{\infty}}\ar[r]\ar[u]&\overline{\ca{L}}\ar[u]
		}
	\end{align}
	Since $\df t_1',\dots,\df t_e'$ are $\ca{L}'$-linearly independent in $\Omega^1_{\ca{L}'/K'}$ by assumption, the Kummer tower $(B'_{n,\underline{m}})_{(n,\underline{m})\in \bb{N}^{1+e}}$ defined by $\zeta_{p^n},t'_{1,p^n},\dots,t'_{e,p^n}$ also satisfies the condition {\rm\ref{prop:kummer-tower-lem}.(\ref{item:prop:kummer-tower-lem-1})} by \ref{prop:B-rank}. By the discussion in \ref{para:general-Bnm-tower-2}, we have a natural isomorphism $\lie(\Xi'^{(\underline{N})})\iso \lie(\Xi^{(\underline{N})})$ which identifies their standard bases $\{\partial'_i\}_{0\leq i\leq e}$ and $\{\partial_i\}_{0\leq i\leq e}$. Moreover, $V'=\widetilde{B'}^{(\underline{N})}_{\infty,\underline{m}}\otimes_{\widetilde{B}^{(\underline{N})}_{\infty,\underline{m}}}V$ is an object of $\repnan{\Delta'^{(\underline{N})}_{\underline{m}}}(\Xi'^{(\underline{N})},\widetilde{B'}^{(\underline{N})}_{\infty,\underline{m}}[1/p])$ with $W'=\widehat{\overline{B'}}\otimes_{\widehat{\overline{B}}}W= \widehat{\overline{B'}}\otimes_{\widetilde{B'}^{(\underline{N})}_{\infty,\underline{m}}}V'$. By \ref{rem:derivative}, the natural identification $\mrm{End}_{\widehat{\overline{B'}}}(W')=\widehat{\overline{B'}}\otimes_{\widehat{\overline{B}}}\mrm{End}_{\widehat{\overline{B}}[1/p]}(W)$ identifies $\id_{\widehat{\overline{B'}}}\otimes \varphi_{\partial'_i}|_{V'}$ with $\id_{\widehat{\overline{B'}}}\otimes \varphi_{\partial_i}|_V$. This shows that the diagram \eqref{diam:sen-brinon-B-func} is commutative which induces an isomorphism \eqref{eq:prop:sen-brinon-B-func}.
\end{proof}

\begin{myrem}\label{rem:sen-brinon-B-func}
	One can also replace $B'$ by a complete discrete valuation ring extension $\ca{O}_E$ of $\ca{O}_K$ whose residue field admits a finite $p$-basis. Assuming that $E\otimes_{\ca{L}}\Omega^1_{\ca{L}/K}\to \widehat{\Omega}^1_{\ca{O}_E}[1/p]$ is injective, the result of \ref{prop:sen-brinon-B-func} still holds for $\ca{O}_E$.
\end{myrem}

\begin{mycor}\label{cor:quasi-adequate-cofinal}
	Let $\scr{F}^{\mrm{fini}}_{\ca{L}_{\mrm{ur}}/\ca{L}}$ be the directed system of finite field extensions $\ca{L}'$ of $\ca{L}$ in $\ca{L}_{\mrm{ur}}$, $\scr{F}^{\mrm{qa}}_{\ca{L}_{\mrm{ur}}/\ca{L}}$ the subsystem consists of $\ca{L}'$ such that the integral closure $B'$ of $B$ in $\ca{L}'$ is a quasi-adequate $\ca{O}_K$-algebra, where we put $B'_{\triv}=B_{\triv}\otimes_BB'$ and $\overline{B'}=\overline{B}$.
	\begin{enumerate}
		\renewcommand{\labelenumi}{{\rm(\theenumi)}}
		\item The subsystem $\scr{F}^{\mrm{qa}}_{\ca{L}_{\mrm{ur}}/\ca{L}}$ is cofinal in $\scr{F}^{\mrm{fini}}_{\ca{L}_{\mrm{ur}}/\ca{L}}$.\label{item:cor:quasi-adequate-cofinal-1}
		\item Let $\ca{L}'\in\scr{F}^{\mrm{qa}}_{\ca{L}_{\mrm{ur}}/\ca{L}}$ with Galois group $G'=\gal(\ca{L}_{\mrm{ur}}/\ca{L}')\subseteq G$, $W$ an object of $\repnpr(G,\widehat{\overline{B}}[1/p])$ and $W'$ the object of $\repnpr(G',\widehat{\overline{B}}[1/p])$ defined by restricting the $G$-action on $W$. Then, there is a natural commutative diagram
		\begin{align}
			\xymatrix{
				\scr{E}^*_{B'}(1)\ar[rr]^-{\varphi_{\sen}|_{W'}}\ar[d]^-{\wr}&&\mrm{End}_{\widehat{\overline{B}}[\frac{1}{p}]}(W')\\
				\scr{E}^*_B(1)
				\ar[rr]^-{\varphi_{\sen}|_W}&&\mrm{End}_{\widehat{\overline{B}}[\frac{1}{p}]}(W)
				\ar@{=}[u]
			}
		\end{align}\label{item:cor:quasi-adequate-cofinal-2}
	\end{enumerate}
\end{mycor}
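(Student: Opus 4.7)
The plan is to deduce both parts from results already proved in the excerpt: part (1) is a direct application of Abhyankar's lemma, and part (2) is essentially a special case of Proposition \ref{prop:sen-brinon-B-func}.

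For part (1), given $\ca{L}' \in \scr{F}^{\mrm{fini}}_{\ca{L}_{\mrm{ur}}/\ca{L}}$ with integral closure $B'$ of $B$ in $\ca{L}'$, I would invoke Lemma \ref{lem:abhyankar} to produce $r \in \bb{N}_{>0}$ such that the triple $(B'_{\underline{r},\triv}, B'_{\underline{r}}, \overline{B})$ is quasi-adequate over $\ca{O}_K$, where $\ca{L}'_{\underline{r}} = \ca{L}' \cdot \ca{L}_{\underline{r}}$. Since the $p$-power roots $t_{i,k}$ were by construction chosen in $\overline{B}[1/p]$, we have $\ca{L}_{\underline{r}} \subseteq \ca{L}_{\mrm{ur}}$, and hence $\ca{L}'_{\underline{r}} \in \scr{F}^{\mrm{qa}}_{\ca{L}_{\mrm{ur}}/\ca{L}}$ contains $\ca{L}'$, establishing cofinality.

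For part (2), I would apply Proposition \ref{prop:sen-brinon-B-func} to the inclusion of quasi-adequate triples $(B_{\triv}, B, \overline{B}) \hookrightarrow (B'_{\triv}, B', \overline{B})$ over the identity base $(K, \ca{O}_K, \ca{O}_{\overline{K}}) \to (K, \ca{O}_K, \ca{O}_{\overline{K}})$. A preliminary remark is that since $B'_{\triv}/B_{\triv}$ is itself étale (by the very definition of $\ca{L}_{\mrm{ur}}$), the maximal unramified extensions of $\ca{L}$ and $\ca{L}'$ coincide, so $\overline{B'} = \overline{B}$ and $G' = \gal(\ca{L}_{\mrm{ur}}/\ca{L}')$ is an open subgroup of $G$ acting on $W' = W$ by restriction. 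The hypothesis to verify is the injectivity of $\ca{L}' \otimes_{\ca{L}} \Omega^1_{\ca{L}/K} \to \Omega^1_{\ca{L}'/K}$; this holds because $\ca{L}'/\ca{L}$ is finite étale, so $\dl_{\ca{L}'/\ca{L}}$ is acyclic and the fundamental distinguished triangle of cotangent complexes promotes this map to an isomorphism. Proposition \ref{prop:sen-brinon-B-func} then yields the commutative diagram. The fact that the left vertical arrow is an isomorphism is the content of Remark \ref{rem:B-fal-ext}: when $B'$ is the integral closure of $B$ in a finite subextension of $\ca{L}_{\mrm{ur}}$ and $K'=K$ is finite over $K$, the canonical morphism of Faltings extensions $\scr{E}_B \to \scr{E}_{B'}$ is an isomorphism, and so is its twisted dual $\scr{E}^*_{B'}(1) \to \scr{E}^*_B(1)$.

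I do not expect a significant obstacle: both parts follow formally from the earlier machinery. The one subtlety deserving attention is the compatibility of the datum $\overline{B}$: one must check that the same $\overline{B}$ (rather than a genuinely larger maximal unramified extension) works as the third slot of the quasi-adequate triple for $B'_{\underline{r}}$ in part (1), and equivalently as $\overline{B'}$ in part (2). This, however, is exactly what Lemma \ref{lem:abhyankar} and the étaleness of $B'_{\triv}/B_{\triv}$ deliver.
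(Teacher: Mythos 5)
Your proof is correct and takes essentially the same route as the paper's: part~(1) is a direct application of Abhyankar's lemma (\ref{lem:abhyankar}), noting that $\ca{L}'_{\underline{r}}\subseteq\ca{L}_{\mrm{ur}}$ because the chosen $p$-power (and $r$-th) roots $t_{i,k}$ lie in $\overline{A}[1/p]\subseteq\overline{B}[1/p]$; part~(2) follows from \ref{prop:sen-brinon-B-func} together with \ref{rem:B-fal-ext}, the injectivity hypothesis of the former being automatic since $\ca{L}'/\ca{L}$ is étale. Your additional remarks — that the maximal unramified extension is preserved when passing to $B'$ (cancellation of étale morphisms), and that \ref{rem:B-fal-ext} upgrades the left vertical arrow to an isomorphism — are exactly the points the paper's terse proof is implicitly appealing to.
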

\begin{proof}
	(\ref{item:cor:quasi-adequate-cofinal-1}) follows from Abhyankar's lemma \ref{lem:abhyankar}, and (\ref{item:cor:quasi-adequate-cofinal-2}) follows from \ref{rem:B-fal-ext} and \ref{prop:sen-brinon-B-func} (as $\ca{L}'$ is \'etale over $\ca{L}$).
\end{proof}

\begin{myrem}\label{rem:quasi-adequate-cofinal}
	 Let $\ca{L}'\in\scr{F}^{\mrm{fini}}_{\ca{L}_{\mrm{ur}}/\ca{L}}$ with Galois group $G'=\gal(\ca{L}_{\mrm{ur}}/\ca{L}')\subseteq G$, $\ca{L}''\in \scr{F}^{\mrm{qa}}_{\ca{L}_{\mrm{ur}}/\ca{L}}$ containing $\ca{L}'$ with Galois group $G''=\gal(\ca{L}_{\mrm{ur}}/\ca{L}'')\subseteq G'$, $W'$ an object of $\repnpr(G',\widehat{\overline{B}}[1/p])$, $W''$ the object of $\repnpr(G'',\widehat{\overline{B}}[1/p])$ defined by restricting the $G'$-action on $W'$. Then, we define a Lie algebra action $\varphi_{\sen}|_{W'}$ of $\scr{E}_B^*(1)$ on $W'$ by assigning it to be $\varphi_{\sen}|_{W''}$. This definition of $\varphi_{\sen}|_{W'}$ does not depend on the choice of $\ca{L}''$ by \ref{cor:quasi-adequate-cofinal}. One can check that $\varphi_{\sen}|_{W'}$ is $G'$-equivariant by the same arguments in \ref{thm:sen-brinon-operator}.
\end{myrem}

\begin{mylem}\label{lem:G-H-fixed}
	With the notation in {\rm\ref{para:notation-B-galois}}, let $V$ be an object of $\repnan{\Delta^{(\underline{N})}_{\underline{m}}}(\Xi^{(\underline{N})},\widetilde{B}^{(\underline{N})}_{\infty,\underline{m}}[1/p])$ for some $\underline{N}\in J$ and $\underline{m}\in \bb{N}^d$, $W=\widehat{\overline{B}}\otimes_{\widetilde{B}^{(\underline{N})}_{\infty,\underline{m}}}V$ the associated object of $\repnpr(G,\widehat{\overline{B}}[1/p])$.
	\begin{enumerate}
		\renewcommand{\labelenumi}{{\rm(\theenumi)}}
		\item Let $V'=\widetilde{B}^{(\underline{N})}_{\infty,\underline{\infty}}\otimes_{\widetilde{B}^{(\underline{N})}_{\infty,\underline{m}}}V$ be the associated object of $\repnpr(\Xi^{(\underline{N})},\widetilde{B}^{(\underline{N})}_{\infty,\underline{\infty}}[1/p])$. Then, $W^G=V'^G$. In particular, $(\widehat{\overline{B}}[1/p])^G=\widehat{B}[1/p]$.\label{item:lem:G-H-fixed-1}
		\item Let $C=\colim_{\underline{m'}\in\bb{N}^d}\widehat{B}^{(\underline{N})}_{\infty,\underline{m'}}$, $V''=C\otimes_{\widetilde{B}^{(\underline{N})}_{\infty,\underline{m}}}V$ the associated object of $\repnpr(\Delta^{(\underline{N})},C[1/p])$. Then, $W^H=V''^H$.\label{item:lem:G-H-fixed-2}
	\end{enumerate}
\end{mylem}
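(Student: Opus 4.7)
The strategy for both parts is to reduce the computation of Galois invariants of $W$ to a computation inside $V_0 := \widehat{B}^{(\underline{N})}_{\infty,\underline{\infty}}\otimes_{\widetilde{B}^{(\underline{N})}_{\infty,\underline{m}}}V$, an object of $\repnpr(\Xi^{(\underline{N})},\widehat{B}^{(\underline{N})}_{\infty,\underline{\infty}}[1/p])$. The first step is common to both parts: by the almost Galois descent at the heart of the proof of \ref{prop:almost-et-descent}---namely that $\widehat{B}^{(\underline{N})}_{\infty,\underline{\infty}}\to (\widehat{\overline{B}})^{H^{(\underline{N})}_{\underline{\infty}}}$ is an almost isomorphism, applied to the finite projective $V_0$---one identifies $W^{H^{(\underline{N})}_{\underline{\infty}}}=V_0$. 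Consequently $W^G = V_0^{\Xi^{(\underline{N})}}$ and $W^H = V_0^{\overline{H}}$, where $\overline{H}=\gal(\ca{L}^{(\underline{N})}_{\infty,\underline{\infty}}/\ca{L}_\infty)$ denotes the image of $H$ in $\Xi^{(\underline{N})}$, which contains $\Delta^{(\underline{N})}_{\underline{0}}$ as a subgroup of finite prime-to-$p$ index.

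For part~(1), it remains to identify $V_0^{\Xi^{(\underline{N})}}$ with $V'^G$. Applying \ref{lem:decompletion}.(\ref{item:lem:decompletion-2}) to $V'\in\repnpr(\Xi^{(\underline{N})},\widetilde{B}^{(\underline{N})}_{\infty,\underline{\infty}}[1/p])$ (and noting $V_0=\widehat{B}^{(\underline{N})}_{\infty,\underline{\infty}}\otimes_{\widetilde{B}^{(\underline{N})}_{\infty,\underline{\infty}}}V'$), $V'$ is the $(\Xi^{(\underline{N})},\widehat{B}[1/p])$-finite part of $V_0$. Any $v\in V_0^{\Xi^{(\underline{N})}}$ spans the cyclic $\widehat{B}[1/p]$-submodule $\widehat{B}[1/p]\cdot v$, which is $\Xi^{(\underline{N})}$-stable and finitely generated, so $v\in V'$. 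The reverse containment $V'^{\Xi^{(\underline{N})}}\subseteq V_0^{\Xi^{(\underline{N})}}$ is trivial, giving $V_0^{\Xi^{(\underline{N})}}=V'^{\Xi^{(\underline{N})}}$; since $H^{(\underline{N})}_{\underline{\infty}}$ fixes $V'$ pointwise, this equals $V'^G$. For the ``in particular'' statement, apply part~(1) to $V=\widetilde{B}_\infty[1/p]$ (with $\underline{N}=\underline{1}$, $\underline{m}=\underline{0}$, and natural action; this is $\Delta$-analytic because $\Delta$ fixes $\widetilde{B}_\infty$ pointwise), reducing to $(\widetilde{B}_{\infty,\underline{\infty}}[1/p])^\Gamma=\widehat{B}[1/p]$. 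The latter follows by writing $\widetilde{B}_{\infty,\underline{\infty}}=\colim_{(n,\underline{m'})}\widehat{B}_{n,\underline{m'}}$ and applying flat base change along the Noetherian inclusion $B\to\widehat{B}$ to the finite Galois identity $(B_{n,\underline{m'}}[1/p])^{\gal(\ca{L}_{n,\underline{m'}}/\ca{L})}=B[1/p]$.

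For part~(2), the analogous identification $V_0^{\overline{H}}=V''^{\overline{H}}$ rests on a $\Delta$-direction analog of \ref{lem:decompletion}.(\ref{item:lem:decompletion-2}): the $(\Delta^{(\underline{N})}_{\underline{0}},\widehat{B}^{(\underline{N})}_{\infty,\underline{0}}[1/p])$-finite part of $V_0$ equals $V''$. Granting this, for $v\in V_0^{\overline{H}}\subseteq V_0^{\Delta^{(\underline{N})}_{\underline{0}}}$, the cyclic $\widehat{B}^{(\underline{N})}_{\infty,\underline{0}}[1/p]$-submodule through $v$ is $\Delta^{(\underline{N})}_{\underline{0}}$-stable and finitely generated, so lies in $V''$. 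Combined with $V''\subseteq V_0$ and $V''^{\overline{H}}=V''^H$ (the $H$-action factoring through $\overline{H}$), this yields $W^H=V''^H$.

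The main obstacle is verifying the $\Delta$-direction decompletion statement used in part~(2). I plan to prove it by mimicking the proof of \ref{lem:decompletion}.(\ref{item:lem:decompletion-1}): iteratively apply Tsuji's decompletion criterion \cite[7.14]{tsuji2018localsimpson} to the $d$ Tate-Sen towers $(B^{(\underline{N})}_{\infty,\underline{m'}+\underline{r}_i})_{r\in\bb{N}}$ supplied by \ref{prop:tate-sen}.(\ref{item:prop:tate-sen-2}), using $\widehat{B}^{(\underline{N})}_{\infty,\underline{0}}[1/p]$ as the base ring for the ``finite part'' construction rather than $\widehat{B}[1/p]$. Only the Kummer directions are then decompleted, leaving the cyclotomic one untouched, so the output of the iteration is $C$ in place of $\widetilde{B}^{(\underline{N})}_{\infty,\underline{\infty}}$, as required.
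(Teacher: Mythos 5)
Your proposal is correct and follows essentially the same route as the paper's proof: in both parts you first apply the almost Galois descent from \ref{prop:almost-et-descent} to identify $W^{H^{(\underline{N})}_{\underline{\infty}}}$ with $V_0$, then invoke \ref{lem:decompletion}.(\ref{item:lem:decompletion-2}) (respectively its $\Delta$-direction adaptation, which the paper also asserts by saying ``the arguments of \ref{lem:decompletion} also show'') to locate $V'$ (resp.\ $V''$) as a finite part of $V_0$, and conclude via the stability of the cyclic submodule through an invariant vector; the ``in particular'' computation via the colimit $\widetilde{B}_{\infty,\underline{\infty}}=\colim\widehat{B}_{n,\underline{m'}}$, Noetherianity, and flatness of $B\to\widehat{B}$ is also what the paper does.
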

\begin{proof}
	(\ref{item:lem:G-H-fixed-1}) Recall that by the proof of \ref{prop:almost-et-descent} we have $W^{H^{(\underline{N})}_{\underline{\infty}}}=\widehat{B}^{(\underline{N})}_{\infty,\underline{\infty}}\otimes_{\widetilde{B}^{(\underline{N})}_{\infty,\underline{m}}}V$, and by \ref{lem:decompletion}.(\ref{item:lem:decompletion-2}) its $(\Xi^{(\underline{N})},\widehat{B}[1/p])$-finite part is $V'$. Thus, we have $W^G\subseteq V'^G$ (and hence $W^G=V'^G$ as $V'\subseteq W$).	In particular, we have $(\widehat{\overline{B}})^G\subseteq (\widetilde{B}^{(\underline{N})}_{\infty,\underline{\infty}})^G$. Since $\widehat{B}^{(\underline{N})}_{n,\underline{m}}=\widehat{B}\otimes_B B^{(\underline{N})}_{n,\underline{m}}$ (as $B^{(\underline{N})}_{n,\underline{m}}$ is finite over the Noetherian ring $B$), we have $(\widetilde{B}^{(\underline{N})}_{\infty,\underline{\infty}})^G=\colim (\widehat{B}^{(\underline{N})}_{n,\underline{m}})^G=\colim \widehat{B}\otimes_B (B^{(\underline{N})}_{n,\underline{m}})^G=\widehat{B}$ (as $\widehat{B}$ is flat over $B$).
	
	(\ref{item:lem:G-H-fixed-2}) The arguments of \ref{lem:decompletion} also show that $(\Delta^{(\underline{N})},\widehat{B}^{(\underline{N})}_\infty[1/p])$-finite part of $W^{H^{(\underline{N})}_{\underline{\infty}}}$ is $V''$. Thus, we have $W^H\subseteq V''^H$ (and hence $W^H=V''^H$ as $V''\subseteq W$).
\end{proof}

\begin{myprop}\label{prop:sen-fixed}
	Let $W$ be an object of $\repnpr(G,\widehat{\overline{B}}[1/p])$. Then, any Sen operator of $W$ vanishes on $W^G$, and any geometric Sen operator of $W$ vanishes on $W^H$.
\end{myprop}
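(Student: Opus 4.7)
The strategy is to apply the descent Theorem \ref{thm:descent} to land in the setting of the explicit formula \eqref{eq:thm:sen-brinon-B}, and then read off the vanishing from the descriptions of $W^G$ and $W^H$ provided by Lemma \ref{lem:G-H-fixed}. Since the system of coordinates $t_1,\dots,t_d$ fixed in \ref{para:notation-B-galois} satisfies the linear independence hypothesis of \ref{para:general-Bnm-tower-2} with $e=d$ by \ref{prop:B-rank}, the descent theorem allows us to assume $W=\widehat{\overline{B}}\otimes_{\widetilde{B}^{(\underline{N})}_{\infty,\underline{m}}}V$ for some $V\in\repnan{\Delta^{(\underline{N})}_{\underline{m}}}(\Xi^{(\underline{N})},\widetilde{B}^{(\underline{N})}_{\infty,\underline{m}}[1/p])$, so that
\[
\varphi_{\sen}|_W(f)=\sum_{i=0}^{d} f(T_i)\otimes \varphi_{\partial_i}|_V \qquad \text{for all } f\in \scr{E}^*_B(1).
\]
Because $T_0=\iota(1\otimes\zeta)\otimes\zeta^{-1}$ by \ref{thm:B-fal-ext}.(\ref{item:thm:B-fal-ext-1}), we have $\jmath(T_0)=0$, hence $f(T_0)=0$ whenever $f=\jmath^{*}g$ is geometric, and the index $i=0$ drops from the sum in that case.

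For the first assertion, let $x\in W^G$. By \ref{lem:G-H-fixed}.(\ref{item:lem:G-H-fixed-1}), $x$ lies in $V'=\widetilde{B}^{(\underline{N})}_{\infty,\underline{\infty}}\otimes_{\widetilde{B}^{(\underline{N})}_{\infty,\underline{m}}}V$, which is a $\Xi^{(\underline{N})}$-representation; the $G$-action on $V'$ factors through the surjection $G\twoheadrightarrow\Xi^{(\underline{N})}$, so $x$ is $\Xi^{(\underline{N})}$-invariant. By \ref{rem:derivative} together with the identification $\lie(\Xi^{(\underline{N})})=\lie(\Gamma)$ of \eqref{eq:para:general-Bnm-tower-lie}, the restriction of $\id\otimes\varphi_{\partial_i}|_V$ to $V'$ equals the infinitesimal action $\varphi_{\partial_i}|_{V'}$ of $\partial_i\in\lie(\Xi^{(\underline{N})})$ on $V'$, which annihilates $\Xi^{(\underline{N})}$-invariant vectors. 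Thus every term of the sum vanishes at $x$, giving $\varphi_{\sen}|_W(f)(x)=0$ for all $f\in\scr{E}^*_B(1)$.

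For the second assertion, let $x\in W^H$ with $H=\gal(\ca{L}_{\mrm{ur}}/\ca{L}_\infty)$, and let $f$ be a geometric Sen operator. By \ref{lem:G-H-fixed}.(\ref{item:lem:G-H-fixed-2}), $x$ lies in $V''=C\otimes_{\widetilde{B}^{(\underline{N})}_{\infty,\underline{m}}}V$, a $\Delta^{(\underline{N})}$-representation over $C[1/p]$. Since each $N_i$ is prime to $p$, $\ca{L}_\infty\subseteq\ca{L}^{(\underline{N})}_\infty$, so by Galois theory $\Delta^{(\underline{N})}=\gal(\ca{L}^{(\underline{N})}_{\infty,\underline{\infty}}/\ca{L}^{(\underline{N})}_\infty)\subseteq\gal(\ca{L}^{(\underline{N})}_{\infty,\underline{\infty}}/\ca{L}_\infty)=H/H^{(\underline{N})}_{\underline{\infty}}$, which is the quotient through which the $H$-action on $V''$ factors. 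Consequently $V''^H\subseteq V''^{\Delta^{(\underline{N})}}$, and by \ref{rem:derivative} together with $\lie(\Delta^{(\underline{N})})=\lie(\Delta)$, each operator $\id\otimes\varphi_{\partial_i}|_V$ for $1\leq i\leq d$ restricts on $V''$ to the infinitesimal action of $\partial_i\in\lie(\Delta^{(\underline{N})})$, which annihilates $\Delta^{(\underline{N})}$-invariants. Combined with $f(T_0)=0$, this gives $\varphi_{\sen}|_W(f)(x)=0$.

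The main difficulty is essentially bookkeeping: once the descent and Lemma \ref{lem:G-H-fixed} have relocated the fixed vectors into a subspace carrying an explicit infinitesimal Lie algebra action, the vanishing is automatic. The only subtle point is confirming that the group through which each relevant $\partial_i$ acts sits inside the image of $G$ (respectively $H$) in $\Xi^{(\underline{N})}$, which reduces to the tower-theoretic inclusion $\ca{L}_\infty\subseteq\ca{L}^{(\underline{N})}_\infty$ and the identifications of Lie algebras in \eqref{eq:para:general-Bnm-tower-lie}.
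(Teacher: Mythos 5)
Your proof is correct and takes essentially the same route as the paper: descend via Theorem \ref{thm:descent}, invoke Lemma \ref{lem:G-H-fixed} to relocate $W^G$ and $W^H$ into $V'$ and $V''$, and observe that the Sen operators are base-changes of the infinitesimal Lie-algebra actions of $\lie(\Xi^{(\underline{N})})$ and $\lie(\Delta^{(\underline{N})})$, which kill the respective invariants. You have made explicit a few steps the paper leaves implicit (that $f(T_0)=0$ for geometric $f$, and that $\Delta^{(\underline{N})}$ sits inside $H/H^{(\underline{N})}_{\underline{\infty}}$ because $N_i$ is prime to $p$), but the underlying argument is the same.
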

\begin{proof}
	By the descent theorem \ref{thm:descent}, we may put ourselves in the situation of \ref{lem:G-H-fixed}. By \ref{prop:derivative}, the infinitesimal Lie algebra action of $\lie(\Xi^{(\underline{N})})$ (resp. $\lie(\Delta^{(\underline{N})})$) on $V'$ (resp. $V''$) is well-defined and vanishes on $V'^{\Xi^{(\underline{N})}}$ (resp. $V''^{\Delta^{(\underline{N})}}$), and thus vanishes on $W^G$ (resp. $W^H$) by \ref{lem:G-H-fixed}. On the other hand, the infinitesimal action of $\lie(\Xi^{(\underline{N})})$ (resp. $\lie(\Delta^{(\underline{N})})$) on $V'$ (resp. $V''$) is the base change of that on $V$ by \ref{rem:derivative}. Thus, these infinitesimal actions induce the (resp. geometric) Sen operators on $W$ by extending scalars to $\widehat{\overline{B}}[1/p]$, which completes the proof.
\end{proof}

\begin{mypara}
	Recall that for any finite projective module $M$ over a ring $R$, the trace map $\mrm{Tr}:\mrm{End}_R(M)\to R$ is the composition of the second map with the inverse of the first map:
	\begin{align}
		\mrm{End}_R(M)\stackrel{\sim}{\longleftarrow} M\otimes_R \ho_R(M,R)\longrightarrow R,
	\end{align}
	where for any $x\in M$ and $f\in \ho_R(M,R)$, the element $x\otimes f$ is mapped to $xf$ by the first arrow and to $f(x)$ by the second arrow. For any $R$-linear endomorphism $\phi$ of $M$, its determinant is defined as follows (\cite[\textsection 1]{goldman1961det}): we write $R^n=M\oplus M'$ for some $n\in \bb{N}$ and define $\det(\phi)$ to be the determinant of the endomorphism $\phi\oplus \id_{M'}$ on $R^n$, which does not depend on the choice of $M'$ and is compatible with base change. Then, the characteristic polynomial $\det(T-\phi)\in R[T]$ of $\phi$ is defined as the determinant of the $R[T]$-linear endomorphism $T\otimes \id_M-\id_{R[T]}\otimes\phi$ of $R[T]\otimes_R M$ (\cite[\textsection 2]{goldman1961det}). Similarly, one can define the reverse characteristic polynomial $\det(1-T\phi)\in R[T]$ as the determinant of the $R[T]$-linear endomorphism $\id_{R[T]}\otimes \id_M-T\otimes\phi$ of $R[T]\otimes_R M$. One can check by taking localizations of $R$ that the reverse characteristic polynomial is also given by the formula (which is indeed a finite sum)
	\begin{align}\label{eq:reverse-char-poly}
		\det(1-T\phi)=\sum_{k=0}^\infty (-1)^k \mrm{Tr}(\wedge^k \phi)T^k,
	\end{align}
	where $\wedge^k \phi:\wedge^k M\to \wedge^k M$ is the $R$-linear endomorphism of the $k$-th exterior power of $M$ induced by $\phi$.	By the multiplicativity of determinants, we have  $\det(T-\phi)=\det(T)\det(1-T^{-1}\phi)$ in $R[T^{\pm 1}]$. We remark that the coefficients of the characteristic polynomial $\det(T)$ of the zero endomorphism are mutually orthogonal idempotents of $R$, and $\det(T)$ may not be a monomial if $\spec(R)$ is not connected (\cite[2.2, 2.3]{goldman1961det}).
\end{mypara}

\begin{mylem}\label{lem:char-poly-conjugate}
	Let $R$ be a ring endowed with an action of a group $G$, $M$ a finite projective $R$-module endowed with a semi-linear action of $G$, $\phi$ an $R$-linear endomorphism of $M$. Then, for any $g\in G$, $\det(1-T(g\circ\phi\circ g^{-1}))=g(\det(1-T\phi))$.
\end{mylem}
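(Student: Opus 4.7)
The plan is to reduce the statement to a trace identity via the formula
\begin{align}
\det(1-T\phi) = \sum_{k=0}^{\infty}(-1)^k\mrm{Tr}(\wedge^k\phi)T^k
\end{align}
of \eqref{eq:reverse-char-poly} applied degree-wise, and to establish the trace identity by exhibiting a canonical $G$-equivariant isomorphism $M\otimes_R M^*\iso\mrm{End}_R(M)$.

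First I would record that, although $g:M\to M$ is only $g$-semi-linear, the composite $g\circ\phi\circ g^{-1}$ is genuinely $R$-linear, so the conjugation $\phi\mapsto g\circ\phi\circ g^{-1}$ defines a semi-linear action of $G$ on $\mrm{End}_R(M)$ (with respect to the given action of $G$ on $R$). Similarly, the dual $M^*=\ho_R(M,R)$ carries the semi-linear action $(g\cdot f)(x)=g(f(g^{-1}x))$, so that $M\otimes_R M^*$ inherits the diagonal semi-linear action. One then checks that the two canonical $R$-linear maps
\begin{align}
M\otimes_R M^* \stackrel{\mrm{ev}}{\longrightarrow} \mrm{End}_R(M),\quad x\otimes f\mapsto (y\mapsto f(y)x),\qquad M\otimes_R M^* \stackrel{\mrm{tr}}{\longrightarrow} R,\quad x\otimes f\mapsto f(x),
\end{align}
are both $G$-equivariant by a direct computation: for $\mrm{ev}$, one has $(g\cdot\mrm{ev}(x\otimes f))(y)=g(f(g^{-1}y))g(x)=\mrm{ev}(g(x)\otimes (g\cdot f))(y)$; and for $\mrm{tr}$, $(g\cdot f)(g(x))=g(f(x))$.

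Since $M$ is finite projective, $\mrm{ev}$ is an isomorphism, and under it $\mrm{tr}$ is identified with $\mrm{Tr}:\mrm{End}_R(M)\to R$. Consequently, $\mrm{Tr}$ is itself $G$-equivariant, i.e.\ $\mrm{Tr}(g\circ\psi\circ g^{-1})=g(\mrm{Tr}(\psi))$ for every $R$-linear endomorphism $\psi$ of any finite projective $R$-module equipped with a compatible semi-linear $G$-action.

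To conclude, I would apply this trace identity to the exterior powers $\wedge^k M$: each is finite projective with the induced semi-linear $G$-action $\wedge^k g$, and functoriality of $\wedge^k$ yields $\wedge^k(g\circ\phi\circ g^{-1})=(\wedge^k g)\circ(\wedge^k\phi)\circ(\wedge^k g)^{-1}$. Plugging into \eqref{eq:reverse-char-poly} (with the $G$-action on $R[T]$ extended trivially on $T$) gives
\begin{align}
\det(1-T(g\circ\phi\circ g^{-1}))=\sum_{k=0}^{\infty}(-1)^k g(\mrm{Tr}(\wedge^k\phi))T^k=g(\det(1-T\phi)),
\end{align}
as required. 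The argument is essentially formal; the only subtle point, and the place I would be careful, is verifying that the evaluation and trace maps are $G$-equivariant for the correct (conjugation and diagonal) semi-linear actions, since the semi-linearity of the $G$-action on $M$ means signs and placements of $g,g^{-1}$ must be tracked precisely.
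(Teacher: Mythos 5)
Your proof is correct and follows essentially the same route as the paper's: reduce to the $G$-equivariance of the trace via the formula $\det(1-T\phi)=\sum_k(-1)^k\mrm{Tr}(\wedge^k\phi)T^k$ together with the functoriality $\wedge^k(g\circ\phi\circ g^{-1})=(\wedge^kg)\circ(\wedge^k\phi)\circ(\wedge^kg)^{-1}$. The only difference is that the paper treats the trace identity $\mrm{Tr}(g\circ\phi\circ g^{-1})=g(\mrm{Tr}(\phi))$ as immediate from the definitions, whereas you spell out the verification that the evaluation isomorphism $M\otimes_RM^*\iso\mrm{End}_R(M)$ and the contraction $M\otimes_RM^*\to R$ are $G$-equivariant — a careful and worthwhile elaboration, but not a different argument.
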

\begin{proof}
	It is clear from the definitions that $\mrm{Tr}(g\circ\phi\circ g^{-1})=g(\mrm{Tr}(\phi))$ and that $\wedge^k (g\circ\phi\circ g^{-1})=g\circ(\wedge^k\phi)\circ g^{-1}$. The conclusion follows from \eqref{eq:reverse-char-poly}.
\end{proof}

\begin{myprop}\label{prop:sen-B-char-poly}
	Let $W$ be an object of $\repnpr(G,\widehat{\overline{B}}[1/p])$. Then, any lifting $\phi\in\Phi(W)$ of the arithmetic Sen operator of $W$ has the same (resp. reverse) characteristic polynomial. Moreover, the coefficients of the reverse characteristic polynomial lie in $\widehat{B}[1/p]$.
\end{myprop}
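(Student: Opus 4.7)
The plan is to reduce to the complete discrete valuation field case (Proposition \ref{prop:sen-char-poly}) by localising at each height-one prime of $\overline{B}$ containing $p$. Any two liftings $\phi_1,\phi_2\in\Phi(W)$ of the arithmetic Sen operator differ by a geometric Sen operator, so equivalently I want to show that if $\phi$ is a lifting and $\theta\in\Phi^{\geo}(W)$, then $\det(1-T\phi)=\det(1-T(\phi+\theta))$ in $\widehat{\overline{B}}[1/p][T]$, and that the coefficients of this reverse characteristic polynomial lie in $\widehat{B}[1/p]$.

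For the first assertion, I would proceed as follows. For each $\ak{q}\in\ak{S}_p(\overline{B})$ with image $\ak{p}\in\ak{S}_p(B)$, let $\ca{O}_{E_{\ak{p}}}$ be the $p$-adic completion of $B_{\ak{p}}$ and set $W_{\ak{q}}=\widehat{\overline{E}}_{\ak{q}}\otimes_{\widehat{\overline{B}}}W$. By Corollary \ref{cor:quasi-adequate-rank}, the residue field of $E_{\ak{p}}$ admits a finite $p$-basis, and by \eqref{eq:rem:adequate-chart} together with Proposition \ref{prop:quasi-adequate-diff}, a system of coordinates $t_1,\dots,t_d$ of $B$ induces bases $\df t_1,\dots,\df t_d$ on both $\Omega^1_{\ca{L}/K}$ and $\widehat{\Omega}^1_{\ca{O}_{E_{\ak{p}}}}[1/p]$; hence the canonical map $E_{\ak{p}}\otimes_{\ca{L}}\Omega^1_{\ca{L}/K}\to \widehat{\Omega}^1_{\ca{O}_{E_{\ak{p}}}}[1/p]$ is an isomorphism. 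The hypothesis of Remark \ref{rem:sen-brinon-B-func} is thus satisfied, yielding a commutative square that identifies the base changes $\phi_{i,\ak{q}}$ with two liftings of the arithmetic Sen operator of $W_{\ak{q}}$ in the field-case sense of Definition \ref{defn:sen-brinon-operator}. Proposition \ref{prop:sen-char-poly} then gives $\det(1-T\phi_{1,\ak{q}})=\det(1-T\phi_{2,\ak{q}})$ in $E_{\ak{p}}[T]$. Since the reverse characteristic polynomial commutes with base change, and the map $\widehat{\overline{B}}[1/p]\to \prod_{\ak{q}\in\ak{S}_p(\overline{B})}\widehat{\overline{E}}_{\ak{q}}$ is injective by (the proof of) Lemma \ref{lem:A-inj}, we conclude $\det(1-T\phi_1)=\det(1-T\phi_2)$ in $\widehat{\overline{B}}[1/p][T]$.

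For the second assertion, fix a lifting $\phi=\varphi_{\sen}|_W(f)$ with $f\in \scr{E}_B^*(1)$ and $\iota^*(f)=1$. For any $g\in G$, the $G$-equivariance of $\varphi_{\sen}|_W$ and of $\iota^*$ (together with the $G$-invariance of $1\in \widehat{\overline{B}}[1/p]$) shows that $g\circ\phi\circ g^{-1}=\varphi_{\sen}|_W(g\cdot f)$ is again a lifting of the arithmetic Sen operator. By the first assertion and Lemma \ref{lem:char-poly-conjugate}, we get
\begin{align*}
g(\det(1-T\phi))=\det(1-T(g\circ\phi\circ g^{-1}))=\det(1-T\phi),
\end{align*}
so the coefficients of $\det(1-T\phi)$ lie in $(\widehat{\overline{B}}[1/p])^G=\widehat{B}[1/p]$ by Lemma \ref{lem:G-H-fixed}.(\ref{item:lem:G-H-fixed-1}).

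The main obstacle is essentially a bookkeeping one: ensuring that the hypothesis of Remark \ref{rem:sen-brinon-B-func} is satisfied at every localisation, so that the field-case result transfers cleanly. As noted above, for the specific completions $\ca{O}_{E_{\ak{p}}}$ arising from the height-one primes of $B$ containing $p$, the relevant map of differentials is in fact an isomorphism (both sides being free with the same basis coming from a system of coordinates of $B$), so this reduction poses no real difficulty, and the rest of the argument is formal manipulation of Galois invariance and the injectivity of the product map at height-one primes.
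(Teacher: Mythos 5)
Your proof is correct and takes essentially the same approach as the paper: reduction to the discrete valuation field case via localization at height-one primes of $\overline{B}$ containing $p$ for the first assertion, and $G$-equivariance of $\varphi_{\sen}|_W$ together with Lemma \ref{lem:char-poly-conjugate} and Lemma \ref{lem:G-H-fixed}.(\ref{item:lem:G-H-fixed-1}) for the second. The paper's own proof invokes the diagram \eqref{diam:thm:sen-brinon-B} directly rather than routing through Remark \ref{rem:sen-brinon-B-func}, but this is the same localization argument that you have spelled out.
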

\begin{proof}
	The first assertion follows from the diagram \eqref{diam:thm:sen-brinon-B} and \ref{prop:sen-char-poly}. For the second, notice that for any $g\in G$, $g\circ \phi\circ g^{-1}$ is also a lifting of the arithmetic Sen operator by the $G$-equivariance of $\varphi_{\sen}|_W$. Thus, $\phi$ and $g\circ \phi\circ g^{-1}$ have the same (resp. reverse) characteristic polynomials. By \ref{lem:char-poly-conjugate}, we see that the coefficients of the reverse characteristic polynomial $\det(1-T\phi)$ lie in $\widehat{\overline{B}}[1/p]^G=\widehat{B}[1/p]$ by \ref{lem:G-H-fixed}.(\ref{item:lem:G-H-fixed-1}).
\end{proof}

\begin{myrem}\label{rem:sen-B-char-poly}
	\begin{enumerate}
		\renewcommand{\labelenumi}{{\rm(\theenumi)}}
		\item If $B$ is adequate, then the operator ``$\varphi_{\sen}$'' on the associated Higgs bundle defined by Tsuji (\cite[page 876]{tsuji2018localsimpson}) induces a lifting of the arithmetic Sen operator of $W$ by extending scalars to $\widehat{\overline{B}}[1/p]$ (cf. \cite[15.1.(4)]{tsuji2018localsimpson}). In particular, the characteristic polynomial of $\phi$ coincides with that of ``$\varphi_{\sen}$''. Thus, in general, we call the roots of the characteristic polynomial of $\phi$ the \emph{Hodge-Tate weights} of $W$.
		\item If $W$ is defined over $\widehat{B}[1/p]$, i.e. there exists an object $V$ of $\repnpr(G,\widehat{B}[1/p])$ such that $W=\widehat{\overline{B}}\otimes_{\widehat{B}}V$, then the characteristic polynomial $\det(T)$ of the zero endomorphism of $W$ has coefficients in $\widehat{B}[1/p]$. The identity $\det(T-\phi)=\det(T)\det(1-T^{-1}\phi)$ in $\widehat{\overline{B}}[1/p][T^{\pm 1}]$ implies that the coefficients of the characteristic polynomial of $\phi$ also lie in $\widehat{B}[1/p]$.
	\end{enumerate}
	
\end{myrem}

\begin{mypara}\label{para:inertia-subgroup}
	Let $\ak{q}\in\ak{S}_p(\overline{B})$ with image $\ak{p}\in\ak{S}_p(B)$, consider the element $(B_{\triv},B,\overline{B})\to (E_{\ak{p}},\ca{O}_{E_{\ak{p}}},\ca{O}_{\overline{E}_{\ak{q}}})$ of $\ak{E}(B)$ defined in \ref{para:notation-A-inj}. Let $I_{\ak{q}}\subseteq G=\gal(\ca{L}_{\mrm{ur}}/\ca{L})$ be the image of the inertial subgroup of the absolute Galois group of $E_{\ak{p}}$. It is a closed subgroup of $G$, which we call the \emph{inertia subgroup} of $G$ at $\ak{q}\in\ak{S}_p(\overline{B})$.
\end{mypara}

\begin{mythm}[{cf. \ref{thm:sen-lie}}]\label{thm:sen-lie-B}
	Let $(V,\rho)$ be an object of $\repnpr(G,\bb{Q}_p)$, $W=\widehat{\overline{B}}[1/p]\otimes_{\bb{Q}_p}V$ the associated object of $\repnpr(G,\widehat{\overline{B}}[1/p])$. Then, $\sum_{\ak{q}\in\ak{S}_p(\overline{B})}\lie(\rho(I_{\ak{q}}))$ is the smallest $\bb{Q}_p$-subspace $S$ of $\mrm{End}_{\bb{Q}_p}(V)$ such that the $\widehat{\overline{B}}[1/p]$-module of Sen operators $\Phi(W)$ is contained in the submodule $\widehat{\overline{B}}[1/p]\otimes_{\bb{Q}_p} S$ of $\mrm{End}_{\widehat{\overline{B}}[1/p]}(W)=\widehat{\overline{B}}[1/p]\otimes_{\bb{Q}_p}\mrm{End}_{\bb{Q}_p}(V)$.
\end{mythm}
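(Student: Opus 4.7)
The plan is to reduce to the known valuation-ring case \ref{thm:sen-lie} (Sen--Ohkubo) by localizing at height-$1$ prime ideals of $\overline{B}$ containing $p$, exactly as the introduction announces. The two key inputs are the commutative diagram \eqref{diam:thm:sen-brinon-B} from the proof of \ref{thm:sen-brinon-B} (whose right vertical arrow is an isomorphism and whose left horizontal arrow is injective by \ref{lem:A-inj}), together with the abstract characterization of the smallest subspace given by \ref{lem:smallest}.

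For each $\ak{q}\in \ak{S}_p(\overline{B})$ with image $\ak{p}\in\ak{S}_p(B)$, I set $W_{\ak{q}}=\widehat{\overline{E}}_{\ak{q}}\otimes_{\widehat{\overline{B}}}W=\widehat{\overline{E}}_{\ak{q}}\otimes_{\bb{Q}_p}V$ and consider the composition
\begin{align*}
\mrm{End}_{\widehat{\overline{B}}[1/p]}(W)\longrightarrow \prod_{\ak{q}}\widehat{\overline{E}}_{\ak{q}}\otimes_{\widehat{\overline{B}}}\mrm{End}_{\widehat{\overline{B}}[1/p]}(W)\stackrel{\sim}{\longleftarrow} \prod_{\ak{q}}\mrm{End}_{\widehat{\overline{E}}_{\ak{q}}}(W_{\ak{q}}),
\end{align*}
where the left arrow, obtained from \ref{lem:A-inj} by tensoring with the free $\bb{Q}_p$-module $\mrm{End}_{\bb{Q}_p}(V)$, is injective. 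The commutativity of \eqref{diam:thm:sen-brinon-B} implies that the base change of $\varphi_{\sen}|_W$ to each $\widehat{\overline{E}}_{\ak{q}}$ equals $\varphi_{\sen}|_{W_{\ak{q}}}$ through the isomorphism of the right vertical arrow; consequently the image of $\Phi(W)$ in $\mrm{End}_{\widehat{\overline{E}}_{\ak{q}}}(W_{\ak{q}})$ generates $\Phi(W_{\ak{q}})$ as a $\widehat{\overline{E}}_{\ak{q}}$-module.

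Now I apply \ref{lem:smallest} with $F=\bb{Q}_p$, $V=\mrm{End}_{\bb{Q}_p}(V)$ and coefficient module $W'=\widehat{\overline{B}}[1/p]$: the claim $V_{\Phi(W)}=\sum_{\ak{q}}\lie(\rho(I_{\ak{q}}))$ is equivalent to the equality of annihilator sets $\ak{F}_{\Phi(W)}=\ak{F}_{\sum_{\ak{q}}\lie(\rho(I_{\ak{q}}))}$. For a linear form $f\in\ho_{\bb{Q}_p}(\mrm{End}_{\bb{Q}_p}(V),\bb{Q}_p)$, I argue two implications: if $f$ vanishes on every $\lie(\rho(I_{\ak{q}}))$, then by \ref{thm:sen-lie} applied to $E_{\ak{p}}$ the map $f_{W_{\ak{q}}}$ kills $\Phi(W_{\ak{q}})$ for every $\ak{q}$, and by the injectivity of the display above $f_W$ kills $\Phi(W)$; conversely, if $f_W$ kills $\Phi(W)$, then after base change to $\widehat{\overline{E}}_{\ak{q}}$ the induced form kills the $\widehat{\overline{E}}_{\ak{q}}$-span of the image, which is $\Phi(W_{\ak{q}})$, hence by \ref{thm:sen-lie} (the minimality direction) $f$ must vanish on $\lie(\rho(I_{\ak{q}}))$ for each $\ak{q}$.

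The only mild subtleties are, first, verifying that the linear form $f$ commutes appropriately with base change (this is immediate since $f$ acts on the $\mrm{End}_{\bb{Q}_p}(V)$ factor while the base change operates on the coefficient factor), and second, correctly identifying the image of $\Phi(W)$ in $\mrm{End}_{\widehat{\overline{E}}_{\ak{q}}}(W_{\ak{q}})$ with a set generating $\Phi(W_{\ak{q}})$ so that the classical Sen--Ohkubo statement can be applied to a non-trivial object; both points are already encoded in the construction of \eqref{eq:sen-brinon-B} through the diagram \eqref{diam:thm:sen-brinon-B}. No genuinely new technical obstacle appears, so the proof is essentially the assembly of \ref{thm:sen-lie}, \ref{lem:smallest}, \ref{lem:A-inj}, and the functoriality built into \ref{thm:sen-brinon-B}.
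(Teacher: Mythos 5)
Your proposal is correct and follows essentially the same route as the paper's own proof: apply \ref{lem:smallest} to reduce the statement to an equality of annihilator sets of linear forms, localize the Sen operator diagram at each $\ak{q}\in\ak{S}_p(\overline{B})$ via the injectivity from \ref{prop:ht1-prime-inj}/\ref{lem:A-inj}, identify $\Phi(W_{\ak{q}})$ with the $\widehat{\overline{E}}_{\ak{q}}$-span of the image of $\Phi(W)$ using \eqref{diam:thm:sen-brinon-B}, and invoke the Sen--Ohkubo theorem \ref{thm:sen-lie} for each $E_{\ak{p}}$.
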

\begin{proof}
	By \ref{lem:smallest}, it suffices to show that for any $\bb{Q}_p$-linear form $f$ on $\mrm{End}_{\bb{Q}_p}(V)$, we have $f(\lie(\rho(I_{\ak{q}})))=0$ for any $\ak{q}\in\ak{S}_p(\overline{B})$ if and only if $f'(\Phi(W))=0$, where $f'$ is the $\widehat{\overline{B}}[1/p]$-linear form on $\mrm{End}_{\widehat{\overline{B}}[1/p]}(W)$ defined by extending scalars from $f$. We set $W_{\ak{q}}=\widehat{\overline{E}}_{\ak{q}}\otimes_{\widehat{\overline{B}}}W=\widehat{\overline{E}}_{\ak{q}}\otimes_{\bb{Q}_p}V$ and let $f'_{\ak{q}}$ be the $\widehat{\overline{E}}_{\ak{q}}$-linear form on $\mrm{End}_{\widehat{\overline{E}}_{\ak{q}}}(W_{\ak{q}})$ defined by extending scalars from $f$. Consider the commutative diagram 
	\begin{align}
		\xymatrix{
			\mrm{End}_{\widehat{\overline{B}}[1/p]}(W)\ar[d]_-{f'}\ar[r]^-{(\iota_\ak{q})_{\ak{q}}}&\prod_{\ak{q}\in\ak{S}_p(\overline{B})}\mrm{End}_{\widehat{\overline{E}}_{\ak{q}}}(W_{\ak{q}})\ar[d]^-{(f'_{\ak{q}})_{\ak{q}}}\\
			\widehat{\overline{B}}[1/p]\ar[r]&\prod_{\ak{q}\in\ak{S}_p(\overline{B})} \widehat{\overline{E}}_{\ak{q}}
		}
	\end{align}
	where $\iota_\ak{q}$ is defined by extending scalars. Notice that the horizontal arrows are injective by \ref{prop:ht1-prime-inj}. Thus, $f'(\Phi(W))=0$ if and only if $f_{\ak{q}}'(\iota_{\ak{q}}(\Phi(W)))=0$ for any $\ak{q}\in\ak{S}_p(\overline{B})$. Notice that $f(\lie(\rho(I_{\ak{q}})))=0$ if and only if $f_{\ak{q}}'(\Phi(W_{\ak{q}}))=0$ by \ref{thm:sen-lie}, and that $\Phi(W_{\ak{q}})$ is generated by $\iota_{\ak{q}}(\Phi(W))$ by \eqref{eq:prop:sen-brinon-B-func} and \ref{rem:sen-brinon-B-func} (or directly from \eqref{diam:thm:sen-brinon-B}). This completes the proof.
\end{proof}

\begin{mycor}\label{cor:sen-lie-B-zero}
	With the notation in {\rm\ref{thm:sen-lie-B}}, the $\widehat{\overline{B}}[1/p]$-module of Sen operators $\Phi(W)$ is zero if and only if $\rho(I_{\ak{q}})$ is finite for any $\ak{q}\in\ak{S}_p(\overline{B})$.
\end{mycor}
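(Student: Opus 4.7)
The plan is to reduce the statement to a direct application of Theorem \ref{thm:sen-lie-B}. That theorem identifies the $\bb{Q}_p$-subspace $S_0 := \sum_{\ak{q}\in\ak{S}_p(\overline{B})}\lie(\rho(I_{\ak{q}}))$ of $\mrm{End}_{\bb{Q}_p}(V)$ as the smallest $\bb{Q}_p$-subspace $S$ with $\Phi(W)\subseteq \widehat{\overline{B}}[1/p]\otimes_{\bb{Q}_p} S$. Since $\widehat{\overline{B}}[1/p]$ is a faithful $\bb{Q}_p$-module, $\Phi(W)=0$ is equivalent to $S_0=0$, and as $S_0$ is a sum of $\bb{Q}_p$-subspaces indexed by $\ak{S}_p(\overline{B})$, this amounts to $\lie(\rho(I_{\ak{q}}))=0$ for every $\ak{q}\in\ak{S}_p(\overline{B})$.

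The remaining task is to show that, for each $\ak{q}$, $\lie(\rho(I_{\ak{q}}))=0$ if and only if $\rho(I_{\ak{q}})$ is finite. Since $V$ is a finite-dimensional $\bb{Q}_p$-vector space, $\rho$ factors through $\mrm{GL}(V)\cong\mrm{GL}_n(\bb{Q}_p)$ with $n=\dim_{\bb{Q}_p}V$. The inertia subgroup $I_{\ak{q}}\subseteq G$ is closed in the profinite group $G$, hence compact, so $\rho(I_{\ak{q}})$ is a compact subgroup of $\mrm{GL}_n(\bb{Q}_p)$. Such a subgroup is in particular a closed subgroup of the $p$-adic analytic group $\mrm{GL}_n(\bb{Q}_p)$, and thus itself a compact $p$-adic analytic group by Theorem \ref{thm:lie-group-ext}.

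Finally, for a compact $p$-adic analytic group $H$ one has $\lie(H)=0$ if and only if $H$ is finite. Indeed, pick a uniform pro-$p$ open subgroup $H_0\subseteq H$ as in Definition \ref{defn:p-analytic-group}. By Definition \ref{defn:lie-alg} the vanishing of $\lie(H)$ forces $L_{H_0}\otimes_{\bb{Z}_p}\bb{Q}_p=0$; as $L_{H_0}$ is a finite free $\bb{Z}_p$-module (Definition \ref{defn:powerful-lie-alg}), this gives $L_{H_0}=0$, whence $H_0=\{1\}$ via the homeomorphism \eqref{eq:coordinates-first}. Thus $H$ has a trivial open subgroup, so $H$ is discrete and compact, hence finite; the converse is clear since the Lie algebra of a finite group is zero. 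Applying this to $H=\rho(I_{\ak{q}})$ and combining with the previous paragraph yields the claimed equivalence. There is no real difficulty beyond Theorem \ref{thm:sen-lie-B}: the argument is purely formal once that result is in hand, relying only on the basic dimension theory of $p$-adic analytic groups recalled in Section \ref{sec:lie}.
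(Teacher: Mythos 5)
Your proof is correct, and it spells out exactly the deduction that makes this a ``corollary'' of Theorem \ref{thm:sen-lie-B}: the theorem identifies $S_0=\sum_{\ak{q}}\lie(\rho(I_{\ak{q}}))$ as the smallest $\bb{Q}_p$-subspace $S$ with $\Phi(W)\subseteq\widehat{\overline{B}}[1/p]\otimes_{\bb{Q}_p}S$, from which $\Phi(W)=0\Leftrightarrow S_0=0$ follows (the forward direction by minimality, since $S=0$ is a valid choice; the reverse trivially), and the reduction to ``each $\lie(\rho(I_{\ak{q}}))=0$'' plus the standard fact that a compact $p$-adic analytic group has vanishing Lie algebra iff it is finite closes the argument. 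One small remark: the invocation of faithfulness of $\widehat{\overline{B}}[1/p]$ over $\bb{Q}_p$ is not quite what drives the equivalence $\Phi(W)=0\Leftrightarrow S_0=0$ — that equivalence is really carried by the minimality assertion in the theorem — but this is a cosmetic issue and does not affect the validity of the proof, which is otherwise exactly the intended argument.
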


\begin{mycor}\label{cor:sen-lie-B}
	With the notation in {\rm\ref{thm:sen-lie-B}}, the $\widehat{\overline{B}}[1/p]$-module of Sen operators $\Phi(W)$ is contained in $\widehat{\overline{B}}[1/p]\otimes_{\bb{Q}_p} \lie(\rho(G))$, and the $\widehat{\overline{B}}[1/p]$-module of geometric Sen operators $\Phi^{\geo}(W)$ is contained in $\widehat{\overline{B}}[1/p]\otimes_{\bb{Q}_p} \lie(\rho(H))$, where $H=\gal(\ca{L}_{\mrm{ur}}/\ca{L}_\infty)$. 
\end{mycor}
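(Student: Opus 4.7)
The plan is to deduce both containments from \ref{thm:sen-lie-B} together with the abelianness of the cyclotomic quotient $G/H$. For the first containment, by \ref{thm:sen-lie-B} it is enough to show that $\sum_{\ak{q}\in\ak{S}_p(\overline{B})}\lie(\rho(I_{\ak{q}}))\subseteq \lie(\rho(G))$. Since $\rho(G)$ is a closed (hence $p$-adic analytic) subgroup of $\mrm{GL}(V)$ by \ref{thm:lie-group-ext}, and each $\rho(I_{\ak{q}})$ is in turn a closed subgroup of $\rho(G)$ (being the continuous image of a closed subgroup of the compact group $G$), the inclusion of Lie algebras follows by functoriality of $\lie(-)$ (cf. \ref{defn:lie-alg}).

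For the second containment, I first claim that $\Phi^{\geo}(W)=[\Phi(W),\Phi(W)]$, where the bracket on the right denotes the commutator bracket in $\mrm{End}_{\widehat{\overline{B}}[1/p]}(W)$. By the defining formula $[f_1,f_2]=\iota^*(f_1)f_2-\iota^*(f_2)f_1$ for the Lie bracket on $\scr{E}_B^*(1)$ (\ref{para:B-fal-ext-dual}), the image of the bracket lies in $\ker(\iota^*)=\ho_{B[1/p]}(\Omega^1_{Y_K/K}(-1),\widehat{\overline{B}}[1/p])$; conversely, picking any $f_0\in\scr{E}_B^*(1)$ with $\iota^*(f_0)=1$, one has $[f_0,f]=f$ for every $f\in\ker(\iota^*)$, so the derived sub-Lie-algebra of $\scr{E}_B^*(1)$ coincides with $\ker(\iota^*)$. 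Since $\varphi_{\sen}|_W$ is a $\widehat{\overline{B}}[1/p]$-linear Lie algebra homomorphism into $\mrm{End}_{\widehat{\overline{B}}[1/p]}(W)$ with its commutator bracket, passing to images yields the claim.

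The commutator bracket on $\mrm{End}_{\widehat{\overline{B}}[1/p]}(W)=\widehat{\overline{B}}[1/p]\otimes_{\bb{Q}_p}\mrm{End}_{\bb{Q}_p}(V)$ is $\widehat{\overline{B}}[1/p]$-bilinear, so combining with the first containment gives $\Phi^{\geo}(W)\subseteq \widehat{\overline{B}}[1/p]\otimes_{\bb{Q}_p}[\lie(\rho(G)),\lie(\rho(G))]$. To finish, I need $[\lie(\rho(G)),\lie(\rho(G))]\subseteq \lie(\rho(H))$. Since $G/H$ embeds into $\bb{Z}_p^\times$ via the cyclotomic character, it is abelian, whence $[G,G]\subseteq H$ and therefore $[\rho(G),\rho(G)]\subseteq \rho(H)$. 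As $\rho(H)$ is a closed normal subgroup of the compact $p$-adic analytic group $\rho(G)$, \ref{lem:lie-ext} supplies an exact sequence $0\to\lie(\rho(H))\to\lie(\rho(G))\to\lie(\rho(G)/\rho(H))\to 0$ in which the quotient Lie algebra is abelian, yielding the desired containment.

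The main subtlety is verifying that the derived sub-Lie-algebra of $\scr{E}_B^*(1)$ as a $\widehat{\overline{B}}[1/p]$-module equals all of $\ker(\iota^*)$ rather than merely sitting inside it, so that $[\Phi(W),\Phi(W)]$ recovers all of $\Phi^{\geo}(W)$; the explicit witness $[f_0,f]=f$ for $\iota^*(f_0)=1$ handles this cleanly.
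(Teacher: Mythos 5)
Your proof is correct and follows essentially the same route as the paper's: deduce the first containment from Theorem \ref{thm:sen-lie-B}, identify $\Phi^{\geo}(W)=[\Phi(W),\Phi(W)]$ using the explicit Lie bracket on $\scr{E}_B^*(1)$, and conclude from the abelianness of $G/H=\Sigma$ that $[\lie(\rho(G)),\lie(\rho(G))]\subseteq\lie(\rho(H))$. Your version fills in details the paper leaves implicit (the witness $[f_0,f]=f$ showing the derived subalgebra is all of $\ker(\iota^*)$, the $\widehat{\overline{B}}[1/p]$-bilinearity of the commutator, and the appeal to \ref{lem:lie-ext}), but there is no material difference in strategy.
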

\begin{proof}
	The first assertion is a direct result of \ref{thm:sen-lie-B}. Note that $\Phi^{\geo}(W)=[\Phi(W),\Phi(W)]$ by the Lie algebra structure on $\scr{E}^*_B(1)$ (cf. \ref{para:B-fal-ext-dual}), and that $[\lie(\rho(G)),\lie(\rho(G))]\subseteq \lie (\rho(H))$ as $G/H=\Sigma$ is abelian. Thus, the second assertion follows from the first.
\end{proof}

\begin{mythm}[{cf. \cite[Theorem 12]{sen1980sen}}]\label{thm:sen-lie-lift-B}
	Let $\ca{G}$ be a quotient of $G$ which is a $p$-adic analytic group. Then, there exists a unique homomorphism of $\widehat{\overline{B}}[1/p]$-linear Lie algebras $\varphi_{\sen}|_{\ca{G}}:\scr{E}^*_B(1)\to \widehat{\overline{B}}[1/p]\otimes_{\bb{Q}_p}\lie(\ca{G})$ making the following diagram commutative for any object $V$ of $\repnpr(\ca{G},\bb{Q}_p)$,
	\begin{align}\label{diam:thm:sen-lie-lift-B}
		\xymatrix{
			\scr{E}^*_B(1)\ar[r]^-{\varphi_{\sen}|_{\ca{G}}}\ar[d]_-{\varphi_{\sen}|_W}&\widehat{\overline{B}}[\frac{1}{p}]\otimes_{\bb{Q}_p}\lie(\ca{G})\ar[d]^-{\id_{\widehat{\overline{B}}[\frac{1}{p}]}\otimes\varphi|_V}\\
			\mrm{End}_{\widehat{\overline{B}}[\frac{1}{p}]}(W)&\widehat{\overline{B}}[\frac{1}{p}]\otimes_{\bb{Q}_p}\mrm{End}_{\bb{Q}_p}(V)\ar[l]_-{\sim}
		}
	\end{align}
	where $W=\widehat{\overline{B}}[1/p]\otimes_{\bb{Q}_p}V$ is the associated object of $\repnpr(G,\widehat{\overline{B}}[1/p])$, $\varphi_{\sen}|_W$ is the canonical Lie algebra action defined in {\rm\ref{thm:sen-brinon-B}}, and $\varphi|_V$ is the infinitesimal Lie algebra action of $\lie(\ca{G})$ on $V$ (cf. {\rm\ref{cor:operator}}). 
\end{mythm}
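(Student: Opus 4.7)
The strategy is to reduce the construction to a single faithful continuous representation of $\ca{G}$, and then propagate commutativity to arbitrary representations via a direct-sum trick.

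As $\ca{G}$ is a continuous image of the profinite (hence compact) group $G$, it is a compact $p$-adic analytic group. By \ref{thm:compact-lie-group} we choose a closed embedding $\rho_0:\ca{G}\hookrightarrow \mrm{GL}_d(\bb{Z}_p)$ for some $d\in\bb{N}$, which defines a faithful continuous $\bb{Q}_p$-representation $V_0=\bb{Q}_p^d$ of $\ca{G}$. The induced infinitesimal Lie algebra homomorphism $\varphi|_{V_0}:\lie(\ca{G})\to \mrm{End}_{\bb{Q}_p}(V_0)$ is then injective with image $\lie(\rho_0(\ca{G}))$: indeed, passing to uniform pro-$p$ open subgroups and using \ref{defn:lie-alg}, a closed embedding of compact $p$-adic analytic groups induces a closed embedding of $\bb{Z}_p$-linear powerful Lie algebras (its Lie functor on $\bb{Z}_p$-points is nothing but $\log$), whence an injection after tensoring with $\bb{Q}_p$. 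Since $\widehat{\overline{B}}[1/p]$ is flat over $\bb{Q}_p$, $\id_{\widehat{\overline{B}}[1/p]}\otimes\varphi|_{V_0}$ is injective. This injectivity is exactly what yields the uniqueness assertion of the theorem.

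For existence, apply \ref{cor:sen-lie-B} to $V_0$: the module of Sen operators $\Phi(W_0)$ is contained in $\widehat{\overline{B}}[1/p]\otimes_{\bb{Q}_p}\lie(\rho_0(\ca{G}))=(\id\otimes\varphi|_{V_0})(\widehat{\overline{B}}[1/p]\otimes_{\bb{Q}_p}\lie(\ca{G}))$. Combined with the injectivity from the previous paragraph, the map $\varphi_{\sen}|_{W_0}$ factors uniquely through a $\widehat{\overline{B}}[1/p]$-linear map $\varphi_{\sen}|_{\ca{G}}:\scr{E}_B^*(1)\to \widehat{\overline{B}}[1/p]\otimes_{\bb{Q}_p}\lie(\ca{G})$. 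Since $\id\otimes\varphi|_{V_0}$ is an injective homomorphism of $\widehat{\overline{B}}[1/p]$-linear Lie algebras and $\varphi_{\sen}|_{W_0}$ is a Lie algebra homomorphism by \ref{thm:sen-brinon-B}, it follows formally that $\varphi_{\sen}|_{\ca{G}}$ is a homomorphism of $\widehat{\overline{B}}[1/p]$-linear Lie algebras.

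It remains to verify that \eqref{diam:thm:sen-lie-lift-B} commutes for an arbitrary $V$ in $\repnpr(\ca{G},\bb{Q}_p)$. The trick is to consider $V_0\oplus V$, which is again a faithful continuous $\bb{Q}_p$-representation of $\ca{G}$. By the functoriality of $\varphi_{\sen}$ in \ref{thm:sen-brinon-B} and the compatibility of infinitesimal Lie algebra actions with direct sums (immediate from \ref{cor:operator}), both $\varphi_{\sen}|_{W_0\oplus W}$ and $\id\otimes\varphi|_{V_0\oplus V}$ are block-diagonal, equal to the direct sums of their restrictions to $V_0$ and to $V$. Running the existence argument above with $V_0\oplus V$ in place of $V_0$ produces a unique lift $\varphi_{\sen}^{\oplus}|_{\ca{G}}$ fitting the $(V_0\oplus V)$-diagram; projecting to the $V_0$-block and using the injectivity established for $V_0$ forces $\varphi_{\sen}^{\oplus}|_{\ca{G}}=\varphi_{\sen}|_{\ca{G}}$, and projecting to the $V$-block then gives exactly the required identity $\varphi_{\sen}|_W=(\id\otimes\varphi|_V)\circ\varphi_{\sen}|_{\ca{G}}$. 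The substantive input is the containment \ref{cor:sen-lie-B} coming from Sen--Ohkubo; all remaining steps are formal, and I do not anticipate any real obstacle beyond bookkeeping.
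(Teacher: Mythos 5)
Your proposal is correct and takes essentially the same approach as the paper: both fix a faithful finite-dimensional $\bb{Q}_p$-representation via \ref{thm:compact-lie-group}, use the containment $\Phi(W)\subseteq\widehat{\overline{B}}[1/p]\otimes_{\bb{Q}_p}\lie(\rho(G))$ from \ref{thm:sen-lie-B}/\ref{cor:sen-lie-B} together with injectivity of $\id\otimes\varphi|_V$ to produce the lift, and then propagate the commutativity to arbitrary representations by the direct-sum trick, projecting to the blocks. The only cosmetic difference is that the paper first checks independence of the choice of faithful representation (via the injectivity of the restriction map $\Phi(W')\to\Phi(W)$ for inclusions of faithful representations) before treating general $V$, whereas you fix one $V_0$ from the start and handle independence implicitly through uniqueness; this is an equivalent bookkeeping choice.
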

\begin{proof}
	Firstly, as $\ca{G}$ is a compact $p$-adic analytic group, it admits a faithful finite projective $\bb{Q}_p$-representation $V$ by \ref{thm:compact-lie-group}. The faithfulness implies that the map $\varphi|_V:\lie(\ca{G})\to \mrm{End}_{\bb{Q}_p}(V)$ is injective (cf. \ref{prop:derivative}.(\ref{item:prop:derivative-3})). Thus, the uniqueness of $\varphi_{\sen}|_{\ca{G}}$ is clear. 
	
	Consider an injective morphism $V\to V'$ of faithful finite projective $\bb{Q}_p$-representations of $\ca{G}$. Note that $W=\widehat{\overline{B}}[1/p]\otimes_{\bb{Q}_p}V$ is still a subrepresentation of $W'=\widehat{\overline{B}}[1/p]\otimes_{\bb{Q}_p}V'$. We claim that the natural surjection $\Phi(W')\to \Phi(W)$ (of the images of $\varphi_{\sen}|_{W'}$ and $\varphi_{\sen}|_W$) defined by restriction is also injective. Indeed, we regard $\widehat{\overline{B}}[1/p]\otimes_{\bb{Q}_p}\lie(\ca{G})$ as a subset of $\mrm{End}_{\widehat{\overline{B}}[1/p]}(W)$ via $\id_{\widehat{\overline{B}}[1/p]}\otimes\varphi|_V$. Thus, the restriction from $V'$ to $V$ induces the identity map on $\widehat{\overline{B}}[1/p]\otimes_{\bb{Q}_p}\lie(\ca{G})$. On the other hand, $\Phi(W)$ (resp. $\Phi(W')$) is contained in $\widehat{\overline{B}}[1/p]\otimes_{\bb{Q}_p}\lie(\ca{G})$ by \ref{thm:sen-lie-B}, which shows that the surjective map $\Phi(W')\to \Phi(W)$ induced by the restriction is injective.
	
	Therefore, we take a faithful finite projective $\bb{Q}_p$-representation $V$ of $\ca{G}$, and we define $\varphi_{\sen}|_{\ca{G}}$ to be the composition of  
	\begin{align}
		\scr{E}^*_B(1)\stackrel{\varphi_{\sen}|_W}{\longrightarrow}\Phi(W)\subseteq \im(\id_{\widehat{\overline{B}}[\frac{1}{p}]}\otimes\varphi|_V)=\widehat{\overline{B}}[\frac{1}{p}]\otimes_{\bb{Q}_p}\lie(\ca{G}).
	\end{align}
	As any two faithful representations $V$ and $V'$ are both contained in $V\oplus V'$, we deduce easily from the above discussions that this definition of $\varphi_{\sen}|_{\ca{G}}$ does not depend on the choice of $V$. It follows immediately that the diagram \eqref{diam:thm:sen-lie-lift-B} is commutative for faithful representations.
	
	In general, we take a faithful finite projective $\bb{Q}_p$-representation $V'$ of $\ca{G}$. Then, for any object $V$ of $\repnpr(\ca{G},\bb{Q}_p)$, $V\oplus V'$ is also a faithful finite projective $\bb{Q}_p$-representation of $\ca{G}$. The previous result shows that there is a canonical commutative diagram
	\begin{align}\label{diam:thm:sen-lie-lift-B-2}
		\xymatrix{
			\scr{E}^*_B(1)\ar[r]^-{\varphi_{\sen}|_{\ca{G}}}\ar[d]_-{\varphi_{\sen}|_{W\oplus W'}}&\widehat{\overline{B}}[\frac{1}{p}]\otimes_{\bb{Q}_p}\lie(\ca{G})\ar[d]^-{\id_{\widehat{\overline{B}}[\frac{1}{p}]}\otimes\varphi|_{V\oplus V'}}\\
			\mrm{End}_{\widehat{\overline{B}}[\frac{1}{p}]}(W\oplus W')&\widehat{\overline{B}}[\frac{1}{p}]\otimes_{\bb{Q}_p}\mrm{End}_{\bb{Q}_p}(V\oplus V')\ar[l]_-{\sim}
		}
	\end{align}
	where $W=\widehat{\overline{B}}[1/p]\otimes_{\bb{Q}_p}V$ and $W'=\widehat{\overline{B}}[1/p]\otimes_{\bb{Q}_p}V'$. Notice that the image of $\varphi_{\sen}|_{W\oplus W'}=\varphi_{\sen}|_W\oplus \varphi_{\sen}|_{W'}$ lies in $\mrm{End}_{\widehat{\overline{B}}[\frac{1}{p}]}(W)\oplus\mrm{End}_{\widehat{\overline{B}}[\frac{1}{p}]}(W')$ (cf. \ref{thm:sen-brinon-B}), and that the image of $\varphi|_{V\oplus V'}=\varphi|_V\oplus\varphi|_{V'}$ lies in $\mrm{End}_{\bb{Q}_p}(V)\oplus \mrm{End}_{\bb{Q}_p}(V')$. By looking at the first component, the commutativity of \eqref{diam:thm:sen-lie-lift-B-2} implies that of \eqref{diam:thm:sen-lie-lift-B}.
\end{proof}

\begin{mydefn}\label{defn:sen-lie-lift-B}
	Let $\ca{G}$ be a quotient of $G$ which is a $p$-adic analytic group. We denote by $\Phi_{\ca{G}}$ the image of $\varphi_{\sen}|_{\ca{G}}$ (\ref{thm:sen-lie-lift-B}), and by $\Phi^{\geo}_{\ca{G}}$ the image of $\ho_{B[1/p]}(\Omega^1_{Y_K/K}(-1),\widehat{\overline{B}}[1/p])$ under $\varphi_{\sen}|_{\ca{G}}$. We call an element of $\Phi_{\ca{G}}\subseteq \widehat{\overline{B}}[1/p]\otimes_{\bb{Q}_p}\lie(\ca{G})$ a \emph{Sen operator} of $\ca{G}$. We call an element of $\Phi^{\geo}_{\ca{G}}$ a \emph{geometric Sen operator} of $\ca{G}$. And we call the image of $1\in\widehat{\overline{B}}[1/p]$ in $\Phi^{\ari}_{\ca{G}}=\Phi_{\ca{G}}/\Phi^{\geo}_{\ca{G}}$ the \emph{arithmetic Sen operator} of $\ca{G}$.
\end{mydefn}

\begin{mycor}\label{cor:sen-lie-lift-B}
	Let $\ca{G}$ be a quotient of $G$ which is a $p$-adic analytic group.
	\begin{enumerate}
		\renewcommand{\labelenumi}{{\rm(\theenumi)}}
		\item The canonical morphism $\varphi_{\sen}|_{\ca{G}}:\scr{E}^*_B(1)\to \widehat{\overline{B}}[1/p]\otimes_{\bb{Q}_p}\lie(\ca{G})$ is $G$-equivariant with respect to the canonical action on $\scr{E}^*_B(1)$, $\widehat{\overline{B}}[1/p]$ and the adjoint action on $\lie(\ca{G})$ defined in {\rm\ref{para:adjoint}}.\label{item:cor:sen-lie-lift-B-1}
		\item For any $\ak{q}\in\ak{S}_p(\overline{B})$, let $\ca{G}_{I_{\ak{q}}}\subseteq \ca{G}$ be the image of the inertia subgroup $I_{\ak{q}}\subseteq G$ at $\ak{q}$ (see {\rm\ref{para:inertia-subgroup}}). Then, $\sum_{\ak{q}\in\ak{S}_p(\overline{B})}\lie(\ca{G}_{I_{\ak{q}}})$ is the smallest $\bb{Q}_p$-subspace $S$ of $\lie(\ca{G})$ such that $\Phi_{\ca{G}}$ is contained in $\widehat{\overline{B}}[1/p]\otimes_{\bb{Q}_p} S$. \label{item:cor:sen-lie-lift-B-2}
		\item Let $\ca{G}_H\subseteq \ca{G}$ be the image of $H=\gal(\ca{L}_{\mrm{ur}}/\ca{L}_{\infty})\subseteq G$. Then, the Lie algebra $\Phi^{\geo}_{\ca{G}}$ of geometric Sen operators of $\ca{G}$ is contained in $\widehat{\overline{B}}[1/p]\otimes_{\bb{Q}_p}\lie(\ca{G}_H)$.\label{item:cor:sen-lie-lift-B-3}
		\item Let $K'$ be a complete discrete valuation field extension of $K$ with perfect residue field, $B'$ a quasi-adequate $\ca{O}_{K'}$-algebra with fraction field $\ca{L}'$ and Galois group $G'=\gal(\ca{L}'_{\mrm{ur}}/\ca{L}')$. Consider a commutative diagram of $(K,\ca{O}_K,\ca{O}_{\overline{K}})$-triples (see {\rm\ref{defn:triple}}) 
		\begin{align}
			\xymatrix{
				(B_{\triv},B,\overline{B})\ar[r]& (B'_{\triv},B',\overline{B'})\\
				(K,\ca{O}_K,\ca{O}_{\overline{K}})\ar[u]\ar[r]&(K',\ca{O}_{K'},\ca{O}_{\overline{K'}})\ar[u]
			}
		\end{align}
		with $\overline{B}\to \overline{B'}$ injective, and let $\ca{G}'$ be the image of the composition of $G'\to G\to \ca{G}$. Then, there is a natural commutative diagram
		\begin{align}
			\xymatrix{
				\scr{E}^*_{B'}(1)\ar[rr]^-{\varphi_{\sen}|_{\ca{G}'}}\ar[d]&&\widehat{\overline{B'}}[\frac{1}{p}]\otimes_{\bb{Q}_p}\lie(\ca{G}')\ar[d]\\
				\widehat{\overline{B'}}\otimes_{\widehat{\overline{B}}}\scr{E}^*_B(1)
				\ar[rr]^-{\id_{\widehat{\overline{B'}}}\otimes\varphi_{\sen}|_{\ca{G}}}&&\widehat{\overline{B'}}[\frac{1}{p}]\otimes_{\bb{Q}_p}\lie(\ca{G})
			}
		\end{align}
		Moreover, if we denote by $\widehat{\overline{B'}}\Phi_{\ca{G}}$ the image of $\widehat{\overline{B'}}\otimes_{\widehat{\overline{B}}}\Phi_{\ca{G}}$ in $\widehat{\overline{B'}}[\frac{1}{p}]\otimes_{\bb{Q}_p}\lie(\ca{G})$, then the right vertical arrow induces a natural isomorphism
		\begin{align}
			\Phi_{\ca{G}'}\iso\widehat{\overline{B'}}\Phi_{\ca{G}}
		\end{align}
		which is compatible with geometric and arithmetic Sen operators.\label{item:cor:sen-lie-lift-B-4}
		\item Let $\ca{L}'$ be an element of the directed system $\scr{F}^{\mrm{qa}}_{\ca{L}_{\mrm{ur}}/\ca{L}}$ defined in {\rm\ref{cor:quasi-adequate-cofinal}}, $G'=\gal(\ca{L}_{\mrm{ur}}/\ca{L}')$, $\ca{G}'\subseteq \ca{G}$ the image of $G'\subseteq G$. Then, there is a natural commutative diagram
		\begin{align}
			\xymatrix{
				\scr{E}^*_{B'}(1)\ar[rr]^-{\varphi_{\sen}|_{\ca{G}'}}\ar[d]^-{\wr}&&\widehat{\overline{B}}[\frac{1}{p}]\otimes_{\bb{Q}_p}\lie(\ca{G}')\ar[d]^-{\wr}\\
				\scr{E}^*_B(1)\ar[rr]^-{\varphi_{\sen}|_{\ca{G}}}&&\widehat{\overline{B}}[\frac{1}{p}]\otimes_{\bb{Q}_p}\lie(\ca{G})
			}
		\end{align}
		In particular, $\Phi_{\ca{G}'}=\Phi_{\ca{G}}$. \label{item:cor:sen-lie-lift-B-5}
		\item  Let $\ca{G}'$ be a quotient of $\ca{G}$. Then, there is a natural commutative diagram 
		\begin{align}
			\xymatrix{
				\scr{E}^*_B(1)\ar[rr]^-{\varphi_{\sen}|_{\ca{G}}}\ar@{=}[d]&&\widehat{\overline{B}}[\frac{1}{p}]\otimes_{\bb{Q}_p}\lie(\ca{G})\ar@{->>}[d]\\
				\scr{E}^*_B(1)\ar[rr]^-{\varphi_{\sen}|_{\ca{G}'}}&&\widehat{\overline{B}}[\frac{1}{p}]\otimes_{\bb{Q}_p}\lie(\ca{G}')
			}
		\end{align}
		In particular, it induces a surjection $\Phi_{\ca{G}}\to \Phi_{\ca{G}'}$.
		\label{item:cor:sen-lie-lift-B-6}
	\end{enumerate}
\end{mycor}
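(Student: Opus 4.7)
The plan is to deduce all six assertions formally from the defining property of $\varphi_{\sen}|_{\ca{G}}$ established in Theorem \ref{thm:sen-lie-lift-B}, combined with the corresponding properties of $\varphi_{\sen}|_W$ proved earlier in Section \ref{sec:sen-B} (equivariance in \ref{thm:sen-brinon-B}, functoriality in \ref{prop:sen-brinon-B-func} and \ref{cor:quasi-adequate-cofinal}, and the Lie-algebraic descriptions \ref{thm:sen-lie-B} and \ref{cor:sen-lie-B}), plus the existence of a faithful finite-dimensional $\bb{Q}_p$-representation $V$ of any compact $p$-adic analytic group (\ref{thm:compact-lie-group}). For such a faithful $V$ the infinitesimal action $\varphi|_V:\lie(\ca{G})\to\mrm{End}_{\bb{Q}_p}(V)$ is injective (as in the proof of \ref{thm:sen-lie-lift-B}), so the defining diagram \eqref{diam:thm:sen-lie-lift-B} realises $\varphi_{\sen}|_{\ca{G}}$ as the unique factorisation of $\varphi_{\sen}|_W$ through $\id_{\widehat{\overline{B}}[1/p]}\otimes \varphi|_V$. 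Each item will be obtained by choosing an appropriate faithful representation, transporting the assertion across this diagram, and invoking the corresponding statement for $\varphi_{\sen}|_W$.

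For (\ref{item:cor:sen-lie-lift-B-1}), I fix a faithful $V$ of $\ca{G}$. The horizontal identification in \eqref{diam:thm:sen-lie-lift-B} is $G$-equivariant when the right-hand side carries the diagonal action (Galois on scalars, conjugation via $\rho$ on $\mrm{End}_{\bb{Q}_p}(V)$), and the functoriality of the adjoint representation (\ref{para:adjoint}) implies that $\varphi|_V$ intertwines $\mrm{Ad}_g$ on $\lie(\ca{G})$ with conjugation by $\rho(g)$ on its image, so $\widehat{\overline{B}}[1/p]\otimes\lie(\ca{G})$ is a $G$-stable subspace. Since $\varphi_{\sen}|_W$ is $G$-equivariant by \ref{thm:sen-brinon-B}, so is its factorisation $\varphi_{\sen}|_{\ca{G}}$. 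For (\ref{item:cor:sen-lie-lift-B-2}) and (\ref{item:cor:sen-lie-lift-B-3}), injectivity of $\id\otimes\varphi|_V$ identifies $\Phi_{\ca{G}}$ with $\Phi(W)$, and identifies $\widehat{\overline{B}}[1/p]\otimes\lie(\ca{G}_{I_\ak{q}})$ (resp.\ $\lie(\ca{G}_H)$) with $\widehat{\overline{B}}[1/p]\otimes\lie(\rho(I_\ak{q}))$ (resp.\ $\lie(\rho(H))$), because faithfulness of $V$ makes $\ca{G}_{I_\ak{q}}\to\rho(I_\ak{q})$ an isomorphism of compact $p$-adic analytic groups; the assertions then follow directly from \ref{thm:sen-lie-B} and \ref{cor:sen-lie-B} respectively.

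For (\ref{item:cor:sen-lie-lift-B-4}), I pick a faithful $V$ of $\ca{G}$; pulled back along $\ca{G}'\subseteq \ca{G}$ it is still faithful for $\ca{G}'$, so the inclusion $\lie(\ca{G}')\hookrightarrow \lie(\ca{G})$ embeds into $\mrm{End}_{\bb{Q}_p}(V)$. The square of the statement then reduces, via the defining diagrams for $\varphi_{\sen}|_{\ca{G}}$ and $\varphi_{\sen}|_{\ca{G}'}$, to the commutativity of \eqref{diam:sen-brinon-B-func} for $\varphi_{\sen}|_W$ and $\varphi_{\sen}|_{W'}$ established in \ref{prop:sen-brinon-B-func}; the induced isomorphism $\Phi_{\ca{G}'}\iso\widehat{\overline{B'}}\Phi_{\ca{G}}$ comes from \eqref{eq:prop:sen-brinon-B-func}. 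For (\ref{item:cor:sen-lie-lift-B-5}), the inclusion $G'\subseteq G$ is open, hence $\ca{G}'\subseteq\ca{G}$ is an open subgroup and $\lie(\ca{G}')=\lie(\ca{G})$ by \ref{defn:lie-alg}; combined with the identification $\varphi_{\sen}|_{W'}=\varphi_{\sen}|_W$ from \ref{cor:quasi-adequate-cofinal}, this yields the square. For (\ref{item:cor:sen-lie-lift-B-6}), the surjection $\mrm{d}\colon\lie(\ca{G})\to\lie(\ca{G}')$ from \ref{lem:lie-ext} and functoriality of infinitesimal actions factor $\varphi|_{V',\ca{G}}$ as $\varphi|_{V',\ca{G}'}\circ\mrm{d}$ for any representation $V'$ of $\ca{G}'$; choosing $V'$ faithful for $\ca{G}'$ and comparing the defining diagrams for $\varphi_{\sen}|_{\ca{G}}$ and $\varphi_{\sen}|_{\ca{G}'}$ yields the commutativity after cancelling the injective $\id\otimes\varphi|_{V',\ca{G}'}$.

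The arguments are essentially formal, so the main obstacle is not computational but rather book-keeping: one has to verify that the various group-theoretic objects in play ($\ca{G}_{I_\ak{q}}$, $\ca{G}_H$, $\ca{G}'$, quotients of $\ca{G}$) are closed subgroups or quotients of $p$-adic analytic groups with the expected Lie algebras, and that all identifications are equivariant for the Galois and adjoint actions. These verifications are supplied by \ref{thm:lie-group-ext} and \ref{lem:lie-ext}, so no genuinely new ingredient is required beyond the framework developed in Section \ref{sec:lie} and the properties of $\varphi_{\sen}|_W$ established earlier in Section \ref{sec:sen-B}.
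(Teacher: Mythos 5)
Your argument is correct and follows essentially the same route as the paper: all six items are reduced to the defining factorisation of $\varphi_{\sen}|_{\ca{G}}$ through the injective map $\id\otimes\varphi|_V$ for a faithful $V$, and then transported across that diagram using the $G$-equivariance (\ref{thm:sen-brinon-B}), the functoriality (\ref{prop:sen-brinon-B-func}, \ref{cor:quasi-adequate-cofinal}), and the Lie-theoretic identifications (\ref{thm:sen-lie-B}, \ref{cor:sen-lie-B}). The only (harmless) deviation is that you treat (\ref{item:cor:sen-lie-lift-B-5}) directly via $\lie(\ca{G}')=\lie(\ca{G})$ and \ref{cor:quasi-adequate-cofinal}, whereas the paper simply observes it is a special case of (\ref{item:cor:sen-lie-lift-B-4}).
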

\begin{proof}
	For (\ref{item:cor:sen-lie-lift-B-1}), the $\ca{G}$-equivariance of $\varphi_{\sen}|_{\ca{G}}$ follows from that of the other three arrows in the diagram \eqref{diam:thm:sen-lie-lift-B} (cf. \ref{thm:sen-brinon-B}, \ref{para:adjoint}). (\ref{item:cor:sen-lie-lift-B-2}) holds by applying \ref{thm:sen-lie-B} to a faithful finite projective $\bb{Q}_p$-representation $V$ of $\ca{G}$. (\ref{item:cor:sen-lie-lift-B-3}) follows from the arguments of \ref{cor:sen-lie-B}. 
	
	For (\ref{item:cor:sen-lie-lift-B-4}), as $\ca{G}'$ is a closed subgroup of $\ca{G}$, a faithful finite projective $\bb{Q}_p$-representation $V$ of $\ca{G}$ defines a faithful finite projective $\bb{Q}_p$-representation $V'$ of $\ca{G}'$ by restricting the action. Combining \ref{prop:sen-brinon-B-func} with \ref{thm:sen-lie-lift-B}, we obtain a natural diagram 
	\begin{align}
		\xymatrix{
			\scr{E}^*_{B'}(1)\ar[rr]^-{\varphi_{\sen}|_{\ca{G}'}}\ar[d]&&\widehat{\overline{B'}}[\frac{1}{p}]\otimes_{\bb{Q}_p}\lie(\ca{G}')\ar[d]\ar[rr]^-{\id_{\widehat{\overline{B'}}[\frac{1}{p}]}\otimes\varphi|_{V'}}&&\widehat{\overline{B'}}[\frac{1}{p}]\otimes_{\bb{Q}_p}\mrm{End}_{\bb{Q}_p}(V')\ar[d]^-{\wr}\\
			\widehat{\overline{B'}}\otimes_{\widehat{\overline{B}}}\scr{E}^*_B(1)
			\ar[rr]^-{\id_{\widehat{\overline{B'}}}\otimes\varphi_{\sen}|_{\ca{G}}}&&\widehat{\overline{B'}}[\frac{1}{p}]\otimes_{\bb{Q}_p}\lie(\ca{G})\ar[rr]^-{\id_{\widehat{\overline{B'}}[\frac{1}{p}]}\otimes\varphi|_V}&&\widehat{\overline{B'}}[\frac{1}{p}]\otimes_{\bb{Q}_p}\mrm{End}_{\bb{Q}_p}(V)
		}
	\end{align}
	Notice that the left square is commutative, since the right square and the big rectangle are commutative and the horizontal arrows in the right square are injective. Moreover, \eqref{eq:prop:sen-brinon-B-func} implies that the image $\Phi_{\ca{G}'}$ of $\varphi_{\sen}|_{\ca{G}'}$ coincides with the image $\widehat{\overline{B'}}\Phi_{\ca{G}}$ of  $\id_{\widehat{\overline{B'}}}\otimes\varphi_{\sen}|_{\ca{G}}$ via the middle vertical arrow (which is injective). This completes the proof of (\ref{item:cor:sen-lie-lift-B-4}), and (\ref{item:cor:sen-lie-lift-B-5}) is a special case of (\ref{item:cor:sen-lie-lift-B-4}).
	
	For (\ref{item:cor:sen-lie-lift-B-6}), a faithful finite projective $\bb{Q}_p$-representation $V$ of $\ca{G}'$ can be regarded as an object of $\repnpr(\ca{G},\bb{Q}_p)$. Thus, we obtain a natural diagram
	\begin{align}
		\xymatrix{
			\scr{E}^*_B(1)\ar[r]^-{\varphi_{\sen}|_{\ca{G}}}\ar[d]_-{\varphi_{\sen}|_W}&\widehat{\overline{B}}[\frac{1}{p}]\otimes_{\bb{Q}_p}\lie(\ca{G})\ar[d]^-{\id_{\widehat{\overline{B}}[\frac{1}{p}]}\otimes\varphi|_V}\ar@{->>}[r]&\widehat{\overline{B}}[\frac{1}{p}]\otimes_{\bb{Q}_p}\lie(\ca{G}')\ar[d]^-{\id_{\widehat{\overline{B}}[\frac{1}{p}]}\otimes\varphi|_V}\\
			\mrm{End}_{\widehat{\overline{B}}[\frac{1}{p}]}(W)&\widehat{\overline{B}}[\frac{1}{p}]\otimes_{\bb{Q}_p}\mrm{End}_{\bb{Q}_p}(V)\ar[l]_-{\sim}&\widehat{\overline{B}}[\frac{1}{p}]\otimes_{\bb{Q}_p}\mrm{End}_{\bb{Q}_p}(V)\ar@{=}[l]
		}
	\end{align}
	Notice that the left square is commutative by \ref{thm:sen-lie-lift-B} and the right square is obviously commutative. Since the right vertical arrow is injective, the composition of the two horizontal arrows on the top is the unique map making the big rectangle commutative, which thus coincides with $\varphi_{\sen}|_{\ca{G}'}$ by \ref{thm:sen-lie-lift-B}.
\end{proof}

\begin{mypara}\label{para:inf-repn-B}
	Let $\ca{G}$ be a quotient of $G$ which is a $p$-adic analytic group. The universal Lie algebra homomorphism $\varphi_{\sen}|_{\ca{G}}:\scr{E}^*_B(1)\to \widehat{\overline{B}}[1/p]\otimes_{\bb{Q}_p}\lie(\ca{G})$ allows us to canonically extend Sen operators to certain infinite-dimensional representations as follows. 
	
	Let $V$ be a $\bb{Q}_p$-Banach space endowed with a $\bb{Q}_p$-linear action of $\ca{G}$ such that $\ca{G}$ preserves the norm of $V$ and induces a trivial action on $V^{\leq 1}/p^2V^{\leq 1}$, where $V^{\leq 1}$ is the closed ball of radius $1$ in $V$. Such a smallness condition implies that the infinitesimal action of $\ca{G}$ on $V^{\leq 1}$ is well-defined and given by the following formula for any $g\in \ca{G}$ and $x\in V^{\leq 1}$ (cf. \ref{para:small})
	\begin{align}\label{eq:para:inf-repn-B}
		\log(g)|_{V^{\leq 1}}(x)=\sum_{n=1}^\infty \frac{(-1)^{n-1}}{n}(g-1)^n(x).
	\end{align}
	It defines a Lie algebra homomorphism
	\begin{align}\label{eq:para:inf-repn-B-0}
		\varphi|_V:\lie(\ca{G})\longrightarrow \mrm{End}_{\bb{Q}_p}(V),
	\end{align}
	sending $\log_{\ca{G}}(g)$ to $\id_{\bb{Q}_p}\otimes\log(g)|_{V^{\leq 1}}$ for any $g\in \ca{G}$ (cf. \ref{lem:finite-dim-repn-analytic}, \ref{cor:analytic-derivative}). 
	
	We put $W^+=(\widehat{\overline{B}}\otimes_{\bb{Z}_p}V^{\leq 1})^\wedge$ and $W=W^+[1/p]$. They are naturally endowed with continuous group actions of $G$ with respect to the $p$-adic topology defined by $W^+$, cf. \ref{lem:system-repn}.(\ref{item:lem:system-repn-1}). Notice that the $\widehat{\overline{B}}$-linear endomorphism $\id_{\widehat{\overline{B}}}\otimes \log(g)|_{V^{\leq 1}}$ on $\widehat{\overline{B}}\otimes_{\bb{Z}_p}V^{\leq 1}$ extends to an endomorphism on $W^+$ by taking $p$-adic completion. Thus, we also obtain a Lie algebra homomorphism after inverting $p$,
	\begin{align}\label{eq:para:inf-repn-B-2}
		\widehat{\overline{B}}[\frac{1}{p}]\otimes_{\bb{Q}_p}\lie(\ca{G})\longrightarrow \mrm{End}_{\widehat{\overline{B}}[\frac{1}{p}]}(W).
	\end{align}
	We define $\varphi_{\sen}|_W:\scr{E}^*_B(1)\to\mrm{End}_{\widehat{\overline{B}}[\frac{1}{p}]}(W)$ to be the composition of 
	\begin{align}\label{eq:para:inf-repn-B-1}
		\xymatrix{
			\scr{E}^*_B(1)\ar[rr]^-{\varphi_{\sen}|_{\ca{G}}}&& \widehat{\overline{B}}[1/p]\otimes_{\bb{Q}_p}\lie(\ca{G})\ar[rr]^-{\eqref{eq:para:inf-repn-B-2}}&&\mrm{End}_{\widehat{\overline{B}}[\frac{1}{p}]}(W).
		}
	\end{align}
	It is a $G$-equivariant homomorphism of $\widehat{\overline{B}}[1/p]$-linear Lie algebras. We denote by $\Phi(W)$ its image, and by $\Phi^{\geo}(W)$ the image of $\ho_{B[1/p]}(\Omega^1_{Y_K/K}(-1),\widehat{\overline{B}}[1/p])$. It follows from the construction that any element of $\Phi(W)$ acts continuously on $W$ with respect to the $p$-adic topology defined by $W^+$.
\end{mypara}

\begin{mylem}\label{lem:inf-repn-B-basic}
	With the notation in {\rm\ref{para:inf-repn-B}}, for any object $V_0$ of $\repnpr(\ca{G},\bb{Q}_p)$ and any $\ca{G}$-equivariant continuous $\bb{Q}_p$-linear homomorphism $V_0\to V$, the Lie algebra action $\varphi_{\sen}|_W$ of $\scr{E}_B^*(1)$ on $W$ defined by \eqref{eq:para:inf-repn-B-1} is compatible with the canonical Lie algebra action $\varphi_{\sen}|_{W_0}$ of $\scr{E}_B^*(1)$ on $W_0$ defined in {\rm\ref{thm:sen-brinon-B}}, where $W_0=\widehat{\overline{B}}[1/p]\otimes_{\bb{Q}_p}V_0$ is the associated object of $\repnpr(G,\widehat{\overline{B}}[1/p])$. 
\end{mylem}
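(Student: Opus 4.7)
The strategy is to reduce the compatibility to a statement at the level of $\lie(\ca{G})$, exploiting that both Sen operator actions factor through $\widehat{\overline{B}}[1/p]\otimes_{\bb{Q}_p}\lie(\ca{G})$. By the universal property of $\varphi_{\sen}|_{\ca{G}}$ established in \ref{thm:sen-lie-lift-B}, the Lie algebra action $\varphi_{\sen}|_{W_0}$ on $W_0=\widehat{\overline{B}}[1/p]\otimes_{\bb{Q}_p}V_0$ equals the composite $(\id_{\widehat{\overline{B}}[\frac{1}{p}]}\otimes\varphi|_{V_0})\circ\varphi_{\sen}|_{\ca{G}}$, while by the construction \eqref{eq:para:inf-repn-B-1}, $\varphi_{\sen}|_W$ equals $\eqref{eq:para:inf-repn-B-2}\circ\varphi_{\sen}|_{\ca{G}}$. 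Hence it suffices to show that the continuous $\bb{Q}_p$-linear map $V_0\to V$ intertwines, for each $X\in\lie(\ca{G})$, the endomorphism $\varphi|_{V_0}(X)$ and the endomorphism of $V$ induced by $X$ via \eqref{eq:para:inf-repn-B-0}; the corresponding intertwining on $W_0\to W$ then follows by $\widehat{\overline{B}}[1/p]$-linearity.

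Next, both Lie algebra homomorphisms $\lie(\ca{G})\to\mrm{End}_{\bb{Q}_p}(V_0)$ and $\lie(\ca{G})\to\mrm{End}_{\bb{Q}_p}(V)$ are $\bb{Q}_p$-linear, and by \ref{para:L_G} the image $\log_{\ca{G}}(\ca{G}_0)=L_{\ca{G}_0}$ spans $\lie(\ca{G})$ over $\bb{Q}_p$ for any uniform pro-$p$ open subgroup $\ca{G}_0$ of $\ca{G}$. Thus, after choosing such a $\ca{G}_0$, it is enough to verify the intertwining for $X=\log_{\ca{G}}(g)$ with $g\in\ca{G}_0$. Concretely, we must show that $V_0\to V$ carries $\varphi_g|_{V_0}$ (the infinitesimal action from \ref{prop:derivative}) to the continuous extension to $V=V^{\leq 1}[1/p]$ of $\log(g)|_{V^{\leq 1}}$ (the series \eqref{eq:para:inf-repn-B}).

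To accomplish this, we arrange, after shrinking $\ca{G}_0$ if necessary, a $\ca{G}_0$-stable $\bb{Z}_p$-lattice $L_0\subseteq V_0$ whose image under $V_0\to V$ lies in $V^{\leq 1}$ (possible after multiplying $L_0$ by a suitable power of $p$, since $V_0\to V$ is continuous and $L_0$ is bounded), and such that $(g-1)L_0\subseteq p^2L_0$ for every $g\in\ca{G}_0$ (possible because the induced map $\ca{G}_0\to\mrm{Aut}_{\bb{Z}_p}(L_0/p^2L_0)$ is continuous with finite target). Under these conditions, \ref{para:small} and \eqref{eq:para:small-derivative} identify the $\bb{Z}_p$-linear endomorphism $\log(g)|_{L_0}=\sum_{n\geq 1}(-1)^{n-1}(g-1)^n/n$ of $L_0$ with the infinitesimal action of $g$ on $L_0$, so after inverting $p$ it agrees with $\varphi_g|_{V_0}$. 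On the other hand, the same series formula defines $\log(g)|_{V^{\leq 1}}$ by construction in \eqref{eq:para:inf-repn-B}, and the $\ca{G}_0$-equivariance of $L_0\to V^{\leq 1}$ yields term-by-term compatibility $\sum(-1)^{n-1}(g-1)^n(x)/n\mapsto\sum(-1)^{n-1}(g-1)^n(\bar{x})/n$ for $x\in L_0$ with image $\bar{x}\in V^{\leq 1}$. This gives the desired intertwining on $L_0$, and hence on $V_0$ after inverting $p$.

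The main point of the argument is the third step, namely the simultaneous arrangement of the lattice $L_0$ so that it is $\ca{G}_0$-stable, $p$-adically small enough for $\log(g)|_{L_0}$ to converge as an endomorphism of $L_0$, and maps into $V^{\leq 1}$. Apart from this bookkeeping, the crux is the identification of the two a priori different presentations of the infinitesimal action (as a $p$-adic limit in the finite-dimensional setting, versus as the $\log$-series on a lattice in the Banach setting), which is precisely the content of \eqref{eq:para:small-derivative}.
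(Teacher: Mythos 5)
Your proposal is correct and follows essentially the same route as the paper: reduce via the universality in \ref{thm:sen-lie-lift-B} and the definition \eqref{eq:para:inf-repn-B-1} to a compatibility of infinitesimal actions of $\lie(\ca{G})$ on $V_0$ and $V$, then arrange a $\ca{G}_0$-stable lattice in $V_0$ that maps into $V^{\leq 1}$ and on which a suitably shrunk $\ca{G}_0$ acts trivially mod $p^2$, and finally identify the limit-definition of the infinitesimal action with the $\log$-series via \eqref{eq:para:small-derivative}. The only difference from the paper's proof is cosmetic (order in which the lattice is scaled and $\ca{G}_0$ is shrunk, and your insistence that $\ca{G}_0$ be uniform pro-$p$ rather than merely open), and neither affects the argument.
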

\begin{proof}
	We take a $\bb{Q}_p$-basis $e_1,\dots,e_r$ of $V_0$ whose images under $V_0\to V$ lie in $V^{\leq 1}$. We put $V_0^+=\bb{Z}_pe_1\oplus\cdots\oplus \bb{Z}_pe_r\subseteq V_0$. Let $\ca{G}_0$ be a sufficiently small open subgroup of $\ca{G}$ whose image under the continuous group homomorphism $\rho:\ca{G}\to \mrm{GL}_r(\bb{Q}_p)$ (induced by the $\ca{G}$-action on $V_0$) is contained in $\id+p^2\mrm{M}_r(\bb{Z}_p)$. Thus, the lattice $V_0^+=\bb{Z}_p^r\subseteq \bb{Q}_p^r=V_0$ is $\ca{G}_0$-stable, and $\ca{G}_0$ acts trivially on $V_0^+/p^2V_0^+$. Notice that the infinitesimal action of $\ca{G}_0$ on $V_0^+$ is also well-defined and given by the same formula as in \eqref{eq:para:inf-repn-B}. Then, the $\ca{G}_0$-equivariant homomorphism $V_0^+\to V^{\leq 1}$ guarantees that the map $\widehat{\overline{B}}[1/p]\otimes_{\bb{Q}_p}\lie(\ca{G})\to\mrm{End}_{\widehat{\overline{B}}[\frac{1}{p}]}(W_0)$ induced by the infinitesimal action of $\ca{G}$ on $V_0^+$ is compatible with \eqref{eq:para:inf-repn-B-2}. Thus, the conclusion follows from \ref{thm:sen-lie-lift-B} and \eqref{eq:para:inf-repn-B-1}.
\end{proof}

\begin{mythm}\label{thm:sen-operator-B-fix}
	Let $\ca{G}$ be a quotient of $G$ which is a $p$-adic analytic group, $V$ a $\bb{Q}_p$-Banach space endowed with a $\bb{Q}_p$-linear action of $\ca{G}$ satisfying the following conditions:
	\begin{enumerate}
		\renewcommand{\labelenumi}{{\rm(\theenumi)}}
		\item The $(\ca{G},\bb{Q}_p)$-finite part of $V$ is dense in $V$ (cf. {\rm\ref{defn:repn}}).\label{item:thm:sen-operator-B-fix-1}
		\item The $\ca{G}$-action preserves the norm on $V$, and induces a trivial action on $V^{\leq 1}/p^3V^{\leq 1}$, where $V^{\leq 1}$ is the closed ball of radius $1$ in $V$.\label{item:thm:sen-operator-B-fix-2}
	\end{enumerate}
	We set $W=(\widehat{\overline{B}}\otimes_{\bb{Z}_p}V^{\leq 1})^\wedge[1/p]$ endowed with the natural $\widehat{\overline{B}}[1/p]$-semi-linear action of $G$. Then, the canonical $\widehat{\overline{B}}[1/p]$-linear Lie subalgebra $\Phi^{\geo}_{\ca{G}}$ of $\widehat{\overline{B}}[1/p]\otimes_{\bb{Q}_p}\lie(\ca{G})$ defined in {\rm\ref{defn:sen-lie-lift-B}} acts trivially on $W^H$ via \eqref{eq:para:inf-repn-B-2}, where $H=\gal(\ca{L}_{\mrm{ur}}/\ca{L}_\infty)\subseteq G$.
\end{mythm}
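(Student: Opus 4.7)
The plan is to reduce to the complete discrete valuation ring case already handled in Theorem~\ref{thm:sen-operator-fix-infty}, by base-changing to each of the completed localizations at the height-$1$ primes of $\overline{B}$ containing $p$, following the same localization philosophy used throughout the paper (compare Theorems~\ref{thm:sen-brinon-B} and~\ref{thm:sen-lie-B}).

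\textbf{Step 1: presentation as completion of a system of small lattices.} Using condition~(\ref{item:thm:sen-operator-B-fix-1}) I would write $V^{\mrm{f}} = \colim_{\lambda \in \Lambda} V_\lambda$ as a filtered colimit of $\ca{G}$-stable finite-dimensional $\bb{Q}_p$-subspaces, and set $V_\lambda^+ = V_\lambda \cap V^{\leq 1}$. Since the restricted norm makes $V_\lambda$ a finite-dimensional Banach space, $V_\lambda^+$ is a finite free $\bb{Z}_p$-module, and condition~(\ref{item:thm:sen-operator-B-fix-2}) forces $\ca{G}$ to act trivially on $V_\lambda^+/p^3$. A density argument shows that $V^{\leq 1}$ is the $p$-adic completion of $\colim V_\lambda^+$, so $W^+$ is the $p$-adic completion of $W_\infty^+ = \colim W_\lambda^+$, where $W_\lambda^+ = \widehat{\overline{B}} \otimes_{\bb{Z}_p} V_\lambda^+$. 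Each $W_\lambda = W_\lambda^+[1/p]$ is a $p^3\bb{Z}_p$-small object of $\repnpr(G,\widehat{\overline{B}}[1/p])$ in the sense of~\ref{defn:small}, and by Lemma~\ref{lem:inf-repn-B-basic} the action $\varphi_{\sen}|_W$ from~\ref{para:inf-repn-B} is compatible with the canonical actions $\varphi_{\sen}|_{W_\lambda}$ of Theorem~\ref{thm:sen-brinon-B}.

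\textbf{Step 2: base-change to each valuation ring.} For every $\ak{q} \in \ak{S}_p(\overline{B})$ with image $\ak{p} \in \ak{S}_p(B)$, I would form $W^+_{\lambda,\ak{q}} = \ca{O}_{\widehat{\overline{E}}_\ak{q}} \otimes_{\bb{Z}_p} V_\lambda^+$ and $W_\ak{q}^+ = \ca{O}_{\widehat{\overline{E}}_\ak{q}} \hat\otimes_{\bb{Z}_p} V^{\leq 1}$; again $W_\ak{q}^+$ is the $p$-adic completion of $\colim W^+_{\lambda,\ak{q}}$ and each $W^+_{\lambda,\ak{q}}$ is $p^3\bb{Z}_p$-small as an $\ca{O}_{\widehat{\overline{E}}_\ak{q}}$-representation of $G_\ak{p} = \gal(\overline{E}_\ak{q}/E_\ak{p})$. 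The base-change identification $\widehat{\overline{E}}_\ak{q} \otimes_{\widehat{\overline{B}}} \scr{E}^*_B(1) \iso \scr{E}^*_{\ca{O}_{E_\ak{p}}}(1)$ from~\ref{para:B-fal-ext-compare}, together with the functoriality statements~\ref{rem:sen-brinon-B-func} and~\ref{cor:sen-lie-lift-B}.(\ref{item:cor:sen-lie-lift-B-4}) (whose injectivity hypothesis $E_\ak{p} \otimes_{B[1/p]} \Omega^1_{Y_K/K} \to \widehat{\Omega}^1_{\ca{O}_{E_\ak{p}}}[1/p]$ is in fact an isomorphism by~\ref{prop:quasi-adequate-diff}), shows that $\Phi^{\geo}_\ca{G}$ base-changes into $\Phi^{\geo}_{\ca{G}_\ak{p}}$, where $\ca{G}_\ak{p}$ is the image of $G_\ak{p}$ in $\ca{G}$. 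Theorem~\ref{thm:sen-operator-fix-infty} applied to $(W^+_{\lambda,\ak{q}})_\lambda$ then yields that the base-change of any $\phi \in \Phi^{\geo}_\ca{G}$ annihilates $(W_\ak{q})^{H_\ak{p}}$, where $H_\ak{p} = \gal(\overline{E}_\ak{q}/E_{\ak{p},\infty})$.

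\textbf{Step 3: gluing via injectivity.} For $x \in W^H$, its image in each $W_\ak{q}$ is $H_\ak{p}$-invariant because the Galois homomorphism $G_\ak{p} \to G$ sends $H_\ak{p}$ into $H$; therefore $\phi(x)$ has zero image in every $W_\ak{q}$. It remains to check that the natural map $W^+ \to \prod_\ak{q} W_\ak{q}^+$ is injective. Reducing modulo $p^n$, $W^+/p^n = W^+_\infty/p^n = \colim_\lambda (\widehat{\overline{B}}/p^n \otimes V_\lambda^+/p^n)$; since each $V_\lambda^+/p^n$ is a finite free $\bb{Z}/p^n$-module, the injectivity $\widehat{\overline{B}}/p^n \hookrightarrow \prod_\ak{q} \ca{O}_{\widehat{\overline{E}}_\ak{q}}/p^n$ (coming from~\ref{prop:ht1-prime-inj}.(\ref{item:prop:ht1-prime-inj-2}) and essentially Lemma~\ref{lem:A-inj}) survives tensoring with $V_\lambda^+/p^n$, and filtered colimits preserve injectivity. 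The main obstacle should be the bookkeeping in Step 2: one must ensure that the non-strict map $G_\ak{p} \to G$ induced by the embedding $\overline{B} \to \ca{O}_{\widehat{\overline{E}}_\ak{q}}$ of~\ref{para:notation-A-inj} interacts correctly with the cyclotomic quotients, and that the geometric part of $\Phi_\ca{G}$ truly lands in the geometric part of $\Phi_{\ca{G}_\ak{p}}$ after base change — rather than merely in $\Phi_{\ca{G}_\ak{p}}$ as a whole — so that~\ref{thm:sen-operator-fix-infty} genuinely applies; this hinges on the compatibility of Faltings extensions already set up in~\ref{para:B-fal-ext-compare}.
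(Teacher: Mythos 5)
Your proposal is correct and follows essentially the same route as the paper: decompose $V$ into a directed system of $p^3\bb{Z}_p$-small lattices using condition (1), tensor up to $\widehat{\overline{B}}$ and then to each $\ca{O}_{\widehat{\overline{E}}_{\ak{q}}}$ for $\ak{q}\in\ak{S}_p(\overline{B})$, invoke Theorem \ref{thm:sen-operator-fix-infty} at each $\ak{q}$, and conclude by the injectivity of $\widehat{W_\infty}\to\prod_{\ak{q}}\widehat{W_{\infty,\ak{q}}}$ coming from \ref{prop:ht1-prime-inj}. The bookkeeping issue you flag at the end — verifying that the base change of $\Phi^{\geo}_{\ca{G}}$ actually lands in the $\Phi^{\geo}(\widehat{W_{\infty,\ak{q}}})$ to which Theorem \ref{thm:sen-operator-fix-infty} applies — is indeed the delicate point, and the paper handles it exactly as you anticipate: by taking the filtered colimit of the diagram \eqref{diam:thm:sen-brinon-B}, and by observing (via Lemma \ref{lem:inf-repn-B-basic} and its valuation-ring variant) that the action defined through $\varphi_{\sen}|_{\ca{G}}$ is the unique continuation of $\varphi_{\sen}|_{W_\infty}$, and likewise on each completed localization.
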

\begin{proof}
	The strategy is to reduce to \ref{thm:sen-operator-fix-infty}. We need to construct a suitable directed system of $p^3\bb{Z}_p$-small objects in  $\repnpr(\ca{G},\widehat{\overline{B}})$ in order to reduce to the situation of \ref{thm:sen-operator-fix-infty}. By condition (\ref{item:thm:sen-operator-B-fix-1}), we fix a directed system $(V_\lambda)_{\lambda\in\Lambda}$ of finite dimensional $\ca{G}$-stable $\bb{Q}_p$-subspaces of $V$ such that $V_\infty=\bigcup_{\lambda\in\Lambda}V_\lambda$ is dense in $V$. Recall that for any $\lambda\in\Lambda\cup\{\infty\}$ and $n\in\bb{N}$, we have (cf. \ref{lem:p-adic-separated1})
	\begin{align}
		p^nV_\lambda^{\leq 1}=V_{\lambda}^{\leq p^{-n}}=V_\lambda\cap p^n V^{\leq 1}.
	\end{align}
	In particular, $V_\lambda^{\leq 1}/p^nV_\lambda^{\leq 1}\to V^{\leq 1}/p^nV^{\leq 1}$ is injective, and $V_\infty^{\leq 1}/p^nV_\infty^{\leq 1}\to V^{\leq 1}/p^nV^{\leq 1}$ is an isomorphism as $V_\infty$ is dense in $V$. Thus, $V^{\leq 1}$ identifies naturally with the $p$-adic completion of $V_\infty^{\leq 1}$, and the $\ca{G}$-action on $V_\lambda^{\leq 1}/p^3V_\lambda^{\leq 1}\subseteq V^{\leq 1}/p^3V^{\leq 1}$ is also trivial by condition (\ref{item:thm:sen-operator-B-fix-2}). In conclusion, $(V_\lambda^{\leq 1})_{\lambda\in\Lambda}$ forms a directed system of $p^3\bb{Z}_p$-small objects in  $\repnpr(\ca{G},\bb{Z}_p)$ (cf. \ref{lem:banach-sub}, \ref{defn:small}) whose transition maps are injective and injective modulo $p^n$ for any $n\in\bb{N}$. 
	
	For any $\lambda\in\Lambda\cup\{\infty\}$, we put $W_\lambda^+=\widehat{\overline{B}}\otimes_{\bb{Z}_p}V_\lambda^{\leq 1}$. As $\widehat{\overline{B}}$ is flat over $\bb{Z}_p$, $(W_\lambda^+)_{\lambda\in\Lambda}$ forms a directed system of $p^3\bb{Z}_p$-small objects in  $\repnpr(\ca{G},\widehat{\overline{B}})$ (cf. \ref{defn:small}) whose transition maps are injective and injective modulo $p^n$ for any $n\in\bb{N}$. We put $W_\lambda=W_\lambda^+[1/p]$ and $\widehat{W_\infty}=\widehat{W_\infty^+}[1/p]$, where $\widehat{W_\infty^+}$ is the $p$-adic completion of $W_\infty^+$. Remark that we have $G$-equivariant natural identifications
	\begin{align}
		W^+=(\widehat{\overline{B}}\otimes_{\bb{Z}_p}V^{\leq 1})^\wedge=\widehat{W_\infty^+}.
	\end{align}
	Thus, $W=W^+[1/p]=\widehat{W_\infty}$.
	
	For any $\ak{q}\in\ak{S}_p(\overline{B})$ with image $\ak{p}\in\ak{S}_p(B)$, consider the element $(B_{\triv},B,\overline{B})\to (E_{\ak{p}},\ca{O}_{E_{\ak{p}}},\ca{O}_{\overline{E}_{\ak{q}}})$ of $\ak{E}(B)$ defined in \ref{para:notation-A-inj}. We set $G_{\ak{q}}=\gal(\overline{E}_{\ak{q}}/E_{\ak{p}})$ and $H_{\ak{q}}=\gal(\overline{E}_{\ak{q}}/E_{\ak{p},\infty})$. Consider the natural commutative diagram
	\begin{align}
		\xymatrix{
			\repnpr(\ca{G},\bb{Z}_p)\ar[r]\ar[d]& \repnpr(G,\widehat{\overline{B}})\ar[r]\ar[d]&\repnpr(G_{\ak{q}},\ca{O}_{\widehat{\overline{E}}_{\ak{q}}})\ar[d]\\
			\repnpr(\ca{G},\bb{Q}_p)\ar[r]&\repnpr(G,\widehat{\overline{B}}[\frac{1}{p}])\ar[r]&\repnpr(G_{\ak{q}},\widehat{\overline{E}}_{\ak{q}})
		}
	\end{align}
	Let $W_{\lambda,\ak{q}}^+=\ca{O}_{\widehat{\overline{E}}_{\ak{q}}}\otimes_{\widehat{\overline{B}}} W_\lambda^+=\ca{O}_{\widehat{\overline{E}}_{\ak{q}}}\otimes_{\bb{Z}_p} V_\lambda^{\leq 1}$ and $W_{\lambda,\ak{q}}=W_{\lambda,\ak{q}}^+[1/p]$. Since $\ca{O}_{\widehat{\overline{E}}_{\ak{q}}}$ is flat over $\bb{Z}_p$, $(W_{\lambda,\ak{q}}^+)_{\lambda\in\Lambda}$ forms a directed system of $p^3\bb{Z}_p$-small objects in $\repnpr(G_{\ak{q}},\ca{O}_{\widehat{\overline{E}}_{\ak{q}}})$ whose transition maps are injective and injective modulo $p^n$ for any $n\in\bb{N}$. Thus, we are in the situation of \ref{thm:sen-operator-fix-infty}. Taking colimit of the diagram \eqref{diam:thm:sen-brinon-B}, we get a canonical commutative diagram
	\begin{align}
		\xymatrix{
			\scr{E}^*_B(1)\ar[d]_-{\varphi_{\sen}|_{W_\infty}}\ar[r]&\prod_{\ak{q}}\scr{E}^*_{\ca{O}_{E_{\ak{p}}}}(1)\ar[d]^-{(\varphi_{\sen}|_{W_{\infty,\ak{q}}})_{\ak{q}}}\\
			\mrm{End}_{\widehat{\overline{B}}[\frac{1}{p}]}(W_\infty)\ar[r]^-{(\iota_{\ak{q}})_{\ak{q}}}&\prod_{\ak{q}}\mrm{End}_{\widehat{\overline{E}}_{\ak{q}}}(W_{\infty,\ak{q}})
		}
	\end{align}
	where the product is taken over $\ak{q}\in\ak{S}_p(\overline{B})$, $\iota_{\ak{q}}$ is defined by extending scalars, and the horizontal arrows are injective. Notice that the Lie algebra action $\varphi_{\sen}|_W$ defined by \eqref{eq:para:inf-repn-B-1} is the unique continuation of $\varphi_{\sen}|_{W_\infty}$ by \ref{lem:inf-repn-B-basic}. The variant of \ref{lem:inf-repn-B-basic} for valuation ring case shows that $\varphi_{\sen}|_{W_{\infty,\ak{q}}}$ extends uniquely to $\varphi_{\sen}|_{\widehat{W_{\infty,\ak{q}}}}$ by continuation, whose geometric part thus coincides with that defined in \ref{thm:sen-operator-fix-infty}. In particular, $\iota_{\ak{q}}(\Phi(\widehat{W_\infty}))\subset\Phi(\widehat{W_{\infty,\ak{q}}})$ (resp. $\iota_{\ak{q}}(\Phi^{\geo}(\widehat{W_\infty}))\subset\Phi^{\geo}(\widehat{W_{\infty,\ak{q}}})$). Taking filtered colimit on $\lambda\in\Lambda$ and then inverse limit on $n\in\bb{N}$ of the natural injection $W_\lambda^+/p^n\to \prod_{\ak{q}}W_{\lambda,\ak{q}}^+/p^n$ given by \ref{prop:ht1-prime-inj}, we obtain a natural injection
	\begin{align}
		\widehat{W_\infty}\longrightarrow \prod_{\ak{q}}\widehat{W_{\infty,\ak{q}}},
	\end{align}
	which particularly sends $(\widehat{W_\infty})^G$ (resp. $(\widehat{W_\infty})^H$) into $\prod_{\ak{q}}(\widehat{W_{\infty,\ak{q}}})^{G_{\ak{q}}}$ (resp. $\prod_{\ak{q}}(\widehat{W_{\infty,\ak{q}}})^{H_{\ak{q}}}$). Since $\Phi^{\geo}(\widehat{W_{\infty,\ak{q}}})$ acts trivially on $(\widehat{W_{\infty,\ak{q}}})^{H_{\ak{q}}}$ for each $\ak{q}$ by \ref{thm:sen-operator-fix-infty}, we see that $\Phi^{\geo}(\widehat{W_\infty})$ acts trivially on $(\widehat{W_\infty})^H$.
\end{proof}

\section{Application to Locally Analytic Vectors}\label{sec:analytic}
This section is devoted to generalizing a result of Pan \cite[3.1.2]{pan2021locally} to higher dimension.

\begin{mypara}\label{para:analytic-function}
	 We mainly follow \cite[2.1]{pan2021locally} to briefly review the notion of locally analytic vectors. Let $M$ be a finite free $\bb{Z}_p$-module with a basis $\mbf{e}_1,\dots,\mbf{e}_n$, $V$ a $\bb{Q}_p$-Banach space with norm $|\ |$. We say that a map $f:M\to V$ is (strictly) \emph{analytic} (cf. \cite[Definition 6.17]{ddms1999lie}) if there exists $v_{\underline{m}}\in V$ for any $\underline{m}=(m_1,\dots,m_d)\in\bb{N}^d$ such that ($|\underline{m}|=m_1+\cdots +m_d$)
	\begin{align}
		\lim_{|\underline{m}|\to \infty}|v_{\underline{m}}|=0,
	\end{align}
	and for any $a_1,\dots,a_d\in\bb{Z}_p$, we have
	\begin{align}
		f(a_1\mbf{e}_1+\cdots+a_d\mbf{e}_d)=\sum_{\underline{m}\in\bb{N}^d}a_1^{m_1}\cdots a_d^{m_d}v_{\underline{m}}.
	\end{align}
	Notice that $f=0$ if and only if $v_{\underline{m}}$ is zero for any $\underline{m}\in\bb{N}^d$. If $\mbf{e}_1',\dots,\mbf{e}_d'$ form another basis of $M$, then we can write $f(a_1\mbf{e}_1'+\cdots+a_d\mbf{e}_d')=\sum_{\underline{m}\in\bb{N}^d}a_1^{m_1}\cdots a_d^{m_d}v_{\underline{m}}'$, where $v_{\underline{m}}'$ is a $\bb{Z}_p$-linear combination of finitely many $v_{\underline{m}}$. In particular, we get $\sup_{\underline{m}\in\bb{N}^d}|v_{\underline{m}}'|\leq \sup_{\underline{m}\in\bb{N}^d}|v_{\underline{m}}|$, and thus they are actually equal by symmetry. In conclusion, the definition on the analyticity of $f$ does not depend on the choice of the basis of $M$. We denote by $\scr{C}^{\mrm{an}}(M,V)$ the $\bb{Q}_p$-vector space of $V$-valued analytic functions on $M$. We note that after fixing a basis of $M$, taking the coefficients $v_{\underline{m}}$ identifies $\scr{C}^{\mrm{an}}(M,V)$ with a subspace of $\prod_{{\underline{m}\in\bb{N}^d}}V$. We define a norm on $\scr{C}^{\mrm{an}}(M,V)$ by setting
	\begin{align}
		|f|=\sup_{\underline{m}\in\bb{N}^d}|v_{\underline{m}}|\in\bb{R}_{\geq 0},
	\end{align}	
	which does not depend on the choice of the basis of $M$ and makes $\scr{C}^{\mrm{an}}(M,V)$ a $\bb{Q}_p$-Banach space.
\end{mypara}

\begin{mylem}\label{lem:analytic-function-basic}
	Let $M$ be a finite free $\bb{Z}_p$-module, $V$ a $\bb{Q}_p$-Banach space. Then, for any $n\in\bb{N}$, the natural map
	\begin{align}
		(V^{\leq 1}\otimes_{\bb{Z}_p} \scr{C}^{\mrm{an}}(M,\bb{Q}_p)^{\leq 1})/p^n\longrightarrow \scr{C}^{\mrm{an}}(M,V)^{\leq 1}/p^n.
	\end{align}
	is an isomorphism.
\end{mylem}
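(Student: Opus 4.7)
The plan is to fix a basis of $M$ and exhibit both sides explicitly as direct sums indexed by $\bb{N}^d$, so that the desired isomorphism becomes manifest. After choosing a basis $\mbf{e}_1, \ldots, \mbf{e}_d$ of $M$, the definition in \ref{para:analytic-function} identifies $\scr{C}^{\mrm{an}}(M, V)^{\leq 1}$ with the set of sequences $(v_{\underline{m}})_{\underline{m} \in \bb{N}^d}$ such that $v_{\underline{m}} \in V^{\leq 1}$ and $|v_{\underline{m}}| \to 0$, and analogously for $V = \bb{Q}_p$ with coefficients in $\bb{Z}_p$.

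First I would compute the right-hand side. Since $p^n V^{\leq 1} = V^{\leq p^{-n}}$ and $V^{\leq 1}$ is $p$-torsion free, a sequence lies in $p^n \scr{C}^{\mrm{an}}(M, V)^{\leq 1}$ if and only if every $v_{\underline{m}}$ belongs to $p^n V^{\leq 1}$ (the convergence condition is automatically preserved upon dividing by $p^n$). Moreover, any element of $\scr{C}^{\mrm{an}}(M, V)^{\leq 1}$ automatically satisfies $v_{\underline{m}} \in p^n V^{\leq 1}$ for all but finitely many $\underline{m}$, by the condition $|v_{\underline{m}}| \to 0$. Combining these two observations yields a canonical isomorphism
\[
\scr{C}^{\mrm{an}}(M, V)^{\leq 1}/p^n \cong \bigoplus_{\underline{m} \in \bb{N}^d} V^{\leq 1}/p^n V^{\leq 1},
\]
sending a representative to its coordinate-wise reduction modulo $p^n$.

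Next I would compute the left-hand side. Applying the same argument to $V = \bb{Q}_p$ gives $\scr{C}^{\mrm{an}}(M, \bb{Q}_p)^{\leq 1}/p^n \cong \bigoplus_{\underline{m}} \bb{Z}/p^n$. Combining this with the standard identity $(A \otimes_{\bb{Z}_p} B)/p^n = (A/p^n) \otimes_{\bb{Z}/p^n} (B/p^n)$ applied to $A = V^{\leq 1}$ and $B = \scr{C}^{\mrm{an}}(M, \bb{Q}_p)^{\leq 1}$ produces
\[
(V^{\leq 1} \otimes_{\bb{Z}_p} \scr{C}^{\mrm{an}}(M, \bb{Q}_p)^{\leq 1})/p^n \cong V^{\leq 1}/p^n \otimes_{\bb{Z}/p^n} \bigoplus_{\underline{m}} \bb{Z}/p^n \cong \bigoplus_{\underline{m}} V^{\leq 1}/p^n V^{\leq 1}.
\]
The natural map $v \otimes (a_{\underline{m}}) \mapsto (a_{\underline{m}} v)$ corresponds, under these two identifications, to the identity map on $\bigoplus_{\underline{m}} V^{\leq 1}/p^n V^{\leq 1}$, which establishes the claim.

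No serious obstacle is expected. The only subtle point worth stating explicitly is that the norm-to-zero condition in the definition of analytic functions forces the quotient by $p^n$ to be the direct sum rather than the full product indexed by $\bb{N}^d$, which is precisely what allows the algebraic (uncompleted) tensor product to commute with the $p$-adic quotient.
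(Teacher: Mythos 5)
Your proof is correct and follows essentially the same route as the paper's: fix a basis of $M$, identify $\scr{C}^{\mrm{an}}(M,V)^{\leq 1}/p^n$ with the direct sum $\bigoplus_{\underline{m}\in\bb{N}^d}V^{\leq 1}/p^nV^{\leq 1}$ using that the coefficients tend to $0$, specialize to $V=\bb{Q}_p$, and match both sides via tensor–sum compatibility. The only cosmetic difference is that the paper first identifies $\scr{C}^{\mrm{an}}(M,V)^{\leq 1}$ with the $p$-adic completion $(\bigoplus_{\underline{m}}V^{\leq 1})^\wedge$ and then reduces mod $p^n$, whereas you work directly modulo $p^n$; the two are equivalent and your version spells out the details the paper leaves implicit.
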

\begin{proof}
	After fixing a basis of $M$, by taking coefficients we identify $\scr{C}^{\mrm{an}}(M,V)$ with
	\begin{align}
		\{(v_{\underline{m}})\in \prod_{{\underline{m}\in\bb{N}^d}}V\ |\ \lim_{|\underline{m}|\to \infty}|v_{\underline{m}}|=0\}.
	\end{align}
	Notice that $\scr{C}^{\mrm{an}}(M,V)^{\leq 1}$ identifies with
	\begin{align}
		\{(v_{\underline{m}})\in \prod_{{\underline{m}\in\bb{N}^d}}V^{\leq 1}\ |\ \lim_{|\underline{m}|\to \infty}|v_{\underline{m}}|=0\}=(\oplus_{{\underline{m}\in\bb{N}^d}}V^{\leq 1})^\wedge,
	\end{align}
	where the completion is $p$-adic. Taking $V=\bb{Q}_p$, we get $\scr{C}^{\mrm{an}}(M,\bb{Q}_p)^{\leq 1}=(\oplus_{{\underline{m}\in\bb{N}^d}}\bb{Z}_p)^\wedge$. Thus, $(V^{\leq 1}\otimes_{\bb{Z}_p} \scr{C}^{\mrm{an}}(M,\bb{Q}_p)^{\leq 1})/p^n=\oplus_{{\underline{m}\in\bb{N}^d}}V^{\leq 1}/p^n=\scr{C}^{\mrm{an}}(M,V)^{\leq 1}/p^n$, which completes the proof.
\end{proof}

\begin{mylem}\label{lem:analytic-function-pullback}
	Let $V$ be a $\bb{Q}_p$-Banach space, $g:M\to N$ a $\bb{Z}_p$-linear homomorphism of finite free $\bb{Z}_p$-modules. Then, the pullback of functions induces a map 
	\begin{align}
		g^*:\scr{C}^{\mrm{an}}(N,V)\longrightarrow \scr{C}^{\mrm{an}}(M,V),\ f\mapsto f\circ g
	\end{align}
	which decreases the norm. Moreover, if the cokernel of $g$ is finite, then $g^*$ is injective; if $g$ is injective, then $g^*$ has dense image.
\end{mylem}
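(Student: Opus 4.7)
The plan is to work in $\bb{Z}_p$-bases: fix $\mbf{e}_1,\dots,\mbf{e}_m$ of $M$ and $\mbf{f}_1,\dots,\mbf{f}_n$ of $N$, and write $g(\mbf{e}_j)=\sum_i a_{ij}\mbf{f}_i$ with $a_{ij}\in\bb{Z}_p$. For the analyticity of $g^*f$ and the norm inequality, I would expand $f\in\scr{C}^{\mrm{an}}(N,V)$ as $f(\sum_i b_i\mbf{f}_i)=\sum_{\underline{m}}b^{\underline{m}}v_{\underline{m}}$ and substitute $b_i=\sum_j a_{ij}c_j$, using that each factor $(\sum_j a_{ij}c_j)^{m_i}$ is $\bb{Z}_p$-coefficient homogeneous of degree $m_i$ in $c_1,\dots,c_m$. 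This shows that the coefficient $v'_{\underline{k}}$ of $c^{\underline{k}}$ in $g^*f$ is a $\bb{Z}_p$-linear combination of those $v_{\underline{m}}$ with $|\underline{m}|=|\underline{k}|$, which immediately yields $\lim_{|\underline{k}|\to\infty}|v'_{\underline{k}}|=0$ together with $|g^*f|\leq|f|$.

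For injectivity under the assumption that $\cok(g)$ is finite, I would choose $k\in\bb{N}$ with $p^kN\subseteq g(M)$, so that $g^*f=0$ forces $f$ to vanish on $p^kN$, which translates into $\sum_{\underline{m}}p^{k|\underline{m}|}v_{\underline{m}}b^{\underline{m}}\equiv 0$ on $\bb{Z}_p^n$. The key step is an identity principle for $V$-valued convergent power series on $\bb{Z}_p^n$, which I would prove by induction on $n$: the base case $n=1$ extracts the constant term by evaluation at $0$, and then notes that $(F(b_1)-F(0))/b_1$ is again analytic with the same vanishing property, iterating to kill all coefficients in $b_1$; the induction step on $n$ views the series as analytic in $b_n$ with coefficients in $\scr{C}^{\mrm{an}}(\bb{Z}_p^{n-1},V)$. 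This forces $p^{k|\underline{m}|}v_{\underline{m}}=0$ for every $\underline{m}$, and hence $v_{\underline{m}}=0$ since $V$ is a $\bb{Q}_p$-vector space.

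For density when $g$ is injective, I would first observe that $V$-valued polynomial functions are dense in $\scr{C}^{\mrm{an}}(M,V)$ since truncating at total degree $\leq N$ produces an approximation of error norm $\sup_{|\underline{m}|>N}|v_{\underline{m}}|\to 0$. It then suffices to realize every polynomial as a pullback $g^*f$. Invoking Smith normal form together with the injectivity of $g$, I can choose bases of $M$ and $N$ so that $g(\mbf{e}_i)=d_i\mbf{f}_i$ with nonzero $d_i\in\bb{Z}_p$ for $i=1,\dots,m$; then for a polynomial $h=\sum_{|\underline{k}|\leq N}v_{\underline{k}}x^{\underline{k}}$ on $M$, the explicit polynomial $f(y_1,\dots,y_n)=\sum_{|\underline{k}|\leq N}v_{\underline{k}}d_1^{-k_1}\cdots d_m^{-k_m}y_1^{k_1}\cdots y_m^{k_m}$ (depending only on the first $m$ coordinates of $N$) is analytic on $N$ and satisfies $g^*f=h$ by direct substitution. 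The only mildly delicate input in the whole argument is the identity principle used in the injectivity step; every other verification is formal, so I do not expect any serious obstacle.
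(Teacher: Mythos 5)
Your proof is correct, and while the overall spirit is the same as the paper's (expand in $\bb{Z}_p$-coordinates, exploit the non-Archimedean norm, and use the fact that an analytic function is determined by its Taylor coefficients), the execution differs at each of the three steps. For the norm inequality, you do everything in one shot by expanding the substitution $b_i=\sum_j a_{ij}c_j$ and observing that the contribution to $v'_{\underline{k}}$ is a finite $\bb{Z}_p$-combination of the $v_{\underline{m}}$ with $|\underline{m}|=|\underline{k}|$; the paper instead factors $g$ through $g(M)$, treats surjections (norm-preserving, dropping variables) and injections (norm-decreasing) separately, and for the latter further splits into a direct-summand case and a finite-cokernel case. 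For injectivity, you reparametrize via $p^kN\subseteq g(M)$ and then re-prove from scratch the identity principle that the only analytic function vanishing identically on $\bb{Z}_p^n$ is the one with all Taylor coefficients zero; the paper takes this for granted, having stated it without proof in the paragraph defining $\scr{C}^{\mrm{an}}(M,V)$ (``$f=0$ if and only if $v_{\underline{m}}$ is zero for every $\underline{m}$''), and its injectivity argument is somewhat terse — it only spells out injectivity of $g^*$ for surjective $g$, and the finite-cokernel step implicitly needs exactly the reparametrization you make explicit. For density, you invoke Smith normal form to put $g$ in diagonal form $g(\mbf{e}_i)=d_i\mbf{f}_i$ all at once and then pull back polynomials via $d_i^{-1}\in\bb{Q}_p$; the paper instead factors the injection $g$ into a finite-cokernel inclusion followed by a split inclusion and composes ``dense image'' with ``surjective.'' Both approaches buy roughly the same amount: yours is a bit more self-contained (you don't lean on the unproved identity assertion from the preceding paragraph, and Smith normal form replaces the two-step decomposition), while the paper's modular decomposition is closer in style to how the rest of the section treats pullback maps. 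The only real overhead in your version is the induction for the identity principle; since the paper already asserts this in \ref{para:analytic-function}, you could have just cited it, but proving it does no harm and makes the argument self-contained.
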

\begin{proof}
	As any submodule of a finite free $\bb{Z}_p$-module is still finite free, we may decompose $g$ as an injection composed with a surjection, so that we can treat the two cases separately. 
	
	Assume firstly that $g$ is surjective. We write $M=N\oplus L$, and choose a basis $\mbf{e}_1,\dots,\mbf{e}_d$ for $M$ such that $\mbf{e}_1,\dots,\mbf{e}_c$ form a basis of $N$, where $0\leq c\leq d$ are integers. We see that an analytic function $f(a_1\mbf{e}_1+\cdots+a_c\mbf{e}_c)=\sum_{\underline{m}\in\bb{N}^c}a_1^{m_1}\cdots a_c^{m_c}v_{\underline{m}}$ on $N$ is pulled back to an analytic function $(f\circ g)(a_1\mbf{e}_1+\cdots+a_d\mbf{e}_d)=\sum_{\underline{m}\in\bb{N}^c\times\{0\}^{d-c}}a_1^{m_1}\cdots a_d^{m_d}v_{\underline{m}}$. Thus, $g^*$ is injective and preserves norms.
	
	Assume that $g$ is injective. Since $N/M$ is a direct sum of a finite free $\bb{Z}_p$-module with a finite $\bb{Z}_p$-module, by writing $g$ as a composition of two injective maps, we can treat separately the case where $g$ admits a retraction and the case where the cokernel of $g$ is finite. For the first case, we can write $N=M\oplus L$. By an argument as before, we see that $g^*$ is surjective and decreases the norm. For the second case, by expressing a basis of $M$ as $\bb{Z}_p$-linear combination of that of $N$, we see that $g^*$ decreases the norm as in \ref{para:analytic-function}. Conversely, as $N[1/p]=M[1/p]$, we can express a basis of $N$ as $\bb{Q}_p$-linear combination of that of $M$. Thus, any analytic function on $N$ with only finitely many coefficients $v_{\underline{m}}$ non-zero is a restriction of such an analytic function on $M$. This shows that $g^*$ has dense image.
\end{proof}

\begin{mydefn}\label{defn:analytic-function-lie}
	Let $\ca{G}$ be a uniform pro-$p$ group, $L_\ca{G}$ its corresponding powerful Lie algebra over $\bb{Z}_p$ with identity map $\exp:L_{\ca{G}}\to\ca{G}$ (see \ref{para:L_G}), $V$ a $\bb{Q}_p$-Banach space. We say that a $V$-valued function $f:\ca{G}\to V$ is \emph{analytic} if $f\circ\exp:L_\ca{G}\to V$ is analytic (as $L_\ca{G}$ is a finite free $\bb{Z}_p$-module). We set $\scr{C}^{\mrm{an}}(\ca{G},V)=\scr{C}^{\mrm{an}}(L_\ca{G},V)$ and denote its norm by $|\ |_\ca{G}$.
\end{mydefn}
	
\begin{myrem}\label{rem:analytic-function-lie}
	 One can also use a system of coordinates of the second kind \eqref{eq:coordinates-second} to define analyticity of a function as in \cite[2.1.1]{pan2021locally}. Notice that the transition map between the coordinates of the first kind and the second kind is a homeomorphism $\psi:\bb{Z}_p^d\to \bb{Z}_p^d$ such that $\psi$ and $\psi^{-1}$ are both analytic by \cite[Exercise 8.3]{ddms1999lie} (or by \cite[34.1]{schneider2011lie}, as $\ca{G}$ is $p$-saturable by \cite[Notes on page 81, Theorem 7.7, Exercise 7.10]{ddms1999lie}). Thus, the two definitions of $\scr{C}^{\mrm{an}}(\ca{G},V)$ and its norm coincide (cf. \cite[Theorem 6.35]{ddms1999lie}).
\end{myrem}

\begin{mypara}\label{para:three-action}
	Let $V$ be a $\bb{Q}_p$-vector space endowed with a $\bb{Q}_p$-linear action of a group $\ca{G}$, $\scr{C}(\ca{G},V)=\prod_{\ca{G}}V$ the $\bb{Q}_p$-vector space of $V$-valued functions on $\ca{G}$. We mainly consider three $\ca{G}$-actions $\{1\otimes \rho_{\dl}, 1\otimes \rho_{\rr},\rho_V\otimes \rho_{\dl}\}$ on $\scr{C}(\ca{G},V)$, called the left translation action, right translation action and diagonal action respectively, defined as follows: for any $g,g'\in \ca{G}$ and $f\in \scr{C}(\ca{G},V)$,
	\begin{align}\label{eq:para:three-action}
		(\rho(g)f)(g')=\left\{ \begin{array}{ll}
			f(g^{-1}g') & \textrm{if }\rho=1\otimes \rho_{\dl},\\
			f(g'g) & \textrm{if }\rho=1\otimes \rho_{\rr},\\
			g(f(g^{-1}g')) & \textrm{if }\rho=\rho_V\otimes \rho_{\dl}.
		\end{array} \right.
	\end{align}
\end{mypara}

\begin{mylem}[{\cite[2.1.2]{pan2021locally}}]\label{lem:group-action-analytic}
	Let $\ca{G}$ be a uniform pro-$p$ group, $V$ a $\bb{Q}_p$-Banach space. Then, $\scr{C}^{\mrm{an}}(\ca{G},V)$ is stable under the left and right translation actions of $\ca{G}$ which preserve the norm $|\ |_{\ca{G}}$. Moreover, the left and right translation actions of $\ca{G}^{p^n}$ on $\scr{C}^{\mrm{an}}(\ca{G},V)^{\leq 1}/p^n\scr{C}^{\mrm{an}}(\ca{G},V)^{\leq 1}$ are trivial for any $n\in\bb{N}$.
\end{mylem}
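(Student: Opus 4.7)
The plan is to reduce everything to the Baker--Campbell--Hausdorff (BCH) formula and the non-Archimedean maximum principle. By \ref{thm:powerful-lie}, $\log:\ca{G}\iso L_{\ca{G}}$ transports the group law to the BCH product $*:L_{\ca{G}}\times L_{\ca{G}}\to L_{\ca{G}}$ of \ref{para:L-*}, which is analytic in both variables. Hence for fixed $g_0\in\ca{G}$, the maps $x\mapsto(-g_0)*x$ and $x\mapsto x*g_0$ are analytic self-maps of $L_{\ca{G}}$, and precomposition with them sends $\scr{C}^{\mrm{an}}(\ca{G},V)$ into itself. For norm invariance, I would use that $|f|_{\ca{G}}$ coincides with the sup norm $\sup_{g\in\ca{G}}|f(g)|$: one direction is the ultrametric bound, the reverse uses Hahn--Banach over the spherically complete field $\bb{Q}_p$ to reduce to the Gauss-norm equality on Tate algebras $\bb{Q}_p\langle X_1,\dots,X_d\rangle$. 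The sup norm is manifestly preserved by translations on both sides.

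For the mod-$p^n$ triviality, \ref{lem:analytic-function-basic} reduces the problem to $V=\bb{Q}_p$, since its isomorphism is $\ca{G}^{p^n}$-equivariant with $\ca{G}^{p^n}$ acting only on the $\scr{C}^{\mrm{an}}$-tensor factor. So I need to show that for $g_0=\exp(u)\in\ca{G}^{p^n}$ (equivalently $u\in p^n L_{\ca{G}}$) and $f\in\scr{C}^{\mrm{an}}(\ca{G},\bb{Q}_p)^{\leq 1}$, the translated function $g_0\cdot f-f$ has sup norm $\le p^{-n}$. The key integrality claim is that $u*x-x$ and $x*u-x$ both lie in $p^n L_{\ca{G}}$ for all $x\in L_{\ca{G}}$. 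Granting this, if $F:=f\circ\exp=\sum v_{\underline{m}}a^{\underline{m}}$ with $|v_{\underline{m}}|\le 1$, then for $y,y'\in L_{\ca{G}}$ with $y-y'\in p^nL_{\ca{G}}$, the telescoping identity $a_i^{m_i}-a_i'^{m_i}=(a_i-a_i')\sum_{j=0}^{m_i-1}a_i^{j}a_i'^{m_i-1-j}$ shows $F(y)-F(y')\in p^n\bb{Z}_p$; applying this to $y=(-u)*x$ and $y'=x$ yields the required uniform bound for left translation, and the analogous computation handles right translation.

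The hard part will be the integrality claim $u*x-x\in p^n L_{\ca{G}}$ for $u\in p^n L_{\ca{G}}$. Every term of the BCH expansion $u*x=u+x+\tfrac{1}{2}[u,x]+\cdots$ beyond the bare $x$ contains at least one copy of $u$; since $L_{\ca{G}}$ is powerful (\ref{defn:powerful-lie-alg}), a $k$-fold iterated bracket containing one $u\in p^n L_{\ca{G}}$ and $k-1$ elements of $L_{\ca{G}}$ lies in $p^{n+k-1}L_{\ca{G}}$, while the BCH rational coefficients are $p$-adically bounded by $p^{(k-1)/(p-1)}$, and these cancel to give the required $p^n$-integrality (with a sharper analysis using $[L_{\ca{G}},L_{\ca{G}}]\subseteq 4L_{\ca{G}}$ handling $p=2$). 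This estimate is essentially the BCH convergence argument in \cite[Chapters 4 and 6--7]{ddms1999lie}, and is precisely why the definition \ref{defn:uniform-pro-p} of uniform pro-$p$ groups carries its particular powerfulness axiom.
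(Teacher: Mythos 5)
Your approach via BCH is genuinely different from the paper's, which is considerably more economical: given $g\in\ca{G}^{p^n}\setminus\ca{G}^{p^{n+1}}$, the paper extends $g_1=g^{p^{-n}}$ to a minimal topological generating set $(g_1,\dots,g_d)$, and observes that in the resulting coordinates of the second kind, left translation by $g$ is literally the affine shift $(a_1,\dots,a_d)\mapsto(a_1-p^n,a_2,\dots,a_d)$; norm-preservation and the mod-$p^n$ congruence then drop out of the binomial expansion of $(a_1-p^n)^{m_1}$, with \ref{rem:analytic-function-lie} taking care of basis-independence.

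The gap in your argument is the claim that $|f|_\ca{G}$ coincides with $\sup_{g\in\ca{G}}|f(g)|$. This is false: for $d=1$ take $f(a)=a^p-a$, whose coefficients are bounded by $1$, but $|f(a)|\le p^{-1}$ for every $a\in\bb{Z}_p$ by Fermat's little theorem, so $\sup_{a\in\bb{Z}_p}|f(a)|=p^{-1}<1=|f|_\ca{G}$. The maximum-modulus/Gauss-norm identity for $\bb{Q}_p\langle X_1,\dots,X_d\rangle$ concerns the supremum over $\ca{O}_{\bb{C}_p}^d$, not over $\bb{Z}_p^d$, and Hahn--Banach does not bridge this. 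As a consequence, your mod-$p^n$ step — which only yields the pointwise congruence $F(y)-F(y')\in p^n\bb{Z}_p$ for $y,y'\in L_\ca{G}$ with $y-y'\in p^nL_\ca{G}$ — does not produce the Gauss-norm bound that the statement about $\scr{C}^{\mrm{an}}(\ca{G},V)^{\leq 1}/p^n\scr{C}^{\mrm{an}}(\ca{G},V)^{\leq 1}$ actually requires.

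The structure is repairable without detours through sup norms. For norm-preservation, note that $\phi_u(x)=(-u)*x$ and its inverse $\phi_{-u}$ are analytic with coefficients in $L_\ca{G}$ itself (this is what the $\bb{Z}_p$-analytic structure of a uniform group supplies), so each of $(\phi_u)^*$, $(\phi_{-u})^*$ is Gauss-norm-decreasing, hence both are isometries. For the mod-$p^n$ congruence, upgrade your integrality claim to the coefficient level: the analytic map $x\mapsto\phi_u(x)-x$ has \emph{all coefficients} in $p^nL_\ca{G}$ — which is what the BCH term-by-term estimate in fact produces, since it is an estimate on the formal series — and then carry out the expansion $F\bigl(x+p^n\psi(x)\bigr)-F(x)$ as a formal substitution in $\bb{Z}_p\langle X_1,\dots,X_d\rangle$, which exhibits the difference with all coefficients in $p^n\bb{Z}_p$. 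This gives the needed bound on $|g_0\cdot f-f|_\ca{G}$, but at the price of a rather delicate BCH bookkeeping; the paper's adapted-basis trick achieves the same in one line.
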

\begin{proof}
	For any element $g\in \ca{G}$, let $n$ be the maximal integer such that $g\in\ca{G}^{p^n}$ and we set $g_1=g^{p^{-n}}\in\ca{G}\setminus \ca{G}^p$ (as $\ca{G}\to \ca{G}^{p^n}$ sending $x$ to $x^{p^n}$ is a homeomorphism, cf. \ref{para:L_G}). Then, we take $g_2,\dots,g_d\in\ca{G}$ such that $(g_1,\dots,g_d)$ forms a minimal topological generating set of $\ca{G}$ (i.e. it forms a $\bb{Z}_p$-basis of $L_{\ca{G}}$, cf. \ref{para:L_G}). We obtain a system of coordinates of second kind \eqref{eq:coordinates-second},
	\begin{align}
		\bb{Z}_p^d&\longrightarrow \ca{G},\ (a_1,\dots,a_d)\mapsto g_1^{a_1}\cdots g_d^{a_d}.
	\end{align}
	The left translation by $g=g_1^{p^n}$ sends the coordinate $(a_1,a_2,\dots,a_d)$ to $(a_1-p^n,a_2,\dots,a_d)$. Thus, it preserves the analyticity of a $V$-valued function on $\ca{G}$ by \ref{rem:analytic-function-lie} as well as the norm $|\ |_{\ca{G}}$, and acts trivially on $\scr{C}^{\mrm{an}}(\ca{G},V)^{\leq 1}/p^n\scr{C}^{\mrm{an}}(\ca{G},V)^{\leq 1}$. Thus, we obtain the conclusion for the left translation. The proof for the right translation is analogous. 
\end{proof}

\begin{myprop}[{\cite[2.1.3]{pan2021locally}}]\label{prop:analytic-approx}
	Let $\ca{G}$ be a uniform pro-$p$ group isomorphic to a closed subgroup of $\id+p^2\mrm{M}_d(\bb{Z}_p)$ for some $d\in\bb{N}$. Then, there exists a directed system of finite-dimensional $\bb{Q}_p$-subspaces $(V_k)_{k\geq 1}$ of $\scr{C}^{\mrm{an}}(\ca{G},\bb{Q}_p)$ stable under the left and right translation action of $\ca{G}$ such that $\bigcup_{k\geq 1}V_k$ is dense in $\scr{C}^{\mrm{an}}(\ca{G},\bb{Q}_p)$.
\end{myprop}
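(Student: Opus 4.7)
My plan is to realize the spaces $V_k$ as restrictions to $\ca{G}$ of polynomials of bounded degree in the matrix entry functions of the ambient linear group, and to use a two-step density argument.

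Fix the closed embedding $\ca{G} \hookrightarrow G' := \id + p^2 \mrm{M}_d(\bb{Z}_p)$. By \ref{exmp:gl_n}, $G'$ is a uniform pro-$p$ group with $L_{G'} = p^2 \mrm{M}_d(\bb{Z}_p)$ via the matrix logarithm. Since the operations of \ref{para:L_G} defining $L_{\bullet}$ use only products and $p^n$-th roots, the closed embedding induces a $\bb{Z}_p$-linear injection $L_{\ca{G}} \hookrightarrow L_{G'}$ identifying analytic functions pulled back from $G'$ with their restrictions to $\ca{G}$. Hence by \ref{lem:analytic-function-pullback}, the restriction map
\begin{align*}
r: \scr{C}^{\mrm{an}}(G', \bb{Q}_p) \longrightarrow \scr{C}^{\mrm{an}}(\ca{G}, \bb{Q}_p)
\end{align*}
has dense image. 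For $1 \leq i, j \leq d$, let $X_{ij}: G' \to \bb{Z}_p$ be the matrix entry $g \mapsto g_{ij}$. From
\begin{align*}
X_{ij}(\exp(p^2 A)) = \delta_{ij} + \sum_{n \geq 1} \frac{p^{2n}}{n!} (A^n)_{ij},
\end{align*}
we see $X_{ij} \in \scr{C}^{\mrm{an}}(G', \bb{Q}_p)$, and the bound $v_p(p^{2n}/n!) \geq n+1$ gives $|X_{ij} - \delta_{ij}|_{G'} \leq p^{-2}$. Let $V'_k \subseteq \scr{C}^{\mrm{an}}(G', \bb{Q}_p)$ be the $\bb{Q}_p$-span of polynomials of degree $\leq k$ in $\{X_{ij}\}_{1 \leq i,j \leq d}$, and set $V_k := r(V'_k) \subseteq \scr{C}^{\mrm{an}}(\ca{G},\bb{Q}_p)$.

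I will then verify: (a) each $V_k$ is finite-dimensional (polynomials of degree $\leq k$ in $d^2$ variables); (b) $V_k$ is stable under the left and right translation actions of $\ca{G}$, since for $g \in \ca{G} \subseteq G'$ and $g' \in G'$ one has $X_{ij}(g^{-1} g') = \sum_k (g^{-1})_{ik} X_{kj}(g')$ and $X_{ij}(g' g) = \sum_k g_{kj} X_{ik}(g')$ (both $\bb{Z}_p$-linear in the $X_{kl}$), and translations are algebra automorphisms preserving the degree filtration; (c) $\bigcup_k V_k$ is dense in $\scr{C}^{\mrm{an}}(\ca{G}, \bb{Q}_p)$, which by density of $r$ reduces to showing $\bigcup_k V'_k$ is dense in $\scr{C}^{\mrm{an}}(G', \bb{Q}_p)$.

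For the density claim in (c), I will use the coordinate functions $\log_{ij}: G' \to \bb{Z}_p$ defined by $\log(g) = p^2(\log_{ij}(g))$, which satisfy $\log_{ij} \circ \exp(p^2 A) = a_{ij}$ and are therefore dual to the basis $\{p^2 E_{ij}\}$ of $L_{G'}$. By the very definition of analyticity in \ref{defn:analytic-function-lie}, every element of $\scr{C}^{\mrm{an}}(G', \bb{Q}_p)$ is a convergent power series in the $\log_{ij}$ whose polynomial truncations converge in norm; hence the polynomial algebra $\bb{Q}_p[\log_{ij}]$ is dense. It remains to approximate each $\log_{ij}$ by elements of $\bigcup_k V'_k$, for which I will use the series $\log(g) = \sum_{n \geq 1} \frac{(-1)^{n-1}}{n}(g - \id)^n$: its $N$-th partial sum is a polynomial in $\{X_{ij} - \delta_{ij}\}$, and combining the bound $|X_{ij} - \delta_{ij}|_{G'} \leq p^{-2}$ with the submultiplicativity of $|\,\cdot\,|_{G'}$ shows that the $n$-th term has norm $\leq p^{-2n + v_p(n) + 2}$, forcing convergence to $\log_{ij}$.

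The main obstacle is precisely this last step: showing that the logarithm series converges in the $\scr{C}^{\mrm{an}}$-norm rather than merely pointwise. This requires checking carefully that $|\,\cdot\,|_{G'}$ is submultiplicative (so $V'_k \cdot V'_l \subseteq V'_{k+l}$ with good norm control) and that $|X_{ij} - \delta_{ij}|_{G'}$ is small enough to offset the $1/n$ factors; the condition $p^2$ (rather than $p$) in the defining subgroup is exactly what makes the estimate go through unconditionally for all primes.
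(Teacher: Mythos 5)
Your proposal is correct and follows essentially the same route as the paper: restrict polynomials in the matrix-entry functions $X_{ij}$ from $\id+p^2\mrm{M}_d(\bb{Z}_p)$ down to $\ca{G}$ via pullback along $\iota\colon L_\ca{G}\hookrightarrow p^2\mrm{M}_d(\bb{Z}_p)$ (using \ref{lem:analytic-function-pullback} for density of the restriction), observe that translations act linearly on the $X_{ij}$ hence preserve each $V_k$, and get density from the fact that $\log\circ\exp=\id$. The only difference is expository: the paper compresses the density step into the phrase ``since $\log\circ\exp$ is the identity,'' whereas you explicitly unwind this to mean that the $\log_{ij}$ coordinate functions are uniform limits, in the $\scr{C}^{\mrm{an}}$-norm, of the polynomial partial sums of the matrix logarithm series — together with the submultiplicativity estimate and the bound $|X_{ij}-\delta_{ij}|_{G'}\le p^{-2}$ that make the series converge; this is exactly the content the paper's one-liner presupposes.
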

\begin{proof}
	We follow the proof of \cite[2.1.3]{pan2021locally}. Consider the commutative diagram (cf. \ref{exmp:gl_n})
	\begin{align}
		\xymatrix{
			\ca{G}\ar[d]_-{\log_{\ca{G}}}\ar[r]&\id+p^2\mrm{M}_d(\bb{Z}_p)\ar@<0.3pc>[d]^-{\log}\\
			L_{\ca{G}}\ar[r]^-{\iota}&p^2\mrm{M}_d(\bb{Z}_p)\ar@<0.3pc>[u]^-{\exp}
		}
	\end{align}
	We remark that the pullback by the injection $\iota$ induces a map $\scr{C}^{\mrm{an}}(p^2\mrm{M}_d(\bb{Z}_p),\bb{Q}_p)\to \scr{C}^{\mrm{an}}(L_{\ca{G}},\bb{Q}_p)$ with dense image by \ref{lem:analytic-function-pullback}. For any $1\leq i,j\leq d$, let $X_{ij}:\mrm{M}_d(\bb{Z}_p)\to\bb{Z}_p$ be the map taking the value of the $(i,j)$-component. Let $W_k$ (resp. $V_k$) be the space of $\bb{Q}_p$-valued functions on $\id+p^2\mrm{M}_d(\bb{Z}_p)$ (resp. $\ca{G}$) of the form $P(X_{ij}|1\leq i,j\leq d)$ where $P$ is a polynomial with coefficients in $\bb{Q}_p$ of degree $\leq k$. Since $f\circ\exp\in \scr{C}^{\mrm{an}}(p^2\mrm{M}_d(\bb{Z}_p),\bb{Q}_p)$ for any $f\in W_k$, we have $W_k\subseteq \scr{C}^{\mrm{an}}(\id+p^2\mrm{M}_d(\bb{Z}_p),\bb{Q}_p)$. Moreover, since $\log\circ \exp$ is the identity on $p^2\mrm{M}_d(\bb{Z}_p)$, the set $\{f\circ \exp\ |\ f\in W_k,\ k\in \bb{N}\}$ is dense in $\scr{C}^{\mrm{an}}(p^2\mrm{M}_d(\bb{Z}_p),\bb{Q}_p)$. Therefore, $\bigcup_{k\in\bb{N}}W_k$ is dense in $\scr{C}^{\mrm{an}}(\id+p^2\mrm{M}_d(\bb{Z}_p),\bb{Q}_p)$. By pulling back, we see that $\bigcup_{k\in\bb{N}}V_k$ is dense in $\scr{C}^{\mrm{an}}(\ca{G},\bb{Q}_p)$ by \ref{lem:analytic-function-pullback}. Moreover, it follows from the construction that each $V_k$ is stable under the left and right translation action of $\ca{G}$. 
\end{proof}

\begin{mydefn}[{\cite[2.1.4]{pan2021locally}}]\label{defn:analytic-vector}
	Let $\ca{G}$ be a uniform pro-$p$ group, $V$ a $\bb{Q}_p$-Banach space endowed with a $\bb{Q}_p$-linear continuous action of $\ca{G}$. An element $v\in V$ is called \emph{$\ca{G}$-analytic} if the function $f_v:\ca{G}\to V$ sending $g$ to $gv$ is analytic. We denote by $V^{\ca{G}\trm{-}\mrm{an}}$ the subset of $V$ consisting of $\ca{G}$-analytic vectors.
\end{mydefn}
	We remark that $V^{\ca{G}\trm{-}\mrm{an}}$ is stable under the action of $\ca{G}$ on $V$, since the right translation of the analytic function $f_v$ is still analytic by \ref{lem:group-action-analytic}. For any continuous group homomorphism of uniform pro-$p$ groups $\ca{G}'\to \ca{G}$, regarding $V$ also as a $\bb{Q}_p$-representation of $\ca{G}'$, then we have $V^{\ca{G}\trm{-}\mrm{an}}\subseteq V^{\ca{G}'\trm{-}\mrm{an}}$ by \ref{lem:analytic-function-pullback}. The subset $V^{\ca{G}\trm{-}\mrm{an}}$ is actually a $\bb{Q}_p$-subspace of $V$ by the following lemma.
	
\begin{mylem}[{\cite[2.1.5]{pan2021locally}}]\label{lem:analytic-vector-pre}
	Let $\ca{G}$ be a uniform pro-$p$ group, $V$ a $\bb{Q}_p$-Banach space endowed with a $\bb{Q}_p$-linear continuous action of $\ca{G}$. Then, the evaluation map at $1\in \ca{G}$ induces a bijection
	\begin{align}\label{eq:analytic-vector-pre}
		\scr{C}^{\mrm{an}}(\ca{G},V)^{\rho_V\otimes \rho_{\dl}=1}\iso V^{\ca{G}\trm{-an}},
	\end{align}
	where $\rho_V\otimes\rho_{\dl}:\scr{C}^{\mrm{an}}(\ca{G},V)\to \scr{C}(\ca{G},V)$ is the diagonal action (see {\rm\ref{para:three-action}}). Moreover, the inverse of this bijection induces a $\ca{G}$-equivariant inclusion
	\begin{align}\label{eq:analytic-vector-pre-2}
		V^{\ca{G}\trm{-an}}\longrightarrow (\scr{C}^{\mrm{an}}(\ca{G},V),1\otimes\rho_{\rr}),\ v\mapsto (f_v:g\mapsto gv),
	\end{align}
	where $1\otimes\rho_{\rr}$ is the right translation of $\ca{G}$ on $\scr{C}^{\mrm{an}}(\ca{G},V)$ (see {\rm\ref{lem:group-action-analytic}}).
\end{mylem}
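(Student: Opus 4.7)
The proof will be entirely formal, amounting to unwinding the three $\ca{G}$-actions on $\scr{C}(\ca{G},V)$ defined in \ref{para:three-action} and checking a single identity in each direction. First I would establish the forward map. Given $v\in V^{\ca{G}\trm{-an}}$, the function $f_v:g\mapsto gv$ lies in $\scr{C}^{\mrm{an}}(\ca{G},V)$ by the very definition \ref{defn:analytic-vector}. To see that $f_v$ is fixed by the diagonal action, I compute for $g,g'\in \ca{G}$:
\begin{align}
((\rho_V\otimes \rho_{\dl})(g)f_v)(g')=g(f_v(g^{-1}g'))=g(g^{-1}g'v)=g'v=f_v(g'),
\end{align}
so the assignment $v\mapsto f_v$ lands in $\scr{C}^{\mrm{an}}(\ca{G},V)^{\rho_V\otimes \rho_{\dl}=1}$, and the evaluation at $1\in\ca{G}$ sends $f_v$ back to $v$. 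This gives a right inverse of the evaluation map.

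Next I would treat the converse. Let $f\in \scr{C}^{\mrm{an}}(\ca{G},V)$ be fixed by the diagonal action and set $v=f(1)\in V$. The invariance $f(g')=g(f(g^{-1}g'))$, specialized to $g'=g$, reads $f(g)=g(f(1))=gv$, i.e.\ $f=f_v$. Consequently $f_v$ is analytic, which by \ref{defn:analytic-vector} is exactly the assertion that $v\in V^{\ca{G}\trm{-an}}$. This produces the inverse bijection \eqref{eq:analytic-vector-pre} and, at the same time, shows that $V^{\ca{G}\trm{-an}}$ is a $\bb{Q}_p$-subspace of $V$ because it is identified with a $\bb{Q}_p$-subspace of $\scr{C}^{\mrm{an}}(\ca{G},V)$.

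For the second statement, the inclusion \eqref{eq:analytic-vector-pre-2} is already provided by $v\mapsto f_v$, and it only remains to verify its $\ca{G}$-equivariance with respect to the natural $\ca{G}$-action on $V^{\ca{G}\trm{-an}}$ on the one hand, and the right-translation action $1\otimes \rho_{\rr}$ on $\scr{C}^{\mrm{an}}(\ca{G},V)$ on the other (note that the target is indeed stable under $1\otimes \rho_{\rr}$ by \ref{lem:group-action-analytic}). This is again a direct computation: for $g,g'\in\ca{G}$,
\begin{align}
((1\otimes \rho_{\rr})(g)f_v)(g')=f_v(g'g)=(g'g)v=g'(gv)=f_{gv}(g'),
\end{align}
so $(1\otimes \rho_{\rr})(g)\,f_v=f_{gv}$, as required. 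There is no genuine obstacle in the argument; the only subtlety is making sure the formulas in \eqref{eq:para:three-action} are used consistently, so that the diagonal action yields the rigidity $f(g)=gv$ while the right-translation action realizes the $\ca{G}$-action on $V^{\ca{G}\trm{-an}}$.
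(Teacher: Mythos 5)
Your proof is correct and is essentially the same argument as the one given in the paper: you characterize diagonal-invariant elements as those of the form $f(g)=g(f(1))$, which gives the bijection directly, and then verify $\ca{G}$-equivariance by a one-line computation with the right-translation action.
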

\begin{proof}
	Notice that an element $f\in \scr{C}(\ca{G},V)$ is fixed by $\rho_V\otimes \rho_{\dl}$ if and only if $f(g)=g(f(1))$ for any $g\in\ca{G}$. Thus, in this case $f$ is analytic if and only if $f(1)\in V$ is $\ca{G}$-analytic by definition, so that we have the bijection \eqref{eq:analytic-vector-pre}. It follows from the definition that \eqref{eq:analytic-vector-pre-2} is $\ca{G}$-equivariant.
\end{proof}

	The smallness condition will give us enough analytic vectors.
	
\begin{mylem}[{\cite[2.1.9]{pan2021locally}}]\label{lem:finite-dim-repn-analytic}
	Let $\ca{G}$ be a uniform pro-$p$ group, $V$ a $\bb{Q}_p$-Banach space endowed with a $\bb{Q}_p$-linear action of $\ca{G}$. Assume that $\ca{G}$ preserves the norm of $V$ and induces a trivial action on $V^{\leq 1}/p^2V^{\leq 1}$. Then, $V=V^{\ca{G}\trm{-}\mrm{an}}$. In particular, for any object $V_0$ of $\repnpr(\ca{G},\bb{Q}_p)$, there exists a uniform pro-$p$ open subgroup $\ca{G}_0$ of $\ca{G}$ such $V_0=V_0^{\ca{G}_0\trm{-}\mrm{an}}$.
\end{mylem}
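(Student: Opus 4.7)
The plan is to explicitly write down the Taylor expansion of $f_v:\ca{G}\to V$, $g\mapsto g(v)$, around the identity in coordinates of the second kind. First I fix a minimal topological generating set $g_1,\dots,g_d$ of $\ca{G}$, giving by \eqref{eq:coordinates-second} a homeomorphism $\bb{Z}_p^d\iso\ca{G}$, $(b_1,\dots,b_d)\mapsto g_1^{b_1}\cdots g_d^{b_d}$. By \ref{rem:analytic-function-lie}, analyticity can be tested through this coordinate system. The hypothesis $(g_i-\id)(V^{\leq 1})\subseteq p^2 V^{\leq 1}$ brings us exactly into the situation of \ref{para:small} (with $\alpha=p^2$ and $v_K(\alpha)=2>\tfrac{1}{p-1}$), so the series $\phi_i:=\log(g_i)=\sum_{n\geq 1}(-1)^{n-1}(g_i-\id)^n/n$ converges to a $\bb{Q}_p$-linear endomorphism of $V$ sending $V^{\leq 1}$ into $p^2 V^{\leq 1}$, and $g_i^{b_i}=\exp(b_i\phi_i)$ as endomorphisms of $V$ for all $b_i\in\bb{Z}_p$, using the formal identity $\exp\circ\log=\id$ together with continuity of $b_i\mapsto g_i^{b_i}$ (which agrees with $\exp(b_i\phi_i)$ on the dense subset $\bb{Z}\subseteq\bb{Z}_p$).

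Expanding each factor as an exponential series, I set
\begin{align*}
v_{\underline{n}}\;:=\;\frac{1}{n_1!\cdots n_d!}\,(\phi_1^{n_1}\circ\cdots\circ\phi_d^{n_d})(v),\qquad \underline{n}=(n_1,\dots,n_d)\in\bb{N}^d.
\end{align*}
For $v\in V^{\leq 1}$, the nested inclusions $\phi_i(V^{\leq 1})\subseteq p^2 V^{\leq 1}$ give $(\phi_1^{n_1}\circ\cdots\circ\phi_d^{n_d})(v)\in p^{2|\underline{n}|}V^{\leq 1}$, while the factorials contribute at most $|n_1!\cdots n_d!|_p^{-1}\leq p^{|\underline{n}|/(p-1)}$ (since $v_p(n!)\leq (n-1)/(p-1)$). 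Hence $|v_{\underline{n}}|\leq p^{-|\underline{n}|(2p-3)/(p-1)}$, which tends to $0$ as $|\underline{n}|\to\infty$ for every prime $p$. This shows the power series $\sum_{\underline{n}}b_1^{n_1}\cdots b_d^{n_d}v_{\underline{n}}$ converges for every $(b_1,\dots,b_d)\in\bb{Z}_p^d$; checking equality with $g_1^{b_1}\cdots g_d^{b_d}(v)$ either by induction on $d$ (using continuity of each $g_i^{b_i}$ to commute it past the convergent inner series) or by verifying both sides agree on the dense subset $\bb{Z}^d$ shows $f_v$ is analytic, hence $v$ is $\ca{G}$-analytic. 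Since $V=V^{\leq 1}[1/p]$, we get $V=V^{\ca{G}\trm{-}\mrm{an}}$.

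For the ``in particular'' statement, given $V_0\in\repnpr(\ca{G},\bb{Q}_p)$, I pick any $\bb{Z}_p$-lattice $V_0^+\subset V_0$ so that the norm defined by $V_0^+$ induces the canonical topology (\ref{para:normed-module-2}). The continuous representation $\rho:\ca{G}\to\mrm{GL}(V_0)$ pulls back the open neighbourhood $\id+p^2\mrm{End}_{\bb{Z}_p}(V_0^+)$ of the identity in $\mrm{GL}(V_0)$ to an open subgroup of $\ca{G}$, inside which I select a uniform pro-$p$ open subgroup $\ca{G}_0$ (possible by \ref{defn:p-analytic-group}). By construction, $\ca{G}_0$ stabilises $V_0^+$ (hence preserves the norm) and acts trivially on $V_0^+/p^2 V_0^+$, so the first part applies and gives $V_0=V_0^{\ca{G}_0\trm{-}\mrm{an}}$. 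No substantial obstacle is expected; the only delicate point is the $p$-adic bookkeeping ensuring $|v_{\underline{n}}|\to 0$, which succeeds precisely because the smallness hypothesis is ``trivial mod $p^2$'' rather than mod $p$ (for $p=2$ the exponent $(2p-3)/(p-1)$ collapses to $1$, exactly the borderline case).
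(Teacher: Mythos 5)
Your argument is correct and follows essentially the same route as the paper: express $g_1^{b_1}\cdots g_d^{b_d}(v)$ in coordinates of the second kind as a product of convergent exponential series $\exp(b_i\log(g_i))$, note that each $\log(g_i)$ sends $V^{\leq 1}$ into $p^2 V^{\leq 1}$ so the resulting multi-series has coefficients tending to $0$, and for the finite-dimensional case shrink $\ca{G}$ so that $\rho(\ca{G}_0)\subseteq\id+p^2\mrm{End}_{\bb{Z}_p}(V_0^+)$. You are merely more explicit than the paper about the $p$-adic bookkeeping (the bound $|v_{\underline{n}}|\leq p^{-|\underline{n}|(2p-3)/(p-1)}$ and the rearrangement of the iterated series), which the paper leaves as "clearly analytic."
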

\begin{proof}
	For any $g\in \ca{G}$, as $(g-1)(V^{\leq 1})\subseteq p^2V^{\leq 1}$, we have $g=\exp(\log(g))$ by \ref{para:small} where $\exp$ and $\log$ are defined in loc.cit. We take a minimal topological generating set $(g_1,\dots,g_d)$ of $\ca{G}$ (cf. \ref{para:L_G}) so that we obtain a system of coordinates of second kind \eqref{eq:coordinates-second},
	\begin{align}
		\bb{Z}_p^d&\longrightarrow \ca{G},\ (a_1,\dots,a_d)\mapsto g_1^{a_1}\cdots g_d^{a_d}.
	\end{align}
	For any $v\in V$, we have
	\begin{align}
		g_1^{a_1}\cdots g_d^{a_d}(v)=&\exp(a_1\log(g_1))\cdots\exp(a_d\log(g_d))(v)\quad\trm{(cf. \ref{para:small})}\\
		=&\left(\sum_{n=0}^{\infty}\frac{a_1^n}{n!}\log(g_1)^n\right)\cdots\left(\sum_{n=0}^{\infty}\frac{a_d^n}{n!}\log(g_d)^n\right)(v),\nonumber
	\end{align}
	which is clearly an analytic $V$-valued function on variables $(a_1,\dots,a_d)\in \bb{Z}_p^d$ (as $\log(g_i)\in p^2\mrm{End}_{\bb{Z}_p}(V^{\leq 1})$). Thus, the function $f_v:\ca{G}\to V$ sending $g$ to $gv$ is analytic by \ref{rem:analytic-function-lie}, i.e. $v$ is $\ca{G}$-analytic.
	
	For the ``in particular'' part, we fix a $\bb{Q}_p$-basis of $V_0$ so that we obtain a continuous group homomorphism $\rho:\ca{G}\to \mrm{GL}_d(\bb{Q}_p)$. Let $\ca{G}_0$ be a sufficiently small uniform pro-$p$ open subgroup of $\ca{G}$ whose image under $\rho$ is contained in $\id+p^2\mrm{M}_d(\bb{Z}_p)$. Thus, the lattice $V_0^+=\bb{Z}_p^d\subseteq \bb{Q}_p^d=V_0$ is $\ca{G}_0$-stable, and $\ca{G}_0$ acts trivially on $V_0^+/p^2V_0^+$. By the assertion we just proved above, we see that $V_0=V_0^{\ca{G}_0\trm{-}\mrm{an}}$.
\end{proof}

	Moreover, one can get a slightly stronger result for the analytic vectors in $\scr{C}^{\mrm{an}}(\ca{G},V)$.

\begin{mylem}\label{lem:analytic-function-space-an}
	Let $\ca{G}$ be a uniform pro-$p$ group, $V$ a $\bb{Q}_p$-Banach space. Endowing the $\bb{Q}_p$-Banach space $\scr{C}^{\mrm{an}}(\ca{G},V)$ with the left or right translation action of $\ca{G}$, then $\scr{C}^{\mrm{an}}(\ca{G},V)=\scr{C}^{\mrm{an}}(\ca{G},V)^{\ca{G}\trm{-}\mrm{an}}$.
\end{mylem}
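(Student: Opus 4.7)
The approach is to expand the orbit map $g\mapsto \rho(g)f$ explicitly as a power series in coordinates on $\ca{G}$, using the Baker--Campbell--Hausdorff description of multiplication in $\ca{G}$. We handle the right translation $1\otimes\rho_{\rr}$; the left translation case follows by the analogous argument (using $\log(g^{-1}h)=(-\log g)*_{\mrm{BCH}}\log h$), or by observing that the analytic involution $f\mapsto (h\mapsto f(h^{-1}))$ preserves $\scr{C}^{\mrm{an}}(\ca{G},V)$ and intertwines the left and right translation actions up to $g\mapsto g^{-1}$.

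Fix $f\in\scr{C}^{\mrm{an}}(\ca{G},V)$ and a $\bb{Z}_p$-basis $\partial_1,\dots,\partial_d$ of $L_\ca{G}$. Using the coordinates of the first kind \eqref{eq:coordinates-first}, parametrise $g=\exp(\underline{a}\cdot\partial)$ and $h=\exp(\underline{x}\cdot\partial)$ by $(\underline{a},\underline{x})\in\bb{Z}_p^d\times\bb{Z}_p^d$. The first step is to note that, since $L_\ca{G}$ is powerful (so $[L_\ca{G},L_\ca{G}]\subseteq p^{\epsilon} L_\ca{G}$ with $\epsilon\in\{1,2\}$), the BCH series \eqref{eq:para:L-*} converges on $L_\ca{G}\times L_\ca{G}$ and expresses
\[
\log(hg)\;=\;(\underline{x}\cdot\partial)*_{\mrm{BCH}}(\underline{a}\cdot\partial)\;=\;\sum_{\ell=1}^{d}P_\ell(\underline{x},\underline{a})\,\partial_\ell,
\]
where each $P_\ell$ is an analytic function on $\bb{Z}_p^{2d}$ with integral coefficients satisfying $P_\ell(\underline{x},0)=x_\ell$ and $P_\ell(0,\underline{a})=a_\ell$. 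This is essentially the content of the correspondence \ref{thm:powerful-lie} between uniform pro-$p$ groups and powerful Lie algebras (cf.\ the proof of \cite[Theorem 9.8]{ddms1999lie}), in which the BCH denominators are absorbed by the $p^\epsilon$-factors in iterated Lie brackets.

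Substituting into the analytic function $f\circ\exp$, the standard fact that composition of convergent power series on $\bb{Z}_p^{n}$ with zero-tending coefficients remains of this form yields a convergent double expansion
\[
f(hg)\;=\;\sum_{\alpha,\beta\in\bb{N}^d}v_{\alpha\beta}\,\underline{x}^{\alpha}\underline{a}^{\beta},\qquad v_{\alpha\beta}\in V, \quad |v_{\alpha\beta}|_V\longrightarrow 0 \text{ as }|\alpha|+|\beta|\to\infty.
\]
For each $\beta\in\bb{N}^d$, let $F_\beta\in\scr{C}^{\mrm{an}}(\ca{G},V)$ be the analytic function whose coefficients are $(v_{\alpha\beta})_\alpha$, so that $|F_\beta|_\ca{G}=\sup_\alpha|v_{\alpha\beta}|_V$. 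The bivariate vanishing condition forces $|F_\beta|_\ca{G}\to 0$ as $|\beta|\to\infty$: given $\epsilon>0$, choose $N$ with $|v_{\alpha\beta}|_V<\epsilon$ whenever $|\alpha|+|\beta|\geq N$; then for $|\beta|\geq N$ this bound holds for every $\alpha$, giving $|F_\beta|_\ca{G}<\epsilon$. Consequently $(1\otimes\rho_{\rr}(g))f=\sum_{\beta}F_\beta\,\underline{a}^{\beta}$ is a convergent power-series expansion in $\underline{a}$ with coefficients in $\scr{C}^{\mrm{an}}(\ca{G},V)$ whose norms tend to zero, which by definition exhibits $g\mapsto(1\otimes\rho_{\rr}(g))f$ as an analytic function valued in $\scr{C}^{\mrm{an}}(\ca{G},V)$; hence $f\in\scr{C}^{\mrm{an}}(\ca{G},V)^{\ca{G}\trm{-}\mrm{an}}$.

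The principal technical point is the first step: the restricted-power-series expansion of the BCH multiplication on $L_\ca{G}\times L_\ca{G}$, which is precisely where the powerful hypothesis on $L_\ca{G}$ is essential. Once this is in hand, the remainder is formal manipulation of $p$-adic convergent series with values in a Banach space, so no further obstacle arises.
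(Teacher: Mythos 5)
Your proposal is correct and follows essentially the same route as the paper's proof: both rely on the analyticity of the Baker--Campbell--Hausdorff multiplication on $L_\ca{G}\times L_\ca{G}$ (the paper cites \cite[Lemma~9.12]{ddms1999lie} directly for this; you sketch the underlying reasoning from the powerful hypothesis), expand the orbit map as a bivariate convergent power series with zero-tending coefficients, and then regroup by one variable to exhibit it as an analytic function valued in $\scr{C}^{\mrm{an}}(\ca{G},V)$. The only cosmetic differences are that the paper treats the left translation first and invokes the cited DDMS lemma rather than rederiving the BCH expansion, but the substance of the argument is identical.
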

\begin{proof}
	Firstly, the action of $\ca{G}$ on $\scr{C}^{\mrm{an}}(\ca{G},V)$ is well-defined and continuous by \ref{lem:group-action-analytic}. Thus, we can talk about $\ca{G}$-analytic vectors in $\scr{C}^{\mrm{an}}(\ca{G},V)$ by \ref{defn:analytic-vector}. We fix a $\bb{Z}_p$-basis of $L_{\ca{G}}$. For any $f\in \scr{C}^{\mrm{an}}(L_\ca{G},V)$, consider the function $\psi:L_\ca{G}\times L_\ca{G}\to V$ sending $(x,y)$ to $f(x*y)$, where the operation $x*y$ is given by the Baker-Campbell-Hausdorff formula \eqref{eq:para:L-*} (which defines the multiplication in $\ca{G}$). Notice that $\psi$ is analytic by \cite[Lemma 9.12]{ddms1999lie}, i.e. there exists a unique element $v_{\underline{k},\underline{l}}\in V$ for any $\underline{k},\underline{l}\in\bb{N}^d$ such that $|v_{\underline{k},\underline{l}}|\to 0$ when $|\underline{k}|+|\underline{l}|\to \infty$, and that for any $x=(x_1,\dots,x_d),y=(y_1,\dots,y_d)\in\bb{Z}_p^d$, we have
	\begin{align}\label{eq:lem:group-action-analytic}
		f(x*y)=\sum_{\underline{k},\underline{l}\in\bb{N}^d} x_1^{k_1}\cdots x_d^{k_d}y_1^{l_1}\cdots y_d^{l_d} v_{\underline{k},\underline{l}}.
	\end{align}
	In particular, the function $f_{\underline{k}}$ sending $y$ to $\sum_{\underline{l}\in\bb{N}^d}y_1^{l_1}\cdots y_d^{l_d} v_{\underline{k},\underline{l}}$ is analytic, and we have $|f_{\underline{k}}|\to 0$ when $|\underline{k}|\to \infty$. Thus, the function on $L_{\ca{G}}$ sending $x$ to ${}_xf=\sum_{\underline{k}\in\bb{N}^d} x_1^{k_1}\cdots x_d^{k_d}f_{\underline{k}}$ is analytic, which shows that $f$ is a $\ca{G}$-analytic vector of $\scr{C}^{\mrm{an}}(\ca{G},V)$ with respect to the left translation. The proof for right translation is analogous.
\end{proof}

The continuity condition of the $\ca{G}$-action on $V$ in definition \ref{defn:analytic-vector} is used to define infinitesimal actions as follows.

\begin{myprop}\label{prop:analytic-derivative}
	Let $\ca{G}$ be a uniform pro-$p$ group, $V$ a $\bb{Q}_p$-Banach space endowed with a $\bb{Q}_p$-linear continuous action of $\ca{G}$. Then, for any $g\in G$, there exists a unique $\bb{Q}_p$-linear endomorphism $\varphi_g$ of $V^{\ca{G}\trm{-}\mrm{an}}$ defined by
	\begin{align}\label{eq:prop:analytic-derivative-defn}
		\varphi_g(v)=\lim_{\bb{Z}_p\setminus\{0\}\ni a\to 0} a^{-1}(g^a-1)(v),\quad \forall v\in V^{\ca{G}\trm{-}\mrm{an}},
	\end{align}
	such that for any $a\in\bb{Z}_p$,
	\begin{align}\label{eq:prop:analytic-derivative}
		g^a(x)=\exp(a\varphi_g)(v)=\sum_{k=0}^\infty \frac{a^k}{k!}(\underbrace{\varphi_g\circ\cdots \circ\varphi_g}_{k\trm{ copies}})(v).
	\end{align}
\end{myprop}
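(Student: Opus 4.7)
The plan is to realize $V^{\ca{G}\trm{-}\mrm{an}}$ as a subspace of $\scr{C}^{\mrm{an}}(\ca{G},V)$ via the $\ca{G}$-equivariant embedding $v\mapsto f_v$ of \ref{lem:analytic-vector-pre}, and then exploit \ref{lem:analytic-function-space-an}, which asserts that under right translation $\scr{C}^{\mrm{an}}(\ca{G},V)$ is itself fully $\ca{G}$-analytic. Composing the right-translation orbit of $f_v$ with the one-parameter subgroup $a\mapsto g^a$ will yield an analytic $\scr{C}^{\mrm{an}}(\ca{G},V)$-valued function of $a$ whose Taylor coefficients record all iterates $\varphi_g^k(v)/k!$ simultaneously.

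Fix $v\in V^{\ca{G}\trm{-}\mrm{an}}$ and $g\in\ca{G}$. By the coordinates of the first kind \eqref{eq:coordinates-first}, $a\mapsto g^a=\exp(a\log(g))$ is an analytic map $\bb{Z}_p\to\ca{G}$, and by \ref{lem:analytic-function-space-an} the orbit map $g'\mapsto g'\cdot f_v$ is analytic as a map $\ca{G}\to\scr{C}^{\mrm{an}}(\ca{G},V)$. Their composition gives an analytic expansion
\begin{align*}
g^a\cdot f_v=\sum_{k\geq 0} a^k h_k,\qquad h_k\in\scr{C}^{\mrm{an}}(\ca{G},V),\ |h_k|_{\ca{G}}\to 0.
\end{align*}
Evaluating at any $g'\in\ca{G}$ and using that the $\ca{G}$-action on $V$ is continuous, we may interchange $g'$ with the convergent sum to get $\sum_k a^k h_k(g')=f_v(g'g^a)=g'(g^av)=\sum_k a^k g'(c_k)$, where $c_k:=h_k(1)\in V$. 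Matching coefficients in $a$ forces $h_k=f_{c_k}$, so by \ref{lem:analytic-vector-pre} each $c_k$ lies in $V^{\ca{G}\trm{-}\mrm{an}}$. Setting $\varphi_g(v):=c_1$ defines a $\bb{Q}_p$-linear endomorphism of $V^{\ca{G}\trm{-}\mrm{an}}$ that satisfies the limit formula \eqref{eq:prop:analytic-derivative-defn}, since $a^{-1}(g^av-v)=c_1+ac_2+a^2c_3+\cdots\to c_1$ as $a\to 0$; uniqueness of such a $\varphi_g$ is immediate from \eqref{eq:prop:analytic-derivative-defn}.

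It remains to identify $c_k$ with $\varphi_g^k(v)/k!$. Set $\phi(a):=g^av$, so $\phi(a)=\sum_k c_k a^k$ with $c_k\in V^{\ca{G}\trm{-}\mrm{an}}$. The cocycle identity $\phi(a+b)=g^a\phi(b)$, together with the previous construction applied to $\phi(b)\in V^{\ca{G}\trm{-}\mrm{an}}$ in place of $v$, exhibits $(a,b)\mapsto\phi(a+b)$ as jointly analytic in a neighbourhood of $(0,b)$ with leading term in $a$ equal to $\varphi_g(\phi(b))$; differentiating at $a=0$ yields $\phi'(b)=\varphi_g(\phi(b))$. Since $\varphi_g$ stabilises $V^{\ca{G}\trm{-}\mrm{an}}$, an induction on $k$ gives $\phi^{(k)}(0)=\varphi_g^k(v)$, and matching this with the Taylor expansion of $\phi$ at $0$ gives $c_k=\varphi_g^k(v)/k!$, which is exactly \eqref{eq:prop:analytic-derivative}.

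The main obstacle is ensuring that the Taylor coefficients $c_k$ of $\phi(a)=g^av$ land back in $V^{\ca{G}\trm{-}\mrm{an}}$, so that iterating $\varphi_g$ makes sense and the cocycle-ODE argument in the last step can be carried out. This is precisely what is gained by working inside $\scr{C}^{\mrm{an}}(\ca{G},V)$ and invoking \ref{lem:analytic-function-space-an}: it upgrades mere analyticity of $\phi$ in $a$ to analyticity of $g^a\cdot f_v$ in $a$ as an element of the function space $\scr{C}^{\mrm{an}}(\ca{G},V)$, which via \ref{lem:analytic-vector-pre} records the analyticity of each coefficient $c_k$ as an element of $V^{\ca{G}\trm{-}\mrm{an}}$.
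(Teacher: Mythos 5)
Your proof is correct, and it follows the paper's overall strategy — expand $g^av=\sum_k a^kc_k$, use the cocycle identity $g^{a+b}v=g^a(g^bv)$, and match Taylor coefficients to get $(k+1)c_{k+1}=\varphi_g(c_k)$ and hence $c_k=\varphi_g^k(v)/k!$. The one place where you genuinely diverge, and in fact tighten the argument, is in establishing that the coefficients $c_k$ lie in $V^{\ca{G}\trm{-}\mrm{an}}$. The paper derives $g^bv_k=\sum_{l\geq 0}\binom{k+l}{l}b^lv_{k+l}$ from the cocycle identity and asserts that this shows $v_k\in V^{\ca{G}\trm{-}\mrm{an}}$; but on its face that identity only exhibits analyticity of $b\mapsto g^bv_k$ along the one-parameter subgroup through $g$, not analyticity of the full orbit map $g'\mapsto g'v_k$ on $\ca{G}$. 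You instead lift to $\scr{C}^{\mrm{an}}(\ca{G},V)$ via the $\ca{G}$-equivariant embedding $v\mapsto f_v$ from \ref{lem:analytic-vector-pre}, invoke \ref{lem:analytic-function-space-an} to get an expansion $\rho_{\rr}(g^a)f_v=\sum_ka^kh_k$ with $h_k\in\scr{C}^{\mrm{an}}(\ca{G},V)$, and then identify $h_k=f_{c_k}$ by evaluation and coefficient matching, which delivers $c_k\in V^{\ca{G}\trm{-}\mrm{an}}$ cleanly. This is the more careful route to that step; the paper does not cite \ref{lem:analytic-function-space-an} in its proof, and your detour through the function space is what actually buys full $\ca{G}$-analyticity of the coefficients rather than just one-parameter analyticity. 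The only minor presentational issue is your ``differentiating at $a=0$'' phrasing in the last paragraph — this is best read as matching the coefficient of $a^1$ on both sides of $\phi(a+b)=g^a\phi(b)$, exactly as in the paper — but the content is right.
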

\begin{proof}
	Let $v\in V^{\ca{G}\trm{-}\mrm{an}}$ and $g\in \ca{G}$. As $\bb{Z}_p\to\ca{G}$ sending $a$ to $g^a$ is a continuous homomorphism of uniform pro-$p$ groups, $v$ is also $\bb{Z}_p$-analytic, i.e. there exists a unique element $v_k\in V$ for any $k\in\bb{N}$ such that $|v_k|\to 0$ when $k\to \infty$, and that for any $a\in \bb{Z}_p$,
	\begin{align}
		g^av=\sum_{k=0}^\infty a^kv_k.
	\end{align}
	We see easily that $v_0=v$ by taking $a=0$ and that
	\begin{align}
		v_1=\lim_{\bb{Z}_p\setminus\{0\}\ni a\to 0} a^{-1}(g^a-1)(v).
	\end{align}
	Thus, $\varphi_g(v)=v_1$ is a well-defined element of $V$. We claim that $v_1\in V^{\ca{G}\trm{-}\mrm{an}}$ (so that $\varphi_g$ is a $\bb{Q}_p$-linear endomorphism of $V^{\ca{G}\trm{-}\mrm{an}}$) and $v_k=\varphi_g^k(v)/k!$ for any $k\geq 0$ (so that \eqref{eq:prop:analytic-derivative} follows). Indeed, for any $b\in\bb{Z}_p$, we have
	\begin{align}
		g^{a+b}v=g^b\left(\sum_{k=0}^\infty a^kv_k\right)=\sum_{k=0}^\infty a^kg^bv_k
	\end{align}
	where the second equality follows from the continuity of the $\ca{G}$-action on $V$; on the other hand,
	 \begin{align}
	 	g^{a+b}v=\sum_{k=0}^\infty (a+b)^kv_k=\sum_{k=0}^\infty a^k\left(\sum_{l=0}^\infty \binom{k+l}{l}b^lv_{k+l}\right)
	 \end{align}
 	where the second equality follows from the absolute convergence condition $|v_k|\to 0$ when $k\to \infty$. Combining the two expressions for $g^{a+b}v$, we get
 	\begin{align}
 		g^bv_k=\sum_{l=0}^\infty \binom{k+l}{l}b^lv_{k+l},
 	\end{align}
 	which shows that $v_k\in V^{\ca{G}\trm{-}\mrm{an}}$ and $v_{k+1}=\varphi_g(v_k)/(k+1)$. The claim follows by induction.
\end{proof}

\begin{mylem}\label{lem:analytic-derivative}
	With the notation in {\rm\ref{prop:analytic-derivative}}, we endow $V^{\ca{G}\trm{-}\mrm{an}}$ with the norm induced from the norm $|\ |_{\ca{G}}$ on $\scr{C}^{\mrm{an}}(\ca{G},V)$ via the canonical injection \eqref{eq:analytic-vector-pre-2}. Then, the map
	\begin{align}
		\phi:\bb{Z}_p\times\ca{G}\times V^{\ca{G}\trm{-}\mrm{an}}\longrightarrow V^{\ca{G}\trm{-}\mrm{an}},
	\end{align}
	sending $(0,g,v)$ to $\varphi_g(v)$ and sending $(a,g,v)$ to $a^{-1}(g^av-v)$ for $a\neq 0$, is continuous.
\end{mylem}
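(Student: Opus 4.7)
The plan is to work in the ambient Banach space $\scr{C}^{\mrm{an}}(\ca{G}, V)$ via the isometric $\ca{G}$-equivariant embedding $V^{\ca{G}\trm{-}\mrm{an}} \hookrightarrow \scr{C}^{\mrm{an}}(\ca{G}, V)$ sending $v \mapsto f_v$ (with the right-translation action on the target), which comes from \ref{lem:analytic-vector-pre} and \ref{lem:group-action-analytic}. Under this embedding, $f_{\phi(a,g,v)}(h) = a^{-1}(f_v(hg^a) - f_v(h))$ for $a \ne 0$ and $f_{\phi(0,g,v)}(h) = h\varphi_g(v)$, so it suffices to prove continuity of the analogous map
\begin{align*}
\Psi: \bb{Z}_p \times \ca{G} \times \scr{C}^{\mrm{an}}(\ca{G}, V) \longrightarrow \scr{C}^{\mrm{an}}(\ca{G}, V),
\end{align*}
$\Psi(a, g, f)(h) = a^{-1}(f(hg^a) - f(h))$ for $a \ne 0$ (with value the infinitesimal right-translation action at $a=0$, well-defined by \ref{lem:analytic-function-space-an}). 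By the $\bb{Q}_p$-linearity of $\Psi$ in $f$, continuity reduces to a uniform operator bound $|\Psi(a, g, -)|_{\ca{G}} \le 1$ together with continuity of $\Psi(\cdot, \cdot, f_0)$ for each fixed $f_0$.

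For the operator bound, I would use that for any $a \in p^n \bb{Z}_p$ one has $g^a \in \ca{G}^{p^n}$, so by \ref{lem:group-action-analytic} the operator $R_{g^a} - 1$ carries $\scr{C}^{\mrm{an}}(\ca{G}, V)^{\le 1}$ into $p^n \scr{C}^{\mrm{an}}(\ca{G}, V)^{\le 1}$; multiplying by $a^{-1}$, of absolute value at most $p^n$, yields $|\Psi(a, g, f)|_{\ca{G}} \le |f|_{\ca{G}}$ for $a \ne 0$, and passing to the limit gives the same bound at $a = 0$. For the continuity of $\Psi(\cdot, \cdot, f_0)$, fix a $\bb{Z}_p$-basis of $L_{\ca{G}}$ and use the coordinates of the first kind $\exp: L_{\ca{G}} \iso \ca{G}$. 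Writing $g = \exp(y)$, $h = \exp(z)$ and invoking the Baker--Campbell--Hausdorff formula $hg^a = \exp(z * ay)$, the observation that every BCH term past $z + ay$ contains at least one bracket involving $ay$ gives $z * ay = z + a \beta(z, y, a)$ for some $\beta$ analytic on $\bb{Z}_p^{2d+1}$ (cf. \cite[Lemma 9.12]{ddms1999lie}). Expanding $f_0 \circ \exp(w) = \sum_{\underline{n}} w^{\underline{n}} f_{0,\underline{n}}$ with $|f_{0,\underline{n}}| \to 0$, we obtain
\begin{align*}
\Psi(a, g, f_0)(\exp(z)) = \sum_{\underline{n}} D_{\underline{n}}(z, y, a)\, f_{0,\underline{n}}, \qquad D_{\underline{n}}(z, y, a) = a^{-1}\bigl((z + a\beta)^{\underline{n}} - z^{\underline{n}}\bigr),
\end{align*}
where each $D_{\underline{n}}$ extends to an analytic function on $\bb{Z}_p^{2d+1}$ (the division by $a$ is removable since $z * ay - z$ is divisible by $a$). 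Applying the operator bound to the scalar case $V = \bb{Q}_p$ shows that the $z^{\underline{m}}$-coefficients $D_{\underline{n},\underline{m}}(y, a)$ of $D_{\underline{n}}$ satisfy $|D_{\underline{n},\underline{m}}(y, a)| \le 1$ uniformly in all indices.

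The main obstacle is deducing from this that $(a, g) \mapsto \Psi(a, g, f_0)$ is continuous into $\scr{C}^{\mrm{an}}(\ca{G}, V)$, which requires uniform (in $\underline{m}$) control of the modulus of continuity of the $z^{\underline{m}}$-coefficient $\sum_{\underline{n}} D_{\underline{n},\underline{m}}(y, a) f_{0,\underline{n}}$ at $(y_0, a_0)$. Splitting the sum over $\underline{n}$ into indices with $|\underline{n}| \le N$ and $|\underline{n}| > N$, the tail is bounded by $\sup_{|\underline{n}| > N} |f_{0,\underline{n}}|$ using $|D_{\underline{n},\underline{m}}| \le 1$, which is small by the decay of the $f_{0,\underline{n}}$; for the finite part, one uses the analyticity of the $D_{\underline{n}}$ in $(y, a)$. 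The uniformity in $\underline{m}$ for the finite part is the technical heart of the argument, and should follow from the uniform $p$-adic convergence of the BCH series on the uniform pro-$p$ group $\ca{G}$ governing the joint Taylor coefficients of $D_{\underline{n}}$ in $(z, y, a)$. Combining this with the operator bound via the telescoping $\phi(a,g,v) - \phi(a_0,g_0,v_0) = \phi(a,g,v-v_0) + (\phi(a,g,v_0) - \phi(a_0,g_0,v_0))$ then yields the desired joint continuity.
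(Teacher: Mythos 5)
Your approach uses the right ingredients --- the isometric embedding $v\mapsto f_v$ into $\scr{C}^{\mrm{an}}(\ca{G},V)$, the operator bound from \ref{lem:group-action-analytic}, and (implicitly) the analyticity of the translation action from \ref{lem:analytic-function-space-an} --- but you organize them less efficiently than the paper does, and the ``technical heart'' you identify but do not resolve is a genuine gap in your write-up. The paper's simplification, which you miss, is to parameterize only $g=\exp(\sum_{i=1}^da_iu_i)$ by coordinates of the first kind and then use the identity $\exp(u)^{a_0}=\exp(a_0u)$ from \ref{para:L_G}: with it, $g^{a_0}v$ is literally the evaluation of the analytic expansion $\exp(\sum b_iu_i)v=\sum_{\underline{m}}b^{\underline{m}}v_{\underline{m}}$ at $\underline{b}=a_0\underline{a}$, which gives the closed form
\begin{align*}
\phi\bigl(a_0,\exp(\textstyle\sum_ia_iu_i),v\bigr)=\sum_{\underline{m}\in\bb{N}^d\setminus\{\underline{0}\}}a_0^{|\underline{m}|-1}a_1^{m_1}\cdots a_d^{m_d}\,v_{\underline{m}},
\end{align*}
whose dependence on $(a_0,\dots,a_d)$ is through manifestly continuous $\bb{Z}_p$-valued scalars. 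You instead also parameterize the argument $h=\exp(z)$ of $f_v$, which is what drags the Baker--Campbell--Hausdorff expansion and the auxiliary analytic functions $D_{\underline{n}}$ into the picture; the paper never introduces $h$ and so never needs BCH to express $\phi$.

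What actually closes the gap (in either formulation) is \ref{lem:analytic-function-space-an}, which you cite but underuse: applied to $f_v$ with the right translation action, it says $g\mapsto R_gf_v=f_{gv}$ is itself an analytic function $\ca{G}\to\scr{C}^{\mrm{an}}(\ca{G},V)$, i.e.\ the coefficients satisfy $|v_{\underline{m}}|_{\ca{G}}\to 0$, not merely $|v_{\underline{m}}|\to 0$ in $V$. Combined with the closed formula above, this shows $(a_0,\dots,a_d)\mapsto\phi(a_0,\exp(\sum_ia_iu_i),v)$ is an \emph{analytic} map $\bb{Z}_p^{1+d}\to V^{\ca{G}\trm{-}\mrm{an}}$ for fixed $v$, hence continuous; your operator bound $|\phi(a,g,\cdot)|_{\ca{G}}\le 1$ then upgrades this to joint continuity by the telescoping you already describe. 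Your BCH route is heading to the same conclusion --- the joint decay of $D_{\underline{n},\underline{m},\underline{k},l}$ encodes the same information --- but as written it stops at the assertion that uniformity in $\underline{m}$ ``should follow from the uniform $p$-adic convergence of the BCH series,'' and that assertion is exactly the content of \ref{lem:analytic-function-space-an} that you need to extract and invoke explicitly.
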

\begin{proof}
	We fix a $\bb{Z}_p$-basis $u_1,\dots,u_d$ of $L_{\ca{G}}$. By the homeomorphism $\exp:L_{\ca{G}}\to\ca{G}$, it suffices to verify the continuity of the map
	\begin{align}
		\psi:\bb{Z}_p\times\bb{Z}_p^d\times V^{\ca{G}\trm{-}\mrm{an}}\longrightarrow V^{\ca{G}\trm{-}\mrm{an}}
	\end{align}
	sending $(a_0,a_1,\dots,a_d,v)$ to $\phi(a_0,\exp(\sum_{i=1}^da_iu_i),v)$. For any $v\in V^{\ca{G}\trm{-}\mrm{an}}$, there exists a unique element $v_{\underline{m}}\in V$ for any $\underline{m}=(m_1,\dots,m_d)\in\bb{N}^d$ such that $|v_{\underline{m}}|\to 0$ when $|\underline{m}|\to \infty$, and that for any $(a_1,\dots,a_d)\in\bb{Z}_p^d$,
	\begin{align}
		\exp(\sum_{i=1}^da_iu_i)v=\sum_{\underline{m}\in\bb{N}^d}  a_1^{m_1}\cdots a_d^{m_d}v_{\underline{m}}.
	\end{align}
	Since $v_{\underline{0}}=v$ and $\exp(\sum_{i=1}^da_iu_i)^{a_0}=\exp(\sum_{i=1}^da_0a_iu_i)$, we see that for $a_0\neq 0$,
	\begin{align}\label{eq:lem:analytic-derivative}
		\psi(a_0,a_1,\dots,a_d,v)=\sum_{\underline{m}\in\bb{N}^d\setminus\{\underline{0}\}}  a_0^{|\underline{m}|-1}a_1^{m_1}\cdots a_d^{m_d}v_{\underline{m}}.
	\end{align}
	As the right hand side is continuous with respect to the variable $a_0$, we see that this formula remains valid for $a_0=0$ by the definition of $\varphi_g$ \eqref{eq:prop:analytic-derivative-defn}. Notice that $|v|_{\ca{G}}=\sup_{\underline{m}\in\bb{N}^d}|v_{\underline{m}}|$ by definition. Thus, one deduces easily from the formula \eqref{eq:lem:analytic-derivative} the continuity of $\psi$.
\end{proof}

\begin{mycor}\label{cor:analytic-derivative}
	With the notation in {\rm\ref{prop:analytic-derivative}}, there is a canonical morphism of $\bb{Q}_p$-linear Lie algebras induced by the infinitesimal action of $\ca{G}$,
	\begin{align}\label{eq:prop:analytic-derivative-lie}
		\varphi:\lie(\ca{G})\longrightarrow \mrm{End}_{\bb{Q}_p}(V^{\ca{G}\trm{-}\mrm{an}}).
	\end{align}
	More precisely, its composition with the logarithm map $\log_{\ca{G}}:\ca{G}\to \lie(\ca{G})$ is the map $\varphi:\ca{G}\to \mrm{End}_{\bb{Q}_p}(V^{\ca{G}\trm{-}\mrm{an}})$ sending $g$ to the infinitesimal action $\varphi_g$ \eqref{eq:prop:analytic-derivative-defn} of $g\in G$ on $V^{\ca{G}\trm{-}\mrm{an}}$.
\end{mycor}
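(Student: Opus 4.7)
The plan is to mirror closely the proof of \ref{cor:operator}, replacing the role of \ref{lem:derivative-uniform-cont} with \ref{lem:analytic-derivative}. Recall that $\lie(\ca{G})=\colim_{\ca{G}_0}L_{\ca{G}_0}\otimes_{\bb{Z}_p}\bb{Q}_p$, where $\ca{G}_0$ runs through uniform pro-$p$ open subgroups of $\ca{G}$, and each $L_{\ca{G}_0}$ has underlying set $\ca{G}_0$ with the identity map as $\log_{\ca{G}_0}$. Since $\log:\ca{G}_0\to L_{\ca{G}_0}$ is a bijection, uniqueness of $\varphi$ is clear, and it suffices to exhibit, for some cofinal family of uniform pro-$p$ open subgroups $\ca{G}_0\subseteq \ca{G}$, a $\bb{Z}_p$-linear Lie algebra homomorphism $L_{\ca{G}_0}\to \mrm{End}_{\bb{Q}_p}(V^{\ca{G}\trm{-}\mrm{an}})$ sending $g\in \ca{G}_0$ to $\varphi_g$, and then to check compatibility between different choices of $\ca{G}_0$.

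First I would observe that the formula $\varphi_{g^a}=a\varphi_g$ for $a\in\bb{Z}_p$ is immediate from \eqref{eq:prop:analytic-derivative}, so the map $g\mapsto \varphi_g$ on any $\ca{G}_0$ commutes with $\bb{Z}_p$-scalar multiplication on $L_{\ca{G}_0}$. The key point is to verify additivity and the Lie bracket identity. To do this I would endow $V^{\ca{G}\trm{-}\mrm{an}}$ with the norm induced from $\scr{C}^{\mrm{an}}(\ca{G},V)$ via \eqref{eq:analytic-vector-pre-2}, so that \ref{lem:analytic-derivative} provides the continuous map
\begin{align}
\phi:\bb{Z}_p\times \ca{G}\times V^{\ca{G}\trm{-}\mrm{an}}\longrightarrow V^{\ca{G}\trm{-}\mrm{an}}
\end{align}
sending $(0,g,v)$ to $\varphi_g(v)$ and $(a,g,v)$ to $a^{-1}(g^av-v)$ for $a\neq 0$. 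Given $g,g'\in \ca{G}_0$ and $v\in V^{\ca{G}\trm{-}\mrm{an}}$, I would apply $\phi$ to the convergent sequences $\{(p^n,(g^{p^n}g'^{p^n})^{p^{-n}},v)\}_n$ with limit $(0,g+_{\ca{G}_0}g',v)$ by \eqref{eq:para:L_G-add}, and $\{(p^{2n},(g^{-p^n}g'^{-p^n}g^{p^n}g'^{p^n})^{p^{-2n}},v)\}_n$ with limit $(0,[g,g']_{\ca{G}_0},v)$ by \eqref{eq:para:L_G-bracket}, obtaining
\begin{align}
\varphi_{g+_{\ca{G}_0}g'}(v)=\lim_{n\to\infty}p^{-n}(g^{p^n}g'^{p^n}v-v),\quad
\varphi_{[g,g']_{\ca{G}_0}}(v)=\lim_{n\to\infty}p^{-2n}(g^{-p^n}g'^{-p^n}g^{p^n}g'^{p^n}v-v).
\end{align}
Then, exactly as in the proof of \ref{cor:operator}, I would compare with $(\varphi_g+\varphi_{g'})(v)$ and $(\varphi_g\varphi_{g'}-\varphi_{g'}\varphi_g)(v)$ by applying continuity of $\phi$ to the sequences $\{(p^n,g,p^{-n}(g'^{p^n}v-v))\}_n$, using the identity $\lim_{n\to\infty}p^{-n}(g'^{p^n}v-v)=\varphi_{g'}(v)$ and its analogue of order two, to conclude both $\varphi_{g+_{\ca{G}_0}g'}=\varphi_g+\varphi_{g'}$ and $\varphi_{[g,g']_{\ca{G}_0}}=\varphi_g\varphi_{g'}-\varphi_{g'}\varphi_g$.

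This gives, for each uniform pro-$p$ open subgroup $\ca{G}_0$ of $\ca{G}$, a $\bb{Z}_p$-linear Lie algebra homomorphism $L_{\ca{G}_0}\to \mrm{End}_{\bb{Q}_p}(V^{\ca{G}\trm{-}\mrm{an}})$, which extends to $L_{\ca{G}_0}\otimes_{\bb{Z}_p}\bb{Q}_p$ after inverting $p$. Compatibility between two such subgroups $\ca{G}_0\subseteq \ca{G}_0'$ is automatic from the unicity of infinitesimal actions \eqref{eq:prop:analytic-derivative-defn}, which depend only on $g$ and $v$. Passing to the colimit over uniform pro-$p$ open subgroups yields the desired $\varphi:\lie(\ca{G})\to \mrm{End}_{\bb{Q}_p}(V^{\ca{G}\trm{-}\mrm{an}})$ whose composition with $\log_{\ca{G}}$ is $g\mapsto \varphi_g$ by construction. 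The only delicate step is the continuity argument showing that the limits $\lim_n p^{-n}(g^{p^n}-1)(g'^{p^n}v-v)=0$ and its order-two analogue indeed hold in $V^{\ca{G}\trm{-}\mrm{an}}$ with its induced norm; this is precisely where \ref{lem:analytic-derivative} (rather than just the continuity of the $\ca{G}$-action on $V$) is essential, since we must control convergence inside $V^{\ca{G}\trm{-}\mrm{an}}$ and not only in $V$.
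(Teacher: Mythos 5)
Your proof is correct and follows exactly the route the paper takes: the paper's proof is simply "the proof is the same as that of \ref{cor:operator} by using \ref{lem:analytic-derivative} instead of \ref{lem:derivative-uniform-cont}," and you have faithfully spelled out that substitution, including the use of the induced norm on $V^{\ca{G}\text{-}\mathrm{an}}$ to make the continuity argument go through. The only minor redundancy is the passage through the directed system of uniform pro-$p$ open subgroups $\ca{G}_0$: since here $\ca{G}$ is already assumed uniform pro-$p$ and \ref{lem:analytic-derivative} applies directly to $\ca{G}$ itself, one may work directly with $L_{\ca{G}}$ rather than a colimit, but this does not affect correctness.
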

\begin{proof}
	The proof is the same as that of \ref{cor:operator} by using \ref{lem:analytic-derivative} instead of \ref{lem:derivative-uniform-cont}.
\end{proof}

\begin{myrem}\label{rem:analytic-derivative}
	With the notation in {\rm\ref{prop:analytic-derivative}}, assume that $\ca{G}$ preserves the norm of $V$ and induces a trivial action on $V^{\leq 1}/p^2V^{\leq 1}$. Then, $V=V^{\ca{G}\trm{-}\mrm{an}}$ by \ref{lem:finite-dim-repn-analytic}. We remark that the infinitesimal action \eqref{eq:prop:analytic-derivative-lie} of $\ca{G}$ on $V$ coincides with the Lie algebra action \eqref{eq:para:inf-repn-B-0} by \eqref{eq:para:small-derivative}.
\end{myrem}

\begin{mydefn}[{cf. \cite[2.1.6]{pan2021locally}}]\label{defn:locally-analytic-vector}
	Let $\ca{G}$ be a $p$-adic analytic group, $V$ a $\bb{Q}_p$-Banach space endowed with a $\bb{Q}_p$-linear continuous action of $\ca{G}$. We say an element $v\in V$ is \emph{$\ca{G}$-locally analytic} if it is $\ca{G}_0$-analytic for some uniform pro-$p$ open subgroup $\ca{G}_0$ of $\ca{G}$. We set $V^{\ca{G}\trm{-}\mrm{la}}=\bigcup_{\ca{G}_0}V^{\ca{G}_0\trm{-}\mrm{an}}$ the $\bb{Q}_p$-linear subspace of $V$ consisting of $\ca{G}$-locally analytic vectors.
\end{mydefn}

We obtain from \ref{cor:analytic-derivative} a canonical morphism of Lie algebras over $\bb{Q}_p$,
\begin{align}\label{eq:analytic-derivative-lie}
	\varphi:\lie(\ca{G})\longrightarrow \mrm{End}_{\bb{Q}_p}(V^{\ca{G}\trm{-}\mrm{la}}),
\end{align} 
which is called the \emph{infinitesimal Lie algebra action on locally analytic vectors}. We remark that if $V$ is finite-dimensional, then $V=V^{\ca{G}\trm{-}\mrm{la}}$ by \ref{lem:finite-dim-repn-analytic}.

\begin{mypara}\label{para:locally-analytic-vector-B}
	Let $K$ be a complete discrete valuation field of characteristic $0$ with perfect residue field of characteristic $p>0$, $(B_{\triv},B,\overline{B})$ a $(K,\ca{O}_K,\ca{O}_{\overline{K}})$-triple (see \ref{defn:triple}), $\ca{F}$ a Galois extension of the fraction field $\ca{L}$ of $B$ contained in $\ca{L}_{\mrm{ur}}$ such that $\ca{G}=\gal(\ca{F}/\ca{L})$ is a $p$-adic analytic group. We denote by $\ca{G}_H$ the image of $H=\gal(\ca{L}_{\mrm{ur}}/\ca{L}_{\infty})$ under the surjection $G=\gal(\ca{L}_{\mrm{ur}}/\ca{L})\to \ca{G}$, where $\ca{L}_\infty=K_\infty\ca{L}$ is the cyclotomic extension of $\ca{L}$, and we denote by $\ca{F}_{\Sigma}$ the invariant subfield of $\ca{F}$ by $\ca{G}_H$. We name some Galois groups as indicated in the following diagram
	\begin{align}\label{diam:para:locally-analytic-vector-B}
		\xymatrix{
			\ca{L}_{\mrm{ur}}&&\\
			\ca{F}_\infty\ar[u]&\ca{F}\ar[l]\ar[lu]_-{\ca{H}}&\\
			\ca{L}_\infty\ar[u]^-{\ca{G}_H} \ar@/^2.1pc/[uu]^-{H} &\ca{F}_{\Sigma}\ar[l]\ar[u]^-{\ca{G}_H}&\ca{L}\ar[l]\ar@/^1pc/[ll]^-{\Sigma}\ar[lu]_-{\ca{G}}\ar@/_2pc/[lluu]_{G}
		}
	\end{align}
	Indeed, we have $\ca{F}_\infty=K_\infty\ca{F}=\ca{L}_\infty\otimes_{\ca{F}_{\Sigma}}\ca{F}$ by Galois theory. Consider the directed system $\scr{F}^{\mrm{fini}}_{\ca{L}_{\mrm{ur}}/\ca{L}}$ of finite field extension $\ca{L}'$ of $\ca{L}$ contained in $\ca{L}_{\mrm{ur}}$. For each $\ca{L}'$, we construct the above diagram for the Galois extension $\ca{F}'=\ca{L}'\ca{F}$ over $\ca{L}'$ contained in $\ca{L}_{\mrm{ur}}$ in the same way and add prime superscript to the notation. There is a natural commutative diagram of fields
	\begin{align}
		\xymatrix{
			\ca{L}'\ar[r]\ar@/^1pc/[rr]^-{\ca{G}'}&\ca{F}'_\Sigma\ar[r]_-{\ca{G}'_H}&\ca{F}'\ar[r]^-{\ca{H}'}&
			\ca{L}_{\mrm{ur}}\\
			\ca{L}\ar[r]\ar[u]\ar@/_1pc/[rr]_-{\ca{G}}&\ca{F}_\Sigma\ar[r]^-{\ca{G}_H}\ar[u]&\ca{F}\ar[r]_-{\ca{H}}\ar[u]&
			\ca{L}_{\mrm{ur}}\ar@{=}[u]
		}
	\end{align}
	We remark that each Galois group with prime superscript naturally identifies with an open subgroup of the corresponding Galois group without prime superscript.
	
	Recall that $\widehat{\overline{B}}[1/p]$ is a $\bb{Q}_p$-Banach algebra endowed with the canonical norm defined by $\widehat{\overline{B}}$ (see \ref{para:normed-module-2}) and the continuous action of $G$. Consider a $\widehat{\overline{B}}[1/p]$-Banach module $W$ endowed with a semi-linear continuous action of $G$. Then, the $\ca{H}$-invariant part $W^{\ca{H}}$ is a $(\widehat{\overline{B}}[1/p])^{\ca{H}}$-Banach module endowed with the induced continuous action of $\ca{G}$. The spaces of locally analytic vectors $(W^{\ca{H}'})^{\ca{G}'\trm{-}\mrm{la}}$ (resp. $(W^{\ca{H}'})^{\ca{G}'_H\trm{-}\mrm{la}}$) form a directed system of $\bb{Q}_p$-linear subspaces of $W$ over $\scr{F}^{\mrm{fini}}_{\ca{L}_{\mrm{ur}}/\ca{L}}$. We denote its colimit by
	\begin{align}
		W^{\ca{G}\trm{-}\mrm{la}}&=\colim_{\ca{L}'\in\scr{F}^{\mrm{fini}}_{\ca{L}_{\mrm{ur}}/\ca{L}}}(W^{\ca{H}'})^{\ca{G}'\trm{-}\mrm{la}}\\
		\trm{(resp. } W^{\ca{G}_H\trm{-}\mrm{la}}&=\colim_{\ca{L}'\in\scr{F}^{\mrm{fini}}_{\ca{L}_{\mrm{ur}}/\ca{L}}}(W^{\ca{H}'})^{\ca{G}'_H\trm{-}\mrm{la}}\trm{)}
	\end{align}
	and we call it the subspace of \emph{$\ca{G}$-locally analytic} (resp. \emph{$\ca{G}_H$-locally analytic}) \emph{vectors of $W$} (we take the colimit here for the flexibility of replacing $\ca{L}$ by a finite extension $\ca{L}'$ in the proof of \ref{thm:locally-analytic-vector}). It is $G$-stable and endowed with the infinitesimal Lie algebra action $\varphi^{\mrm{la}}$ of $\lie(\ca{G})$ (resp. $\lie(\ca{G}_H)$) by \ref{cor:analytic-derivative}. We extend this action $\widehat{\overline{B}}[1/p]$-linearly to
	\begin{align}
		1\otimes\varphi^{\mrm{la}}:(\widehat{\overline{B}}[1/p]\otimes_{\bb{Q}_p}\lie(\ca{G}))\times W^{\ca{G}\trm{-}\mrm{la}}&\longrightarrow W\label{eq:locally-analytic-vector-B-1}\\
		\trm{(resp. }1\otimes\varphi^{\mrm{la}}:(\widehat{\overline{B}}[1/p]\otimes_{\bb{Q}_p}\lie(\ca{G}_H))\times W^{\ca{G}_H\trm{-}\mrm{la}}&\longrightarrow W\trm{)}.\label{eq:locally-analytic-vector-B-2}
	\end{align}
\end{mypara}

\begin{mypara}\label{para:analytic-vector-different-act}
	With the notation in \ref{para:locally-analytic-vector-B}, assume that $\ca{G}$ is a uniform pro-$p$ group. Then, we consider three $G$-actions $\{1\otimes \rho_{\dl}, 1\otimes \rho_{\rr},\rho_W\otimes \rho_{\dl}\}$ on $\scr{C}^{\mrm{an}}(\ca{G},W)$ defined by the same formula as in \eqref{eq:para:three-action}.	By \ref{cor:analytic-derivative} and \ref{lem:analytic-function-space-an}, there are infinitesimal Lie algebra actions $\varphi_{\dl},\varphi_{\rr}$ of $\lie(\ca{G})$ on $\scr{C}^{\mrm{an}}(\ca{G},W)$ associated to the left and right translation of $\ca{G}$, which commute with each other. We also extend them $\widehat{\overline{B}}[1/p]$-linearly to
	\begin{align}\label{eq:para:analytic-vector-different-act}
		1\otimes\varphi_{\dl},1\otimes\varphi_{\rr}:(\widehat{\overline{B}}[1/p]\otimes_{\bb{Q}_p}\lie(\ca{G}))\times \scr{C}^{\mrm{an}}(\ca{G},W)\longrightarrow \scr{C}^{\mrm{an}}(\ca{G},W).
	\end{align}
\end{mypara}

\begin{mylem}[{cf. \cite[2.1.4, 3.3.5]{pan2021locally}}]\label{lem:analytic-vector-ari}
	With the notation in {\rm\ref{para:locally-analytic-vector-B}} and {\rm\ref{para:analytic-vector-different-act}}, assume that $\ca{G}$ is a uniform pro-$p$ group. Then, the map
	\begin{align}\label{eq:analytic-vector-ari}
		(W^{\ca{H}})^{\ca{G}\trm{-}\mrm{an}}\longrightarrow \scr{C}^{\mrm{an}}(\ca{G},W)^{(\rho_W\otimes \rho_{\dl})(G)=1},\ v\mapsto (f_v:g\mapsto gv)
	\end{align}
	is well-defined and bijective. Moreover, for any $v\in (W^{\ca{H}})^{\ca{G}\trm{-}\mrm{an}}$ and $\phi\in \widehat{\overline{B}}[1/p]\otimes_{\bb{Q}_p}\lie(\ca{G})$, we have
	\begin{align}
		(1\otimes\varphi^{\mrm{la}})(\phi,v)=-(1\otimes\varphi_{\dl})(\phi,f_v)(1)\in W.
	\end{align}
\end{mylem}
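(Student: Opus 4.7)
The plan is to reduce the bijectivity claim to Lemma \ref{lem:analytic-vector-pre} by unwinding the $G$-invariance condition, and then to verify the Lie algebra formula by a direct calculation on elementary tensors. For the first assertion, I would decode what $(\rho_W\otimes\rho_{\dl})(G)=1$ means for an element $f\in\scr{C}^{\mrm{an}}(\ca{G},W)$. By \eqref{eq:para:three-action}, fixedness under $g\in G$ with image $\bar g\in\ca{G}$ reads $f(\bar g h)=g(f(h))$ for every $h\in\ca{G}$. Specializing to $g\in\ca{H}$ (so $\bar g=1$) forces $f$ to take values in $W^{\ca{H}}$; since $\ca{H}$ is normal in $G$ and acts trivially on $W^{\ca{H}}$, for arbitrary $g\in G$ the relation descends to $f(\bar g h)=\bar g(f(h))$ in $W^{\ca{H}}$. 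Thus $\scr{C}^{\mrm{an}}(\ca{G},W)^{(\rho_W\otimes\rho_{\dl})(G)=1}$ identifies canonically with $\scr{C}^{\mrm{an}}(\ca{G},W^{\ca{H}})^{\rho_{W^{\ca{H}}}\otimes\rho_{\dl}=1}$, and Lemma \ref{lem:analytic-vector-pre} applied to the $\ca{G}$-Banach representation $W^{\ca{H}}$ gives the evaluation-at-$1$ bijection with $(W^{\ca{H}})^{\ca{G}\text{-}\mrm{an}}$, whose inverse is exactly $v\mapsto f_v$. In particular, $f_v$ is automatically analytic whenever $v\in(W^{\ca{H}})^{\ca{G}\text{-}\mrm{an}}$.

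For the second assertion, by $\widehat{\overline{B}}[1/p]$-bilinearity it suffices to check the formula for $\phi=1\otimes\partial$ with $\partial=\log_{\ca{G}}(g)$, $g\in\ca{G}$. By Corollary \ref{cor:analytic-derivative}, $(1\otimes\varphi^{\mrm{la}})(1\otimes\partial,v)=\varphi_g(v)=\lim_{a\to 0}a^{-1}(g^a v-v)$. Using the definition of $\rho_{\dl}$ from \eqref{eq:para:three-action} and evaluating the infinitesimal action at $1\in\ca{G}$ yields
\begin{align*}
\varphi_{\dl,g}(f_v)(1)=\lim_{a\to 0}a^{-1}\bigl(f_v(g^{-a})-f_v(1)\bigr)=\lim_{a\to 0}a^{-1}(g^{-a}v-v)=\varphi_{g^{-1}}(v)=-\varphi_g(v),
\end{align*}
where the last equality uses Lemma \ref{lem:infinitesimal}(\ref{item:infinitesimal-1}). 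Equivalently, $(1\otimes\varphi^{\mrm{la}})(1\otimes\partial,v)=-(1\otimes\varphi_{\dl})(1\otimes\partial,f_v)(1)$, and linear extension over $\widehat{\overline{B}}[1/p]$ delivers the general formula.

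I expect no substantial obstacle. The only points requiring care are the well-definedness of $\varphi_{\dl}$ as an infinitesimal Lie algebra action on $\scr{C}^{\mrm{an}}(\ca{G},W)$, which is guaranteed by Lemmas \ref{lem:group-action-analytic} and \ref{lem:analytic-function-space-an} (every element of $\scr{C}^{\mrm{an}}(\ca{G},W)$ is itself $\ca{G}$-analytic under left translation, so Corollary \ref{cor:analytic-derivative} applies), and the sign coming from the $g^{-1}$ in the definition of $\rho_{\dl}$---which is in fact the entire source of the minus sign appearing in the statement.
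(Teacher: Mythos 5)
Your proof is correct and follows essentially the same route as the paper: the first part reduces to Lemma \ref{lem:analytic-vector-pre} after unwinding the $G$-fixedness condition (your explicit check that such an $f$ must take values in $W^{\ca{H}}$ is just the argument the paper invokes implicitly), and the second part is the same direct computation with the infinitesimal action, where the sign arises from the $g^{-1}$ in $\rho_{\dl}$. The one step you elide---that the limit defining $\varphi_{\dl,g}(f_v)$ in $\scr{C}^{\mrm{an}}(\ca{G},W)$ may be exchanged with evaluation at $1$---is worth a word, since the norm $|\cdot|_{\ca{G}}$ dominates $|f(1)|$ (the $\underline{m}=\underline{0}$ coefficient), so evaluation is continuous; this is exactly the remark the paper makes explicitly.
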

\begin{proof}
	The first part follows from the same argument of \ref{lem:analytic-vector-pre}. For the ``moreover'' part, by assumption, we write $\phi=\sum_{i=1}^na_i\otimes\log_{\ca{G}}(g_i)$ where $a_i\in \widehat{\overline{B}}[1/p]$ and $g_i\in\ca{G}$. Then, by the definition, we have
	\begin{align}
		(1\otimes\varphi_{\dl})(\phi,f_v)(1)=&\sum_{i=1}^n a_i\cdot\lim_{n\to\infty}p^{-n}(\rho_{\dl}(g_i^{p^n})f_v-f_v)(1)\\
		=&\sum_{i=1}^n a_i\cdot\lim_{n\to\infty}p^{-n}(g_i^{-p^n}v-v)\nonumber\\
		=&-\sum_{i=1}^n a_i\varphi_{g_i}(v)=-(1\otimes\varphi^{\mrm{la}})(\phi,v),\nonumber
	\end{align}
	where the first equality follows from the fact that taking limits in $\scr{C}^{\mrm{an}}(\ca{G},W)$ with respect to its norm commutes with evaluating at $1\in\ca{G}$.
\end{proof}

\begin{mylem}\label{lem:analytic-vector-geo}
	With the notation in {\rm\ref{para:locally-analytic-vector-B}} and {\rm\ref{para:analytic-vector-different-act}}, assume that $\ca{G}$ is a uniform pro-$p$ group and that $\ca{G}/\ca{G}_H$ is isomorphic to $0$ or $\bb{Z}_p$. We take $\sigma_0\in \ca{G}$ whose image in $\ca{G}/\ca{G}_H$ is a topological generator, and denote by $\sigma_0^{\bb{Z}_p}$ the closed subgroup of $\ca{G}$ generated by $\sigma_0$. Then, the map
	\begin{align}\label{eq:analytic-vector}
		(W^{\ca{H}})^{\ca{G}_H\trm{-}\mrm{an}}\longrightarrow\{f\in \scr{C}^{\mrm{an}}(\ca{G},W)\ |\ (\rho_W\otimes \rho_{\dl})(H)f=f,\ (1\otimes\rho_{\rr})(\sigma_0^{\bb{Z}_p})f=f\}
	\end{align}
	sending $v$ to $(f_v:h\sigma_0^a\mapsto hv)$, where $h\in \ca{G}_H$ and $a\in\bb{Z}_p$, is well-defined and bijective. Moreover, for any $v\in (W^{\ca{H}})^{\ca{G}_H\trm{-}\mrm{an}}$ and $\phi\in \widehat{\overline{B}}[1/p]\otimes_{\bb{Q}_p}\lie(\ca{G}_H)$, we have
	\begin{align}
		(1\otimes\varphi^{\mrm{la}})(\phi,v)=-(1\otimes\varphi_{\dl})(\phi,f_v)(1)\in W.
	\end{align}
\end{mylem}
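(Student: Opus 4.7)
The plan is to adapt Lemma \ref{lem:analytic-vector-ari} by exploiting the semi-direct product structure $\ca{G} = \ca{G}_H \rtimes \sigma_0^{\bb{Z}_p}$. Since $\ca{G}/\ca{G}_H$ is torsion-free, the subgroup $\sigma_0^{\bb{Z}_p}$ meets $\ca{G}_H$ trivially, so the multiplication map $\ca{G}_H \times \bb{Z}_p \to \ca{G}$, $(h,a) \mapsto h\sigma_0^a$, is a continuous bijection. Moreover, $\ca{G}_H$ is itself a uniform pro-$p$ group by \ref{lem:uniform-ext}, so the notion of a $\ca{G}_H$-analytic vector is well-defined, and the natural inclusion $\lie(\ca{G}_H) \hookrightarrow \lie(\ca{G})$ makes the restriction of $\varphi_{\dl}$ compatible with the infinitesimal action on $\ca{G}_H$-analytic vectors.

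For the well-definedness of the forward map, I would fix a minimal topological generating set $(g_1, \dots, g_{d-1})$ of $\ca{G}_H$ and extend it to a minimal topological generating set $(g_1, \dots, g_{d-1}, \sigma_0)$ of $\ca{G}$. In the corresponding system of coordinates of the second kind \eqref{eq:coordinates-second}, the function $f_v$ is given by $(a_1, \dots, a_{d-1}, a) \mapsto g_1^{a_1} \cdots g_{d-1}^{a_{d-1}} v$, which is independent of the last variable $a$ and analytic in the first $d-1$ variables by the $\ca{G}_H$-analyticity of $v$. Hence $f_v \in \scr{C}^{\mrm{an}}(\ca{G}, W)$ by \ref{rem:analytic-function-lie}. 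The right $\sigma_0^{\bb{Z}_p}$-invariance under $1 \otimes \rho_{\rr}$ then follows tautologically from $f_v(h\sigma_0^{a+b}) = hv$, and the $H$-invariance under $\rho_W \otimes \rho_{\dl}$ from the normality of $\ca{G}_H$ in $\ca{G}$ (which keeps $\bar{h}'^{-1} g$ in the decomposed form) together with the fact that $h' \in H$ and its image $\bar{h}' \in \ca{G}_H$ act identically on $W^{\ca{H}}$.

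Injectivity is immediate by evaluating at $1 \in \ca{G}$. For surjectivity, given $f$ in the target, I would set $v = f(1)$; the $H$-invariance at $g = 1$ with $h'$ running over $\ca{H}$ shows $v \in W^{\ca{H}}$, and at $g = \bar{h}'$ for general $h' \in H$ shows $f(\bar{h}') = \bar{h}' v$, which combined with the right $\sigma_0^{\bb{Z}_p}$-invariance gives $f = f_v$ on all of $\ca{G}$. The $\ca{G}_H$-analyticity of $v$ follows from the analyticity of the restriction $f|_{\ca{G}_H}$, which in turn follows from pulling back along the inclusion $L_{\ca{G}_H} \hookrightarrow L_{\ca{G}}$ by \ref{lem:analytic-function-pullback}. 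For the moreover part, writing $\phi = \sum_i a_i \otimes \log_{\ca{G}_H}(g_i)$ with $g_i \in \ca{G}_H$, I would repeat verbatim the computation at the end of the proof of \ref{lem:analytic-vector-ari}, using that evaluation at $1 \in \ca{G}$ on $\scr{C}^{\mrm{an}}(\ca{G}, W)$ is continuous (as it decreases the norm $|\cdot|_{\ca{G}}$) to interchange the limit defining $\varphi_{\dl}$ with evaluation and recover $-(1 \otimes \varphi^{\mrm{la}})(\phi, v)$.

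The hard part, if anything, is the bookkeeping around the $H$-equivariance: one must carefully distinguish the way $h' \in H$ acts on $W$ (through the $G$-action) versus on $\ca{G}$ (through its image $\bar{h}' \in \ca{G}$), and use the normality of $\ca{G}_H$ in $\ca{G}$ to ensure that the two-sided twist by $H$ preserves the product decomposition. All other steps are routine transcriptions of the $\ca{G}$-analytic case in \ref{lem:analytic-vector-ari}.
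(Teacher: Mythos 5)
Your proposal is correct and follows essentially the same approach as the paper: use coordinates of the second kind adapted to the decomposition $\ca{G}_H$ and $\sigma_0$, invoke \ref{lem:analytic-function-pullback}, and transfer the computation of \ref{lem:analytic-vector-ari} to the moreover part. The only presentational difference is that the paper factors the bijection through an intermediate step $(W^{\ca{H}})^{\ca{G}_H\trm{-}\mrm{an}}\iso \scr{C}^{\mrm{an}}(\ca{G}_H,W)^{(\rho_W\otimes\rho_{\dl})(H)=1}$ and then embeds $\scr{C}^{\mrm{an}}(\ca{G}_H,W)$ into $\scr{C}^{\mrm{an}}(\ca{G},W)$ as the right-$\sigma_0^{\bb{Z}_p}$-invariants, whereas you verify all the required properties of $f_v$ directly; both packagings rest on the same ingredients.
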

\begin{proof}
	If $\ca{G}/\ca{G}_H=0$, then we reduce to \ref{lem:analytic-vector-ari}. Assume that $\ca{G}/\ca{G}_H=\bb{Z}_p$. Firstly, we see that there is a canonical bijection by the same argument of \ref{lem:analytic-vector-pre}, 
	\begin{align}
		(W^{\ca{H}})^{\ca{G}_H\trm{-}\mrm{an}}\iso \scr{C}^{\mrm{an}}(\ca{G}_H,W)^{(\rho_W\otimes \rho_{\dl})(H)=1},\ v\mapsto (f'_v:h\mapsto hv).
	\end{align}
	Notice that $\ca{G}_H$ is a uniform pro-$p$ group and there is an exact sequence of the powerful Lie algebras over $\bb{Z}_p$ by \ref{lem:uniform-ext}, 
	\begin{align}
		0\longrightarrow L_{\ca{G}_H}\longrightarrow L_{\ca{G}}\longrightarrow L_{\ca{G}/\ca{G}_H}\longrightarrow 0.
	\end{align}
	We take a $\bb{Z}_p$-linear basis $h_1,\dots,h_d$ of $L_{\ca{G}_H}$. Then, $\sigma_0,h_1,\dots,h_d$ form a $\bb{Z}_p$-linear basis of $L_{\ca{G}}$, and thus they also form a minimal topological generating set of $\ca{G}$ (see \ref{para:L-*}). We obtain two systems of coordinates of second kind \eqref{eq:coordinates-second},
	\begin{align}
		\bb{Z}_p^d&\longrightarrow \ca{G}_H,\ (a_1,\dots,a_d)\mapsto h_1^{a_1}\cdots h_d^{a_d}\\
		\bb{Z}_p^{1+d}&\longrightarrow \ca{G},\ (a_0,a_1,\dots,a_d)\mapsto h_1^{a_1}\cdots h_d^{a_d}\sigma_0^{a_0}.
	\end{align}
	By \ref{lem:analytic-function-pullback} and \ref{rem:analytic-function-lie}, the map of underlying sets $\ca{G}\to \ca{G}_H$ sending $h_1^{a_1}\cdots h_d^{a_d}\sigma_0^{a_0}$ to $h_1^{a_1}\cdots h_d^{a_d}$ induces an injective map by pullback
	\begin{align}
		\scr{C}^{\mrm{an}}(\ca{G}_H,W)\longrightarrow \scr{C}^{\mrm{an}}(\ca{G},W),\ f'\mapsto (h\sigma_0^a\mapsto f'(h)),
	\end{align}
	whose image is $\scr{C}^{\mrm{an}}(\ca{G},W)^{(1\otimes\rho_{\rr})(\sigma_0^{\bb{Z}_p})=1}$. This shows that \eqref{eq:analytic-vector} is a well-defined bijection. The ``moreover'' part follows from the same argument of that of \ref{lem:analytic-vector-ari}.
\end{proof}

\begin{mythm}[{cf. \cite[3.3.5]{pan2021locally}}]\label{thm:locally-analytic-vector}
	 With the notation in {\rm\ref{para:locally-analytic-vector-B}}, assume that $B$ is a quasi-adequate $\ca{O}_K$-algebra. Let $\Phi^{\geo}_{\ca{G}}$ be the canonical $\widehat{\overline{B}}[1/p]$-linear Lie subalgebra of $\widehat{\overline{B}}[1/p]\otimes_{\bb{Q}_p}\lie(\ca{G}_H)$ defined in {\rm\ref{defn:sen-lie-lift-B}}. Then, under the canonical infinitesimal action $1\otimes\varphi^{\mrm{la}}$ \eqref{eq:locally-analytic-vector-B-2} by taking $W=\widehat{\overline{B}}[1/p]$, $\Phi^{\geo}_{\ca{G}}$ annihilates $(\widehat{\overline{B}}[1/p])^{\ca{G}_H\trm{-}\mrm{la}}$.
\end{mythm}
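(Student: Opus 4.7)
The approach is to realize the statement as an instance of Theorem \ref{thm:sen-operator-B-fix}, applied to the Banach space of analytic functions on $\ca{G}$, following the strategy Pan used in the one-dimensional arithmetic case.

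First, I would reduce: using the cofinality of $\scr{F}^{\mrm{qa}}_{\ca{L}_{\mrm{ur}}/\ca{L}}$ (Corollary \ref{cor:quasi-adequate-cofinal}) together with Lemma \ref{lem:uniform-ext-2}, I may replace $\ca{L}$ by a finite extension $\ca{L}'\in\scr{F}^{\mrm{qa}}_{\ca{L}_{\mrm{ur}}/\ca{L}}$ and shrink $\ca{G}$ to an open subgroup so that $\ca{G}$, $\ca{G}_H$ and $\ca{G}/\ca{G}_H$ are all uniform pro-$p$, and the given vector $v$ lies in $(\widehat{\overline{B}}[1/p])^{\ca{H}}$ and is $\ca{G}_H$-analytic; Corollary \ref{cor:sen-lie-lift-B}(\ref{item:cor:sen-lie-lift-B-5}) ensures that $\Phi^{\geo}_{\ca{G}}$ is preserved under this reduction. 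Since $\ca{G}/\ca{G}_H$ is a quotient of the abelian group $\Sigma$, it is isomorphic to $0$ or $\bb{Z}_p$, and I fix $\sigma_0\in\ca{G}$ whose image topologically generates the quotient (taking $\sigma_0=1$ if trivial). Lemma \ref{lem:analytic-vector-geo} then produces a function $f_v\in\scr{C}^{\mrm{an}}(\ca{G},\widehat{\overline{B}}[1/p])$ with $f_v(h\sigma_0^a)=h(v)$, and gives the identity
\begin{align*}
(1\otimes\varphi^{\mrm{la}})(\phi,v)=-(1\otimes\varphi_{\dl})(\phi,f_v)(1)
\end{align*}
for $\phi\in\widehat{\overline{B}}[1/p]\otimes_{\bb{Q}_p}\lie(\ca{G}_H)$. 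Hence it suffices to show that $\Phi^{\geo}_{\ca{G}}$ annihilates $f_v$ under the $\widehat{\overline{B}}[1/p]$-linear extension of $\varphi_{\dl}$.

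To apply Theorem \ref{thm:sen-operator-B-fix}, I would take the $\bb{Q}_p$-Banach space $V=\scr{C}^{\mrm{an}}(\ca{G},\bb{Q}_p)$ equipped with the left translation action of $\ca{G}$. Proposition \ref{prop:analytic-approx} provides a directed system of finite-dimensional left-translation-stable $\bb{Q}_p$-subspaces whose union is dense in $V$, verifying hypothesis (\ref{item:thm:sen-operator-B-fix-1}). Shrinking $\ca{G}$ further (and enlarging $\ca{L}'$ correspondingly), Lemma \ref{lem:group-action-analytic} guarantees that $\ca{G}$ preserves the norm on $V$ and acts trivially on $V^{\leq 1}/p^3V^{\leq 1}$, verifying hypothesis (\ref{item:thm:sen-operator-B-fix-2}). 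By Lemma \ref{lem:analytic-function-basic}, the associated Banach module $W=(\widehat{\overline{B}}\otimes_{\bb{Z}_p}V^{\leq 1})^\wedge[1/p]$ is canonically identified with $\scr{C}^{\mrm{an}}(\ca{G},\widehat{\overline{B}}[1/p])$ endowed with the diagonal action $\rho_{\widehat{\overline{B}}}\otimes\rho_{\dl}$, and the infinitesimal action of $\widehat{\overline{B}}[1/p]\otimes_{\bb{Q}_p}\lie(\ca{G}_H)$ on $W$ from \eqref{eq:para:inf-repn-B-2} coincides with $1\otimes\varphi_{\dl}$ by Remark \ref{rem:analytic-derivative} together with Lemma \ref{lem:analytic-function-space-an}. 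A direct check shows $f_v\in W^H$: for $h\in H$ with image $\bar{h}\in\ca{G}_H$ and any $x=h''\sigma_0^a\in\ca{G}$, one has $\bar{h}^{-1}x=(\bar{h}^{-1}h'')\sigma_0^a$ with $\bar{h}^{-1}h''\in\ca{G}_H$, so
\begin{align*}
(h\cdot f_v)(x)=h\bigl(f_v(\bar{h}^{-1}x)\bigr)=\bar{h}\bigl(\bar{h}^{-1}h''(v)\bigr)=h''(v)=f_v(x),
\end{align*}
using that $h$ and $\bar{h}$ induce the same automorphism of $\widehat{\overline{B}}[1/p]$ (as $v\in W^{\ca{H}}$). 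Theorem \ref{thm:sen-operator-B-fix} therefore yields that $\Phi^{\geo}_{\ca{G}}$ annihilates $f_v$, completing the argument.

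The main difficulty I anticipate is the careful bookkeeping required to align the different actions and reductions: identifying the infinitesimal action on $W$ arising from the integral-lattice construction of Theorem \ref{thm:sen-operator-B-fix} with $\varphi_{\dl}$ defined through analytic functions; ensuring that the successive shrinkings of $\ca{G}$ and extensions of $\ca{L}$ are simultaneously compatible with the smallness requirement, the uniform pro-$p$ requirement, the analyticity of $v$, and the invariance of $\Phi^{\geo}_{\ca{G}}$; and verifying the $H$-invariance of $f_v$ with proper accounting for the semi-linearity of the Galois action on $\widehat{\overline{B}}[1/p]$.
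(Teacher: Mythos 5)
Your proposal follows the same strategy as the paper's proof: reduce to a uniform pro-$p$ group, use Lemma \ref{lem:analytic-vector-geo} to transfer the computation to the analytic function $f_v$ in $\scr{C}^{\mrm{an}}(\ca{G},\widehat{\overline{B}}[1/p])$, then apply Theorem \ref{thm:sen-operator-B-fix} to the Banach space $V=\scr{C}^{\mrm{an}}(\ca{G},\bb{Q}_p)$ with the left translation action. The identification with $\varphi_{\dl}$ and the $H$-invariance of $f_v$ are handled as in the paper.

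However, there is a genuine imprecision in how you verify hypothesis (2) of Theorem \ref{thm:sen-operator-B-fix}. You write that after ``shrinking $\ca{G}$ further'', Lemma \ref{lem:group-action-analytic} guarantees $\ca{G}$ acts trivially on $V^{\leq 1}/p^3V^{\leq 1}$. But Lemma \ref{lem:group-action-analytic} only gives that $\ca{G}^{p^3}$ (not $\ca{G}$) acts trivially on $\scr{C}^{\mrm{an}}(\ca{G},\bb{Q}_p)^{\leq 1}/p^3$, and this relation is unaffected if you shrink $\ca{G}$ and redefine $V=\scr{C}^{\mrm{an}}(\ca{G},\bb{Q}_p)$ accordingly — the acting group is never a $p^3$-th power of the group defining analyticity. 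The correct maneuver, which the paper carries out, is to \emph{fix} $\ca{G}$ and hence $V=\scr{C}^{\mrm{an}}(\ca{G},\bb{Q}_p)$, and then enlarge $\ca{L}'\in\scr{F}^{\mrm{qa}}_{\ca{L}_{\mrm{ur}}/\ca{L}}$ so that the image of $\ca{G}'=\gal(\ca{F}'/\ca{L}')$ under $\ca{G}'\hookrightarrow\ca{G}$ is contained in $\ca{G}^{p^3}$; Theorem \ref{thm:sen-operator-B-fix} is then applied to $\ca{G}'$ (not $\ca{G}$) acting on the unchanged $V$, and the conclusion is transferred back using $\Phi^{\geo}_{\ca{G}'}=\Phi^{\geo}_{\ca{G}}$ from Corollary \ref{cor:sen-lie-lift-B}.(\ref{item:cor:sen-lie-lift-B-5}). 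Since you also say ``enlarging $\ca{L}'$ correspondingly'', you are likely thinking along the right lines, but the two operations (shrinking $\ca{G}$ vs. shrinking the Galois group while keeping $\ca{G}$ fixed) must be kept distinct here, as conflating them makes the smallness check fail.
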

\begin{proof}
	Consider the cofinal subsystem $\scr{F}^{\mrm{qa}}_{\ca{L}_{\mrm{ur}}/\ca{L}}$ of $\scr{F}^{\mrm{fini}}_{\ca{L}_{\mrm{ur}}/\ca{L}}$ defined in \ref{cor:quasi-adequate-cofinal}. Recall that for any $\ca{L}'\in\scr{F}^{\mrm{qa}}_{\ca{L}_{\mrm{ur}}/\ca{L}}$, we have natural identifications $\Phi^{\geo}_{\ca{G}}=\Phi^{\geo}_{\ca{G}'}$ by \ref{cor:sen-lie-B}.(\ref{item:cor:sen-lie-lift-B-5}). Thus, it suffices to show that $\Phi^{\geo}_{\ca{G}}$ annihilates $(\widehat{\overline{B}}[1/p]^{\ca{H}})^{\ca{G}_H\trm{-}\mrm{la}}$. Moreover, after replacing $\ca{G}$ by a sufficiently small uniform pro-$p$ open subgroup, it suffices to show that $\Phi^{\geo}_{\ca{G}}$ annihilates $(\widehat{\overline{B}}[1/p]^{\ca{H}})^{\ca{G}_H\trm{-}\mrm{an}}$ (so that $\ca{L}$ may not lies in $\scr{F}^{\mrm{qa}}_{\ca{L}_{\mrm{ur}}/\ca{L}}$ from now on). Recall that there exists a sufficiently small uniform pro-$p$ open subgroup $\ca{G}_0$ of $\ca{G}$ such that $\ca{G}_{0H}=\ca{G}_0\cap \ca{G}_H$ and $\ca{G}_0/\ca{G}_{0H}$ are both uniform by \ref{lem:uniform-ext-2}. Replacing $\ca{G}$ by $\ca{G}_0$, we obtain the following conditions on $\ca{G}$: 
	\begin{enumerate}
		\renewcommand{\labelenumi}{{\rm(\theenumi)}}
		\item $\ca{G}$ is a uniform pro-$p$ group isomorphic to a closed subgroup of $\id+p^2\mrm{M}_d(\bb{Z}_p)$ for some $d\in\bb{N}$ (using \ref{thm:compact-lie-group}), and
		\item $\ca{G}/\ca{G}_H$ is isomorphic to $0$ or $\bb{Z}_p$ (as it is a uniform pro-$p$ subquotient of $G/H=\Sigma\subseteq\bb{Z}_p^\times$).
	\end{enumerate}
	
	\begin{align}\label{diam:thm:locally-analytic-vector}
		\xymatrix{
			\ca{L}'\ar[r]\ar@/^1pc/[rr]^-{\ca{G}'}&\ca{F}'_\Sigma\ar[r]_-{\ca{G}'_H}&\ca{F}'\ar[r]^-{\ca{H}'}&
			\ca{L}_{\mrm{ur}}\\
			\ca{L}\ar[r]\ar[u]\ar@/_1pc/[rr]_-{\ca{G}}&\ca{F}_\Sigma\ar[r]^-{\ca{G}_H}\ar[u]&\ca{F}\ar[r]_-{\ca{H}}\ar[u]&
			\ca{L}_{\mrm{ur}}\ar@{=}[u]
		}
	\end{align}
	We take $\ca{L}'\in \scr{F}^{\mrm{qa}}_{\ca{L}_{\mrm{ur}}/\ca{L}}$ sufficiently large such that the image of the natural injection $\ca{G}'\to \ca{G}$ is contained in $\ca{G}^{p^3}$ (recall that $\ca{F}'=\ca{L}'\ca{F}$ by definition). Then, $\ca{G}'$ acts trivially on $\scr{C}^{\mrm{an}}(\ca{G},\bb{Q}_p)^{\leq 1}/p^3\scr{C}^{\mrm{an}}(\ca{G},\bb{Q}_p)^{\leq 1}$ via the left and right translation actions by \ref{lem:group-action-analytic}. Combining with \ref{prop:analytic-approx}, we can apply \ref{thm:sen-operator-B-fix} to the $\bb{Q}_p$-Banach space $\scr{C}^{\mrm{an}}(\ca{G},\bb{Q}_p)$ endowed with the left translation action $1\otimes\rho_{\dl}$ of $\ca{G}'$, so that $\Phi^{\geo}_{\ca{G}'}$ acts trivially on $((\widehat{\overline{B}}\otimes_{\bb{Z}_p}\scr{C}^{\mrm{an}}(\ca{G},\bb{Q}_p)^{\leq 1})^\wedge[1/p])^{H'}$. 
	
	Notice that there is a natural identification $(\widehat{\overline{B}}\otimes_{\bb{Z}_p}\scr{C}^{\mrm{an}}(\ca{G},\bb{Q}_p)^{\leq 1})^\wedge[1/p]=\scr{C}^{\mrm{an}}(\ca{G},\widehat{\overline{B}}[1/p])$ by \ref{lem:analytic-function-basic}, which satisfies the following properties (by firstly checking over the submodule $\widehat{\overline{B}}\otimes_{\bb{Z}_p}\scr{C}^{\mrm{an}}(\ca{G},\bb{Q}_p)^{\leq 1}$ and then taking $p$-adic completion): 
	\begin{enumerate}
		\renewcommand{\labelenumi}{{\rm(\theenumi)}}
		\item the action of $G'$ on $(\widehat{\overline{B}}\otimes_{\bb{Z}_p}\scr{C}^{\mrm{an}}(\ca{G},\bb{Q}_p)^{\leq 1})^\wedge[1/p]$ defined in \ref{para:inf-repn-B} coincides with the diagonal action $\rho_{\widehat{\overline{B}}[1/p]}\otimes\rho_{\dl}$ on $\scr{C}^{\mrm{an}}(\ca{G},\widehat{\overline{B}}[1/p])$, and
		\item the action of $\Phi_{\ca{G}'}$ on $(\widehat{\overline{B}}\otimes_{\bb{Z}_p}\scr{C}^{\mrm{an}}(\ca{G},\bb{Q}_p)^{\leq 1})^\wedge[1/p]$ defined by \eqref{eq:para:inf-repn-B-2} coincides with the infinitesimal action $1\otimes\varphi_{\dl}$ \eqref{eq:para:analytic-vector-different-act} on $\scr{C}^{\mrm{an}}(\ca{G},\widehat{\overline{B}}[1/p])$ induced by the left translation of $\ca{G}$ (cf. \ref{rem:analytic-derivative}).
	\end{enumerate}
	Therefore, applying \ref{lem:analytic-vector-geo} we see that for any $v\in ((\widehat{\overline{B}}[1/p])^{\ca{H}})^{\ca{G}_H\trm{-}\mrm{an}}$ and any $\phi\in \Phi^{\geo}_{\ca{G}}$,
	\begin{align}
		(1\otimes\varphi^{\mrm{la}})(\phi,v)=-(1\otimes\varphi_{\dl})(\phi,f_v)(1)=0,
	\end{align}
	as $(f_v:h\sigma_0^a\mapsto hv)$ is an element of $\scr{C}^{\mrm{an}}(\ca{G},\widehat{\overline{B}}[1/p])^{(\rho_{\widehat{\overline{B}}[1/p]}\otimes \rho_{\dl})(H)=1}\subseteq ((\widehat{\overline{B}}\otimes_{\bb{Z}_p}\scr{C}^{\mrm{an}}(\ca{G},\bb{Q}_p)^{\leq 1})^\wedge[1/p])^{H'}$ killed by $\Phi^{\geo}_{\ca{G}'}=\Phi^{\geo}_{\ca{G}}$.
\end{proof}

\begin{myrem}\label{rem:locally-analytic-vector}
	We don't know whether $\Phi_{\ca{G}}$ annihilates $(\widehat{\overline{B}}[1/p])^{\ca{G}\trm{-}\mrm{la}}$ or not, cf. \ref{rem:thm:sen-operator-fix-infty}.
\end{myrem}

\section{Appendix: Hyodo's Computation of Galois Cohomologies}
This appendix is devoted to detailed proofs of \ref{cor:fal-ext-connect} and \ref{cor:hyodo-ring-coh}, using essentially Hyodo's arguments in \cite{hyodo1986hodge} and \cite{hyodo1989variation}. We take again the notation in \ref{para:notation-K}.
\begin{align}
	\xymatrix{
		\overline{K}&\\
		K_{\infty,\underline{\infty}}\ar[u]&\\
		K_\infty\ar[u]^-{\Delta}\ar@/^2pc/[uu]^-{H}&K\ar[l]^-{\Sigma}\ar[lu]_-{\Gamma}\ar@/_1pc/[luu]_{G}
	}
\end{align}

\begin{mylem}[{cf. \cite[2-1]{hyodo1986hodge}}]\label{lem:H-coh}
	Assume that $K$ satisfies the assumption {\rm($\ast$)} in {\rm\ref{para:notation-K-good}}. Let $r,q\in\bb{N}$.
	\begin{enumerate}
		\renewcommand{\labelenumi}{{\rm(\theenumi)}}
		\item The natural map $H^q(\Delta,\ca{O}_{K_{\infty,\underline{\infty}}}/p^r\ca{O}_{K_{\infty,\underline{\infty}}})\to H^q(H,\ca{O}_{\overline{K}}/p^r\ca{O}_{\overline{K}})$ is an almost isomorphism.\label{item:lem:H-coh-1}
		\item The natural map induced by the cup product $\wedge^q \ho(\Delta,\ca{O}_{K_\infty}/p^r\ca{O}_{K_\infty})\to H^q(\Delta,\ca{O}_{K_{\infty,\underline{\infty}}}/p^r\ca{O}_{K_{\infty,\underline{\infty}}})$ is injective, and admits a natural retraction with kernel killed by $\zeta_p-1$.\label{item:lem:H-coh-2}
	\end{enumerate}
\end{mylem}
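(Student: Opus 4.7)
The plan is to treat the two parts separately: part (1) via a Hochschild–Serre spectral sequence combined with Faltings's almost purity, and part (2) via an eigenspace decomposition followed by a Koszul computation.

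For part (1), I would exploit the tower $K\subseteq K_{\infty,\underline{\infty}}\subseteq \overline{K}$, which gives a short exact sequence $1\to H_{\mrm{geo}}\to H\to\Delta\to 1$ with $H_{\mrm{geo}}=\gal(\overline{K}/K_{\infty,\underline{\infty}})$. Under the assumption $(\ast)$, the explicit description \eqref{eq:para:notation-K-good} shows that $K_{\infty,\underline{\infty}}$ is a pre-perfectoid field (its value group is $p$-divisible by adjunction of $p$-power roots of $\pi$ and $\zeta_{p^n}$, and Frobenius is surjective on the residue field since $t_1,\dots,t_d$ provide $p$-power roots of a $p$-basis). Faltings's almost purity theorem then asserts that for every finite extension $L/K_{\infty,\underline{\infty}}$ in $\overline{K}$ the ring $\ca{O}_L$ is almost étale over $\ca{O}_{K_{\infty,\underline{\infty}}}$; passing to the filtered colimit over all such $L$ yields that $H^q(H_{\mrm{geo}},\ca{O}_{\overline{K}}/p^r)$ is almost isomorphic to $\ca{O}_{K_{\infty,\underline{\infty}}}/p^r$ for $q=0$ and is almost zero for $q>0$. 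The Hochschild–Serre spectral sequence therefore almost degenerates, giving the desired almost isomorphism.

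For part (2), I would use the $\Delta$-equivariant direct sum decomposition
\begin{align*}
\ca{O}_{K_{\infty,\underline{\infty}}}=\bigoplus_{\underline{a}\in (\bb{Z}[1/p]/\bb{Z})^d} \ca{O}_{K_\infty}\cdot t^{\underline{a}},
\end{align*}
obtained by taking the colimit over $\underline{m}$ of the finite free $\ca{O}_{K_\infty}$-bases $\{t_{1,p^{m_1}}^{a_1}\cdots t_{d,p^{m_d}}^{a_d}\}$ afforded by \eqref{eq:para:notation-K-good}. Each summand is a rank-one eigenspace on which $\tau\in\Delta$ acts by the character $\chi_{\underline{a}}(\tau)=\zeta^{\langle\xi(\tau),\underline{a}\rangle}\in\mu_{p^\infty}$. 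Since continuous cohomology of the profinite group $\Delta$ on discrete $p$-torsion modules commutes with filtered colimits (and hence with the above decomposition once one passes through the finite stages $\ca{O}_{K_{\infty,\underline{m}}}/p^r$), the cohomology splits as a direct sum over $\underline{a}$. The summand $\underline{a}=\underline{0}$ gives $\ca{O}_{K_\infty}/p^r$ with trivial $\Delta$-action, and the Künneth/Koszul computation for $\Delta\cong \bb{Z}_p^d$ identifies its contribution with $\wedge^q\ho(\Delta,\ca{O}_{K_\infty}/p^r)$ via cup product; this is precisely the map in the lemma, and projection onto this summand furnishes the natural retraction. For $\underline{a}\neq 0$, the summand's cohomology is computed by the Koszul complex on $\chi_{\underline{a}}(\tau_1)-1,\dots,\chi_{\underline{a}}(\tau_d)-1$, and choosing $i$ with $a_i\neq 0$ one has $\chi_{\underline{a}}(\tau_i)=\zeta_{p^{n_i}}^{a_i'}$ a non-trivial $p^{n_i}$-th root of unity; thus $\chi_{\underline{a}}(\tau_i)-1$ and $\zeta_{p^{n_i}}-1$ are associates, and the factorization $\zeta_p-1=(\zeta_{p^{n_i}}-1)(\zeta_{p^{n_i}}^{p^{n_i-1}-1}+\cdots+1)$ shows that multiplication by $\zeta_p-1$ factors through $\chi_{\underline{a}}(\tau_i)-1$. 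This annihilates both the kernel and cokernel of that Koszul differential, and a double complex spectral sequence argument propagates the annihilation to the full Koszul cohomology of that summand.

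The main technical obstacle I anticipate is twofold. First, part (1) requires a careful bookkeeping of the almost structure: Faltings's almost purity over a pre-perfectoid base yields almost vanishing in the sense of the maximal ideal $\ak{m}_{K_\infty}$, but one needs to verify that Hyodo's version with this specific almost constant is recovered; this boils down to checking that the trace-based splittings produced by almost étaleness can be assembled into a genuine almost quasi-isomorphism at each finite stage before passing to the colimit. Second, in part (2), justifying the compatibility of continuous $\Delta$-cohomology with the infinite eigenspace decomposition is delicate because direct sums of continuous modules need not be preserved by continuous cochains; I would handle this by reducing to finite stages $\ca{O}_{K_{\infty,\underline{m}}}/p^r$, where the decomposition is finite, and only then take the filtered colimit, exploiting that $\Delta$-cohomology commutes with filtered colimits of discrete $p$-torsion modules.
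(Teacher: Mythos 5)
Your proof matches the paper's: for (1), the paper likewise establishes that $K_{\infty,\underline{\infty}}$ is a pre-perfectoid field (passing through the absolutely unramified subfield $K'$ of \ref{lem:cohen} so that \ref{lem:cohen-str} gives the explicit description needed to verify Frobenius surjectivity mod $p$), and then invokes almost purity together with almost Galois descent, which is precisely your Hochschild--Serre argument; for (2), the paper uses the same eigenspace decomposition coming from \eqref{eq:para:notation-K-good} and quotes \cite[\Luoma{2}.8.1]{abbes2016p} for the Koszul computation you sketch. One small slip in your justification of $p$-divisibility of the value group: the tower $K_{n,\underline{m}}$ does not adjoin $p$-power roots of the uniformizer $\pi$ (the $t_i$ are units of $\ca{O}_K$, so adjoining their $p$-power roots leaves the value group unchanged), and the $p$-divisibility comes entirely from the cyclotomic subtower $K_\infty/K$; this does not affect your conclusion.
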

\begin{proof}
	(\ref{item:lem:H-coh-1}) If we take $K'$ as in \ref{lem:cohen}, then $\ca{O}_{K'_{\infty,\underline{\infty}}}=\colim_{n\in\bb{N}}\ca{O}_{K'}[\zeta_{p^n},t_{1,p^n},\dots,t_{d,p^n}]$ (by \ref{lem:cohen-str}) is a non-discrete valuation ring of height $1$ such that the Frobenius map on $\ca{O}_{K'_{\infty,\underline{\infty}}}/p\ca{O}_{K'_{\infty,\underline{\infty}}}$ is surjective. In other words, $K'_{\infty,\underline{\infty}}$ is a pre-perfectoid field. As $K$ is finite over $K'$, $\ca{O}_{K_{\infty,\underline{\infty}}}$ is almost  \'etale over $\ca{O}_{K'_{\infty,\underline{\infty}}}$ by almost purity (\ref{thm:almost-purity}), and the conclusion follows by almost Galois descent (cf. \cite[\Luoma{2}.6.24]{abbes2016p}).
	
	(\ref{item:lem:H-coh-2}) By \eqref{eq:para:notation-K-good}, we can decompose $\ca{O}_{K_{\infty,\underline{\infty}}}$ into a direct sum of free $\ca{O}_{K_\infty}$-submodules of rank $1$,
	\begin{align}
		\ca{O}_{K_{\infty,\underline{\infty}}}=\bigoplus_{\underline{m}\in\bb{N}^d}\bigoplus_{\underline{k}\in J_{\leq p^{\underline{m}}}} \ca{O}_{K_\infty}t_{1,p^{m_1}}^{k_1}\cdots t_{d,p^{m_d}}^{k_d},
	\end{align}
	where $J\subseteq \bb{N}_{>0}^d$ is the subset of tuples with components prime to $p$. It induces a natural retraction of the inclusion $\ca{O}_{K_\infty}\to \ca{O}_{K_{\infty,\underline{\infty}}}$. Notice that $\Delta$ acts on $\ca{O}_{K_\infty}t_{1,p^{m_1}}^{k_1}\cdots t_{d,p^{m_d}}^{k_d}$ by the multiplication by a group homomorphism from $\Delta$ to the group of roots of unity contained in $\overline{K}^\times$, which is trivial if and only if $\underline{m}= 0$. By \cite[\Luoma{2}.8.1]{abbes2016p}, we see that $\wedge^q \ho(\Delta,\ca{O}_{K_\infty}/p^r)= H^q(\Delta,\ca{O}_{K_\infty}/p^r)$ and that $H^q(\Delta,\ca{O}_{K_\infty}t_{1,p^{m_1}}^{k_1}\cdots t_{d,p^{m_d}}^{k_d}/p^r)$ is killed by $\zeta_p-1$ if $\underline{m}\neq 0$.
\end{proof}

\begin{myprop}\label{app:prop:fal-ext-connect}
	The connecting map of the Faltings extension \eqref{eq:fal-ext} induces a canonical $\widehat{K_\infty}$-linear isomorphism
	\begin{align}\label{eq:app:prop:fal-ext-connect}
		\widehat{K_\infty}\otimes_{\ca{O}_K}\widehat{\Omega}^1_{\ca{O}_K}\iso H^1(H,\widehat{\overline{K}}(1)),
	\end{align}
	sending $\df\log(t_i)$ to $\xi_i\otimes\zeta$, where $\zeta=(\zeta_{p^n})_{n\in\bb{N}}\in \bb{Z}_p(1)$ and $\xi=(\xi_1,\dots,\xi_d):H\to \bb{Z}_p^d$ is the continuous $1$-cocycle \eqref{eq:cont-cocycle}. Moreover, for any $q\in\bb{N}$, the cup product induces a natural isomorphism
	\begin{align}\label{eq:app:prop:fal-ext-connect-1}
		\widehat{K_\infty}\otimes_{\ca{O}_K}\wedge^q_{\ca{O}_K}\widehat{\Omega}^1_{\ca{O}_K}\iso H^q(H,\widehat{\overline{K}}(q)).
	\end{align}
\end{myprop}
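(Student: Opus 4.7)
The plan is to first compute $H^q(H,\widehat{\overline{K}}(q))$ on the nose using Hyodo's method, then identify the connecting map of the Faltings extension with the isomorphism produced by this computation in degree $1$, and finally derive the higher-degree statement from cup product compatibility.

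First I would reduce to the case where $K$ satisfies the assumption $(\ast)$ of \ref{para:notation-K-good}. By Epp's theorem there is a finite Galois extension $K'/K$ satisfying $(\ast)$; the subgroup $H'=\gal(\overline{K}/K'_\infty)$ is open in $H$ with finite index, and the Hochschild--Serre spectral sequence together with the vanishing of higher cohomology of finite groups on $\bb{Q}_p$-vector spaces lets us recover $H^q(H,\widehat{\overline{K}}(q))$ from $H^q(H',\widehat{\overline{K}}(q))$ by taking $\gal(K'_\infty/K_\infty)$-invariants. The formation of the Faltings extension is compatible with $K\to K'$ (cf. \ref{rem:fal-ext}), and $\widehat{K'_\infty}\otimes_{\ca{O}_{K'}}\widehat{\Omega}^1_{\ca{O}_{K'}}$ decomposes compatibly so that the $\df\log(t_i)$ for $t_i\in \ca{O}_K^\times$ remain a partial basis. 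Hence it suffices to prove the statement assuming $(\ast)$.

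Under $(\ast)$, the core input is \ref{lem:H-coh}: almost isomorphisms
\[
H^q(H,\ca{O}_{\overline{K}}/p^r) \simeq H^q(\Delta,\ca{O}_{K_{\infty,\underline{\infty}}}/p^r) \simeq \wedge^q \ho(\Delta,\ca{O}_{K_\infty}/p^r).
\]
I would pass to the inverse limit over $r$ using \ref{rem:pi-iso-retract}.(\ref{item:rem:pi-iso-retract-2}) and then invert $p$; this yields a canonical $\widehat{K_\infty}$-linear isomorphism $H^q(H,\widehat{\overline{K}}) \cong \wedge^q_{\widehat{K_\infty}} H^1(H,\widehat{\overline{K}})$, and after twisting by $\bb{Z}_p(q)$ an isomorphism $H^q(H,\widehat{\overline{K}}(q))\cong \wedge^q_{\widehat{K_\infty}} H^1(H,\widehat{\overline{K}}(1))$. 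The isomorphism $\xi:\Delta\iso \bb{Z}_p^d$ identifies $\ho(\Delta,\bb{Z}_p)$ with the $\bb{Z}_p$-module generated by $\xi_1,\dots,\xi_d$, giving a concrete basis $\xi_i\otimes\zeta$ of $H^1(H,\widehat{\overline{K}}(1))$ over $\widehat{K_\infty}$.

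Next I would identify the connecting map. The Faltings extension is split as a sequence of $\widehat{\overline{K}}$-modules by \ref{thm:fal-ext}.(\ref{item:thm:fal-ext-3}), with $\df\log(t_i)\mapsto \alpha_i=(\df\log(t_{i,p^n}))_{n\in\bb{N}}$, and the $G$-action on $\scr{E}_{\ca{O}_K}$ is given by \eqref{eq:fal-ext-action}: $g(\alpha_0)=\chi(g)\alpha_0$ and $g(\alpha_i)=\xi_i(g)\alpha_0+\alpha_i$. Because $\scr{E}_{\ca{O}_K}$ is a finite free $\widehat{\overline{K}}$-module on which $G$ acts continuously (so the splitting is continuous and induces the usual long exact sequence of continuous cohomology, cf.\ \cite[\textsection2]{tate1976galcoh}), the connecting homomorphism sends the class of $\df\log(t_i)\in (\widehat{\overline{K}}\otimes_{\ca{O}_K}\widehat{\Omega}^1_{\ca{O}_K})^H$ to the $1$-cocycle $g\mapsto g(\alpha_i)-\alpha_i=\xi_i(g)\alpha_0$, which via $\iota(1\otimes\zeta)=\alpha_0$ is the class $\xi_i\otimes\zeta$ in $H^1(H,\widehat{\overline{K}}(1))$. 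Combining with the previous step gives the $q=1$ isomorphism \eqref{eq:app:prop:fal-ext-connect} and identifies it on the displayed basis.

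Finally, for general $q$ I would check that the cup product map
\[
\wedge^q_{\widehat{K_\infty}} H^1(H,\widehat{\overline{K}}(1)) \longrightarrow H^q(H,\widehat{\overline{K}}(q))
\]
corresponds, under the Hyodo description, to the wedge product on $\wedge^\bullet \ho(\Delta,\widehat{K_\infty})$; this is a direct verification at the level of the standard Koszul resolution of $\bb{Z}_p$ over $\bb{Z}_p[\![\Delta]\!]$ (using that $\Delta$ is commutative, so the cup product on $H^*(\Delta,-)$ is graded-commutative and corresponds to the exterior algebra on the free $\bb{Z}_p$-module $\ho(\Delta,\bb{Z}_p)$). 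Once established, the iterated connecting-map description and the fact that $\xi_1\otimes\zeta,\dots,\xi_d\otimes\zeta$ is a basis in degree $1$ yield \eqref{eq:app:prop:fal-ext-connect-1}. The main obstacle is not a single hard step but the bookkeeping: carefully tracking the almost-isomorphisms of \ref{lem:H-coh} through the inverse limit and the Tate twist, and verifying that the connecting map of the Faltings extension is genuinely compatible with the Koszul-theoretic identification (rather than merely off by some $p$-power or sign), which is where the explicit formulas \eqref{eq:fal-ext-action} do all the work.
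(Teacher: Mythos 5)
Your proposal is essentially the same as the paper's proof: both rely on \ref{lem:H-coh} under hypothesis $(\ast)$, both pass to $\widehat{\overline{K}}$ via the standard $\lim/\rr^1\lim$ exact sequence of continuous cohomology, both reduce the general case to the $(\ast)$ case by invoking Epp's theorem to find a suitable finite Galois $K'/K$ together with $K'\otimes_{\ca{O}_K}\widehat{\Omega}^1_{\ca{O}_K}\iso\widehat{\Omega}^1_{\ca{O}_{K'}}[1/p]$ and Ax--Sen--Tate to identify invariants, and both identify the connecting map via the explicit action \eqref{eq:fal-ext-action} on the basis $\alpha_i$.

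The only genuine (and minor) divergence is the mechanism you use for the descent from $K'$ to $K$: you invoke a Hochschild--Serre spectral sequence together with vanishing of higher cohomology of the finite group $H/H'$ on $\bb{Q}_p$-coefficients, whereas the paper uses restriction--corestriction with $\mrm{Cor}\circ\mrm{Res}=[K':K]\cdot\mrm{id}$. These are interchangeable here. Two small points worth flagging for a cleaner write-up: (1) your phrase ``pass to the inverse limit \ldots and then invert $p$'' compresses the step where the paper carefully invokes the exact sequence $0\to\rr^1\lim H^{q-1}\to H^q(H,\widehat{\overline{K}})\to\lim H^q\to 0$ from \cite[\Luoma{2}.3.10.4, \Luoma{2}.3.10.5]{abbes2016p}; you should make that sequence explicit, as it is the substrate on which the $p^2$-isomorphism and the Mittag--Leffler observation operate. (2) Your claim that the cup product isomorphism ``is a direct verification at the level of the standard Koszul resolution'' is precisely the content of \ref{lem:H-coh}.(\ref{item:lem:H-coh-2}) (via \cite[\Luoma{2}.8.1]{abbes2016p}); you should cite that lemma rather than sketch an independent re-derivation, both to avoid duplication and because the almost-isomorphism bookkeeping of \ref{lem:H-coh}.(\ref{item:lem:H-coh-1}) is needed in any case to relate $\Delta$-cohomology to $H$-cohomology.
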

\begin{proof}
	The statement itself defines a natural map $\widehat{K_\infty}\otimes_{\ca{O}_K}\wedge^q_{\ca{O}_K}\widehat{\Omega}^1_{\ca{O}_K}\to H^q(H,\widehat{\overline{K}}(q))$ for any $q\in\bb{N}$. We only need to prove that it is an isomorphism. For $q=0$, this follows from Ax-Sen-Tate's theorem \cite{ax1970ax}. 
	
	Assume that $K$ satisfies the assumption {\rm($\ast$)} in {\rm\ref{para:notation-K-good}}. Then, the natural map
	\begin{align}\label{eq:app:prop:fal-ext-connect-2}
		\wedge^q \ho(\Delta,K_\infty/p^r\ca{O}_{K_\infty})\longrightarrow H^q(H,\overline{K}/p^r\ca{O}_{\overline{K}}) 
	\end{align} 
	is a $p^2$-isomorphism by \ref{lem:H-coh} for any $q,r\in\bb{N}$. Consider the canonical exact sequence for any $q\in\bb{N}$ (\cite[\Luoma{2}.3.10.4, \Luoma{2}.3.10.5]{abbes2016p})
	\begin{align}
		0\to \rr^1\lim_{r\in\bb{N}}H^{q-1}(H,\overline{K}/p^r\ca{O}_{\overline{K}})\to H^q(H,\widehat{\overline{K}})\to \lim_{r\in\bb{N}}H^q(H,\overline{K}/p^r\ca{O}_{\overline{K}})\to 0.
	\end{align}
	Since the inverse system $(\wedge^{q-1} \ho(\Delta,K_\infty/p^r\ca{O}_{K_\infty}))_{r\in\bb{N}}$ satisfies the Mittag-Leffler condition, we have $\rr^1\lim_{r\in\bb{N}}\wedge^{q-1} \ho(\Delta,K_\infty/p^r\ca{O}_{K_\infty})=0$. Thus, $\rr^1\lim_{r\in\bb{N}}H^{q-1}(H,\overline{K}/p^r\ca{O}_{\overline{K}})$ is killed by $p^4$ by the $p^2$-isomorphism \eqref{eq:app:prop:fal-ext-connect-2} and \ref{rem:pi-iso-retract}.(\ref{item:rem:pi-iso-retract-2}). Moreover, it is actually zero since multiplication by $p$ is invertible on $H^q(H,\widehat{\overline{K}})$. Then, we get $H^q(H,\widehat{\overline{K}})=\wedge^q \ho(\Delta,\widehat{K_\infty})$ by a similar argument. Unwinding the definitions, we get the conclusion in this case.
	
	In general, there exists a finite Galois extension $K'$ of $K$ which satisfies the assumption {\rm($\ast$)} in {\rm\ref{para:notation-K-good}}. Remark that the map $K'\otimes_{\ca{O}_K}\widehat{\Omega}^1_{\ca{O}_K}\to \widehat{\Omega}^1_{\ca{O}_{K'}}[1/p]$ is an isomorphism. By Ax-Sen-Tate's theorem, we have $(\widehat{K'_\infty} )^{H/H'}=\widehat{K_\infty}$, where $H'=\gal(\overline{K}/K'_\infty)$. Thus,
	\begin{align}
		\widehat{K_\infty}\otimes_{\ca{O}_K}\widehat{\Omega}^1_{\ca{O}_K}\longrightarrow (\widehat{K'_\infty}\otimes_{\ca{O}_{K'}}\widehat{\Omega}^1_{\ca{O}_{K'}})^{H/H'}
	\end{align}
	is an isomorphism. In particular, the restriction map
	\begin{align}
		\mrm{Res}:H^q(H,\widehat{\overline{K}}(q))\longrightarrow H^q(H',\widehat{\overline{K}}(q))^{H/H'}
	\end{align}
	is surjective, since the natural map $\widehat{K'_\infty}\otimes_{\ca{O}_{K'}}\widehat{\Omega}^1_{\ca{O}_{K'}}\to H^q(H',\widehat{\overline{K}}(q))$ is an isomorphism by applying the discussion above to $K'$. It is also injective, since there is a co-restriction map $\mrm{Cor}:H^q(H',\widehat{\overline{K}}(q))^{H/H'}\to H^q(H,\widehat{\overline{K}}(q))$ such that $\mrm{Cor}\circ \mrm{Res}=[K':K]\cdot\id$ (cf. \cite[\textsection 2]{tate1976galcoh}). Thus, we deduce the proposition for $K$ from the special case for $K'$.
\end{proof}

\begin{mypara}
	For simplicity, we put $\widehat{\Omega}^1=K\otimes_{\ca{O}_K}\widehat{\Omega}^1_{\ca{O}_K}(-1)$ and $\widehat{\Omega}^i=\wedge_K^i\widehat{\Omega}^1$ for any $i\in\bb{Z}$ (where $\widehat{\Omega}^i=0$ if $i<0$). Recall that there is a natural exact sequence \eqref{eq:para:hyodo-ring} for any $n\in\bb{Z}$ (where $\mrm{Sym}^n=0$ if $n<0$),
	\begin{align}\label{eq:app:cor:hyodo-ring-coh}
		0\longrightarrow \mrm{Sym}^{n-1}_{\widehat{\overline{K}}} (\scr{E}_{\ca{O}_K}(-1))\longrightarrow \mrm{Sym}^n_{\widehat{\overline{K}}} (\scr{E}_{\ca{O}_K}(-1))\longrightarrow \widehat{\overline{K}}\otimes_K\mrm{Sym}^n_K\widehat{\Omega}^1\longrightarrow 0.
	\end{align}
	It induces a natural exact sequence
	\begin{align}\label{eq:app:cor:hyodo-ring-coh-2}
		0\longrightarrow \widehat{\overline{K}}\otimes_K\mrm{Sym}^{n-1}_K\widehat{\Omega}^1\longrightarrow \mrm{Sym}^n_{\widehat{\overline{K}}} (\scr{E}_{\ca{O}_K}(-1))/\mrm{Sym}^{n-2}_{\widehat{\overline{K}}} (\scr{E}_{\ca{O}_K}(-1))\longrightarrow \widehat{\overline{K}}\otimes_K\mrm{Sym}^n_K\widehat{\Omega}^1\longrightarrow 0.
	\end{align}
	For any $i,j\in\bb{Z}$, we set
	\begin{align}
		E_1^{i,j}=H^{i+j}(H,\widehat{\overline{K}}\otimes_K\mrm{Sym}^{-i}_K\widehat{\Omega}^1)
	\end{align}
	and we denote by $\df_1^{i,j}:E_1^{i,j}\to E_1^{i+1,j}$ the $(i+j)$-th connecting map associated to \eqref{eq:app:cor:hyodo-ring-coh-2} for $n=-i$. We remark that if we endow each $\mrm{Sym}^n_{\widehat{\overline{K}}}(\scr{E}_{\ca{O}_K}(-1))$ with the finite decreasing filtration $(\mrm{F}^i)_{i\in\bb{Z}}$ given by
	\begin{align}
		\mrm{F}^i\mrm{Sym}^n_{\widehat{\overline{K}}}(\scr{E}_{\ca{O}_K}(-1))=\left\{ \begin{array}{ll}
			\mrm{Sym}^n_{\widehat{\overline{K}}}(\scr{E}_{\ca{O}_K}(-1)) & \textrm{if $i<-n$,}\\
			\mrm{Sym}^{-i}_{\widehat{\overline{K}}}(\scr{E}_{\ca{O}_K}(-1)) & \textrm{otherwise,}
		\end{array} \right.
	\end{align}
	then the associated spectral sequence of the group cohomology (\cite[\href{https://stacks.math.columbia.edu/tag/012M}{012M}]{stacks-project})
	\begin{align}\label{eq:app:cor:hyodo-ring-coh-ss}
		{}_nE_1^{i,j}\Rightarrow H^{i+j}(H,\mrm{Sym}^n_{\widehat{\overline{K}}}(\scr{E}_{\ca{O}_K}(-1)))
	\end{align}
	is convergent and is given by
	\begin{align}
		{}_nE_1^{i,j}=\left\{ \begin{array}{ll}
			0 & \textrm{if $i<-n$,}\\
			E_1^{i,j} & \textrm{otherwise,}
		\end{array} \right.\quad {}_n\df_1^{i,j}=\left\{ \begin{array}{ll}
		0 & \textrm{if $i<-n$,}\\
		\df_1^{i,j} & \textrm{otherwise.}
	\end{array} \right.
	\end{align}
\end{mypara}

\begin{mylem}[{cf. \cite[\Luoma{3}.5.7, \Luoma{3}.11.11]{abbes2016p}}]\label{app:lem:hyodo-ring-coh}
	For any $i,j\in\bb{Z}$, we define a $K$-linear map $\phi^{i,j}:\mrm{Sym}_K^i\widehat{\Omega}^1\otimes_K\widehat{\Omega}^j\to \mrm{Sym}_K^{i-1}\widehat{\Omega}^1\otimes_K\widehat{\Omega}^{j+1}$ by
	\begin{align}\label{diam:app:lem:hyodo-ring-coh-0}
		[x_1\otimes\cdots\otimes x_i]\otimes \omega\mapsto \sum_{k=1}^i[x_1\otimes\cdots\otimes x_{k-1}\otimes x_{k+1}\otimes\cdots\otimes x_i]\otimes(x_k\wedge \omega),
	\end{align}
	for any $x_1,\dots,x_i\in \widehat{\Omega}^1$ and $\omega\in \widehat{\Omega}^j$. Then, there is a natural commutative diagram
	\begin{align}\label{diam:app:lem:hyodo-ring-coh}
		\xymatrix{
			\widehat{K_\infty}\otimes_K(\mrm{Sym}_K^{-i}\widehat{\Omega}^1)\otimes_K\widehat{\Omega}^{i+j}\ar[d]^-{\wr}_-{\alpha^{i,j}}\ar[rr]^-{1\otimes \phi^{-i,i+j}}&&\widehat{K_\infty}\otimes_K(\mrm{Sym}_K^{-i-1}\widehat{\Omega}^1)\otimes_K\widehat{\Omega}^{i+j+1}\ar[d]^-{\wr}_-{\alpha^{i+1,j}}\\
			E_1^{i,j}\ar[rr]^-{\df_1^{i,j}}&&E_1^{i+1,j}
		}
	\end{align}
	where the vertical maps are the natural isomorphisms induced by \eqref{eq:app:prop:fal-ext-connect-1}.
\end{mylem}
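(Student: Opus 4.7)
The plan is to reduce to the case where $K$ satisfies assumption $(\ast)$ of \ref{para:notation-K-good}, and then compute the connecting map $\df_1^{i,j}$ explicitly. Both the spectral sequence \eqref{eq:app:cor:hyodo-ring-coh-ss} and the vertical identifications $\alpha^{i,j}$ are compatible with restriction along a finite Galois extension $K'/K$. Since restriction $\mrm{Res}:H^q(H,-)\to H^q(H',-)^{H/H'}$ is injective, as in the proof of \ref{app:prop:fal-ext-connect} (via $\mrm{Cor}\circ\mrm{Res}=[K':K]\cdot\id$ and the invertibility of $[K':K]$), one may first enlarge $K$ by Epp's theorem so that $(\ast)$ holds. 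The map $\phi^{i,j}$ and the pure combinatorics of $\widehat{\Omega}^i$ are also compatible with such base change, so commutativity over $K'$ transfers to $K$.

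Under $(\ast)$, I use the $\widehat{\overline{K}}$-basis $T_0,T_1,\dots,T_d$ of $\scr{E}_{\ca{O}_K}(-1)$ from \ref{cor:hyodo-ring}.(\ref{item:cor:hyodo-ring-1}), which identifies $\mrm{Sym}^n_{\widehat{\overline{K}}}(\scr{E}_{\ca{O}_K}(-1))$ with the degree-$n$ homogeneous part of $\widehat{\overline{K}}[T_0,\dots,T_d]$. The section of \ref{thm:fal-ext}.(\ref{item:thm:fal-ext-3}) twisted by $\zeta^{-1}$ gives a $\widehat{\overline{K}}$-linear (non-$G$-equivariant) splitting of \eqref{eq:app:cor:hyodo-ring-coh-2}, sending $\df\log(t_i)\otimes\zeta^{-1}$ to $T_i$ for $1\leq i\leq d$. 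The key input from \ref{cor:hyodo-ring}.(\ref{item:cor:hyodo-ring-2}) is that, for $g\in H$ (so $\chi(g)=1$), one has $g(T_0)=T_0$ and $g(T_i)=T_i+\xi_i(g)T_0$. Therefore, for any monomial $T_{k_1}\cdots T_{k_n}$ with $k_r\geq 1$,
\begin{align*}
g(T_{k_1}\cdots T_{k_n})-T_{k_1}\cdots T_{k_n}\equiv T_0\cdot\sum_{r=1}^{n}\xi_{k_r}(g)\, T_{k_1}\cdots\widehat{T_{k_r}}\cdots T_{k_n}\pmod{T_0^2\cdot\widehat{\overline{K}}[T_0,\dots,T_d]},
\end{align*}
which measures the failure of the splitting to be $H$-equivariant modulo the image of $T_0^2\cdot\widehat{\overline{K}}[T_0,\dots,T_d]$.

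Third, I compute the coboundary: via the K\"unneth formula and the isomorphism \eqref{eq:app:prop:fal-ext-connect-1}, a class in $E_1^{i,j}$ is represented by a cocycle of the form $u\otimes[T_{k_1}\cdots T_{k_{-i}}]\otimes(\xi_{l_1}\wedge\cdots\wedge\xi_{l_{i+j}})\otimes\zeta^{\otimes(i+j)}$, with $u\in\widehat{K_\infty}$. Lifting along the chosen splitting and applying the inhomogeneous coboundary, the displayed congruence above shows that the resulting class in $E_1^{i+1,j}$ is
\begin{align*}
u\otimes\sum_{r=1}^{-i}[T_{k_1}\cdots\widehat{T_{k_r}}\cdots T_{k_{-i}}]\otimes(\xi_{k_r}\wedge\xi_{l_1}\wedge\cdots\wedge\xi_{l_{i+j}})\otimes\zeta^{\otimes(i+j+1)}.
\end{align*}
Under \ref{app:prop:fal-ext-connect}, $\xi_{k_r}\otimes\zeta$ is identified with $\df\log(t_{k_r})$, i.e.\ with $T_{k_r}$ viewed inside $\widehat{\Omega}^1$. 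Comparing with the definition \eqref{diam:app:lem:hyodo-ring-coh-0} of $\phi^{-i,i+j}$, this is precisely $\alpha^{i+1,j}\circ(1\otimes\phi^{-i,i+j})$ applied to the original element, giving the desired commutativity.

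The main obstacle is the careful bookkeeping: one must verify that the Koszul-type cocycles representing the cup-product basis of $H^{i+j}(H,\widehat{\overline{K}}(i+j))$ interact with the coboundary of the explicit splitting so that the resulting expression matches the symmetric-sum form of $\phi^{-i,i+j}$ exactly, with no stray sign, twist, or combinatorial factor. This is a standard but delicate check in the inhomogeneous bar resolution of continuous group cohomology of $H$, entirely analogous to \cite[\Luoma{3}.5.7, \Luoma{3}.11.11]{abbes2016p} to which the lemma refers.
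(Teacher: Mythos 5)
Your computation is essentially the same as the paper's: both expand the connecting map $\df_1^{i,j}$ on the monomial basis $[T_{r_1}\otimes\cdots\otimes T_{r_{-i}}]$ of $\mrm{Sym}^{-i}$, use the $H$-action formula $h(T_k)=T_k+\xi_k(h)T_0$ to compute the $1$-coboundary of the tautological lift modulo $\mrm{Sym}^{-i-2}$, and match the resulting Koszul sum with $\phi^{-i,i+j}$ under $\alpha^{i+1,j}$. The opening reduction to condition $(\ast)$ via Epp and $\mrm{Cor}\circ\mrm{Res}$ is superfluous: the basis $\{T_i\}_{0\leq i\leq d}$ of $\scr{E}_{\ca{O}_K}(-1)$ comes from \ref{thm:fal-ext} and the identifications $\alpha^{i,j}$ from \ref{app:prop:fal-ext-connect} already hold for an arbitrary $K$ with residue field admitting a finite $p$-basis, so the paper passes directly to the explicit cocycle computation.
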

\begin{proof}
	As in \ref{cor:hyodo-ring}, we set $T_k=(\df\log(t_{k,p^n}))_{n\in\bb{N}}\otimes\zeta^{-1}\in\scr{E}_{\ca{O}_K}(-1)$ for any $1\leq k\leq d$. We remark that their images $\df T_k$ in $\widehat{\Omega}^1$ form a $K$-basis, and that  $h(T_k)=\xi_k(h)+T_k$ for any $h\in H$. For any $1\leq r_1,\dots,r_i,s_1,\dots,s_j\leq d$, we have 
	\begin{align}
		&(\df_1^{-i,i+j}\circ\alpha^{-i,i+j})([\df T_{r_1} \otimes\cdots\otimes \df T_{r_i} ]\otimes(\df T_{s_1} \wedge\cdots\wedge\df T_{s_j} ))\\
		=& \df_1^{-i,i+j}([\df T_{r_1} \otimes\cdots\otimes \df T_{r_i} ]\otimes(\xi_{s_1}\cup\cdots\cup\xi_{s_j})) \nonumber\\
		=&(h\mapsto [(\xi_{r_1}(h)+T_{r_1})\otimes\cdots\otimes(\xi_{r_i}(h)+T_{r_i})-T_{r_1} \otimes\cdots\otimes T_{r_i}])\cup \xi_{s_1}\cup\cdots\cup\xi_{s_j}\nonumber\\
		=&\sum_{k=1}^i[\df T_{r_1}\otimes\cdots\otimes \df T_{r_{k-1}}\otimes \df T_{r_{k+1}}\otimes\cdots\otimes \df T_{r_i}]\otimes (\xi_{r_k}\cup\xi_{s_1}\cup\cdots\cup\xi_{s_j})\nonumber\\
		=&(\alpha^{-i+1,i+j}\circ(1\otimes\phi^{i,j}))([\df T_{r_1} \otimes\cdots\otimes \df T_{r_i} ]\otimes(\df T_{s_1} \wedge\cdots\wedge\df T_{s_j} )).\nonumber
	\end{align}
\end{proof}

\begin{mycor}[{cf. \cite[\Luoma{3}.11.12]{abbes2016p}}]\label{app:cor:hyodo-ring-coh}
	We have
	\begin{align}
		\colim_{n\in\bb{N}}H^q(H,\mrm{Sym}^n_{\widehat{\overline{K}}} (\scr{E}_{\ca{O}_K}(-1))) = \left\{ \begin{array}{ll}
			\widehat{K_\infty} & \textrm{if $q=0$,}\\
			0 & \textrm{otherwise.}
		\end{array} \right.
	\end{align} 
	In particular, $(\hyodoring)^H=\widehat{K_\infty}$.
\end{mycor}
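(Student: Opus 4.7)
The plan is to analyze the spectral sequence \eqref{eq:app:cor:hyodo-ring-coh-ss} and its behavior under the filtered colimit over $n$. First, I would note that for each fixed bidegree $(i,j)$, the transition map ${}_nE_r^{i,j}\to{}_{n+1}E_r^{i,j}$ is an isomorphism once $n\geq -i$, since the filtration piece $\mrm{F}^i\mrm{Sym}^n_{\widehat{\overline{K}}}(\scr{E}_{\ca{O}_K}(-1))$ stabilizes. As filtered colimits of abelian groups are exact, the resulting colimit spectral sequence converges to $\colim_n H^{i+j}(H,\mrm{Sym}^n_{\widehat{\overline{K}}}(\scr{E}_{\ca{O}_K}(-1)))$, with
\begin{align*}
E_1^{i,j}=\widehat{K_\infty}\otimes_K\mrm{Sym}^{-i}_K\widehat{\Omega}^1\otimes_K\widehat{\Omega}^{i+j}\qquad(i\leq 0,\ 0\leq i+j\leq d).
\end{align*}

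Next, by Lemma \ref{app:lem:hyodo-ring-coh}, the $E_1$ differential is $1\otimes\phi$. A direct computation on monomials shows that $\phi$ is the algebraic de Rham differential on the polynomial algebra $A=\mrm{Sym}^\bullet_K\widehat{\Omega}^1$: in a basis $e_1,\dots,e_d$ of $\widehat{\Omega}^1$, one has $\phi(e^\alpha\otimes e_I)=\sum_k\alpha_k\,e^{\alpha-\mbf{e}_k}\otimes(e_k\wedge e_I)$. Because $\phi$ decreases polynomial degree by one and increases form degree by one, the $j$-th row of the $E_1$ page (fixed second index) is the subcomplex of the de Rham complex $\Omega^\bullet_{A/K}=A\otimes_K\wedge^\bullet\widehat{\Omega}^1$ consisting of elements of polynomial degree plus form degree equal to $j$; in particular, it is a finite complex.

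Then I would invoke the algebraic Poincar\'e lemma for the polynomial ring over a field of characteristic zero, via an explicit contracting homotopy that preserves the total degree: $H^0_{dR}(A/K)=K$ and $H^{\geq 1}_{dR}(A/K)=0$. Hence the $j$-th row of $E_1$ is acyclic for $j\geq 1$, while for $j=0$ its cohomology is $K$ concentrated at bidegree $(0,0)$. After tensoring with $\widehat{K_\infty}$, the $E_2$ page is concentrated at $(0,0)=\widehat{K_\infty}$; for $r\geq 2$, the higher differentials $\df_r$ into or out of $(0,0)$ land in bidegrees where $E_r$ already vanishes, so the spectral sequence degenerates at $E_2$ and yields the main assertion. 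For the ``in particular'' part, since $\hyodoring=\colim_n\mrm{Sym}^n_{\widehat{\overline{K}}}(\scr{E}_{\ca{O}_K}(-1))$ is a filtered colimit along injections, $H$-invariants commute with this colimit, giving $(\hyodoring)^H=\colim_n(\mrm{Sym}^n_{\widehat{\overline{K}}}(\scr{E}_{\ca{O}_K}(-1)))^H=\widehat{K_\infty}$.

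The main bookkeeping lies in verifying that the colimit of the finite spectral sequences converges as claimed and identifying $\phi$ with the de Rham differential; both are routine once set up. The substantive inputs are Lemma \ref{app:lem:hyodo-ring-coh} (which encodes Proposition \ref{app:prop:fal-ext-connect} on $H^q(H,\widehat{\overline{K}}(q))$) together with the Poincar\'e lemma for polynomial rings, which together do essentially all the work.
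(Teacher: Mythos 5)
Your approach is essentially the paper's: run the spectral sequence \eqref{eq:app:cor:hyodo-ring-coh-ss}, identify $\df_1$ with the algebraic de Rham differential on $\mrm{Sym}^\bullet_K\widehat{\Omega}^1$ via Lemma \ref{app:lem:hyodo-ring-coh}, and use acyclicity of the de Rham complex of a polynomial algebra in characteristic $0$ (the paper cites \cite[Lemma 1.2]{hyodo1989variation} for exactly this graded Poincar\'e lemma). The ``in particular'' part is handled identically.

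The one place to be more careful is the passage to the colimit of spectral sequences. It is not true in general that a filtered colimit of convergent spectral sequences converges to the colimit of the abutments: the abutment filtration on $\colim_n H^{q}$ is no longer bounded below, so convergence requires an argument, and ``filtered colimits are exact'' only gives that the colimit of the $E_r$-pages is again a spectral sequence. (Your remark that ${}_nE_r^{i,j}\to{}_{n+1}E_r^{i,j}$ is an isomorphism once $n\geq -i$ is also only correct on the nose for $r=1$; for $r\geq 2$ the threshold is slightly higher, though the colimit is unaffected.) In this instance the gap is easily repaired, but the repair is exactly the paper's level-wise argument: for each fixed $n$, ${}_nE_2^{i,j}$ vanishes for $j\neq 0$ and $i>-n$, so $H^q(H,\mrm{Sym}^n_{\widehat{\overline{K}}}(\scr{E}_{\ca{O}_K}(-1)))=\mrm{F}^{-n}H^q$ for $q\neq 0$; the filtered transition map then lands in $\mrm{F}^{-n}H^q(H,\mrm{Sym}^{n+1}_{\widehat{\overline{K}}}(\scr{E}_{\ca{O}_K}(-1)))$, which is zero because all its graded pieces ${}_{n+1}E_2^{i,q-i}$ with $i\geq -n$ vanish and the filtration is bounded above. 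Hence the transition maps on $H^q$ ($q\neq 0$) are zero and the colimit vanishes, without ever appealing to convergence of the colimit spectral sequence. You should either supply this observation or otherwise justify why the filtration on the colimit (exhaustive, bounded above by $\mrm{F}^1=0$) still detects the abutment when all $E_\infty$-graded pieces vanish.
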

\begin{proof}
	 For any $n\in\bb{N}_{>0}$, the differential map \eqref{diam:app:lem:hyodo-ring-coh-0} defined in \ref{app:lem:hyodo-ring-coh} induces a natural sequence
	 \begin{align}
	 	0\longrightarrow \mrm{Sym}_K^n\widehat{\Omega}^1\stackrel{\phi^{n,0}}{\longrightarrow}\mrm{Sym}_K^{n-1}\widehat{\Omega}^1\otimes_K\widehat{\Omega}^1\stackrel{\phi^{n-1,1}}{\longrightarrow} \cdots\stackrel{\phi^{2,n-2}}{\longrightarrow}\widehat{\Omega}^1\otimes_K\widehat{\Omega}^{n-1}\stackrel{\phi^{1,n-1}}{\longrightarrow} \widehat{\Omega}^n\longrightarrow 0.
	 \end{align}
	 which is exact by \cite[Lemma 1.2]{hyodo1989variation} (cf. \cite[\Luoma{3}.5.1]{abbes2016p}). Therefore, by \ref{app:lem:hyodo-ring-coh}, the sequence $(E_1^{\bullet,j},\df_1^{\bullet,j})$ is exact for any $j\neq 0$, and nonzero only at $E_1^{0,0}=\widehat{K_\infty}$ if $j=0$. Then, we see that for any $n\in \bb{N}$, the nonzero terms ${}_nE_2^{i,j}$ of the second page of the spectral sequence \eqref{eq:app:cor:hyodo-ring-coh-ss} appear on the positions
	 \begin{align}
	 	(i,j)\in\{(0,0),(-n,n+m)\ |\ m\in\bb{N}\}.
	 \end{align}
 	 In particular, the spectral sequence degenerates at the second page, and we see that $H^0(H,\mrm{Sym}^n_{\widehat{\overline{K}}} (\scr{E}_{\ca{O}_K}(-1)))=\widehat{K_\infty}$ and that $H^q(H,\mrm{Sym}^n_{\widehat{\overline{K}}} (\scr{E}_{\ca{O}_K}(-1)))\to H^q(H,\mrm{Sym}^{n+1}_{\widehat{\overline{K}}} (\scr{E}_{\ca{O}_K}(-1)))$ is zero for any $q\neq 0$.
\end{proof}

\section{Appendix: Faltings Extensions as Graded Pieces of De Rham Period Rings}
The Faltings extension and Hyodo ring (the Hodge-Tate period ring) are also constructed as graded pieces of de Rham period ring in the literature in various $p$-adic geometric settings. See \cite[\textsection5]{brinon2008derham} for the good reduction case, \cite[\textsection2]{tsuji2011purity} for the semi-stable reduction case, \cite[\textsection15]{tsuji2018localsimpson} for the adequate case, and \cite[\textsection6]{scholze2013hodge}, \cite{scholze2016erratum} for smooth adic spaces. This appendix is devoted to a comparison between our construction of the Faltings extension with theirs. We fix a complete discrete valuation field $K$ of characteristic $0$ with perfect residue field of characteristic $p>0$, an algebraic closure $\overline{K}$ of $K$, and a compatible system of primitive $n$-th roots of unity $(\zeta_n)_{n\in\bb{N}}$ in $\overline{K}$.

\begin{mylem}\label{lem:G-fix-A1}
	Let $(A_{\triv},A,\overline{A})$ be a $(K,\ca{O}_K,\ca{O}_{\overline{K}})$-triple in the sense of {\rm\ref{defn:triple}}, $\ca{K}$ the fraction field of $A$, $G=\gal(\ca{K}_{\mrm{ur}}/\ca{K})$. Then, $\widehat{\overline{A}}[\frac{1}{p}](1)^G=0$, where $(1)$ denotes the first Tate twist.
\end{mylem}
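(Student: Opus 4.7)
The plan is to reduce the problem to the classical vanishing of Tate-twisted Galois invariants over a complete discrete valuation field of mixed characteristic with finite $p$-basis residue field, by localizing at height-$1$ primes of $\overline{A}$ containing $p$. Combining Proposition~\ref{prop:ht1-prime-inj}.(\ref{item:prop:ht1-prime-inj-2}) with the embeddings $\widehat{\overline{A}_\ak{q}}\hookrightarrow\widehat{\overline{E}_\ak{q}}$ from~\ref{para:notation-A-inj} yields an injection $\widehat{\overline{A}}[\frac{1}{p}]\hookrightarrow\prod_{\ak{q}\in\ak{S}_p(\overline{A})}\widehat{\overline{E}_\ak{q}}$. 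Given $\alpha=x\otimes\zeta\in \widehat{\overline{A}}[\frac{1}{p}](1)^G$ with $\zeta=(\zeta_{p^n})_n$, the $G$-invariance translates into the cocycle relation $g(x)=\chi(g)^{-1}x$ in $\widehat{\overline{A}}[\frac{1}{p}]$ for every $g\in G$, so it will suffice to show that the image $\bar x_\ak{q}$ of $x$ in each $\widehat{\overline{E}_\ak{q}}$ vanishes.

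Fixing $\ak{q}$ with image $\ak{p}$, I would invoke the natural homomorphism $\gal(\overline{E}_\ak{q}/E_\ak{p})\to G$ of~\ref{para:notation-A-inj}, which arises by restricting automorphisms of $\overline{E}_\ak{q}$ fixing $E_\ak{p}$ to the subfield $\ca{K}_{\mrm{ur}}\subseteq\overline{E}_\ak{q}$. For any $\tilde g$ with image $g$, the continuous extension of $\tilde g$ to $\widehat{\overline{E}_\ak{q}}$ acts on the image of $\widehat{\overline{A}}$ as $g$, so $\tilde g(\bar x_\ak{q})=\chi(g)^{-1}\bar x_\ak{q}$. Since the roots of unity $\zeta_{p^n}\in A$ are sent to $\zeta_{p^n}\in\ca{O}_{\overline{E}_\ak{q}}$, the cyclotomic character $\chi_E$ of $\gal(\overline{E}_\ak{q}/E_\ak{p})$ satisfies $\chi_E(\tilde g)=\chi(g)$, so $\bar x_\ak{q}\otimes\zeta$ lies in $\widehat{\overline{E}_\ak{q}}(1)^{\gal(\overline{E}_\ak{q}/E_\ak{p})}$. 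Since by construction $E_\ak{p}$ is a complete discrete valuation field of mixed characteristic whose residue field admits a finite $p$-basis, Theorem~\ref{thm:sen-brinon} applied to $E_\ak{p}$ descends $\widehat{\overline{E}_\ak{q}}(1)$ to a one-dimensional representation of the associated $\Gamma_{E_\ak{p}}$ over the corresponding cyclotomic--Kummer extension, on which the cyclotomic quotient $\Sigma_{E_\ak{p}}$ acts via $\chi^{-1}$ with open, non-trivial image in $\bb{Z}_p^\times$; a non-zero invariant vector $c$ would give an impossible solution to $\sigma(c)=\chi(\sigma)^{-1}c$. Hence $\bar x_\ak{q}=0$ for every $\ak{q}$, and the injectivity of the product map then forces $x=0$.

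The main technical step will be verifying the equivariance of the embedding $\widehat{\overline{A}}\to\widehat{\overline{E}_\ak{q}}$ for the natural homomorphism $\gal(\overline{E}_\ak{q}/E_\ak{p})\to G$, by continuously extending the restriction of $\tilde g$ from the dense subring $\overline{A}$ to its $p$-adic completion in $\widehat{\overline{E}_\ak{q}}$; the remaining input is the classical vanishing of Tate-twisted invariants over a local field with finite $p$-basis residue field, which is a direct consequence of the Tate--Hyodo--Brinon theory recalled in Section~\ref{sec:brinon}.
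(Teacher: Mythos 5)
Your proof is correct and follows the paper's approach: reduce to the complete discrete valuation ring case by the injection of Lemma~\ref{lem:A-inj} (which you unpack via Proposition~\ref{prop:ht1-prime-inj} and the decomposition-group equivariance of $\widehat{\overline{A}}\to\widehat{\overline{E}}_{\ak{q}}$), then apply the classical vanishing of Tate-twisted invariants for local fields. The paper cites Tate and Hyodo directly for this last step, whereas you rederive it from Theorem~\ref{thm:sen-brinon}; both routes rest on the same underlying Tate--Sen analysis.
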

\begin{proof}
	By \ref{lem:A-inj}, we reduce to the case where $A$ is a complete discrete valuation ring extension of $\ca{O}_K$, which is proved by Tate (\cite[Theorem 2]{tate1967p}, cf. \cite[Theorem 1]{hyodo1986hodge}).
\end{proof}

\begin{mylem}\label{lem:fal-ext-compare}
	Let $(A_{\triv},A,\overline{A})$ be a $(K,\ca{O}_K,\ca{O}_{\overline{K}})$-triple in the sense of {\rm\ref{defn:triple}}, $\ca{K}$ the fraction field of $A$, $G=\gal(\ca{K}_{\mrm{ur}}/\ca{K})$. Consider an isomorphism $f:\scr{E}\to \scr{E}'$ of extensions of an $\widehat{\overline{A}}[1/p]$-representation $W$ of $G$ by $\widehat{\overline{A}}[1/p](1)$ making the following diagram commutative.
	\begin{align}
		\xymatrix{
			0\ar[r]&\widehat{\overline{A}}[\frac{1}{p}](1)\ar[r]^-{\iota}\ar@{=}[d]&\scr{E}\ar[r]^-{\jmath}\ar[d]_-{f}^-{\wr}&W\ar[r]\ar[d]^-{\cdot(-1)}&0\\
			0\ar[r]&\widehat{\overline{A}}[\frac{1}{p}](1)\ar[r]^-{\iota'}&\scr{E}'\ar[r]^-{\jmath'}&W\ar[r]&0
		}
	\end{align}
	Let $x$ and $x'$ be elements of $\scr{E}$ and $\scr{E}'$ respectively. Assume that $\jmath(x)=-\jmath'(x')\in W^G$ and $g(x)-x=g(x')-x'\in \widehat{\overline{A}}[1/p](1)$ for any $g\in G$. Then, $f(x)=x'$.
\end{mylem}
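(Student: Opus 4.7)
The plan is a short diagram chase reducing the claim to the vanishing result of Lemma \ref{lem:G-fix-A1}. The strategy is to form the difference $y = f(x) - x' \in \scr{E}'$ and show successively that $y$ lies in the image of $\iota'$ and that $y$ is fixed by $G$; since $\widehat{\overline{A}}[\frac{1}{p}](1)^G = 0$, this will force $y = 0$, i.e.\ $f(x) = x'$.

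First I would verify that $y$ lies in the subspace $\iota'(\widehat{\overline{A}}[\frac{1}{p}](1))$ of $\scr{E}'$. Using the commutativity of the given diagram, $\jmath'(f(x)) = -\jmath(x)$; combining with the hypothesis $\jmath(x) = -\jmath'(x')$ yields $\jmath'(f(x)) = \jmath'(x')$, so $\jmath'(y) = 0$ and $y$ identifies with an element of $\widehat{\overline{A}}[\frac{1}{p}](1)$ via $\iota'$.

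Next I would compute $g(y) - y$ for $g \in G$. Since $f$ is $G$-equivariant, $g(f(x)) = f(g(x))$, hence
\begin{align*}
	g(y) - y = f(g(x) - x) - (g(x') - x').
\end{align*}
The hypothesis $\jmath(x) \in W^G$ forces $g(x) - x \in \ker(\jmath) = \iota(\widehat{\overline{A}}[\frac{1}{p}](1))$, and since $f$ is the identity on the common subobject $\widehat{\overline{A}}[\frac{1}{p}](1)$ (that is, $f \circ \iota = \iota'$), the element $f(g(x) - x)$ agrees with $g(x) - x$ viewed inside $\widehat{\overline{A}}[\frac{1}{p}](1)$. The second hypothesis $g(x) - x = g(x') - x'$ then gives $g(y) = y$. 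Thus $y \in \widehat{\overline{A}}[\frac{1}{p}](1)^G$, which is zero by Lemma \ref{lem:G-fix-A1}, completing the proof.

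There is no serious obstacle here: the argument is purely formal once one unwinds the sign convention in the right-hand vertical arrow of the diagram. The only nontrivial input is Lemma \ref{lem:G-fix-A1}, which has already been reduced via \ref{lem:A-inj} to Tate's classical vanishing $\widehat{\overline{E}}(1)^{\gal(\overline{E}/E)} = 0$ for complete discrete valuation fields $E$.
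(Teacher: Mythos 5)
Your argument is correct and is essentially the same as the paper's: you form the difference $y$ (you take $f(x)-x'$, the paper takes $x'-f(x)$, which makes no difference), observe that $\jmath'(y)=0$ and $g(y)=y$ for all $g\in G$, and then invoke Lemma \ref{lem:G-fix-A1} to conclude $y=0$. The paper states this as a one-line verification; you have merely unwound the same diagram chase in full detail.
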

\begin{proof}
	If we set $y=x'-f(x)\in\scr{E}'$, then $\jmath'(y)=0$ and $g(y)=y$ for any $g\in G$ by assumption, which implies that $y=0$ by \ref{lem:G-fix-A1}.
\end{proof}

\begin{mypara}\label{para:derham-period}
	For simplicity, we only consider Tsuji's construction of de Rham period ring \cite[\textsection15]{tsuji2018localsimpson}. The comparison with other constructions of the Faltings extension should be clear from our arguments in the following. We quickly review Tsuji's construction and state properties without proofs. Let $A$ be an adequate $\ca{O}_K$-algebra of relative dimension $d$ satisfying the following condition:
	\begin{enumerate}
		\renewcommand{\labelenumi}{{\rm(\theenumi)}}
		\item We set $X=(\spec(A),\alpha_{\spec(A_{\triv})\to \spec(A)})$ (cf. \ref{rem:quasi-adequate}). Then, the monoid $\Gamma(X,\ca{M}_X)/A^\times$ is finitely generated, and the identity of $\Gamma(X,\ca{M}_X)$ induces a chart of $X$ (cf. \cite[1.3.2]{tsuji1999p}).
	\end{enumerate}
	We remark that any adequate $\ca{O}_K$-algebra Zariski locally satisfies this condition (\cite[1.3.3]{tsuji1999p}). Let $\ca{K}$ be the fraction field of $A$, $G=\gal(\ca{K}_{\mrm{ur}}/\ca{K})$. Consider the tilt $\overline{A}^\flat=\plim_{x\mapsto x^p}\overline{A}/p\overline{A}$ of $\overline{A}$, and we put $W_{\ca{O}_K}(\overline{A}^\flat)=\ca{O}_K\otimes_{W(\kappa)}W(\overline{A}^\flat)$, where $\kappa$ is the residue field of $K$, and $W(-)$ is taking the ring of Witt vectors. Let $[-]:\overline{A}^\flat\to W(\overline{A}^\flat)$ denote the multiplicative lift, $\pi$ a uniformizer of $K$ with compatible system of $p^n$-th root $(\pi_{p^n})_{n\in\bb{N}}$ contained in $\ca{O}_{\overline{K}}$, $\xi_{\pi}=\pi\otimes 1-1\otimes [(\pi_{p^n})_{n\in\bb{N}}]\in W_{\ca{O}_K}(\overline{A}^\flat)$. There is a canonical exact sequence 
	\begin{align}
		\xymatrix{
			0\ar[r]& W_{\ca{O}_K}(\overline{A}^\flat)\ar[r]^-{\cdot \xi_\pi}&W_{\ca{O}_K}(\overline{A}^\flat)\ar[r]^-{\vartheta_{\ca{O}_K}}&\widehat{\overline{A}}\ar[r]& 0
		}
	\end{align}
	where $\vartheta_{\ca{O}_K}$ is the homomorphism of $\ca{O}_K$-algebras characterized by $\vartheta_{\ca{O}_K}(1\otimes[(a_n)_{n\in\bb{N}}])=\lim_{n\to \infty} \widetilde{a}_n^{p^n}$ (where $\widetilde{a}_n\in \overline{A}$ is a lift of $a_n\in \overline{A}/p\overline{A}$).
	
	Let $\overline{X}$ be the log scheme with underlying scheme $\spec(\widehat{\overline{A}})$ whose log structure is associated to $\Gamma(X,\ca{M}_X)\to \widehat{\overline{A}}$ (different to the notation in \ref{para:notation-Ybar}). Consider the fibred product of monoids 
	\begin{align}
		\xymatrix{
			\overline{Q}\ar[d]\ar[r]&\Gamma(X,\ca{M}_X)\ar[d]\\
			\plim_{x\mapsto x^p}\overline{A}\ar[r]&\overline{A}
		}
	\end{align}
	and let $\overline{D}$ be the log scheme with underlying scheme $\spec(W(\overline{A}^\flat))$ whose log structure is associated to the composition of $\overline{Q}\to \plim_{x\mapsto x^p}\overline{A}\to \overline{A}^\flat\stackrel{[-]}{\longrightarrow}W(\overline{A}^\flat)$. The condition above on the log structure of $X$ implies that $\overline{D}$ is an fs log scheme, and the natural maps $\vartheta:W(\overline{A}^\flat)\to \widehat{\overline{A}}$ and $\overline{Q}\to\Gamma(X,\ca{M}_X)$ induce an exact closed immersion $i_{\overline{D}}:\overline{X}\to \overline{D}$. We put $S=(\spec(\ca{O}_K),\alpha_{\spec(K)\to\spec(\ca{O}_K)})$ and $\overline{D}_S=S\times_{\spec(W(\kappa))}\overline{D}$, where $\spec(W(\kappa))$ is endowed with the trivial log structure. Consider the induced closed immersion of fs log schemes $i_{\overline{D}_S}:\overline{X}\to \overline{D}_S$ (not exact). For any $r,m\in\bb{N}$, let $\overline{D}_{S,m}^{(r)}$ be the $r$-th infinitesimal neighbourhood of the reduction mod $p^m$ of the closed immersion $i_{\overline{D}_S}$ in the category of fine log schemes in the sense of \cite[5.8]{kato1989log}. Then, the natural map
	\begin{align}
		(W_{\ca{O}_K}(\overline{A}^\flat)/\xi_\pi^{r+1}W_{\ca{O}_K}(\overline{A}^\flat))[\frac{1}{p}]\longrightarrow (\lim_{m\to\infty}\Gamma(\overline{D}_{S,m}^{(r)},\ca{O}_{\overline{D}_{S,m}^{(r)}}) )[\frac{1}{p}]
	\end{align}
	is an isomorphism (\cite[page 870, equation (33)]{tsuji2018localsimpson}, cf. \cite[2.5]{tsuji2011purity}). We define 
	\begin{align}
		B_{\mrm{dR}}^+(\overline{A})=\lim_{r\to\infty}\left((\lim_{m\to\infty}\Gamma(\overline{D}_{S,m}^{(r)},\ca{O}_{\overline{D}_{S,m}^{(r)}}) )[\frac{1}{p}]\right)
	\end{align}
	which is endowed with the natural action of $G$ and a canonical $G$-stable decreasing filtration by ideals
	\begin{align}
		\mrm{Fil}^rB_{\mrm{dR}}^+(\overline{A})=\ke\left(B_{\mrm{dR}}^+(\overline{A})\to (\lim_{m\to\infty}\Gamma(\overline{D}_{S,m}^{(r-1)},\ca{O}_{\overline{D}_{S,m}^{(r-1)}}) )[\frac{1}{p}]\right)=\xi_\pi^rB_{\mrm{dR}}^+(\overline{A}),
	\end{align}
	where $r\in\bb{N}_{>0}$ and we put $\mrm{Fil}^rB_{\mrm{dR}}^+(\overline{A})=B_{\mrm{dR}}^+(\overline{A})$ for $r\leq 0$.
	
	We put $\overline{D}_X=X\times_{\spec(W(\kappa))} \overline{D}$. Consider the induced closed immersion of fs log schemes $i_{\overline{D}_X}:\overline{X}\to\overline{D}_X$. For any $r,m\in\bb{N}$, let $\overline{D}_{X,m}^{(r)}$ be the $r$-th infinitesimal neighbourhood of the reduction mod $p^m$ of the closed immersion $ i_{\overline{D}_X}$ in the category of fine log schemes in the sense of \cite[5.8]{kato1989log}. We define
	\begin{align}
		\scr{B}_{\mrm{dR}}^+(\overline{A})=\lim_{r\to\infty}\left((\lim_{m\to\infty}\Gamma(\overline{D}_{X,m}^{(r)},\ca{O}_{\overline{D}_{X,m}^{(r)}}) )[\frac{1}{p}]\right)
	\end{align}
	which is endowed with the natural action of $G$ and a canonical $G$-stable decreasing filtration by ideals
	\begin{align}
		\mrm{Fil}^r\scr{B}_{\mrm{dR}}^+(\overline{A})=\ke\left(\scr{B}_{\mrm{dR}}^+(\overline{A})\to (\lim_{m\to\infty}\Gamma(\overline{D}_{X,m}^{(r-1)},\ca{O}_{\overline{D}_{X,m}^{(r-1)}}) )[\frac{1}{p}]\right),
	\end{align}
	where $r\in\bb{N}_{>0}$ and we put $\mrm{Fil}^r\scr{B}_{\mrm{dR}}^+(\overline{A})=\scr{B}_{\mrm{dR}}^+(\overline{A})$ for $r\leq 0$. Moreover, Tsuji \cite[page 871]{tsuji2018localsimpson} defines a $G$-equivariant $B_{\mrm{dR}}^+(\overline{A})$-linear integrable connection $\nabla:\scr{B}_{\mrm{dR}}^+(\overline{A})\to \scr{B}_{\mrm{dR}}^+(\overline{A})\otimes_A\Omega^1_{X/S}$ compatible with $\df:A\to \Omega^1_{X/S}$ and $\df\log:\Gamma(X,\ca{M}_X)\to \Omega^1_{X/S}$ satisfying that $\nabla(\mrm{Fil}^r\scr{B}_{\mrm{dR}}^+(\overline{A}))\subseteq \mrm{Fil}^{r-1}\scr{B}_{\mrm{dR}}^+(\overline{A})\otimes_A\Omega^1_{X/S}$. We summarize some properties of $\scr{B}_{\mrm{dR}}^+(\overline{A})$ proved by Tsuji in the following.
\end{mypara}

\begin{myprop}[{\cite[\textsection15]{tsuji2018localsimpson}}]\label{prop:derham-period}
	We keep the notation in {\rm\ref{para:derham-period}}.
	\begin{enumerate}
		\renewcommand{\labelenumi}{{\rm(\theenumi)}}
		\item The element $t=\log([(\zeta_{p^n})_{n\in\bb{N}}])\in B_{\mrm{dR}}^+(\overline{A})$ is a non-zero divisor such that $t^rB_{\mrm{dR}}^+(\overline{A})=\mrm{Fil}^rB_{\mrm{dR}}^+(\overline{A})$ for any $r\in\bb{N}$. Moreover, the map $\vartheta_{\ca{O}_K}$ induces a $G$-equivariant isomorphism
		\begin{align}
			t^rB_{\mrm{dR}}^+(\overline{A})/t^{r+1}B_{\mrm{dR}}^+(\overline{A})\iso \widehat{\overline{A}}[1/p](r).
		\end{align}\label{item:prop:derham-period-1}
		\item Let $(s_{p^n})_{n\in\bb{N}}\in \overline{Q}$ with image $s=s_1\in \Gamma(X,\ca{M}_X)$. Then, for any $r,m\in\bb{N}$, there is a unique element $u^{(r)}_m\in 1+\ke\left(\Gamma(\overline{D}_{X,m}^{(r)},\ca{O}_{\overline{D}_{X,m}^{(r)}})\to\Gamma(\overline{D}_{X,m}^{(r-1)},\ca{O}_{\overline{D}_{X,m}^{(r-1)}})\right)\subseteq \Gamma(\overline{D}_{X,m}^{(r)},\ca{O}_{\overline{D}_{X,m}^{(r)}}^\times)$ such that $s=[(s_{p^n})_{n\in\bb{N}}]\cdot u^{(r)}_m$ in $\Gamma(\overline{D}_{X,m}^{(r)},\ca{M}_{\overline{D}_{X,m}^{(r)}})$. It induces a canonical homomorphism of monoids
		\begin{align}
			q:\overline{Q}\longrightarrow 1+\mrm{Fil}^1\scr{B}_{\mrm{dR}}^+(\overline{A}),\ (s_{p^n})_{n\in\bb{N}}\mapsto u=(u^{(r)}_m)_{r,m\in\bb{N}}.
		\end{align}
		Moreover, $\nabla(u)=u\otimes \df\log(s)$.
		\label{item:prop:derham-period-2}
	\item Let $\{t_i\}_{1\leq i\leq d}$ be a system of coordinates of the adequate $\ca{O}_K$-algebra $A$ with compatible systems of $p$-power roots $(t_{i,p^n})_{n\in\bb{N}}$ contained in $\overline{A}[1/p]$. For sufficiently large $k\in \bb{N}$ such that $\pi^kt_i\in A$, the element $u_i=q((\pi_{p^n})_{n\in\bb{N}})^{-k}q((\pi_{p^n}^kt_{i,p^n})_{n\in\bb{N}})\in 1+\mrm{Fil}^1\scr{B}_{\mrm{dR}}^+(\overline{A})$ does not depend on the choice of $k$. Moreover, $\scr{B}_{\mrm{dR}}^+(\overline{A})$ is the $B_{\mrm{dR}}^+(\overline{A})$-algebra of formal power series with $d$ variables $u_1-1,\dots, u_d-1$, and for any $r\in \bb{Z}$ we have
	\begin{align}
		\mrm{Fil}^r\scr{B}_{\mrm{dR}}^+(\overline{A})=\prod_{\underline{k}\in\bb{N}^d}\mrm{Fil}^{r-|\underline{k}|}B_{\mrm{dR}}^+(\overline{A})\cdot(u_1-1)^{k_1}\cdots(u_d-1)^{k_d}.
	\end{align}\label{item:prop:derham-period-3}
	\end{enumerate}
\end{myprop}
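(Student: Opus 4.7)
The plan is to treat the three parts essentially in the order they are stated, leveraging the definitions in \ref{para:derham-period} together with Fontaine-style computations for the arithmetic part and log deformation theory for the geometric part.

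For part (\ref{item:prop:derham-period-1}), the first step is to verify that $\xi_\pi$ is a non-zero divisor generating $\mrm{Fil}^1 B_{\mrm{dR}}^+(\overline{A})$. This is read off directly from the canonical exact sequence recalled in \ref{para:derham-period}: multiplication by $\xi_\pi$ is injective on $W_{\ca{O}_K}(\overline{A}^\flat)$, and iterating it gives $\xi_\pi^r B_{\mrm{dR}}^+(\overline{A}) = \mrm{Fil}^r B_{\mrm{dR}}^+(\overline{A})$ with graded pieces $\widehat{\overline{A}}[1/p]$ via $\vartheta_{\ca{O}_K}$. To pass from $\xi_\pi$ to $t$, I would write $t = \log([\epsilon])$ with $\epsilon = (\zeta_{p^n})_{n \in \bb{N}} \in \overline{A}^\flat$ and observe that the series $t = ([\epsilon]-1) - ([\epsilon]-1)^2/2 + \cdots$ converges in $B_{\mrm{dR}}^+(\overline{A})$ with $t \equiv [\epsilon]-1 \pmod{\mrm{Fil}^2}$. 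It then suffices to show that $([\epsilon]-1)/\xi_\pi$ maps via $\vartheta_{\ca{O}_K}$ to a unit in $\widehat{\overline{A}}[1/p]$, which is a local computation that can be done after embedding $\overline{A} \to \ca{O}_{\overline{E}_{\ak{q}}}$ for some $\ak{q} \in \ak{S}_p(\overline{A})$ (as in \ref{lem:A-inj}), reducing to the well-known case of a complete valuation ring. The identification of the graded pieces with Tate twists then follows because $g(t) = \chi(g)\, t$ by definition of $t$.

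For part (\ref{item:prop:derham-period-2}), I would construct $u_m^{(r)}$ by induction on $r \in \bb{N}$. At level $r=0$ the equality $s = [(s_{p^n})_{n \in \bb{N}}] \cdot 1$ holds in $\Gamma(\overline{X}, \ca{M}_{\overline{X}})$ because $\vartheta_{\ca{O}_K}(1\otimes [(s_{p^n})]) = s_1 = s$ and the chart of $\overline{X}$ comes from $\Gamma(X, \ca{M}_X)$. For the inductive step, the obstruction to lifting $u_m^{(r-1)}$ to $u_m^{(r)}$ lives in the kernel of $\Gamma(\overline{D}_{X,m}^{(r)}, \ca{M}_{\overline{D}_{X,m}^{(r)}}^{\mrm{gp}}) \to \Gamma(\overline{D}_{X,m}^{(r-1)}, \ca{M}_{\overline{D}_{X,m}^{(r-1)}}^{\mrm{gp}})$, which for an exact closed immersion is canonically identified (via $\df\log$-type exponentials, or rather via the additive group of the ideal) with the $r$-th graded piece of the ideal defining $\overline{X}$. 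Since both $s$ and $[(s_{p^n})_{n\in\bb{N}}]$ already agree modulo $\mrm{Fil}^r$, there is a unique such lift, giving $u_m^{(r)}$. Passing to the inverse limit over $r$ and $m$, and inverting $p$, yields $q((s_{p^n})_{n\in\bb{N}}) = u$; uniqueness makes $q$ a homomorphism of monoids. The compatibility $\nabla(u) = u \otimes \df\log(s)$ follows from the defining property of $\nabla$ recalled in \ref{para:derham-period}: $\nabla$ is a connection compatible with $\df\log: \ca{M} \to \Omega^1_{X/S}$, and $\nabla([(s_{p^n})_{n\in\bb{N}}]) = 0$ since this element comes from $\overline{D}$ which is defined over $\spec W(\kappa)$.

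For part (\ref{item:prop:derham-period-3}), the independence of $u_i$ from $k$ follows from the homomorphism property of $q$ established in (\ref{item:prop:derham-period-2}). The main task is to identify $\scr{B}_{\mrm{dR}}^+(\overline{A})$ with the formal power series ring $B_{\mrm{dR}}^+(\overline{A})[[u_1-1, \dots, u_d-1]]$ compatibly with the filtration. Since $\overline{D}_X = X \times_{\spec W(\kappa)} \overline{D}$ and $\overline{D}_S = S \times_{\spec W(\kappa)} \overline{D}$, the relative situation of $\overline{D}_X$ over $\overline{D}_S$ is controlled by the log smooth $S$-scheme $X$ via its chart. Using the fact that $\Omega^1_{X/S}$ is free on $\df\log(t_1), \dots, \df\log(t_d)$ (which in turn uses \ref{lem:quasi-adequate-regular}), and that $\nabla(u_i) = u_i \otimes \df\log(t_i)$ by (\ref{item:prop:derham-period-2}) combined with a short cocycle computation, I would argue by an induction on $r$ that the natural map
\begin{align}
\bigoplus_{|\underline{k}| = r} B_{\mrm{dR}}^+(\overline{A}) \cdot (u_1-1)^{k_1} \cdots (u_d-1)^{k_d} \longrightarrow \mrm{Fil}^r \scr{B}_{\mrm{dR}}^+(\overline{A})/\mrm{Fil}^{r+1} \scr{B}_{\mrm{dR}}^+(\overline{A})
\end{align}
becomes an isomorphism modulo $\mrm{Fil}^1 B_{\mrm{dR}}^+(\overline{A})$ (giving a free $\widehat{\overline{A}}[1/p]$-module of the correct rank by the description of $\Omega^r_{X/S}$), and then bootstrap using part (\ref{item:prop:derham-period-1}). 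The main obstacle I anticipate is the bookkeeping in this last step: identifying the log infinitesimal neighborhoods of $\overline{X}$ in $\overline{D}_X$ as a formal relative thickening over the arithmetic part $\overline{D}_S$, and checking that the chosen $u_i - 1$ really do serve as a regular system of parameters in the graded sense. Everything else reduces to either the classical perfectoid computation of $B_{\mrm{dR}}^+$ or a formal log deformation argument.
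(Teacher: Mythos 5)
The paper offers no proof of this proposition: it is stated purely as a recollection of results due to Tsuji (the paragraph immediately preceding it says it summarizes ``properties of $\scr{B}_{\mrm{dR}}^+(\overline{A})$ proved by Tsuji''), so there is no paper-internal argument for you to be compared against, and the burden of proof sits entirely in \cite[\textsection15]{tsuji2018localsimpson}. Your sketch is broadly consistent with how these facts are established there and in the related literature (Brinon, Tsuji's earlier semi-stable paper), but two points deserve sharpening.

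In part~(\ref{item:prop:derham-period-2}) the obstruction-theoretic framing is heavier than needed and slightly misleading: both $s$ and $[(s_{p^n})_{n\in\bb{N}}]$ restrict to the same element of $\Gamma(\overline{X}_m,\ca{M}_{\overline{X}_m})$, and because fine log structures are integral (so $\ca{M}\hookrightarrow\ca{M}^{\mrm{gp}}$), the equation $s=[(s_{p^n})_{n\in\bb{N}}]\cdot u^{(r)}_m$ has at most one solution $u^{(r)}_m$ in $\Gamma(\overline{D}_{X,m}^{(r)},\ca{O}^\times)$; existence is immediate because the kernel of $\ca{M}^{\mrm{gp}}_{\overline{D}_{X,m}^{(r)}}\to\ca{M}^{\mrm{gp}}_{\overline{X}_m}$ is exactly $1+I$ where $I$ is the defining ideal. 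There is no obstruction to vanish and no lifting ambiguity to resolve; the torsor you describe has a canonical point singled out by the equation, not by a deformation-theoretic choice.

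In part~(\ref{item:prop:derham-period-3}) the graded-rank argument you sketch gives, at best, injectivity of the natural map from the power series ring into $\scr{B}_{\mrm{dR}}^+(\overline{A})$. The hard content — surjectivity on each graded piece, i.e., that the $u_i-1$ together with $\xi_\pi$ actually generate the defining ideal — requires identifying the log-infinitesimal neighborhood of $\overline{X}$ in $\overline{D}_X=X\times_S\overline{D}_S$ as the completed tensor product of the corresponding neighborhood in $\overline{D}_S$ with the formal diagonal of $X/S$. That uses the log-smoothness of $X/S$ with the global chart specified in the condition in \ref{para:derham-period} and the exactness of $\overline{X}\to\overline{D}$. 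This is precisely ``the bookkeeping'' you flag as the main obstacle, and it genuinely is where the work is; the sketch as written defers rather than discharges it. With that supplied, the rest of your reconstruction goes through.
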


\begin{mycor}\label{cor:fal-ext-derham}
	With the notation in {\rm\ref{para:derham-period}}, there is a canonical $G$-equivariant exact sequence of $\widehat{\overline{A}}[1/p]$-modules,
	\begin{align}
		0\longrightarrow \widehat{\overline{A}}[\frac{1}{p}](1)\stackrel{\iota}{\longrightarrow}\mrm{Gr}^1\scr{B}_{\mrm{dR}}^+(\overline{A})\stackrel{\jmath}{\longrightarrow}\widehat{\overline{A}}[\frac{1}{p}]\otimes_{A}\Omega^1_{X/S}\longrightarrow 0,
	\end{align}
	where $\mrm{Gr}^1\scr{B}_{\mrm{dR}}^+(\overline{A})=\mrm{Fil}^1\scr{B}_{\mrm{dR}}^+(\overline{A})/\mrm{Fil}^2\scr{B}_{\mrm{dR}}^+(\overline{A})$, satisfying the following properties:
	\begin{enumerate}
		\renewcommand{\labelenumi}{{\rm(\theenumi)}}
		\item The map $\iota$ is induced by taking $\mrm{Gr}^1$ on the map $B_{\mrm{dR}}^+(\overline{A})\to\scr{B}_{\mrm{dR}}^+(\overline{A})$. In particular, we have $\iota(1\otimes(\zeta_{p^n})_{n\in\bb{N}})=\log([(\zeta_{p^n})_{n\in\bb{N}}])=1-u_0$, where $u_0\in 1+\mrm{Fil}^1\scr{B}_{\mrm{dR}}^+(\overline{A})$ is the element associated to $(\zeta_{p^n})_{n\in\bb{N}}$ defined in {\rm\ref{prop:derham-period}.(\ref{item:prop:derham-period-2})}.\label{item:cor:fal-ext-derham-1}
		\item The map $\jmath$ is induced by taking $\mrm{Gr}^1$ on the connection $\nabla:\scr{B}_{\mrm{dR}}^+(\overline{A})\to \scr{B}_{\mrm{dR}}^+(\overline{A})\otimes_A\Omega^1_{X/S}$. In particular, for any element $s\in A[1/p]\cap A_{\triv}^\times$ and any compatible system of $p$-power roots $(s_{p^n})_{n\in\bb{N}}$ of $s$ contained in $\overline{A}[1/p]$, if we denote by $u\in 1+\mrm{Fil}^1\scr{B}_{\mrm{dR}}^+(\overline{A})$ the element associated to $(s_{p^n})_{n\in\bb{N}}$ defined as in {\rm\ref{prop:derham-period}.(\ref{item:prop:derham-period-2}, \ref{item:prop:derham-period-3})}, then we have $\jmath(u-1)=1\otimes\df\log(s)$.\label{item:cor:fal-ext-derham-2}
		\item With the notation in {\rm\ref{prop:derham-period}.(\ref{item:prop:derham-period-3})}, the $\widehat{\overline{A}}[1/p]$-linear surjection $\jmath$ admits a section sending $1\otimes\df\log(t_i)$ to $u_i-1$ for any $1\leq i\leq d$.\label{item:cor:fal-ext-derham-3}
	\end{enumerate}
	In particular, $\mrm{Gr}^1\scr{B}_{\mrm{dR}}^+(\overline{A})$ is a finite free $\widehat{\overline{A}}[1/p]$-module with basis $\{1-u_i\}_{0\leq i\leq d}$.
\end{mycor}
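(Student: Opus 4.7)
The plan is to read the statement off directly from the explicit description of $\scr{B}_{\mrm{dR}}^+(\overline{A})$ as a formal power series ring given in \ref{prop:derham-period}.(\ref{item:prop:derham-period-3}); there is no deep content here beyond the structure theorems already assembled in \ref{prop:derham-period}. First I would compute $\mrm{Gr}^1\scr{B}_{\mrm{dR}}^+(\overline{A})$ using the filtration formula
\begin{align*}
  \mrm{Fil}^r\scr{B}_{\mrm{dR}}^+(\overline{A})=\prod_{\underline{k}\in\bb{N}^d}\mrm{Fil}^{r-|\underline{k}|}B_{\mrm{dR}}^+(\overline{A})\cdot (u_1-1)^{k_1}\cdots(u_d-1)^{k_d}.
\end{align*}
Taking $r=1$ and quotienting by $r=2$, only the multi-indices with $|\underline{k}|\in\{0,1\}$ contribute nontrivially because $\mrm{Gr}^{1-|\underline{k}|}B_{\mrm{dR}}^+(\overline{A})=0$ for $|\underline{k}|\geq 2$. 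This yields a canonical $G$-equivariant decomposition $\mrm{Gr}^1\scr{B}_{\mrm{dR}}^+(\overline{A})=\mrm{Gr}^1 B_{\mrm{dR}}^+(\overline{A})\oplus\bigoplus_{i=1}^d\widehat{\overline{A}}[1/p]\cdot(u_i-1)$, and \ref{prop:derham-period}.(\ref{item:prop:derham-period-1}) identifies the first summand with $\widehat{\overline{A}}[1/p](1)$. Combined, this already establishes the finite freeness and the basis $\{1-u_i\}_{0\leq i\leq d}$, provided one observes that under the identification $\mrm{Gr}^1B_{\mrm{dR}}^+(\overline{A})\cong\widehat{\overline{A}}[1/p](1)$, the element $1-u_0$ maps to $1\otimes(\zeta_{p^n})_{n\in\bb{N}}$; this is because in the log structure on $\overline{D}_{X,m}^{(r)}$ the relation $1=\zeta_1=[(\zeta_{p^n})]\cdot u_0$ forces $u_0=[(\zeta_{p^n})]^{-1}$, so $\log(u_0)=-t$ and hence $1-u_0\equiv t\pmod{\mrm{Fil}^2}$.

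Next, I would define $\iota$ to be $\mrm{Gr}^1$ of the canonical inclusion $B_{\mrm{dR}}^+(\overline{A})\hookrightarrow \scr{B}_{\mrm{dR}}^+(\overline{A})$, and $\jmath$ to be $\mrm{Gr}^1$ of the connection $\nabla:\scr{B}_{\mrm{dR}}^+(\overline{A})\to\scr{B}_{\mrm{dR}}^+(\overline{A})\otimes_A\Omega^1_{X/S}$; the latter is well-defined because $\nabla(\mrm{Fil}^r)\subseteq\mrm{Fil}^{r-1}\otimes\Omega^1_{X/S}$, and both maps are $G$-equivariant and $\widehat{\overline{A}}[1/p]$-linear by construction. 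The relation $\jmath\circ\iota=0$ is immediate from the fact that $\nabla$ is $B_{\mrm{dR}}^+(\overline{A})$-linear, so it annihilates $\mrm{Fil}^1 B_{\mrm{dR}}^+(\overline{A})$. Claim (1) then follows from the identification above, and the formula in claim (2) follows by reducing $\nabla(u)=u\otimes\df\log(s)$ of \ref{prop:derham-period}.(\ref{item:prop:derham-period-2}) modulo $\mrm{Fil}^1\scr{B}_{\mrm{dR}}^+\otimes\Omega^1_{X/S}$, which gives $\jmath(u-1)=1\otimes\df\log(s)$; for general $s\in A[1/p]\cap A_{\triv}^\times$ (not in $\Gamma(X,\ca{M}_X)$), one expresses $u$ via the multiplicativity of $q$ as in the definition of $u_i$ in \ref{prop:derham-period}.(\ref{item:prop:derham-period-3}). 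Specializing to $s=t_i$ gives claim (3): the assignment $1\otimes\df\log(t_i)\mapsto u_i-1$ extends $\widehat{\overline{A}}[1/p]$-linearly because $\{\df\log(t_i)\}$ is a basis, and it is a section of $\jmath$ by the computation just made.

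Finally, for exactness: $\iota$ is injective by \ref{prop:derham-period}.(\ref{item:prop:derham-period-1}) (which shows $\mrm{Gr}^1B_{\mrm{dR}}^+(\overline{A})$ is free of rank one), $\jmath$ is surjective because it admits the section just constructed, and exactness in the middle reduces to a rank count: the middle term is free of rank $1+d$ by the first paragraph, and the kernel/image of $\jmath$ account for ranks $1$ and $d$ respectively. The only minor subtlety is the sign check for $u_0$ mentioned above; everything else is bookkeeping on top of the structure theorem \ref{prop:derham-period}.(\ref{item:prop:derham-period-3}). Once all three formulas are verified, uniqueness of a map fitting the extension data could alternatively be obtained via \ref{lem:fal-ext-compare}, giving a conceptual comparison with the Faltings extension $\scr{E}_A$ of \ref{thm:B-fal-ext} through the obvious dictionary $T_i\leftrightarrow 1-u_i$.
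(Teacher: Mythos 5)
Your proof is correct and follows what the paper clearly intends: the corollary is stated without proof because it is a direct unwinding of the structure theorem \ref{prop:derham-period}.(\ref{item:prop:derham-period-3}), and you carry out that unwinding carefully, including the necessary sign/normalization check that $1-u_0=t=\log([(\zeta_{p^n})_{n\in\bb{N}}])$ in $\mrm{Gr}^1$, which indeed comes from $1=\zeta_1=[(\zeta_{p^n})]\cdot u_0$ in $\Gamma(\overline{D}^{(r)}_{X,m},\ca{M}_{\overline{D}^{(r)}_{X,m}})$ and $\log(u_0)\equiv u_0-1\pmod{\mrm{Fil}^2}$. The verification that $\jmath(u-1)=1\otimes\df\log(s)$ via $\nabla(u)=u\otimes\df\log(s)$ reduced modulo $\mrm{Fil}^1$, and the extension to $s\in A[1/p]\cap A_\triv^\times$ by writing $u=q((\pi_{p^n}))^{-k}q((\pi_{p^n}^k s_{p^n}))$ and using $\df\log(\pi)=0$ in $\Omega^1_{X/S}$, are both right.

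Two small points worth tightening. First, the direct sum decomposition
\[
\mrm{Gr}^1\scr{B}_{\mrm{dR}}^+(\overline{A})=\mrm{Gr}^1B_{\mrm{dR}}^+(\overline{A})\oplus\bigoplus_{i=1}^d\widehat{\overline{A}}[\tfrac{1}{p}]\cdot(u_i-1)
\]
is a decomposition of $\widehat{\overline{A}}[1/p]$-modules, \emph{not} of $G$-modules: as the computation in \ref{prop:fal-ext-comparison} shows, $g(1-u_i)=\xi_i(g)(1-u_0)+(1-u_i)$, so $G$ does not preserve the summands $\widehat{\overline{A}}[1/p](u_i-1)$. This is harmless for your argument since you only use the decomposition to identify the underlying module and you establish $G$-equivariance of $\iota$ and $\jmath$ separately, but the phrase ``$G$-equivariant decomposition'' should be dropped. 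Second, the rank-count argument for exactness in the middle is more delicate than it looks ($\widehat{\overline{A}}[1/p]$ need not be a domain) and is in any case redundant: the decomposition you have already produced makes the sequence \emph{split} exact, because $\iota$ identifies $\widehat{\overline{A}}[1/p](1)$ with the first summand, $\jmath$ kills it (as $\nabla$ is $B_{\mrm{dR}}^+$-linear), and $\jmath$ restricted to $\bigoplus_i\widehat{\overline{A}}[1/p](u_i-1)$ is an isomorphism onto $\widehat{\overline{A}}[1/p]\otimes_A\Omega^1_{X/S}$ since the $\df\log(t_i)$ form a basis. It is cleaner to state exactness this way and drop the rank count.
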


\begin{myprop}\label{prop:fal-ext-comparison}
	With the notation in {\rm\ref{para:derham-period}}, there is a unique $G$-equivariant $\widehat{\overline{A}}[1/p]$-linear isomorphism
	\begin{align}
		f:\scr{E}_A\iso \mrm{Gr}^1\scr{B}_{\mrm{dR}}^+(\overline{A}),
	\end{align}
	where $\scr{E}_A$ is defined in {\rm\ref{thm:B-fal-ext}}, such that for any element $s\in A[1/p]\cap A_{\triv}^\times$ and any compatible system of $p$-power roots $(s_{p^n})_{n\in\bb{N}}$ of $s$ contained in $\overline{A}[1/p]$, we have
	\begin{align}
		f((\df\log(s_{p^n}))_{n\in\bb{N}})=1-u,
	\end{align}
	where $u\in 1+\mrm{Fil}^1\scr{B}_{\mrm{dR}}^+(\overline{A})$ is the element associated to $(s_{p^n})_{n\in\bb{N}}$ defined as in {\rm\ref{prop:derham-period}.(\ref{item:prop:derham-period-2}, \ref{item:prop:derham-period-3})}. In particular, it induces a canonical isomorphism of Faltings extensions
	\begin{align}\label{diam:prop:fal-ext-comparison}
		\xymatrix{
			0\ar[r]&\widehat{\overline{A}}[\frac{1}{p}](1)\ar[r]^-{\iota}\ar@{=}[d]&\scr{E}_A\ar[r]^-{\jmath}\ar[d]_-{f}^-{\wr}&\widehat{\overline{A}}[\frac{1}{p}]\otimes_{A}\Omega^1_{X/S}\ar[r]\ar[d]^-{\cdot(-1)}&0\\
			0\ar[r]&\widehat{\overline{A}}[\frac{1}{p}](1)\ar[r]^-{\iota}&\mrm{Gr}^1\scr{B}_{\mrm{dR}}^+(\overline{A})\ar[r]^-{\jmath}&\widehat{\overline{A}}[\frac{1}{p}]\otimes_{A}\Omega^1_{X/S}\ar[r]&0
		}
	\end{align}
\end{myprop}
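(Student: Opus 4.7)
The plan is to define $f$ on the explicit $\widehat{\overline{A}}[1/p]$-basis of $\scr{E}_A$ coming from a system of coordinates, verify that the resulting map is $G$-equivariant and fits into \eqref{diam:prop:fal-ext-comparison}, and then deduce the formula for arbitrary $s$ by invoking Lemma \ref{lem:fal-ext-compare}. Uniqueness is immediate: by \ref{thm:B-fal-ext}, $\scr{E}_A$ is generated over $\widehat{\overline{A}}[1/p]$ by elements of the form $(\df\log(s_{p^n}))_{n\in\bb{N}}$, so the prescribed formula determines $f$ by $\widehat{\overline{A}}[1/p]$-linearity.

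For existence, I would fix a system of coordinates $t_1,\dots,t_d$ of $A$ with compatible systems of $p$-power roots $(t_{i,p^n})_{n\in\bb{N}}$ in $\overline{A}[1/p]$, and set $t_0=\zeta_p$ and $\alpha_i=(\df\log(t_{i,p^n}))_{n\in\bb{N}}$ for $0\leq i\leq d$. By \ref{thm:B-fal-ext}, $\{\alpha_i\}_{0\leq i\leq d}$ is an $\widehat{\overline{A}}[1/p]$-basis of $\scr{E}_A$; by \ref{cor:fal-ext-derham}, $\{1-u_i\}_{0\leq i\leq d}$ is an $\widehat{\overline{A}}[1/p]$-basis of $\mrm{Gr}^1\scr{B}_{\mrm{dR}}^+(\overline{A})$, where $u_i$ is defined as in \ref{prop:derham-period}.(\ref{item:prop:derham-period-2}, \ref{item:prop:derham-period-3}). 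Define $f(\alpha_i)=1-u_i$ and extend $\widehat{\overline{A}}[1/p]$-linearly. The left square of \eqref{diam:prop:fal-ext-comparison} commutes since $f(\iota(1\otimes\zeta))=f(\alpha_0)=1-u_0=\iota(1\otimes\zeta)$ by \ref{thm:B-fal-ext}.(\ref{item:thm:B-fal-ext-1}) and \ref{cor:fal-ext-derham}.(\ref{item:cor:fal-ext-derham-1}); the right square commutes with the $\cdot(-1)$ twist since $\jmath(1-u_i)=-\df\log(t_i)=-\jmath(\alpha_i)$ for $1\leq i\leq d$ by \ref{cor:fal-ext-derham}.(\ref{item:cor:fal-ext-derham-2}) and \ref{thm:B-fal-ext}.(\ref{item:thm:B-fal-ext-2}).

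The crucial step is to check $G$-equivariance of $f$ by comparing the Galois action on the two bases. On $\scr{E}_A$, the formulas $g(\alpha_0)=\chi(g)\alpha_0$ and $g(\alpha_i)=\alpha_i+\xi_i(g)\alpha_0$ for $1\leq i\leq d$ follow from $g(t_{i,p^n})=\zeta_{p^n}^{\xi_i(g)}t_{i,p^n}$, the logarithmic product rule, and $p$-adic continuity in the Tate module. On $\mrm{Gr}^1\scr{B}_{\mrm{dR}}^+(\overline{A})$, the uniqueness statement in \ref{prop:derham-period}.(\ref{item:prop:derham-period-2}) yields $G$-equivariance of the monoid homomorphism $q\colon\overline{Q}\to 1+\mrm{Fil}^1\scr{B}_{\mrm{dR}}^+(\overline{A})$; combined with its multiplicativity and the trick of \ref{prop:derham-period}.(\ref{item:prop:derham-period-3}) (in which the cocycle $\eta_\pi$ describing $g(\pi_{p^n})=\zeta_{p^n}^{\eta_\pi(g)}\pi_{p^n}$ cancels), this gives $g(u_i)=u_0^{\xi_i(g)}u_i$. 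Reducing modulo $\mrm{Fil}^2$ via the expansion $(1+x)^a\equiv 1+ax$, one obtains $g(1-u_i)\equiv(1-u_i)+\xi_i(g)(1-u_0)$ and $g(1-u_0)\equiv\chi(g)(1-u_0)$, matching the action on $\{\alpha_i\}$ transported via $f$.

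To conclude, I would apply Lemma \ref{lem:fal-ext-compare} to derive the general formula. For $s\in A[1/p]\cap A_{\triv}^\times$ with $p$-power roots $(s_{p^n})_{n\in\bb{N}}$, define $\eta_s\colon G\to\bb{Z}_p$ by $g(s_{p^n})=\zeta_{p^n}^{\eta_s(g)}s_{p^n}$, and set $x=(\df\log(s_{p^n}))_{n\in\bb{N}}$ and $x'=1-u_s$ where $u_s$ is defined by the same formula as in \ref{prop:derham-period}.(\ref{item:prop:derham-period-3}) applied to $s$. Then $\jmath(x)=\df\log(s)=-\jmath(x')\in W^G$ for $W=\widehat{\overline{A}}[1/p]\otimes_A\Omega^1_{X/S}$, and repeating the cocycle computation above with $s$ in place of $t_i$ yields $g(x)-x=\eta_s(g)\alpha_0$ and $g(x')-x'\equiv\eta_s(g)(1-u_0)\pmod{\mrm{Fil}^2}$, both identified with $\iota(\eta_s(g)\otimes\zeta)\in\widehat{\overline{A}}[1/p](1)$. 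Lemma \ref{lem:fal-ext-compare} then gives $f(x)=x'$. The main obstacle is the careful computation of $g(u_i)$ when $t_i$ is not integral, which forces the use of the indirect formula of \ref{prop:derham-period}.(\ref{item:prop:derham-period-3}) and demands care with the cocycle $\eta_\pi$ of the uniformizer; once the identity $g(u_i)=u_0^{\xi_i(g)}u_i$ is in hand, the remainder of the argument reduces to a diagram chase and a single invocation of Lemma \ref{lem:fal-ext-compare}.
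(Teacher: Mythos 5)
Your proposal is correct and follows essentially the same route as the paper: define $f$ on the basis $\{\alpha_i\}_{0\leq i\leq d}\mapsto\{1-u_i\}_{0\leq i\leq d}$, verify $G$-equivariance via the cocycle identity $g(1-u)=\xi(g)(1-u_0)+(1-u)$ in $\mrm{Gr}^1\scr{B}_{\mrm{dR}}^+(\overline{A})$, and deduce the general formula $f((\df\log(s_{p^n}))_{n\in\bb{N}})=1-u$ from Lemma~\ref{lem:fal-ext-compare}. Your treatment is slightly more explicit than the paper's about the non-integral case and the cancellation of the uniformizer cocycle $\eta_\pi$ in the computation of $g(u_i)$, but the content is the same.
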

\begin{proof}
	We take the notation in {\rm\ref{prop:derham-period}.(\ref{item:prop:derham-period-3})}. Recall that $\scr{E}_A$ is a finite free $\widehat{\overline{A}}[1/p]$-module with basis $\{\alpha_i=\df\log(t_{i,p^n})_{n\in\bb{N}}\}_{0\leq i\leq d}$, where $t_{0,p^n}=\zeta_{p^n}$. Thus, the uniqueness of $f$ is clear. For its existence, we define $f$ to be the $\widehat{\overline{A}}[1/p]$-linear isomorphism sending $\alpha_i\in \scr{E}_A$ to $1-u_i\in\mrm{Gr}^1\scr{B}_{\mrm{dR}}^+(\overline{A})$ for any $0\leq i\leq d$. 
	
	We claim that $f$ is $G$-equivariant. Indeed, let $u\in 1+\mrm{Fil}^1\scr{B}_{\mrm{dR}}^+(\overline{A})$ be the element associated to $(s_{p^n})_{n\in\bb{N}}$ defined as in {\rm\ref{prop:derham-period}.(\ref{item:prop:derham-period-2}, \ref{item:prop:derham-period-3})}. For any $g\in G$, there is a unique element $\xi(g)\in\bb{Z}_p$ such that $g(s_{p^n})_{n\in\bb{N}}=(\zeta_{p^n}^{\xi(g)}s_{p^n})_{n\in\bb{N}}$. As $s=[(s_{p^n})_{n\in\bb{N}}]\cdot u^{(r)}_m$, we have $u=[(\zeta_{p^n})_{n\in\bb{N}}]^{\xi(g)}g(u)$. Taking logarithm and using the identity $\log([(\zeta_{p^n})_{n\in\bb{N}}])=1-u_0$ in $\mrm{Gr}^1\scr{B}_{\mrm{dR}}^+(\overline{A})$ (see \ref{cor:fal-ext-derham}.(\ref{item:cor:fal-ext-derham-1})), we obtain that 
	\begin{align}\label{eq:prop:fal-ext-comparison}
		g(1-u)=\xi(g)(1-u_0)+1-u\in \mrm{Gr}^1\scr{B}_{\mrm{dR}}^+(\overline{A}).
	\end{align}
	As $f$ is $\widehat{\overline{A}}[1/p]$-linear and sends the basis $\alpha_0,\dots,\alpha_d$ to $1-u_0,\dots,1-u_d$, we see that $f$ is $G$-equivariant by \eqref{eq:prop:fal-ext-comparison} and \eqref{eq:fal-ext-action}.
	
	We claim that $f(\alpha)=1-u$, where $\alpha=(\df\log(s_{p^n})_{n\in\bb{N}})$. Indeed, our definition of $f$ makes the diagram \eqref{diam:prop:fal-ext-comparison} commute by \ref{thm:B-fal-ext} and \ref{cor:fal-ext-derham}. Notice that $\jmath(\alpha)=\df\log(s)=-\jmath(1-u)\in \Omega^1_{X/S}$ and that $g(\alpha)-\alpha=\xi(g)\alpha_0=\xi(g)(1-u_0)=g(1-u)-(1-u)\in \widehat{\overline{A}}[1/p](1)$. Thus, we can apply \ref{lem:fal-ext-compare} to verify this claim.
	
	Therefore, $f$ satisfies all the requirements, which completes the proof.
\end{proof}

\bibliographystyle{myalpha}
\bibliography{bibli}


\end{document}